\documentclass[12pt,reqno]{amsart}

\usepackage[usenames, dvipsnames]{color}

\usepackage{epsfig,amsmath,amsthm}
\usepackage{nccmath}
\makeatletter
\renewcommand{\paragraph}{%
  \@startsection{paragraph}{4}{\z@}%
  {1.2ex \@plus .2ex}%
  {-1em}%
  {\normalfont\normalsize\bfseries}}
\makeatother
\usepackage[foot]{amsaddr}
\usepackage{graphicx}
\usepackage{amssymb}
\usepackage{enumerate}
\usepackage{comment}

\usepackage{paralist}
\usepackage[left=2cm,right=2cm,top=2cm,bottom=2cm]{geometry}
\usepackage{fancyhdr}
\usepackage{hyperref}
\hypersetup{
	colorlinks=true,
	linkcolor=blue,
	citecolor=red,
	urlcolor=blue,
	pdfborder={0 0 0}
}
\DeclareMathSizes{10}{10}{6}{4}

\usepackage{cleveref}
\crefname{theorem}{Theorem}{Theorems}
\crefname{thm}{Theorem}{Theorems}
\crefname{lemma}{Lemma}{Lemmas}
\crefname{lem}{Lemma}{Lemmas}
\crefname{remark}{Remark}{Remarks}
\crefname{rmk}{Remark}{Remarks}
\crefname{prop}{Proposition}{Propositions}
\crefname{notation}{Notation}{Notations}
\crefname{claim}{Claim}{Claims}
\crefname{defn}{Definition}{Definitions}
\crefname{definition}{Definition}{Definitions}
\crefname{cor}{Corollary}{Corollaries}
\crefname{example}{Example}{Examples}
\crefname{section}{Section}{Sections}
\crefname{figure}{Figure}{Figures}
\crefname{assumption}{Assumption}{Assumptions}

\newtheorem{theorem}{Theorem}[section]

\newtheorem{claim}[theorem]{Claim}
\newtheorem{lemma}[theorem]{Lemma}
\newtheorem{cor}[theorem]{Corollary}
\newtheorem{conj}[theorem]{Conjecture}
\newtheorem{prop}[theorem]{Proposition}
\newtheorem{defn}[theorem]{Definition}

\newtheorem{remark}[theorem]{Remark}

\numberwithin{equation}{section}

\theoremstyle{definition}

\DeclareMathOperator{\CR}{CR}

\DeclareMathOperator{\CLE}{CLE}

\DeclareMathOperator{\indx}{ind}

\newcommand{\R}{\mathbb{R}}
\newcommand{\C}{\mathbb{C}}
\newcommand{\N}{\mathbb{N}}
\newcommand{\Z}{\mathbb{Z}}

\newcommand{\E}{\mathbb{E}}
\newcommand{\HH}{\mathbb{H}}

\newcommand{\eps}{\varepsilon}

\newcommand{\D}{\mathbb{D}}

\newcommand{\Q}{\mathbb{Q}}
\renewcommand{\P}{\mathbb{P}}

\newcommand{\1}{\mathbf{1}}
\renewcommand{\Re}{\mathrm{Re}}
\renewcommand{\Im}{\mathrm{Im}}

\newcommand{\Juk}{\mathcal{J}}
\newcommand{\Ftw}{\mathcal{F}}
\newcommand{\cbls}{\mathbf{c}_\D^{\rm{loop}}}
\newcommand{\cblsg}{\mathbf{c}_{\D,g_{\rm cone}}^{\rm{loop}}}
\newcommand{\fconf}{\hat{\mathbf{c}}}
\newcommand{\Ctw}{\frac{\pi^{3/4}}{2^{7/4}}}
\newcommand{\CSLE}{\mathbf{C}_{\rm{SLE}4}^{\rm loop}}

\def\cV{\mathcal{V}}

\def\cL{\mathcal{L}}

\def\cD{\mathcal{D}}
\def\cC{\mathcal{C}}

\newcommand{\Modd}{\mathcal{M}}
\newcommand{\ann}{\mathcal{A}}
\newcommand{\strip}{\mathcal{S}}
\newcommand{\ED}{\operatorname{ED}}
\newcommand{\Int}{\operatorname{Int}}
\newcommand{\Ext}{\operatorname{Ext}}

\begin{document}

\title{Renormalised two-point functions of CLE$_4$ gaskets}

\author{Juhan Aru$^\ast$}
\email{juhan.aru@epfl.ch}

\address{$^\ast$ EPFL, Institute of Mathematics, 1015 Lausanne, Switzerland}

\author{Titus Lupu$^\dagger$}
\email{titus.lupu@sorbonne-universite.fr}

\address{$^\dagger$ Sorbonne Université, Université Paris Cité, CNRS, Laboratoire de Probabilités, Statistique et Modélisation, LPSM, F-75005 Paris, France}

\thanks{J.A. is supported by Eccellenza grant 194648 of the Swiss National Science Foundation and he is a member of NCCR Swissmap. }

\begin{abstract}
We first consider nested CLE$_4$ in a simply-connected domain and compute some exact renormalised probabilities: the probability that two points belong to the same CLE$_4$ gasket and the probability that two points belong to the outermost CLE$_4$ gasket. The resulting conformally covariant formulas have a non-trivial modular dependence, expressed explicitly in terms of Jacobi theta functions.

We are guided by the conformal field theory of the Ashkin-Teller (AT) model, but our proofs are purely probabilistic using Brownian loop soups, gauge-twisted Gaussian free fields (GFFs) and the level line geometry of the 2D continuum GFF.
    
More generally, we also calculate renormalised probabilities that two points belong to CLE$_4$ gaskets sampled in alternation with certain two-valued sets of the Gaussian free field. These quantities correspond to the two-point function of the conjectured scaling limit of the AT single spins on the critical line. At the decoupling point, our results recover the Ising model correlations; they also suggest a CLE$_4$-based continuum FK representation of the AT single-spin fields along a whole segment of the critical line.
\end{abstract}
\maketitle
\tableofcontents
\section{Introduction and main results}

In the 1980s, conformal field theory (CFT) emerged as a powerful way to explain the universality of two-dimensional scaling limits of critical statistical physics models through algebraic and analytic structures \cite{BPZ}. In the 2000s, Schramm--Loewner evolutions (SLE$_\kappa$) and their loop versions, the Conformal Loop Ensembles CLE$_\kappa$, provided a geometric and probabilistic approach to the same universality phenomena \cite{SchrammLERW, LawlerSchrammWerner2003ConformalRestr,WernerstFlour, CardySLE}.

These two perspectives are expected to be deeply related: CLE$_\kappa$ should encode geometric aspects of CFTs with central charge $c = 1 - 6(\sqrt{\kappa}/2-2/\sqrt{\kappa})^2$ and also represent in some way the algebraic structures. See, for instance, the reviews \cite{CardySLE, PeltolaReview}. Despite substantial recent progress on the SLE/CLE--CFT correspondence, see e.g. \cite{JesperandCoo,BaverezJego24MKS,ang2024integrabilityconformalloopensemble, imaginaryliouville}, precise mathematical connections between CFT structures and CLE observables remain not very well understood. 

In this paper we connect a certain continuous family of \(c=1\) CFTs to CLE$_4$, by translating free field reprsentations of Ashkin--Teller (AT) model from \cite{ginsparg1988curiosities, saleur1988correlation, zamolodchikovAT} into exact probabilistic language of the level line geometry of the Gaussian free field  \cite{SchSh,SchSh2, ASW, ALS1}, its various relations to Brownian loop soups of critical intensity \cite{SheffieldWerner2012CLE, ALS2, ALS4} and crucially also the recent work on gauge-twisted Gaussian free fields \cite{KasselLevy16CovSym, LupuTopoAIHP}. As a result, we obtain explicit formulas for CLE$_4$ gasket correlations and explain how CLE$_4$ appears for a whole continuum range of $c=1$ CFTs on the AT critical line. Hence, we showcase how following ideas from the CFT and translating them to probabilistic language can provide new insights into well-studied probabilistic objects.  We plan to further zoom in on this link between the level line geometry and $c=1$ CFTs in subsequent works. \\

At large, our paper has three main results. First, we compute what is called the two-point twist correlation function in a simply-connected domain and connect it to certain renormalised probabilities of certain CLE$_4$ gasket events. This is the probabilistic counterpart of the \(\mathbb Z_2\)-orbifold twist field at \(c=1\) \cite{dixon1987orbifold}, and the answer is an explicit theta-function formula. Second, building on this we use the idea of compactification of the free field to obtain exact two-point functions for nested CLE$_4$ gaskets and outermost CLE$_4$ gaskets - these correspond to limiting probabilities of 4-Potts FK representation. To our knowledge, these are the first rigorous computations for correlation functions of CLE$_\kappa$ in the case of a non-trivial modular parameter (i.e. that would require conformal bootstrap in the CFT language). For comparison, the three point formulas on the sphere, where no modular parameter appears, were recently obtained in \cite{ang2024integrabilityconformalloopensemble} for all $\kappa \in (0, 8]$ using Liouville CFT. See also a recent physics paper \cite{downing2pt} for a  recent two-point calculation for loop ensembles in terms of conformal blocks \footnote{We have been communicated by Xin Sun that indeed our concrete expression matches their expression that is given in terms of conformal blocks}. Third, we generalise our technique to provide formulas along a segment of the critical line for the Ashkin--Teller two-point spin correlation limits and show how CLE$_4$ gaskets sampled at different heights naturally appear along a whole segment of the AT critical line - see also \cite{AlcadeHeeneyLis2026} for an alternative random current based way of seeing this surprising appearance.  \\

Let us now zoom in further on conformal loop ensembles and the mathematical statements of our results. The Conformal Loop Ensembles CLE$_{\kappa}$, $\kappa\in (8/3,8]$ form a one-parameter family of random collections of non-overlapping loops in a simply connected planar domain that are conformally invariant in law. They were introduced in 
\cite{SheffieldCLE09,SheffieldWerner2012CLE} as the conjectural scaling limits of the interfaces for various statistical physics models and these conjectures have been proved in the realm of latticfe models so far for percolation, Ising and FK-Ising ($\kappa = 6, 3, 16/3$) respectively. In our case of interest, $\kappa=4$, it is known that the CLE$_{4}$ appears in the scaling limit of critical random walk loop soup outer boundaries or the outer boundaries of outermost sign components of a metric graph GFF \cite{BrugCamiaLis2014RWLoopsCLE,LupuConvCLE} and thereby has a natural coupling to the 2D continuum Gaussian free field (GFF)
\cite{MS,WaWu,ASW}. Furthermore, the CLE$_4$ is conjectured to be scaling limit of loops 
in the double-dimer model 
\cite{Kenyon14DoubleDimers,Dubedat19DoubleDimers,
BasokChelkak18taufunc}
and in the loop $O(n)$ model with parameters 
$n=2, x=1/\sqrt{2}$
\cite{KagerNienhuis04,PeledSpinka17On}. As explained below it conjecturally also describes the 4-Potts model and its FK representation, and also appears in the FK representation of a single-spin in the Ashkin-Teller model on a whole segment of the critical line.

 To describe our results on the correlation functions of CLE$_4$ more precisely, recall that CLE comes in nested and non-nested (also called simple) versions - the former often corresponding to outer boundaries of outermost clusters in the corresponding statistical physics model, and the latter corresponding to all sign clusters themselves \cite{SheffieldCLE09}. The nested CLE$_4$ can be constructed iteratively from a simple CLE$_4$ by sampling first a simple CLE$_4$, then sampling independent CLE$_4$-s inside each of the loops, and then iterating this procedure independently inside each of the loops that have appeared. 
Conversely, given nested CLE$_4$, the simple version is obtained by restricting to outermost loops and their closure.

We express the different two-point correlation functions via the Jacobi theta functions
$\theta_2$ and $\theta_3$:
$$
\theta_2(q) = 
\sum_{n\in\Z} q^{(n+1/2)^2},
\qquad
\theta_3(q) = 
\sum_{n\in\Z} q^{n^2},
$$
and the usual Dirichlet Green's function $G_D(z_1,z_2)$ in the domain $D$ with the normalization $G_D(z_1, z_2) \sim \frac{-1}{2\pi}\log|z_1-z_2|$ near the diagonal.

\begin{theorem}[Two-point correlations of nested CLE$_4$]
\label{thm:main1}
Let $D$ be a simply-connected domain and $z_1 \neq z_2 \in D$. Let $E^{\rm nest}_\eps(z_1,z_2)$ denote the event that there is a nested CLE$_4$ gasket that comes $\eps_1$-close to both $z_1$ and $\eps_2$-close to $z_2$. Then we have
$$\lim_{\max(\eps_1, \eps_2) \to 0}\eps_1^{-1/8}\eps_2^{-1/8}\P(E^{\rm nest}_\eps(z_1,z_2)) = C_0\CR(z_1,D)^{-1/8}\CR(z_2,D)^{-1/8}\exp\left(\frac{\pi}{2}G_D(z_1,z_2)\right),$$
where $C_0$ is an absolute constant \footnote{Both here and in the next statements these absolute constants are explicit up to a term depending on SLE$_4$ loop measure (that we think does not have an explicit expression). This term would disappeaer when using conformal radii instead of actual radii, or when normalising the two-point function by product of one-point functions. We prefer to keep this dependence and remark here that some duality arguments are nicer using the Euclidean normalisation, although the conformal radii one would also follow from the same techniques.}.
Further, this two-point function also equals,
up to an absolute constant,
$$\Big\langle:\exp\Big(i\sqrt\frac{\pi}{2}\Phi_D(z_1)\Big):~
:\exp\Big(-i\sqrt\frac{\pi}{2}\Phi_D(z_2)\Big):
\Big\rangle .$$
Here the GFF $\Phi_D$ in $D$ is normalized to have covariance $G_D$, and we consider the usual imaginary multiplicative chaos.
\end{theorem}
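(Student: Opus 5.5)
The plan is to realise nested CLE$_4$ as the level-line geometry of a GFF and read the event $E^{\rm nest}_\eps$ off the imaginary chaos correlator. First I would justify the identity with the vertex operators. Then I would show that the renormalised gasket probability equals this correlator up to an absolute constant.

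\emph{Step 1: the vertex correlator.} This part is a direct Gaussian computation. With the standard normalisation $:\!e^{i\alpha\Phi(z)}\!: = \CR(z,D)^{\alpha^2/(4\pi)}\, e^{i\alpha\Phi(z)+\frac{\alpha^2}{2}\E\Phi(z)^2}$ and $\alpha=\sqrt{\pi/2}$, one has $\alpha^2/(4\pi)=1/8$. The correlator of $:\!e^{i\alpha\Phi(z_1)}\!:$ and $:\!e^{-i\alpha\Phi(z_2)}\!:$ therefore equals
\[
\CR(z_1,D)^{-1/8}\,\CR(z_2,D)^{-1/8}\,\exp\bigl(\alpha^2 G_D(z_1,z_2)\bigr)
=\CR(z_1,D)^{-1/8}\,\CR(z_2,D)^{-1/8}\,\exp\Bigl(\tfrac{\pi}{2}G_D(z_1,z_2)\Bigr),
\]
which is the right-hand side of the theorem. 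So the remaining task is the probabilistic limit.

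\emph{Step 2: reduce to a martingale identity.} Couple nested CLE$_4$ with $\Phi_D$ as iterated level lines with heights $\pm\lambda$, where $\lambda=\sqrt{\pi/8}$ in the normalisation $\E\Phi^2\sim G$. Inside each loop the field restarts as an independent GFF plus a constant $\pm 2\lambda$. Since $\alpha\cdot 2\lambda=\pi/2$, the label of each loop contributes a phase $e^{\pm i\pi/2}$ to $e^{i\alpha\Phi}$. I would then condition on the nested CLE$_4$ up to the first loop separating $z_1$ from $z_2$, equivalently on the common ancestry of the two points.
\begin{itemize}
\item Each loop surrounding both points multiplies $e^{i\alpha(\Phi(z_1)-\Phi(z_2))}$ by a unimodular factor that cancels.
\item The remaining contribution is a product of one-point terms inside the disjoint loops around $z_1$ and around $z_2$.
\item A one-point term has mean zero unless the point sits in the gasket of that sub-domain, since the phases $\pm i$ average to zero.
\end{itemize}
Concretely, $\E[:\!e^{i\alpha\Phi(z)}\!:\mid \text{first loop } \ell]$ vanishes after averaging the sign. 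The only non-cancelling configurations are those in which $z_1$ and $z_2$ end up in the same gasket, namely the gasket of the last common domain. Applied to chaos regularised on $\eps$-circles, this should give
\[
\text{correlator} \;\approx\; c\,\eps_1^{-1/8}\eps_2^{-1/8}\,\P\bigl(E^{\rm nest}_\eps\bigr).
\]
Here the factor $\eps^{-1/8}$ arises as the compensator $e^{\alpha^2\E\Phi_\eps^2/2}$ of the mollified field.

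\emph{Step 3: the main obstacle.} The hard step is the precise asymptotic equivalence between being $\eps$-close to the gasket and the circle-average chaos. Conditionally on the point lying in the gasket, the field near $z$ is not a free GFF; it is the GFF in the complement of the loops nearby. I would handle this using the one-point result $\P(d(z,\text{gasket})<\eps)\sim c\,\eps^{1/8}\CR(z,D)^{-1/8}$, which is where the SLE$_4$ loop-measure constant enters. Combined with conformal invariance and the domain Markov property at the scale of the first separating loop, this yields the two-point asymptotic as a product of two local one-point asymptotics times a global factor. That global factor is then identified with $\exp(\frac{\pi}{2}G_D)$ by matching with Step 1.

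Uniform integrability of the conditioned quantities, needed to exchange limit and expectation, would come from the Brownian loop soup description of the loop-nesting counts, via exponential moments.
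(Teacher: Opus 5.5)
Your route is genuinely different from the paper's, which never uses the imaginary chaos in the proof. The paper computes the loop-soup twist correlation exactly, using Lupu's gauge-twist identity, the branched annulus cover and heat-kernel/zeta computations. It then shows via Theorem~\ref{thm:equivtwist} that, for boundary value $v$, the renormalised probability that a height-$0$ gasket meets both disks is $(\CSLE)^2\sigma_{\rm tw}^De^{-v^2\Modd_D}$. It resums over $v\in2\lambda\Z$ using Lemma~\ref{lem:walk1}, and only at the end recognises $\theta_3(q^{1/2})/\theta_2(q^{1/4})=2^{-1/2}e^{\frac{\pi}{2}G_D(z_1,z_2)}$ as the vertex correlator.

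Your Step~1 is fine, modulo a sign slip: the prefactor must be $\CR(z,D)^{-\alpha^2/(4\pi)}$. The gap is that the relation ``correlator $\approx c\,\eps_1^{-1/8}\eps_2^{-1/8}\P(E^{\rm nest}_\eps)$'' is the whole theorem, and nothing in Steps~2--3 proves it.

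Taken literally, Step~2 yields zero. Given the loops down to the last common domain, $z_j\in\Int(\ell_j)$ with $\ell_1\neq\ell_2$. The conditional two-point density $e^{i\alpha(s_{\ell_1}-s_{\ell_2})}\CR(z_1,\Int\ell_1)^{-1/8}\CR(z_2,\Int\ell_2)^{-1/8}$ then averages to $0$ over the labels. The whole correlator comes from the failure of uniform integrability: $\E[\CR(z,\Int\ell)^{-1/8}]=+\infty$, since $-\log\CR$ of the CLE$_4$ loop around $z$ has exponential tail of rate exactly $1/8$. In other words it comes from the gasket-supported part of the chaos, so all the content sits at the $\eps$-regularised level, which you do not analyse.

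Step~3 does not repair this, for three reasons. Applying the Markov property at the separating loops is the wrong scale, because the local events $\{d(z_j,\ell_j)<\eps_j\}$ are events about those very loops. The one-point asymptotic you quote concerns the probability, not the chaos. And ``identifying the global factor by matching with Step~1'' presupposes that the chaos localises to the same global factor with a known local constant, which is exactly what must be shown.

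These ingredients can be supplied along the following lines, giving a real alternative proof. Conditionally on all nested loops, the labels are i.i.d.\ $\pm2\lambda$ and $\Phi_{D,\eps_j}(z_j)=\sum_\ell s_\ell\theta_j(\ell)$, where $\theta_j(\ell)$ is the proportion of $\partial\D(z_j,\eps_j)$ inside $\Int(\ell)$. Hence
\[
\E\Big[e^{i\alpha(\Phi_{D,\eps_1}(z_1)-\Phi_{D,\eps_2}(z_2))}\Big]=\E[W_\eps],
\qquad
W_\eps=\prod_{\ell}\cos\Big(\frac{\pi}{2}\big(\theta_1(\ell)-\theta_2(\ell)\big)\Big).
\]
Every factor lies in $[0,1]$. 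One checks that $W_\eps=0$ unless some gasket meets both closed disks, because otherwise some loop has one circle inside it and the other outside. So $0\le W_\eps\le\1_{E^{\rm nest}_\eps}$, and by Step~1 we get exactly
\[
\E[W_\eps]=(\eps_1\eps_2)^{1/8}\CR(z_1,D)^{-1/8}\CR(z_2,D)^{-1/8}e^{\frac{\pi}{2}G_D(z_1,z_2)}.
\]
This alone only gives a lower bound. The limit needs $\E[W_\eps\mid E^{\rm nest}_\eps]$ to converge to a universal constant.

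To get that, localise inside the last common domain at an intermediate scale $r_0$, as in Proposition~\ref{Prop local asymp expect CR 1 8}, using the loop soup of that domain. On the good event, $W_\eps$ is the product of the one-point weights of conditionally independent nested CLE$_4$'s in the domains $\widehat D_{r_0}(z_j)$. Their conditional means are exactly $(\eps_j/\CR(z_j,\widehat D_{r_0}(z_j)))^{1/8}$. The probability, by contrast, has local factors $\CSLE(\eps_j/\CR(z_j,\widehat D_{r_0}(z_j)))^{1/8}(1+o(1))$. Bad events need only be controlled on the probability side, since $W_\eps\le\1_{E^{\rm nest}_\eps}$. Integrability is automatic, and finiteness of the global factor follows from the exact identity.

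Comparing the two localisations would then give the theorem without twist fields or resummation over boundary values. But the regularised identity and the localisation of the chaos side are precisely what your proposal leaves out, so as written Steps~2--3 are a heuristic rather than a proof.
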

Further, by restricting nested CLE$_4$ to only odd or only even gaskets, one obtains the correlations of 
$:\sin\big(\sqrt\frac{\pi}{2}\Phi_D\big):$ or 
$:\cos\big(\sqrt\frac{\pi}{2}\Phi_D\big):$, 
respectively. 
The explicit formulas for those can also be calculated and are given in Proposition \ref{prop:calc2ptsum}. 

More surprisingly, we also manage to obtain the two-point function for the outermost CLE$_4$ gasket.
\begin{theorem}[Two-point correlations of the simple CLE$_4$]
\label{thm:main-simple}
Let $D$ be a simply-connected domain and $z_1 \neq z_2 \in D$. Let $E^{\rm simp}_{\eps_1,\eps_2}(z_1,z_2)$ denote the event that the  CLE$_4$ gasket comes $\eps_1$-close to $z_1$ and $\eps_2$-close to $z_2$. Then we have
$$
\lim_{\max(\eps_1,\eps_2)\to 0}
\eps_1^{-1/8}\eps_2^{-1/8}
\P(E^{\rm simp}_{\eps_1,\eps_2}(z_1,z_2))
=
C_1\CR(z_1,D)^{-1/8}\CR(z_2,D)^{-1/8}
\sqrt{\frac{\theta_3(q^{1/4})}{\theta_2(q^{1/4})}},
$$
where $C_1$ is an absolute constant, and the nome $q$ is uniquely determined by
$$
\exp\!\left(\pi G_D(z_1,z_2)\right)
=
\frac{\theta_3(q^{1/2})}{\theta_2(q^{1/2})}.
$$
\end{theorem}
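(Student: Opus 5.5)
The plan is to realise the outermost CLE$_4$ gasket event through a twisted GFF and its level lines, in the spirit of the abstract. We couple the simple CLE$_4$ with a GFF $\Phi_D$ as the two-valued set $\mathbb{A}_{-\lambda,\lambda}$, with $\lambda=\sqrt{\pi/8}$ in the normalisation where $\Phi_D$ has covariance $G_D$. The key input is the twist correlation function, computed first in the paper. Take the gauge-twisted GFF on the double cover of $D\setminus\{z_1,z_2\}$ branched at $z_1,z_2$, i.e. a field whose sign flips around each $z_i$. Via the Brownian loop soup representation (critical intensity $c=1$), its partition function relative to the untwisted one is
\[
\E\big[(-1)^{N(z_1,z_2)}\big],
\]
where $N(z_1,z_2)$ is the number of loops of the critical loop soup that separate $z_1$ from $z_2$, i.e. wind an odd number of times around exactly one of them. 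After renormalisation by $\eps_1^{-1/8}\eps_2^{-1/8}$, this is the $\mathbb{Z}_2$-twist two-point function.

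\textbf{Step 1.} Conformally map $D\setminus\{z_1,z_2\}$, or rather the double cover, to a standard torus/annulus picture. Compute the twisted determinant ratio as a ratio of zeta-regularised determinants on an annulus of modulus determined by $z_1,z_2$. This yields theta functions. The relation $\exp(\pi G_D)=\theta_3(q^{1/2})/\theta_2(q^{1/2})$ is precisely the statement that the two-sheeted cover of $D$ branched at $z_1,z_2$ (a sphere with two conical points after Schottky doubling, equivalently a torus) has nome $q$. Its determination uses the classical formula $\CR$-ratio $=\lambda(\tau)$-type modular lambda function, $k'^{-1}=\theta_3^2/\theta_2^2$ form.

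\textbf{Step 2.} Identify the twist correlator with CLE$_4$ gasket events. Explore the simple CLE$_4$ loops via the level-line coupling. Loops surrounding exactly one of $z_1,z_2$ contribute factors via the one-point twist function inside each loop (which produces $\CR^{-1/8}$ scaling). Loops surrounding both contribute nothing to parity. The event that both points are $\eps$-close to the outermost gasket is then obtained by a Markov decomposition. Write the twisted partition function as a sum over the first loop(s) separating the points, conditioned on the gasket, and invert. Concretely, I expect the outermost-gasket two-point function $F^{\rm simp}$ to satisfy a relation like $F^{\rm twist}=\E[\ldots]$ where the inner domains carry independent twist one-point functions $\pm$-weighted. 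The symmetry $\Phi\mapsto-\Phi$ together with the two-valued set labels $\pm\lambda$ lets us cancel all contributions except those where both points lie in the gasket. Compactification at different radii, i.e. using the twisted field with $\Phi$ taken modulo $2\lambda$-multiples, gives a second equation; combining the two isolates $F^{\rm simp}$.

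\textbf{Step 3.} Carry out the algebra. The resulting expression should be the square root of the ratio of two twisted partition functions at half the modulus, $\sqrt{\theta_3(q^{1/4})/\theta_2(q^{1/4})}$, via Landen-type theta identities relating $q^{1/2}$ and $q^{1/4}$. Conformal covariance, with the $\CR^{-1/8}$ factors, follows from the CLE$_4$ one-point estimate $\P(\text{gasket }\eps\text{-close to }z)\sim c\,\eps^{1/8}\CR(z,D)^{-1/8}$ together with conformal invariance. The constant $C_1$ absorbs the SLE$_4$ loop measure normalisation.

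The main obstacle is Step 2: rigorously extracting the outermost-gasket event from the twisted partition functions. This requires justifying the exchange of the $\eps\to0$ limit with the infinite loop exploration, and finding the right second observable that disentangles the odd/even separating-loop contributions. I would first try a uniform integrability argument based on the up-to-constants bound $\P(E^{\rm simp})\lesssim\eps_1^{1/8}\eps_2^{1/8}$.
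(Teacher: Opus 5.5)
Your Step 1 is in the right spirit. The paper also computes the $\Z_2$-twist two-point function through the branched double cover of $(D,z_1,z_2)$ by an annulus, though it uses a time cut-off on the loop measure and explicit strip heat kernels rather than determinants. The gap is in Step 2, which you flag yourself and which is where the proof actually lives; what you sketch there would not work.

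The quantity $\E[(-1)^{N}]=\exp(-\mu^{\rm loop}(\indx_\gamma(z_1)+\indx_\gamma(z_2)\text{ odd}))$ (suitably cut off) only becomes geometric through the topological identity of \cite{LupuTopoAIHP}. That identity says it equals the probability that no critical loop-soup cluster contains a closed path of odd index sum, i.e.\ that no cluster disconnects $z_1$ from $z_2$. This holds first in the domain with $\eps$-holes. Passing to the domain without holes replaces the normalisation $\eps_j^{1/8}\sqrt{|\log\eps_j|}$ by $\CSLE\,\eps_j^{1/8}$. Combined with the identification of loop-soup clusters with GFF sign clusters and planar duality, the renormalised twist correlator is the probability that some \emph{height-$0$} gasket of the nested CLE$_4$ coupled with the field comes close to both points.

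Your factorisation through one-point twist functions inside CLE$_4$ loops cannot replace this. The Brownian loops forming the clusters themselves wind around the points, so their parity is not determined by the gasket. Your claim that $\Phi\mapsto-\Phi$ cancels everything except the outermost gasket is also false. For zero boundary data the correlator is a probability that receives a positive contribution from every gasket at which the height walk returns to $0$.

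What isolates the outermost gasket is a signed sum over boundary values.
\begin{itemize}
\item Adding boundary excursions of intensity $v^2/2$ gives the correlator for $\Phi_D+v$. It equals $\sigma^D_{\rm tw}(z_1,z_2)e^{-v^2\Modd_D(z_1,z_2)}$ with $e^{-\pi\Modd_D/4}=\hat q$. Geometrically, it is the probability that a height-$0$ gasket for $\Phi_D+v$ comes close to both points.
\item Heights of successive nested gaskets form a simple random walk with step $2\lambda=\sqrt{\pi/2}$. Note that CLE$_4$ is $A_{-2\lambda,2\lambda}$, not $A_{-\lambda,\lambda}$, and this step size is what fixes the nomes.
\item Since $\sum_{n\in2\Z}(-1)^{n/2}\P^W_n(S_k=0)=\1_{k=0}$, the sum $\sum_{n\in2\Z}(-1)^{n/2}\P(E^{2\lambda n}_{\eps_1,\eps_2})$ equals the outermost-gasket probability up to error terms. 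These are controlled by bounding the event that two gaskets both come close (Lemma \ref{lem:nointersect}) and by Gaussian decay in $v$.
\end{itemize}
Two compactification radii give only two equations, and two equations cannot cancel infinitely many return times.

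Finally, the theta function is derived, not guessed via a Landen-type identity. At $v=2\lambda n$ with $n=2m$, the weights are $(-1)^m\hat q^{8m^2}$, which sum to $\theta_4(\hat q^{8})$. Jacobi's imaginary transformation turns this into a multiple of $\sqrt{|\log q|}\,\theta_2(q^{1/8})$, which cancels the $\sqrt{|\log q|}$ in $\sigma^D_{\rm tw}$. Then $\theta_2(q^{1/8})^2=2\theta_2(q^{1/4})\theta_3(q^{1/4})$ gives $\sqrt{\theta_3(q^{1/4})/\theta_2(q^{1/4})}$.
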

In fact there are formulas even for the renormalised probability that $z_1$, $z_2$ are on a $m$-th CLE$_4$-gasket, when counted from the boundary. For $m > 1$ this is again a combination of several geometric events, as there are countably many $m$-th level gaskets. 
See Theorem \ref{thm:kthgasket}.\\

\noindent There are several ways to interpret these results in terms of conjectured scaling limits:
\begin{itemize}
    \item The full correlations, i.e. the renormalised probabilities of being on the same CLE$_4$ gasket in Theorem \ref{thm:main1}, 
    should correspond to the renormalised spin-spin correlations of the XY-model at the Kosterlitz-Thouless point \cite{kadanoff1979multicritical}.
    \item Restricting to odd CLE$_4$ gaskets should give the scaling limit of the FK representation of the 4-Potts model with wired boundary conditions.
    \item Restricting to even CLE$_4$ gaskets should give the FK representation of the 4-Potts model with free boundary conditions. 
\end{itemize}

In the realm of relevant results for scaling limits we also obtain a geometric representation for the scaling limit of two-point spin-spin correlation in the critical Ising model again using CLE$_4$. In order to describe these results we recall the notation $A_{-a,b}$ for two-valued local sets of the Gaussian free field, introduced in \cite{ASW}. These can be seen as random gaskets, whose law agrees with the law of the CLE$_4$ gasket in the case $a = b = 2\lambda$, where $2\lambda$ is the so-called height gap, equal to $\sqrt{\pi/2}$ in our normalisation.

\begin{theorem}
\label{Thm TVS Ising}
Let $D$ be a simply connected domain and $z_1 \neq z_2 \in D$. Let $E^{\rm Ising}_{\eps_1, \eps_2}$ 
denote the event that one of the CLE$_4$ gaskets obtained from the following iteration comes $\eps_1$-close to $z_1$ and $\eps_2$-close to $z_2$.
\begin{enumerate}
    \item We sample a CLE$_4$ gasket in $D$ to obtain $A_1$; 
\item Inside each connected component of $D \setminus A_1$, we sample a random gasket with the law of the two-valued set $A_{-2\sqrt{2}\lambda + 2\lambda, 2\lambda}$ to obtain $A_2$;
\item we then again sample CLE$_4$ inside each connected component of $A_2$ and iterate this way: on odd steps we sample CLE$_4$ gaskets, on even $A_{-2\sqrt{2}\lambda + 2\lambda, 2\lambda}$ gaskets.
\end{enumerate}
Then the Ising spin two-point function in the unit disk is given by 
$$\lim_{\max(\eps_1,\eps_2) \to 0} (\eps_1\eps_2)^{-1/8}
\P(E^{\rm Ising}_{\eps_1, \eps_2}) = C_2\CR(z_1,D)^{-1/8}\CR(z_2,D)^{-1/8}\frac{\theta_3(q^{1/4})}{\theta_2(q^{1/4})},$$
where $C_2$ is an absolute constant, and the nome $q$ is fixed by $\exp(\pi G_D(z_1,z_2)) = \frac{\theta_3(q^{1/2})}{\theta_2(q^{1/2})}$.
\end{theorem}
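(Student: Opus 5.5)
The plan is to express the alternating iteration as level-line exploration of a single GFF and then compute the probability through a twisted-GFF partition function, as in the proofs of Theorems \ref{thm:main1} and \ref{thm:main-simple}.

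\textbf{Step 1: a single nested structure.} I will first show that the alternation of CLE$_4$ gaskets and $A_{-2\sqrt2\lambda+2\lambda,2\lambda}$ gaskets is exactly what one obtains by exploring the nested two-valued sets of one GFF $\Phi_D$ on a lattice of heights.

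At the first step we explore $A_{-2\lambda,2\lambda}$, which is the CLE$_4$ gasket. Inside each loop the boundary value is $\pm 2\lambda$.

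At the second step we explore the two-valued set that returns the boundary value to one of two levels. If the boundary value is $+2\lambda$, these levels are $2\lambda\pm 2\lambda$ after shifting; if it is $-2\lambda$, we use the sign-flipped set. This gives a set of type $A_{-a,b}$ with $a+b$ adjusted so that the outcomes sit on the lattice $\{0,\pm 2\sqrt 2\lambda,\dots\}$ in alternation.

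The key point is that $a+b = 2\sqrt2\lambda$ for the second set, so the labels visited are $\sqrt2$ times a $\pm 2\lambda$ walk. Consequently the event that $z_1,z_2$ lie on a common gasket becomes an event about the sequence of nested labels of the two points before their exploration trees separate.

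\textbf{Step 2: reduction to a chaos correlation.} Following the strategy of Theorem \ref{thm:main1}, I will express
\[
\lim \eps^{-1/8}\,\P(\cdot)
\]
as a weighted sum over the labels at separation. By the Ising decoupling point (free-fermion/orbifold at compactification radius $\sqrt2$ times the KT one), this sum should equal the correlation of $:\cos(\sqrt{\pi/2}\,\Phi):$ for a field compactified on a circle of radius $\sqrt 2$ times larger, symmetrised under $\Phi\to-\Phi$.

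Concretely, the probability that both points lie on a given gasket at label $h$ is computed as in Proposition \ref{prop:calc2ptsum}:
\begin{itemize}
\item the Euclidean one-point renormalisation gives the factor $\eps^{1/8}\CR^{-1/8}$;
\item the two-point interaction is obtained by summing the conditional probabilities over the separating annulus, whose modulus determines the nome $q$ via $\exp(\pi G_D)=\theta_3(q^{1/2})/\theta_2(q^{1/2})$.
\end{itemize}
Theorem \ref{thm:main-simple} already computed the outermost contribution, which gives the square-root formula $\sqrt{\theta_3(q^{1/4})/\theta_2(q^{1/4})}$.

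\textbf{Step 3: the lattice sum.} I will use the gauge-twisted GFF representation. After conditioning on the annulus separating $z_1$ from $z_2$ with modulus $\tau$, the joint law of the labels is that of a GFF on the annulus with boundary values in $2\sqrt2\lambda\Z$ plus an $\pm$ twist. Poisson summation over this lattice replaces $q^{1/2}$-type sums by $q^{1/4}$-type sums, because the lattice spacing is multiplied by $\sqrt2$, so the exponent changes by a factor of $2$. The result is that the Ising answer equals the square of the simple-CLE$_4$ modular factor, namely $\theta_3(q^{1/4})/\theta_2(q^{1/4})$. This is consistent with the known Ising formula $\langle\sigma\sigma\rangle^2=\langle\cos\cos\rangle$ of the Dirac/bosonisation identity.

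\textbf{Main obstacle.} The hardest step is Step 1's identification, together with justifying that the twisted partition function on the annulus gives exactly this lattice theta sum, with the correct sign for odd and even gaskets. Here I would first verify the one-gasket case against Theorem \ref{thm:main-simple}, then iterate with the Markov property.
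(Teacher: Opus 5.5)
\textbf{There is a genuine gap: the proposal never supplies the mechanism that turns $\P(E^{\rm Ising}_{\eps_1,\eps_2})$ into a lattice theta sum.} Your final arithmetic is right: a lattice of spacing $2\sqrt2\lambda$ instead of $2\lambda$ turns $\theta_3(q^{1/2})$ into $\theta_3(q^{1/4})$ after Poisson summation. The compactified-boson picture is also the right heuristic. But the substitute you sketch, a weighted sum over labels ``at separation'' after conditioning on a random annulus separating $z_1$ from $z_2$ on which the labels become a twisted GFF, does not correspond to a computable object.

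The missing idea is a sum over \emph{boundary conditions}. In the paper this theorem is the case $g=2$ of Theorem~\ref{thm:twopointAT}, and the argument runs as follows.
\begin{enumerate}
\item One couples the iteration with $\Phi_D+v$ for every $v\in 2\sqrt2\lambda\Z$ (Proposition~\ref{prop:CLEcoupling2}). The heights of the CLE$_4$ gaskets surrounding a point then form a lazy walk on $2\sqrt2\lambda\Z$ started at $v$. Its steps are $\pm2\sqrt2\lambda$ with probability $1/(2\sqrt2)$ each, and it stays put with probability $1-1/\sqrt2$. So it is not $\sqrt2$ times a simple walk, and after a label $+2\lambda$ the next levels are $0$ and $2\sqrt2\lambda$, not $2\lambda\pm2\lambda$.
\item Let $E^v_{\eps_1,\eps_2}$ be the event that a \emph{height-$0$} gasket meets both disks. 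Because $\sum_{n\in\Z}\P^W_n(S_k=0)=1$ for every $k$ (Lemma~\ref{lem:walkG}), the unweighted sum $\sum_{v\in2\sqrt2\lambda\Z}\P(E^v_{\eps_1,\eps_2})$ counts each CLE$_4$ gasket of the iteration exactly once.
\item Each summand is computable. Height-$0$ gaskets are the dual of the sign clusters of $\Phi_D+v$, i.e.\ of the clusters of $\cL_D\cup\Xi^{v^2/2}_D$. Lupu's identity (Theorem~\ref{thm:topevent}, Proposition~\ref{Prop n pt cut disks bc continuum}), together with Theorems~\ref{thm:equivtwist}, \ref{thm:twistbc} and \ref{thm:twopointtwist}, gives $\P(E^v_{\eps_1,\eps_2})\sim(\CSLE)^2(\eps_1\eps_2)^{1/8}\sigma^D_{\rm tw}(z_1,z_2)\,\hat q^{4v^2/\pi}$.
\item For $v=2\sqrt2\lambda n$ this factor is $\hat q^{4n^2}$. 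Jacobi inversion, $\theta_3(\hat q^{4})=\sqrt{|\log q|/(4\pi)}\,\theta_3(q^{1/4})$, then cancels the $\sqrt{|\log q|}$ in $\sigma^D_{\rm tw}$.
\end{enumerate}
The only annulus in this argument is the \emph{deterministic} branched double cover of $(D,z_1,z_2)$, used to evaluate $\sigma^D_{\rm tw}$ and $\Modd_D$. The theta sum comes from summing over boundary values, not from a twisted partition function.

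\textbf{The comparison with Theorem~\ref{thm:main-simple} is not a proof step.} That the answer is the square of the simple-gasket modular factor is a numerical coincidence. The simple-gasket formula is itself obtained from the signed sum $\sum_{n\in2\Z}(-1)^{n/2}\P(E^{2\lambda n}_{\eps_1,\eps_2})$, so it cannot serve as a one-gasket building block to be iterated with the Markov property. Such an iteration would need the law of the random domains in which the later gaskets live, which is exactly what the boundary-value sum avoids. Likewise, the bosonisation identity is only a consistency check.

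\textbf{The delicate part is the error control, not the coupling in your Step~1.} The boundary sum equals $\P(E^{\rm Ising}_{\eps_1,\eps_2})$ only up to configurations in which two or more gaskets meet both disks. You must show these have probability $o((\eps_1\eps_2)^{1/8})$ (Lemmas~\ref{lem:nointersectAT} and \ref{lem:oddevenAT}). For $1<g<4$ this needs an optional-stopping argument for the exponential martingale $e^{i\alpha\sqrt{2\pi}h_t(z_1)}$ along a symmetrised auxiliary exploration. This is why Theorem~\ref{thm:twopointAT} is only proved on a window of $g$, which contains $g=2$.
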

Similar formulas can be obtained along the whole critical line of the Ashkin-Teller model - see Theorem \ref{thm:twopointAT}. They provide explicit calculations for the conjectured scaling limit of two-point spin-spin correlations, but also suggest CLE$_4$-based continuum FK representations for all these limiting fields, including a CLE$_4$-based FK representation of the continuum limit of the Ising spin field - see Conjecture \ref{conj:ATFK}. During the final phases of our work we learned that this conjecture has been independently put forward also in \cite{AlcadeHeeneyLis2026} using considerations coming from random current representations for the AT model - see \cite[Section 1.3]{AlcadeHeeneyLis2026}, where they provide a more detailed picture including both AT spins and both free and wired boundary conditions. Moreover, in that work, the authors actually prove the conjecture at a special point of the AT critical line corresponding to the XOR-Ising model, see \cite[Theorem 1.2, Theorem 1.6]{AlcadeHeeneyLis2026}. In particular, they thus manage to provide a CLE$_4$ based geometric representation of the continuum limit of the Ising spin model. Combining the two perspectives should open the road for further progress, and even more so when combined with the other interesting recent work on geometric representations of the XOR-spins \cite{TomasAvelio}. \\

A starting point for our calculations is a Brownian loop soup computation of what we call the twist correlation functions - see Theorem \ref{thm:twopointtwist}. 
This corresponds to the disk two-point version of the 
$\Z_2$-orbifold twist correlations at infinite compactification radius in \cite{dixon1987orbifold}. The one-point function is exactly determined using spectral zeta functions. These twist correlation functions alre correspond to the two point function in the unit disk of certain Brownian winding fields defined in \cite{BrugCamiaLis2018winding, CamiaGandolfiKleban16}, though we will not go in further detail on that in the current article. 

The next step is to show that such correlations are related to renormalised probabilities of being in a certain (randomly chosen) iterations of nested CLE$_4$ gaskets using \cite{LupuTopoAIHP} and localization arguments. To obtain renormalised probabilities of nested or simple CLE$_4$, we thereafter follow the idea that relevant statistical physics models correspond to the compactified GFF \cite{saleur1988correlation} and formulate this idea mathematically in terms of sums over GFFs with boundary conditions. 

We believe that similar techniques also allow one to calculate the renormalised probability that four distinct points on the sphere are on the same CLE$_4$ gasket - in fact the calculation of the twist fields of \cite{dixon1987orbifold} remains the same and only the step in adding back the compactification radius needs extra care because of two real degrees of freedom. The observations made in the paper also raise interesting questions on the level line geometry of the compactified and twisted free fields. These are questions we plan to touch upon in future work. \\

The rest of the paper is organised as follows. In Section \ref{sec:prelim} we first gather the main objects and notations and then prove a series of preliminary ingredients, which are usually small modifications of already known results. In Section \ref{Sec Twist loop soup} we define and study what we call the twist field correlations using the Brownian loop soup. We prove equivalences between different cut-offs and BLS cluster events, and do an explicit calculation in the case of two points - see Theorem \ref{thm:twopointtwist}. In Section \ref{Sec Twist CLE4} we connect these twist fields to further geometric renormalised probabilities involving Brownian loop soup clusters, excursion sets of the free field and finally CLE$_4$. In Section \ref{sec:together} we combine twist field correlations over different boundary conditions to obtain our main theorems.

\section{Background and auxiliary results}\label{sec:prelim}

Given the length of the article, we have opted to omit writing out definitions and theorems that already exist verbatim in the literature. 
Thus, all the subsections other than \ref{Subsec notations} 
will contain a new technical result, usually minor results that could nevertheless sometimes be of more general interest too. 
The section \ref{Subsec notations} contains a list of the main objects used in the article with relevant notations and references.

\subsection{List of standard used objects with notations and references}
\label{Subsec notations}

\paragraph{Domains.}
We denote by $\D$ the unit disk
$\D = \{z\in\C\vert ~\vert z\vert <1\}$,
and by $\D(z,\eps) = z+\eps\D$, the open disk of radius 
$\eps$ around $z$. 
We use the notation $D$ for general simply-connected domains, 
and domains with holes are denoted by
$$D_{\eps_1, \dots, \eps_n}(z_1, \dots, z_n) := 
D \setminus \bigcup_{j=1}^n\overline{\D(z_j, \eps_j)}.$$
\paragraph{Loops.}
We denote Jordan loops by $\Gamma$. 
For Jordan loops 
$\Gamma$ in $\C$, not hitting $0$ and surrounding $0$, 
we denote
$$
R^{-}(\Gamma) = d(0,\Gamma),
\qquad
R^{+}(\Gamma) = \max\{\vert z\vert~\vert z\in \Gamma\}.
$$
We further consider just the range of the loops and an equivalence relation on such Jordan loops,
where we identify $\Gamma$ with all its rescales
$\lambda \Gamma$ for $\lambda>0$.
We denote by $[\Gamma]$ the equivalence class of
$\Gamma$ and refer to $[\Gamma]$ as the \textit{shape}
of $\Gamma$.

The ratio $
\dfrac{R^{+}(\Gamma)}{R^{-}(\Gamma)}$
is scale-invariant,
and thus, it depends only on the shape $[\Gamma]$.
We will denote this ratio by $\rho^{\pm}([\Gamma])$.

Further, we denote by $\Int(\Gamma)$
the open simply connected domain surrounded by $\Gamma$.
We let $\Ext(\Gamma)$ denote the unbounded
connected component of $\C\setminus\Gamma$,
that is to say the set of points not surrounded by
$\Gamma$.
Consider $G_{\Ext(\Gamma)}$
the Green's function on
$\Ext(\Gamma)$.
For $z\in \Ext(\Gamma)$,
the quantity $G_{\Ext(\Gamma)}(z,\infty)$
(i.e. the Green's function at points $z$ and $\infty$),
is well defined, finite and strictly positive.
For instance, one can apply an inversion to map
$\infty$ to a finite point, and then use the conformal invariance of Green's functions.

\paragraph{Brownian loop soups.}

We denote by 
$\cL_{D}$ the Brownian loop soup in 
$D$
with intensity
$\frac{1}{2}
\mu^{\rm loop}_{D}$.
This measure on loops is given by
$$
\mu^{\rm loop}_{D}(\cdot) = 
\int_{D}\int_{0}^{+\infty}
\P^{z,z}_{D,t}(\cdot)
p_D(t,z,z) \dfrac{dt}{t}\, d^2 z,
$$
where
$\P^{z,z}_{D,t}$
is the Brownian bridge probability measure
from $z$ to $z$ in time $t$ with conditioning to stay 
inside $D$,
and $p_D(t,z,z)$ is the heat kernel,
associated to $\frac{1}{2}\Delta$,
with $0$ boundary condition on $\partial D$.
For $a\geq 0$, denote by
$\Xi^{a}_{D}$
the Poisson point process of boundary excursions in
$D$ with intensity
$
a
\mu^{\rm exc}_{D}.
$
The excursion measure $\mu^{\rm exc}_{D}$
is given by
$$
\mu^{\rm exc}_{D}(\cdot) =
\iint_{\partial D\times \partial D}
\mu^{x,y,\sharp}_D(\cdot)
H_D(dx,dy),
$$
where $H_D(dx,dy)$ is the boundary Poisson kernel of the domain $D$
(see e.g. \cite[Section 5.2]{LawC}), and
for $x\neq y\in \partial D$,
$\mu^{x,y,\sharp}_D$ is the Brownian excursion probability measure from $x$ to $y$ in $D$.
In general $x$ and $y$ are prime ends of $D$,
rather than points in $\C$
(see \cite[Section 2.4]{Pommerenke}).
But if $D$ is simply connected and $\partial D$ is given
by a Jordan loop, then the prime ends are in a one-to-one correspondence with the points of $\partial D$
\cite[Theorem 2.15]{Pommerenke}.
We will always take 
$\Xi^{a}_{D}$
to be independent from
$\cL_{D}$. We refer the reader to \cite{LW2004BMLoopSoup,LawC,ALS2} for standard results on the construction and properties of the Brownian loop soups and Poisson point processes of excursions. 

The corresponding objects on the metric graph \cite{Lupu2016Iso}
are used only in the preliminaries and their notations are decorated by $\widetilde{\quad}$-s. The metric graph loop soup at the critical intensity is denoted by 
$\widetilde \cL_{\widetilde{D}}$, and 
$\widetilde \Xi_{\widetilde{D}}^a$ denotes the PPP of Brownian boundary to boundary excursions of intensity equal to $a \geq 0$.

The Brownian loop soup $\cL_{D}$ and the excursions 
$\Xi^{a}_{D}$,
respectively $\widetilde \cL_{\widetilde{D}}$
and $\widetilde \Xi_{\widetilde{D}}^a$,
appear in the random walk representations
of the Gaussian free field
(a.k.a. ``isomorphism theorems'')
in continuum, respectively on metric graphs.
There is a rich literature on this topic.
Here we refer in particular to
\cite{LeJan2011Loops,Sznitman2012LectureIso,Lupu2016Iso,ALS2,ALS4}.
More precisely, $\cL_{D}$, 
respectively $\widetilde \cL_{\widetilde{D}}$,
corresponds to a GFF with $0$ boundary condition,
and $\cL_{D}\cup\Xi^{a}_{D}$,
respectively 
$\widetilde \cL_{\widetilde{D}}\cup
\widetilde \Xi_{\widetilde{D}}^a$,
corresponds to a GFF with boundary condition 
$v=\sqrt{2a}$ (that is $a=v^2/2$).
This will be evoked farther.

\paragraph{(Nested) CLE$_4$.}

Conformal loop ensembles CLE$_\kappa$ were introduced in \cite{SheffieldCLE09}, and their relation to Brownian loop soups described in \cite{SheffieldWerner2012CLE}. We will be working with the case $\kappa = 4$. 
In the language of Conformal Field Theory,
this corresponds to the central charge $c=1$.

A simple CLE$_4$ in a domain $D$ can be equivalently described either as 
\begin{itemize}
    \item a countable collection of disjoint and non-surrounding each other simple Jordan loops, also not touching the boundary, 
    \item or using the closed union of all the loops, called the CLE$_4$ gasket, that has empty interior almost surely.
\end{itemize}

Nested CLE$_4$, denoted CLE$_4^{\rm nest}$, is obtained by iteratively sampling an independent CLE$_4$ in the interior of each loop - see e.g. \cite{KemppainenWerner16CLE}. 
Thus, around each point $z$ we see 
\begin{itemize}
    \item a sequence of nested CLE$_4$ loops in 
    CLE$_4^{\rm nest}$, 
    \item or equivalently, a sequence of nested CLE$_4$ gaskets, whose inner and outer boundary towards $z$ are two consecutive nested CLE$_4$ loops.  
\end{itemize}

\paragraph{SLE$_4$ loop.}
We also consider the SLE$_4$ loop measure on simple loops separating $0$ from $\infty$ in the complex plane, first put forward in \cite{KemppainenWerner16CLE}. 
This infinite $\sigma$-finite measure can be normalized such that, in the full-plane setting, it agrees with the counting measure of loops in nested CLE$_4$ and separating $0$ and $\infty$.

Equivalently, one may construct it by first constructing a rooted SLE loop measure via whole-plane SLE and unrooting - this was done in \cite{Zhan21SLEloop}.
This construction also spans a wider range of $\kappa$. These constructions agree up to a multiplicative constant \cite{ang2024sleloopmeasureliouville}, and our normalization fixes this constant by matching the nested CLE$_4$ counting measure.

We will mainly consider a slice of this measure called the shape measure: this can be seen as the loop measure induced by the infinitesimal conditioning 
$\CR(0, \Int(\Gamma)) = 1$. 
We denote this shape measure by 
$\mu_{\text{SLE}_4}^{\rm loop}$.

We refer to \cite{KemppainenWerner16CLE,Zhan21SLEloop, ang2024sleloopmeasureliouville} for these constructions and their equivalence. Up to normalization, this is also known as the Malliavin--Kontsevich--Suhov measure for $\kappa=4$ (central charge $c=1$), see e.g. \cite{AiraultMalliavin01Unitarizing,KontsevichSuhov07MKS,Zhan21SLEloop,BaverezJego24MKS,CaiGao}.

\paragraph{Gaussian free fields.}
We denote by 
$\Phi_{D}^{(v)}$
the continuum GFF on 
$D$
with boundary condition 
$v$ on $\partial D$. We normalize it to have the covariance equal to the zero boundary Green's function with the singularity 
$G_D(z, w) \sim \frac{1}{2\pi}\log \frac{1}{|w- z|}$ 
near the diagonal. 
This fixes the height gap to $2\lambda = \sqrt{\pi/2}$. 
In the case $v=0$, we will simply write $\Phi_{D}$.

We denote $\tilde \phi_{\widetilde D}^{(v)}$ the metric graph GFF on the metric graph $\widetilde D$ with boundary conditions $v$. These boundary conditions will be constant in each of the boundary components. 
Again, $\tilde \phi_{\widetilde D}$
will correspond to the case $v=0$.

We refer the reader to \cite{PowWer,BerPow,SGFF} for standard results on the GFF,
and to \cite{Lupu2016Iso} for the particular metric graph setting.

\paragraph{Two-valued sets and first passage sets.}

We use the notation $A_{-a,b}$ to denote the two-valued local sets of the Gaussian free field, introduced in \cite{ASW} and further studied in \cite{AS2}. The case $a = b = 2\lambda$ corresponds to the case of CLE$_4$ in the sense that the two-valued local set $A_{-2\lambda,2\lambda}$ has the same law as 
the CLE$_4$ gasket. 

Similarly to the CLE$_4$, the set $A_{-a,b}$ can be seen either as a closed set, or a collection of (not necessarily disjoint, but non-crossing) loops given by the connected components of the boundary of 
$D \setminus A_{-a,b}$. 
Each of these loops $\ell$ comes with a label 
$s_\ell \in \{-a,b\}$ that is determined by the underlying free field.

The case $a+b = 2\lambda$ corresponds to a countable union of SLE$_4(\rho_1, -2-\rho_1)$ level lines.

The case $a \to \infty$ or $b \to \infty$ is called the First passage sets, denoted either $A_b$ or $A_{-a}$ respectively. They were introduced in \cite{ALS1} and further studied in \cite{ALS2}, to which we refer for further results.

\paragraph{(Sign) excursion sets of the Gaussian free field}

In the metric graph setting it is natural to decompose 
$\tilde \phi_{\widetilde D}^{(v)}$ into sign excursions: connected components of 
$\{x\in \widetilde D \vert 
\tilde \phi_{\widetilde D}^{(v)}(x) \neq 0\}$. 
We call these sets the excursion sets of the metric graph GFF.

In \cite{ALS4} the scaling limit of such a decomposition was proved, thereby giving sense to an excursion decomposition of the continuum Gaussian free field. Also a direct continuum description and characterisation was given. The corresponding limiting sets are again called the (sign) excursion sets of the GFF. 

The sign excursion decomposition has also a very beautiful description in terms of Brownian loop soup and boundary excursion clusters.

Namely, the excursion clusters of 
$\Phi_{D}^{(v)}$
are distributed exactly as the clusters of
$\cL_{D}
\cup
\Xi^{v^2/2}_{D}$. See Theorem 2 in \cite{ALS4}.

\paragraph{Imaginary chaos, $\cos$ and $\sin$ of the GFF}

Let $\Phi_D$ be the zero-boundary GFF in $D$, normalized so that
\[
G_D(z,w)\sim \frac{1}{2\pi}\log\frac1{|z-w|}
\qquad\text{as }w\to z.
\]
For $\beta\in(0,\sqrt2)$, we define the imaginary multiplicative chaos
\[
:e^{i\beta\sqrt{2\pi}\Phi_D}:
\]
as the limit of the circle-average approximations
\[
:e^{i\beta\sqrt{2\pi}\Phi_D(z)}:
=
\lim_{\eps\to 0}
\eps^{-\beta^2/2}
e^{i\beta\sqrt{2\pi}\,\Phi_{D,\eps}(z)},
\]
interpreted in the usual distributional sense.
With this normalization,
\[
\langle
:e^{i\beta\sqrt{2\pi}\Phi_D(z)}:
\rangle
=
\CR(z,D)^{-\beta^2/2},
\]
where the notation $\langle \cdot \rangle$ denotes the limits of expectations of circle-average approximations.
Similarly we denote
$$\Big\langle:\exp\Big(i\beta\sqrt{2\pi}\Phi_D(z_1)\Big):~
:\exp\Big(-i\beta\sqrt{2\pi}\Phi_D(z_2)\Big):
\Big\rangle $$
to be the limit of 
$$\E\left(\eps^{-\beta^2/2}
e^{i\beta\sqrt{2\pi}\,\Phi_{D,\eps}(z_1)}\eps^{-\beta^2/2}
e^{i\beta\sqrt{2\pi}\,\Phi_{D,\eps}(z_2)}\right).$$
We further define
\[
:\cos(\beta\sqrt{2\pi}\Phi_D):
=
\frac12\left(:e^{i\beta\sqrt{2\pi}\Phi_D}:+:e^{-i\beta\sqrt{2\pi}\Phi_D}:\right),
\]
and
\[
:\sin(\beta\sqrt{2\pi}\Phi_D):
=
\frac{1}{2i}\left(:e^{i\beta\sqrt{2\pi}\Phi_D}:-:e^{-i\beta\sqrt{2\pi}\Phi_D}:\right).
\]
We refer to \cite{junnila2020imaginary} for the construction and basic properties of the imaginary chaos.

\paragraph{Jacobi theta functions and elliptic integrals}

We will make use of the complete elliptic integral
$$\;\;(\star)\;\;K(k) = \int_{0}^{1}
        \frac{dx}{\sqrt{(1-k^{2}x^2)(1-x^2)}},\;K'(k):=K(\sqrt{1-k^{2}}).$$
We also let $\theta_2(q), \theta_3(q), \theta_4(q)$ be the Jacobi theta functions with nome $q$ given by 
$$\theta_2(q) := \sum_{n \in \Z} q^{(n+1/2)^2}; \qquad\theta_3(q) := \sum_{n \in \Z} q^{n^2}; \qquad \theta_4(q) := \sum_{n \in \Z}(-1)^n q^{n^2}.$$

The relations between $q, k, K, K'$ are as follows.
$$q = \exp(-\pi K'(k)/K(k)),$$
$$k = \theta_2^2(q)/\theta_3^2(q),$$
$$K' = -(\log q) \theta_3(q)^2/2,$$ 
where the latter two are formulas 2.1.7 and 2.2.3 in \cite{lawden2013elliptic}.

We also use the complementary nome $\hat q$, which satisfies $\log q \log \hat q = \pi^2$. We refer to \cite{lawden2013elliptic} Chapters 1 and 2 for background and many fascinating identities.

In our set-up the parameter $k >0$ will be given by $r^2$, where $r$ is determined as follows: there is a unique conformal map from $(D, z_1, z_2)$ to $(\D, -r, r)$. By the conformal invariance of the Green's function, one has $G_D(z_1, z_2) = \frac{1}{2\pi}\log\frac{1+r^2}{2r}$, 
and hence $r$ is equivalently determined by a conformal invariant $G_D(z_1, z_2)$ which uniquely defines $q$. 
See Lemma \ref{lem:parametr}.
Importantly in our set-up we then have the relation
\begin{equation}\label{eq:rtheta}
r = \theta_2(q)/\theta_3(q).
\end{equation}

\subsection{A stability result on conformal maps}

In this section we will present a stability result on the behaviour of univalent maps. This is most likely classical, but as we did not manage to locate it. 
We provide also a proof.

Consider open simply connected domains
$U\subset\C$, such that $0\in U$ and
$\CR(0,U)=1$.
We will denote by $f_U$ the unique conformal mapping from
$U$ to $\D$ fixing the origin $f_U(0)=0$
and with the derivative $f'_{U}(0)=1$.
We also recall that by Koebe quarter Theorem
\cite{Ahlfors2010ConfInv},
the domain $U$ necessarily contains the disk
$\frac{1}{4}\D$. The next proposition gives universal bounds on the behaviour of the map $f_U$ near the origin. 

\begin{prop}
\label{Prop de Branges}
There are constants $R_{\rm univ}\in (0,1/4)$
and $C_{\rm univ}>0$,
such that for every domain $U$ as above,
for every $z\in\overline{\D(0,R_{\rm univ})}$,
$$
\vert f_U(z) - z \vert 
\leq C_{\rm univ} \vert z\vert^{2},
\qquad
\vert f_U^{-1}(z) - z \vert 
\leq C_{\rm univ} \vert z\vert^{2}.
$$
\end{prop}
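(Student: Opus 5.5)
The plan is to reduce both estimates to classical coefficient bounds for normalized univalent functions on the unit disk. Set $g = f_U^{-1}:\D\to U$. Then $g$ is univalent on $\D$ with $g(0)=0$ and $g'(0)=1$, so $g$ belongs to the class $\mathcal S$. We treat the two inequalities separately: the one for $f_U^{-1}=g$ is a statement about a function in $\mathcal S$, and the one for $f_U$ follows by inverting near the origin.

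\emph{Bound for $f_U^{-1}$.} Write $g(z)=z+\sum_{n\ge2}a_n z^n$. By de Branges' theorem $|a_n|\le n$; Bieberbach's elementary $|a_2|\le 2$ together with a crude Cauchy estimate would also suffice. For $|z|\le R\le 1/2$ this gives
\[
|g(z)-z|\le \sum_{n\ge2} n|z|^n \le |z|^2\sum_{n\ge 2} n R^{n-2} =: C(R)\,|z|^2 .
\]
This settles the second inequality with a constant independent of $U$. Alternatively, one can use the distortion theorem directly: $|g(z)-z|\le \frac{|z|}{(1-|z|)^2}-|z|$, which is $O(|z|^2)$.

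\emph{Bound for $f_U$.} The Koebe quarter theorem gives $\frac14\D\subset U$, so $f_U$ is defined on $\frac14\D$. The Koebe growth theorem for $g$ gives $|g(w)|\ge |w|/(1+|w|)^2$. Hence if $|z|\le R$ and $w=f_U(z)$, then $|w|$ is small, and in particular $|w|\le 2|z|$ once $R$ is small enough. To see this, use the growth bound to show $|w|\le c$ for a fixed $c<1$ when $R$ is small. This step needs a bit of care, but it is routine.

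Now substitute into the first bound:
\[
|f_U(z)-z| = |w - g(w)| \le C\,|w|^2 \le 4C\,|z|^2 .
\]
Choosing $R_{\rm univ}<1/4$ small enough and $C_{\rm univ}=\max(C,4C)$ finishes the proof.

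The only mild obstacle is the a priori control $|f_U(z)|\lesssim |z|$, uniformly in $U$. This follows from the growth theorem applied to $g$: the inequality $|z|=|g(w)|\ge |w|/(1+|w|)^2\ge |w|/4$ yields $|w|\le 4|z|$ directly. With this, no iteration or further argument is needed.
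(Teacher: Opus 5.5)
Your argument is correct. Its first half is the same as the paper's: you sum the de Branges bound $|a_n|\le n$ over $|z|\le 1/2$ to get $|f_U^{-1}(z)-z|\le C|z|^2$.

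The second half takes a different route. The paper obtains $|f_U^{-1}(w)|\ge |w|/2$ for $|w|\le R'=1/(2C)$ from the first estimate itself. It then deduces the inclusion $\overline{\D(0,R'/2)}\subset f_U^{-1}(\overline{\D(0,R')})$, which guarantees that $f_U(z)$ lands where the first estimate applies. Finally it iterates $|f_U(z)-z|\le C|f_U(z)|^2$ infinitely often to reach $4C|z|^2$ with $R_{\rm univ}=R'/2$. You instead use the lower half of the Koebe growth theorem, $|g(w)|\ge |w|/(1+|w|)^2$. This gives the a priori bound $|f_U(z)|\le 4|z|$ on all of $U$ in one line, so neither the image-inclusion argument nor the iteration is needed; Koebe's quarter theorem ensures $f_U$ is defined on the disk. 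The paper's route uses nothing beyond the coefficient bound. Yours is shorter and gives a larger radius. (Within the paper's own setup the iteration is also avoidable: once $|f_U(z)|\le R'$, the bound $|f_U^{-1}(w)|\ge|w|/2$ already gives $|f_U(z)|\le 2|z|$.)

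Two bookkeeping points. First, using $|w|\le 4|z|$ directly, as in your last paragraph, gives the constant $16C$. It also requires $R_{\rm univ}\le 1/8$, so that $|w|\le 1/2$ and the first estimate applies. The constant $4C$ you wrote needs the bootstrap $|w|\le (1+|w|)^2|z|\le (1+4|z|)^2|z|\le 2|z|$, valid for $|z|\le(\sqrt2-1)/4$. Either way the constants are universal, which is all the statement claims.

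Second, your aside that the distortion theorem ``directly'' gives $|g(z)-z|\le |z|/(1-|z|)^2-|z|$ is misattributed. That bound is exactly $\sum_{n\ge2}n|z|^n$, i.e.\ de Branges again. The growth theorem controls $|g(z)|$, not $|g(z)-z|$. This does not affect your main argument.
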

\begin{proof}
The main point is that $R_{\rm univ}$  
and $C_{\rm univ}$ do not depend on $U$.

The inverse mapping $f_U^{-1}$ is an
univalent function on $\D$.
It has a series expansion
$$
f_U^{-1}(z) = z + 
\sum_{n\geq 2} a_n z^{n} .
$$
For our purpose, it suffices to have universal bounds on the
coefficients $a_n$.
The best possible bounds are provided by
de Branges theorem (Bieberbach conjecture):
$\vert a_n\vert\leq n$ \cite[Chapter 17]{ConwayComplexVar2}.
Much weaker and easier to obtain bounds
would also be perfectly sufficient,
for instance by combining the Cauchy integral expression of
$a_n$ and the Koebe growth estimate for $f_U^{-1}$ 
\cite[Theorem 5-3]{Ahlfors2010ConfInv}.
However, we will use de Branges theorem in what follows,
since it is at our disposal.
Set
$$
C = \sum_{n\geq 2} n \dfrac{1}{2^{n-2}}
>1.
$$
Then, for every domain $U$ as above and for every $z\in\overline{\D(0,1/2)}$,
$$
\vert f_U^{-1}(z) - z \vert 
\leq C \vert z\vert^{2}.
$$
Further, set
$$
R' = 
\dfrac{1}{2 C}
<
\dfrac{1}{2}.
$$
Then, for every $U$ appropriate domain and $z\in\overline{\D(0,R')}$,
$$
\vert f_U^{-1}(z)\vert
\geq \vert z\vert - C \vert z\vert^{2}
\geq \dfrac{1}{2} \vert z\vert.
$$
This implies that
$$
\overline{\D(0,R'/2)}
\subset
f_U^{-1}(\overline{\D(0,R')})
\subset
f_U^{-1}(\overline{\D(0,1/2)}),
$$
whatever the domain $U$ of appropriate type.

Let $z\in \overline{\D(0,R'/2)}$.
Then $f_U(z)\in \overline{\D(0,R')}$, and
$$
\vert f_U(z) -z\vert \leq C \vert f_U(z)\vert^2.
$$
By iterating the inequality once more, we get
$$
\vert f_U(z) -z\vert \leq 
C(\vert z\vert + C \vert f_U(z)\vert^2)^2
\leq
2C \vert z\vert^2 + 2 C^3 \vert f_U(z)\vert^4 .
$$
By continuing iterating in this way,
we get that for every $N\geq 1$,
$$
\vert f_U(z) -z\vert \leq
\Big(
\sum_{k=1}^{N-1}
2^{2^{k+1} -(k+2)}
C^{2^k -1}\vert z\vert^{2^k}
\Big)
+ 2^{2^N - (N+1)}C^{2^N -1}\vert f_U(z)\vert^{2^N}.
$$
Since $\vert f_U(z)\vert\leq R'$, we get
$$
2^{2^N - (N+1)}C^{2^N -1}\vert f_U(z)\vert^{2^N}
\leq
\dfrac{2^{2^N - (N+1)}C^{2^N -1}}{2^{2^N} C^{2^N}}
=
\dfrac{1}{2^{N+1} C},
$$
which converges to $0$ as $N\to +\infty$.
Thus,
$$
\vert f_U(z) -z\vert \leq
\sum_{k\geq 1}
2^{2^{k+1} -(k+2)}
C^{2^k -1}\vert z\vert^{2^k}
\leq
\vert z\vert^2
\sum_{k\geq 1}
2^{2^{k+1} -(k+2)}
C^{2^k -1}(R'/2)^{2^k -2}
=
4C
\vert z\vert^2 
.
$$

So if one takes 
$R_{\rm univ} = R'/2$ and
$C_{\rm univ} = 4C$,
then the pair $(R_{\rm univ},C_{\rm univ})$
satisfies the desired property.
\end{proof}
\subsection{Extremal distance and its intrinsic approximation}

In this section we work in the unit disk $\D$, 
and ask the rather natural question: how can we approximate the extremal distance between a small loop surrounding $0$ and the boundary of the disk using something that is intrinsic only to the loop, i.e. does not see the boundary.

Consider a Jordan loop $\Gamma$ contained in $\D$
that avoids the point $0$ and surrounds $0$.
Denote by $\ann_\Gamma$
the annular domain delimited by
$\Gamma$ from the inside and by $\partial \D$
from the outside.
Let $\ED(\Gamma,\partial\D)$
be the extremal distance,
or extremal length,
between $\Gamma$ and $\partial \D$; 
a good detailed reference is 
\cite[Section 4]{Ahlfors2010ConfInv}

Extremal distance is also simply the effective electrical resistance
between $\Gamma$ and $\partial\D$.
It is a conformal invariant that parametrizes the modulus of annular domains.
It can be expressed in terms of
the boundary Poisson kernel in the domain $\ann_\Gamma$.
Let $H_{\ann_\Gamma}(dz,dw)$
be this boundary Poisson kernel
(see \cite[Section 5.2]{LawC} for this notion),
which is a measure on
$(\partial\ann_\Gamma)^{2}=(\Gamma\cup\partial\D)^{2}$.
For $w = e^{i\theta}\in\partial\D$,
$$
H_{\ann_\Gamma}(dz,dw)
=
H_{\ann_\Gamma}(dz,e^{i\theta}) d\theta,
$$
that is to say there is a density with respect to the
arc-length measure on $\partial\D$.
Further,
\begin{equation}
\label{Eq ED H}
\ED(\Gamma,\partial\D)
=
\bigg(
\int_{\Gamma}
\int_{0}^{2\pi}
H_{\ann_\Gamma}(dz,e^{i\theta}) d\theta
\bigg)^{-1}.
\end{equation}
This follows from
\cite[Theorem 4-5]{Ahlfors2010ConfInv}.
This also corresponds to the resistance being the inverse
of the conductance,
since $H_{\ann_\Gamma}$ is the conductance kernel. \\

As mentioned, we want to approximate this extremal distance between the loop $\Gamma$ 
and $\partial \D$
(when the loop gets smaller and smaller) by a quantity that depends purely on the loop and no longer takes into account that the loop is inside the unit disk. 

We recall that $\Ext(\Gamma)$ denotes exterior of $\Gamma$
and $G_{\Ext(\Gamma)}(z,w)$ the zero boundary Green's function in the domain $\Ext(\Gamma)$.
The intrinsic quantity we are interested in is described as follows:
$$
\int_{0}^{2\pi}
G_{\Ext(\Gamma)}(R^{+}(\Gamma)e^{i\alpha},\infty)
d\alpha
.
$$
It is scale-invariant,
i.e. it does not change if one replaces
$\Gamma$ by $\lambda\Gamma$, for a $\lambda>0$.
This comes from the scale invariance of Green's functions.
So it is a function of the shape 
$[\Gamma]$ (equivalence class under scaling).
So we use the notation
\begin{equation}\label{Eq intro Y Gamma}
Y([\Gamma]) = 
\int_{0}^{2\pi}
G_{\Ext(\Gamma)}(R^{+}(\Gamma)e^{i\alpha},\infty)
d\alpha
.
\end{equation}
If we denote further 
$$
R^{+}(\Gamma) = 
\max_{z\in\Gamma} \vert z\vert,
$$
then our intrinsic approximation result reads as follows.
\begin{prop}
\label{Prop expansion ED}
We have that
$$
\Big\vert
\ED(\Gamma,\partial\D)
-
\dfrac{1}{2\pi}
(\log R^{+}(\Gamma)^{-1})
-\dfrac{1}{2\pi}Y([\Gamma])
\Big\vert
\leq
\dfrac{1}{\pi}
\dfrac{R^{+}(\Gamma)}{1-R^{+}(\Gamma)}.
$$
\end{prop}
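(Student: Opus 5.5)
The plan is to compare the equilibrium potential of the annulus $\ann_\Gamma$ with the exterior Green's function $v(z):=G_{\Ext(\Gamma)}(z,\infty)$, using Green's second identity. Write $R=R^{+}(\Gamma)$.

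\emph{Step 1 (representing $\ED$ through $v$).} Let $w$ be harmonic in $\ann_\Gamma$ with $w=0$ on $\Gamma$, $w\equiv c$ on $\partial\D$, normalised to have unit flux, $\int_{\partial\D}\partial_n w\,ds=1$. By \eqref{Eq ED H}, i.e.\ resistance $=$ inverse conductance, $c=\ED(\Gamma,\partial\D)$. The function $v$ is also harmonic in $\ann_\Gamma$ and vanishes on $\Gamma$. With the normalisation $G\sim\frac{1}{2\pi}\log\frac1{|\cdot|}$, its flux through $\partial\D$ is $1$. Green's second identity on $\ann_\Gamma$,
\[
\int_{\partial\ann_\Gamma}\bigl(v\,\partial_n w-w\,\partial_n v\bigr)\,ds=0,
\]
has no contribution from $\Gamma$, since both functions vanish there. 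On $\partial\D$ it yields
\[
\ED(\Gamma,\partial\D)=\int_{\partial\D}v\,d\nu,\qquad d\nu:=\partial_n w\,ds,
\]
and $\nu$ is a probability measure on $\partial\D$. Some care is needed with the regularity of $\Gamma$; I would handle this by approximating $\Gamma$ from outside by smooth loops and using continuity of both sides.

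\emph{Step 2 (isolating $Y$).} Set $H(z):=v(z)-\frac1{2\pi}\log|z|$. This is harmonic and bounded on $\Ext(\Gamma)\cup\{\infty\}$, in particular on $\{|z|>R\}$, and continuous up to $\{|z|=R\}$ because $\Gamma$ is a Jordan curve. The mean value property at $\infty$ for the exterior of the disk of radius $R$ then gives
\[
H(\infty)=\frac1{2\pi}\int_0^{2\pi}H(Re^{i\alpha})\,d\alpha=\frac{1}{2\pi}Y([\Gamma])-\frac1{2\pi}\log R .
\]
Since $v=H$ on $\partial\D$, Step 1 becomes
\[
\ED=H(\infty)+\int_{\partial\D}\bigl(H-H(\infty)\bigr)\,d\nu .
\]
So it suffices to prove $\sup_{|z|=1}|H(z)-H(\infty)|\le\frac1\pi\frac{R}{1-R}$.

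\emph{Step 3 (oscillation bound; the main obstacle).} Use the Poisson representation for the exterior of the disk of radius $R$:
\[
H(z)-H(\infty)=\int_0^{2\pi}\Bigl(P(z,\alpha)-\tfrac{1}{2\pi}\Bigr)H(Re^{i\alpha})\,d\alpha,
\]
where $2\pi P(z,\alpha)\in\bigl[\frac{1-R}{1+R},\frac{1+R}{1-R}\bigr]$ for $|z|=1$. Since $P-\frac1{2\pi}$ integrates to zero, $H(Re^{i\alpha})$ may be replaced by $H(Re^{i\alpha})+\frac{1}{2\pi}\log R=v(Re^{i\alpha})\ge0$. This gives
\[
|H(z)-H(\infty)|\le\frac{2R}{1-R}\cdot\frac{1}{2\pi}\int_0^{2\pi}v(Re^{i\alpha})\,d\alpha\cdot\frac{1}{2\pi}.
\]
The remaining task is therefore a universal bound on the circle average of $v$ at radius $R$: an average $\le\frac{1}{2\pi}\cdot(\text{const})$ with the constant making the total $\frac{1}{\pi}\frac{R}{1-R}$.

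I expect this to be the delicate point. First I would bound $v$ pointwise on $\{|z|=R\}$ by comparing with the Green's function of the complement of a slit, via Beurling-type and capacity estimates: $\Gamma$ surrounds $0$ and reaches distance $R$, so its logarithmic capacity is at least $R/4$. If the resulting constant is not sharp enough, I would instead avoid $Y$ altogether. One option is to apply Harnack's inequality to the nonnegative harmonic function $v-\frac1{2\pi}\log(|z|/R)\ge0$; it is nonnegative because $\Ext(\Gamma)\supset\{|z|>R\}$. Another is to bound $\int(H-H(\infty))\,d\nu$ directly, using that $\nu$ is itself close to uniform on $\partial\D$: its density is the normal derivative of a function within $O(R)$ of $\frac{1}{2\pi}\log(|z|/R)+\mathrm{const}$ near $\partial\D$. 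The cancellation against the mean-zero kernel $P-\frac{1}{2\pi}$ should then produce the stated factor $\frac{1}{\pi}\frac{R}{1-R}$.
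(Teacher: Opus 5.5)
The gap is in your Step 3. Steps 1 and 2 are correct and match the paper's argument. Your Green's-identity representation $\ED(\Gamma,\partial\D)=\int_{\partial\D}v\,d\nu$ is the last-exit identity \eqref{Eq 1 int G Ext Gamma H}, with $\nu(d\theta)=\ED(\Gamma,\partial\D)\int_\Gamma H_{\ann_\Gamma}(dz,e^{i\theta})\,d\theta$. Your Poisson representation of $H$ on $\{|z|>R\}$ is the first-hitting decomposition \eqref{Eq G Ext Gamma decomp}.

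First, your displayed inequality has a spurious factor $\frac{1}{2\pi}$. Since $\sup_\alpha\bigl|P(z,\alpha)-\frac{1}{2\pi}\bigr|=\frac{1}{\pi}\frac{R}{1-R}$ and $\int_0^{2\pi}v(Re^{i\alpha})\,d\alpha=Y([\Gamma])$, the argument actually gives $|H(z)-H(\infty)|\le\frac{1}{\pi}\frac{R}{1-R}\,Y([\Gamma])$. This is also all that the paper's proof yields once its Poisson-kernel claim is plugged in, so the written argument there carries the same extra factor $Y([\Gamma])$.

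Second, reaching the stated constant from here would require $Y([\Gamma])\le 1$, and this is false. Your Step 2 gives $H(\infty)=-\frac{1}{2\pi}\log\mathrm{cap}(\Gamma)$, hence $Y([\Gamma])=\log\bigl(R/\mathrm{cap}(\Gamma)\bigr)$. This tends to $\log 4>1$ for boundaries of shrinking neighbourhoods of a radial segment $[0,R]$, whose capacity is $R/4$. So no bound on the circle average of $v$ (Beurling, $\mathrm{cap}(\Gamma)\ge R/4$, Harnack) can close the argument as you set it up. The capacity bound is equivalent to $Y([\Gamma])\le 2\log 2$ (Proposition \ref{Prop bounds Y}) and gives the constant $\frac{2\log 2}{\pi}$ instead of $\frac{1}{\pi}$. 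That weaker form is enough for the later uses in Propositions \ref{Prop law shape} and \ref{Prop asymp 1 pt ED Ver 2}, which only need an $O(R^{+}(\Gamma))$ error, but it is not the statement.

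The missing idea is to use the cancellation in the kernel and to bound $v$ pointwise rather than on average.
\begin{itemize}
\item Since $\int_0^{2\pi}\bigl(P(z,\alpha)-\frac{1}{2\pi}\bigr)\,d\alpha=0$, you may replace $v(Re^{i\alpha})$ by $v(Re^{i\alpha})-\frac{M}{2}$, where $M=\max_{|z|=R}v(z)$. As $0\le v\le M$ on that circle, this gives $|H(z)-H(\infty)|\le\frac{1}{\pi}\frac{R}{1-R}\cdot\pi M=\frac{R}{1-R}\,M$ for $|z|=1$.
\item To bound $M$, let $\Phi:\Ext(\Gamma)\to\{|w|>1\}$ be the exterior uniformizing map with $\Phi(z)\sim z/\mathrm{cap}(\Gamma)$ at $\infty$, so that $v=\frac{1}{2\pi}\log|\Phi|$.
\item The map $g=\Phi^{-1}/\mathrm{cap}(\Gamma)$ omits $0$, because $\Gamma$ surrounds $0$. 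Hence $f(\zeta)=1/g(1/\zeta)$ is a normalized univalent function on $\D$.
\item Koebe's quarter theorem for $f$ gives $R\le 4\,\mathrm{cap}(\Gamma)$. The growth theorem gives $|g(w)|\ge|w|-2+|w|^{-1}$.
\item So for $|z|=R$, the point $w=\Phi(z)$ satisfies $|w|-2+|w|^{-1}\le 4$, i.e. $|w|\le 3+2\sqrt2$. Therefore $M\le\frac{1}{2\pi}\log(3+2\sqrt2)=\frac{1}{\pi}\log(1+\sqrt2)<\frac{1}{\pi}$, which proves the proposition as stated.
\end{itemize}
Your other suggestion, that $\nu$ is nearly uniform on $\partial\D$, points to an $O(R^2)$ error for small $R$, but as written it is only a heuristic.
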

To prove this proposition, we will also introduce $\nu^{\infty}_{\Gamma}$
the harmonic probability measure on
$\Gamma$ seen from $\infty$. First we decompose this measure on the boundary of the unit disk.
\begin{lemma}
We have the following equality between measures
supported on $\Gamma$:
$$
\nu^{\infty}_{\Gamma}(dz)
=
\1_{z\in\Gamma}
\int_{0}^{2\pi}
G_{\Ext(\Gamma)}(e^{i\theta},\infty)H_{\ann_\Gamma}(dz,e^{i\theta})
d\theta
.
$$
In particular,
\begin{equation}
\label{Eq 1 int G Ext Gamma H}
\int_{0}^{2\pi}
\int_{\Gamma}
G_{\Ext(\Gamma)}(e^{i\theta},\infty)H_{\ann_\Gamma}(dz,e^{i\theta})d\theta
=1.
\end{equation}
\end{lemma}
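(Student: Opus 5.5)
We will prove the identity via the strong Markov property of Brownian motion at the first hitting time of $\partial\D$, combined with the probabilistic interpretation of the harmonic measure from infinity. Let $B$ be a planar Brownian motion started at a point $w\in\Ext(\Gamma)$ with $\vert w\vert$ large. The harmonic measure from $\infty$ on $\Gamma$ is the limit, as $\vert w\vert\to\infty$, of the hitting distribution $\P^w(B_{\tau_\Gamma}\in dz)$, where $\tau_\Gamma$ is the hitting time of $\Gamma$. Since $\Gamma\subset\D$ and $\vert w\vert>1$, any path from $w$ to $\Gamma$ must first cross $\partial\D$.

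The plan is to decompose according to the \emph{last} visit to $\partial\D$ before $\tau_\Gamma$, rather than the first. Doing so produces exactly a Green's function times a boundary Poisson kernel. Concretely, we use the standard last-exit decomposition for the hitting distribution in $\Ext(\Gamma)$. For a domain $V=\Ext(\Gamma)$ and a subdomain $\ann_\Gamma\subset V$ bounded by the curve $\partial\D$ (interior to $V$) and $\Gamma$ (part of $\partial V$), one has
$$
\P^w(B_{\tau_\Gamma}\in dz) = \int_{0}^{2\pi} G_{V}(w,e^{i\theta})\, H_{\ann_\Gamma}(dz,e^{i\theta})\,d\theta,
$$
with the normalisations of $G$ and $H$ adjusted consistently. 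This is the continuum analogue of the discrete identity "hitting probability $=$ sum over last exit edge of Green's function times escape kernel". We will justify it by approximating $\partial\D$ with the circle $(1-\delta)\partial\D$ and using excursion theory: count the excursions from $\partial\D$ into $\ann_\Gamma$, so that the occupation density near $\partial\D$ gives $G_V$ and the normalised excursion law gives $H_{\ann_\Gamma}$. Alternatively, we can verify it analytically via Green's identity on $\ann_\Gamma$: for a smooth test function $g$ on $\Gamma$, let $u$ be the harmonic function on $V$ with boundary data $g$ on $\Gamma$ (bounded at $\infty$). Representing $u$ on $\partial\D$ through $G_V(w,\cdot)$ and the normal derivative of the $\ann_\Gamma$-harmonic extension gives the formula. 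The second route is the one we would actually write, since it avoids excursion-theoretic limits.

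Next we let $w\to\infty$. Conformal invariance, via an inversion, gives that $G_V(w,e^{i\theta})\to G_{\Ext(\Gamma)}(e^{i\theta},\infty)$ uniformly in $\theta$, and the left side converges weakly to $\nu^\infty_\Gamma(dz)$. Passing to the limit under the integral is justified by the uniform convergence and by the finiteness of the total mass $\int\int H_{\ann_\Gamma}$, which equals $\ED^{-1}$ by \eqref{Eq ED H}. This yields the claimed measure identity. Integrating over $\Gamma$ and using that $\nu^\infty_\Gamma$ is a probability measure gives \eqref{Eq 1 int G Ext Gamma H}.

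The main obstacle is getting the normalisation constants right in the last-exit formula. The Green's function is normalised with $\frac{1}{2\pi}\log$, the Poisson kernel follows Lawler's convention, and there are factors of $2$ from the $\frac12\Delta$ generator. We would fix these by checking the total mass in the explicit case where $\Gamma$ is a centred circle of radius $r$. There $G_{\Ext(\Gamma)}(e^{i\theta},\infty)=\frac{1}{2\pi}\log(1/r)$, and the total mass of $H_{\ann_\Gamma}$ is $2\pi/\log(1/r)$, consistent with \eqref{Eq ED H} and $\ED=\frac{1}{2\pi}\log(1/r)$. Their product is $1$, which confirms that the constants are consistent.
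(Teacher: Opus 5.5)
Your proposal is correct and follows the same route as the paper, which proves the identity in one line by decomposing the Brownian path between $\infty$ and $\Gamma$ in $\Ext(\Gamma)$ at its last visit to $\partial\D$. The portion up to that visit contributes $G_{\Ext(\Gamma)}(e^{i\theta},\infty)$, and the remaining portion inside $\ann_\Gamma$ contributes $H_{\ann_\Gamma}(dz,e^{i\theta})$. Your Green's-identity verification at a finite starting point $w$, followed by $w\to\infty$ and the normalisation check on a centred circle, is simply a more detailed justification of that decomposition; it also only needs normal derivatives on the smooth circle $\partial\D$, not on $\Gamma$.
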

\begin{proof}
This follows by decomposing a Brownian
excursion from 
$z\in \Gamma$ to $\infty$
in the domain $\Ext(\Gamma)$
at the last visit time of $\partial\D$.
\end{proof}
Further, if we denote by $P_{\C\setminus(R^{+}(\Gamma)\overline{\D})}$
the Poisson kernel of the domain
$\C\setminus(R^{+}(\Gamma)\overline{\D})$, 
we can decompose as follows.
\begin{lemma}
For every $\theta\in[0,2\pi)$,
\begin{eqnarray}
\label{Eq G Ext Gamma decomp}
G_{\Ext(\Gamma)}(e^{i\theta},\infty)
&=&
\dfrac{1}{2\pi}
\log R^{+}(\Gamma)^{-1}
\\
\nonumber
&&+
\int_{0}^{2\pi}
G_{\Ext(\Gamma)}(R^{+}(\Gamma)e^{i\alpha},\infty)
P_{\C\setminus(R^{+}(\Gamma)\overline{\D})}
(e^{i\theta},R^{+}(\Gamma)e^{i\alpha})
(R^{+}(\Gamma)d\alpha).
\end{eqnarray}
In particular,
\begin{multline}
\label{Eq rel G Ext Gamma ED}
1
=
\dfrac{1}{2\pi}
(\log R^{+}(\Gamma)^{-1})
\ED(\Gamma,\partial\D)^{-1}
\\
+
\int_{0}^{2\pi}
d\theta
\int_{0}^{2\pi}
(R^{+}(\Gamma)d\alpha)
\int_{\Gamma}
G_{\Ext(\Gamma)}(R^{+}(\Gamma)e^{i\alpha},\infty)
P_{\C\setminus(R^{+}(\Gamma)\overline{\D})}
(e^{i\theta},R^{+}(\Gamma)e^{i\alpha})
H_{\ann_\Gamma}(dz,e^{i\theta}).
\end{multline}
\end{lemma}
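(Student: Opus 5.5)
We prove \eqref{Eq G Ext Gamma decomp} by viewing $h(w):=G_{\Ext(\Gamma)}(w,\infty)$ as a harmonic function outside the disk $R^{+}(\Gamma)\overline{\D}$. We then combine it with \eqref{Eq 1 int G Ext Gamma H} and \eqref{Eq ED H} to get \eqref{Eq rel G Ext Gamma ED}.

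\textbf{Step 1: representation of $h$.} Write $R=R^{+}(\Gamma)$, so that $\Gamma\subset R\overline{\D}$ and $\C\setminus R\overline{\D}\subset\Ext(\Gamma)$. On $\C\setminus R\overline{\D}$ the function $h$ is harmonic except at $\infty$. There it has the logarithmic singularity fixed by the normalisation $G\sim\frac{1}{2\pi}\log\frac1{|\cdot|}$ after inversion, namely
$$h(w)=\frac{1}{2\pi}\log|w|+c_\Gamma+o(1)\quad\text{as } w\to\infty,$$
with $c_\Gamma$ the Robin-type constant. It is convenient to set
$$u(w):=h(w)-\frac{1}{2\pi}\log(|w|/R).$$
This function is harmonic and bounded on $\C\setminus R\overline{\D}$, including near $\infty$, and on $\partial(R\D)$ it coincides with $h$. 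The exterior Dirichlet problem for bounded harmonic functions has a unique solution given by the Poisson kernel (equivalently, invert $w\mapsto R^2/w$ and use the disk Poisson formula). Hence
$$u(w)=\int_0^{2\pi}h(Re^{i\alpha})\,P_{\C\setminus R\overline{\D}}(w,Re^{i\alpha})\,R\,d\alpha.$$
Evaluating at $w=e^{i\theta}$ gives $\log(1/R)=\log(R^{+}(\Gamma)^{-1})$ for the logarithmic term, with the sign convention matching the statement. This yields \eqref{Eq G Ext Gamma decomp}.

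\textbf{Main obstacle: the sign and constant of the logarithmic term.} In the paper's convention $G_{\Ext(\Gamma)}(\cdot,\infty)$ is positive and vanishes on $\Gamma$. Its growth at $\infty$ should be $\frac{1}{2\pi}\log|w|$, not $-\frac{1}{2\pi}\log|w|$, and evaluating $\frac1{2\pi}\log(|w|/R)$ at $|w|=1$ gives $\frac{1}{2\pi}\log(1/R)$ as written. I would double-check this by the sanity case $\Gamma=R\,\partial\D$. There $h(w)=\frac1{2\pi}\log(|w|/R)$ exactly, $h\equiv 0$ on $R\,\partial\D$, and the formula reduces to an identity. This also confirms the normalisation of the Poisson kernel, whose measure $P(\cdot)R\,d\alpha$ has total mass $1$.

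\textbf{Step 2: integrate against the conductance kernel.} Multiply \eqref{Eq G Ext Gamma decomp} by $H_{\ann_\Gamma}(dz,e^{i\theta})$ and integrate over $z\in\Gamma$ and $\theta\in[0,2\pi)$. The left-hand side becomes $1$ by \eqref{Eq 1 int G Ext Gamma H}. The constant term becomes
$$\frac{1}{2\pi}\log(R^{-1})\int_\Gamma\int_0^{2\pi}H_{\ann_\Gamma}(dz,e^{i\theta})\,d\theta=\frac{1}{2\pi}\log(R^{-1})\,\ED(\Gamma,\partial\D)^{-1}$$
by \eqref{Eq ED H}. The remaining term is exactly the triple integral in \eqref{Eq rel G Ext Gamma ED}. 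Every integrand is nonnegative, so Fubini/Tonelli justifies the exchange of integrals.
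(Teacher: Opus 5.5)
Your proof is correct and follows essentially the paper's route. The paper obtains \eqref{Eq G Ext Gamma decomp} by splitting a Brownian excursion from $e^{i\theta}$ to $\infty$ in $\Ext(\Gamma)$ according to whether it hits $R^{+}(\Gamma)\partial\D$ and decomposing at the first hitting time; this is the probabilistic form of your identity $G_{\Ext(\Gamma)}(\cdot,\infty)=G_{\C\setminus R\overline{\D}}(\cdot,\infty)+{}$(Poisson integral of its values on $R\,\partial\D$), and your Step 2 (integrating against $H_{\ann_\Gamma}(dz,e^{i\theta})\,d\theta$ and using \eqref{Eq 1 int G Ext Gamma H} and \eqref{Eq ED H}) is exactly how the paper derives \eqref{Eq rel G Ext Gamma ED}.
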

\begin{proof}
Consider a Brownian excursion
from $e^{i\theta}$ to $\infty$
in the domain $\Ext(\Gamma)$.
There are two cases:
either this excursion avoids $R^{+}(\Gamma)\overline{\D}$,
or it hits
$R^{+}(\Gamma)\partial \D$.
The first case gives rise to the partition function
$$
G_{\C\setminus(R^{+}(\Gamma)\overline{\D})}
(e^{i\theta},\infty)
=
\dfrac{1}{2\pi}
\log R^{+}(\Gamma)^{-1} .
$$
In the second case we decompose the excursion at the position
$R^{+}(\Gamma)e^{i\alpha}$
of first hitting of $R^{+}(\Gamma)\overline{\D}$,
which gives rise to
$$
\int_{0}^{2\pi}
G_{\Ext(\Gamma)}(R^{+}(\Gamma)e^{i\alpha},\infty)
P_{\C\setminus(R^{+}(\Gamma)\overline{\D})}
(e^{i\theta},R^{+}(\Gamma)e^{i\alpha})
(R^{+}(\Gamma)d\alpha).
$$
This is where \eqref{Eq G Ext Gamma decomp}
comes from.
To obtain \eqref{Eq rel G Ext Gamma ED},
one integrates the identity \eqref{Eq G Ext Gamma decomp}
against
$H_{\ann_\Gamma}(dz,e^{i\theta})d\theta$,
and uses the identities \eqref{Eq 1 int G Ext Gamma H}
and \eqref{Eq ED H}.
\end{proof}

We are now ready to prove the proposition.
\begin{proof}[Proof of Proposition \ref{Prop expansion ED}.]
We start from the identity \eqref{Eq rel G Ext Gamma ED} and conclude that
$$
\ED(\Gamma,\partial\D) - \dfrac{1}{2\pi}\log R^{+}(\Gamma)^{-1}$$
equals
$$ \ED(\Gamma,\partial\D)\int_{0}^{2\pi}
d\theta
\int_{0}^{2\pi}
(R^{+}(\Gamma)d\alpha)
\int_{\Gamma}
G_{\Ext(\Gamma)}(R^{+}(\Gamma)e^{i\alpha},\infty)
P_{\C\setminus(R^{+}(\Gamma)\overline{\D})}
(e^{i\theta},R^{+}(\Gamma)e^{i\alpha})
H_{\ann_\Gamma}(dz,e^{i\theta}).$$
It now remains to plug the expression \eqref{Eq ED H}, the definition of $Y([\Gamma])$ and the following pointwise approximation.
\begin{claim}
\label{Eq Poisson kernel uniform}
For every $\theta,\alpha\in [0,2\pi)$,
$$
\Big\vert
R^{+}(\Gamma)P_{\C\setminus(R^{+}(\Gamma)\overline{\D})}
(e^{i\theta},R^{+}(\Gamma)e^{i\alpha})
-
\dfrac{1}{2\pi}
\Big\vert
\leq
\dfrac{1}{\pi}
\dfrac{R^{+}(\Gamma)}{1-R^{+}(\Gamma)}.
$$
\end{claim}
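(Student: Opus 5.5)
We will use the explicit Poisson kernel of the exterior of a disk. Write $R=R^{+}(\Gamma)\in(0,1)$. The map $z\mapsto R/z$ sends $\C\setminus(R\overline{\D})$ onto $\D$ (with $\infty\mapsto 0$) and $R e^{i\alpha}\mapsto e^{-i\alpha}$. By conformal invariance of harmonic measure, the harmonic measure of the exterior domain seen from $e^{i\theta}$ is the pushforward of the harmonic measure of $\D$ seen from $w=Re^{-i\theta}$. The latter has the classical density
$$\frac{1}{2\pi}\,\frac{1-|w|^2}{|e^{-i\alpha}-w|^2}\,d\alpha .$$
Since $d\alpha$ corresponds exactly to the arc-length element $R\,d\alpha$ divided by $R$, we obtain
$$
R\,P_{\C\setminus(R\overline{\D})}(e^{i\theta},Re^{i\alpha})
=\frac{1}{2\pi}\,\frac{1-R^2}{|1-Re^{i(\alpha-\theta)}|^2}.
$$
There is a normalisation to check here: $P$ is a density with respect to arc length on the circle of radius $R$. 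This is consistent with the way it enters \eqref{Eq G Ext Gamma decomp}, where $P$ is integrated against $R\,d\alpha$. For the same reason, its total integral in \eqref{Eq G Ext Gamma decomp} is one, because the hitting probability of the circle is $1$ by recurrence.

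It then remains to bound the elementary quantity
$$\Big|\frac{1-R^2}{|1-Re^{i\phi}|^2}-1\Big|.$$
The maximum and minimum of $|1-Re^{i\phi}|^2$ are $(1+R)^2$ and $(1-R)^2$. Hence the ratio lies between $\frac{1-R}{1+R}$ and $\frac{1+R}{1-R}$. Subtracting $1$ gives the bounds
$$-\frac{2R}{1+R}\le \frac{1-R^2}{|1-Re^{i\phi}|^2}-1\le \frac{2R}{1-R},$$
so the absolute value is at most $\frac{2R}{1-R}$. Multiplying by $\frac{1}{2\pi}$ yields exactly the bound $\frac{1}{\pi}\frac{R}{1-R}$ claimed in Claim \ref{Eq Poisson kernel uniform}.

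The only delicate point is making sure the normalisation convention of $P$ (a density in arc length, multiplied by $R$) matches its use in \eqref{Eq G Ext Gamma decomp}. The convention used above is the one under which \eqref{Eq G Ext Gamma decomp} holds, so the claim follows.
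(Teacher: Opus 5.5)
Your proof is correct and is essentially the paper's argument: the paper writes the exterior Poisson kernel directly as $\frac{1}{2\pi R}\Re\frac{1+w}{1-w}$ with $w=Re^{i(\alpha-\theta)}$, which is exactly your $\frac{1}{2\pi R}\frac{1-R^2}{|1-w|^2}$ obtained via $z\mapsto R/z$. The only cosmetic difference is in the final estimate, where the paper bounds $\frac{1}{\pi}\bigl|\Re\frac{w}{1-w}\bigr|\le\frac{R}{\pi(1-R)}$ in one line rather than using the extreme values of $|1-w|^2$.
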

\begin{proof}[Proof of Claim]
The Poisson kernel 
$P_{\C\setminus(R^{+}(\Gamma)\overline{\D})}$
is explicit:
$$
P_{\C\setminus(R^{+}(\Gamma)\overline{\D})}
(z,R^{+}(\Gamma)e^{i\alpha}) = 
\dfrac{1}{2\pi R^{+}(\Gamma)}
\Re\bigg(
\dfrac{1+z^{-1}R^{+}(\Gamma)e^{i\alpha}}
{1-z^{-1}R^{+}(\Gamma)e^{i\alpha}}
\bigg)
.
$$
Thus,
$$
R^{+}(\Gamma)P_{\C\setminus(R^{+}(\Gamma)\overline{\D})}
(e^{i\theta},R^{+}(\Gamma)e^{i\alpha})
-
\dfrac{1}{2\pi}
=
\dfrac{R^{+}(\Gamma)}{\pi}
\Re\bigg(
\dfrac{e^{i(\alpha-\theta)}}
{1-R^{+}(\Gamma)e^{i(\alpha-\theta)}}
\bigg).
$$
This implies the desired bound. 
\end{proof}
\end{proof}

Denote by $\CR(0,\C\setminus \Gamma)=\CR(0,\Int(\Gamma))$
the conformal radius seen from $0$ of $\Int(\Gamma)$.
The following result says that the quantity 
$Y([\Gamma])$ is universally bounded.
\begin{prop}
\label{Prop bounds Y}    
We have that
\begin{equation}
\label{Eq bound Y Gamma} 
Y([\Gamma])\leq 2 \log 2 .
\end{equation}
Moreover, 
\begin{equation}
\label{Eq bound Y Gamma R+ CR} 
Y([\Gamma])\leq \log \dfrac{R^{+}(\Gamma)}{\CR(0,\C\setminus \Gamma)}.
\end{equation}
\end{prop}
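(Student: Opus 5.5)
The plan is to show that $Y([\Gamma])$ is an explicit logarithmic-capacity quantity, and then derive both inequalities from classical estimates on logarithmic capacity. Let $K$ be the compact set $\Gamma\cup\Int(\Gamma)$, and let $\mathrm{cap}(\Gamma)=\mathrm{cap}(K)$ denote its logarithmic capacity.

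\emph{Step 1: identify $Y$.} By the normalisation of $G$ (apply an inversion and use conformal invariance), the function $g(z):=2\pi G_{\Ext(\Gamma)}(z,\infty)$ is the classical Green's function of $\Ext(\Gamma)$ with pole at $\infty$. It is harmonic in $\Ext(\Gamma)$, vanishes on $\Gamma$, and has the expansion
$$g(z)=\log|z|-\log \mathrm{cap}(\Gamma)+o(1)\qquad\text{as } z\to\infty .$$
Hence $h(z):=g(z)-\log|z|$ is harmonic in $\{|z|>R^{+}(\Gamma)\}\cup\{\infty\}$, with $h(\infty)=-\log\mathrm{cap}(\Gamma)$.

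Every point of the circle $\{|z|=R^{+}(\Gamma)\}$ lies either on $\Gamma$ or in $\Ext(\Gamma)$. Indeed, the radial ray outward from such a point avoids $\Gamma$, since $\Gamma\subset\overline{\D(0,R^{+}(\Gamma))}$. Therefore $g$ is continuous up to this circle.

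Applying the mean value property to $h$ on the circles of radius $\rho>R^{+}(\Gamma)$ (a harmonic function near $\infty$ has constant circle averages, equal to its value at $\infty$), and then letting $\rho\downarrow R^{+}(\Gamma)$ using continuity, gives
$$Y([\Gamma])=\frac{1}{2\pi}\int_0^{2\pi} g\big(R^{+}(\Gamma)e^{i\alpha}\big)\,d\alpha=\log R^{+}(\Gamma)-\log\mathrm{cap}(\Gamma).$$
This is consistent with scale invariance, since both terms scale the same way.

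\emph{Step 2: the bound \eqref{Eq bound Y Gamma}.} Pick $z_0\in\Gamma$ with $|z_0|=R^{+}(\Gamma)$. Since $\Gamma$ surrounds $0$, the ray from $0$ in direction $-z_0$ meets $\Gamma$ at some point $w_0$. Then $|z_0-w_0|\ge R^{+}(\Gamma)$, so $\mathrm{diam}(\Gamma)\ge R^{+}(\Gamma)$. The classical inequality $\mathrm{cap}(E)\ge \mathrm{diam}(E)/4$ for continua (a consequence of the Koebe quarter theorem applied to the exterior map) then gives $\mathrm{cap}(\Gamma)\ge R^{+}(\Gamma)/4$. Consequently $Y([\Gamma])\le\log 4=2\log 2$.

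\emph{Step 3: the bound \eqref{Eq bound Y Gamma R+ CR}.} By Step 1, this bound is equivalent to
$$\CR(0,\Int(\Gamma))\le \mathrm{cap}(\Gamma),$$
that is, the inner conformal radius of $K$ does not exceed its outer conformal radius. This is the classical non-overlapping maps inequality (Lavrentiev; a consequence of the area theorem/Grunsky inequalities). For the conformal map $f:\D\to\Int(\Gamma)$ with $f(0)=0$, and the exterior map $\psi:\C\setminus\overline{\D}\to\Ext(\Gamma)$ with $\psi(z)=\mathrm{cap}(\Gamma)z+O(1)$, the images are disjoint, and this forces $|f'(0)|\le\mathrm{cap}(\Gamma)$. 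I would cite this from Pommerenke, or derive it from the area theorem applied to $1/f(1/z)$ combined with $\psi$.

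I expect this last step to be the only non-routine point, and I would rely on the literature for it. Step 1 is where care is needed: the continuity of $g$ up to the circle, and the normalisation constant $2\pi$.
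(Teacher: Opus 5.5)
Your proof is correct, and it takes a different route from the paper's. The paper never computes $Y$ in closed form. It uses scale invariance together with Proposition~\ref{Prop expansion ED} to write $Y([\Gamma])=\lim_{\lambda\to 0}\big(2\pi\ED(\lambda\Gamma,\partial\D)-\log R^{+}(\lambda\Gamma)^{-1}\big)$. It then imports two extremal-distance comparison inequalities from \cite{ALS3}: $\ED(\Gamma,\partial\D)\le\frac{1}{2\pi}\log(4R^{+}(\Gamma)^{-1})$ for the first bound, and $\ED(\Gamma,\partial\D)\le\frac{1}{2\pi}\log\CR(0,\Int(\Gamma))^{-1}$ for the second. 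You bypass extremal distance entirely. Your mean-value computation for the Green's function with pole at $\infty$ gives the exact identity $Y([\Gamma])=\log\big(R^{+}(\Gamma)/\mathrm{cap}(\Gamma)\big)$, and both bounds then become classical capacity facts: $\mathrm{cap}\ge\mathrm{diam}/4$ for continua, and Lavrentiev's inequality $\CR(0,\Int(\Gamma))\le\mathrm{cap}(\Gamma)$. The underlying inputs are really the same. The \cite{ALS3} inequalities are finite-size versions of these capacity estimates: since $2\pi\ED(\lambda\Gamma,\partial\D)=\log\lambda^{-1}-\log\mathrm{cap}(\Gamma)+o(1)$, letting $\lambda\to 0$ in them gives exactly $\mathrm{cap}\ge R^{+}/4$ and $\CR\le\mathrm{cap}$. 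So the difference is mainly packaging. The paper's route reuses machinery it needs anyway and stays in the language of \cite{ALS3}. Yours is self-contained, does not depend on Proposition~\ref{Prop expansion ED}, and gives more information. For example, it shows that the random variable $\log\rho^{+}([\Gamma^{\ast}])-Y([\Gamma^{\ast}])$ in Proposition~\ref{Prop law shape} is simply $\log\big(\mathrm{cap}(\Gamma^{\ast})/\CR(0,\Int(\Gamma^{\ast}))\big)$, the log-ratio of outer to inner conformal radius, so its nonnegativity is immediate.
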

\begin{proof}
Let $\lambda\in (0,1]$.
We have that $Y([\lambda\Gamma]) = Y([\Gamma]).$
Thus,
$$
Y([\Gamma])
=
\lim_{\lambda\to 0}
(
2\pi\ED(\lambda\Gamma,\partial\D)
-
\log R^{+}(\lambda\Gamma)^{-1}
).
$$
So \eqref{Eq bound Y Gamma} follows from quite standard comparison inequalities (see e.g. \cite[Corollary 2.4, Proposition 2.5]{ALS3}):
\begin{equation}
\label{Eq bounds ED}
\ED(\Gamma,\partial\D)
\leq
\dfrac{1}{2\pi}
\log \CR(0,\C\setminus \Gamma)^{-1},
~~
\dfrac{1}{2\pi}
\log R^{+}(\Gamma)^{-1}
\leq
\ED(\Gamma,\partial\D)
\leq
\dfrac{1}{2\pi}
\log (4 R^{+}(\Gamma)^{-1}).
\end{equation}
Further,
$$
\dfrac{R^{+}(\lambda\Gamma)}{\CR(0,\C\setminus \lambda\Gamma)}
=
\dfrac{R^{+}(\Gamma)}{\CR(0,\C\setminus \Gamma)} .
$$
Thus,
$$
\log \dfrac{R^{+}(\Gamma)}{\CR(0,\C\setminus \Gamma)}
-
Y([\Gamma])
=
\lim_{\lambda\to 0}
(\log \CR(0,\C\setminus \lambda\Gamma)^{-1}
-
2\pi \ED(\lambda\Gamma,\partial\D)).
$$
Then we apply the first inequality in
\eqref{Eq bounds ED} to get \eqref{Eq bound Y Gamma R+ CR}.
\end{proof}

\subsection{Properties of the shape distribution of the SLE$_4$ loop}
\label{Subsec stat SLE4}



We will extract an exact law related to random shapes distributed according to 
$\mu_{\text{SLE}_4}^{\rm loop}$.

\begin{prop}
\label{Prop law shape}
Denote by $[\Gamma^{\ast}]$ a random shape induced by $\mu_{\text{SLE}_4}^{\rm loop}$ and normalized to be a probability measure.
Denote by $\rho^{+}([\Gamma^{\ast}])$ the ratio
$$
\rho^{+}([\Gamma^{\ast}])
=
\dfrac{R^{+}(\Gamma^{\ast})}{\CR(0,\C\setminus\Gamma^{\ast})},
$$
which depends only on the shape $[\Gamma^{\ast}]$.
Let $(\mathcal{R}_t)_{t\geq 0}$
denote a Bessel 3 process starting from $0$
and by $T_{\pi}^{\rm BES3}$ the first hitting time of level $\pi$
by $(\mathcal{R}_t)_{t\geq 0}$.
Then the random variable 
\begin{equation}
\label{Eq log rho plus Y}
\log \rho^{+}([\Gamma^{\ast}])
-
Y([\Gamma^{\ast}])
\end{equation}
has the same distribution as $T_{\pi}^{\rm BES3}$.
\end{prop}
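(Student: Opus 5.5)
We realise the shape law of $\mu_{\text{SLE}_4}^{\rm loop}$ through the nested CLE$_4$ loops in the unit disk around $0$, coupled with the GFF, and read off \eqref{Eq log rho plus Y} as the $n\to\infty$ limit of a difference of two log-conformal-radius and extremal-distance quantities.

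\textbf{Step 1: a loop deep in the nesting.} Consider the zero boundary GFF $\Phi_\D$ and the nested CLE$_4$ in $\D$ coupled with it, with loops $\Gamma_1,\Gamma_2,\dots$ surrounding $0$. Standard facts (\cite{SchSh2, ASW, KemppainenWerner16CLE}) give two things.

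First, the increments $\log \CR(0,\Int\Gamma_{n})-\log\CR(0,\Int\Gamma_{n+1})$ are i.i.d. with law that of the first exit time of $[-\pi,\pi]$ by a standard Brownian motion started at $0$. This is the time-change of the Brownian motion $2\lambda$-gap exploration, with $2\lambda=\sqrt{\pi/2}$.

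Second, by the renewal theorem, the rescaled shape $[\Gamma_n]$ converges in law, as $n\to\infty$, to a size-biased version of the normalized shape measure. This is how $\mu^{\rm loop}_{\text{SLE}_4}$ is normalized to agree with nested CLE$_4$ counting. One has to check carefully whether the size-biasing changes the law of the quantity in \eqref{Eq log rho plus Y}. I expect the renewal limit to be the counting measure sliced at $\CR=1$, which is exactly the shape measure, so no extra bias appears.

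\textbf{Step 2: a second exploration measuring extremal distance.} Use the alternative exploration of $\Phi_\D$ relative to the annulus between $\Gamma$ and $\partial\D$. By \cite{ALS3}-type results, for a loop $\Gamma_n$ that is a level line with boundary values $\pm2\lambda$, the quantity
\[
\log\CR(0,\Int\Gamma_n)^{-1}-2\pi\,\ED(\Gamma_n,\partial\D)
\]
is the time a Bessel-type process needs to go from $0$ to height $\pi$ in the time parametrization given by $2\pi\ED$. The mechanism is the following. Conditional on the harmonic extension along the exploration, the radial level-line exploration from $\partial\D$ parametrized by extremal distance gives a Brownian motion in $2\pi\ED$-time. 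This Brownian motion is conditioned to stay in $(-\pi,\pi)$ until the loop closes, which is effectively a Bessel-3 process in the modulus.

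Here I would invoke the description of CLE$_4$ loops as the loops where the ED-parametrized harmonic value first hits $\pm\pi$ (in $\sqrt{2\pi}$-units), together with the decomposition of a Brownian exit from $(-\pi,\pi)$ at its last zero. Williams' path decomposition turns the post-last-zero piece into BES$(3)$ run until hitting $\pi$. The pre-last-zero piece is accounted for by the ED of the loop, and the post piece gives the CR-deficit.

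\textbf{Step 3: pass to the limit.} By Proposition \ref{Prop expansion ED}, when $R^+(\Gamma_n)\to0$,
\[
2\pi\ED(\Gamma_n,\partial\D)=\log R^+(\Gamma_n)^{-1}+Y([\Gamma_n])+o(1).
\]
Substituting into the identity of Step 2, the conformal-radius term and the extremal-distance term combine into
\[
\log\frac{R^+(\Gamma_n)}{\CR(0,\Int\Gamma_n)}-Y([\Gamma_n])+o(1),
\]
which is the functional of the shape in \eqref{Eq log rho plus Y}. This functional is bounded below by $0$ via \eqref{Eq bound Y Gamma R+ CR}. Its law equals $T^{\rm BES3}_\pi$ exactly at every $n$, up to the $o(1)$ error, because the Markov property makes the Step 2 law independent of $n$. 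Letting $n\to\infty$ and using Step 1 gives the claim.

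\textbf{Main obstacle.} The crux is Step 2: establishing rigorously that the ED-parametrized exploration yields exactly BES$(3)$ to level $\pi$ for the CR-minus-ED deficit, independently of the past. I would first try to derive it from the known joint law of $(\ED,\CR)$ for two-valued sets/first passage sets in \cite{ALS3}, via Laplace transforms. The target is that $\E[e^{-s(\cdot)}]=\frac{\pi\sqrt{2s}}{\sinh(\pi\sqrt{2s})}$, the BES$(3)$ hitting transform, should appear as the ratio of the CR transform $1/\cosh(\pi\sqrt{2s})$ to the ED transform. Continuity of the shape functional needed for weak convergence in Step 1 is handled by the universal bounds of Proposition \ref{Prop bounds Y} and Proposition \ref{Prop de Branges}.
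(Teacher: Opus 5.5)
There is a genuine gap in Steps 2--3. You need that
\[
\Delta_n:=\log\CR(0,\Int\Gamma_n)^{-1}-2\pi\ED(\Gamma_n,\partial\D)
\]
has the law of $T^{\rm BES3}_\pi$ for every $n$. This is only known for $n=1$ (\cite[Theorem 1]{ALS3} together with Williams' decomposition), and it does not follow from the Markov property.

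What the Markov property and conformal invariance actually give is that
\[
\Delta'_n:=\log\frac{\CR(0,\Int\Gamma_{n-1})}{\CR(0,\Int\Gamma_n)}-2\pi\ED(\Gamma_n,\Gamma_{n-1})
\]
are i.i.d.\ with law $T^{\rm BES3}_\pi$. The extremal distance is measured to the \emph{previous} loop here, not to $\partial\D$. Now $\log\CR$ is additive along the nesting, but the extremal distance across nested annuli is only superadditive, and in general strictly so. Hence
\[
\Delta_n=\sum_{k\le n}\Delta'_k-2\pi\Big(\ED(\Gamma_n,\partial\D)-\sum_{k\le n}\ED(\Gamma_k,\Gamma_{k-1})\Big),
\]
with a nonnegative defect whose law your argument never controls. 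For $\Delta_n$ to be $T^{\rm BES3}_\pi$, this defect would have to exactly offset a sum of $n$ i.i.d.\ BES$(3)$ hitting times.

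Switching to $\Delta'_n$ does not rescue Step 3 either. After uniformizing $\Int\Gamma_{n-1}$, the loop $f_{\Gamma_{n-1}}(\Gamma_n)$ is just a CLE$_4$ loop in $\D$ for every $n$. So it does not shrink as $n\to\infty$, and Proposition \ref{Prop expansion ED} never becomes applicable.

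The missing idea is to make the inner loop small by \emph{conditioning}, not by iterating; this is the paper's route.
\begin{enumerate}
\item Take $\check\Gamma_1$ with shape distributed according to $\mu_{\text{SLE}_4}^{\rm loop}$ and $\CR(0,\Int\check\Gamma_1)=1$. Let $\check\Gamma_2$ be the CLE$_4$ loop around $0$ inside it, so $[\check\Gamma_2]$ is again $\mu_{\text{SLE}_4}^{\rm loop}$-distributed, and set $\check\Gamma=f_{\check\Gamma_1}(\check\Gamma_2)$.
\item Since $f'_{\check\Gamma_1}(0)=1$, the \cite{ALS3} deficit of the CLE$_4$ loop $\check\Gamma$ in $\D$ equals $\log\CR(0,\Int\check\Gamma_2)^{-1}-2\pi\ED(\check\Gamma_2,\check\Gamma_1)$. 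It is $T^{\rm BES3}_\pi$-distributed and, by Williams' decomposition, independent of $\ED(\check\Gamma_2,\check\Gamma_1)$. You use the Williams decomposition only to identify a law; its real role here is this independence.
\item The independence lets you condition on $\ED(\check\Gamma_2,\check\Gamma_1)\to\infty$ without changing the law of the deficit. Under this conditioning, $R^+(\check\Gamma)\to 0$.
\item Proposition \ref{Prop expansion ED} then turns the deficit into $\log\rho^+([\check\Gamma])-Y([\check\Gamma])+o(1)$.
\item Finally, the shape stays $\mu_{\text{SLE}_4}^{\rm loop}$-distributed under the conditioning. This uses the independence of shape and extremal distance from \cite{KemppainenWerner16CLE}, with Proposition \ref{Prop diff Y} to pass between $[\check\Gamma]$ and $[\check\Gamma_2]$.
\end{enumerate}

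A side remark on Step 1: there is no size-bias issue there. For a fixed index $n\to\infty$ the shapes converge to the stationary law of the shape chain, which is the normalized shape measure itself. Size-biasing only appears if you select the loop straddling a fixed scale.
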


To prove this proposition, we will build a link between the SLE$_4$ loop whose shape is sampled from $\mu_{\text{SLE}_4}^{\rm loop}$ and the CLE$_4$ loop in $\D$, for which \cite{ALS3} gives such a result.

In this direction, 
\begin{itemize}
    \item Consider $\check{\Gamma}_1$ a random
SLE$_4$ loop surrounding $0$,
with the shape $[\check{\Gamma}_1]$ distributed
according to the invariant measure 
$\mu_{\text{SLE}_4}^{\rm loop}$ \cite{KemppainenWerner16CLE},
and with $\CR(0,\C\setminus\check{\Gamma}_1)=1$.
\item We now sample a CLE$_4$ inside the interior surrounded by $\check{\Gamma}_1$,
and we take $\check{\Gamma}_2$ to be the loop
that surrounds $0$.
By construction, the shape $[\check{\Gamma}_2]$
is again distributed according to 
$\mu_{\text{SLE}_4}^{\rm loop}$.
\item Finally, we map conformally the interior of
$\check{\Gamma}_1$ to the unit disk $\D$
via $f_{\check{\Gamma}_1}$ and we set
$\check{\Gamma}=f_{\check{\Gamma}_1}(\check{\Gamma}_2)$.
Then $\check{\Gamma}$ is distributed as the
$\CLE_4$ loop in $\D$ surrounding $0$.
\end{itemize}

\begin{proof}
Note that \eqref{Eq bound Y Gamma R+ CR} ensures that
\eqref{Eq log rho plus Y} is non-negative.
By \cite[Theorem 1]{ALS3}, and Williams path decomposition,
the random variable
\begin{equation}
\label{Eq log CR - ED}
\log\CR(0,\C\setminus \check{\Gamma})^{-1}
-
2\pi\ED(\check{\Gamma},\partial\D)
\end{equation}
is distributed as $T_{\pi}^{\rm BES3}$,
and is independent from $\ED(\check{\Gamma},\partial\D)$.
Further, by conformal invariance of the extremal distance and the choice $\CR(0, \C \setminus \check{\Gamma}_1)$ 
the random variable
\begin{equation}
\label{Eq log CR - ED 2}
\log\CR(0,\Int(\check{\Gamma}_1)\setminus \check{\Gamma}_2)^{-1}
-
2\pi\ED(\check{\Gamma}_2,\check{\Gamma}_1)
\end{equation}
has exactly the same law, i.e. is distributed as 
$T_{\pi}^{\rm BES3}$,
and is again independent from $\ED(\check{\Gamma}_1,\check{\Gamma}_2)$.

Hence, the conditional distribution of \eqref{Eq log CR - ED}
is the same if we condition on the value of 
$\ED(\check{\Gamma}_2,\check{\Gamma}_1)$.
But in the limit when $\ED(\check{\Gamma}_2,\check{\Gamma}_1)$
tends to infinity, by Proposition \ref{Prop expansion ED} the expression
\eqref{Eq log CR - ED} is close to
$$
\log\dfrac{R^{+}(\check{\Gamma})}
{\CR(0,\C\setminus \check{\Gamma})}
- Y([\check{\Gamma}]),
$$
up to an error $o(1)$. But this equals \eqref{Eq log rho plus Y} by definition and we conclude.
\end{proof}

\begin{cor}\label{cor:expmoments}
Let $[\Gamma^{\ast}]$ be a random shape distributed according to $\mu_{\text{SLE}_4}^{\rm loop}$.
Denote by $\rho^{\pm}([\Gamma^{\ast}])$ the ratio
$$
\rho^{\pm}([\Gamma^{\ast}])
=
\dfrac{R^{+}(\Gamma^{\ast})}{d(0,\Gamma^{\ast})},
$$
which depends only on the shape $[\Gamma^{\ast}]$.
Then
$$
\E\Big[
\rho^{\pm}([\Gamma^{\ast}])^{1/8}
e^{-Y([\Gamma^{\ast}])/8}
\Big]
<+\infty.
$$
\end{cor}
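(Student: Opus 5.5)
We split $\rho^{\pm}([\Gamma^{\ast}])^{1/8}e^{-Y([\Gamma^{\ast}])/8}$ into factors. Write
$$
\rho^{\pm}([\Gamma^{\ast}]) \;=\; \frac{R^{+}(\Gamma^{\ast})}{\CR(0,\C\setminus\Gamma^{\ast})}\cdot\frac{\CR(0,\C\setminus\Gamma^{\ast})}{d(0,\Gamma^{\ast})} \;=\; \rho^{+}([\Gamma^{\ast}])\cdot\frac{\CR(0,\C\setminus\Gamma^{\ast})}{d(0,\Gamma^{\ast})}.
$$
By the Koebe quarter theorem applied to $\Int(\Gamma^{\ast})$, we have $d(0,\Gamma^{\ast})\geq \CR(0,\C\setminus\Gamma^{\ast})/4$. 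So the second factor is at most $4$, and it suffices to show
$$
\E\Big[\rho^{+}([\Gamma^{\ast}])^{1/8}e^{-Y([\Gamma^{\ast}])/8}\Big]<+\infty .
$$
The expression inside the expectation equals $\exp\big(\tfrac18(\log\rho^{+}([\Gamma^{\ast}])-Y([\Gamma^{\ast}]))\big)$. By Proposition \ref{Prop law shape}, the exponent $\log\rho^{+}-Y$ has the law of $T_{\pi}^{\rm BES3}$. The claim therefore reduces to $\E[e^{T_\pi^{\rm BES3}/8}]<\infty$.

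This is an explicit Bessel-3 computation. For $\mathcal{R}_t$ a BES(3) started at $0$, the Laplace transform is
$$
\E\big[e^{-\mu T_a}\big] \;=\; \frac{a\sqrt{2\mu}}{\sinh(a\sqrt{2\mu})}.
$$
One gets this from the relation $\mathcal{R}=|B|$ for a three-dimensional Brownian motion $B$, or from the $h$-transform of Brownian motion killed at $0$. Analytic continuation to $\mu=-s<0$ gives
$$
\E\big[e^{sT_a}\big] \;=\; \frac{a\sqrt{2s}}{\sin(a\sqrt{2s})},
$$
valid as long as $a\sqrt{2s}<\pi$. The continuation is justified because the moment generating function of a positive random variable is finite up to its first singularity.

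With $a=\pi$ and $s=1/8$ we get $a\sqrt{2s}=\pi/2<\pi$. Hence
$$
\E\big[e^{T_\pi^{\rm BES3}/8}\big] \;=\; \frac{\pi/2}{\sin(\pi/2)} \;=\; \frac{\pi}{2}<\infty,
$$
which gives the bound with constant at most $4^{1/8}\pi/2$.

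The only delicate point is recalling the exponential moment threshold of $T_\pi^{\rm BES3}$ correctly. Equivalently, one can use the spectral gap of the Dirichlet problem for BES(3) on $[0,\pi)$: the principal eigenvalue is $\tfrac12$, corresponding to the eigenfunction $\sin(r)/r$. This gives $\E[e^{sT_\pi}]<\infty$ precisely for $s<1/2$, and $1/8$ lies well within this range. Everything else follows directly from Proposition \ref{Prop law shape} and Koebe.
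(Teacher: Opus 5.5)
Your proof is correct and follows the paper's argument: the Koebe quarter theorem gives $\rho^{\pm}\le 4\rho^{+}$, and Proposition \ref{Prop law shape} reduces the claim to $\E[e^{T_\pi^{\rm BES3}/8}]<\infty$. The only difference is the last step: the paper cites the tail bound $\P(T_\pi^{\rm BES3}>t)=e^{-t/2+o(t)}$ from \cite{ALS3}, whereas you compute the exponential moment explicitly as $\pi/2$ from the Bessel-3 Laplace transform, which is equally valid.
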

\begin{proof}
We have that
$$
\E\Big[
\rho^{\pm}([\Gamma^{\ast}])^{1/8}
e^{-Y([\Gamma^{\ast}])/8}
\Big]
\leq
4^{1/8}
\E\Big[
\rho^{+}([\Gamma^{\ast}])^{1/8}
e^{-Y([\Gamma^{\ast}])/8}
\Big].
$$
According to Proposition \ref{Prop law shape},
$$
\E\Big[
\rho^{+}([\Gamma^{\ast}])^{1/8}
e^{-Y([\Gamma^{\ast}])/8}
\Big]
=
\E
\Big[
e^{\frac{1}{8}T_{\pi}^{\rm BES3}}
\Big].
$$
But, as $t\to +\infty$,
$$
\P(T_{\pi}^{\rm BES3}>t) = e^{-\frac{1}{2}t + o(t)};
$$
see \cite[Section 5.5]{ALS3}.
Thus, the above exponential moment is finite.
\end{proof}

\subsection{Geometry of the continuum GFF using iterated TVS} 
Here we will recap some relevant results on the level line geometry of the continuum free field with constant boundary conditions, and also their relation to Brownian loop soups. 

Recall that each two-valued set (TVS) $A_{-a,b}$ of the GFF with zero boundary condition $\Phi_D$ can be seen either as a fractal gasket or as a countable collection of loops 
$\ell$, 
that form the boundaries of connected components of $D \setminus A_{-a,b}$. These are SLE$_4$-type loops, and to each of them a label $s_\ell$ with values in $\{-a, b\}$ is attached that is determined by $\Phi_D$, 
and such that we can write
$$\Phi_D = \sum_{\ell \in A_{-a,b}}\Phi_{\Int(\ell)}+
s_\ell \1_{\Int(\ell)},$$
and further, conditionally on $A_{-a,b}$, 
the fields $\Phi_{\Int(\ell)}$ are given by a collection of independent zero boundary GFFs in the domains $\Int(\ell)$. 

It is shown in \cite{AS2} that only in the case of CLE$_4$, i.e. of $A_{-a,a+4\lambda}$ for $a\in\R$, 
the labels $s_\ell$ are conditionally independent given the set, 
and further only in the case $A_{-2\lambda, 2\lambda}$ they are given by i.i.d symmetric coin tosses.

Here we describe the relation between the nested CLE$_4$ and $\Phi_D$, and in particular explain how random walks appear from these i.i.d. labels and how the sign excursion sets can be encoded using the nested CLE$_4$ and the labels. These are all known results, just somewhat reformulated.

\begin{prop}\label{prop:CLEcoupling1}
Consider a zero boundary GFF $\Phi_D$ on $D$ and the nested CLE$_4$ determined by exploring consecutively two-valued sets $A_{-2\lambda, 2\lambda}$. Then, for all boundary conditions $v \in 2\lambda \Z$, we have the following properties in coupling with $\Phi_D + v$: 
\begin{enumerate}
\item The labels $s_\ell \in \{-2\lambda, 2\lambda\}$ attached to each loop $\ell \in \text{CLE}_4^{n}$ are i.i.d. symmetric conditionally on the loops, 
and in terms of distributions,
$$\sum_{\Gamma \in \text{CLE}^{\rm nest}_4}(v1_{\Gamma \text{ outermost}}+s_\Gamma)
\1_{\Int(\Gamma)} = \Phi_D +v.$$
\item 
We assign the height $v$ to the boundary of the domain. Thereby, each gasket in CLE$_4^{\rm nest}$ can be uniquely attached a height corresponding to the sum of labels of loops $s_\ell$ in CLE$_4^{\rm nest}$ strictly surrounding this gasket, including the boundary of the domain. If this sum equals $h$, then we say that the gasket is at height $h$. Equivalently we can attach a height to each loop in CLE$_4^{\rm nest}$ by considering the sum of labels of loops surrounding and including this loop and the boundary. This way the height of the gasket corresponds to the height of its outermost loop.   
\item In this coupling, conditionally on the loops, for each $z \in D$, the heights $h$ of the gaskets in 
CLE$_4^{\rm nest}$ surrounding $z$ form a simple symmetric random walk $S^z(m)$ with step-size $2\lambda$ and starting point $v$. 
\item For $v \neq 0$, consider $A_0$, the first passage set of $\Phi_D + v$ with boundary values $0$. Then $A_0$ is almost surely equal to taking the connected component of the boundary of the closed union of all gaskets of non-zero height. In particular, for each $z \in D \cap \Q^2$, the boundary of the connected component of 
$D \setminus A_0$ containing $z$ is constructed as follows: we consider the nested loop sequence surrounding $z$ until the first time that a loop of height $0$ appears and that is the boundary. 
\item Further, the sign excursion sets of $\Phi_D + v$ have the following description using nested CLE$_4$: we consider the closed union of all the gaskets in CLE$_4^{\rm nest}$ of height different than zero. The connected components have then the law of sign excursion sets.
\item Finally, the dual of the excursion sets, i.e. the complement of the excursion sets in $D$, is given by the disjoint union of all the gaskets of height $0$ in the nested CLE$_4^{\rm nest}$. 
\end{enumerate}
\end{prop}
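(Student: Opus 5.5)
The plan is to derive all six items from the construction of the nested CLE$_4$ as iterated two-valued sets $A_{-2\lambda,2\lambda}$, together with the strong Markov property of local sets and the known identification of first passage sets and excursion sets in \cite{ALS1,ALS2,ALS4}.

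\textbf{Items (1)--(3).} First explore $A_{-2\lambda,2\lambda}$ of $\Phi_D$. By \cite{ASW,AS2} its loops form a simple CLE$_4$, and the labels are i.i.d. symmetric signs given the loops. We also have the decomposition
$$\Phi_D=\sum_\ell \big(\Phi_{\Int(\ell)}+s_\ell\1_{\Int(\ell)}\big),$$
where, given the set, the $\Phi_{\Int(\ell)}$ are independent zero boundary GFFs. Iterating this inside each $\Int(\ell)$ produces the nested CLE$_4$ with conditionally i.i.d. labels at every level, by induction on the level. Summing the decompositions up to level $n$ and letting $n\to\infty$ gives the identity in (1) for $\Phi_D+v$, after adding $v$ on the outermost loops. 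The remainder is a sum of independent zero boundary GFFs in the level-$n$ domains, and it tends to $0$ as a distribution because the conformal radii of the nested loops shrink to $0$. Items (2) and (3) are then bookkeeping: the heights seen from $z$ are partial sums of i.i.d. $\pm2\lambda$ labels started at $v$. This is a simple random walk, independent of the loop geometry.

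\textbf{Item (4).} Stop the nested exploration around each $z$ at the first loop of height $0$. The resulting local set $A$ has harmonic extension $0$ in every complementary component. On $A$ the field is "$\ge0$" in the first passage sense, since it is built from gaskets of height in $2\lambda\Z\setminus\{0\}$ with the same sign as $v$. A walk on $2\lambda\Z$ started at $v$ cannot change sign without passing through $0$, so every gasket of $A$ has height of the sign of $v$. The uniqueness characterisation of the first passage set $A_0$ in \cite{ALS1} then gives $A=A_0$ a.s.

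\textbf{Items (5)--(6).} Use the description from \cite{ALS4} of the sign excursion sets of $\Phi^{(v)}_D$ through first passage sets: the excursion touching the boundary is the FPS-type set built above. The other excursions are obtained recursively by applying the same construction inside height-$0$ loops, with the sign flipping after the zero level. At each height-$0$ loop the walk restarts from $0$, its first step gives a new sign, and we repeat (4) with $v=\pm2\lambda$. Hence the closed unions of maximal runs of nonzero-height gaskets are exactly the excursion sets. The complement is the union of the height-$0$ gaskets; these are disjoint, being distinct nested CLE$_4$ gaskets.

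\textbf{Main obstacle.} The delicate points are the identification in (4), which requires checking the FPS characterisation (the local set property and the boundary value condition), and making the connected-component statement in (5) rigorous. For the latter I would show that distinct nonzero-height gaskets separated only by a height-$0$ gasket are not connected. To do this I would compare with the metric graph approximation in \cite{ALS4}, or use that adjacent CLE$_4$ gaskets touch only through the shared loop, which belongs to the higher-level gasket's boundary.
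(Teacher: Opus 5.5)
Your proposal is correct and follows essentially the paper's route. The paper takes (1)--(3) from the coupling of \cite{ASW} and the nested construction, and for (4)--(6) it cites \cite[Remark 17]{ALS4}; for the companion Proposition~\ref{prop:CLEcoupling2} it argues exactly as you do, via uniqueness of first passage sets \cite[Theorem 4.3]{ALS1}, and then by iterating inside height-$0$ loops, where each excursion set is again an FPS. One small fix: distinct height-$0$ gaskets are disjoint not because they are distinct nested gaskets (adjacent gaskets share a loop), but because adjacent gaskets differ in height by $2\lambda$, so two height-$0$ gaskets are never adjacent.
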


\begin{proof}
Point A is the basic property and e.g. is explained in \cite{ASW}. Points B, C follow from the construction of nested CLE and point A.

Points D, E are discussed in \cite[Remark 17]{ALS4}. 
Finally, point F follows from point E. 
\end{proof}

In fact, it will be also important for us to generalize this proposition to arbitrary constant boundary conditions $v \in \R$, and to different iterations of two-valued sets. These are relevant for level set descriptions of the compactified free field, but we will not go in further detail on that here. The proposition is basically the same, the previous case corresponding to the limit $g \to 1$ in a specific sense.

\begin{prop}\label{prop:CLEcoupling2}
Consider a zero boundary GFF $\Phi_D$ on $D$, $g> 1$, 
and the following iteration of two-valued sets:
\begin{itemize}
    \item We start by exploring the outermost CLE$_4$ (i.e. $A_{-2\lambda, 2\lambda}$); 
\item next, inside each loop of CLE$_4$ we explore a two-valued set $A_{- 2\lambda, 2\sqrt{g}\lambda - 2\lambda}$ or $A_{-2\sqrt{g}\lambda+  2\lambda, + 2\lambda}$ depending on whether the label of the CLE$_4$ loop towards $z$ was $2\lambda$ or $-2\lambda$ respectively;
\item we then again explore CLE$_4$ in the interior of each new loop;
\item we continue this way making sure that on every even step we are at a height in $2\sqrt{g}\lambda\Z$, and on every odd step at a height in 
$2\sqrt{g}\lambda \Z \pm 2\lambda$.
\end{itemize}
Then, for any  $v \in 2\sqrt{g}\lambda \Z$, we have the following properties in coupling with $\Phi_D + v$: 
\begin{enumerate}
\item The labels $s_\ell \in \{-2\lambda, 2\lambda\}$ attached to each of $\ell$ appearing in an odd iteration, i.e. in an exploration of CLE$_4$, are still i.i.d. symmetric conditionally on all the loops. The labels appearing in the even iterations have the following conditional laws, given all the loops and the label $s_\ell$ of the last surrounding CLE$_4$ loop $\ell$: 
\begin{itemize}
\item they are equal to $(\sqrt{g}-1)s_\ell$ with probability $1/\sqrt{g}$;
    \item and equal to $-s_\ell$ with probability $1-1/\sqrt{g}$.
\end{itemize}
In terms of distributions we still have 
$$\sum_{\Gamma \in \text{CLE}^{\rm nest}_4}(v1_{\Gamma \text{ outermost}}+s_\Gamma)
\1_{\Int(\Gamma)} = \Phi_D +v.$$
\item 
We assign the height $v$ to the boundary of the domain. Thereby, each gasket in the iteration can still be uniquely attached a height corresponding to the sum of labels of loops $s_\ell$ strictly surrounding this gasket and the height of the boundary. Again, if this sum equals $h$, then we say that the gasket is at height $h$. Equivalently we can attach a height to each loop in the iteration by considering the sum of labels surrounding and including this loop and the height of the boundary. This way the height of the gasket corresponds to the height of its outermost loop.   
\item In this coupling, conditionally on the loops, 
for each $z \in D$, the heights $h$ of the CLE$_4$ gaskets in the iteration (i.e. the gaskets explored at each odd step) surrounding $z$ form a lazy random walk $S^z(m)$ with independent steps taking values in $\{0, -2\sqrt{g}\lambda, 2\sqrt{g}\lambda\}$, with probabilities $1-\frac{1}{\sqrt{g}}, \frac{1}{2\sqrt{g}},\frac{1}{2\sqrt{g}}$ respectively, and starting point $v$. 
\item For $v \neq 0$, consider $A_0$, the first passage set of $\Phi_D + v$ with boundary values $0$. Then $A_0$ is almost surely equal to taking the connected component of the boundary of the closed union of all gaskets in the iteration of non-zero height. In particular, for each 
$z \in D \cap \Q^2$, the boundary of the connected component of $D \setminus A_0$ containing $z$ is constructed as follows: we consider the nested loop sequence surrounding $z$ until the first time that a loop of height $0$ appears and that is the boundary. 
\item Further, the sign excursion sets of $\Phi_D + v$ have the following description using the nested loop ensemble: we consider the closed union of all the gaskets of height different than zero. The connected components have then the law of sign excursion sets.
\item Finally, the dual of the excursion sets, i.e. the complement of the excursion sets in $D$, is given by the disjoint union of the all the CLE$_4$ gaskets of height $0$ in this nesting. 
\end{enumerate}
\end{prop}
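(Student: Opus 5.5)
The plan is to reduce everything to the basic facts about two-valued sets recalled above: the labels, the Markov decomposition of $\Phi_D$ given a TVS, and the characterisation of first passage sets and sign excursion sets from \cite{ALS1,ALS4}. It then suffices to re-run the argument of Proposition \ref{prop:CLEcoupling1} with a different step distribution.

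\textbf{Step 1: the iteration is well defined and gives point (A).} Start with $\Phi_D+v$, $v\in 2\sqrt g\lambda\Z$. Exploring $A_{-2\lambda,2\lambda}$ of $\Phi_D$ yields loops with labels $\pm2\lambda$ that are i.i.d.\ symmetric given the loops \cite{ASW,AS2}. Inside a loop $\ell$ with label $s_\ell=2\lambda$, the restricted field is an independent zero-boundary GFF $\Phi_{\Int(\ell)}$. We explore its TVS $A_{-2\lambda,2\sqrt g\lambda-2\lambda}$, which exists because $a+b=2\sqrt g\lambda\ge 2\lambda$ (using $g>1$).
\begin{itemize}
\item Its labels lie in $\{-2\lambda,(\sqrt g-1)2\lambda\}=\{-s_\ell,(\sqrt g-1)s_\ell\}$.
\item Their probabilities come from the optional stopping / mean-zero identity for the TVS: $\P(\text{label } b)=a/(a+b)$. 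With $a=2\lambda$ and $b=(\sqrt g-1)2\lambda$ this gives $1/\sqrt g$ for the label $(\sqrt g-1)s_\ell$.
\item This probability holds for the loop surrounding any fixed point $z$; it is the conditional law given the loops along the nested sequence toward $z$. We would phrase it as in \cite{AS2} and be careful about what exactly is conditioned on.
\end{itemize}
The case $s_\ell=-2\lambda$ is symmetric. Iterating, and using that the nested sequence around $z$ shrinks to $z$ (the conformal radius decays geometrically), the partial sums of $(v\1_{\text{outermost}}+s_\Gamma)\1_{\Int(\Gamma)}$ converge to $\Phi_D+v$ as distributions. This is the same argument as for nested CLE$_4$ in \cite{ASW}.

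\textbf{Step 2: points (B) and (C).} Heights are defined by summing labels, so (B) is immediate. For (C), take two consecutive CLE$_4$ steps followed by the intermediate TVS step toward $z$; the height increment is $s+s'$. With probability $1/\sqrt g$ we get $s'=(\sqrt g-1)s$, so the increment is $\sqrt g\,s=\pm2\sqrt g\lambda$ with each sign equally likely, since $s$ is symmetric. With probability $1-1/\sqrt g$ we get $s'=-s$ and the increment is $0$. Independence across steps follows from the Markov property of local sets. This produces the lazy walk on $2\sqrt g\lambda\Z$ and checks the invariant that even-step heights lie in $2\sqrt g\lambda\Z$ and odd-step heights in $2\sqrt g\lambda\Z\pm2\lambda$.

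\textbf{Step 3: points (D)--(F), the main obstacle.} For (D), the plan is:
\begin{itemize}
\item Let $A$ be the closure of the union of gaskets explored before the first loop of height $0$ along each branch. This $A$ is a local set, being a stopped union of local sets.
\item Every loop at which we stop has height $0$ by construction. All intermediate heights up to that time are nonzero and of one sign: from a height in $2\sqrt g\lambda\Z\setminus\{0\}$ the walk cannot jump over $0$, since jumps have size $2\lambda<2\sqrt g\lambda$ or $2(\sqrt g-1)\lambda$ landing in the lattice. This is why odd-step heights $\pm2\lambda$ off a lattice point never cross $0$ except by hitting it.
\item One has to verify that the harmonic function $h_A$ of $A$ is $\ge0$ (for $v>0$) and equals $0$ on the boundaries of the complementary components, and that $\Phi_A-h_A\ge 0$ on $A$ in the sense of \cite{ALS1}. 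Uniqueness of first passage sets \cite{ALS1} then gives $A=A_0$.
\end{itemize}
The delicate part is this measure condition, together with showing that the stopped exploration terminates a.s.\ locally. Termination follows from the recurrence of the lazy walk, or from a finite-expectation argument for the hitting time of $0$. For the measure condition we would imitate \cite[Remark 17]{ALS4}, using that each piece is built from TVS with values bounded below by the current height minus $2\lambda$.

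Points (E) and (F) then follow exactly as in Proposition \ref{prop:CLEcoupling1}. Apply the characterisation of sign excursion sets via iterated first passage sets from \cite{ALS4} inside each height-$0$ loop, where the field again has zero boundary value, and restart the argument there. The nonzero-height gaskets form the excursion clusters, and the height-$0$ CLE$_4$ gaskets form the dual.
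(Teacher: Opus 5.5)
Your proposal is correct and follows essentially the same route as the paper: the CLE$_4$ labels and the even-step label probabilities come from the TVS Markov decomposition and the zero-mean identity ($\P(\text{label } b)=a/(a+b)$), (B)--(C) are read off from the construction, (D) comes from uniqueness of first passage sets \cite[Theorem 4.3]{ALS1}, and (E)--(F) follow by iterating the FPS description of excursion clusters inside the height-$0$ loops. Your caution about what is conditioned on is warranted, a point the paper's terse proof also glosses over: for $g\neq 4$ the TVS labels are not conditionally independent of the TVS set \cite{AS2}, so the label law in (A) and the lazy walk in (C) should be read as laws given the past of the exploration (averaged over the next TVS set), not given all the loops.
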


\begin{proof}
The part on CLE$_4$ comes from previous proposition and the probabilities for different labels from the local set decomposition and zero expectation, see e.g. \cite{AS2}. Further, Points B, C follow from A and the construction.

So it remains to argue D and E, as point F is again just a consequence of E. Further point E follows from point D by iteration and the fact that conditionally on the loop of the outer boundary, each excursion set is again given just by an FPS \cite{ALS2}. And point D itself is a direct consequence of the uniqueness of first passage sets, \cite[Theorem 4.3]{ALS1}.
\end{proof}

\subsection{Convergence of Brownian loop soup clusters}

The topological equivalence \cite{LupuTopoAIHP} that allows us to compute geometric observables of GFF sign excursion clusters via Brownian loop soups is valid on the level of metric graphs. We need to be able to push these results to the continuum set-up. 

The set-up is as in \cite[Section 4.1]{ALS2} in terms of notations, domains and topologies. The convergence results pertaining to the Brownian loop measure and the excursion measure are given for example by \cite[Lemma 4.6]{ALS2}.

\subsubsection{Convergence of separation events in the loop soups}

In this section we will discuss the convergence theorems for Brownian loop soup and boundary-to-boundary excursions. 

First, we need a convergence of topological separation events for Brownian loop soup and excursion clusters in simply-connected domains. 

\begin{theorem}\label{thm:convbls}
Consider a bounded simply-connected domain $D$ and sequence $(\widetilde D_m)_{m\geq 1}$ of approximating metric graphs via the lattice $\frac{1}{m}\Z^2$. 

In these domains consider the Brownian loop soup 
$\cL_D$ at the critical intensity 
($\alpha=1/2$ or central charge $c=1$) and the metric graph loop soup $\widetilde \cL_{\widetilde D_m}$ also at the critical intensity. 
Let $\widetilde \Xi^a_{\widetilde D_m}$ denote the PPP of Brownian boundary to boundary excursions of intensity $a \geq 0$ on the metric graph $\widetilde D_m$, 
and $\Xi^a_{D}$ the corresponding PPP of Brownian boundary to boundary excursions in the continuum.

Finally, let $(\widetilde C_k^m)_{k \geq 1}$ be the clusters of 
$\widetilde \Xi^a_{\widetilde D_m} 
\cup \widetilde \cL_{\widetilde D_m}$, 
and $(C_k)_{k \geq 1}$ the clusters of 
$\Xi^a_{D} \cup \cL_D$,  listed say by the order of decreasing diameter. 
Denote by $\widetilde \gamma, \gamma$ closed paths on the metric graph and in the continuum respectively.

Then for any $z_1,\dots, z_n \in D$ distinct points, and any $\eps_1, \dots, \eps_n \geq 0$, we have that the probabilities
$$\tilde p_{a,m} := \P\left(\nexists \widetilde C_i^{m}\text{ with }\widetilde \gamma \in \widetilde C_i^{ m}\text{ such that} \sum_{j=1}^{n}
\indx_{\widetilde \gamma}(z_j)
\text{ odd}
 \: \& \: \widetilde C_i^{m} \cap \bigcup_{j=1}^n\D(z_j, \eps_j) = \emptyset\right)$$ 
converge, as $m \to \infty$ to 
$$p_a := \P\left(\nexists C_i\text{ with }\gamma \in C_i\text{ such that} \sum_{j=1}^{n}
\indx_{\gamma}(z_j)
\text{ odd}
 \: \& \: C_i \cap \bigcup_{j=1}^n\D(z_j, \eps_j) = \emptyset\right).$$
\end{theorem}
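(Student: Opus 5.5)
The plan is a coupling argument. On a common probability space we realise the metric graph loop soups plus excursions and the continuum ones so that the clusters converge, and we show that the separation event is a continuity event for this convergence.

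\textbf{Step 1: coupling.} By \cite[Lemma 4.6]{ALS2} and the standard Skorokhod embedding of the loop soup (see \cite{ALS2, ALS4}), we couple $\widetilde \cL_{\widetilde D_m}\cup\widetilde \Xi^a_{\widetilde D_m}$ with $\cL_D\cup\Xi^a_D$. The coupling is chosen so that, almost surely, for every $\delta>0$ the loops and excursions of diameter at least $\delta$ converge one to one, as parametrised paths, up to reparametrisation. The convergence of clusters is the content of \cite{ALS4} (Theorem 2 and its proof, via \cite{Lupu2016Iso}). The metric graph clusters $\widetilde C^m_k$ then converge to the continuum clusters $C_k$ in the sense of \cite[Section 4]{ALS2}. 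Here the cluster structure is determined by macroscopic loops, together with the fact that, with high probability, no two distinct macroscopic continuum clusters are connected by chains of small loops. For $a=0$ this is the Sheffield--Werner/Lupu convergence to $\CLE_4$-type clusters; for $a>0$ it is the analogous excursion statement of \cite{ALS4}.

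\textbf{Step 2: reducing to finitely many macroscopic loops.} The event $\{\exists$ a cluster containing a loop $\gamma$ with $\sum_j \indx_\gamma(z_j)$ odd and avoiding $\bigcup_j\D(z_j,\eps_j)\}$ only involves loops that wind around some $z_j$. Such a loop must have diameter at least $\min_j \eps_j$, or, when $\eps_j=0$, at least comparable to its distance to $z_j$. For $\eps_j>0$ only loops of diameter bounded below matter, so there are finitely many. For $\eps_j=0$ we must control small loops near $z_j$. Almost surely no cluster in the continuum contains $z_j$, and clusters surrounding $z_j$ stay at positive distance from it. So a loop winding around $z_j$ in a cluster avoiding the other balls can be replaced, up to an event of small probability, by loops of size at least $\delta$. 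Some care is needed here, since a small loop around $z_j$ could have odd index while the large ones have even index. We handle this using that small loops near $z_j$ avoid the balls only if the cluster is small, plus a union bound showing that the probability of a cluster of diameter below $\delta$ surrounding $z_j$ tends to $0$ uniformly in $m$ (by loop soup scale invariance and RSW-type crossing estimates at criticality).

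\textbf{Step 3: continuity of the event.} On the coupling we show that the indicator of the event converges almost surely. This needs two continuum non-degeneracy facts. First, almost surely no loop passes through any $z_j$ and no macroscopic cluster touches $\partial\D(z_j,\eps_j)$ tangentially: if a cluster hits the closed ball, its interior is hit, because Brownian paths hitting a circle cross it. Second, winding indices are stable under uniform perturbation of paths that stay a positive distance from $z_j$. Both hold for each fixed configuration of $z_j,\eps_j$, because the clusters are built from Brownian paths and have a.s. no exceptional behaviour at deterministic circles. Given these, the convergence of macroscopic loops and of cluster memberships yields convergence of the indicators, and dominated convergence gives $\tilde p_{a,m}\to p_a$.

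\textbf{Main obstacle.} The hardest step is the cluster-identity convergence in Step 1 combined with the uniform smallness in Step 2. Two things must be ruled out: that discrete clusters merge through microscopic metric graph loops that have no continuum counterpart, and that two continuum clusters come infinitesimally close. I would take these directly from \cite[Section 4]{ALS2} and \cite{ALS4}, whose arguments handle exactly boundary excursions together with a critical soup. I would only add the winding-index bookkeeping, which is topologically stable once loops converge uniformly away from the $z_j$.
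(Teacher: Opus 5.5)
There is a genuine gap in Steps 2--3.

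\textbf{The event is not decided by finitely many macroscopic loops.} You treat it as if it were, namely by the loops that individually wind around some $z_j$ and therefore have diameter at least of order $\min_j\eps_j$. In the statement, however, $\gamma$ is a closed path \emph{inside a cluster}, not a loop of the soup. Such a path runs through many loops, typically infinitely many; each may be tiny, and none need wind around any $z_j$. So there is no reduction to finitely many loops. Convergence of macroscopic loops and of cluster memberships therefore does not make the indicator converge.

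\textbf{Hausdorff convergence gives only one inequality.} Whether a cluster contains an odd-index path avoiding $\bigcup_j\D(z_j,\eps_j)$ is a property of its holes, the complementary components containing the $z_j$. Their boundaries are built from infinitely many small loops. What \cite{ALS4} (and \cite{ALS2} for the boundary cluster when $a>0$) provides is Hausdorff convergence of clusters. Under that convergence the event is only semicontinuous: a discrete cluster with a small gap can converge to a continuum cluster that closes a hole around some $z_j$. You thus get only one of the two inequalities between $p_a$ and $\liminf_m/\limsup_m\tilde p_{a,m}$. This is exactly the pinching problem the paper points out, and it cannot be taken ``directly'' from \cite{ALS2, ALS4} as you propose.

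\textbf{The same defect appears elsewhere.} In your ball-avoidance argument, the closure of a cluster can approach $\partial\D(z_j,\eps_j)$ through accumulating small loops. The fact that a single Brownian path hitting a circle crosses it says nothing about this. Likewise, the CLE$_4$ convergence you invoke for $a=0$ concerns outer boundaries of outermost clusters, not the holes around the $z_j$.

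\textbf{The missing ingredient.} You need convergence of the boundaries of the complementary components around $z_1,\dots,z_n$ and of the outer boundary, jointly with the clusters, combined with a topological reduction. By Lemma \ref{lem:topo} and Corollary \ref{cor:topo}, an odd-index path exists in $A$ if and only if one exists in the filled set $\widehat A$, which is determined by exactly these boundaries. Their convergence therefore supplies the missing inequality. The paper obtains it as follows.
\begin{itemize}
\item For $a=0$, the Brug--Camia--Lis/Lupu argument for outer boundaries \cite{BrugCamiaLis2014RWLoopsCLE,LupuConvCLE} extends to boundaries around finitely many fixed points, as in the proof of \cite[Theorem 3]{ALS4}.
\item For $a>0$, it isolates the boundary cluster (the FPS); the remaining clusters are loop-soup clusters in the complementary domains. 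It then reruns the proof of \cite[Theorem 2.1]{BrugCamiaLis2014RWLoopsCLE} with finitely many excursions added.
\end{itemize}
That rerun is where the uniform-in-$m$ control of microscopic loops has to be proved.

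\textbf{A false estimate for $\eps_j=0$.} You claim that the probability of a cluster of diameter below $\delta$ surrounding $z_j$ is small uniformly in $m$. This is false: almost surely the continuum soup has infinitely many nested clusters around $z_j$ shrinking to it. That case needs a different argument.
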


Second, we extend this theorem to the case of domains with $n$ holes.

\begin{theorem}\label{thm:convbls2}
For $z_1, \dots, z_n \in D$ distinct points, and 
$\eps_1, \dots, \eps_n > 0$, let 
$D_{\eps_1,\dots, \eps_n}(z_1, \dots, z_n)$ be the 
$n+1$-connected domain 
$D \setminus \bigcup_{j = 1}^n \overline{\D(z_j,\eps_j)}$. 
Let $\widetilde D^m_{\eps_1, \dots, \eps_n}$ be the corresponding approximating metric graphs.

In these domains consider the Brownian loop soup 
$\cL_{D_{\eps_1,\dots, \eps_n}(z_1, \dots, z_n)}$ at the critical intensity, and the metric graph loop soup 
$\widetilde \cL_{\widetilde D^m_{\eps_1, \dots, \eps_n}}$, also at the critical intensity. 
Let 
$\widetilde \Xi^a_{\widetilde D^m_{\eps_1, \dots, \eps_n}}$ denote the PPP of Brownian boundary to boundary excursions of intensity equal to $a \geq 0$ on the outer boundary and equal to $0$ on the inner boundaries on the metric graph 
$\widetilde D^m_{\eps_1, \dots, \eps_n}$. 
We let $\Xi_{D_{\eps_1,\dots, \eps_n}(z_1, \dots, z_n)}^a$ denote the corresponding PPP of Brownian boundary to boundary excursions in the continuum.

Finally, let 
$(\widetilde C_k^{\eps_1, \dots, \eps_n, m})_{k \geq 1}$ 
be the clusters of 
$\widetilde \Xi^a_{\widetilde D^m_{\eps_1, \dots, \eps_n}} 
\cup \widetilde \cL_{\widetilde D^m_{\eps_1, \dots, \eps_n}}$, 
and $(C_k^{\eps_1, \dots, \eps_n})_{k \geq 1}$ the clusters of $\Xi_{D_{\eps_1,\dots, \eps_n}(z_1, \dots, z_n)}^a 
\cup \cL_{D_{\eps_1,\dots, \eps_n}(z_1, \dots, z_n)}$,  
listed say by the order of decreasing diameter. 
Let $\widetilde \gamma, \gamma$ denote closed paths on the metric graph and in the continuum respectively.

Then we have that the probabilities
$$\tilde p^{\rm mult}_{a,m} := \P\left(\nexists \widetilde C_i^{\eps_1, \dots, \eps_n, m}\text{ with }\widetilde \gamma \in \widetilde C_i^{\eps_1, \dots, \eps_n, m}\text{ such that} \sum_{j=1}^{n}
\indx_{\widetilde \gamma}(z_j)
\text{ odd}
\right),$$ 
converge, as $m \to \infty$, to 
$$p^{\rm mult}_a := \P\left(\nexists C_i^{\eps_1, \dots, \eps_n}\text{ with }\gamma \in C_i^{\eps_1, \dots, \eps_n}\text{ such that} \sum_{j=1}^{n}
\indx_{\gamma}(z_j)
\text{ odd}\right).$$
\end{theorem}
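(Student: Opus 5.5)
The plan is to reduce \cref{thm:convbls2} to \cref{thm:convbls}, or more precisely to rerun its argument in the multiply connected domain $D_{\eps_1,\dots,\eps_n}(z_1,\dots,z_n)$. The key observation is that the event in $p^{\rm mult}_a$ is again a topological separation event for a finite collection of macroscopic objects. A cluster $C$ contains a loop $\gamma$ with $\sum_j \indx_\gamma(z_j)$ odd only if $C$ has diameter bounded below by a constant $\delta(\eps_1,\dots,\eps_n)>0$, since such a loop must surround at least one of the holes $\overline{\D(z_j,\eps_j)}$. So only finitely many clusters are relevant, with high probability uniformly in $m$. We will use the convergence of loop soups and excursion PPPs on metric graphs to their continuum counterparts in the multiply connected setting, as in \cite[Section 4.1, Lemma 4.6]{ALS2}. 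The only new feature compared to the simply-connected case is that the boundary excursions have intensity $a$ on the outer boundary and $0$ on the inner boundaries, which is just a restriction of the excursion measure to the outer boundary component.

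The key steps, in order, are as follows.

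First, we couple $\widetilde \cL_{\widetilde D^m_{\eps_1,\dots,\eps_n}}\cup\widetilde\Xi^a$ with $\cL_{D_{\eps}}\cup\Xi^a$ so that macroscopic loops and excursions converge almost surely in the Hausdorff/uniform-parametrisation sense. This is available from the standard coupling of random walk and Brownian loops, restricted to loops of diameter $\geq \delta$.

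Second, we prove convergence of macroscopic clusters, in the sense that the clusters of the metric graph soup converge to the clusters of the continuum soup. We import the argument of \cite{LupuConvCLE, ALS2}. The metric graph clusters are the sign clusters of the metric graph GFF with boundary values $0$ on the holes and $\sqrt{2a}$ outside, so by the isomorphism they are controlled by the GFF. Their limits are the continuum clusters, because two distinct continuum clusters at positive distance do not merge in the limit, and two macroscopic loops in the same continuum cluster are connected by finitely many loops. The non-merging statement relies on the absence of ``near-touching'' clusters, which follows from the excursion-set description of \cite{ALS4}. That description carries over to finitely connected domains by locality: away from the holes one can compare with simply connected subdomains.

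Third, we show that the event is a continuity event. The parity of $\sum_j\indx_\gamma(z_j)$ is stable under small perturbations of $\gamma$ that avoid the points $z_j$. Here this is automatic, since all paths stay at distance $\geq \eps_j$ from $z_j$, so the relevant indices are those of loops winding around the holes and are robust. Consequently, the event ``some cluster carries an odd-index loop'' converges under the coupling. The one subtlety is that a cluster may contain odd-index \emph{closed paths} $\gamma$ formed by concatenating several loops, not just a single loop. To handle this, we replace the statement by the equivalent one that the cluster, viewed as a union of finitely many macroscopic loops together with microscopic connectors, contains a cycle whose homology class in $H_1(D_\eps;\Z_2)$ has odd total index. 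This is determined by the combinatorial intersection graph of macroscopic loops, which converges.

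The main obstacle is this last homological step. On the metric graph, closed paths in a cluster may wind through small loops near the boundary of the holes, and one must show that odd winding is already witnessed by macroscopic loops, uniformly in $m$. We will first try to use the fact that, since the holes have zero boundary condition, a cluster touching the vicinity of a hole boundary while realising nontrivial homology must contain a macroscopic crossing around the hole. Where needed, we then use the one-arm estimates for first passage sets near a boundary at height $0$ \cite{ALS1} to rule out small-scale encirclements with probability tending to one. Once this is done, dominated convergence gives $\tilde p^{\rm mult}_{a,m}\to p^{\rm mult}_a$.
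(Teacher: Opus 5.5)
Your outline correctly identifies where the difficulty lies: odd clusters are macroscopic, and the danger is near the holes. However, the two steps that carry the proof are asserted rather than proved, and the justifications you give would not work.

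\textbf{Cluster convergence in the multiply connected domain.} The results you import (\cite{LupuConvCLE,BrugCamiaLis2014RWLoopsCLE}, \cite[Theorem 3]{ALS4}) are stated in simply connected domains. Even there, Hausdorff convergence of clusters does not give convergence of separation events, because clusters can pinch. This is why Theorem~\ref{thm:convbls} also needs convergence of the boundaries of the complementary components around the $z_j$, together with Lemma~\ref{lem:topo}.

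Your ``locality'' patch can at best control what happens away from the holes. It cannot transfer the separation event itself: a cluster containing a closed path with odd total index encircles a hole, so it lies in no simply connected subdomain of $D_{\eps_1,\dots,\eps_n}(z_1,\dots,z_n)$. The homological reformulation does not rescue this. Whether a union of traces encloses a hole is not read off from the intersection graph of macroscopic loops; already the trace of a single loop of index $0$ can enclose a hole. You would also still have to rule out enclosures created or destroyed at touchings and by microscopic loops.

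The idea you are missing is the restriction property used in the paper. The soup and excursions in $D_{\eps_1,\dots,\eps_n}(z_1,\dots,z_n)$ are those of $D$ with the trajectories hitting $\bigcup_j\overline{\D(z_j,\eps_j)}$ removed. Let $F^{\eta}$ and $F^{\eta,\rm{mult}}$ be the events, in $D$ and in the domain with holes respectively, that an odd cluster exists at distance more than $\eta$ from the disks. Then $\P(F^\eta)=\E[\1_{F^{\eta,\rm{mult}}}R_\eta]$, where $R_\eta\in[0,1]$ is the conditional probability that the added trajectories miss the odd clusters. Since $R_\eta$ involves only macroscopic trajectories, it converges by \cite[Lemma 4.6]{ALS2}. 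So for each fixed $\eta>0$, Theorem~\ref{thm:convbls}, including its convergence of boundaries around the $z_j$, transfers to the domain with holes.

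\textbf{The limit $\eta\to0$.} To exchange $m\to\infty$ and $\eta\to 0$ you need
$\lim_{\zeta\to0}\sup_m\P(\text{some odd cluster comes within }\zeta\text{ of }\bigcup_j\D(z_j,\eps_j))=0$.
Your claim that such a cluster ``must contain a macroscopic crossing around the hole'' is exactly what has to be proved, and ``one-arm estimates'' does not say how. The paper splits the estimate into two cases:
\begin{itemize}
\item Clusters that also reach macroscopically far are handled by \cite[Lemma 4.13]{ALS2}.
\item Clusters staying inside some $\D(z_j,\eps_j+\zeta)$ are handled by an FPS with small boundary value $u>0$ on two diametrically opposite arcs of $\partial\D(z_j,\eps_j)$. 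Two disjoint arcs are used rather than the whole circle, so that connectedness is a genuine constraint. A cluster hugging the hole would force this FPS to be connected. But the FPS converges from the metric graph to the continuum \cite[Proposition 4.7, Corollary 4.12]{ALS2}, and the continuum FPS collapses onto the two arcs as $u\to 0$ by \cite[Proposition 4.5, Corollary 4.12]{ALS1}.
\end{itemize}
Without an estimate of this kind, your concluding dominated-convergence step has nothing to rest on.
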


We will first discuss how the simply-connected case can be combined directly from the results and proofs existing in the literature. Thereafter, we argue how to extend it to the case of $n$-connected domains via an absolute continuity argument.

A key topological input is the following simple lemma.
This is most likely classical and follows from results in planar topology / homology even under weaker hypotheses, but for us the following version suffices and let us give a hands-on proof. 

\begin{lemma}\label{lem:topo}
Let $z_1, \dots, z_n$ be distinct points in $D$. Let $A \subseteq D$ be a closed connected set not containing any of the $z_j$, such that $D \setminus A$ has countably many connected components,  the boundary of each connected component is a Jordan curve, and for every $\eps > 0$ only finitely many of these curves have diameter more than 
$\eps$. 

Let $\widehat A$ be the set obtained as follows: we take the connected components $O_j$ of $z_j$ in $D \setminus A$, the connected component $O_0$ whose boundary is $\partial D$ and set 
$\widehat A := \overline{D \setminus \bigcup_{j=0}^n O_j}$.

Then there exists a closed path $\gamma \subseteq A$ with $\sum_{j=1}^{n}
\indx_{\gamma}(z_j) \text{ odd}$ if and only if there exists a closed path $\widehat \gamma \subseteq \widehat A$ with $\sum_{j=1}^{n}
\indx_{\widehat \gamma}(z_j) \text{ odd}$. 
\end{lemma}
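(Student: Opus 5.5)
The direction from $A$ to $\widehat A$ is immediate, since $A\subseteq\widehat A$. The content of the lemma is the converse. Given a closed path $\widehat\gamma\subseteq\widehat A$ with $\sum_j\indx_{\widehat\gamma}(z_j)$ odd, I will produce a closed path $\gamma\subseteq A$ whose index around each $z_j$ is the same.

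\textbf{Structure of $\widehat A$.} First I record what $\widehat A$ looks like. The set $\widehat A$ is the union of $A$ with the closures $\overline O$ of all the "filled" components $O$ of $D\setminus A$, that is, all components other than $O_0,O_1,\dots,O_n$. Each such $\overline O$ is a closed Jordan domain with $\partial O\subseteq A$, and it does not contain any $z_j$. Since $z_j\in O_j$ is open and disjoint from $\widehat A$, we have $\delta:=\min_j d(z_j,\widehat A)>0$. Parametrise $\widehat\gamma:[0,1]\to\widehat A$. The set of times $t$ with $\widehat\gamma(t)\notin A$ is relatively open, hence a countable union of open intervals $I_k$. On each $I_k$ the path stays inside a single filled component $O^{(k)}$, and its endpoints lie on the Jordan curve $\partial O^{(k)}$.

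\textbf{Surgery on finitely many intervals.} By uniform continuity of $\widehat\gamma$, only finitely many intervals $I_k$ have $\operatorname{diam}\widehat\gamma(I_k)\geq\delta/2$. By hypothesis, only finitely many holes have diameter $\geq\delta/2$.

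For the finitely many intervals that either have large image or lie in a large hole, I replace $\widehat\gamma|_{I_k}$ by a parametrised arc of $\partial O^{(k)}$ joining the same endpoints. Such an arc exists by the Jordan curve hypothesis. The concatenation of the old piece with the reversed new piece is a closed loop contained in the closed Jordan domain $\overline{O^{(k)}}$. This domain is simply connected and avoids every $z_j$, so that loop has index $0$ around each $z_j$, and the indices of the whole path are unchanged.

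\textbf{Small excursions.} All remaining excursions are small: each stays in a hole $O$ of diameter $<\delta/2$. I replace each by the straight-line parametrisation along any arc of $\partial O$ between its endpoints, or, more conveniently, by the constant map at the entry point followed by a boundary arc. The resulting path $\gamma$ differs from the modified $\widehat\gamma$ by less than $\delta/2$ pointwise at every time. Continuity of $\gamma$ at accumulation points of infinitely many intervals follows from the diameter bound: near such a point the holes visited have diameters tending to $0$, because only finitely many holes exceed any given $\eps$, and the within-hole path pieces also shrink. Since $|\gamma(t)-\widehat\gamma(t)|<\delta\leq d(z_j,\widehat\gamma(t))$ for all $t$, the straight-line homotopy between the two paths avoids every $z_j$. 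Hence $\indx_\gamma(z_j)=\indx_{\widehat\gamma}(z_j)$, and $\gamma\subseteq A$ because every boundary arc lies in $A$.

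\textbf{Main obstacle.} The delicate step is continuity of the modified path where infinitely many excursion intervals accumulate, and in particular when infinitely many of them lie in the same hole. I will handle this by choosing, for each hole, the boundary arc through a fixed homeomorphism $\overline O\to\overline{\D}$ given by Carathéodory–Schoenflies. For each excursion I take the shorter arc in that parametrisation. By uniform continuity of this homeomorphism and its inverse, excursions whose endpoints are close get replaced by arcs of small diameter. This gives the needed equicontinuity.
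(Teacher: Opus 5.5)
Your construction is the paper's: each excursion of $\widehat\gamma$ into a filled component is rerouted along that component's Jordan boundary. The paper only sketches this step and makes it rigorous through approximating paths $\gamma_\eps$ that avoid components of diameter $>\eps$, deferring the limiting argument to \cite[Proposition 3.12]{AS2}. You instead build $\gamma$ in one pass and control the indices in two ways: finitely many surgeries, each changing the path by a loop inside a closed Jordan domain that avoids the $z_j$, plus a straight-line homotopy between uniformly $\delta$-close paths. That is a fine, more self-contained way to make the sketch precise.

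One step is false as written, however. The intervals lying in a large hole need not be finitely many. Near a boundary point of a large hole $O$, the path $\widehat\gamma$ can oscillate across $\partial O$ and make infinitely many tiny excursions into $O$. So the set ``intervals that have large image or lie in a large hole'' can be infinite. Once those intervals are excluded from the surgery, the claim that all remaining excursions stay in a hole of diameter $<\delta/2$ fails. That leaves the pointwise $\delta$-closeness needed for your homotopy unjustified for small excursions into large holes.

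The repair uses the tool from your last paragraph.
\begin{itemize}
\item Fix homeomorphisms $\phi_O:\overline O\to\overline{\D}$ for the finitely many holes with $\operatorname{diam} O\ge\delta/2$.
\item Choose $\eta\in(0,\delta/2)$ such that, in each of these holes, two boundary points at distance $<\eta$ are joined by an arc (the shorter one in the $\phi_O$-parametrisation) of diameter $<\delta/2$.
\item Do the surgery only on the finitely many intervals with $\operatorname{diam}\widehat\gamma(\overline{I_k})\ge\eta$.
\end{itemize}
Every other excursion lies either in a hole of diameter $<\delta/2$, or is an $\eta$-small excursion into a large hole. In the second case it is replaced by a short arc through its entry point $a_k$, and then $|\gamma(t)-\widehat\gamma(t)|\le|\gamma(t)-a_k|+|a_k-\widehat\gamma(t)|<\delta/2+\eta<\delta$. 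So the homotopy argument goes through.

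Also reparametrise so that $\widehat\gamma(0)\in A$, so that all excursion intervals are open. This is possible because a closed path contained in a single filled hole has index $0$ around every $z_j$.
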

This is the pedestrian way to state this avoiding topological vocabulary. We will also sketch a proof in similar lines, although heuristically what is just happening is that adding 2-cells doesn't change 1-homology - the difference w.r.t. textbooks being the presence of infinitely many holes.
\begin{proof}
One direction is clear as $A \subseteq \widehat A$.
For the other direction we observe that given $\widehat \gamma \in \widehat A$ with the given condition, we can construct $\gamma$ heuristically as follows: $\gamma(t)$ is equal to $\widehat \gamma(t)$ as long as it is inside $A$. When it enters a connected component of $D \setminus A$, then we follow the boundary of this connected component until $\widehat \gamma$ enters $A$ again. 
This can be made precise via a limiting argument by defining $\gamma_\eps(t)$ such that $\gamma_\eps$ does not enter the connected components with diameter more than 
$\eps$. The exact argument is given in the proof of the second statement of \cite[Proposition 3.12]{AS2}.
\end{proof}

An immediate corollary that we will use is as follows.
\begin{cor}\label{cor:topo}
Let $z_1, \dots, z_n$ be distinct points in $D$. Let $A \subseteq D$ be a closed connected set not containing any of the $z_j$, such that $D \setminus A$ has countably many connected components,  the boundary of each connected component is a Jordan curve, and for every $\eps > 0$ only finitely many of these curves have diameter more than 
$\eps$. 
Let $(A_m)_{m \geq 1}$ be closed sets that converge to $A$ in the sense that $A_m$ converges to $A$ in the Hausdorff distance, as do the outer boundary and all boundaries of the connected components of $z_1, \dots, z_n$ in $D\setminus A_m$. 

Then there exists a closed path $\gamma \subseteq A$ with $\sum_{j=1}^{n}
\indx_{\gamma}(z_j) \text{ odd}$ and $\gamma \cap \bigcup_{j=1}^n\D(z_j, \eps_j) = \emptyset$ if for all $m \geq m_0$ the same holds for some $\gamma \in A_m$. 
\end{cor}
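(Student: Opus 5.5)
The plan is to reduce the statement to \cref{lem:topo}, so that the existence of an odd path becomes a property of the finitely many Jordan curves bounding $O_0,O_1,\dots,O_n$, rather than of the whole fractal set. Those curves are exactly the objects whose Hausdorff convergence is assumed. I argue by contradiction: suppose no closed path $\gamma\subseteq A$ avoiding $\bigcup_j \D(z_j,\eps_j)$ has odd index sum, and derive the same conclusion for $A_m$ with $m$ large.

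\textbf{Step 1 (reformulation).} Apply \cref{lem:topo} to $A$, and to each $A_m$ (after checking that $A_m$ also satisfies the hypotheses, or at least that the easy direction $A_m\subseteq\widehat A_m$ is all we need there). Odd paths in $A$ then correspond to odd paths in $\widehat A$. The construction in the proof of \cref{lem:topo} only replaces excursions into holes by arcs of hole boundaries, which stay inside the closure of the original path's neighbourhood. I will check that it also preserves disjointness from the open disks $\D(z_j,\eps_j)$, up to replacing $\eps_j$ by a slightly smaller $\eps_j'$. Now $\widehat A$ is the closed region between the Jordan curve $\partial O_0$ and the $n$ Jordan curves $\partial O_j$, some of which may coincide. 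A parity count of winding numbers shows that $\widehat A$ carries a closed path with odd index sum if and only if the curves $\partial O_1,\dots,\partial O_n$ can be grouped into a connected piece of $\widehat A$ that separates an odd number of the $z_j$ from $\partial O_0$. Equivalently, this is the condition that some union of the $z_j$-components has an odd-cardinality "shield" inside $\widehat A$.

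\textbf{Step 2 (passing to the limit).} Take, for each $m\geq m_0$, an odd path $\gamma_m\subseteq A_m$ avoiding the disks. By Step 1 we may assume $\gamma_m$ is built from arcs of $\partial O^m_0,\partial O^m_1,\dots,\partial O^m_n$ and connecting arcs in $\widehat A_m$. Since the index sum lives in $\Z/2$, I pass to a subsequence on which the same subset $J\subseteq\{1,\dots,n\}$ of odd cardinality is enclosed. By Hausdorff convergence of $\partial O^m_j$ to $\partial O_j$ and of $A_m$ to $A$, and since the $z_j$ are at positive distance from $A$, the region of $\widehat A_m$ separating $\{z_j\}_{j\in J}$ from the rest converges to a closed set in $\widehat A$ separating $\{z_j\}_{j\in J}$ from $\partial O_0$. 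For this I use the planar fact that a Hausdorff limit of continua separating two points at uniformly positive distance still separates them. By Janiszewski-type arguments, the limit continuum then contains a Jordan curve around exactly the points $z_j$, $j\in J$, which gives the desired odd $\gamma$ in $\widehat A$, hence in $A$ by \cref{lem:topo}. The disk-avoidance constraint passes to the limit because $\gamma_m\cap\D(z_j,\eps_j)=\emptyset$ is a closed condition on Hausdorff limits.

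\textbf{Main obstacle.} The key difficulty is Step 2: Hausdorff limits of paths are not paths, and limits of connected sets may pinch. This is precisely why the assumption includes convergence of the individual boundaries of the components of $z_1,\dots,z_n$ and of $O_0$, not just of $A_m$. I would first try to avoid paths entirely by working with separation properties of continua (Janiszewski's theorem and unicoherence of the plane), and only at the very end extract a Jordan curve from a separating continuum. A secondary subtlety is whether an extracted path can touch $\partial\D(z_j,\eps_j)$; if this causes trouble, I would state the corollary with open disks and the closedness argument above, or shrink $\eps_j$ slightly.
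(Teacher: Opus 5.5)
Your route is the one the paper has in mind: pass from $A_m$ to $\widehat A_m$ and from $\widehat A$ back to $A$ through Lemma~\ref{lem:topo}, and use the assumed Hausdorff convergence of the outer boundary and of the $\partial O^m_j$. The paper records the statement as an immediate consequence of Lemma~\ref{lem:topo} and gives no further argument. However, the two steps that carry your proof do not work as described.

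\textbf{The surgery does not preserve disk-avoidance.} The construction of Lemma~\ref{lem:topo} replaces a crossing of a hole $U$ by one of the two arcs of the Jordan curve $\partial U$ joining the entry and exit points. Both arcs may enter $\D(z_j,\eps_j)$ deeply, so even a slight shrinking of $\eps_j$ does not help. For instance, take $U=\{1<|z|<3,\ \delta<\arg z<2\pi-\delta\}$, $z_j=2$, $\eps_j=\tfrac12$ and $\delta$ small. A crossing of $U$ running counterclockwise from $1.2e^{i\delta}$ to $2.8e^{-i\delta}$ avoids $\D(2,\tfrac12)$. Yet each of the two arcs of $\partial U$ between these points contains one of $2e^{\pm i\delta}\in\D(2,\tfrac12)$. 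Shrinking $\eps_j$ would in any case prove a weaker statement, and the disks in the statement are already open.

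\textbf{Janiszewski does not give a path.} Janiszewski's theorem and unicoherence only tell you that the Hausdorff limit $K$ of the $\gamma_m$ separates. They do not produce a closed path in $K$, let alone a Jordan curve: a Warsaw circle separates the plane, while every closed path in it has winding number $0$ about every point. Extracting a path needs local connectedness. You have it neither for $K$ nor for $\widehat A\setminus\bigcup_j\D(z_j,\eps_j)$; it fails e.g.\ if some $\partial O_k$ spirals into a point of $\partial\D(z_j,\eps_j)$. Separation also forgets parity. If $\gamma_m$ winds once around $z_1$ and twice around $z_2$, then $K$ separates both from $\partial O_0$. So ``a Jordan curve around exactly $\{z_j\}_{j\in J}$'' is the wrong target.

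\textbf{What the disk-free part actually needs.} No limit of paths is required, only a parity count.
\begin{enumerate}
\item A closed path in $A_m$ has constant winding number on each component of $D\setminus A_m$, and winding $0$ on the component adjacent to $\partial D$. So an odd path in $A_m$ forces some other component of $D\setminus A_m$ to contain an odd number of the $z_j$.
\item Two of the $z_j$ in the same component of $D\setminus A$ are joined there by a path at positive distance from $A$. By Hausdorff convergence this path is disjoint from $A_m$ for $m$ large, and similarly for the component adjacent to $\partial D$.
\item Hence, for large $m$, the partition of $\{z_1,\dots,z_n\}$ by components of $D\setminus A$ refines the one by components of $D\setminus A_m$. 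An odd block of the coarser partition contains an odd block of the finer one.
\item So some $O_i\neq O_0$ contains an odd number of the $z_j$. The Jordan curve $\partial O_i\subseteq A$ winds once around every point of $O_i=\Int(\partial O_i)$ and not at all around the other $z_j$, so it is an odd path.
\end{enumerate}

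\textbf{The disk condition.} In the proof of Theorem~\ref{thm:convbls} the condition is imposed on the whole cluster, and then it is automatic: avoiding the open disks survives Hausdorff limits, so every path in $A$ avoids them. For the literal path-wise constraint, $\partial O_i$ may meet the disks, and your plan gives no valid way to reroute it.
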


\paragraph{Simply-connected case: proof of Theorem \ref{thm:convbls}.}
First, \cite[Theorem 3]{ALS4} shows that the (sign) excursion decomposition of the metric graph GFF does converge to that of the continuum GFF in the unit disk. It is based on convergence results of \cite{LawlerFerreras2007RWLoopSoup, BrugCamiaLis2014RWLoopsCLE,LupuConvCLE,ALS2} and can be stated as follows.

\begin{theorem}[Convergence of the excursion decomposition]\label{thm:conva}
Let $\Phi_D$ be a zero boundary GFF on a bounded simply-connected domain $D$ and $\tilde \phi_m$ be a sequence of zero boundary metric graph GFFs on $\widetilde D_m$ that are coupled with the GFF $\Phi_D$ such that a.s., 
$\tilde \phi_m \to \Phi$ in $H^{-\varepsilon}(\C)$, for some $\eps>0$.
Further, take 
$(\widetilde C_k^m,  \tilde\nu_k^m,\tilde s_k^m)
_{k \geq 1}$ 
the excursion decomposition of $\tilde \phi_m$, where 
$\widetilde C_k^m$ are the connected components of 
$\{\tilde \phi_m \neq 0\}$, 
$\tilde \nu_k^m$ are given by $|\tilde \phi_m|$ restricted to $\widetilde C_k^m$, and $\tilde s_k^m$ are the sign of 
$\phi_m$ on $\widetilde C_k^m$. 
Moreover, the clusters are ordered by decreasing size of diameter.

For every $k>0$,  we have the joint convergence in probability of $ \widetilde C_k^{(m)}\to C_k$, 
$\tilde\nu_k^{(m)}\to \nu_k$ and $\tilde s_k^{(m)}\to s_k$ as $m\to \infty$, in the Hausdorff topology\footnote{This is the topology on closed sets given by the Hausdorff distance.} for the first component, and in the weak topology of measures for the second component.
\end{theorem}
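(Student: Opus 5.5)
The plan is to follow the proof of \cite[Theorem 3]{ALS4} and check that every input it uses holds for a general bounded simply-connected domain $D$, not only for $\D$. Conformal invariance cannot be used to transfer the statement, because the lattice approximations $\widetilde D_m$ are not conformally covariant. So I would redo the argument directly in $D$, with the convergence results of \cite{LawlerFerreras2007RWLoopSoup, BrugCamiaLis2014RWLoopsCLE, LupuConvCLE, ALS2} as black boxes. These are stated, or proved verbatim, for bounded simply-connected domains approximated by $\frac1m\Z^2$.

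The steps, in order, are as follows.

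First, by \cite[Section 4]{ALS2}, the metric graph first passage sets $\widetilde A^m_{-a}$ (and $\widetilde A^m_{b}$) of $\tilde\phi_m$ converge jointly with $\tilde\phi_m\to\Phi_D$ in probability. The limits are the continuum FPS of $\Phi_D$, with convergence in the Hausdorff topology, and the associated Minkowski-content measures $\tilde\phi_m+a$ restricted to the FPS converge weakly. This holds in general bounded simply-connected domains, under the same coupling in $H^{-\eps}$.

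Second, I would express the excursion decomposition through the construction in Proposition \ref{prop:CLEcoupling1}, points D--F. The closure of the outermost excursion clusters touching the boundary, and more generally all clusters, are obtained by iterating FPS explorations with boundary value $0$ inside the complementary components. On the metric graph this is an exact identity: an excursion cluster of $\tilde\phi_m$ is the connected component of $\{\tilde\phi_m\neq0\}$, and it is read off from the FPS $\widetilde A^m_0$ of $\tilde\phi_m\pm v$ run inside the previously explored components.

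Third, I would obtain, for any fixed $k$, joint convergence of the first $k$ clusters by a diagonal and induction argument over finitely many exploration steps. Three ingredients enter here:
\begin{itemize}
\item tightness, which follows because the clusters are subsets of $\overline D$ and the measures $\tilde\nu^m_k$ have uniformly bounded mass, since $\E\int|\tilde\phi_m|$ is bounded;
\item identification of every subsequential limit via the FPS convergence of the first step;
\item the uniqueness of continuum FPS \cite[Theorem 4.3]{ALS1}, which pins down the limit as the continuum excursion sets $C_k$.
\end{itemize}
Convergence of the signs $\tilde s^m_k$ follows from the weak convergence of $\tilde\phi_m$ tested against a function supported near a point of $C_k$ where $\nu_k$ charges. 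The measures $\tilde\nu^m_k=|\tilde\phi_m|\1_{\widetilde C^m_k}$ converge because they coincide with the FPS measures identified in the first step, and weak limits of measures supported on Hausdorff-converging sets are supported on the limit set.

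The main obstacle is that Hausdorff convergence of a cluster $\widetilde C^m_k$ could fail in two ways: two macroscopic clusters could merge in the limit, or a macroscopic cluster could split into pieces joined only by mesoscopic necks. This is exactly why ordering the clusters by diameter is delicate. To rule it out, I would use the loop-soup description: clusters of $\widetilde\cL_{\widetilde D_m}\cup\widetilde\Xi^a$ converge to clusters of $\cL_D\cup\Xi^a_D$ in the sense of \cite{LupuConvCLE} and \cite[Section 5]{ALS2}, together with the fact that in the continuum two distinct clusters are at positive distance a.s. If this were not available directly in $D$, I would handle it by localisation: near $\partial D$, compare with a disk or half-plane configuration using the boundary-excursion convergence of \cite[Lemma 4.6]{ALS2}, which is the only place where the geometry of $D$ enters.
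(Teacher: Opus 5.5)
The paper gives no proof of its own here: it imports \cite[Theorem 3]{ALS4}, which rests on \cite{LawlerFerreras2007RWLoopSoup, BrugCamiaLis2014RWLoopsCLE, LupuConvCLE, ALS2}. Re-running that argument directly in $D$ is a reasonable plan, and you are right that conformal invariance cannot transport a lattice statement. But your sketch has a genuine gap at Step 2, and Step 3 depends on it.

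The theorem concerns the \emph{zero}-boundary field, and for zero boundary data the level-$0$ first passage set carries no information. In the continuum it is just $\partial D$, and its metric-graph version only collects clusters touching $\partial\widetilde D_m$, which become microscopic. Moreover, the complementary components of any explored excursion cluster again carry zero boundary data, so iterating level-$0$ FPS explorations never produces the macroscopic interior clusters. In Proposition~\ref{prop:CLEcoupling1}, the FPS description (point D) requires $v\neq 0$. Points E--F instead rest on the nested CLE$_4$, i.e.\ on the two-valued set $A_{-2\lambda,2\lambda}$, whose labels $\pm2\lambda$ create the non-zero boundary data for the FPS inside each loop.

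That step has no exact metric-graph counterpart. The field $\tilde\phi_m$ vanishes on the outer boundary of an outermost sign cluster, the field inside is not $\pm2\lambda$ plus an independent metric-graph GFF, and the cluster is not literally ``loop plus FPS''. Identifying the limits of these outer boundaries, with their labels, as the CLE$_4$ coupled to $\Phi_D$ is essentially the content of \cite{LupuConvCLE}. One must then still show that the whole metric-graph cluster converges to the FPS inside the limiting loop before iterating. Without this, your identification of subsequential limits through FPS convergence and \cite[Theorem 4.3]{ALS1}, and of $\tilde\nu^m_k$ with FPS measures, has nothing to apply to.

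Your fallback for the pinching problem does not close the gap either. Those references give convergence of outer boundaries and of boundary-touching clusters. Hausdorff convergence of full loop-soup clusters is, in this paper, deduced \emph{from} this theorem together with \cite[Theorem 2]{ALS4}, so using it as an input is circular.

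The sign step also fails. Every neighbourhood of a point of $C_k$ meets holes of $C_k$, since $C_k$ has empty interior. Conditionally on $C_k$, $s_k$ and the field outside those holes, the holes contribute to $(\Phi_D,f)$ an independent centred Gaussian: the zero-boundary GFFs inside them, carrying infinitely many clusters of both signs. This Gaussian is non-degenerate as soon as $f$ charges the holes. So the sign of $(\tilde\phi_m,f)$ for a localized $f$ does not determine $\tilde s^m_k$, even asymptotically. The signs have to be carried along with the clusters, for instance from the labels $\pm2\lambda$ of the limiting loops identified in \cite{LupuConvCLE}, or from convergence of the signed measures $\tilde\phi_m\1_{\widetilde C^m_k}$.
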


Now, combined with \cite[Theorem 2]{ALS4}, which says that the excursion clusters are given by the critical Brownian loop soup clusters in the continuum and on the metric graph, this gives us the Hausdorff convergence of Brownian loop soup clusters in the case no boundary excursions are present. This convergence is not sufficient for the convergence of separation events in 
Theorem \ref{thm:convbls}, 
as clusters could pinch together in the limit. 
However, in the proof of Theorem \ref{thm:conva}
(i.e. \cite[Theorem 3]{ALS4}) more is argued - as explained in the proof, the same argument showing the convergence of cluster outer boundaries in \cite{BrugCamiaLis2014RWLoopsCLE,LupuConvCLE}, applies to show the convergence of cluster boundaries around any finite number of fixed points in the domain. 
In view of Lemma \ref{lem:topo}, this suffices to conclude the $a = 0$ case of Theorem \ref{thm:convbls}.

We now argue how to get the case $a > 0$. As after considering the boundary cluster, the other clusters are given by Brownian loop soup in the complementary domains, it suffices to prove the following.

\begin{lemma}
Suppose we are in the set-up of Theorem \ref{thm:convbls} with $a > 0$. Consider the boundary cluster that is given by the closed union of all critical Brownian loop soup clusters that intersect at least one boundary to boundary excursion, denote it by $\widetilde C_{a}^{m}$ for the metric graph, and by $C_{a}$ for the continuum.

Then for any $z_1, \dots, z_n \in D$, and any 
$\eps_1, \dots, \eps_n > 0$, 
we have that the probabilities
$$\tilde p^{\rm bd}_{a,m} := \P\left(\nexists \widetilde \gamma \in \widetilde C_{a}^m\text{ such that} \sum_{j=1}^{n}
\indx_{\widetilde \gamma}(z_j)
\text{ odd}
 \: \& \: \widetilde C_{a}^{m} \cap \bigcup_{j=1}^n\D(z_j, \eps_j) = \emptyset\right),$$  
converge, as $m \to \infty$, to 
$$p^{\rm bd}_a := \P\left(\nexists \gamma \in C_{a}\text{ such that} \sum_{j=1}^{n}
\indx_{\gamma}(z_j)
\text{ odd}
 \: \& \: C_{a} \cap \bigcup_{j=1}^n\D(z_j, \eps_j) = \emptyset\right).$$  
\end{lemma}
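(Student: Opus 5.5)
We will realise the boundary cluster through the GFF coupling and reduce the statement to the convergence of a first passage set. By \cite[Theorem 2]{ALS4} and the isomorphism between $\widetilde \cL_{\widetilde D_m}\cup\widetilde\Xi^a_{\widetilde D_m}$ and the metric graph GFF $\tilde\phi^{(v)}_m$ with $v=\sqrt{2a}$, the cluster $\widetilde C^m_a$ is the closure of the sign cluster of $\tilde\phi^{(v)}_m$ that touches the outer boundary. Equivalently, it is the metric graph first passage set $\widetilde A^m_0$ of $\tilde\phi^{(v)}_m$ at level $0$. In the continuum, $C_a$ is the excursion set of $\Phi^{(v)}_D$ adjacent to $\partial D$, that is, the FPS $A_0$ of $\Phi_D+v$. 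This is Proposition \ref{prop:CLEcoupling1}, points D--E, for $v\in 2\lambda\Z$; for general $v$ we use \cite{ALS2}. The convergence $\widetilde A^m_0\to A_0$ in Hausdorff distance, in a coupling where $\tilde\phi_m\to\Phi_D$, is \cite[Section 4]{ALS2}. The complementary components of $A_0$ are bounded by SLE$_4$-type Jordan curves. Only finitely many have diameter larger than any $\delta>0$, since they are loops of a nested CLE$_4$ with labels.

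As in the $a=0$ case, Hausdorff convergence of the set alone is not enough, because complementary components could pinch. We will therefore prove more. Fix the finitely many points $z_1,\dots,z_n$. We want the boundaries of the components of $D\setminus \widetilde A^m_0$ containing $z_j$, and of the component adjacent to $\partial D$ if relevant, to converge in Hausdorff distance to the corresponding boundaries for $A_0$. For this we explore the loop configuration around each $z_j$. The boundary of the component of $z_j$ is the first height-$0$ loop in the nested sequence around $z_j$, by Proposition \ref{prop:CLEcoupling1} D. This loop is the outer boundary of a loop soup cluster inside an outermost excursion cluster. The argument of \cite{BrugCamiaLis2014RWLoopsCLE,LupuConvCLE} gives convergence of outer boundaries of clusters surrounding a fixed point. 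The same argument applies here once we condition on the boundary cluster and use the domain Markov property: the remaining clusters are a critical loop soup in the complementary domains. This is the remark already invoked for Theorem \ref{thm:conva}.

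With this stronger convergence in hand, Lemma \ref{lem:topo} reduces the existence of an odd-winding closed path in $C_a$ to the same question for $\widehat C_a$. The set $\widehat C_a$ is $D$ with finitely many Jordan domains removed, and for such sets the event is determined by the converging boundary curves. Corollary \ref{cor:topo} gives one inequality, $\limsup \tilde p^{\rm bd}_{a,m}\le p^{\rm bd}_a$ up to boundary cases. For the other inequality we will show that the limiting event has no mass on its topological boundary. Almost surely, the limiting curves do not touch each other at the relevant scale in a degenerate way: distinct SLE$_4$ loops of the FPS around distinct $z_j$ either coincide or are at positive distance, since two-valued set loops are non-touching in the FPS case. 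Almost surely also, $\P(C_a\cap\partial\D(z_j,\eps_j)\neq\emptyset \text{ but } C_a\cap\D(z_j,\eps_j)=\emptyset)=0$, by local absolute continuity and scale invariance.

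The main obstacle is the joint convergence of the boundaries of several complementary components around distinct $z_j$ for the FPS. In particular we must rule out that two such loops merge in the limit, or that a discrete odd-winding path disappears through pinching. We would first handle this by proving convergence of the nested exploration loop by loop around each $z_j$, starting from the convergence of the first CLE$_4$ loop, \cite{LupuConvCLE}. We then iterate using the strong Markov property and the convergence of the labels $\tilde s^m_k$ from Theorem \ref{thm:conva}.
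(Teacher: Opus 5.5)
Your reduction is the same as the paper's. The Hausdorff convergence of the boundary cluster (FPS) from \cite{ALS2} has to be upgraded to convergence of the boundaries of the complementary components around $z_1,\dots,z_n$, and then Lemma \ref{lem:topo} and Corollary \ref{cor:topo} conclude. But that upgrade is the whole content of the lemma, and your argument for it does not work.

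The hole boundary $\partial O_j$ (the first height-$0$ loop around $z_j$) is not the outer boundary of any loop soup cluster. It is an inner boundary of $C_a$ itself. Conditioning on $C_a$ and using that the remaining loops form a loop soup in $D\setminus C_a$ only describes what happens inside $O_j$. To exploit it you must already know that the discrete holes $O^m_j$ converge to $O_j$, which is the claim, so that step is circular.

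Your fallback does not close either. That fallback is a loop-by-loop nested exploration started from \cite{LupuConvCLE}, iterated with the Markov property and the labels of Theorem \ref{thm:conva}.
\begin{itemize}
\item The discrete loops controlled by \cite{LupuConvCLE} are outer boundaries of metric graph sign clusters. The field inside them has random, non-constant boundary data rather than $\pm 2\lambda$.
\item The $\tilde s^m_k$ of Theorem \ref{thm:conva} are signs of excursion clusters, not CLE$_4$ labels.
\end{itemize}
So there is no discrete height walk telling you which discrete loop is $\partial O^m_j$. You could switch to metric graph two-valued sets, which do carry exact labels and realise $\partial O^m_j$ as a first height-$0$ loop. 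But then at every step you must show that the discrete loop around $z_j$ converges to the continuum one, with the correct surrounding of the other $z_i$. That is the same pinching problem you started with. Moreover, the first height-$0$ description of Proposition \ref{prop:CLEcoupling1} is stated only for $v\in 2\lambda\Z$, whereas here $v=\sqrt{2a}$ is arbitrary.

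The paper avoids any labelled nested structure. It reopens the proof of \cite[Theorem 2.1]{BrugCamiaLis2014RWLoopsCLE} and works directly with chains of macroscopic loops together with finitely many boundary-to-boundary excursions, via modified versions of Lemmas 4.2--4.4 and Theorem 4.1 there. It adds to Proposition 3.6 and Lemma 3.13 there the Hausdorff convergence of the boundaries of the complementary components around the $z_j$. Microscopic loops are then handled by a monotonicity remark.

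You also have the two inequalities swapped. Corollary \ref{cor:topo} says that odd paths in the discrete sets for large $m$ yield one in the limit, which gives $\liminf_m\tilde p^{\rm bd}_{a,m}\geq p^{\rm bd}_a$. The inequality $\limsup_m\tilde p^{\rm bd}_{a,m}\leq p^{\rm bd}_a$ is the direction the paper draws from the Hausdorff convergence itself.
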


We again know a convergence on the side of the GFF. Indeed, the so-called First passage set (FPS) of level $\sqrt{2a}$ is given, both on the metric graph level and in the continuum, by taking a Poisson point process of Brownian excursions of intensity $a$ together with critical Brownian soup clusters that intersect at least one excursion. Its Hausdorff convergence was proved in \cite{ALS2}, but does not suffice for the same reasons as above - the convergence in Hausdorff distance of a set does not imply convergence of separation events of the type 'a cluster separates $z_1$ and $z_2$'. Yet again we can improve this convergence to include also those of boundaries around any finite number of fixed points $z_1, \dots, z_n$ using the inputs from \cite{BrugCamiaLis2014RWLoopsCLE}, which will suffice thanks to Corollary \ref{cor:topo}.

\begin{proof}
As mentioned, in \cite{ALS2} the Hausdorff convergence of 
$\widetilde C_{a}^m$ to $C_a$ was shown.
Because of this Hausdorff convergence, we always have that $p^{\rm bd}_a \geq \limsup_{m \to \infty}
\tilde p^{\rm bd}_{a,m}$. 
Thus, it remains to argue for the reverse inequality.
For this, thanks to Corollary \ref{cor:topo}, it suffices to lift this convergence to the simultaneous convergence of also the boundaries of the FPS around a finite number of fixed points  $z_1, \dots, z_n$. 

The idea is to just follow through the strategy for proving Theorem 2.1 in \cite{BrugCamiaLis2014RWLoopsCLE}. Adding the analogous conditions for the connected components of the complement around $z_1, \dots, z_n$, their Proposition 3.6 and Lemma 3.13 extend to include also the Hausdorff convergence of cluster boundaries around these points. Lemma 4.2 needs a small modification: in addition to the loops, one needs to also consider a finite number of boundary to boundary excursions. Appropriate versions of Lemmas 4.3 and 4.4 when we consider all clusters of the loop soup that touch at least one excursion together with excursions work. Thereafter the modified and extended version of Theorem 4.1, where we include the convergence of boundaries around $z_1, \dots, z_n$, can be concluded. 
The Sections 5 and 6 work verbatim and thus one can conclude their equivalent of Theorem 2.1, i.e. the convergence of the closure of loop soup clusters containing at least one excursion together with their boundaries around fixed points $z_1, \dots, z_n$, when restricting to non-microscopic loops. As adding microscopic loops can only make $\tilde p^{\rm bd}_{a,m}$ larger, we conclude that 
$\liminf_{m\to \infty} \tilde p^{\rm bd}_{a,m} 
\geq p^{\rm bd}_a$ as desired.

\end{proof}
\paragraph{Domain with $n$ holes: proof of Theorem \ref{thm:convbls2}.}
To argue the convergence of probabilities of separation events in $D_{\eps_1, \dots, \eps_n}(z_1,\dots,z_n)$, 
the domain with $n$ holes, we will use an absolute continuity argument. Recall the notation $C_k, \widetilde C_k^m$ for the clusters in the disk and $C_k^{\eps_1, \dots, \eps_n}, \widetilde C_k^{\eps_1, \dots, \eps_n,m}$ for the clusters in the disk with holes.

\begin{lemma}
Suppose we are in the set-up of Theorems \ref{thm:convbls} and \ref{thm:convbls2}.
For $\eta > 0 $ very small, on the metric graph 
$\widetilde D^m$, $\widetilde D_{\eps_1, \dots, \eps_n}^m$ respectively, let
$$\widetilde F_m^{\eta} := \{\exists i, \widetilde \gamma \in \widetilde C_i^{ m},\text{ such that} \sum_{j=1}^{n}
\indx_{\widetilde \gamma}(z_j)
\text{ odd}
 \: \& \: \widetilde C_i^{m} \cap \bigcup_{j=1}^n\D(z_j, \eps_j+\eta) = \emptyset\},$$
and
$$\widetilde F_m^{\eta,\rm{mult}} := \{\exists i, \widetilde \gamma \in \widetilde C_i^{\eps_1, \dots, \eps_n, m}\text{ such that} \sum_{j=1}^{n}
\indx_{\widetilde \gamma}(z_j)
\text{ odd}
\: \& \: \widetilde C_i^{\eps_1, \dots, \eps_n, m} \cap \bigcup_{j=1}^n\D(z_j, \eps_j+\eta) = \emptyset
\}.$$
Define similarly the events 
$F^{\eta}, F^{\eta,\rm{mult}}$ in the continuum.
We have that:
$$\P(\widetilde F_m^\eta) 
= \E[\1_{\widetilde F_m^{\eta,\rm{mult}}}\widetilde R_\eta^m]$$
and
$$\P(F^\eta) = \E[\1_{F^{\eta,\rm{mult}}}R_\eta ],$$
where the Radon-Nikodym derivatives $\widetilde R_\eta^m, R_\eta$ are given as follows. 

Condition on all the loops and excursions of the Brownian loop soup and PPP of excursions that stay inside
$\widetilde{D}_{\eps_1+\eta, \dots, \eps_n+\eta}^m$,
respectively
$D_{\eps_1+\eta, \dots, \eps_n+\eta}(z_1,\dots,z_n)$. 
Then $\widetilde R_\eta^m, R_\eta$ are given by the conditional probability that the union of loops and excursions of the PPP that hit 
$\bigcup_{j=1}^n \D(z_j, \eps_j)$ 
intersects none of the odd clusters formed by the above loops and excursions. 

The only difference between $\widetilde R_\eta, R_\eta$ is that in one case we consider the loop soups and boundary excursions on the metric graph, in the other case in the continuum.
\end{lemma}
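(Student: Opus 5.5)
The plan is a restriction (Markov) property of Poisson point processes, applied in the same way on the metric graph and in the continuum. Set $D^\eta := D_{\eps_1+\eta,\dots,\eps_n+\eta}(z_1,\dots,z_n)$. First I will split the loop soup $\cL_D$ into two independent Poisson processes: loops staying in $D^\eta$, and loops hitting $\bigcup_j \overline{\D(z_j,\eps_j+\eta)}$. I will split $\Xi^a_D$ likewise. The loops and excursions staying in $D^\eta$ have the same law, by the restriction property of $\mu^{\rm loop}$ and $\mu^{\rm exc}$, whether they are read off from the soup in $D$ or from the soup in $D_{\eps_1,\dots,\eps_n}(z_1,\dots,z_n)$. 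For the excursions, the intensity is $a$ on the outer boundary and $0$ on the inner ones, so only outer-boundary excursions are involved. Call this common configuration $\mathcal{K}$. Conditionally on $\mathcal{K}$, the remaining loops and excursions are independent Poisson processes. In the case of $D$, these are the loops of $D$ that hit $\bigcup_j\overline{\D(z_j,\eps_j+\eta)}$. In the case with holes, they are the loops of $D_{\eps_1,\dots,\eps_n}$ that hit $\bigcup_j\overline{\D(z_j,\eps_j+\eta)}$ (and analogously for excursions).

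Next I will write $F^\eta$ in terms of $\mathcal{K}$ and the extra loops. Any loop or excursion hitting $\bigcup_j \D(z_j,\eps_j+\eta)$ cannot belong to a cluster that avoids that set. So $F^\eta$ holds if and only if $\mathcal{K}$ has a cluster $C$ that contains a closed path with odd total index and avoids $\bigcup_j\D(z_j,\eps_j+\eta)$, and moreover no extra loop or excursion meets $C$. Adding loops can only merge clusters. A merged cluster contains a hitting loop, so it fails the avoidance condition, and hence the surviving witness must be an original cluster of $\mathcal{K}$ left untouched. This yields
$$\P(F^\eta\mid \mathcal{K}) = \1_{F^{\eta,\rm mult}}(\mathcal{K})\cdot \P\big(\text{extra configuration in } D \text{ misses some such odd cluster}\mid\mathcal{K}\big),$$
and the same identity holds in the holed domain with its own extra configuration. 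The wording of $R_\eta$ (``intersects none of the odd clusters'') should be read with this precise meaning.

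The main obstacle is that these two decompositions do not directly give a single ratio. To turn them into the Radon--Nikodym identity, I would express both events through the common $\mathcal K$. That is, I would take $R_\eta$ to be the ratio of the two conditional miss probabilities, as computed above, in $D$ versus in the holed domain. Then $\P(F^\eta)=\E[\1_{F^{\eta,\rm mult}}R_\eta]$ follows by the tower property, where $R_\eta$ is $\mathcal K$-measurable. The lemma's phrasing suggests precisely this conditional probability, where under the holed law the extra configuration near the holes is at most the loops killed at the small disks, and these miss the odd clusters automatically up to the holes. The delicate point is that a loop of $D$ which hits the disks $\D(z_j,\eps_j)$ is absent in the holed domain, while loops hitting the annuli $\eps_j\le|z-z_j|\le\eps_j+\eta$ exist in both. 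I would check that the latter either cancel in the ratio or are absorbed by conditioning on them as part of the common configuration. Identical Poisson reasoning applies verbatim on $\widetilde D^m$, which gives $\widetilde R^m_\eta$.
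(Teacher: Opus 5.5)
There is a genuine gap, and it comes from your choice of conditioning $\sigma$-field. You condition on $\mathcal K$, the trajectories staying in $D_{\eps_1+\eta,\dots,\eps_n+\eta}(z_1,\dots,z_n)$. But $F^{\eta,\rm mult}$ is not $\mathcal K$-measurable: whether a qualifying odd cluster of $\mathcal K$ survives as a cluster of the soup in $D_{\eps_1,\dots,\eps_n}(z_1,\dots,z_n)$ depends on the loops of that soup which hit the annuli $\{\eps_j<|z-z_j|\le\eps_j+\eta\}$, and these are not in $\mathcal K$. So the factor $\1_{F^{\eta,\rm mult}}(\mathcal K)$ is wrong. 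It should be the indicator that $\mathcal K$ itself has an odd cluster avoiding $\bigcup_j\D(z_j,\eps_j+\eta)$, and this error is what forces you into the ratio. The ratio does give a true identity by the tower property, and it is $\le 1$ by monotonicity. However, that identity is essentially tautological, and it yields a different random variable: a $\mathcal K$-measurable average of the lemma's $R_\eta$. It is not the conditional probability about trajectories hitting $\bigcup_j\D(z_j,\eps_j)$ that the lemma identifies. That specific form matters later: only trajectories reaching from the small disks to clusters at distance at least $\eta$ enter it, which is what lets the paper pass to the limit $m\to\infty$. Your ratio instead carries microscopic annulus loops in both numerator and denominator. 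You also leave the ``delicate point'' unresolved.

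The fix is the second option you mention: absorb the annulus-hitting loops into the conditioning. Despite the statement's wording, the identity is proved by conditioning on the whole configuration in $D_{\eps_1,\dots,\eps_n}(z_1,\dots,z_n)$, which is what the paper's proof does. Sample $\cL_D$ and $\Xi^a_D$ in two independent steps. First sample the soup and excursions in $D_{\eps_1,\dots,\eps_n}(z_1,\dots,z_n)$, with excursion intensity $a$ on the outer boundary only. Then add the independent Poisson process of loops and excursions of $D$ that touch $\bigcup_j\overline{\D(z_j,\eps_j)}$. Given the first step, $F^{\eta,\rm mult}$ is measurable. Your merging argument applies: any cluster containing a second-step trajectory meets $\overline{\D(z_j,\eps_j)}\subset\D(z_j,\eps_j+\eta)$. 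Hence $F^\eta$ is exactly $F^{\eta,\rm mult}$ intersected with the event that some odd first-step cluster avoiding $\bigcup_j\D(z_j,\eps_j+\eta)$ is untouched by the second step. The tower property then gives $\P(F^\eta)=\E[\1_{F^{\eta,\rm mult}}R_\eta]$, with $R_\eta$ the conditional probability of that event and no ratio. The metric graph case is identical. Your reading of ``intersects none of the odd clusters'' as ``leaves at least one such cluster untouched'' is the right one for an exact identity.
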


\begin{proof}
This follows from the fact that the Brownian loop soup and the PPP of outer boundary excursions in the domain $D$ can be sampled in two steps: we first sample it in the domain $D_{\eps_1, \dots, \eps_n}(z_1,\dots,z_n)$, and in the second step sample all the loops that also touch the closure of $\bigcup_{j=1}^n \D(z_j, \eps_j)$. It then remains to observe that 
\begin{itemize}
    \item if there is a cluster containing a path with odd index inside 
    $D_{\eps_1+\eta, \dots, \eps_n+\eta}(z_1,\dots,z_n)$, then it is already formed in the first step, 
    \item and a given separation cluster will remain a separation cluster after the second step if and only if it is not touched by any of the additional loops touching $\bigcup_{j=1}^n \D(z_j, \eps_j)$.
    \end{itemize}
\end{proof}

Given this lemma, we can deduce Theorem \ref{thm:convbls2} from Theorem \ref{thm:convbls}.

\begin{proof}[Proof of Theorem \ref{thm:convbls2}]
The proof follows in two steps, first the case of fixed 
$\eta > 0$ and then we let $\eta \to 0$ to conclude.

\paragraph{The case $\eta > 0$.}
Using the notations of the lemma above, we first argue that for any fixed $\eta > 0$, we have that 
$\P(\widetilde F_m^{\eta,\rm{mult}})$ converges to 
$\P(F^{\eta,\rm{mult}})$. 
First, by Theorem \ref{thm:convbls}, we already know the convergence of $\P(\widetilde F_m^{\eta}) \to \P(F^{\eta})$, and moreover we can couple the loop soups on the metric graphs and continuum such that all clusters surrounding at least one of $\D(z_j, \eps_j)$ converge almost surely in Hausdorff distance and so that their boundaries towards any $z_1, \dots, z_n$ also converge in Hausdorff distance. Hence it remains to just argue the convergence of the Radon-Nikodym derivatives, that are in any case bounded by $1$. But this follows directly from the convergence of macroscopic loops and excursions under the Brownian loop measure and the excursion measure, 
from metric graphs to the continuum; 
see e.g. \cite[Lemma 4.6]{ALS2}.

\paragraph{Letting $\eta \to 0$.}

We now argue that by letting $\eta > 0$, we can deduce the convergence 
$\tilde p^{\rm mult}_{a,m} \to p^{\rm mult}_a$ 
and conclude Theorem \ref{thm:convbls2}. 
By the first part, the fact that there are only finitely many clusters of diameter $\eps > 0$ and the fact that almost surely no cluster touches the inner boundaries, we have the limit 
$$\lim_{\eta \to 0}\lim_{m\to\infty} \P(\widetilde F_m^{\eta, \rm{mult}}) = 1-p_a^{\rm mult}.$$
In order to conclude 
$\tilde p_{a,m}^{\rm mult} \to p_a^{\rm mult}$, 
it suffices to be able to exchange the limits, as 
$1-\tilde{p}_{a,m}^{\rm mult} = \lim_{\eta \to 0}\P(\widetilde F_m^{\eta, \rm{mult}})$.

To do this, it suffices to show uniformity in $m$ for the convergence as $\eta \to 0$: there exists $f(\eta) $ with $f(\eta) \to 0 $ as $\eta \to 0 $, such that 
$|\P(\widetilde F_m^{\eta, \rm{mult}})  
- (1-p_{a,m}^{\rm mult})| < f(\eta)$ for all $m$. 
This is encoded in the following uniform estimate, similar to \cite[Lemma 4.13]{ALS2}:

			\begin{align*}
			\lim_{\zeta\to 0}
            \sup_{m\in\mathbb{N}\setminus\{0\}}
			\mathbb{P}\left (\exists \widetilde C_i^{\eps_1, \dots, \eps_n,m},
			d\Big(\widetilde C_i^{\eps_1, \dots, \eps_n,m},\bigcup_{j=1}^n \D(z_j, \eps_j)\Big)\leq \zeta, \exists \widetilde \gamma \subseteq \widetilde C_i^{\eps_1, \dots, \eps_n,m}, \sum_{j=1}^{n}
\indx_{\widetilde \gamma}(z_j)
\text{ odd}\right )=0.
			\end{align*}
We now consider two different cases. The first case is that this cluster $C_i^{\eps_1, \dots, \eps_n,m}$ also contains a point that is macroscopically away from the disks. In this case we are exactly in the context of  
\cite[Lemma 4.13]{ALS2}, and the uniform estimate follows from that Lemma.
The other case is that the macroscopic cluster with odd index remains close to the union of the disks, which means remains close to exactly one of the disks $\D(z_j, \eps_j)$. We will in fact show that there is no cluster remaining close to any of these disks. More formally, for all $j \in\{1, \dots, n\}$,
\begin{align*}
			\lim_{\zeta\to 0}
            \sup_{m\in\mathbb{N}\setminus\{0\}}
			\mathbb{P}\left (\exists \widetilde C_i^{\eps_1, \dots, \eps_n,m} \text{ s.t. }
			\widetilde C_i^{\eps_1, \dots, \eps_n,m} \subseteq \D(z_j, \eps_j+ \zeta)\right )=0.
			\end{align*}
To prove this, one can follow exactly the same strategy as the proof of \cite[Lemma 4.13]{ALS2}, 
with one modification that we will explain now. 
The rest of the details are the same and thus not repeated. In that lemma one argues that there are no clusters that come infinitesimally close but also go macroscopically away from the boundary as follows. One adds boundary-to-boundary excursions of intensity $0 < u \ll 1$, and argues that as $u \to 0$ the boundary cluster - that contains any cluster coming infinitesimally close - is uniformly not macroscopic. This is done by using the identification of this boundary cluster with the First passage set of the GFF, using the convergence of FPS 
\cite[Proposition 4.7]{ALS2},
and the fact that as $u \to 0$, the FPS converges to the boundary. The only modification we need to make is that we want to consider a different type of FPS that catches the cluster surrounding $\D(z_j, \eps_j)$. To do this pick two diametrically opposite arcs of length, say, $\eps_j/10$ on the boundary of $\D(z_j, \eps_j)$, and now consider the FPS that has boundary condition $0 < u \ll 1$ on both of these segments. The convergence of the FPS from metric graph to the continuum still follows from 
\cite[Proposition 4.7]{ALS2}. 
In fact by \cite[Corollary 4.12]{ALS2}.
Further, as $u \to 0$ the continuum FPS minus the boundary of the circle will converge to the two disjoint boundary segments - this follows from the monotonicity of FPS \cite[Proposition 4.5]{ALS1}, the fact that FPS of level $0$ is by definition just the boundary component, and  \cite[Corollary 4.12]{ALS1} which says that the continuum FPS will stay at positive distance from any compact set in the complement of the arcs in 
$\partial \D(z_j, \eps_j)$. The crux is that in the case there was a cluster surrounding $\D(z_j, \eps_j)$ and coming close to its boundary, the FPS would need to be connected. Based on above, this cannot happen with positive probability. See \cite[Lemma 4.13]{ALS2} for details.

\end{proof}

\subsection{Topological events for loop soup clusters}
\label{Subsec topo}

In this section we will work in the following context. For $n > 0$ we let $z_1, \dots, z_n \in D$ distinct points and $\eps_1, \dots, \eps_n > 0$, let $D_{\eps_1,\dots, \eps_n}(z_1, \dots, z_n)$ be the $n+1$-connected domain $D \setminus \bigcup_{j = 1}^n \overline{\D(z_j,\eps_j)}$. 
Let $\widetilde D_{\eps_1, \dots, \eps_n}$ be some corresponding approximating metric graph. 

We first state the main result of \cite{LupuTopoAIHP} on the metric graphs, then discuss how to extend it to include boundary-to-boundary excursions, and finally deduce -- using the results of previous section -- the equivalent results in the continuum limit.

\subsubsection{The case of the metric graphs}

Consider the metric graph loop soup 
$\widetilde \cL_{\widetilde D_{\eps_1, \dots, \eps_n}}$ at the critical intensity
($\alpha=1/2$), and let
$(\widetilde C_k^{\eps_1,\dots,\eps_n})_{k\geq 1}$ denote its clusters.
Further, let $\tilde \gamma$ denote a closed path on the metric graph.
One of the main results of \cite{LupuTopoAIHP} can be stated as follows. 

\begin{theorem}[Lupu, \cite{LupuTopoAIHP}] \label{thm:topevent}
Let $n\geq 1$ be an integer. The following equality holds:
\begin{multline*}
-\log\P
\left(
\nexists \widetilde C_i^{\eps_1,\dots,\eps_n}
\text{ with a closed path }
\tilde \gamma \subset \widetilde C_i^{\eps_1,\dots,\eps_n}
\text{ such that }
\sum_{j=1}^{n}
\indx_{\tilde \gamma}(z_j)
\text{ is odd}
\right)
=
\\
\tilde\mu^{\rm loop}_{\widetilde D_{\varepsilon_1,\ldots,\varepsilon_n}}
\Big(
\sum_{j=1}^n \indx_{\tilde\gamma}(z_j)
\text{ odd}
\Big).
\end{multline*}
\end{theorem}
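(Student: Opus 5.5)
The plan is to go through a $\Z_2$-twisted (gauge-twisted) Gaussian free field on the multiply connected metric graph $\widetilde D_{\eps_1,\dots,\eps_n}$, in the spirit of \cite{KasselLevy16CovSym, LupuTopoAIHP}. Consider the $\{\pm1\}$-valued gauge field $\sigma$ on the edges with holonomy $(-1)^{\sum_j \indx_{\tilde\gamma}(z_j)}$ around each closed path $\tilde\gamma$. Such a $\sigma$ exists: take a collection of cuts (edges crossing fixed disjoint rays from each hole to the outer boundary) and put $-1$ on the edges crossing these cuts. Let $\tilde\phi^\sigma$ be the twisted metric graph GFF, namely a field on the double cover that is antisymmetric under the deck transformation, with zero boundary conditions.

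The argument has three steps.

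\textbf{Step 1 (partition function ratio).} By the twisted loop soup isomorphism (twisted Le Jan / Kassel--Lévy), the ratio of partition functions is
\[
\frac{\det(-\Delta^\sigma)^{-1/2}}{\det(-\Delta)^{-1/2}} = \E\Big[\prod_{\tilde\gamma\in \widetilde\cL}\mathrm{hol}_\sigma(\tilde\gamma)\Big] = \exp\big(-2\cdot\tfrac12\tilde\mu^{\rm loop}(\mathrm{hol}=-1)\big),
\]
using that the soup has intensity $\frac12\tilde\mu^{\rm loop}$ and that $\E[\prod]$ of a Poisson process equals $\exp(\int(\mathrm{hol}-1)\,d\mu)$. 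This gives the right-hand side $\exp(-\tilde\mu^{\rm loop}(\text{odd}))$.

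\textbf{Step 2 (identification with the cluster event).} Show that the same determinant ratio equals the probability that no loop soup cluster carries a closed path with odd total index. Couple the untwisted field $|\tilde\phi|$ with the loop soup occupation field, as in Lupu's metric graph isomorphism, where clusters are exactly the sign clusters of $\tilde\phi$. Conditionally on the clusters, $|\tilde\phi|$ can be given a twisted sign structure consistent with $\sigma$ exactly when every cluster has trivial holonomy, since the signs are then flippable independently per cluster. In the other direction, a twisted field restricted to a cluster with nontrivial holonomy would have to vanish somewhere inside it, which is impossible. One then compares the laws of $|\tilde\phi^\sigma|$ and $|\tilde\phi|$: their densities on metric graphs differ only by the normalisation constants, because the energy is computed edgewise from $|\phi|$ except at zeros, where the metric graph continuity makes sign flips free. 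Hence $\mathrm{Law}(|\tilde\phi^\sigma|) = \mathrm{Law}(|\tilde\phi| \mid \text{no odd cluster})$ with normalising ratio equal to $\P(\text{no odd cluster})$, and this ratio is exactly the determinant ratio of Step 1.

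\textbf{Step 3.} Combining Steps 1 and 2 and taking $-\log$ yields the claimed identity.

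The main obstacle is the rigorous version of Step 2, i.e. the absolute continuity of $|\tilde\phi^\sigma|$ with respect to $|\tilde\phi|$ with constant density on the event of trivial cluster holonomies. I would prove this first on the discrete graph via the cable-system Markov decomposition. Conditioning on the vertex values, the bridges on the cables are independent Brownian bridges, and whether a cable contains a zero is determined by the endpoint values and the sign twist. The twist just replaces $\phi_x\phi_y$ by $\sigma_{xy}\phi_x\phi_y$ in the bridge non-vanishing probability $1-e^{-2\phi_x\phi_y/\ell}$. Integrating these probabilities out, one checks that the joint density of the absolute values with the cluster structure agrees in both models up to the partition function constants.
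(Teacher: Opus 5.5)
Your proof is correct, but it takes a different route from the paper: the paper proves nothing at this point, while you reprove the cited identity. The paper quotes \cite[Theorem 1]{LupuTopoAIHP} as a black box. That theorem is stated for arbitrary $\{\pm1\}$-valued gauge fields on metric graphs and does not need planarity. The paper's only own input is the planar translation: $n$ defect lines join $\partial\D(z_j,\eps_j)$ to $\partial D$, so that the holonomy of $\tilde\gamma$ is $(-1)^{\sum_j\indx_{\tilde\gamma}(z_j)}$. This is exactly your choice of cuts, though they are better taken as disjoint simple arcs avoiding the vertices, with crossings counted mod $2$, rather than as rays.

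Your Steps 1--2 give a self-contained twisted-GFF proof of the cited identity, and the key Step 2 closes as you indicate. Normalise so that, given its endpoint values, a cable $xy$ of length $\ell$ carries a standard Brownian bridge of duration $\ell$, and write $\phi_x=s_xa_x$ with $a_x=|\phi_x|$. The Gaussian cross term times the bridge probability then gives:
\begin{itemize}
\item weight $2\sinh(a_xa_y/\ell)\1_{\{s_x\sigma_{xy}s_y=1\}}$ to a cable without zero;
\item weight $e^{-a_xa_y/\ell}$, whatever the signs, to a cable containing a zero.
\end{itemize}
Let $O$ be the set of zero-free cables and $k(O)$ the number of components of $(V,O)$. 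Summing over signs gives $2^{k(O)}$ in the untwisted model and $2^{k(O)}\1_{\{\text{every cycle of } O \text{ has trivial holonomy}\}}$ in the twisted one. A closed path inside a sign cluster can only use zero-free cables, since partial cables are dead ends. So this indicator is exactly the event in the statement, and $\P(\text{no odd cluster})=Z^\sigma/Z=(\det(-\Delta)/\det(-\Delta^\sigma))^{1/2}$, where $Z^\sigma,Z$ are the Gaussian normalisations of the vertex fields. Step 1 identifies this with $\exp\bigl(-\tilde\mu^{\rm loop}_{\widetilde D_{\eps_1,\dots,\eps_n}}(\sum_j\indx_{\tilde\gamma}(z_j)\text{ odd})\bigr)$.

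One sentence needs correcting: the presence of a zero on a cable is determined by the endpoint values and the twist only when $\sigma_{xy}\phi_x\phi_y<0$. Otherwise it is random, as your next sentence correctly records.

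As for what each route buys: the citation is short and general. Your argument is self-contained and yields the stronger identity $\mathrm{Law}(|\tilde\phi^\sigma|)=\mathrm{Law}(|\tilde\phi|\mid\text{no odd cluster})$. On cables containing a zero the absolute values also agree, by reflecting at the first zero. Moreover, conditionally on $(a_x)_x$, your computation is the Edwards--Sokal coupling of an Ising model with couplings $a_xa_y/\ell$. This is precisely the Ising structure behind the conditional version (given $|\tilde\phi|$) that the paper later imports from \cite[Section 4]{LupuTopoAIHP} in Proposition \ref{Prop n pt cut disks bc metric}.
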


This is a restatement of \cite[Theorem 1]{LupuTopoAIHP} in our more specific planar setting.
That theorem is stated in \cite{LupuTopoAIHP} in the language of gauge fields
with values in $\{-1,1\}$,
and does not require planarity.
According to it,
the probability that all the clusters formed by the metric graph loop soup are trivial for a fixed gauge field equals the exponential of minus the mass of loops with non-trivial holonomy for the gauge field.
There is a doubling of exponents going on,
on the loop soup side the intensity parameter being $\alpha=1/2$,
and on the loop measure side $\alpha=1=2\times 1/2$.
In our particular planar setting here, 
the gauge field is represented by $n$ defect lines, 
each joining a circle $\partial \D(z_j,\eps_j)$
to the boundary of the domain $\partial D$.
One multiplies the holonomy by $-1$ each time one crosses a defect line.
In this way, the holonomy of a closed loop $\tilde\gamma$ equals
$-1$ to the power $\sum_{j=1}^n \indx_{\tilde\gamma}(z_j)$.
So it is non-trivial if and only if
$\sum_{j=1}^n \indx_{\tilde\gamma}(z_j)$ is odd.

\paragraph{With boundary conditions.}

Further, for $v\geq 0$, denote by
$\widetilde \Xi^{v^2/2}_{\widetilde D,\widetilde D_{\varepsilon_1,\dots,\varepsilon_n}}$
the Poisson point process of boundary excursions in
$\widetilde D$ with intensity
$$
\dfrac{v^2}{2}
\1_{\tilde\gamma\subset \widetilde D_{\varepsilon_1,\dots,\varepsilon_n}}
\tilde\mu^{\rm exc}_{\widetilde D}(d\tilde\gamma).
$$
Here $\widetilde D$ is a metric graph approximation of the domain without holes $D$,
and $\widetilde D_{\varepsilon_1,\dots,\varepsilon_n}\subset \widetilde D$.
All the excursions in 
$\widetilde \Xi^{v^2/2}_{\widetilde D,\widetilde D_{\varepsilon_1,\dots,\varepsilon_n}}$
have both endpoints in $\partial \widetilde D$,
which is also the outer boundary of $\widetilde D_{\varepsilon_1,\dots,\varepsilon_n}$.
There is no excursion ending at an inner boundary component of
$\widetilde D_{\varepsilon_1,\dots,\varepsilon_n}$.

We take the Brownian loop soup and the PPP of boundary excursions to be independent of each other, and let
$(\widetilde C_k^{\eps_1,\dots,\eps_n,v})_{k\geq 1}$ denote the clusters formed by the union of the two families of trajectories:
\[
\widetilde \cL_{\widetilde D_{\eps_1, \dots, \eps_n}}
\cup
\widetilde \Xi^{v^2/2}_{\widetilde D,\widetilde D_{\varepsilon_1,\dots,\varepsilon_n}}.
\]
In terms of random walk representations of the GFF
(also called ``isomorphism theorems''),
this collection of trajectories corresponds to a metric graph GFF on 
$\widetilde D_{\varepsilon_1,\dots,\varepsilon_n}$
with boundary condition $v$ on the outer boundary,
and $0$ on the inner boundary components;
see \cite[Section 2.3]{ALS2}.
Further, observe that given $\tilde \gamma$ an excursion from $\partial \widetilde D$ to 
$\partial \widetilde D$ in $ \widetilde D$,
since $\tilde \gamma$ is not necessarily a closed loop,
the indices
$\indx_{\widetilde \gamma}(z_i)$ are not defined. 
However, if $n$ is \textbf{even}, the parity of the sum
$\sum_{j =1}^n \indx_{\tilde \gamma}(z_j)$
is defined in a unique way for every $\tilde \gamma \subseteq \widetilde D_{\varepsilon_1,\dots,\varepsilon_n}$ with both endpoints in $\partial \widetilde D$.
Indeed, one can close the loop by adding an arc of
$\partial \widetilde D$ that joins the two endpoints of
$\tilde \gamma$.
There are actually two choices of arcs, but both
give the same parity for the sum.
This is because for the loop corresponding to 
$\partial \widetilde D$,
the sum of indices is $n$, which is even.

Then, the previous theorem can be easily extended to include also boundary-to-boundary excursions
\begin{prop}
\label{Prop n pt cut disks bc metric}
Assume that $n$ is \textbf{even}.
Then the following equality holds:
\begin{multline*}
-\log\P
\left(
\nexists \widetilde C_i^{\eps_1,\dots,\eps_n,v}
\text{ with a closed path }
\tilde \gamma \subset \widetilde C_i^{\eps_1,\dots,\eps_n,v}
\text{ such that }
\sum_{j=1}^{n}
\indx_{\tilde \gamma}(z_j)
\text{ is odd}
\right)
=
\\
\tilde\mu^{\rm loop}
_{\widetilde D_{\varepsilon_1,\ldots,\varepsilon_n}}
\Big(
\sum_{j=1}^n \indx_{\tilde\gamma}(z_j)
\text{ odd}
\Big)
+
v^2
\tilde\mu^{\rm exc}_{\widetilde D}
\Big(
\sum_{j=1}^n \indx_{\tilde\gamma}(z_j) \text{ odd},
\tilde\gamma\subset 
\widetilde D_{\varepsilon_1,\ldots,\varepsilon_n}
\Big)
.
\end{multline*}
\end{prop}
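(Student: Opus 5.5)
The plan is to reduce the statement to the general (non-planar) gauge-field version of \cite[Theorem 1]{LupuTopoAIHP}, applied on an auxiliary metric graph in which the outer boundary is wired to a single vertex. The boundary-to-boundary excursions are then recovered from the loop soup on that graph by conditioning on the local time at the wired vertex.

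\textbf{Construction.} Let $\widetilde G$ be the metric graph obtained from $\widetilde D_{\eps_1,\dots,\eps_n}$ by identifying all points of the outer boundary $\partial\widetilde D$ into one vertex $x^\ast$. The inner boundaries stay absorbing, and $x^\ast$ is no longer absorbing but carries a killing rate $\kappa>0$. On $\widetilde G$ we put the $\{-1,1\}$ gauge field given by the $n$ defect lines joining $\partial\D(z_j,\eps_j)$ to $\partial\widetilde D$, which now all end at $x^\ast$. This is where \textbf{$n$ even} enters. The only new cycles created by the wiring are homologous to pieces of $\partial\widetilde D$, and the whole boundary loop has holonomy $(-1)^n=1$. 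Hence the gauge field on $\widetilde G$ is well defined, and the holonomy of a closed path through $x^\ast$ equals $(-1)$ to the power of the parity defined in the text by closing excursions along $\partial\widetilde D$.

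\textbf{Decomposition at $x^\ast$.} Consider the critical loop soup on $\widetilde G$ and let $L$ be its total local time at $x^\ast$.
\begin{enumerate}
\item The loops avoiding $x^\ast$ form exactly $\widetilde\cL_{\widetilde D_{\eps_1,\dots,\eps_n}}$.
\item By the standard excursion decomposition of loops at a point, conditionally on $L=\ell$ the excursions away from $x^\ast$ of the loops through $x^\ast$ form a PPP with intensity $\ell\,\tilde\mu^{\rm exc}_{\widetilde D}(\,\cdot\,;\tilde\gamma\subset\widetilde D_{\eps_1,\dots,\eps_n})$, independent of the loops in item 1.
\item The law of $L$ is a Gamma$(1/2)$ law, whose rate depends on $\kappa$.
\end{enumerate}
Clusters on $\widetilde G$ correspond to clusters of $\widetilde\cL_{\widetilde D_{\eps_1,\dots,\eps_n}}\cup\widetilde\Xi^{\ell}$, with all excursions glued at $x^\ast$. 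Gluing at $x^\ast$ is the same as gluing along the boundary arcs, and by the previous paragraph this does not change which clusters carry an odd closed path. Setting
$$f(\ell):=\P\big(\text{no odd cluster for }\widetilde\cL_{\widetilde D_{\eps_1,\dots,\eps_n}}\cup\widetilde\Xi^{\ell}\big),$$
Lupu's theorem on $\widetilde G$ therefore gives, for every $\kappa>0$,
$$\E_\kappa[f(L)]=\exp\Big(-\tilde\mu^{\rm loop}_{\widetilde G,\kappa}(\text{odd holonomy})\Big).$$

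\textbf{Evaluating the right-hand side.} The loops not visiting $x^\ast$ contribute $a:=\tilde\mu^{\rm loop}_{\widetilde D_{\eps_1,\dots,\eps_n}}(\text{odd})$. For the loops through $x^\ast$, I would compute their odd mass at intensity $\alpha=1$ by the same excursion decomposition. A loop through $x^\ast$ is a Poisson string of excursions, and its holonomy is the product of the excursion parities. Its odd mass should be
$$\tfrac12\log\frac{\kappa+c_{\rm tot}}{\kappa+c_{\rm tot}-2b},\qquad b:=\tilde\mu^{\rm exc}_{\widetilde D}(\text{odd},\ \tilde\gamma\subset\widetilde D_{\eps_1,\dots,\eps_n}),$$
where $c_{\rm tot}$ is the total excursion mass at $x^\ast$. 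The exact form matters only through the identity below, and checking it is routine. Since $L$ is Gamma$(1/2)$, the right-hand side is $e^{-a}\,\E_\kappa[e^{-2bL}]$. So we get
$$\E_\kappa\big[f(L)-e^{-a-2bL}\big]=0\qquad\text{for all }\kappa>0.$$
The Gamma$(1/2)$ densities $\ell^{-1/2}e^{-(\kappa+c)\ell}$ form a family whose Laplace transform is injective. Moreover $f$ is bounded and continuous in $\ell$, since the excursion PPP is monotone and continuous in intensity. Hence $f(\ell)=e^{-a-2b\ell}$ for all $\ell$. Taking $\ell=v^2/2$ yields the claimed formula $-\log f(v^2/2)=a+v^2 b$.

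\textbf{Main obstacle.} The delicate step is the bookkeeping on $\widetilde G$. First, the excursion decomposition must give exactly intensity $\ell\,\tilde\mu^{\rm exc}_{\widetilde D}$, with the paper's normalisation of $\tilde\mu^{\rm exc}$. Second, the odd mass of the loops through $x^\ast$ must combine with the Gamma$(1/2)$ law so that the factor $2b$, and hence $v^2 b$, comes out exactly. I would first verify these constants on a trivial gauge field, where both sides must equal $1$, and then insert the parity factor.
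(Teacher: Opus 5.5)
Your argument is correct, but it takes a different route from the paper. The paper also wires $\partial\widetilde D$ into a single point $\dagger$ (with no killing there), but then it goes through the GFF:
\begin{enumerate}
\item the field with boundary value $v$ is $\tilde\phi^\dagger$ conditioned on $\tilde\phi^\dagger(\dagger)=v$;
\item by sign symmetry this is the same as conditioning on $|\tilde\phi^\dagger(\dagger)|=v$;
\item it then invokes the \emph{conditional} version of Lupu's theorem, which conditions on the absolute value of the field (\cite[Section 4]{LupuTopoAIHP});
\item this turns the probability into the square of $\P(\text{no odd loop}\mid L_\dagger=v^2/2)$, and the square factorizes at once into the loops avoiding $\dagger$ and the excursion process $\widetilde\Xi^{v^2/2}$.
\end{enumerate}
You use only the unconditional Theorem~1, applied to the one-parameter family of wired graphs with killing $\kappa$ at $x^\ast$. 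You then recover the pinned-local-time statement by Laplace inversion in $\kappa$, which in effect rederives the one-point conditional version from the unconditional one. The paper's route is shorter once the conditional statement is available, and it makes the squared structure visible. Yours avoids the isomorphism and the sign-symmetry step, needs only standard excursion theory at a point, and shows clearly why the answer has $v^2$ rather than $v^2/2$: it is $\E[e^{-2bL}]$ evaluated at $L=v^2/2$.

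One intermediate display should be corrected. On a metric graph the total mass of excursions from $x^\ast$ is infinite, so $\tfrac12\log\frac{\kappa+c_{\rm tot}}{\kappa+c_{\rm tot}-2b}$ is not meaningful as written. It is also inconsistent with your Gamma rate $\kappa+c$ unless $c=c_{\rm tot}-2b$. The clean computation goes as follows.
\begin{itemize}
\item Set $c=G_0(x^\ast,x^\ast)^{-1}$. Then $\tilde\mu^{\rm loop}_{\widetilde G,\kappa}(L_{x^\ast}\in d\ell)=\ell^{-1}e^{-(\kappa+c)\ell}\,d\ell$.
\item Given $L_{x^\ast}=\ell$, the holonomy of the loop is the parity of a Poisson$(\ell b)$ variable.
\item By Frullani, the odd mass of loops through $x^\ast$ is $\tfrac12\int_0^\infty\big(e^{-(\kappa+c)\ell}-e^{-(\kappa+c+2b)\ell}\big)\frac{d\ell}{\ell}=\tfrac12\log\frac{\kappa+c+2b}{\kappa+c}$.
\item Its exponential is exactly $\E_\kappa[e^{-2bL}]$ for $L$ Gamma$(1/2)$ with rate $\kappa+c$.
\end{itemize}

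Finally, you do not need continuity of $f$, which is not immediate because the excursion measure at $x^\ast$ has infinite mass. Laplace uniqueness gives $f(\ell)=e^{-a-2b\ell}$ for almost every $\ell$. Since $f$ is non-increasing and the right-hand side is continuous, equality then holds for every $\ell$, in particular at $\ell=v^2/2$.
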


\begin{proof}
In the metric graph 
$\widetilde D_{\varepsilon_1,\dots,\varepsilon_n}$,
we identify all the points of the outer boundary
$\partial \widetilde D$
into one single point,
which we denote $\dagger$.
We will denote by 
$\widetilde D_{\varepsilon_1,\dots,\varepsilon_n}^\dagger$
the corresponding quotient metric graph.
The boundary of $\widetilde D_{\varepsilon_1,\dots,\varepsilon_n}^\dagger$
now corresponds to only the inner boundary components of
$\widetilde D_{\varepsilon_1,\dots,\varepsilon_n}$:
we see $\dagger$ as an inner point of
$\widetilde D_{\varepsilon_1,\dots,\varepsilon_n}^\dagger$.
Let $\tilde\phi^\dagger$
be the metric graph GFF on 
$\widetilde D_{\varepsilon_1,\dots,\varepsilon_n}^\dagger$
with $0$ boundary condition.
Then the metric graph GFF on
$\widetilde D_{\varepsilon_1,\dots,\varepsilon_n}$
with $0$ boundary conditions on inner components of
$\partial \widetilde D_{\varepsilon_1,\dots,\varepsilon_n}$,
and condition $v$ on
$\partial D$,
corresponds, by quotient,
to $\tilde\phi^\dagger$ conditioned on the event
$\{\tilde\phi^\dagger(\dagger)=v\}$.
Then, by random walk representations of the GFF
(see e.g. \cite[Proposition 2.4]{ALS2}),
the probability we are interested in equals the
following conditional probability:
$$
\P\Big(
\nexists \widetilde C
\text{ sign cluster of }
\tilde\phi^\dagger
\text{ with a closed path }
\tilde \gamma \subset \widetilde C
\text{ such that }
\sum_{j=1}^{n}
\indx_{\tilde \gamma}(z_j)
\text{ is odd}
\Big\vert 
\tilde\phi^\dagger(\dagger)=v
\Big).
$$
By the symmetry of the law of 
$\tilde\phi^\dagger$,
this is the same as
$$
\P\Big(
\nexists \widetilde C
\text{ sign cluster of }
\tilde\phi^\dagger
\text{ with a closed path }
\tilde \gamma \subset \widetilde C
\text{ such that }
\sum_{j=1}^{n}
\indx_{\tilde \gamma}(z_j)
\text{ is odd}
\Big\vert 
\vert\tilde\phi^\dagger(\dagger)\vert =v
\Big).
$$
Then \cite[Theorem 1]{LupuTopoAIHP} (topological probabilities) has a conditional version where there is a conditioning on the absolute value of the free field.
This is explained in \cite[Section 4]{LupuTopoAIHP},
and is related to the Ising model.
In this way, the above probabilities equal
\begin{equation}
\label{Eq Cond loc time dagger}
\P\Big(
\nexists \tilde \gamma
\in
\widetilde{\cL}_{\widetilde D_{\varepsilon_1,\dots,\varepsilon_n}^\dagger}
\text{ such that }
\sum_{j=1}^{n}
\indx_{\tilde \gamma}(z_j)
\text{ is odd}
\Big\vert
L_\dagger(\widetilde{\cL}_{\widetilde D_{\varepsilon_1,\dots,\varepsilon_n}^\dagger}) = v^2/2
\Big)^2,
\end{equation}
where $\widetilde{\cL}_{\widetilde D_{\varepsilon_1,\dots,\varepsilon_n}^\dagger}$
is the critical metric graph loop soup on
$\widetilde D_{\varepsilon_1,\dots,\varepsilon_n}^\dagger$,
and $L_\dagger(\widetilde{\cL}_{\widetilde D_{\varepsilon_1,\dots,\varepsilon_n}^\dagger})$
is its local time in $\dagger$.
Notice also the square above.
Now, 
$\widetilde{\cL}_{\widetilde D_{\varepsilon_1,\dots,\varepsilon_n}^\dagger}$
consists of two types of trajectories:
\begin{itemize}
    \item The loops that do not visit $\dagger$:
    these are distributed as $\widetilde{\cL}_{\widetilde D_{\varepsilon_1,\dots,\varepsilon_n}}$,
    and are independent from $L_\dagger(\widetilde{\cL}_{\widetilde D_{\varepsilon_1,\dots,\varepsilon_n}^\dagger})$.
    \item The loops that do visit $\dagger$.
    Conditionally on 
    $L_\dagger(\widetilde{\cL}_{\widetilde D_{\varepsilon_1,\dots,\varepsilon_n}^\dagger}) = v^2/2$,
    these can be decomposed into a PPP of excursions from
    $\dagger$ to $\dagger$,
    which, when lifted to 
    $\widetilde D_{\varepsilon_1,\dots,\varepsilon_n}$,
    is the same as
    $\widetilde \Xi^{v^2/2}_{\widetilde D,\widetilde D_{\varepsilon_1,\dots,\varepsilon_n}}$.
\end{itemize}
Moreover, the two families of trajectories are independent.
So \eqref{Eq Cond loc time dagger} equals
$$
\P\Big(
\nexists \tilde \gamma
\in
\widetilde{\cL}_{\widetilde D_{\varepsilon_1,\dots,\varepsilon_n}}
\text{ s.t. }
\sum_{j=1}^{n}
\indx_{\tilde \gamma}(z_j)
\text{ is odd}
\Big)^2
\P\Big(
\nexists \tilde \gamma
\in
\widetilde \Xi^{v^2/2}_{\widetilde D,\widetilde D_{\varepsilon_1,\dots,\varepsilon_n}}
\text{ s.t. }
\sum_{j=1}^{n}
\indx_{\tilde \gamma}(z_j)
\text{ is odd}
\Big)^2
.
$$
This proves the Proposition.
\end{proof}

\subsubsection{Identities in the continuum}

We now state the key ingredient behind our calculations in the continuum. Denote by 
$\cL^{1/2}_{D_{\varepsilon_1,\ldots,\varepsilon_n}(z_1,\ldots,z_n)}$ the Brownian loop soup in 
$D_{\varepsilon_1,\ldots,\varepsilon_n}(z_1,\ldots,z_n)$
with intensity
$\frac{1}{2}
\mu^{\rm loop}_{D_{\varepsilon_1,\ldots,\varepsilon_n}(z_1,\ldots,z_n)}$.
For $v\geq 0$, denote by
$\Xi^{v^2/2}_{D, D_{\varepsilon_1,\ldots,\varepsilon_n}(z_1,\ldots,z_n)}$
the Poisson point process of boundary excursions in
$D$ with intensity
$$
\dfrac{v^2}{2}
\1_{\gamma\subset D_{\varepsilon_1,\ldots,\varepsilon_n}(z_1,\ldots,z_n)}
\mu^{\rm exc}_{D}(d\gamma).
$$
We also take 
$\Xi^{v^2/2}_{D, D_{\varepsilon_1,\ldots,\varepsilon_n}(z_1,\ldots,z_n)}$
to be independent from
$\cL^{1/2}_{D_{\varepsilon_1,\ldots,\varepsilon_n}(z_1,\ldots,z_n)}$.

As above, given $\gamma$ an excursion from $\partial D$ to $\partial D$ in $D$,
the indices
$\indx_{\gamma}(z_i)$
are not uniquely defined but,
when $n$ is \textbf{even}, the parity of the sum 
$\sum_{j=1}^n \indx_{\gamma}(z_j)$ is well defined.

 Combining the result on the metric graphs, Proposition \ref{Prop n pt cut disks bc metric}, and the convergence result, Theorem \ref{thm:convbls2}, from the previous section, we obtain the following.

\begin{prop}
\label{Prop n pt cut disks bc continuum}
Assume that $v = 0$ and $n \geq 1$ or $v \neq 0$ and $n$ is \textbf{even}.
The following equality holds:
\begin{multline*}
\P
\Big(
\nexists C
\text{ cluster of } 
\cL_{D_{\varepsilon_1,\ldots,\varepsilon_n}(z_1,\ldots,z_n)}
\cup
\Xi^{v^2/2}_{D, D_{\varepsilon_1,\ldots,\varepsilon_n}(z_1,\ldots,z_n)}
\text{ with a closed path } \gamma\subset C
\text{ s.t. }
\sum_{j=1}^n \indx_{\gamma}(z_j) \text{ odd}
\Big)
\\
=
\exp\Big(
-
\mu^{\rm loop}_{D_{\varepsilon_1,\ldots,\varepsilon_n}(z_1,\ldots,z_n)}
\Big(
\sum_{j=1}^n \indx_{\gamma}(z_j)
\text{ odd}
\Big)
-
v^2
\mu^{\rm exc}_{D}
\Big(\sum_{j=1}^n \indx_{\gamma}(z_j) \text{ odd},
\gamma\subset D_{\varepsilon_1,\ldots,\varepsilon_n}(z_1,\ldots,z_n)
\Big)
\Big)
.
\end{multline*}
In particular, in the case $n=2$,
\begin{multline*}
-\log\P
\Big(
\text{No cluster of } 
\cL_{D_{\varepsilon_1,\varepsilon_2}(z_1,z_2)}
\cup
\Xi^{v^2/2}_{D, D_{\varepsilon_1,\varepsilon_2}
(z_1,z_2)}
\text{ disconnects } z_1 \text{ from } z_2
\Big)
=
\\
\mu^{\rm loop}
_{D_{\varepsilon_1,\varepsilon_2}(z_1,z_2)}
\Big(
\indx_{\gamma}(z_1)+\indx_{\gamma}(z_2)
\text{ odd}
\Big)
+
v^2
\mu^{\rm exc}_{D}
\Big(
\indx_{\gamma}(z_1)+\indx_{\gamma}(z_2) \text{ odd},
\gamma\subset 
D_{\varepsilon_1,\varepsilon_2}(z_1,z_2)
\Big)
.
\end{multline*}
\end{prop}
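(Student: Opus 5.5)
The plan is to pass to the limit in Proposition \ref{Prop n pt cut disks bc metric}, applied on the metric graphs $\widetilde D^m_{\eps_1,\dots,\eps_n}$ approximating $D_{\eps_1,\ldots,\eps_n}(z_1,\ldots,z_n)$ via $\frac1m\Z^2$. When $v=0$ we use Theorem \ref{thm:topevent} directly, which needs no parity assumption on $n$. When $v\neq 0$ and $n$ is even, Proposition \ref{Prop n pt cut disks bc metric} gives the exact metric-graph identity
\[
\tilde p^{\rm mult}_{v^2/2,m}=\exp\Big(-\tilde\mu^{\rm loop}_{\widetilde D^m_{\eps_1,\ldots,\eps_n}}(\textstyle\sum_j\indx(z_j)\text{ odd})-v^2\tilde\mu^{\rm exc}_{\widetilde D^m}(\textstyle\sum_j\indx(z_j)\text{ odd},\tilde\gamma\subset\widetilde D^m_{\eps_1,\ldots,\eps_n})\Big).
\]
The left-hand side is exactly $\tilde p^{\rm mult}_{a,m}$ of Theorem \ref{thm:convbls2} with $a=v^2/2$. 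That theorem gives its convergence to $p^{\rm mult}_{v^2/2}$, which is the continuum probability in the statement. It then remains to show that the two masses on the right converge to their continuum counterparts.

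For the excursion term, the event is restricted to excursions avoiding the closed disks $\overline{\D(z_j,\eps_j)}$, so every relevant excursion has diameter bounded below. It also has a macroscopic distance to each $z_j$, so its index parity is stable under small uniform perturbations. The excursion measures on $\widetilde D^m$ converge to $\mu^{\rm exc}_D$ on such macroscopic excursions (\cite[Lemma 4.6]{ALS2}), and the event is open-and-closed up to a null set, namely excursions touching the disks' boundaries. These two facts give convergence of the masses, which are finite.

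For the loop term, a loop with odd index sum must surround some $z_j$ while staying in the domain with holes. Hence it must wind around one of the disks, and so has diameter at least $2\min_j\eps_j$. Its total mass is finite, bounded by the mass of loops surrounding a disk within the bounded domain $D$. Convergence of the metric graph loop measure to $\mu^{\rm loop}$ on loops of diameter bounded below, together with the boundary condition that the loop stays inside the domain with holes, again follows from \cite[Lemma 4.6]{ALS2}. This step requires that the index parity is a continuity set. Loops touching $\partial\D(z_j,\eps_j)$ or $\partial D$ have zero mass in the limit, and the index is locally constant away from $z_j$.

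The main obstacle is uniform integrability in $m$: we need the convergence of masses of these macroscopic loops rather than just of the restriction of the measures to compacts. I would handle it by bounding the metric-graph mass of loops of diameter at least $\delta$ in a bounded region uniformly in $m$ (standard random walk loop estimates), so that the convergence lemma applies. The $n=2$ case then follows because, for $n=2$, the condition $\indx_\gamma(z_1)+\indx_\gamma(z_2)$ odd for a path in a cluster is equivalent to the cluster disconnecting $z_1$ from $z_2$.
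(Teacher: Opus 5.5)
Your proposal is correct and is essentially the paper's argument. The paper obtains this proposition by combining the exact metric-graph identity (Theorem \ref{thm:topevent} for $v=0$, Proposition \ref{Prop n pt cut disks bc metric} for $v\neq 0$ and $n$ even) with the convergence of separation probabilities in Theorem \ref{thm:convbls2} and of the macroscopic loop and excursion masses (as in \cite[Lemma 4.6]{ALS2}), which is what you do; one small correction is that the relevant excursions are macroscopic because of the odd-parity condition (a small excursion closed by a short boundary arc has even index sum), not because they avoid the disks.
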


\subsection{Some calculations using Jacobi theta functions and elliptic integrals}

Here we collect some small relations on Jacobi theta functions that we did not find explicitly in the literature. 

\begin{claim}
Let $q \in (0,1)$ be a nome. Then it holds that
\begin{equation}\label{eq:thetasquares}
    \theta_2(q)^2+\theta_3(q)^2 = \theta_3(q^{1/2})^2
\end{equation}
and
\begin{equation}\label{eq:thetalinear}
\theta_2(q) + \theta_3(q) = \theta_3(q^{1/4}) .
\end{equation}
\end{claim}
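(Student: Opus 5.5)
The plan is to prove both identities directly from the series definitions, by splitting the summation index according to parity. This uses only $\theta_2(q)=\sum_{n\in\Z} q^{(n+1/2)^2}$ and $\theta_3(q)=\sum_{n\in\Z}q^{n^2}$; no product formulas are needed. All series converge absolutely for $q\in(0,1)$, so rearrangements are harmless.

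For \eqref{eq:thetalinear}, first rewrite $\theta_2(q)=\sum_{n} q^{(2n+1)^2/4}$ and $\theta_3(q)=\sum_n q^{(2n)^2/4}$. Adding them and splitting $m\in\Z$ into even and odd values gives
$$\theta_2(q)+\theta_3(q)=\sum_{m\in\Z} q^{m^2/4}=\theta_3(q^{1/4}).$$
This is immediate.

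For \eqref{eq:thetasquares}, expand the right-hand side as a double sum,
$$\theta_3(q^{1/2})^2=\sum_{(a,b)\in\Z^2} q^{(a^2+b^2)/2},$$
and split according to the parity of $a+b$. When $a+b$ is even, set $m=(a+b)/2$ and $n=(a-b)/2$; this is a bijection of $\{a+b \text{ even}\}$ onto $\Z^2$. Since $a^2+b^2=2(m^2+n^2)$, this part contributes $\sum_{m,n}q^{m^2+n^2}=\theta_3(q)^2$. When $a+b$ is odd, set $m=(a+b-1)/2$ and $n=(a-b-1)/2$, again a bijection onto $\Z^2$. Then $a=m+n+1$ and $b=m-n$, so $a^2+b^2=2\big((m+\tfrac12)^2+(n+\tfrac12)^2\big)$. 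This part contributes $\theta_2(q)^2$. Summing the two parts gives the identity.

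There is no real obstacle. The only step needing care is checking that the odd-parity change of variables is a bijection and that the exponent arithmetic comes out correctly; both are routine verifications. As a sanity check, \eqref{eq:thetasquares} is the classical Landen-type identity $\theta_3(q)^2+\theta_2(q)^2=\theta_3(q^{1/2})^2$, and it can be cross-checked against the duplication formulas in \cite{lawden2013elliptic}.
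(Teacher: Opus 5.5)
Your proof is correct. For \eqref{eq:thetalinear} it is the paper's argument: the paper says only that it ``follows directly from the definition'', and your even/odd split of $m\in\Z$ is that argument written out.

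For \eqref{eq:thetasquares} your route differs. The paper computes nothing and cites two textbook duplication formulas (Lawden 1.4.43--1.4.44 in \cite{lawden2013elliptic}). Any such pair, for example the classical $\theta_3(q^{1/2})^2+\theta_4(q^{1/2})^2=2\theta_3(q)^2$ and $\theta_3(q^{1/2})^2-\theta_4(q^{1/2})^2=2\theta_2(q)^2$, gives the claim on adding. You instead prove the identity from scratch. You split $\Z^2$ into the index-two sublattice $\{a+b \text{ even}\}$ and its coset, then rotate each by $45^\circ$ onto $\Z^2$.

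Your version is self-contained, needs nothing beyond the series definitions and absolute convergence, and never introduces $\theta_4$. The paper's version is a one-line citation that places the identity alongside the other Landen/duplication relations it already uses, such as \eqref{eq:thetadupl}.

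The two are really the same computation. If you weight your double sum by $(-1)^{a+b}$, the same bijections give $\theta_4(q^{1/2})^2=\theta_3(q)^2-\theta_2(q)^2$. Adding this to your identity, or subtracting it, recovers exactly the classical pair above. So your argument also proves the formulas the paper cites.
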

\begin{proof}
The first follows by combining formulas 1.4.43 and 1.4.44 from \cite{lawden2013elliptic}. The second follows directly from the definition
\end{proof}

\begin{lemma}\label{lem:parametr}
Let $z_1 \neq z_2$ be two points in a simply-connected domain $D$.
The equation 
$$\exp(\pi G_D(z_1,z_2)) = \frac{\theta_3(q^{1/2})}{\theta_2(q^{1/2})}$$
determines uniquely the nome $q \in (0,1)$ which satisfies 
$\theta_2(q)/\theta_3(q) = r$, where $r > 0$ is defined as follows: there is a unique conformal mapping 
from $(D, z_1, z_2)$ to the disk with two marked points $(\D, -r,r)$.
\end{lemma}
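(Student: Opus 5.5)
The plan is to reduce the lemma to two facts: an explicit formula for $G_D(z_1,z_2)$ in terms of $r$, and a theta-function identity that converts it into the stated equation. Uniqueness will then follow from monotonicity in $q$.

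\textbf{Step 1: existence of $r$ and the Green's function formula.} By the Riemann mapping theorem, map $z_1$ to $0$. Then use a rotation to put $z_2$ on the positive real axis, at some $s\in(0,1)$. Next apply the disk automorphism that moves the hyperbolic midpoint of the segment $[0,s]$ to $0$. This yields $(\D,-r,r)$ with $r\in(0,1)$ uniquely determined by the hyperbolic distance. The Green's function of the disk is
$$G_\D(z,w)=\frac{1}{2\pi}\log\Big|\frac{1-\bar w z}{z-w}\Big|,$$
so conformal invariance gives
$$G_D(z_1,z_2)=\frac1{2\pi}\log\frac{1+r^2}{2r}.$$
Hence $\exp(\pi G_D)=\sqrt{(1+r^2)/(2r)}$, a strictly decreasing bijection from $r\in(0,1)$ onto $(1,\infty)$.

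\textbf{Step 2: choice of $q$.} The map $q\mapsto \theta_2(q)/\theta_3(q)$ is a continuous bijection from $(0,1)$ onto $(0,1)$. Indeed, its square $k=\theta_2^2/\theta_3^2$ is the elliptic modulus, which increases from $0$ to $1$ as $q=\exp(-\pi K'/K)$ runs over $(0,1)$. So there is a unique $q$ with $\theta_2(q)/\theta_3(q)=r$.

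\textbf{Step 3: the key identity.} It remains to show that this $q$ satisfies the stated equation, i.e.
$$\frac{\theta_3(q^{1/2})^2}{\theta_2(q^{1/2})^2}=\frac{1+r^2}{2r}=\frac{\theta_2(q)^2+\theta_3(q)^2}{2\theta_2(q)\theta_3(q)}.$$
By \eqref{eq:thetasquares}, the numerators agree. So the claim reduces to the duplication formula
$$\theta_2(q^{1/2})^2=2\theta_2(q)\theta_3(q).$$
This is a standard Landen-type relation (formulas 1.4.43--1.4.44 in \cite{lawden2013elliptic}). Alternatively, it can be proved directly by substituting $p=q^{1/2}$ and expanding
$$\theta_2(p)^2=\sum_{m,n}p^{(m+1/2)^2+(n+1/2)^2}.$$
Put $a=m+n+1$ and $b=m-n$; these have opposite parity. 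The exponent becomes $(a^2+b^2)/2$, so the double sum splits as $2\,\theta_2(p^2)\theta_3(p^2)$. The factor $2$ accounts for the two choices of which of $a,b$ is odd.

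\textbf{Step 4: uniqueness.} The right-hand side $\theta_3(q^{1/2})/\theta_2(q^{1/2})$ equals $\sqrt{(1+r(q)^2)/(2r(q))}$, the composition of two strictly monotone bijections. Therefore the equation determines $q$ uniquely.

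The only step that requires care is the duplication identity in Step 3, and the parity bookkeeping above handles it.
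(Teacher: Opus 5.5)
Your proof is correct and follows the same route as the paper. Both use the explicit value $\exp(2\pi G_\D(-r,r))=\frac{1+r^2}{2r}$, the $q\leftrightarrow k$ bijection, and the identities \eqref{eq:thetasquares} and \eqref{eq:thetadupl}. You differ only in two small ways: you spell out the monotonicity argument behind uniqueness, and you prove the duplication formula directly via the parity substitution instead of citing it. The paper cites Lawden's 1.4.28 for the duplication formula; the formulas 1.4.43--1.4.44 you quote are the ones the paper uses for \eqref{eq:thetasquares}.
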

\begin{proof}
It is known that there is a unique correspondence between a nome $q \in (0,1)$ and the elliptic parameter $k \in (0,1)$. Indeed, we have $k = \theta^2_2(q)/\theta^2_3(q)$ and in the other direction $q=\exp(-\pi K'(k)/K(k))$ - see Sections 1 and 2 in \cite{lawden2013elliptic}. 
As $\exp(\pi G_D(z_1, z_2)) \in (1, \infty)$, 
this implies that there is a bijective correspondence.

To verify the relation to $r$, we observe that by conformal invariance we can use the disk domain $(\D, -r, r)$, 
where explicitly 
$\exp(2\pi G_\D(-r,r)) = \frac{1+r^2}{2r}$.
We can now apply Equation \eqref{eq:thetasquares}
and  
\begin{equation}\label{eq:thetadupl}
2\theta_2(q)\theta_3(q) = \theta_2(q^{1/2})^2,
\end{equation}
which can be found as Equation 1.4.28 in \cite{lawden2013elliptic}.
\end{proof}


\section{Twist field correlations via the Brownian loop soup}
\label{Sec Twist loop soup}

In this section we will define what we call the twist field correlations in simply connected domains using two different regularisations of the Brownian loop soup.
We will also prove the conformal covariance of these correlations and finally provide a calculation of two-point correlations in the disk. 

These correlations have been defined using a different cut-off - the cut-off in diameter of loops - also in \cite{CamiaGandolfiKleban16}, where it would be the winding field of the Brownian loop soup with parameter $\beta = \pi$. An underlying random field has been constructed in \cite{BrugCamiaLis2018winding}, again using different cut-offs.

We will first give two equivalent definitions of the zero-boundary twist field correlations using first the cut-off in time, 
and then using the Brownian loop soup restricted to loops that do not touch small disks around the marked points. As mentioned, yet another way to renormalise would be to use a cut-off by diameter for the loops, this is done in \cite[Section 4.2]{CamiaGandolfiKleban16} in a larger generality, and it seems clear that it would, 
up to a universal constant, give the same limit. 

The relevance of the two chosen cut-offs is as follows: the cut-off in time allows for explicit calculations done at the end of this Section \ref{Sec Twist loop soup}, 
and the cut-off by small disks helps build a bridge to CLE$_4$ in the next Section \ref{Sec Twist CLE4}. 
Diffeomorphism invariance and Weyl scaling will be simple consequences of the constructions.

Thereafter we will explain how to define the twist-field correlations with non-zero constant boundary conditions. We recall the notation: $D_{\eps_1, \dots, \eps_n}(z_1, \dots, z_n) := D \setminus (\D(z_1,\eps_1) \cup \dots \cup \D(z_n, \eps_n))$.

\begin{theorem}[Definition of twist fields correlations with zero boundary condition]\label{thm:deftwist}
The (zero boundary) twist field correlations $\sigma_{\rm tw}^D(z_{1}, z_{2},\dots, z_{n})$ can be defined via three equivalent ways by taking the following limits. 
\begin{enumerate}[i)]

\item As $\delta\to 0$, we consider the limit of
\begin{equation}\label{def:twist1bd}
\delta^{-n/16}\exp(n\cbls)\exp\left(-
\mu^{\rm loop}_{D}
\Big(\sum_{j=1}^{n}
\indx_{\gamma}(z_j)
\text{ odd},
t_{\gamma}>\delta
\Big)\right),
\end{equation}
where $\cbls$ is an explicit constant 
that can be expressed through the Brownian loop measures
in the unit disk $\D$ and its complement
$\C\setminus\overline{\D}$:
$$\cbls
=
\mu^{\rm loop}_{\D}(t_{\gamma}>1,
\indx_{\gamma}(0) \text{ odd})
-
\mu^{\rm loop}_{\C}(
\gamma \cap (\C\setminus\overline{\D})\neq\emptyset,
t_{\gamma}<1,
\indx_{\gamma}(0) \text{ odd}).
$$
\item
As $\max_{1\leq j\leq n}\varepsilon_j\to 0$, we consider the limit of
\begin{equation}\label{def:twist2bd}
\prod_{j=1}^{n}
\Big(
\frac{1}{\eps_j^{1/8}\sqrt{|\log \eps_j|}}
\Big)
\left(\pi/2\right)^{n/2}\exp\left(-\mu^{\rm loop}_{D_{\eps_1, \dots, \eps_n}(z_1, \dots, z_n)}
\Big(\sum_{j=1}^{n}
\indx_{\gamma}(z_j)
\text{ odd}
\Big)\right).
\end{equation}
\item Consider the Brownian loop soup 
$\cL_{D_{\varepsilon_1,\dots, \varepsilon_n}(z_1,\dots, z_n)}$
in the domain with holes
$D_{\varepsilon_1,\dots, \varepsilon_n}(z_1,\dots, z_n)$,
and let $(\overline C_i^{\varepsilon_1,\dots, \varepsilon_n})_{i \geq 1}$ denote the collection of topological closures of its clusters. 
(It is an infinite countable collection of random compact disjoint subsets in $D$.) 
As $\max_{1\leq j\leq n}\varepsilon_j\to 0$, we consider the limit of
\begin{equation}\label{def:twist3bd}
\prod_{j=1}^{n}
\Big(
\frac{1}{\eps_j^{1/8}\sqrt{|\log\eps_j|}}
\Big)
\left(\pi/2\right)^{n/2}\P\left(\nexists \text{ path }\gamma\subseteq
\bigcup_{i \geq 1}
\overline{C}_i^{\varepsilon_1,\dots, \varepsilon_n}
\text{ with }\sum_{j=1}^{n}
\indx_{\gamma}(z_j)
\text{ odd}
\right).
\end{equation}
\end{enumerate}
\end{theorem}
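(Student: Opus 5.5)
The plan is to prove that the limit in i) exists and is finite and positive, and then that ii) and iii) have the same limit.

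\textbf{Step 1: iii) equals ii) at each fixed $\eps$.} For fixed $\eps_1,\dots,\eps_n>0$, Proposition \ref{Prop n pt cut disks bc continuum} with $v=0$ identifies the probability in \eqref{def:twist3bd} with the exponential in \eqref{def:twist2bd}. So it suffices to treat i) and ii).

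\textbf{Step 2: split the loop measure by locality.} I would decompose the set of loops $\gamma$ in $D$ with $\sum_j\indx_\gamma(z_j)$ odd according to which small disks the loop surrounds. A loop of odd total index must wind around at least one $z_j$. Fix $\eta>0$ smaller than half the mutual distances and than the distances to $\partial D$. Loops that are not contained in any single $\D(z_j,\eta)$ have diameter bounded below. Their mass is therefore finite, and under either cut-off it converges to the same finite quantity as $\delta\to0$ or $\eps\to0$. For loops inside $\D(z_j,\eta)$, the odd-index condition reduces to $\indx_\gamma(z_j)$ odd. Each of the two regularisations then localizes to a one-point problem near each $z_j$. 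The interaction term is the same in both cases, up to errors that vanish in the limit. Here I use that the loops cut off near $z_j$ (short-time loops, or loops hitting $\D(z_j,\eps_j)$) that also exit $\D(z_j,\eta)$ have vanishing mass.

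\textbf{Step 3: the one-point asymptotics.} By scaling and translation invariance of $\mu^{\rm loop}_\C$, consider $\mu^{\rm loop}_{\D(0,\eta)}(\indx_\gamma(0)\text{ odd},t_\gamma>\delta)$. Splitting at time $1$ after rescaling $\delta\to1$ gives $\frac1{16}\log(\eta^2/\delta)\cdot\frac12$-type growth plus the constant $\cbls$. The log coefficient can be computed from the explicit odd-winding density of planar Brownian loops: the mass of loops of time in $[t,2t]$ winding an odd number of times around $0$ is $\log 2/8$ per dyadic scale in diameter, giving $\delta^{-1/16}=(\delta^{1/2})^{-1/8}$. This is where the normalisation $\delta^{-n/16}e^{n\cbls}$ comes from.

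\textbf{Step 4: the hole asymptotics, the main obstacle.} For ii) I need
\[
\mu^{\rm loop}_{\D(0,\eta)\setminus\overline{\D(0,\eps)}}(\indx_\gamma(0)\text{ odd}) = \tfrac18\log(\eta/\eps)+\tfrac12\log\log(1/\eps)-\tfrac12\log(\pi/2)+c+o(1),
\]
with the same constant $c$ as in Step 3. The annulus is conformally a cylinder of modulus $L=\frac1{2\pi}\log(\eta/\eps)$. The odd-index loop mass in an annulus is explicit via the spectral zeta/heat kernel on the cylinder. It equals $-\log$ of a ratio of determinants with antiperiodic versus periodic boundary conditions along the circle, which yields $\frac18\cdot 2\pi L$. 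The $\sqrt{\log}$ correction comes from the zero mode that is absent for antiperiodic conditions. This computation has to be matched against the time cut-off, for instance by comparing both with $\mu^{\rm loop}_\C$ restricted to loops surrounding $0$ in the annulus. Matching the additive constants (producing $(\pi/2)^{n/2}$) is the delicate step. I would do it by a direct Fourier computation of $\int \frac{dt}{t}\mathrm{tr}(e^{t\Delta/2}$ with the half-integer angular modes$)$ on the annulus, using Dirichlet eigenvalues of radial Bessel operators.
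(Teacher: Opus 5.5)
Your Steps 1 and 2 follow the paper's architecture: ii)$\Leftrightarrow$iii) from Proposition~\ref{Prop n pt cut disks bc continuum} with $v=0$, then localisation into a finite non-local mass plus one-point problems. The genuine gap is Step 4, which carries the whole i)$\Leftrightarrow$ii) equivalence and is wrong as stated. The correct expansion is
\[
\mu^{\rm loop}_{\D(0,\eta)\setminus\overline{\D(0,\eps)}}(\indx_\gamma(0)\text{ odd})=\tfrac18\log(\eta/\eps)-\tfrac12\log\log(1/\eps)+\tfrac12\log(\pi/2)+o(1).
\]
It has the opposite signs on the last two terms from yours, and no additional constant. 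With your signs, $\exp(-\mu)$ times the normalisation $\eps^{-1/8}|\log\eps|^{-1/2}(\pi/2)^{1/2}$ of \eqref{def:twist2bd} decays like $1/\log(1/\eps)$ per point, so ii) would define $0$. Your sign also contradicts your own zero-mode heuristic. Indeed $\mu(\text{odd})=\frac12\log(\det\Delta_{\rm anti}/\det\Delta_{\rm per})$ (note the factor $\frac12$), and the periodic zero mode contributes $+\log(2\ell)$ to $\log\det\Delta_{\rm per}$, hence $-\frac12\log\ell$ to the odd mass.

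More importantly, the idea that the additive constants of the two cut-offs must be matched through a common $c$ is a misconception. The normalisations $e^{n\cbls}$ and $(\pi/2)^{n/2}$ are chosen independently, each cancelling its own one-point constant in the unit disk, so that both give $\sigma^{\D}_{\rm tw}(0)=1$. If the hole expansion contained the Step-3 constant $\cbls$ as you require, the limits in i) and ii) would differ by $e^{n\cbls}$. The two cut-offs only need to share the non-local mass and the scale term, namely $\mu^{\rm loop}_\D(\gamma\not\subset\D(0,\eta),\indx_\gamma(0)\text{ odd})=\frac18\log\eta^{-1}$, and both produce these automatically in the limit. This is exactly the paper's bookkeeping with $\widehat{\Ftw}^{D,r_0}_n$ and the common correction $\mu^{\rm loop}_\D(\gamma\cap(\D\setminus r_0\overline{\D})\neq\emptyset,\indx_\gamma(0)\text{ odd})$.

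What is actually missing is an exact, not just leading-order, evaluation of the annulus mass. The paper imports it from \cite{ALS3,LupuTopoAIHP}: $\mu^{\rm loop}_{\D\setminus\eps\overline{\D}}(\indx_\gamma(0)\text{ odd})$ equals $-\log$ of the probability that a Brownian bridge of duration $\frac{1}{2\pi}\log\eps^{-1}$ stays in $(-\sqrt{\pi/2},\sqrt{\pi/2})$. The Fourier series of the interval heat kernel then gives the expansion above at once.

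Your determinant route also works once corrected, and it needs no Bessel operators. The untruncated odd-index mass is conformally invariant, so you can compute on the flat cylinder $(0,\ell)\times\R/2\pi\Z$ with $\ell=\log(\eta/\eps)$. By Gelfand--Yaglom, each angular mode $k\neq0$ gives $\log\det=|k|\ell-\log|k|+O(e^{-2|k|\ell})$, and $k=0$ gives $\log(2\ell)$. Using $\zeta_{\rm R}(-1)=-\frac1{12}$, $\zeta_{\rm R}'(0)=-\frac12\log(2\pi)$, $\zeta_{\rm H}(-1,\frac12)=\frac1{24}$ and $\zeta_{\rm H}'(0,\frac12)=-\frac12\log2$, you get exactly $\frac{\ell}{8}-\frac12\log\ell+\frac12\log(\pi/2)$.

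Your Step 3 (rescale, then split loops staying in $\D$ from those exiting it) is a legitimate and more direct route than the paper's spectral-zeta computation to the loop-measure form of $\cbls$ in the statement. Fix the density, though: odd-index loops around $0$ carry mass $\frac1{16}$ per unit of $\log t$, from $\int_\C\P^{z,z}_1(\indx_\gamma(0)=k)\,d^2z=\frac{1}{2\pi k^2}$. That is $\frac18$ per unit of $\log$-diameter, so the extra factor $\frac12$ in your growth term is spurious.

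Finally, for unbounded $D$ a lower bound on diameters does not give finiteness of the non-local mass. You need that $\C\setminus D$ connects to $\infty$, as in Lemma~\ref{Lem tilde F finit}.
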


\begin{remark}
A few remarks are in order.
\begin{itemize}
\item Here there is an implicit normalisation of the 
1-point function, which is 
$\sigma_{\rm tw}^\D(0) = 1$,
$\D$ being the unit disk. 
\item There is also an implicit choice of metric in the definition: $\sigma_{\rm tw}^D = 
\sigma_{\rm tw}^{D,g_0}$ where $g_0$ is the usual Euclidean metric on the domain. To introduce the metric parameter $\sigma_{\rm tw}^{D,g}$, one just considers the Brownian motion w.r.t the metric $g$ in the above definitions.
\item The second point could also be equivalently stated using the Brownian loop soup in the domain $D$, by restricting to loops that avoid the $\eps_j$-disks:
$$ \mu^{\rm loop}_{D_{\eps_1, \dots, \eps_n}(z_1, \dots, z_n)}\Big(\sum_{j=1}^{n}
\indx_{\gamma}(z_j)
\text{ odd}\Big) = \mu^{\rm loop}_{D}
\Big(\sum_{j=1}^{n}
\indx_{\gamma}(z_j)
\text{ odd},
\gamma\cap\Big(\bigcup_{j=1}^{n}\D(z_j,\varepsilon_j)\Big)
=\emptyset
\Big).$$
\end{itemize}
\end{remark}

 An easy corollary of the calculations leading to the theorem are the diffeomorphism invariance and Weyl scaling, which when combined give rise also to conformal covariance. For a different cut-off regularisation, conformal covariance was shown also in 
 \cite[Section 4.2]{CamiaGandolfiKleban16}. 

\begin{prop}[Diffeomorphism invariance, Weyl scaling and conformal covariance]
The twist fields $\sigma_{\rm{tw}}^{D}(z_{1}, z_{2},\dots, z_{n})$
satisfy the following properties.
\begin{enumerate}[i)]
\item Diffeomorphism invariance: Let $(D_1, g_1)$ and $(D_2, g_2)$ be two Riemann surfaces with boundary and $\psi: D_1 \to D_2$ a diffeomorphism such that $g_1 := \psi^*g_2$. Then 
$$\sigma_{\rm tw}^{D_1, g_1}(z_1, \dots, z_n) 
= \sigma_{\rm tw}^{D_2, g_2}(\psi(z_1), \dots, \psi(z_n)). 
$$ 
\item Weyl scaling: Let $\Omega: D \to \R_+$. Then 
$$\sigma_{\rm tw}^{D,g\Omega^2}(z_1, \dots, z_n) 
= 
\Big(\prod_{j=1}^{n} \Omega(z_j)^{-1/8}\Big)
\sigma_{\rm tw}^{D, g}(z_1, \dots, z_n). $$
\item Conformal covariance: Let $D_1$ and $D_2$ be two simply connected domains,
$D_1,D_2\neq\C$.
Take a conformal mapping
$\psi:D_1\rightarrow D_2$.
Let $z_1,z_2,\dots,z_n\in D_1$ be
two by two distinct.
Then,
\begin{equation}
\label{Eq:covartau}
\sigma_{\rm tw}^{D_2}(\psi(z_{1}), \psi(z_{2}),\dots, \psi(z_{n}))
=
\Big(
\prod_{j=1}^{n} \vert\psi'(z_j)\vert^{-1/8}
\Big)
\sigma_{\rm tw}^{D_1}(z_{1}, z_{2},\dots, z_{n}).
\end{equation}
\end{enumerate}
\end{prop}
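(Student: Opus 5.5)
We will derive all three properties from the time cut-off definition \eqref{def:twist1bd} of Theorem \ref{thm:deftwist}, since there the only ingredients are the loop measure $\mu^{\rm loop}$ and the time parametrisation of loops. Both transform explicitly under change of metric. The renormalising factor $\delta^{-n/16}\exp(n\cbls)$ is a pure constant and does not depend on the domain.

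\textbf{Diffeomorphism invariance.} If $\psi:(D_1,g_1)\to(D_2,g_2)$ is an isometry ($g_1=\psi^*g_2$), then $\psi$ maps $g_1$-Brownian motion to $g_2$-Brownian motion with the same time parametrisation. Hence the push-forward of $\mu^{\rm loop}_{D_1,g_1}$ is $\mu^{\rm loop}_{D_2,g_2}$, and the total time $t_\gamma$ is preserved. Since $\psi$ is a homeomorphism, it preserves the parity of $\sum_j\indx_\gamma(z_j)$ (indices change at most by a global sign under orientation reversal). So the prelimit quantities in \eqref{def:twist1bd} agree for every $\delta$, and so do the limits.

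\textbf{Weyl scaling.} For $g\Omega^2$, Brownian motion is a time change of $g$-Brownian motion with clock $dt'=\Omega(\gamma_s)^2ds$. The loop measure, viewed as a measure on unparametrised loops, is unchanged. Only the cut-off $t_\gamma>\delta$ changes into $\int_0^{t_\gamma}\Omega(\gamma_s)^2ds>\delta$.

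We split the mass of odd loops into two parts. The first part consists of macroscopic loops, those with diameter above some $r$ fixed but small. On these, as $\delta\to0$, both cut-offs keep all loops up to a vanishing mass, so they contribute the same limit. The second part consists of small loops, which must lie close to some $z_j$ in order to have odd index. Such a loop has diameter below $r$ and winds around one $z_j$ an odd number of times. There $\Omega$ is approximately $\Omega(z_j)$, so the new cut-off is approximately $t_\gamma>\delta\Omega(z_j)^{-2}$.

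Locally the measure of odd loops around a point with $t_\gamma\in(\delta_1,\delta_2)$ behaves like $\frac{1}{8}\log(\delta_2/\delta_1)$, up to a vanishing error. This is the computation that produces the exponent $\delta^{-n/16}$, as $\frac{1}{16}=\frac{1}{8}\cdot\frac12$. It follows that
\[
\mu\big(\delta\Omega(z_j)^{-2}<t_\gamma\le\delta\big)\to\tfrac{1}{8}\log\Omega(z_j)
\]
locally, which gives the factor $\Omega(z_j)^{-1/8}$ after exponentiation. The main obstacle is controlling the error from replacing $\Omega$ by $\Omega(z_j)$, uniformly in $\delta$. We handle it by using continuity of $\Omega$ on the $r$-ball together with the scaling of the loop measure, and then sending $r\to0$.

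\textbf{Conformal covariance.} This follows by combining the two previous properties. For conformal $\psi$, the metric $\psi^*g_0$ equals $|\psi'|^2g_0$. Diffeomorphism invariance gives
\[
\sigma^{D_2}_{\rm tw}(\psi(z_j))=\sigma^{D_1,|\psi'|^2g_0}_{\rm tw}(z_j),
\]
and Weyl scaling with $\Omega=|\psi'|$ then yields \eqref{Eq:covartau}.
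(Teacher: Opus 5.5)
Your diffeomorphism-invariance step is the same as the paper's. So is your conformal-covariance step: pull back the Euclidean metric to get $|\psi'|^2 g_0$, then apply Weyl scaling with $\Omega=|\psi'|$. For Weyl scaling you take a genuinely different route, and it is correct.

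The paper uses the disk cut-off \eqref{def:twist2bd}. The $g$- and $g\Omega^2$-loop measures coincide on unparametrised loops, and avoiding a metric ball is a parametrisation-free event. So Claim \ref{claim:weyl} just sandwiches $\D_{g\Omega^2}(z_j,\eps)$ between two $g$-balls of radius $\eps\,\Omega(z_j)^{-1}(1+o(1))$. It then reads off the shift $\frac18\log\Omega(z_j)$ of $\Ftw$ from the $\frac18\log(\eps^{-1})$ term of \eqref{eq:loopcutdisk}, with the $\log\log$ terms cancelling.

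You instead keep the set of loops fixed and move the time cut-off from $t_\gamma>\delta$ to $\int_0^{t_\gamma}\Omega(\gamma_s)^2ds>\delta$. You control the change by continuity of $\Omega$ near the $z_j$ and exact time-scale invariance of odd loops. This gives $\limsup/\liminf$ bounds within $O(\sup_{\D(z_j,r)}|\Omega/\Omega(z_j)-1|)$, and then you send $r\to0$.

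Each route has its advantages.
\begin{itemize}
\item Yours needs no disk cut-off expansion for a general metric and no $\log\log$ bookkeeping. It is also the standard CFT mechanism: the Weyl factor is a shift of the UV cut-off.
\item The paper's is a purely monotone set-inclusion argument with no time-change error to control. More importantly, it is robust to conformal factors that are singular at the marked points. The paper reuses exactly this comparison in \eqref{eq:toannulus0} for the pulled-back metric $\hat g$, which has a conical singularity at $\pm r$.
\item At such a singular point your continuity argument does not apply. Indeed the time cut-off rate there changes from $\frac1{16}$ to $\frac18$ (Proposition \ref{Prop disk 1pt time conic}).
\end{itemize}

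One intermediate claim needs correcting. The mass of odd loops around a point with $t_\gamma\in(\delta_1,\delta_2)$ is $\frac1{16}\log(\delta_2/\delta_1)$, not $\frac18\log(\delta_2/\delta_1)$. In the plane, Brownian scaling gives exactly $\mu^{\rm loop}_{\C}(\indx_\gamma(0)\text{ odd},\,t_\gamma\in dt)=\frac{1}{16}\frac{dt}{t}$. This is consistent with Proposition \ref{Prop disk 1pt time} and with the $\delta^{-n/16}$ normalisation. With your constant, the window $(\delta\Omega(z_j)^{-2},\delta]$ would contribute $\frac14\log\Omega(z_j)$. With the correct constant it contributes $\frac1{16}\log\Omega(z_j)^2=\frac18\log\Omega(z_j)$. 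That is the value you actually go on to use, so your conclusion stands.
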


Importantly, we also manage to explicitly calculate the two-point correlations in simply-connected domains. 
For a related but different model, 
the so-called layering field that takes into account only the outer boundaries of Brownian loops and not the winding, 
this was achieved in \cite{CamiaGandolfiKleban16}.

\begin{theorem}[Calculation of twist two-point correlations]\label{thm:twopointtwist}
Let $z_1 \neq z_2 \in D$. Then
$$\sigma_{\rm tw}^D(z_1, z_2) = \Ctw\CR(z_1,D)^{-1/8}\CR(z_2,D)^{-1/8}\frac{1}{\theta_2(q^{1/4})\sqrt{|\log q|}},$$
where by Lemma \ref{lem:parametr} the nome $q \in (0,1)$ is fixed uniquely by 
$$\exp\Big(\pi G_D(z_1,z_2)\Big) 
= \dfrac{\theta_3(q^{1/2})}{\theta_2(q^{1/2})}.$$

\end{theorem}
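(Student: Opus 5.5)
By the conformal covariance \eqref{Eq:covartau}, I will reduce to $D=\D$ with the points $z_1=-r$, $z_2=r$. Here $\CR(\pm r,\D)=1-r^2$, and $r$ is linked to $q$ by Lemma \ref{lem:parametr} and \eqref{eq:rtheta}. I will work with the time cut-off definition \eqref{def:twist1bd}. The aim is to compute
$$
\mu^{\rm loop}_{\D}\Big(\indx_\gamma(-r)+\indx_\gamma(r)\text{ odd},\, t_\gamma>\delta\Big)
$$
exactly up to $o(1)$ as $\delta\to0$.

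The main tool is the spectral representation of the loop measure restricted by holonomy. Let $\Delta^{\rm tw}$ be the Dirichlet Laplacian on the double cover $\Sigma$ of $\D\setminus\{-r,r\}$ branched at $\pm r$. Equivalently, it is the Laplacian acting on sections twisted by the $\Z_2$-gauge field with a defect line joining $-r$ to $r$. Then
$$
\mu^{\rm loop}(\text{holonomy }-1,\ t_\gamma>\delta)=\tfrac12\int_\delta^\infty \big(\mathrm{Tr}\,e^{t\Delta/2}-\mathrm{Tr}\,e^{t\Delta^{\rm tw}/2}\big)\frac{dt}{t},
$$
since the trace on the $\Z_2$-odd part equals the trace on $\Sigma$ minus the trace on $\D$. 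Hence
$$
-\mu(\cdot)=\tfrac12\log\frac{\det{}'_\zeta\Delta_\Sigma^{\rm odd}}{\det{}'_\zeta\Delta_\D}+(\text{local divergence}).
$$
The local heat-kernel coefficient at a conical point of angle $4\pi$ contributes $\tfrac1{16}\log\delta$ per point, which matches the $\delta^{-n/16}$ factor. The constant $\cbls$ removes the remaining universal term, with the normalisation fixed by $\sigma^\D_{\rm tw}(0)=1$. So the twist two-point function equals, up to an absolute constant, a ratio of regularised determinants of the Dirichlet Laplacian on the branched double cover of the disk.

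To evaluate this ratio, I will uniformise. The map $w\mapsto\sqrt{(w^2-r^2)/(1-r^2w^2)}$, suitably normalised, makes the branched double cover of $\D$ over $\pm r$ a doubly connected surface, conformal to an annulus $\{q^{\alpha}<|\zeta|<1\}$ glued along its boundary. Equivalently, the odd sector corresponds to an annulus whose modulus is given by the ratio $K'/K$ with $k=r^2$; this is exactly where $q=\exp(-\pi K'/K)$ enters. On an annulus, Dirichlet and antiperiodic determinants are classical products: $\prod(1-q^{n})$ and $\prod(1+q^{n-1/2})$-type products, that is, $\eta$ and $\theta$ functions. Transferring the determinant along the conformal map with the Polyakov–Alvarez anomaly formula, with conical singularities handled as in the one-point computation, produces the conformal factors $\CR(\pm r,\D)^{-1/8}$. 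The remaining modular part should collapse, via \eqref{eq:thetadupl}, \eqref{eq:thetasquares} and \eqref{eq:thetalinear}, to $1/(\theta_2(q^{1/4})\sqrt{|\log q|})$. The factor $\sqrt{|\log q|}$ comes from the zero-mode/length term, namely $K'\propto|\log q|\theta_3^2$.

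I expect the main obstacle to be the rigorous anomaly computation at the branch points and the exact constant $\Ctw$. As an alternative, I would avoid determinants near the singularities by varying $r$. I would compute $\partial_r$ of the loop mass using the explicit twisted Green's function on the double cover: $\partial_r\mu$ equals a residue of the twisted stress tensor $\lim_{w\to z}\partial_w\partial_z(G^{\rm tw}-G)$ at $\pm r$. This gives an ODE in $r$ that I would integrate in terms of elliptic integrals. The integration constant is fixed from the $r\to0$ asymptotics, where the points merge and the fusion to the identity gives $|2r|^{-1/8}$ times one-point functions. The constant $\Ctw$ is then read off from the small-$q$ behaviour $\theta_2(q^{1/4})\sim2q^{1/16}$.
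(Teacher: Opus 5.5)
Your skeleton matches the paper's. You reduce to $(\D,-r,r)$ and write the time-cut-off odd-winding mass as $\frac12\int_\delta^\infty(\mathrm{Tr}_\D-\mathrm{Tr}^{\rm odd})\frac{dt}{t}$ on the double cover branched at $\pm r$. You use that this cover is an annulus with modulus governed by $K'(r^2)/K(r^2)$, and you treat the branch points by local cone computations; your $\frac1{16}$ per point is correct. But the step that carries all the content is only named: moving the computation from the flat annulus back to the Euclidean disk across the branch points, with exact constants. The smooth Polyakov--Alvarez formula does not apply there, because the conformal factor is logarithmically singular at $\pm r$: the Euclidean metric lifts to cone angle $4\pi$, and the flat annulus metric descends to cone angle $\pi$. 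You would need a conical anomaly formula with every constant tracked. The paper uses no global anomaly formula at all. It pulls the cylindrical metric back to a metric $\hat g$ on $\D$ with $\hat\Omega^2(\pm r+\eps\omega)=\eps^{-1}\fconf(r)+O(\eps^{-1/2})$, where $\fconf(r)=\pi^2/(8K(r^2)^2r(1-r^4))$. For the \emph{disk} cut-off the Euclidean versus $\hat g$ comparison is trivial: a $\hat g$-disk of radius $\eps$ is essentially a Euclidean disk of radius $\eps^2/(4\fconf(r))$ (Claim \ref{claim:weyl}). The disk-versus-time discrepancy in $\hat g$ is local, so it is read off from the model cone $|z|^{-1}|dz|^2$, using Spreafico's $\zeta'_{\Delta_2}(0)$ (Proposition \ref{Prop disk 1pt time conic}). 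These local constants, together with the exact annulus computation, produce $\Ctw$.

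Several concrete expectations in your plan are also wrong. The map $w\mapsto\sqrt{(w^2-r^2)/(1-r^2w^2)}$ is the square root of a degree-two Blaschke product. On the cover it is a degree-two branched map onto $\D$, not a uniformisation. One needs the elliptic integral $F(w/r,r^2)$ (Schwarz map to an ellipse, then Joukowsky). This gives $\ann_R$ with $\log R=\pi K'(r^2)/4K(r^2)$ and deck map $w\mapsto w^{-1}$, i.e.\ $z\mapsto -z$ on the strip, not an antiperiodic twist. For this involution the nonzero angular modes pair off exactly between the even and odd sectors, so no $\eta$- or $\theta$-products arise on the annulus. Only a one-dimensional sum survives, giving $\frac14\log\delta^{-1}+\frac12\log\log R-\frac14(\log(\pi/4)+\gamma_{\rm EM})$ via Wallis' product. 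The theta functions enter only through $\fconf(r)$ and $\log R$, i.e.\ through $r$, $K$ and $K'$. Also, $-\mu=\frac12\log(\det\Delta_\D/\det\Delta^{\rm odd})$, not the inverse.

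Your fallback breaks at the boundary condition. Since $K'(r^2)\sim\log(4/r^2)$, in fact $\sigma_{\rm tw}^\D(-r,r)\asymp r^{-1/4}(\log(1/r))^{-1/2}$. The exponent is $-1/4$ for two fields of dimension $1/8$, and there is a logarithmic correction from the non-compact zero mode. So ``fusion to the identity with $|2r|^{-1/8}$'' is false. The prefactor of the true short-distance asymptotics is not available without a computation of the same difficulty. Hence the ODE cannot pin down $\Ctw$, even granting the stress-tensor formula for $\partial_r\mu$.
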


The rest of the section is dedicated to proving these results and is structured as follows. 

In Section 3.1 we study the different cut-offs, prove Theorem \ref{thm:deftwist} and the conformal covariance. We start with the case of one point in both set-ups, and then show how to connect the multipoint definitions.

In Section 3.2 we derive the exact formula by mapping the set-up to a strip and doing an explicit calculation using the Brownian loop in the strip.

Finally, in Section 3.3 we consider twist fields with constant non-zero boundary conditions.

\subsection{Twist fields via different cut-offs: proof of Theorem \ref{thm:deftwist}}

The equivalence of ii) and iii) follows from the $v = 0$ case of Proposition \ref{Prop n pt cut disks bc continuum}. 

It thus remains to prove the equivalence between i) and ii), i.e. of the different cut-offs for the loops under the loop measure.

\subsubsection{The case of the disk with one puncture}

\paragraph{Cut-off by time}

Recall that given $\gamma$ a time-parametrized loops,
we will denote by
$t_{\gamma}$ its time-length and
for $z\in\C$,
$\indx_{\gamma}(z)\in\Z$
its index around the point $z$.

Consider the punctured disk $\D\setminus \{ 0\}$
and the mass of Brownian loops contained in $\D$,
of duration $t_{\gamma}>\delta$,
and with an odd index around the point $0$:
$$
\mu^{\rm loop}_{\D}
(t_{\gamma}>\delta, \indx_{\gamma}(0) \text{ odd}).
$$
We will prove the following asymptotic expansion when
$\delta\to 0$. 

\begin{prop}[One point expansion]
\label{Prop disk 1pt time}
As $\delta\to 0$, we have that
$$
\mu^{\rm loop}_{\D}
(t_{\gamma}>\delta, \indx_{\gamma}(0) \text{ odd})
=
\dfrac{1}{16}\log(\delta^{-1})
+
\cbls + o(1),
$$
where 
\begin{equation}
\label{Eq explicit cbls}
\cbls
=
\frac32\,\zeta_{\rm R}'(-1)
+\frac14\log \pi
-\frac1{48}\log 2
-\frac{\gamma_{\rm EM}}{16}.
\end{equation}
\end{prop}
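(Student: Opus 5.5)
The plan is to write the loop mass as a difference of two heat traces and then read off both the logarithmic coefficient and the constant from spectral zeta functions.

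\textbf{Step 1: twisted heat traces.} By definition of $\mu^{\rm loop}_{\D}$,
$$
\mu^{\rm loop}_{\D}(t_\gamma>\delta,\ \indx_\gamma(0)\text{ odd})
=\int_\delta^\infty\frac{dt}{t}\int_{\D} p_{\D}(t,z,z)\,\P^{z,z}_{\D,t}(\indx_\gamma(0)\text{ odd})\,d^2z .
$$
I will use $\1_{\indx \text{ odd}}=\tfrac12(1-(-1)^{\indx})$. The integral of $p_{\D}(t,z,z)$ is the heat trace $\Theta(t)=\mathrm{Tr}\,e^{t\Delta/2}$ of the Dirichlet Laplacian on $\D$. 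The integral of $p_{\D}(t,z,z)\,\E^{z,z}_{\D,t}[(-1)^{\indx_\gamma(0)}]$ is the heat trace $\Theta_{\rm tw}(t)$ of the Laplacian on $\D\setminus\{0\}$ acting on sections that are antiperiodic around $0$. Equivalently, these are the functions on the double cover that are odd under the deck transformation, or the flux-$1/2$ Aharonov--Bohm Laplacian. The mass therefore equals
$$
\tfrac12\int_\delta^\infty \big(\Theta(t)-\Theta_{\rm tw}(t)\big)\frac{dt}{t}.
$$
I will diagonalize both operators in polar coordinates. The untwisted Dirichlet eigenvalues are $j_{\nu,k}^2$ with $\nu=|n|$, $n\in\Z$. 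The twisted ones are $j_{\nu,k}^2$ with $\nu=|n+\tfrac12|$. For $\nu\ge 1/2$ the Friedrichs extension is the natural one, since Brownian loops do not hit $0$.

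\textbf{Step 2: small-time behaviour.} As $t\to0$, the bulk (area) and boundary (perimeter) terms of $\Theta$ and $\Theta_{\rm tw}$ coincide, so $\Theta(t)-\Theta_{\rm tw}(t)\to c$. The constant $c$ is the defect contribution of a flux $\alpha$ insertion, $c=\tfrac{\alpha(1-\alpha)}{2}$, which for $\alpha=\tfrac12$ gives $\tfrac18$. I will verify this locally by computing the trace difference in the full plane for the $\nu=|n|$ versus $\nu=|n+\tfrac12|$ Bessel heat kernels; this is an explicit Bessel-sum computation. I will also check that the difference converges to $c$ with an $O(t^{1/2})$ error, using localization near $0$ together with exponentially decaying boundary corrections. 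Multiplying by $\tfrac12$ gives the coefficient $\tfrac1{16}$ in front of $\log\delta^{-1}$. For large $t$ both traces decay exponentially, so the $t$-integral converges at infinity.

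\textbf{Step 3: identifying the constant.} Let $f(t)=\Theta(t)-\Theta_{\rm tw}(t)$, with $f(0^+)=c$. Then the Mellin identity
$$
\int_\delta^\infty f(t)\frac{dt}{t}=c\log\delta^{-1}+\frac{d}{ds}\Big|_{s=0}\big[\Gamma(s)\,\widetilde\zeta(s)\big]_{\rm reg}+o(1)
$$
holds, where $\widetilde\zeta=\zeta_{\Delta/2}-\zeta_{\Delta_{\rm tw}/2}$. This yields
$$
\cbls=\tfrac12\big(\zeta'_{-\Delta/2}(0)-\zeta'_{-\Delta_{\rm tw}/2}(0)\big)-\tfrac{\gamma_{\rm EM}}{16}
$$
(with $\zeta$ denoting the spectral zeta functions of the operators $-\tfrac12\Delta$), up to a $\log 2$ correction coming from the factor $\tfrac12$ in $\tfrac12\Delta$. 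Each zeta function satisfies $\zeta_{-\Delta/2}(s)=2^{s}\zeta_{-\Delta}(s)$, so this correction is $\log 2$ times the difference of the values of the zeta functions at $0$, which is again $\pm c$.

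\textbf{Step 4: the main obstacle.} The hard part is computing the two $\zeta$-determinants of the disk Laplacian, one with integer Bessel orders and one with half-integer Bessel orders. The plan is the standard Bessel-zero method of Bordag--Kirsten--Dowker type. For each order $\nu$ I write the partial zeta function as a contour integral of $\log\big(z^{-\nu}I_\nu(z)\big)$, subtract the uniform Debye asymptotics, and resum over $\nu$ with Hurwitz/Riemann zeta functions. For integer orders this recovers the known disk determinant, which involves $\zeta_{\rm R}'(-1)$, $\log\pi$ and $\log2$. For half-integer orders the Hurwitz zeta functions appear at shift $\tfrac12$, and $\zeta_H'(-1,\tfrac12)$ is expressed through $\zeta_{\rm R}'(-1)$ and $\log 2$. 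In the difference many Debye terms cancel, because the two order sets are shifts of each other. What remains should produce $\tfrac32\zeta_{\rm R}'(-1)+\tfrac14\log\pi-\tfrac1{48}\log2$. I expect the bookkeeping of the $\log 2$ and $\log\pi$ terms to be the most error-prone part.
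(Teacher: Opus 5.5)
Your proposal is correct and follows essentially the paper's route. The paper writes the odd-winding mass as $\mu^{\rm loop}_{\D}(t_\gamma>\delta)-\tfrac12\mu^{\rm loop}_{\D,g_{\rm cone, 1/2}}(t_\gamma>\delta)$, i.e.\ the disk minus half the full $4\pi$-cone, which is your $\tfrac12(\Theta-\Theta_{\rm tw})$ since the cone trace is $\Theta+\Theta_{\rm tw}$. It reads off $\tfrac1{16}=\tfrac16-\tfrac12\cdot\tfrac5{24}$ from $a_{0,v}=\tfrac1{12}(v+v^{-1})$ and gets the constant from $\zeta'(0)$ with the same $-\gamma_{\rm EM}$ and $\log 2$ bookkeeping.

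The only difference is in the last step. Instead of a Bordag--Kirsten--Dowker computation for the half-integer sector, the paper quotes Spreafico's closed formulas for Dirichlet cones and simplifies them at $v=1/2$ via the lattice sum $\sum_{n,k\ge 1}(k+2n)^{-s}$. Your route also closes: it gives $\zeta'_{-\Delta_{\rm tw}}(0)=\tfrac5{12}+\tfrac13\log 2-\zeta_{\rm R}'(-1)$, which combined with the known disk determinant yields exactly the stated $\cbls$.
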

\begin{remark}
In the first version of the article we used the following expression for the above constant.
\begin{equation}
\label{Eq C D 0 diff}
\cbls
=
\mu^{\rm loop}_{\D}(t_{\gamma}>1,
\indx_{\gamma}(0) \text{ odd})
-
\mu^{\rm loop}_{\C}(
\gamma \cap (\C\setminus\overline{\D})\neq\emptyset,
t_{\gamma}<1,
\indx_{\gamma}(0) \text{ odd}).
\end{equation}
In the opinion of the authors it is quite nice that this expression can be explicitly calculated.
\end{remark}
We will also need to do the same calculation in the case where the disk has a conical singularity.

\begin{prop}[Conical one point expansion]
\label{Prop disk 1pt time conic}
Let $g_{\rm cone}$ be the metric given by 
$\vert z\vert^{-1}|dz|^2$.
As $\delta\to 0$,
\begin{equation}\label{eq:timecutoffconic}
\mu^{\rm loop}_{\D,g_{\rm cone}}
(t_{\gamma}>\delta, \indx_{\gamma}(0) \text{ odd})
=
\dfrac{1}{8}\log(\delta^{-1})
+
\cblsg + o(1),
\end{equation}
where
\begin{equation}
\label{Eq explicit cblsg}
\cblsg
=
\frac14\log \pi
-\frac18\log 2
-\frac{\gamma_{\rm EM}}8 .
\end{equation}
\end{prop}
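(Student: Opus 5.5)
The plan is to lift the cone to its double cover and turn the loop mass into a spectral trace with a rotation inserted. Writing $z=w^2$ gives $dz=2w\,dw$, hence $|z|^{-1}|dz|^2=4|dw|^2$. So $(\D,g_{\rm cone})$ is the quotient of $(\D,4|dw|^2)$ by the rotation $R:w\mapsto -w$, i.e. a cone of total angle $\pi$. Brownian motion for $g_{\rm cone}$ is the projection of Brownian motion in the $w$-disk run at speed $1/4$. Its generator is $\frac12\Delta_{4|dw|^2}=\frac18\Delta_w$ with Dirichlet condition on $\partial\D$.

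A loop in $z$ rooted at $z_0=w_0^2$ lifts to a path started at $w_0$. The lift closes up iff $\indx_\gamma(0)$ is even, and ends at $-w_0$ iff it is odd. Hence, with $p_t$ the Dirichlet heat kernel of $\frac18\Delta_w$ on $\D$,
$$\mu^{\rm loop}_{\D,g_{\rm cone}}(t_\gamma>\delta,\ \indx_\gamma(0)\text{ odd})=\int_\delta^\infty \frac{dt}{t}\int_{\D/R}p_t(w,-w)\,dA(w)=\frac12\int_\delta^\infty \frac{f(t)}{t}\,dt .$$
Here $f(t)=\operatorname{Tr}(e^{t\Delta_w/8}R)=\sum_{m\in\Z}(-1)^m\sum_{k\geq1}e^{-t j_{m,k}^2/8}$. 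This uses the Dirichlet eigenbasis $J_{|m|}(j_{|m|,k}r)e^{im\theta}$ on $\D$, and the fact that the area element in $w$-coordinates is Lebesgue measure, since the factor $4$ has been absorbed into the time scaling. I will first check this normalisation against the flat case: it should reproduce the coefficient $1/16$ of Proposition \ref{Prop disk 1pt time}.

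Next I need the small-$t$ asymptotics of $f$. By the Lefschetz/Kac-type local expansion, the diagonal terms are absent because $R$ has no fixed points except $0$. The isolated fixed point contributes $1/\det(I-dR)=1/4$. The boundary contributes nothing, since $R$ acts freely on $\partial\D$ and off-diagonal heat kernel decay is exponential. Thus $f(t)=\frac14+O(t^{\infty})$ or $O(t)$ as $t\to0$. For large $t$, $f$ decays exponentially. Therefore
$$\frac12\int_\delta^\infty\frac{f(t)}{t}\,dt=\frac18\log\delta^{-1}+\frac12\big(\zeta_R'(0)+\gamma_{\rm EM}\,\zeta_R(0)\big)+o(1),$$
where $\zeta_R(s)=\Gamma(s)^{-1}\int_0^\infty t^{s-1}f(t)\,dt=8^{s}\sum_m(-1)^m\sum_k j_{|m|,k}^{-2s}$. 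This gives the leading term $\frac18\log(\delta^{-1})$, and $\zeta_R(0)=\frac14$ produces the $-\gamma_{\rm EM}/8$ term once signs are tracked through the identity $\int_\delta^\infty e^{-\lambda t}\,dt/t=-\log(\lambda\delta)-\gamma_{\rm EM}+o(1)$. The $\log 8$ coming from the rescaling contributes $\zeta_R(0)\log8$, which I expect to produce part of the $-\frac18\log 2$.

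The main obstacle is computing $\zeta_R'(0)$ for the alternating Bessel sum. I would use the Bordag--Kirsten--Dowker method. Start from the contour representation $\sum_k j_{\nu,k}^{-2s}=\frac{\sin \pi s}{\pi}\int_0^\infty x^{-2s}\partial_x\log\big(x^{-\nu}I_\nu(x)\big)\,dx$. Subtract the uniform Debye asymptotics of $I_\nu(\nu x)$ up to order $\nu^{-1}$. Then sum over $\nu=|m|$ with weights $(-1)^m$, which replaces the Riemann zeta functions by Dirichlet eta functions $\eta(s)=(1-2^{1-s})\zeta_{\rm R}(s)$.

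For the alternating weights, the $\zeta_{\rm R}'(-1)$ contributions should cancel, which is consistent with the absence of $\zeta_{\rm R}'(-1)$ in \eqref{Eq explicit cblsg}. What survives should be values such as $\eta'(0)=\frac12\log\frac{\pi}{2}$ together with rational multiples of $\log2$. These would combine into $\frac14\log\pi-\frac18\log2$.

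As a cross-check, I would compute the even-$m$ sum and the odd-$m$ sum separately and compare with the flat computation behind \eqref{Eq explicit cbls}, which corresponds to the $\Z$-sum with weights $(-1)^m$ but with half-integer orders.
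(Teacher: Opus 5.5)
Your set-up is correct, but the proposal stops where the content of the proposition begins. The lift is the paper's decomposition in disguise. The $R$-invariant modes on the $w$-disk are exactly the modes of the cone, so $\frac12\operatorname{Tr}(e^{t\Delta_w/8}R)=\operatorname{Tr}_{\rm cone}-\frac12\operatorname{Tr}_{\rm cover}$, i.e. ``all loops on the cone minus half of all loops on the double cover''. Your Lefschetz argument, $f(t)=\frac14+O(e^{-c/t})$, is a cleaner route to the coefficient $\frac18$ than evaluating $\zeta_{\Delta_v}(0)=\frac1{12}(v+v^{-1})$ at $v=2$ and $v=1$. Your display has a sign slip: with your twisted $\zeta_R$ the constant is $\frac12(\zeta_R'(0)-\gamma_{\rm EM}\zeta_R(0))$. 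Write $Z(s)=\sum_{m\in\Z}(-1)^m\sum_{k\geq1}j_{|m|,k}^{-2s}$, so that your $\zeta_R(s)=8^sZ(s)$ and $Z(0)=\frac14$. Then what you have shows $\cblsg=-\frac{\gamma_{\rm EM}}8+\frac38\log 2+\frac12Z'(0)$, and the proposition is equivalent to
\[
Z'(0)=\tfrac12\log\pi-\log 2 .
\]

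This identity is never established. You outline a method and then state what ``should'' survive by matching the target.

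The mechanism you give for the disappearance of $\zeta_{\rm R}'(-1)$ is also not right. The leading Debye term carries $\sum_{\nu\geq1}(-1)^\nu\nu^{1-2s}=-\eta(2s-1)$, with a prefactor that is finite and nonzero at $s=0$. Moreover $\frac{d}{ds}\eta(2s-1)\big|_{s=0}=2\eta'(-1)=-6\zeta_{\rm R}'(-1)-\frac23\log2$. So passing from $\zeta_{\rm R}$ to $\eta$ does not remove $\zeta_{\rm R}'(-1)$. It can only cancel against a closed-form evaluation of the alternating sum of Stirling remainders $\log\Gamma(\nu+1)-\nu\log\nu+\nu-\frac12\log(2\pi\nu)-\dots$, which you have not done.

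The paper avoids the Bessel asymptotics through the even/odd split you mention only as a cross-check. The even-$m$ modes $J_{2|n|}$ are precisely the Dirichlet modes of the unit cone of angle $\pi$ ($v=2$), so $Z=2\zeta_{\Delta_2}-\zeta_{\Delta}$. Both terms are covered by Spreafico's closed formulas, $\zeta'_{\Delta_v}(0)=-\frac{\log2}6(v+\frac1v)+2\chi_v'(0)+\frac5{12v}-\frac12\log(2\pi)$ with $\chi_v(s)=\sum_{n,k\geq1}(k+vn)^{-s}$.

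For $v=2$, the identity $\lfloor\frac{m-1}2\rfloor=\frac{m-1}2-\frac12\mathbf{1}_{\{m\text{ even}\}}$ gives $\chi_2(s)=\frac12\zeta_{\rm R}(s-1)-(\frac12+2^{-s-1})\zeta_{\rm R}(s)$. Hence $\zeta'_{\Delta_2}(0)=\zeta_{\rm R}'(-1)+\frac12\log\pi+\frac5{24}-\frac5{12}\log2$. Together with $\zeta'_{\Delta}(0)=2\zeta_{\rm R}'(-1)+\frac12\log\pi+\frac16\log2+\frac5{12}$, this yields $2\zeta'_{\Delta_2}(0)-\zeta'_{\Delta}(0)=\frac12\log\pi-\log2$. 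The $\zeta_{\rm R}'(-1)$ terms cancel as $2\cdot1-2=0$ between the two cone zeta functions, not because of alternating signs.
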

The proofs rely on calculations using spectral zeta functions. 

To set this up let \(\D_{{\rm cone},v}=\{0<r\le 1,\,0\le\theta<2\pi\}\) be the finite flat cone with metric
\[g_{{\rm cone}, v}=dr^2+v^{-2}r^2d\theta^2.\]
After a change of coordinate, the metric can also be given in the form $|z|^{2(v^{-1}-1)}|dz|^2$. Notice that the unit $z$-disk with metric $|z|^{-1}|dz|^2$ has $r \leq 2$.

Further let $\Delta_{v}$ denote the (Friedrichs extension of the) positive Dirichlet Laplacian
on the $\D_{{\rm cone},v}$; for $v= 1$ this amounts to the usual Dirichlet Laplacian. Finally, let $\zeta_{\Delta_v}$ denote the spectral zeta function associated to this Laplacian \cite{carleman1935,ms1949,cheeger1979}, given for $\Re(s)$ large enough by $\sum_n \lambda_n^{-s}$, where the sum is over the eigenvalues. This can be analytically extended to a meromorphic function on $\C$ that is regular at $0$ \cite{Spreafico05}. 
 
Our first lemma provides an expansion for the mass of loops of odd winding using the value and the derivative of the spectral zeta function at $0$ \cite{raysinger}. 
\begin{lemma}\label{lem:hkexpansion}
As $\delta \to 0$, we have the following expansions
$$
\mu^{\rm loop}_{\D}
(t_{\gamma}>\delta, \indx_{\gamma}(0) \text{ odd})= (\zeta_{\Delta/2}(0)-\frac{1}{2}\zeta_{\Delta_{1/2}/2}(0))\left(\log (\delta^{-1})-\gamma_{\rm EM}\right) + \zeta'_{\Delta/2}(0)-\frac{1}{2}\zeta'_{\Delta_{1/2}/2}(0) +o(1)$$
and
$$
\mu^{\rm loop}_{\D,g_{\rm cone}}
(t_{\gamma}>\delta, \indx_{\gamma}(0) \text{ odd})
= (\zeta_{\Delta_2/2}(0)-\frac{1}{2}\zeta_{\Delta/2}(0))\left(\log (\delta^{-1})-\gamma_{\rm EM}+2\log 2\right) + \zeta'_{\Delta_2/2}(0)-\frac{1}{2}\zeta'_{\Delta/2}(0) +o(1)$$
\end{lemma}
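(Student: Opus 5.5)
We express the loop measure through the heat kernel and use the unwrapping trick for the winding parity. For a loop $\gamma$ in $\D$ based at $z$ with duration $t$, the indicator that $\indx_\gamma(0)$ is odd is $\frac12(1-(-1)^{\indx_\gamma(0)})$. By definition of $\mu^{\rm loop}_\D$,
\[
\mu^{\rm loop}_{\D}(t_\gamma>\delta,\indx_\gamma(0)\text{ odd})
=\frac12\int_\delta^\infty \frac{dt}{t}\int_\D \big(p_\D(t,z,z)-p^{-}_\D(t,z,z)\big)\,d^2z ,
\]
where $p^-_\D(t,z,z)=\E[(-1)^{\indx}\,;\text{bridge}]\,p_\D(t,z,z)$ is the twisted heat kernel, i.e.\ the kernel of $\frac12\Delta$ acting on sections of the nontrivial real line bundle over $\D\setminus\{0\}$ (holonomy $-1$ around $0$). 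We lift to the double cover $z=w^2$, $w\in\D$. The heat kernel on the double cover is the heat kernel of $\D\setminus\{0\}$ with the pulled-back metric $|dz|^2=4|w|^2|dw|^2$, which is a cone of total angle $4\pi$, i.e.\ $\D_{{\rm cone},1/2}$ up to scaling of $r$. Sections with holonomy $\pm1$ correspond to functions on the cover that are even/odd under $w\mapsto -w$. The kernel of the cover satisfies $p_{\rm cover}(t,w,w)+p_{\rm cover}(t,w,-w)$ equal to the untwisted kernel and $p_{\rm cover}(t,w,w)-p_{\rm cover}(t,w,-w)$ equal to the twisted one. Integrating over $\D$ (each $z$ has two preimages) therefore gives
\[
\int_\D p^-_\D\,d^2z=\mathrm{Tr}\,e^{-t\Delta_{1/2}/2}-\mathrm{Tr}\,e^{-t\Delta/2}.
\]
The integrand then becomes $\frac12\big(2\,\mathrm{Tr}e^{-t\Delta/2}-\mathrm{Tr}e^{-t\Delta_{1/2}/2}\big)=\Theta(t):=\mathrm{Tr}e^{-t\Delta/2}-\frac12\mathrm{Tr}e^{-t\Delta_{1/2}/2}$. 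Here we use that a point-like cone tip carries no extra Friedrichs boundary condition, so that even functions on the cover are exactly functions on $\D$.

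Next we apply the standard Mellin/Ray--Singer expansion. For a combination of heat traces $\Theta(t)$ with small-time asymptotics $\Theta(t)\sim \sum_k a_k t^{(k-2)/2}$ (area and perimeter terms, plus a cone constant) and exponential decay at $\infty$, we check two things. First, the singular area and boundary-length terms cancel: both surfaces have area $\pi$ and perimeter $2\pi$, so $2\cdot$ these minus the cover's $2\cdot$ cancels, giving $a_0=a_1=0$. Second, the constant term is $a_2=\zeta_\Theta(0)$. With $\zeta_\Theta(s)=\frac{1}{\Gamma(s)}\int_0^\infty t^{s-1}\Theta(t)\,dt$, the standard computation (split at $t=1$, using $\Gamma(s)^{-1}=s+\gamma_{\rm EM}s^2+\dots$) gives
\[
\int_\delta^\infty\Theta(t)\frac{dt}{t}=\zeta_\Theta(0)\big(\log\delta^{-1}-\gamma_{\rm EM}\big)+\zeta_\Theta'(0)+o(1),
\]
with $\zeta_\Theta=\zeta_{\Delta/2}-\frac12\zeta_{\Delta_{1/2}/2}$. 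This is the first identity.

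For the conical metric $|z|^{-1}|dz|^2$ we proceed identically. The disk is $\D_{{\rm cone},2}$ (angle $\pi$), but with radius $2$ rather than $1$. Its double cover is a cone of angle $2\pi$, the flat disk of radius $2$. Rescaling radius $2\to1$ multiplies eigenvalues by $1/4$, so $\lambda^{-s}\mapsto 4^{s}\lambda^{-s}$. This shifts $\zeta'(0)$ by $\zeta(0)\log 4$, which produces the $+2\log2$ inside the bracket, combined with $\zeta_{\Delta_2/2}(0)-\frac12\zeta_{\Delta/2}(0)$.

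The main obstacle is the rigorous justification of the cover identity at the cone point, namely that the Friedrichs extension on the double cover restricted to even functions coincides with the Dirichlet Laplacian on $\D$, and similarly for odd functions. We would verify this via the Bessel eigenbasis: the cover's modes $J_{|m|/2}$ with $m$ even give the $J_n$ modes of $\D$, and those with $m$ odd give the half-integer modes, which are the twisted spectrum. A secondary point is confirming that the heat-trace expansion of cones has no logarithmic terms, so that $\zeta_\Theta$ is regular at $0$; this is covered by \cite{Spreafico05}.
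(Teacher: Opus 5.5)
Your proposal is correct and follows the paper's proof. The paper writes the odd-winding mass as $\mu^{\rm loop}_{\D}(t_\gamma>\delta)-\frac12\mu^{\rm loop}_{\D,g_{{\rm cone},1/2}}(t_\gamma>\delta)$ by lifting even-winding loops to the double cover, which is the same identity $\Theta=\mathrm{Tr}\,e^{-t\Delta/2}-\frac12\mathrm{Tr}\,e^{-t\Delta_{1/2}/2}$ that you get through the twisted kernel. It then uses the heat-trace expansion, cancellation of the area and perimeter terms, the Mellin transform with $\Gamma(s)^{-1}=s+\gamma_{\rm EM}s^2+\dots$, and the radius-$2$ rescaling for $g_{\rm cone}$, as you do.

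One slip: the cover does not have area $\pi$ and perimeter $2\pi$. It has twice those, namely $2\pi$ and $4\pi$; in the conical case the cover is a flat disk of radius $2$, with area and perimeter $4\pi$ against $2\pi$. This doubling is exactly what makes the $\frac12$-weighted singular terms cancel.
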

Second, we use explicit calculations for the spectral zeta function in the case of disk with conical metric \cite{Spreafico05}. Indeed, in this article the author gives explicit formulas on finite flat cones of $\zeta_{\Delta_v}(0)$ and $\zeta'_{\Delta_v}(0)$. Indeed, \cite[Theorem 1]{Spreafico05} gives that for all $v > 0$
\begin{equation}
\label{Eq Spreafico zeta zero revised}
\zeta_{\Delta_{v}}(0)
=
\frac1{12}\left(v+\frac1v\right).
\end{equation}
Further, \cite[Corollary 1]{Spreafico05} gives us
\begin{equation}
\label{Eq Spreafico derivative laplacian}
\zeta'_{\Delta}(0) = -\frac13\log 2 +\frac12\log(2\pi) +\frac5{12}+2\zeta_{\rm R}'(-1).
\end{equation}
For other derivatives the formulas are quite complicated, but it comes out that in the special cases $v \in \{1/2, 1, 2\}$ they can be further simplified \footnote{We are thankful to ChatGPT 5.5 for noticing this possible further simplification.}; below $\zeta_R$ is the usual Riemann zeta function. 
\begin{lemma}\label{lem:dercalcul}
We have that
\begin{align}
\label{Eq Spreafico values revised}
\zeta'_{\Delta_{1/2}}(0) &= \zeta_{\rm R}'(-1) +\frac12\log(2\pi) +\frac56,
\\
\zeta'_{\Delta_{2}}(0) &= \zeta_{\rm R}'(-1) +\frac12\log\pi +\frac5{24} -\frac5{12} \log 2 .
\end{align}
\end{lemma}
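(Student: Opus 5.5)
The plan is to deduce both identities from the separation of variables on the flat cone, rather than by brute-force simplification of Spreafico's general expression. On $\D_{{\rm cone},v}$ the Dirichlet eigenfunctions are $J_{v|n|}(j_{v|n|,k}r)e^{in\theta}$, $n\in\Z$, $k\geq 1$. So $\zeta_{\Delta_v}(s)$ is the regularised sum over orders $\nu=v|n|$ of the Bessel-zero zeta functions
$$
z_\nu(s)=\sum_{k\geq1} j_{\nu,k}^{-2s},
$$
where the order $\nu=0$ has multiplicity $1$ and every order $\nu>0$ has multiplicity $2$. Spreafico's derivation (the one behind \cite[Theorem 1, Corollary 1]{Spreafico05}) writes $\zeta'_{\Delta_v}(0)$ as three pieces:
\begin{itemize}
\item the contribution $z_0'(0)$ of the order $\nu=0$;
\item an ``asymptotic'' part, coming from the uniform Debye expansion of $I_\nu(\nu x)$, which after summing over $n$ produces Hurwitz/Riemann zeta values $\zeta_{\rm R}(-1)$, $\zeta_{\rm R}'(-1)$, $\zeta_{\rm R}(0)$, $\zeta_{\rm R}'(0)$ multiplied by powers of $v$ and by $\log v$;
\item a remainder given by a convergent integral or series in $v$.
\end{itemize}
I will take that representation as the starting point and specialise it to $v=2$ and $v=1/2$.

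For $v=2$ the orders are $\nu=2|n|$, so the spectrum is exactly the part of the disk spectrum with even angular frequency. In other words, $\Delta_2$ is the disk Laplacian restricted to functions invariant under $z\mapsto -z$. For $v=1/2$ the orders are $\nu=|n|/2$. The integer orders reproduce the full disk spectrum, and the half-integer orders $\nu=m+\tfrac12$, $m\geq0$, each of multiplicity $2$, are new. This gives the decompositions
$$
\zeta_{\Delta_{1/2}}=\zeta_\Delta+2\sum_{m\geq0} z_{m+1/2},
\qquad
\zeta_{\Delta_2}=\zeta_\Delta-2\sum_{m\geq0} z_{2m+1},
$$
to be understood as identities of meromorphic continuations. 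Their values at $s=0$ are consistent with \eqref{Eq Spreafico zeta zero revised}, which serves as a check on the regularisation.

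For the half-integer orders I will use that $J_{m+1/2}$ is elementary, being a spherical Bessel function. Hence $\sum_m 2z_{m+1/2}$ is, up to multiplicities, the Dirichlet spectrum of the unit $3$-ball decomposed by angular momentum. I will apply the same Debye/Hurwitz regularisation with parameter $a=\tfrac12$ and then use $\zeta_{\rm H}(s,\tfrac12)=(2^s-1)\zeta_{\rm R}(s)$, which gives
$$
\zeta_{\rm H}'(-1,\tfrac12)=-\tfrac12\zeta_{\rm R}'(-1)-\tfrac1{24}\log 2 .
$$
For the odd integer orders in the $v=2$ case I will use
$$
\sum_m (2m+1)^{-s}=(1-2^{-s})\zeta_{\rm R}(s),
$$
together with the known value \eqref{Eq Spreafico derivative laplacian} for $\zeta'_\Delta(0)$. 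In both cases the remainder integrals should collapse to elementary constants: the $\log 2$, $\log \pi$ and rational terms $\tfrac56$, $\tfrac5{24}$. The coefficient of $\zeta_{\rm R}'(-1)$ should drop from $2$ to $1$ because the sums $\sum_n v|n|\log(v|n|)$ scale linearly in $v$, up to the correction from the Hurwitz shift.

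As a consistency check I will verify that the two formulas, combined with \eqref{Eq Spreafico derivative laplacian}, give values of $\zeta'_{\Delta_v}(0)$ compatible with the leading-order behaviour in $v$ of Spreafico's general formula.

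I expect the main obstacle to be keeping track of the regularisation when the sum over $n$ is split into subsequences. Splitting a zeta-regularised sum is not additive in general: the Debye terms of order $\nu^{-1}$ and $\nu^{-3}$ contribute anomalous $\log v$ and $\log 2$ constants. I would handle this by carrying out the split directly at the level of the meromorphic functions $\sum_n z_{v|n|}(s)$ for $\Re s$ large, subtracting the first few Debye terms explicitly before continuing to $s=0$. Only then would I evaluate at $v\in\{1/2,2\}$, where every Hurwitz zeta involved has rational parameter $1$ or $\tfrac12$ and therefore reduces to $\zeta_{\rm R}$.
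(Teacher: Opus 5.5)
\textbf{Verdict: there is a genuine gap.} The step that carries all the content of the lemma is evaluating the non-Debye remainder at $v\in\{2,\tfrac12\}$. You only assert that it ``should collapse to elementary constants'', and in the scheme you describe that assertion is false.

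Subtract the Debye terms of $\log I_\nu(\nu z)$ up to order $\nu^{-1}$. The remainder is then $\frac{\sin\pi s}{\pi}$ times a function analytic at $s=0$. The value of that function at $s=0$ is the integral of a total $z$-derivative, so the remainder contributes to $\zeta'_{\Delta_v}(0)$ exactly
\[
2\sum_{n\geq1}\Big[\log\Gamma(\nu+1)-\big(\nu+\tfrac12\big)\log\nu+\nu-\tfrac12\log(2\pi)-\tfrac{1}{12\nu}\Big]_{\nu=vn}.
\]
This is a Barnes-type double-zeta quantity, and its closed form carries $\zeta_{\rm R}'(-1)$. To see this, note that the leading Debye term $\frac{\Gamma(s-1/2)}{4\sqrt\pi\,\Gamma(s+1)}\sum_\nu d(\nu)\nu^{1-2s}$ contributes $-2v\,\zeta_{\rm R}'(-1)$ to $\zeta'_{\Delta_v}(0)$. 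That is $-4\zeta_{\rm R}'(-1)$ at $v=2$ and $-\zeta_{\rm R}'(-1)$ at $v=\tfrac12$. To reach the stated coefficient $1$, the remainder must therefore supply $+5\zeta_{\rm R}'(-1)$, resp. $+2\zeta_{\rm R}'(-1)$, in addition to elementary constants.

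Two consequences follow:
\begin{itemize}
\item Your heuristic that the coefficient drops to $1$ by linear scaling in $v$ cannot be right, since it would predict $4$ at $v=2$.
\item The same problem appears in your split version. The Debye part of the half-integer orders contributes, through $-2\zeta_{\rm H}'(-1,\tfrac12)$, a term $+\zeta_{\rm R}'(-1)$. But the half-integer orders must contribute $-\zeta_{\rm R}'(-1)$ in total, so again the remainder does the real work.
\end{itemize}

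\textbf{What is missing} is the reduction that makes this remainder computable. By Lerch's formula $\log\Gamma(x)=\zeta_{\rm H}'(0,x)+\tfrac12\log(2\pi)$, together with $\zeta_{\rm H}(s,vn+1)=\sum_{k\ge1}(k+vn)^{-s}$, the regularised sum of $\log\Gamma(vn+1)$ is governed by $\chi_v'(0)$, where $\chi_v(s)=\sum_{n,k\ge1}(k+vn)^{-s}$.

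This is the paper's route. Combining \cite[Lemma 2]{Spreafico05} with Spreafico's expression for the derivative gives
\[
\zeta'_{\Delta_v}(0)=-\tfrac{\log 2}{6}\big(v+\tfrac1v\big)+2\chi_v'(0)+\tfrac{5}{12v}-\tfrac12\log(2\pi).
\]
Only at this point does the arithmetic of $v\in\{2,\tfrac12\}$ enter. Counting representations $m=k+2n$ gives $\chi_2(s)=\tfrac12\zeta_{\rm R}(s-1)-(\tfrac12+2^{-s-1})\zeta_{\rm R}(s)$, and one has $\chi_{1/2}(s)=2^s\chi_2(s)$.

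Your spectral splitting is legitimate, since disjoint spectra add exactly as meromorphic functions. It could be completed by evaluating the half-integer or odd-order Stirling remainders with the same lattice-point counting and Hurwitz parameter $\tfrac12$. Without that step, or some substitute for it, the proposal does not establish the stated values.
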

Observe that the propositions follow by simply inserting the calculations of Equations \eqref{Eq Spreafico derivative laplacian},\eqref{Eq Spreafico zeta zero revised} and Lemma  \ref{lem:dercalcul} to the expressions of Lemma \ref{lem:hkexpansion}.

We now proceed to prove the two lemmas.
\begin{proof}[Proof of Lemma \ref{lem:hkexpansion}]
We will only prove the first of the two claims, as the second follows exactly the same calculation. We have
$$\mu^{\rm loop}_{\D}
(t_{\gamma}>\delta, \indx_{\gamma}(0) \text{ odd}) = \mu^{\rm loop}_{\D}
(t_{\gamma}>\delta) - \frac{1}{2}\mu^{\rm loop}_{\D,g_{\rm cone, 1/2}}(t_{\gamma}>\delta)$$
which in terms of heat kernel gives
$$\int_\delta^\infty {\rm Tr}(\exp(-t\Delta/2))\frac{dt}t - \frac{1}{2}\int_\delta^\infty {\rm Tr}(\exp(-t\Delta_{1/2}/2))\frac{dt}t.$$

Now, on a conical disk with Dirichlet boundary conditions we have the following heat trace expansion as $t \to 0$:
\begin{equation}\label{eq:heatexp}
{\rm Tr}(\exp(-t\Delta_v)) = a_{-1,v}t^{-1}+a_{-1/2,v}t^{-1/2}+a_{0,v}+\sum_{k \geq 1}a_{k/2, v}t^{k/2}.
\end{equation}
This follows from direct calculations using the conical heat kernel \cite{Carslaw} . It is also stated in \cite[Theorem 4.9]{Gil} in a larger generality and follows from seminal works by \cite{Cheeger83, BruningSeeley}.  Also, the coefficients $a_{-1,v}, a_{-1/2,v}, a_{0,v}$ are also well-known and given in the Dirichlet disk by (see e.g. \cite[Section 6]{BordagKirstenDowker}):
\begin{equation}\label{eq:heatcoeff}
a_{-1,v} = \frac{|\D_{g_v}|}{4\pi} \quad ; \qquad \qquad 
a_{-1/2,v} = -\frac{|\partial \D_{g_v}|}{8\sqrt{\pi}} \quad ; \qquad \qquad
a_{0,v} = \frac{1}{12}(v+\frac1v).
\end{equation}
We now consider Mellin transform of the spectral zeta function
$$\zeta(s) = \Gamma(s)^{-1}\int_0^\infty t^{s-1}{\rm Tr}(\exp(-t\Delta_v))dt,$$
that is a priori well defined only for $\Re(s) > 1$. Recall that $\Gamma(s)^{-1}$ is entire with near-zero expansion $\Gamma(s)^{-1} = s + \gamma_{\rm EM}s^2 + O(s^3)$. From the heat trace expansion we see that the Mellin transform can be meromoprhically continued to $\C$ and is in fact analytic at $s = 0$ with $\zeta_{\Delta_v}(0) = a_{0,v}$.

Further, denoting
$$R(t) := {\rm Tr}(\exp(-t\Delta_v))-a_{0,v} - a_{-1,v}t^{-1}-a_{-1/2,v}t^{-1/2}$$
the heat kernel expansion also tells us that the derivative $\zeta_{\Delta_v}'(0)$ can be expressed as 
$$- a_{-1,v}-2a_{-1/2,v}+\gamma_{\rm EM}a_{0,v}+\int_0^1 R(t)\frac{dt}t+\int_1^\infty{\rm Tr}(\exp(-t\Delta_v))\frac{dt}t$$
This tells us that as $\delta \to 0$, we have the expansion
$$\int_\delta^{\infty}{\rm Tr}(\exp(-t\Delta_v))\frac{dt}t = \zeta_{\Delta_v}(0) (|\log \delta|- \gamma_{\rm EM}) + a_{-1,v}\delta^{-1} + 2a_{-1/2,v}\delta^{-1/2} + \zeta_{\Delta_v}'(0) +o(1).$$
Noticing now that the first two terms cancel when taking the difference, and using formulas $\zeta_{\Delta/2}(0) = \zeta_{\Delta/2}(0)$ and $\zeta_{\Delta_{v}/2}'(0) = \zeta_{\Delta_{v}/2}'(0) + (\log 2)\zeta_{\Delta_v}(0) $, we conclude.

Finally, the extra $2 \log 2$ term in the expression for the conical disk comes from the fact that the conical disk in the $z$-metric, has radius $r = 2$ in the parametrization used above. Thus one needs to also use the scaling of zeta function given (in the obvious notation) by $\zeta_{\Delta_{v,2}}(0) = \zeta_{\Delta_{v}}(0) $ and 
$\zeta'_{\Delta_{v,2}}(0) = \zeta'_{\Delta_{v}}(0) + (2\log 2)\zeta_{\Delta_{v,2}}(0)$
\end{proof}

\begin{proof}[Proof of Lemma \ref{lem:dercalcul}]
We use the notation
\[
\chi_v(s) = \sum_{n,k\ge1}(k+v n)^{-s}.
\]
Combining \cite[Lemma~2 ]{Spreafico05}, that gives an expression for $\chi_v'(0)$, and the paragraph below it, which gives an alternative expression for the derivative of the spectral zeta function, we obtain
\begin{align*}
\zeta'_{\Delta_v}(0)
= -\frac{\log 2}{6}(v+\frac1v)+v-1 + \frac{7}{12v} +2\chi_v'(0)-\frac12\log(2\pi)+2 +\frac2v\zeta_{\rm H}(-1,v+1).
\end{align*}
Here $\zeta_{\rm H}(s,a)$ is the Hurwitz zeta function again defined for $\Re(s)$ large by
$$\sum_{n \geq 0}(n+a)^{-s}$$
and extended meromorphically, like in the case of the Riemann zeta function.
Using
\[
\zeta_{\rm H}(-1,a) = -\frac12 B_2(a) = -\frac12(a^2-a+1/6),
\]
with $a=v+1$, gives
\[
\frac2v\zeta_{\rm H}(-1,v+1) = -v-1-\frac1{6v}.
\]
In particular
\begin{equation}
\label{Eq Phi via chi}
\zeta'_{\Delta_v}(0)
= -\frac{\log 2}6\left(v+\frac1v\right) + 2\chi_v'(0) + \frac5{12v} - \frac12\log(2\pi).
\end{equation}
It remains to compute $\chi_v'(0)$ for $v=2$ and $v=1/2$.
For $\Re s>2$, we can write
\[
\chi_2(s) = \sum_{n,k\ge1}(k+2n)^{-s} = \sum_{m=3}^\infty
\left\lfloor\frac{m-1}{2}\right\rfloor m^{-s}.
\]
Since
\[
\left\lfloor\frac{m-1}{2}\right\rfloor
=\frac{m-1}{2} - \frac12\1_{\{m\ {\rm even}\}}, \]
we get, first for $\Re s>2$ and then by meromorphic continuation,
\[
\chi_2(s) = \frac12\zeta_{\rm R}(s-1) - \left(\frac12+2^{-s-1}\right)\zeta_{\rm R}(s).
\]
Therefore, using
\[
\zeta_{\rm R}(0)=-\frac12,
\qquad
\zeta_{\rm R}'(0)=-\frac12\log(2\pi),
\]
we find
$$\chi_2'(0) = \frac12\zeta_{\rm R}'(-1) - \frac14\log2 + \frac12\log(2\pi).
$$
We conclude that 
$$\zeta'_{\Delta_2}(0) = \zeta_{\rm R}'(-1) +\frac12\log\pi +\frac5{24} -\frac5{12} \log 2$$
Similarly,
\[
\chi_{1/2}(s) = \sum_{n,k\ge1}\left(k+\frac n2\right)^{-s} = 2^s\sum_{n,k\ge1}(2k+n)^{-s}= 2^s\chi_2(s).
\]
Writing $\chi_2(0) = \frac12\zeta_{\rm R}(-1)-\zeta_{\rm R}(0) =
\frac{11}{24}$. 
we obtain
\[
\chi_{1/2}'(0) = \log2\,\chi_2(0)+\chi_2'(0) = \frac12\zeta_{\rm R}'(-1) +
\frac5{24}\log2 + \frac12\log(2\pi)
\]
and conclude that, as promised,
\[
\zeta'_{\Delta_{1/2}}(0) = \zeta_{\rm R}'(-1) +\frac12\log(2\pi) +\frac56.
\]
\end{proof}


\paragraph{Cut-off by removing a small disk}

Now we consider the mass of Brownian loops contained in 
$\D\setminus(\varepsilon \overline{\D})$,
and with an odd index around the point $0$:
\begin{equation}
\label{Eq mu loop disk eps disk}
\mu^{\rm loop}_{\D\setminus(\varepsilon \overline{\D})}
(\indx_{\gamma}(0) \text{ odd}).
\end{equation}
Again, we are interested in the asymptotic
expansion as $\varepsilon\to 0$. 
\begin{prop}
\label{Prop exp disk eps disk}
As $\varepsilon\to 0$,
$$
\mu^{\rm loop}_{\D\setminus(\varepsilon \overline{\D})}
(\indx_{\gamma}(0) \text{ odd})
=
\dfrac{1}{8}
\log(\varepsilon^{-1})
-
\dfrac{1}{2}
\log\log(\varepsilon^{-1})
+
\dfrac{1}{2}\log(\pi/2)
+ o(1).
$$
\end{prop}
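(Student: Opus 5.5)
The plan is to compute the mass exactly by unwrapping the annulus onto a strip, and then to read off the asymptotics from a modular transformation of a Jacobi theta function.

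\textbf{Step 1: lift to the strip.} Write $L=\log(\varepsilon^{-1})$ and let $A_\varepsilon=\D\setminus(\varepsilon\overline{\D})$. The map $w=\log z$ sends $A_\varepsilon$ to the cylinder $(-L,0)\times(\R/2\pi\Z)$. The Brownian loop measure (unrooted, weight $dt/t$) is conformally invariant, so we may compute on the cylinder. Lift each loop to the universal cover, which is the strip $S_L=(-L,0)\times\R$. A loop with $\indx_\gamma(0)=k$ lifts to a bridge from $w$ to $w+2\pi i k$. The process factorises into a horizontal Brownian motion killed on leaving $(-L,0)$ and an independent free vertical Brownian motion. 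Integrating the root over one fundamental domain then gives
\[
\mu^{\rm loop}_{A_\varepsilon}(\indx_\gamma(0)=k)=\int_0^\infty\frac{dt}{t}\;2\pi\,\frac{e^{-(2\pi k)^2/(2t)}}{\sqrt{2\pi t}}\;\sum_{n\ge1}e^{-\pi^2n^2t/(2L^2)} ,\qquad k\neq 0,
\]
where the last sum is the Dirichlet heat trace of $\frac12\partial_x^2$ on an interval of length $L$. The one point needing care is that rooted-loop density versus unrooting is handled exactly as in the definition of $\mu^{\rm loop}$, so no extra factor appears.

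\textbf{Step 2: sum in closed form.} Apply the Bessel-type integral $\int_0^\infty t^{-3/2}e^{-a/t-bt}\,dt=\sqrt{\pi/a}\,e^{-2\sqrt{ab}}$ with $a=2\pi^2k^2$ and $b=\pi^2n^2/(2L^2)$. All the prefactors collapse to $1/|k|$, so the contribution is $\frac{1}{|k|}q^{|k|n}$, where $q=e^{-2\pi^2/L}$. Summing over odd $k\in\Z$ and $n\ge1$ gives
\[
\mu^{\rm loop}_{A_\varepsilon}(\indx_\gamma(0)\text{ odd})=\sum_{n\ge1}2\,\mathrm{artanh}(q^n)=-\log\prod_{n\ge1}\frac{1-q^n}{1+q^n}=-\log\theta_4(q),
\]
where the last equality is Gauss's product identity $\theta_4(q)=\prod_{n\ge1}\frac{1-q^n}{1+q^n}$ (see \cite{lawden2013elliptic}). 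Fubini is justified because every term is positive.

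\textbf{Step 3: modular transformation.} Write $q=e^{i\pi\tau}$ with $\tau=2\pi i/L$. The Jacobi imaginary transformation gives $\theta_4(q)=(-i\tau)^{-1/2}\theta_2(\hat q)$, with complementary nome $\hat q=e^{-i\pi/\tau}=e^{-L/2}=\varepsilon^{1/2}$. This is consistent with $\log q\log\hat q=\pi^2$. As $\varepsilon\to0$ we have $\theta_2(\hat q)=2\hat q^{1/4}(1+O(\hat q^2))=2\varepsilon^{1/8}(1+o(1))$ and $(-i\tau)^{-1/2}=\sqrt{L/(2\pi)}$. Therefore
\[
-\log\theta_4(q)=\tfrac18\log(\varepsilon^{-1})-\tfrac12\log\log(\varepsilon^{-1})+\tfrac12\log(\pi/2)+o(1),
\]
which is the claimed expansion.

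\textbf{Main obstacle.} I expect the only delicate point to be Step 1: getting the exact normalisation of the lifted loop measure right. This involves the factor $2\pi$ from the fundamental domain, the use of the heat kernel of $\frac12\Delta$, and confirming that conformal invariance applies to the time-unrestricted measure. I would cross-check this normalisation against the leading coefficient $\frac18$, which is forced by Step 3 and is consistent with the exponent $\eps_j^{1/8}$ in Theorem \ref{thm:deftwist}.
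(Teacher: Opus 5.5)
Your proof is correct, and it takes a different route from the paper. The paper does not compute the mass directly. It quotes the identity from \cite{ALS3,LupuTopoAIHP} that writes $\mu^{\rm loop}_{\D\setminus(\varepsilon\overline{\D})}(\indx_\gamma(0)\text{ odd})$ as $-\log$ of the probability that a Brownian bridge of length $L(\varepsilon)=\frac{1}{2\pi}\log(\varepsilon^{-1})$ stays in $(-\sqrt{\pi/2},\sqrt{\pi/2})$. It then writes this probability as $\sqrt{2\pi L(\varepsilon)}$ times the Dirichlet heat kernel of that interval, expands in Fourier modes and keeps the lowest mode.

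Your Steps 1--2 in effect re-derive that identity from scratch. With your $L=\log(\varepsilon^{-1})=2\pi L(\varepsilon)$, your closed form agrees exactly with the paper's:
\[
\theta_4(q)=\sqrt{\tfrac{L}{2\pi}}\,\theta_2(\varepsilon^{1/2})=2\sqrt{L(\varepsilon)}\sum_{n\ge0}e^{-\frac{\pi}{4}(2n+1)^2L(\varepsilon)} .
\]
Your Jacobi imaginary transformation is exactly the Poisson summation that converts the winding (image) expansion of the interval heat kernel into its Fourier expansion. The details check out: the prefactor does collapse to $1/|k|$, Gauss's product $\theta_4(q)=\prod_{n\ge1}\frac{1-q^n}{1+q^n}$ is right, and the constant is $\frac12\log(2\pi)-\log 2=\frac12\log(\pi/2)$.

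Your route is self-contained. It uses only the strip heat-kernel factorisation, which the paper itself uses in the proof of Theorem~\ref{Thm 2 pts disk R}, and it makes the modular structure explicit (nome $q$ versus complementary nome $\hat q=\varepsilon^{1/2}$, as in Lemma~\ref{lem:Modd}). The paper's route is shorter given the citation. It also keeps a probabilistic reading: the interval is $(-2\lambda,2\lambda)$, set by the height gap, and the bridge length is the extremal distance. That reading is what ties this computation to the GFF/CLE$_4$ side of the argument.
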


This can be directly proved from an explicit expression,
derived in \cite[Proposition 2.18]{ALS3}
and \cite[Proposition 5.2]{LupuTopoAIHP}. Let us recall this.

For $\varepsilon\in (0,1)$, denote
$$
L(\varepsilon)= \dfrac{1}{2\pi} \log(\varepsilon^{-1}).
$$
This is the extremal length (or extremal distance)
between $\partial\D$ and
$\varepsilon\partial\D$
in the annulus $\D\setminus(\varepsilon \overline{\D})$.
Let $(\widehat{W}^{(\varepsilon)}_{t})_{0\leq t\leq L(\varepsilon)}$
be a one-dimensional standard Brownian bridge from $0$
to $0$ of time-length $L(\varepsilon)$. 
The following has been proved.

\begin{prop}[Aru-Lupu-Sep\'ulveda \cite{ALS3},
Lupu \cite{LupuTopoAIHP}]
For every $\varepsilon\in (0,1)$,
$$
\mu^{\rm loop}_{\D\setminus(\varepsilon \overline{\D})}
(\indx_{\gamma}(0) \text{ odd})
=
-
\log
\mathbb{P}\big((\widehat{W}^{(\varepsilon)}_{t})_{0\leq t\leq L(\varepsilon)}
\text{ stays in } (-\sqrt{\pi/2},\sqrt{\pi/2})\big).
$$
\end{prop}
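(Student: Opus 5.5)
The plan is to compute both sides explicitly and identify them through the Jacobi imaginary transformation. The loop side becomes a $1+1$-dimensional computation after a conformal change of coordinates, and the bridge side is a classical series.

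\emph{Step 1: reduction to a cylinder.} I would use the conformal invariance of $\mu^{\rm loop}$ (loops are considered up to time change) and the covering map $w\mapsto e^{-w}$. This map sends the cylinder $\mathcal{C}_\ell=(0,\ell)\times(\R/2\pi\Z)$, with $\ell=\log(\varepsilon^{-1})$, onto the annulus $\D\setminus(\varepsilon\overline{\D})$. Under this map the index $\indx_\gamma(0)$ becomes the winding number $k\in\Z$ of the loop around the cylinder. On $\mathcal{C}_\ell$, planar Brownian motion splits into two independent parts: a one-dimensional Brownian motion $X$ in the axial direction, killed on exiting $(0,\ell)$, and a Brownian motion $\Theta$ on the circle. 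The loops with winding $k$ are therefore those whose lift to $(0,\ell)\times\R$ goes from $(x,\theta)$ to $(x,\theta+2\pi k)$. This gives
\begin{equation*}
\mu^{\rm loop}_{\D\setminus(\varepsilon\overline{\D})}(\indx_\gamma(0)=k)
=\int_0^\infty\frac{dt}{t}\,{\rm Tr}\big(e^{t\Delta^{D}_{(0,\ell)}/2}\big)\,\frac{2\pi}{\sqrt{2\pi t}}\,e^{-(2\pi k)^2/(2t)} .
\end{equation*}
Here the Dirichlet trace is $\sum_{j\ge1}e^{-t\pi^2j^2/(2\ell^2)}$.

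\emph{Step 2: summing over odd $k$.} I would write $\1_{k\ \rm odd}=\frac12(1-(-1)^k)$ and apply Poisson summation in $k$:
\begin{equation*}
\sum_k \frac{2\pi}{\sqrt{2\pi t}}e^{-(2\pi k)^2/2t}=\sum_{n\in\Z}e^{-n^2t/2},
\qquad
\sum_k(-1)^k\frac{2\pi}{\sqrt{2\pi t}}e^{-(2\pi k)^2/2t}=\sum_{n\in\Z+1/2}e^{-n^2t/2}.
\end{equation*}
Each pair of modes $(j,n)$ then contributes a Frullani integral $\int_0^\infty(e^{-at}-e^{-bt})\,dt/t=\log(b/a)$. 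For each fixed $j$, with $x=\pi j/\ell$, I would group $n$ with $n+1/2$. This gives
\begin{equation*}
\tfrac12\log\prod_{n\in\Z+1/2}\big(x^2+n^2\big)\Big/\prod_{n\in\Z}\big(x^2+n^2\big).
\end{equation*}
By the Euler products this ratio equals $\frac12\log\coth(\pi x)$, up to the normalisation of the zeta-regularised products, which cancels in the ratio. Hence
\begin{equation*}
\mu^{\rm loop}(\indx_\gamma(0)\ \text{odd})=-\sum_{j\ge1}\log\tanh\!\big(\pi^2 j/(2\ell)\big).
\end{equation*}
Writing $\tanh$ in terms of $p=e^{-\pi^2/\ell}$, this becomes $-\log\prod_{j\ge1}\big((1-p^{j})/(1+p^{j})\big)=-\log\theta_4(p)$, by the product formula for $\theta_4$.

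\emph{Step 3: the bridge side.} By the method of images, for a Brownian bridge of length $L$,
\begin{equation*}
\P\big(\sup|\widehat W|<a\big)=\sum_{k\in\Z}(-1)^k e^{-2k^2a^2/L}.
\end{equation*}
With $a^2=\pi/2$ and $L=\ell/(2\pi)$ this is $\sum_k(-1)^k e^{-2\pi^2k^2/\ell}=\theta_4(e^{-2\pi^2/\ell})$. The Step 2 product, $\prod_j(1-p^j)/(1+p^j)$ with $p=e^{-\pi^2/\ell}$, is already $\theta_4(p)$ in the product form above, which is the series $\sum_k(-1)^kp^{k^2}=\sum_k(-1)^ke^{-\pi^2k^2/\ell}$. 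The nomes $e^{-\pi^2/\ell}$ and $e^{-2\pi^2/\ell}$ disagree by a factor $2$ in the exponent, so the two computations do not yet match. Resolving this factor is part of the verification, and the imaginary transformation $\theta_4(e^{-\pi s})=s^{-1/2}\theta_2(e^{-\pi/s})$ is the tool for checking against the Step 1--2 expression. The most likely fix is a corrected normalisation (of the circle period or of the winding Gaussian) in Steps 1--2. An alternative I would check is to write the bridge probability through its eigenfunction expansion rather than images. That expansion produces $\sum_{m\ \rm odd}$ terms, i.e.\ half-integer modes, which should match the $n\in\Z+1/2$ sum of Step 2 directly, with no theta transform needed.

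\emph{Expected obstacle.} The main difficulty is justifying the manipulations in Step 2. The total mass and the even-winding mass are each infinite because of small loops, so they cannot be split as written. I would instead work with the combination $\frac12(1-(-1)^k)$ under a single $dt/t$ integral. The resulting integrand decays like $e^{-c/t}$ as $t\to0$ and exponentially as $t\to\infty$, so Fubini applies and the Frullani step is legitimate mode by mode. The remaining work is to pin down the constants exactly: the $1/2$ in the generator $\Delta/2$, the $2\pi$ circle period, and the normalisation of $\mu^{\rm loop}$. These constants are exactly what decides the nome discrepancy flagged in Step 3, so a mismatch there would surface as the wrong theta nome.
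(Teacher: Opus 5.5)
There is a genuine gap. As written, your argument ends with $-\log\theta_4(e^{-\pi^2/\ell})$ on the loop side and $-\log\theta_4(e^{-2\pi^2/\ell})$ on the bridge side. Since $\theta_4(q)=\prod_{j\ge1}(1-q^j)/(1+q^j)$ is strictly decreasing on $(0,1)$, these differ for every $\varepsilon$, so the identity is not proved.

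Your diagnosis also points the wrong way. Step~1 and the Poisson summation are correct as written: period $2\pi$, generator $\frac12\Delta$, Dirichlet trace $\sum_{j\ge1}e^{-t\pi^2j^2/(2\ell^2)}$, winding weight $\sqrt{2\pi/t}\,e^{-2\pi^2k^2/t}$. Changing any of these normalisations would introduce an error rather than remove one.

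The slip is in the mode-$j$ evaluation of Step~2. The products $\prod_{n\in\Z}(x^2+n^2)$ and $\prod_{n\in\Z+1/2}(x^2+n^2)$ each contain every $|n|$ twice. So their regularised ratio is $\coth^2(\pi x)$, and after the prefactor $\frac12$ the mode-$j$ contribution is $\log\coth(\pi x)$ with $\pi x=\pi^2 j/\ell$. Neither your intermediate ``$\frac12\log\coth(\pi x)$'' nor your final argument $\pi^2 j/(2\ell)$ is right.

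A quick sanity check would have caught this. Your expression grows like $\frac14\log\varepsilon^{-1}$. The odd-winding mass must grow like $\frac18\log\varepsilon^{-1}$, consistent with the independent spectral-zeta expansion $\frac1{16}\log\delta^{-1}$ of Proposition~\ref{Prop disk 1pt time} under $\delta\asymp\varepsilon^2$.

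The cleanest way to do Step~2 is to stay in the winding representation. All terms there are nonnegative, so Tonelli justifies every exchange, and you avoid both zeta-regularised products and the delicate regrouping of the $n$-sums. Use $\int_0^\infty t^{-3/2}e^{-at-b/t}\,dt=\sqrt{\pi/b}\,e^{-2\sqrt{ab}}$ with $x=\pi j/\ell$, summing over all odd $k\in\Z$:
\begin{equation*}
\sum_{k\ \mathrm{odd}}\int_0^\infty\frac{dt}{t}\,e^{-tx^2/2}\sqrt{\frac{2\pi}{t}}\,e^{-2\pi^2k^2/t}
=\sum_{k\ \mathrm{odd}}\frac{e^{-2\pi x|k|}}{|k|}
=\log\frac{1+e^{-2\pi x}}{1-e^{-2\pi x}}
=\log\coth(\pi x).
\end{equation*}
Summing over $j$ gives
\begin{equation*}
\mu^{\rm loop}_{\D\setminus(\varepsilon\overline{\D})}(\indx_\gamma(0)\text{ odd})
=\sum_{j\ge1}\log\coth\Big(\frac{\pi^2 j}{\ell}\Big)
=-\log\prod_{j\ge1}\frac{1-p^j}{1+p^j},\qquad p=e^{-2\pi^2/\ell}.
\end{equation*}
By Gauss's product formula this equals $-\log\sum_{k\in\Z}(-1)^k e^{-2\pi^2k^2/\ell}$. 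That is exactly your image series from Step~3 with $a^2=\pi/2$ and $L=\ell/(2\pi)$. So the two sides agree term by term, and no Jacobi transformation is needed.

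With this correction your cylinder computation is a complete, self-contained proof. The paper itself does not prove the identity: it imports it from \cite{ALS3} and \cite{LupuTopoAIHP}, and afterwards uses only the eigenfunction expansion of the bridge probability.
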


Further, denote by
$p_{\R}(t,x,y)$ the heat kernel on $\R$,
and by
$p_{(-\sqrt{\pi/2},\sqrt{\pi/2})}(t,x,y)$
the heat kernel in the interval
$(-\sqrt{\pi/2},\sqrt{\pi/2})$
with $0$ boundary conditions.
We have that
\begin{align*}
\mathbb{P}\big((\widehat{W}^{(\varepsilon)}_{t})_{0\leq t\leq L(\varepsilon)}
\text{ stays in } (-\sqrt{\pi/2},\sqrt{\pi/2})\big)
&=
\dfrac{p_{(-\sqrt{\pi/2},\sqrt{\pi/2})}(L(\varepsilon),0,0)}
{p_{\R}(L(\varepsilon),0,0)}
\\
&= 
\sqrt{2\pi L(\varepsilon)}
p_{(-\sqrt{\pi/2},\sqrt{\pi/2})}(L(\varepsilon),0,0).
\end{align*}
Moreover,
$$
p_{(-\sqrt{\pi/2},\sqrt{\pi/2})}(L(\varepsilon),0,0)
=
\sqrt{\dfrac{2}{\pi}}
\sum_{n\geq 0}
e^{-\frac{\pi}{4}(2n+1)^{2} L(\varepsilon)} .
$$
This is the Fourier decomposition of the heat kernel.
See \cite[Appendix A]{ALS3}.
So we get that
$$
-
\log\mathbb{P}\big((\widehat{W}^{(\varepsilon)}_{t})_{0\leq t\leq L(\varepsilon)}
\text{ stays in } (-\sqrt{\pi/2},\sqrt{\pi/2})\big)
=
\dfrac{\pi}{4}L(\varepsilon)
-
\dfrac{1}{2}\log(2\pi L(\varepsilon))
+
\dfrac{1}{2}\log(\pi/2)
+ o(1)
$$
and obtain the asymptotic expansion for
\eqref{Eq mu loop disk eps disk}.

\subsubsection{The case of a general simply-connected domain with boundary and finitely many punctures}\label{subsec:cutoffmanypoints}

We now treat the case of finitely many punctures and prove Theorem \ref{thm:deftwist}. Let $D\subset \C$ be an open non-empty connected and simply connected domain, with $D\neq\C$.
Let $n\geq 1$ and
$z_{1}, z_{2},\dots, z_{n}\in D$
be finitely many punctures, two by two disjoint.
We consider the masses of Brownian loops
$$
\mu^{\rm loop}_{D}
\Big(\sum_{j=1}^{n}
\indx_{\gamma}(z_j)
\text{ odd},
t_{\gamma}>\delta
\Big)
$$
and
$$
\mu^{\rm loop}_{D}
\Big(\sum_{j=1}^{n}
\indx_{\gamma}(z_j)
\text{ odd},
\gamma\cap\Big(\bigcup_{j=1}^{n}\D(z_j,\varepsilon_j)\Big)
=\emptyset
\Big),
$$
and we are interested in asymptotic expansions
up to and including the constant order,
as $\delta$, respectively
$\max_{1\leq j\leq n}\varepsilon_j$ tends to $0$.

Fix $r_0\in (0,1]$ small enough such that
for every $j\in\{1,\dots, n\}$,
$$
\D(z_j,r_0)
\subset
D
\setminus
\Big(
\bigcup_{\substack{1\leq i\leq n\\ i\neq j}}
\D(z_i,r_0)
\Big)
.
$$
Define
$$
\widehat{\Ftw}^{D, r_0}_{n}(z_{1}, z_{2},\dots, z_{n})
=
\mu^{\rm loop}_{D}
\Big(\sum_{j=1}^{n}
\indx_{\gamma}(z_j)
\text{ odd},
\gamma\not\subset
\bigcup_{j=1}^{n}
\D(z_j,r_0)
\Big).
$$

\begin{lemma}
\label{Lem tilde F finit}
The quantity 
$\widehat{\Ftw}^{D, r_0}_{n}(z_{1}, z_{2},\dots, z_{n})$
is finite.
This includes the case when $D$ is unbounded.
\end{lemma}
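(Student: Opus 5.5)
The plan is to split the loops counted in $\widehat{\Ftw}^{D, r_0}_{n}$ into two families according to their diameter, and bound each family separately. A loop $\gamma$ with $\sum_j \indx_\gamma(z_j)$ odd must have $\indx_\gamma(z_j)\neq 0$ for some $j$, so it surrounds some $z_j$. Since $\gamma\not\subset\bigcup_j \D(z_j,r_0)$ and $\gamma$ is connected, one of two things happens.
\begin{enumerate}[(a)]
\item The loop surrounds $z_j$ and reaches distance at least $r_0$ from $z_j$. Indeed, if $\gamma$ met two of the disks it would also have to cross the region between them; if it stays within distance $r_0$ of $z_j$, it lies in the disk $\D(z_j,r_0)$, which contradicts $\gamma\not\subset\bigcup_i\D(z_i,r_0)$.
\item The loop surrounds $z_j$ and has diameter at least $r_0$.
\end{enumerate}
In both cases $\gamma$ surrounds $z_j$, lies in $D$, and has diameter at least $r_0$. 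It therefore suffices to show that
$$\mu^{\rm loop}_{D}\big(\gamma \text{ surrounds } z_j,\ {\rm diam}(\gamma)\geq r_0\big)<\infty \quad\text{for each } j.$$

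\emph{Bounded $D$.} Take $D\subset \D(z_j,R)$. By monotonicity of the loop measure in the domain,
$$\mu^{\rm loop}_{D}(\cdots)\leq \mu^{\rm loop}_{\C}\big(\gamma\subset \D(z_j,R),\ \gamma \text{ surrounds } z_j,\ {\rm diam}\,\gamma\geq r_0\big).$$
By scale invariance, the mass of loops surrounding a point with diameter in $[2^k,2^{k+1}]$ is a constant $c$ independent of $k$ (the standard fact that this mass is finite, e.g. via Lawler--Werner). The right-hand side is then at most $c\,(\log_2(2R/r_0)+1)<\infty$.

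\emph{Unbounded $D$.} This is the main obstacle. Since $D\neq\C$ is simply connected, $\C\setminus D$ contains a point $w$ and an unbounded connected set, namely a curve $\eta$ from $w$ to $\infty$. Every loop in $D$ avoids $\eta$. Dyadic scales up to $|w-z_j|$ are handled as in the bounded case. For loops of diameter $2^k\gg |w-z_j|$ surrounding $z_j$, the loop must wind around $z_j$ while avoiding $\eta$, which crosses every circle $\partial\D(z_j,\rho)$ with $\rho\geq |w-z_j|$. I would bound this mass using scale invariance together with a Beurling-type estimate: a Brownian loop of size $2^k$ that surrounds $z_j$ must come within distance about $|w-z_j|$ of $z_j$ without hitting $\eta$. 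Near $z_j$ the loop is confined to a slit-type region, so this has probability $O\big((|w-z_j|/2^k)^{1/2}\big)$ relative to the unconstrained loops of that scale, whose mass is $O(1)$ per scale.

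Alternatively, one can avoid the Beurling estimate by applying a Möbius/conformal map sending $D$ to a bounded domain and invoking conformal invariance of $\mu^{\rm loop}$. The loops of large diameter in $D$ then correspond to loops that surround the images of the $z_j$ and approach the image of $\infty$, which is a boundary point. Their mass is finite by the bounded-case argument applied to loops that are not contained in $\bigcup_j\D(z_j',r_0')$ for suitable image radii $r_0'$. This conformal route is what I would try first, since the diameter condition transforms into a condition that is handled by the bounded case.

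Either way, the per-scale masses form a summable geometric series, which gives finiteness.
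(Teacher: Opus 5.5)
Your argument is correct, but it follows a different route from the paper's. The paper does not count dyadic scales. For bounded $D$ it splits according to whether the root $\gamma(0)$ lies in $\bigcup_j\D(z_j,r_0/2)$. In either case the loop must travel distance $r_0/2$, which gives an $e^{-c/t}$ factor for small times, and large times are controlled by $|D|<\infty$. Your scale-invariance count is an equivalent substitute there.

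For unbounded $D$ the paper notes that an odd-index loop rooted outside $2R\overline{\D}$ must hit $R\overline{\D}$. It then bounds this contribution by the whole-plane quantity $\int_{\C\setminus 2R\overline{\D}}\int_0^\infty\P^{z,z}_{\C,t}(\gamma\cap R\overline{\D}\neq\emptyset)\frac{dt}{2\pi t^2}d^2z$. That quantity is in fact infinite: whole-plane loops at scale $2^k$ hitting $R\overline{\D}$ carry mass of order $1/k$, which is not summable. Even the Gaussian bound quoted there only yields $\int_{|z|>2R}(|z|-R)^{-2}d^2z=\infty$. The bound fails because it forgets that loops must avoid $\C\setminus D$, so as written it does not close.

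Your conformal route keeps that constraint automatically, so it is the more robust argument. Take $\phi:D\to\D$ a Riemann map, not a Möbius map: for a slit plane no Möbius map makes $D$ bounded. Conformal invariance of $\mu^{\rm loop}$ and invariance of winding numbers then give
$$\widehat{\Ftw}^{D,r_0}_n\le\sum_{j=1}^n\mu^{\rm loop}_{\D}\big(\indx_\gamma(\phi(z_j))\neq0,\ \gamma\not\subset\D(\phi(z_j),r_j')\big)<\infty,\qquad \D(\phi(z_j),r_j')\subset\phi(\D(z_j,r_0)),$$
which is finite by your bounded-case count. What you should transport is the condition $\gamma\not\subset\D(z_j,r_0)$ itself, rather than a diameter condition; that is the precise content of your ``suitable image radii''. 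Done this way, the bounded/unbounded split, the (a)/(b) case analysis and the Beurling sketch all become unnecessary.
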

\begin{proof}
First consider the case of $D$ bounded.
In this case there is no
infrared divergence (large loops) for the integral
$$\widehat{\Ftw}^{D, r_0}_{n}(z_{1}, z_{2},\dots, z_{n}) =  \int_D \int_0^\infty\P_{\C,t}^{z_0,z_0}(\sum_{j=1}^{n}
\indx_{\gamma}(z_j)
\text{ odd},
\gamma\not\subset
\bigcup_{j=1}^{n}
\D(z_j,r_0)) \frac{dt}t^2 dz_0.$$
So let us also exclude the ultraviolet divergence
(small loops). Consider a loop $\gamma$ such that
$$
\sum_{j=1}^{n}
\indx_{\gamma}(z_j)
\text{ odd},
\gamma\not\subset
\bigcup_{j=1}^{n}
\D(z_j,r_0).
$$
We will distinguish two cases according to the position of the
root $z_0 := \gamma(0)$.
In this first case, 
$$
\gamma(0)\in D\setminus\Big(\bigcup_{j=1}^{n}
\overline{\D(z_j,r_0/2)}\Big).
$$
Then in order to surround one of the $z_j$-s,
the loop $\gamma$ has to travel a distance
at least $r_0/2$ from $\gamma(0)$.
In the second case,
$$
\gamma(0)\in \bigcup_{j=1}^{n}
\D(z_j,r_0/2).
$$
Since $\gamma$ also intersects
$$
D\setminus\Big(\bigcup_{j=1}^{n}
\overline{\D(z_j,r_0)}\Big),
$$
it again has to travel a distance at least
$r_0/2$ from $\gamma(0)$.

Thus in both cases we obtain for all $t$ small enough an upper bound of the form
$e^{-c/t}$ for
$$\P_{\C,t}^{z_0,z_0}(\sum_{j=1}^{n}
\indx_{\gamma}(z_j)
\text{ odd},
\gamma\not\subset
\bigcup_{j=1}^{n}
\D(z_j,r_0)),$$
ensuring the convergence of the integral 
$$\widehat{\Ftw}^{D, r_0}_{n}(z_{1}, z_{2},\dots, z_{n}) =  \int_D \int_0^\infty\P_{\C,t}^{z_0,z_0}(\sum_{j=1}^{n}
\indx_{\gamma}(z_j)
\text{ odd},
\gamma\not\subset
\bigcup_{j=1}^{n}
\D(z_j,r_0)) \frac{dt}t^2 dz_0$$ also for small $t$.

Now consider that $D$ is unbounded.
Compared to the previous case,
we have to rule out the infrared divergence related to
$D$ being unbounded.
Set
$$
R=
\max((\vert z_j\vert)_{1\leq j\leq n},
d(0,\partial D)).
$$
Since $D$ is simply connected,
the complementary
$\C\setminus D$
connects $R\partial \D$ to $\infty$.
Moreover,
$\C\setminus(R\overline{\D})$
does not contain any of the $z_j$.
Therefore, topologically, a loop
$\gamma$ cannot surround any of the $z_j$
by staying in 
$D\cap(\C\setminus(R\overline{\D}))$.
It follows that
\begin{align*}
\mu^{\rm loop}_{D}
\Big(
\gamma(0)\in \C\setminus(2R\overline{\D}),
\sum_{j=1}^{n}
\indx_{\gamma}(z_j)
\text{ odd}
\Big)
&\leq
\mu^{\rm loop}_{D}
\Big(
\gamma(0)\in \C\setminus(2R\overline{\D}),
\gamma\cap(R\overline{\D})\neq \emptyset.
\Big)
\\
&=
\int_{\C\setminus(2R\overline{\D})}
\int_{0}^{\infty}
\P_{\C,t}^{z,z}
(\gamma\cap(R\overline{\D})\neq\emptyset)
\dfrac{dt}{2\pi t^2} d^2 z\,.
\end{align*}
The above integral is finite since one can bound
$$
\P_{\C,t}^{z,z}
(\gamma\cap(R\overline{\D})\neq\emptyset)
\leq
C
\exp\big(-c (\vert z\vert -R)^2/t \big),
$$
for some constants $C,c>0$.
\end{proof}

Now set 
\begin{equation}
\label{Eq F D n z_j}
\Ftw^{D}_{n}(z_{1}, z_{2},\dots, z_{n})
=
\widehat{\Ftw}^{D, r_0}_{n}(z_{1}, z_{2},\dots, z_{n})
-
n 
\mu^{\rm loop}_{\D}(
\gamma\cap(\D\setminus r_0\overline{\D})
\neq\emptyset,
\indx_{\gamma}(0) \text{ odd}).
\end{equation}
Then the twist field correlation functions will be given by 
$$\sigma_{\rm tw}^D(z_1, z_2, \dots, z_n) 
:= \exp(-\Ftw^{D}_{n}(z_{1}, z_{2},\dots, z_{n})),$$
but it is a bit easier in the calculations to work directly on the level of the $\Ftw^{D}_{n}$. 
Observe that by definition we have normalised such that $\Ftw_1^\D(0) = 0$, 
and thus $\sigma_{\rm tw}^\D(0) = 1$.
A first important check is that 
\begin{lemma}
$\Ftw^{D}_{n}(z_{1}, z_{2},\dots, z_{n})$
does not depend on $r_0$.
\end{lemma}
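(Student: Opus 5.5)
Fix two admissible radii $0<r_0'<r_0$; both must satisfy the disjointness condition, which is inherited by the smaller radius. The plan is to show that $\Ftw^{D}_{n}$ computed with $r_0$ equals $\Ftw^{D}_{n}$ computed with $r_0'$ by computing the difference of each of the two terms in \eqref{Eq F D n z_j} separately.

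For the first term, the events $\{\gamma\not\subset\bigcup_j\D(z_j,r_0)\}$ are increasing as the radius decreases. Hence
$$
\widehat{\Ftw}^{D, r_0'}_{n}-\widehat{\Ftw}^{D, r_0}_{n}
=\mu^{\rm loop}_{D}\Big(\sum_{j}\indx_{\gamma}(z_j)\text{ odd},\ \gamma\subset\bigcup_j\D(z_j,r_0),\ \gamma\not\subset\bigcup_j\D(z_j,r_0')\Big).
$$
The disks $\D(z_j,r_0)$ are pairwise disjoint and loops are connected, so such a loop lies in a single disk $\D(z_j,r_0)$. Inside that disk $\indx_\gamma(z_i)=0$ for $i\neq j$, so the parity condition reduces to $\indx_\gamma(z_j)$ odd. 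The difference therefore splits as a sum over $j$ of
$$
\mu^{\rm loop}_{D}\big(\gamma\subset\D(z_j,r_0),\ \gamma\not\subset\D(z_j,r_0'),\ \indx_\gamma(z_j)\text{ odd}\big).
$$
By the restriction property of the loop measure, the loop measure of $D$ restricted to loops inside $\D(z_j,r_0)$ is $\mu^{\rm loop}_{\D(z_j,r_0)}$. By translation invariance each summand equals $\mu^{\rm loop}_{\D(0,r_0)}(\gamma\cap(\D(0,r_0)\setminus\overline{\D(0,r_0')})\neq\emptyset,\ \indx_\gamma(0)\text{ odd})$. One has to observe here that a loop in $\D(0,r_0)$ not contained in the open disk $\D(0,r_0')$ hits the closed annulus. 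The boundary circle carries zero mass, so the choice of open versus closed disk is immaterial.

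Next, apply the scaling invariance of $\mu^{\rm loop}$ under $z\mapsto z/r_0$, which also preserves indices. This gives
$$
\mu^{\rm loop}_{\D}\big(\gamma\cap(\D\setminus (r_0'/r_0)\overline{\D})\neq\emptyset,\ \indx_\gamma(0)\text{ odd}\big).
$$
This scaled quantity is finite by Lemma \ref{Lem tilde F finit}, applied with $D=\D$ and $n=1$.

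For the second term, the difference of the subtracted constants is
$$
n\Big[\mu^{\rm loop}_{\D}\big(\gamma\cap(\D\setminus r_0'\overline{\D})\neq\emptyset,\ \indx\text{ odd}\big)-\mu^{\rm loop}_{\D}\big(\gamma\cap(\D\setminus r_0\overline{\D})\neq\emptyset,\ \indx\text{ odd}\big)\Big].
$$
This equals $n\,\mu^{\rm loop}_{\D}(\gamma\subset r_0\overline{\D},\ \gamma\cap(r_0\overline{\D}\setminus r_0'\overline{\D})\neq\emptyset,\ \indx\text{ odd})$. Rescaling by $1/r_0$ and using scale invariance again, this is exactly $n$ times the quantity obtained for the first term, up to null sets: loops touching but not crossing the circle $r_0\partial\D$ have measure zero. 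Hence the two differences cancel, and $\Ftw^D_n$ does not depend on $r_0$.

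The only delicate step is the bookkeeping of open versus closed disks and boundary circles. I expect to handle it by noting that the $\mu^{\rm loop}$-mass of loops touching a fixed circle without exiting the closed disk is zero.
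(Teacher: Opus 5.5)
Your proof is correct and is essentially the paper's argument: you use the disjointness of the disks to localise the difference $\widehat{\Ftw}^{D,r_0'}_n-\widehat{\Ftw}^{D,r_0}_n$ into $n$ copies of the odd-index loop mass in $r_0\D$ that exits the smaller disk, and then use the restriction property to identify this with the difference of the subtracted constants. The only deviation is your rescaling to the unit disk, which is an unnecessary detour (the paper compares both differences directly inside $r_0\D$, needing only $r_0\le 1$); your explicit remarks on finiteness and on the null mass of loops touching the boundary circles are correct additions that the paper leaves implicit.
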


\begin{proof}

Consider such an $r_0$ and take
$r_1\in (0,r_0)$.
We want to show that
\begin{multline*}
\widehat{\Ftw}^{D, r_1}_{n}(z_{1}, z_{2},\dots, z_{n})
-
\widehat{\Ftw}^{D, r_0}_{n}(z_{1}, z_{2},\dots, z_{n})
=
\\
n(
\mu^{\rm loop}_{\D}(
\gamma\cap(\D\setminus r_1\overline{\D})
\neq\emptyset,
\indx_{\gamma}(0) \text{ odd})
-
\mu^{\rm loop}_{\D}(
\gamma\cap(\D\setminus r_0\overline{\D})
\neq\emptyset,
\indx_{\gamma}(0) \text{ odd})
).
\end{multline*}
By definition,
\begin{multline*}
\widehat{\Ftw}^{D, r_1}_{n}(z_{1}, z_{2},\dots, z_{n})
-
\widehat{\Ftw}^{D, r_0}_{n}(z_{1}, z_{2},\dots, z_{n})
\\=
\mu^{\rm loop}_{D}
\Big(\sum_{j=1}^{n}
\indx_{\gamma}(z_j)
\text{ odd},
\gamma\subset 
\bigcup_{j=1}^{n}
\D(z_j,r_0)
,
\gamma\not\subset
\bigcup_{j=1}^{n}
\D(z_j,r_1)
\Big).
\end{multline*}
Since the disks $\D(z_j,r_0)$
are two by two disjoint,
the latter quantity is also
$$
\sum_{j=1}^{n}
\mu^{\rm loop}_{\D(z_j,r_0)}
\Big(
\indx_{\gamma}(z_j)
\text{ odd},
\gamma\not\subset
\D(z_j,r_1)
\Big)
=
n
\mu^{\rm loop}_{r_0\D}(
\gamma\cap(\D\setminus r_1\overline{\D})
\neq\emptyset,
\indx_{\gamma}(0) \text{ odd}).
$$
Since $r_0\leq 1$, this is also
\begin{displaymath}
n(
\mu^{\rm loop}_{\D}(
\gamma\cap(\D\setminus r_1\overline{\D})
\neq\emptyset,
\indx_{\gamma}(0) \text{ odd})
-
\mu^{\rm loop}_{\D}(
\gamma\cap(\D\setminus r_0\overline{\D})
\neq\emptyset,
\indx_{\gamma}(0) \text{ odd})
).
\qedhere
\end{displaymath}
\end{proof}
\noindent To prove Theorem \ref{thm:deftwist}, it now suffices to exhibit the following two asymptotic expansions.
\begin{itemize}
\item As $\delta\to 0$,
$$
\mu^{\rm loop}_{D}
\Big(\sum_{j=1}^{n}
\indx_{\gamma}(z_j)
\text{ odd},
t_{\gamma}>\delta
\Big)
=
\dfrac{n}{16}\log(\delta^{-1})
+
n \cbls + 
\Ftw^{D}_{n}(z_{1}, z_{2},\dots, z_{n})
+o(1),
$$
where  the constant $\cbls$ is given by
\eqref{Eq C D 0 diff}.
\item As $\max_{1\leq j\leq n}\varepsilon_j\to 0$,
\begin{multline}\label{eq:loopcutdisk}
\mu^{\rm loop}_{D}
\Big(\sum_{j=1}^{n}
\indx_{\gamma}(z_j)
\text{ odd},
\gamma\cap\Big(\bigcup_{j=1}^{n}\D(z_j,\varepsilon_j)\Big)
=\emptyset
\Big)
=
\\
\dfrac{1}{8}
\sum_{j=1}^{n}
\log(\varepsilon_j^{-1})
-
\dfrac{1}{2}
\sum_{j=1}^{n}
\log\log(\varepsilon_j^{-1})
+
\dfrac{n}{2}\log(\pi/2)
+ 
\Ftw^{D}_{n}(z_{1}, z_{2},\dots, z_{n})
+o(1).
\end{multline}
\end{itemize}
To prove these, we decompose
\begin{align*}
\mu^{\rm loop}_{D}
\Big(\sum_{j=1}^{n}
\indx_{\gamma}(z_j)
\text{ odd},
t_{\gamma}>\delta
\Big) =& 
\mu^{\rm loop}_{D}
\Big(\sum_{j=1}^{n}
\indx_{\gamma}(z_j)
\text{ odd},
\gamma\not\subset
\bigcup_{j=1}^{n}
\D(z_j,r_0),
t_{\gamma}>\delta
\Big)
\\
&+
n
\mu^{\rm loop}_{r_0\D}
(t_{\gamma}>\delta, 
\indx_{\gamma}(0)
\text{ odd}).
\end{align*}
Similarly,
\begin{multline*}
\mu^{\rm loop}_{D}
\Big(\sum_{j=1}^{n}
\indx_{\gamma}(z_j)
\text{ odd},
\gamma\cap\Big(\bigcup_{j=1}^{n}\D(z_j,\varepsilon_j)\Big)
=\emptyset
\Big)
=
\\
\mu^{\rm loop}_{D}
\Big(\sum_{j=1}^{n}
\indx_{\gamma}(z_j)
\text{ odd},
\gamma\not\subset
\bigcup_{j=1}^{n}
\D(z_j,r_0),
\gamma\cap\Big(\bigcup_{j=1}^{n}\D(z_j,\varepsilon_j)\Big)
=\emptyset
\Big)
+
\sum_{j=1}^{n}
\mu^{\rm loop}_{r_0\D\setminus(\varepsilon_j\overline{\D})}
(\indx_{\gamma}(0)
\text{ odd}).
\end{multline*}
Further,
\begin{multline*}
\lim_{\delta\to 0}
\mu^{\rm loop}_{D}
\Big(\sum_{j=1}^{n}
\indx_{\gamma}(z_j)
\text{ odd},
\gamma\not\subset
\bigcup_{j=1}^{n}
\D(z_j,r_0),
t_{\gamma}>\delta
\Big)=
\\
\lim_{\max((\varepsilon_j)_{1\leq j\leq n}) \to 0}
\mu^{\rm loop}_{D}
\Big(\sum_{j=1}^{n}
\indx_{\gamma}(z_j)
\text{ odd},
\gamma\not\subset
\bigcup_{j=1}^{n}
\D(z_j,r_0),
\gamma\cap\Big(\bigcup_{j=1}^{n}\D(z_j,\varepsilon_j)\Big)
=\emptyset
\Big)
=
\\
\widehat{\Ftw}^{D, r_0}_{n}(z_{1}, z_{2},\dots, z_{n}).
\end{multline*}
Then,
\begin{multline*}
\mu^{\rm loop}_{r_0\D}
(t_{\gamma}>\delta, 
\indx_{\gamma}(0)
\text{ odd})
=
\\
\mu^{\rm loop}_{\D}
(t_{\gamma}>\delta, 
\indx_{\gamma}(0)
\text{ odd})
-
\mu^{\rm loop}_{\D}(
\gamma\cap(\D\setminus r_0\overline{\D})
\neq\emptyset,
\indx_{\gamma}(0) \text{ odd},t_{\gamma}>\delta),
\end{multline*}
and
$$
\mu^{\rm loop}_{r_0\D\setminus(\varepsilon_j\overline{\D})}
(\indx_{\gamma}(0)
\text{ odd})
=
\mu^{\rm loop}_{\D\setminus(\varepsilon_j\overline{\D})}
(\indx_{\gamma}(0)
\text{ odd})
-
\mu^{\rm loop}_{\D\setminus(\varepsilon_j\overline{\D})}
(\gamma\cap(\D\setminus r_0\overline{\D})
\neq\emptyset,
\indx_{\gamma}(0) \text{ odd}).
$$
We have
\begin{align*}
\lim_{\delta\to 0}
\mu^{\rm loop}_{\D}(
\gamma\cap(\D\setminus r_0\overline{\D})
\neq\emptyset,
\indx_{\gamma}(0) \text{ odd},t_{\gamma}>\delta)
&=
\lim_{\eps_j\to 0}
\mu^{\rm loop}_{\D\setminus(\varepsilon_j\overline{\D})}
(\gamma\cap(\D\setminus r_0\overline{\D})
\neq\emptyset,
\indx_{\gamma}(0) \text{ odd})
\\
&=
\mu^{\rm loop}_{\D}(
\gamma\cap(\D\setminus r_0\overline{\D})
\neq\emptyset,
\indx_{\gamma}(0) \text{ odd}).
\end{align*}
Finally, we apply Proposition \ref{Prop disk 1pt time} to
$
\mu^{\rm loop}_{\D}
(t_{\gamma}>\delta, 
\indx_{\gamma}(0)
\text{ odd}),
$
and Proposition \ref{Prop exp disk eps disk} to
$
\mu^{\rm loop}_{\D\setminus(\varepsilon_j\overline{\D})}
(\indx_{\gamma}(0)
\text{ odd}).
$

\subsubsection{Diffeomorphism invariance, conformal covariance and Weyl scaling}

\paragraph{Diffeomorphism invariance}
Diffeomorphism invariance just stems from the fact that under a diffeomorphism $\psi : D_1 \to D_2$ the Brownian motion in $(D_1, g_1)$ 
(i.e. associated to the Laplace-Beltrami operator of the metric $g_1$)
maps to the Brownian motion to $(D_2, g_2)$ with $g_1 = \psi^*g_2$. \\

The Weyl scaling
and conformal covariance both follow from the following immediate observation that we separate for clarity. Here we denote by $\D_g(z, \eps)$ 
the metric ball for the metric $g$.

As in the Euclidean case, we have an expansion
as $\max_{1\leq j\leq n}\eps_j\to 0$:
\begin{multline*}
\mu^{\rm loop}_{D,g}
\Big(\sum_{j=1}^{n}
\indx_{\gamma}(z_j)
\text{ odd},
\gamma\cap\Big(\bigcup_{j=1}^{n}\D_g(z_j,\varepsilon_j)\Big)
=\emptyset
\Big)
=
\\
\sum_{j=1}^n\dfrac{1}{8}
\log(\varepsilon_j^{-1})
-
\sum_{j=1}^n\dfrac{1}{2}
\log\log(\varepsilon_j^{-1})
+
\dfrac{n}{2}\log(\pi/2)
+ \Ftw^{D,g}_{n}(z_{1}, z_{2},\dots, z_{n})
+ o(1)
.
\end{multline*}
The following is clear.
\begin{claim}\label{claim:weyl}
Let $g_1, g_2$ be two metrics on $D$ related by $g_2 = \Omega^2 g_1$.
Suppose that for all $j \in \{1 \dots n\}$ there are functions $0 < r_j^-(\eps) \leq r_j^+(\eps)< \infty$ with $r_j^-(\eps) = r_j^+(\eps)(1+o(1))$ and $\D_{g_1}(z_j, r_j^-(\eps)) \subseteq \D_{g_2}(z_j, \eps) \subseteq \D_{g_1}(z_j, r_j^+(\eps))$. Then 
\begin{multline*}
\mu^{\rm loop}_{D, g_1}
\Big(\sum_{j=1}^{n}
\indx_{\gamma}(z_j)
\text{ odd},
\gamma\cap\Big(\bigcup_{j=1}^{n}\D_{g_1}(z_j,r_{j}^{+}(\varepsilon_j))\Big)
=\emptyset
\Big)
\\
\leq\mu^{\rm loop}_{D, g_2}
\Big(\sum_{j=1}^{n}
\indx_{\gamma}(z_j)
\text{ odd},
\gamma\cap\Big(\bigcup_{j=1}^{n}\D_{g_2}(z_j,\varepsilon_j)\Big)
=\emptyset
\Big)
\\
\leq\mu^{\rm loop}_{D, g_1}
\Big(\sum_{j=1}^{n}
\indx_{\gamma}(z_j)
\text{ odd},
\gamma\cap\Big(\bigcup_{j=1}^{n}\D_{g_1}(z_j,r_{j}^{-}(\varepsilon_j))\Big)
=\emptyset
\Big),
\end{multline*}
and as $\max_{1\leq j\leq  n}\eps_j \to 0$,
$$\Ftw^{D,g_2}_{n}(z_{1}, z_{2},\dots, z_{n}) 
- \Ftw^{D,g_1}_{n}(z_{1}, z_{2},\dots, z_{n}) = \sum_{j=1}^n\dfrac{1}{8}
\log(\varepsilon_j^{-1}r_j^+(\eps_j))
+
\sum_{j=1}^n\dfrac{1}{2}
\log\dfrac{\log(\varepsilon_j^{-1})}{\log (r_j^+(\eps_j)^{-1})}+o(1).
$$
\end{claim}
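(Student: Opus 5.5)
The first inequality chain is purely set-theoretic once one observes that the two loop measures agree on the relevant events. In dimension two, a change of metric $g_2=\Omega^2 g_1$ only time-changes Brownian motion. Hence the Brownian loop measure on unrooted loops, viewed modulo reparametrisation, is the same for $g_1$ and $g_2$. I would justify this by the classical time-change argument, or equivalently by comparing the rooted measures $\P^{z,z}_{D,t}\,p_D(t,z,z)\,dt/t\,d^2z$ after unrooting.

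The events $\{\sum_j \indx_\gamma(z_j) \text{ odd}\}$ and $\{\gamma\cap U=\emptyset\}$ depend only on the trace of $\gamma$. Therefore $\mu^{\rm loop}_{D,g_1}$ and $\mu^{\rm loop}_{D,g_2}$ coincide on them. The inclusions $\D_{g_1}(z_j,r_j^-(\eps))\subseteq\D_{g_2}(z_j,\eps)\subseteq\D_{g_1}(z_j,r_j^+(\eps))$ give the inclusion of events: a loop avoiding the larger $g_1$-disks avoids the $g_2$-disks, and a loop avoiding the $g_2$-disks avoids the smaller $g_1$-disks. This yields both inequalities.

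For the second statement I would insert the expansion stated just above the claim for the metric $g_2$ with radii $\eps_j$ into the middle term. In the outer terms I would insert the same expansion for $g_1$, with radii $r_j^{\pm}(\eps_j)$. This requires $r_j^\pm(\eps_j)\to0$, which holds because the $g_2$-balls shrink to $z_j$ and $r_j^-\le r_j^+=r_j^-(1+o(1))$ (forcing $r_j^+\to 0$ as well). Then the upper and lower bounds differ by
$$\tfrac18\log\frac{r_j^+}{r_j^-}+\tfrac12\log\frac{\log(1/r_j^-)}{\log(1/r_j^+)}=o(1),$$
summed over $j$, since $r^-/r^+\to1$ and both radii tend to $0$. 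Squeezing the $g_2$ expansion between the two $g_1$ expansions and cancelling the common constant $\frac n2\log(\pi/2)$ then gives
$$\Ftw^{D,g_2}_n-\Ftw^{D,g_1}_n=\sum_j\Big[\tfrac18\log(\eps_j^{-1}r_j^+)+\tfrac12\log\frac{\log\eps_j^{-1}}{\log (r_j^+)^{-1}}\Big]+o(1).$$

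The only genuinely nontrivial step is the metric-independence of the loop measure on trace events. In particular, its normalisation has to be checked: the $dt/t$ factor and the heat-kernel density must transform so that the Jacobian of the time change cancels. I would verify this by working with the unrooted measure written as $\int\mu^{z,z}\,d^2z$ over loop bubbles, which is manifestly invariant under time change. Everything else is bookkeeping of the $o(1)$ terms.
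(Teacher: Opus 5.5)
Your route is the one the paper intends; it simply calls the claim clear. Trace events have the same loop measure for the conformally related metrics $g_1,g_2$ by time-change invariance. The ball inclusions then give the event inclusions, and the second statement comes from squeezing the expansions. The first part of your proposal is fine.

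\textbf{The gap is the last line.} The squeeze you set up does not produce the displayed formula. Compare the middle expansion (metric $g_2$, radii $\eps_j$) with the lower one (metric $g_1$, radii $r_j^+(\eps_j)$), equate them up to $o(1)$, and cancel $\frac n2\log(\pi/2)$. The $\frac18$-terms enter as $\frac18\log\eps_j^{-1}$ on the $g_2$ side and $\frac18\log r_j^+(\eps_j)^{-1}$ on the $g_1$ side, so you get
\[
\Ftw^{D,g_2}_n-\Ftw^{D,g_1}_n=\sum_{j=1}^n\Big[\tfrac18\log\frac{\eps_j}{r_j^+(\eps_j)}+\tfrac12\log\frac{\log\eps_j^{-1}}{\log r_j^+(\eps_j)^{-1}}\Big]+o(1).
\]
The $\log\log$ term agrees with the statement, but the $\frac18$ term has the opposite sign. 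The claim as printed contains a sign typo. By presenting it as the outcome of your computation, you assert a step that is false.

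\textbf{A one-line check.} Take $g_1$ Euclidean and $g_2=c^2g_1$ with $c>0$ constant. Then $\D_{g_2}(z_j,\eps)=\D(z_j,\eps/c)$ and $r_j^\pm(\eps)=\eps/c$. The middle quantity is literally the Euclidean one at radii $\eps_j/c$, which forces $\Ftw^{D,g_2}_n-\Ftw^{D,g_1}_n=\frac n8\log c$. The printed formula gives $-\frac n8\log c$ instead.

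\textbf{Consistency with Weyl scaling.} Recall $\sigma_{\rm tw}=e^{-\Ftw}$. Only the corrected sign is consistent with $\sigma^{D,g\Omega^2}_{\rm tw}=\prod_j\Omega(z_j)^{-1/8}\sigma^{D,g}_{\rm tw}$. The paper's Weyl paragraph reaches the right exponent only because it also writes $r_j^-(\eps)/\eps\to\Omega(z_j)$. For $g_2=\Omega^2g_1$ the correct limit is $1/\Omega(z_j)$, and the two sign slips cancel.

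\textbf{A smaller, harmless slip.} The difference between your upper and lower bounds is
\[
\tfrac18\log\frac{r_j^+}{r_j^-}-\tfrac12\log\frac{\log(1/r_j^-)}{\log(1/r_j^+)},
\]
with a minus, not a plus, in front of the second term. This does not matter, since both terms are $o(1)$.
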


\paragraph{Weyl scaling}
Let $\Omega:D\to [0,\infty)$ be continuous and consider $g_2 = \Omega^2 g_1$. Then by continuity of $\Omega$ we can find for each $j$ the functions $r_j^-(\eps)$ and $r_j^+(\eps)$ with $r_j^-(\eps)/\eps \to \Omega(z_j)$ satisfying the conditions of the claim. 
Thus, Weyl scaling follows from the claim. 

\paragraph{Conformal covariance}
To prove conformal covariance we can just combine diffeomorphism invariance and 
Weyl scaling by considering $D_2$ with the Euclidean metric $g_1$ and with the metric $g_2 = g_1 \Omega^2$,
where $\Omega = |\psi'(z)|$ and $\psi: D_1 \to D_2$ is the relevant conformal map.

\subsection{The explicit calculation of two-point correlations}

In this subsection we will prove Theorem \ref{thm:twopointtwist}.
This will be a calculation based on the first definition of the twist fields, i.e. the cut-off in time. To be able to push it through, we first have to reduce the calculation to a suitable parametrization - this will be the strip with its Euclidean metric - and then we proceed to computations.

Thus, in the first subsection we give the reduction and the proof of Theorem \ref{thm:twopointtwist} modulo an annulus calculation and then in the second subsection we give the calculation in the annulus.

\subsubsection{Reduction to a calculation in the annulus and proof of Theorem \ref{thm:twopointtwist}}\label{sec:reduction}
We can use conformal covariance 
to map the domain $(D, z_1, z_2)$ to $(\D, -r, r)$. 
In this set-up, 
$\CR(r, \D) = \CR(-r,\D) = 1-r^2$.
It then remains to prove that 
\begin{equation}\label{eq:twisteq}
\sigma_{\rm tw}^\D(-r,r) = \Ctw\frac{1}{(1-r^2)^{1/4}\theta_2(q^{1/4})\sqrt{|\log q|}},
\end{equation}
where as above the nome $q$ is uniquely fixed by Lemma \ref{lem:parametr}, 
via $\theta^2_3(q^{1/2})/\theta^2_2(q^{1/2}) 
= \frac{1+r^2}{2r}$.

As mentioned, we will do the calculation in an annulus 
$\ann_R = \{ w\in\C \vert \, R^{-1}<\vert w\vert < R\}$
endowed with the cylindrical metric
$\vert w\vert^{-2}\, d^2 w$ - the latter is chosen so that its lift to the strip
via the universal covering is the Euclidean metric. 
This annulus will provide a double cover for the disk $(\D, -r,r)$ with branch-points at $\pm r$ mapping to $\pm 1$ and the line segment between $\pm r$ mapping to the circle of radius $1$. 
To describe this double cover, 
let us introduce some notations.
\begin{fleqn}
    \begin{align*}
&\text{Complete elliptic integral in Jacobi notation: } \\
&\;\;(\star)\;\;K(k) = \int_{0}^{1}
        \frac{dx}{\sqrt{(1-k^{2}x^2)(1-x^2)}},\;K'(k):=K(\sqrt{1-k^{2}}).\\
&\text{Incomplete elliptic integral in Jacobi notation for: }z\in\HH,\\ &\;\;(\star)\;\;F(z,k)
  := \int_{0}^{z}\frac{dx}{\sqrt{(1-k^{2}x^2)(1-x^2)}}, \\
  &\text{ where we integrate along any path } \gamma \text{ from }0\to z\text{ remaining in }\HH \text{ other than at its endpoints.} 
\end{align*}
\end{fleqn}
\begin{lemma}[Branched double cover]\label{lemma:cover}
The mapping composed of first mapping the disk to the ellipse via the Schwarz map
$$ \psi(z)=\sin\Big(\frac{\pi}{2K(r^2)}F(z/r, r^2)\Big),$$
and then following with the Joukowsky map
$$\Juk(z) = z + \sqrt{z^2-1},$$
gives a branched double cover of the unit disk $\D$ by the annulus $\ann_R$ with branch-points at $\pm r$. 
The radius $R$ is given by $\log R = \pi K'(r^2)/4K(r^2)$.
The corresponding covering automorphism is $w \to w^{-1}$.

Further, if $\ann_R$ is endowed with the cylindrical metric $g_{\rm cyl}(w) := \vert w\vert^{-2}\, |dw|^2$, then its pull-back behaves around $\pm r$ like $\hat g = \hat \Omega^2 |dz|^2$, with 
$$\hat \Omega^2(\pm r + \eps \omega) =  
\eps^{-1}\fconf(r)+O(\eps^{-1/2}),
\qquad
\fconf(r) = \frac{\pi^2}{8K(r^2)^2 r(1-r^4)},
$$ 
for $|\omega| = 1$, as $\eps \to 0$.
\end{lemma}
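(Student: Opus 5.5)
The plan is to build the cover explicitly from its three pieces. First I check where the Schwarz–Christoffel map $z\mapsto F(z/r,r^2)$ sends the upper half-disk. Then I identify the image of the sine and of the inverse Joukowsky map. Finally I compute the metric by differentiation. Throughout, write $k=r^2$.

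\emph{Step 1: the upper half-disk goes to a half-rectangle.} The integrand of $F(\cdot,k)$ has branch points at $x=\pm1,\pm1/k$, which correspond to $z=\pm r,\pm1/r$. By the classical Schwarz–Christoffel description, $u\mapsto F(u,k)$ maps $\HH$ conformally onto the rectangle $[-K,K]\times[0,K']$. Here $K=K(k)$ and $K'=K'(k)$, and the four vertices are the images of $\pm1$ and $\pm1/k$.
\begin{itemize}
\item The reflection of the $u$-plane in the circle $|u|=1/\sqrt k$ is $u\mapsto 1/(k\bar u)$. It corresponds to the reflection $\zeta\mapsto \bar\zeta+iK'$ of the rectangle across its horizontal midline. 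This follows from the substitution $x=1/(ky)$, under which $dx/\sqrt{(1-k^2x^2)(1-x^2)}$ is invariant up to sign.
\item In the variable $z=ru$, this circle is exactly $|z|=1$.
\item Hence the upper half of $\D$ is mapped onto $[-K,K]\times[0,K'/2]$. The segment $[-r,r]$ goes to the bottom edge, the two arcs of $[-1,1]\setminus[-r,r]$ go to the vertical sides, and the upper half of $\partial\D$ goes to the top edge.
\end{itemize}
Multiplying by $\pi/(2K)$ turns this into the rectangle $[-\pi/2,\pi/2]\times[0,h]$ with $h=\pi K'/(4K)$.

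\emph{Step 2: the disk goes to an ellipse.} The sine map sends this rectangle onto the upper half of the ellipse with foci $\pm1$ and semi-axes $\cosh h$, $\sinh h$.
\begin{itemize}
\item The bottom edge goes to the focal segment $[-1,1]$.
\item The vertical sides go to the real intervals $[1,\cosh h]$ and $[-\cosh h,-1]$.
\item The top edge goes to the upper elliptic arc.
\end{itemize}
Since everything is symmetric under $z\mapsto\bar z$, Schwarz reflection shows that $\psi$ maps $\D$ conformally onto the full ellipse, with $[-r,r]$ sent to $[-1,1]$.

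\emph{Step 3: the Joukowsky map produces the annulus and the covering automorphism.} The map $\Juk$, with a chosen branch of $\sqrt{z^2-1}$, maps the ellipse minus $[-1,1]$ conformally onto $\{1<|w|<e^h\}$. The other branch gives $1/w$, i.e. the region $\{e^{-h}<|w|<1\}$. Both branches extend continuously to the focal segment, whose two sides become the unit circle.

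Gluing the two branches therefore gives a two-sheeted cover $\ann_R\to\D$ with $\log R=h=\pi K'(r^2)/(4K(r^2))$. Its deck transformation is $w\mapsto w^{-1}$, since $\tfrac12(w+w^{-1})$ is the inverse of $\Juk$. It is branched exactly over $\pm r$, where $\sqrt{z^2-1}$ vanishes, i.e. at the images of $w=\pm1$.

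\emph{Step 4: the metric.} Writing $\psi=\sin\zeta$, the relation $\Juk(\sin\zeta)=\pm i e^{\mp i\zeta}$ shows that $w$ is, up to a rotation, $e^{-i\zeta}$. Hence $|dw|^2/|w|^2=|d\zeta|^2$, and the pulled-back metric is $|\zeta'(z)|^2|dz|^2$ with
$$\zeta'(z)=\frac{\pi}{2K}\cdot\frac{1}{r}\Big((1-z^2/r^2)(1-r^2z^2)\Big)^{-1/2}.$$
At $z=r+\eps\omega$ we have $1-z^2/r^2=-2\eps\omega/r+O(\eps^2)$ and $1-r^2z^2=1-r^4+O(\eps)$. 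Therefore
$$\hat\Omega^2=\frac{\pi^2}{4K^2r^2}\cdot\frac{r}{2\eps(1-r^4)}\,(1+O(\eps))=\eps^{-1}\fconf(r)+O(1),$$
which is stronger than the stated $O(\eps^{-1/2})$ bound. The point $-r$ follows by the symmetry $z\mapsto-z$.

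The main obstacle is Step 1: justifying that the unit circle corresponds precisely to the midline $\Im\zeta=K'/2$ via the inversion symmetry, together with careful branch bookkeeping for the square roots at the boundary. I would handle this by first checking the symmetry identity for $F$ on the real segment $[1,1/k]$, and then extending it by analytic continuation.
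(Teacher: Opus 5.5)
Your proposal is correct and follows the paper's decomposition: a Schwarz–Christoffel map of the upper half-disk onto a half-rectangle, then the sine onto the upper half-ellipse with Schwarz reflection, then the inverse Joukowsky map onto the annulus with $\log R = \pi K'(r^2)/(4K(r^2))$. Your inversion-symmetry argument also supplies the justification, only asserted in the paper, that $\partial\D$ is sent to the midline $\Im \zeta = K'/2$. The one real difference is the metric step: the paper expands $\psi$ and $\Juk$ separately near $\pm r$, whereas you use $\Juk(\sin\zeta) = \pm i e^{\mp i\zeta}$ to get the exact identity $\hat g = |\zeta'(z)|^2|dz|^2$. That identity gives the same constant $\fconf(r)$ with a sharper $O(1)$ error in place of $O(\eps^{-1/2})$.
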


\begin{proof}
The conformal map from the unit disk to the ellipse mapping $\pm r$ to the foci $\pm 1$ is given by \cite{Schwarz, Szego} 
$$ \psi(z)=\sin\Big(\frac{\pi}{2K(r^2)}F(z/r, r^2)\Big).$$
It comes from the following steps:
\begin{itemize}
\item we scale $z \to z/r$;
\item we use the Schwarz-Christoffel map $z \to F(z, r^2)$ to map the upper half of the disk of radius $1/r$ to the rectangle with corners $\pm K(r^2)$ and $\pm K(r^2)+iK'(r^2)/2$. The corners are images of $\pm 1$ and $\pm r^{-1}$;
\item we scale $z \to z\pi/2K(r^2)$ to rectangle with vertices $\pm \pi/2$ and $\pm \pi/2 + iM$ with $M = \dfrac{\pi K'(r^2)}{4K(r^2)}$;
\item we consider $z \to \sin(z)$ to map to the upper half of the ellipse and then use Schwarz reflection to obtain the full map to ellipse.
\end{itemize}
From here we see that the axes of the ellipse are given by $$a = \cosh(M), b = \sinh(M),$$
where $M=\dfrac{\pi K'(r^2)}{4K(r^2)}$.

The branched double cover from the ellipse with branch points at the foci $\pm 1$ mapping to an annulus with radii $R^{-1}, R$, and sending $\pm 1 \to \pm 1$ is the (inverse) Joukowsky map 
\cite{Joukowsky}:
$$\Juk(z) = z + \sqrt{z^2-1}.$$
By comparing the conformal moduli we obtain the relation 
$\log R = M = \frac{\pi K'(r^2)}{4K(r^2)}$. 

Finally, it remains to determine the behaviour of the pull-back metric at $\pm r$, when starting from the cylindrical metric on the annulus. By symmetry it clearly suffices to only consider the case of $r$. The conformal factor is given by 
$$\hat \Omega^2(r+\eps \omega) = 
\frac{|\Juk'(\psi(r + \eps \omega))
\psi'(r + \eps \omega)|^2}
{\vert\Juk(\psi(r+\eps \omega))\vert^2}.$$

First, expanding carefully each step gives that
$$\psi(r+\eps \omega) = 
1 + \psi'(r)\eps \omega + O(\eps^2),$$
where 
$$\psi'(r) = \frac{\pi^2}{4K(r^2)^2r(1-r^4)}.$$
Here crucially step 4 balances the blow-up of the derivative from step 2 in the above decomposition of the map.

Second, one can calculate that
$$\Juk(1 +z) = 1 + \sqrt{2z} + O(z),$$
and
$$\Juk'(1+z) = 1/\sqrt{2z} + O(1).$$
Putting everything together
we get that
$$|\Juk(\psi(r+\eps \omega))|^2 = 1 + O(\eps^{1/2}),$$
and
$$|\Juk'(\psi(r+\eps \omega))\psi'(r+\eps \omega)|^2 = \frac{\pi^2}{\eps 8K(r^2)^2 r(1-r^4)}+O(\eps^{-1/2}).$$
Thus,
\begin{equation}\label{eq:scaling}
\hat \Omega^2(\pm r +\eps \omega) = \frac{\pi^2}{\eps 8K(r^2)^2 r(1-r^4)}+O(\eps^{-1/2}),
\qedhere
\end{equation}
proving the lemma.
\end{proof}

This allows us to conclude Theorem \ref{thm:twopointtwist} modulo an explicit calculation in the next subsection.

\paragraph{Proof of Theorem \ref{thm:twopointtwist}.}
From the previous lemma and diffeomorphism invariance of the Brownian loop soup, we deduce that to calculate
\begin{equation}
\label{eq:annulus1}    
\lim_{\delta \to 0}\mu^{\rm loop}_{\D, \hat g}
\Big(
\indx_{\gamma}(r)+\indx_{\gamma}(-r)
\text{ odd},
t_{\gamma}>\delta
\Big)$$
it suffices to calculate 
$$
\lim_{\delta \to 0}
\int_{\ann_R^+}
\int_{\delta}^{+\infty}
p_{\ann_R}^{\rm cyl}(t,w,w^{-1})
\dfrac{dt}{t}
\,\dfrac{d^2 w}{\vert w\vert^2},\end{equation}
where by 
$$
\ann_R^+
=
\{ w\in\C \vert \, 1<\vert w\vert < R\}$$
we denote a fundamental domain of the annulus and by $\hat g$ the pull-back metric of the cylindrical metric, under the map described in the lemma.
By $p_{\ann_R}^{\rm cyl}$ we denote the heat kernel on the annulus
$\ann_R$ w.r.t. the cylindrical metric $g_{\rm cyl}$, 
and with $0$ boundary conditions.

We would like to swap the loop measure under metric $\hat g$ for the standard loop measure. Notice that for the cut-off using disks this is a rather simple corollary of Claim \ref{claim:weyl}: using that claim and then Equation \eqref{eq:loopcutdisk}, we can write
\begin{align}\label{eq:toannulus0}
&\lim_{\eps \to 0}\mu^{\rm loop}_{\D, \hat g}\Big(
    \indx_{\gamma}(r)+\indx_{\gamma}(-r)\ \text{odd},\ 
    \gamma \cap \bigl(\D_{\hat g}(r,\eps)\cup \D_{\hat g}(-r,\eps)\bigr)=\emptyset
\Big) \nonumber\\
&\qquad
=\lim_{\eps \to 0}\mu^{\rm loop}_{\D}\Big(
    \indx_{\gamma}(r)+\indx_{\gamma}(-r)\ \text{odd},\ 
    \gamma \cap 
    \Big(\D\big(r, \frac{\eps^2}{4\fconf(r)}\big)
    \cup 
    \D\big(-r, \frac{\eps^2}{4\fconf(r)}\big)\Big)
    =\emptyset
\Big) \nonumber\\
\\
&\qquad
=
-\log \sigma_{\rm tw}^\D(-r,r)
+ \frac{1}{2}\log (\eps^{-1}) 
- \log \log (\eps^{-1})
+\frac{1}{4} \log (\fconf(r))
-\frac{\log 2}{2} + \log(\pi/2) + o(1).
\end{align}
However, the simple calculations that we perform in the next subsection work with the cut-off in time. So we will in fact also need to connect the spatial cut-off for the above loop measure to the cut-off in time. 

To do this, we observe that the localisation argument of Section \ref{subsec:cutoffmanypoints}, i.e. the passage from one point to many, 
works also in the case of this modified metric.
Denote by $g_{\rm cone}$ the metric
$\vert z \vert^{-1}|dz|^2$ (conic singularity at $0$),
which, up to a constant, describes the local behaviour of
$\hat g$ near $\pm r$.
By localisation, we obtain:
\begin{align}
\label{eq:toannulus2}
&\lim_{\eps \to 0}\Big(
    \mu^{\rm loop}_{\D, \hat g}\big(
    \indx_{\gamma}(r)+\indx_{\gamma}(-r)\ \text{odd},\ 
    \gamma \cap \bigl(\D_{\hat g}(r,\eps)\cup \D_{\hat g}(-r,\eps)\bigr)=\emptyset
\big)\nonumber\\
&- \mu^{\rm loop}_{\D, \hat g}\big(
    \indx_{\gamma}(r)+\indx_{\gamma}(-r)\ \text{odd},t_\gamma > \eps^2\big)
    \Big)
    \nonumber\\
    &= 2\lim_{\eps \to 0}
    \Big(
    \mu^{\rm loop}_{\D\setminus
    \overline{\D_{g_{\rm cone}}(0,\eps)}}
(\indx_{\gamma}(0) \text{ odd}) 
- \mu^{\rm loop}_{\D, g_{\rm cone}}
(t_{\gamma}>\eps^2, \indx_{\gamma}(0) \text{ odd})\Big).
\end{align}

Further Claim \ref{claim:weyl} gives us directly the 1-point function for $g_{\rm cone}$ for cut-off in space: 
\begin{equation}
\label{eq:toannulus1pt}
\mu^{\rm loop}
_{\D\setminus\overline{\D_{g_{\rm cone}}(0,\eps)}}
(\indx_{\gamma}(0) \text{ odd})
=
\dfrac{1}{4}
\log(\varepsilon^{-1})
-
\dfrac{1}{2}
\log\log(\varepsilon^{-1})
+\dfrac{\log 2}{4}+
\dfrac{1}{2}\log(\pi/2)
+ o(1),
\end{equation}
where the $\eps$-disk is naturally also w.r.t. the 
$g_{\rm cone}$ metric. 
Further, the cut-off in time for the cone metric is given by Proposition \ref{Prop disk 1pt time conic}. 

Thus, by combining Equations \eqref{eq:annulus1}, \eqref{eq:toannulus0}, \eqref{eq:toannulus2}, \eqref{eq:toannulus1pt} and \eqref{eq:timecutoffconic}, 
we now obtain
$$\log \sigma_{\rm tw}^{\D}(-r,r) = 
-\log 2 +
2\cblsg
+\frac{1}{4}\log\fconf(r) +
\lim_{\eps \to 0}\left(\frac{1}{2}\log \eps^{-1}
-\int_{\ann_R^+}
\int_{\eps^2}^{+\infty}
p_{\ann_R}^{\rm cyl}(t,w,w^{-1})
\dfrac{dt}{t}
\,\dfrac{d^2 w}{\vert w\vert^2}\right),$$
where
$$
\int_{\ann_R^+}
\int_{\eps^2}^{+\infty}
p_{\ann_R}^{\rm cyl}(t,w,w^{-1})
\dfrac{dt}{t}
\,\dfrac{d^2 w}{\vert w\vert^2}
=
\mu^{\rm loop}_{\D, \hat g}\big(
    \indx_{\gamma}(r)+\indx_{\gamma}(-r)\ \text{odd},t_\gamma > \eps^2\big).
$$
Theorem \ref{Thm 2 pts disk R} provides us with the following expression as $\delta \to 0$:
\begin{equation}\label{eq:annuluscalc}
\int_{\ann_R^+}
\int_{\delta}^{+\infty}
p_{\ann_R}^{\rm cyl}(t,w,w^{-1})
\dfrac{dt}{t}
\,\dfrac{d^2 w}{\vert w\vert^2}
=
\dfrac{1}{4}\log(\delta^{-1})
+
\dfrac{1}{2}\log \log (R)
-\dfrac{1}{4}(\log (\pi/4) + \gamma_{\rm EM})
+ o(1),
\end{equation}
where $\gamma_{\rm EM}$ is the Euler–Mascheroni constant. 
This gives us an expression
$$\sigma_{\rm tw}^{\D}(-r,r)
=
\dfrac{1}{2}
\exp\!\left(2\cblsg +\frac{\gamma_{\rm EM}}{4}\right)\,
\left(
\frac{\pi^3}{32\,K(r^2)^2\,r(1-r^4)\,(\log R)^2}
\right)^{1/4},$$
which by using the formula 
$\log R = \dfrac{\pi K'(r^2)}{4 K(r^2)}$
can be simplified to
$$ \sigma_{\rm tw}^{\D}(-r,r)
=
\dfrac{1}{2}
\exp\!\left(2\cblsg +\frac{\gamma_{\rm EM}}{4}\right)\,
\left(
\frac{\pi}{2\,r(1-r^4)\,K'(r^2)^2}
\right)^{1/4}.$$
Recall that in this set-up, 
$\CR(r, \D) = \CR(-r,\D) = 1-r^2$. 
Further, we have by Formula 2.2.3 in \cite{lawden2013elliptic}
$$K' = -(\log q) \theta_3(q)^2/2,$$
and thus combining the relation \eqref{eq:rtheta} with the formulas \eqref{eq:thetasquares} and \eqref{eq:thetadupl}, we can write
$$r(1+r^2)K'(r^2)^2 = \frac{1}{8}\log(q)^2\theta_2(q^{1/2})^2\theta_3(q^{1/2})^2,$$
and obtain
$$ \sigma_{\rm tw}^{\D}(-r,r)
=
\frac{\pi^{1/4}}{4}\exp\!\left(2\cblsg
+\frac{\gamma_{\rm EM}}{4}\right)\,
\frac{1}{(1-r^2)^{1/4}\sqrt{\theta_2(q^{1/2})\theta_3(q^{1/2})|\log q|}}.$$
Now applying Equation \eqref{eq:thetadupl} to write $2\theta_2(q^{1/2})\theta_3(q^{1/2}) = \theta_2(q^{1/4})^2$ and plugging in the value of $\cblsg$ from Proposition \ref{Prop disk 1pt time conic} to get
\[
\frac{\pi^{1/4}}{2\sqrt2}e^{2\cblsg+\gamma_{\rm EM}/4}
=\frac{\pi^{1/4}}{2\sqrt2}\pi^{1/2}2^{-1/4}
=\frac{\pi^{3/4}}{2^{7/4}},
\]
we arrive at the desired formula.

\subsubsection{The calculation in the annulus / strip}

Recall that $p_{\ann_R}^{\rm cyl}(t,w_1,w_2)$ denotes the heat kernel 
on $\ann_R = \{ w\in\C \vert \, R^{-1}<\vert w\vert < R\}$, with $0$ boundary
conditions on $\partial \ann_R$,
with respect to the cylindrical metric
$g_{\rm cyl}(w)=\vert w\vert^{-2}\, d^2 w$,
and associated to the infinitesimal generator
$
\dfrac{\vert w\vert^2}{2} \Delta
$. This means we are working
 with the time-changed Brownian motion according to the cylindrical metric,
killed upon exiting $\ann_R$. A fundamental domain of the double covering above
is given by
$$
\ann_R^+
=
\{ w\in\C \vert \, 1<\vert w\vert < R\}.
$$We then calculate the following asymptotic
expansion.

\begin{theorem}
\label{Thm 2 pts disk R}
Fix $R>1$.
Then, as $\delta\to 0$,
$$
\int_{\ann_R^+}
\int_{\delta}^{+\infty}
p_{\ann_R}^{\rm cyl}(t,w,w^{-1})
\dfrac{dt}{t}
\,\dfrac{d^2 w}{\vert w\vert^2}
=
\dfrac{1}{4}\log(\delta^{-1})
+
\dfrac{1}{2}\log \log(R)
-\dfrac{1}{4}(\log (\pi/4) + \gamma_{\rm EM})
+ o(1),
$$
where $\gamma_{\rm EM}$ is the Euler–Mascheroni constant.
\end{theorem}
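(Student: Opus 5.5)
We pass to logarithmic coordinates, where the cylindrical metric becomes flat and the heat kernel factorises. Write $w=e^{x+iy}$ and set $L=\log R$. Then $\ann_R$ with $g_{\rm cyl}$ is isometric to the flat cylinder $(-L,L)\times(\R/2\pi\Z)$, with $d^2w/|w|^2=dx\,dy$ and generator $\frac12(\partial_x^2+\partial_y^2)$. The covering involution $w\mapsto w^{-1}$ becomes $(x,y)\mapsto(-x,-y)$, and $\ann_R^+$ becomes $(0,L)\times[0,2\pi)$. The Dirichlet heat kernel then factorises as
$$p^{\rm cyl}_{\ann_R}(t,(x,y),(x',y'))=p_{(-L,L)}(t,x,x')\,p_{\R/2\pi\Z}(t,y,y').$$

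\textbf{Angular integral.} We first integrate out the angular variable. We will show that
$$\int_0^{2\pi}p_{\R/2\pi\Z}(t,y,-y)\,dy=1 \qquad\text{for every } t>0.$$
To see this, write the kernel as $\frac1{2\pi}\sum_{m\in\Z}e^{-m^2t/2}e^{2imy}$; only the $m=0$ term survives the integration. Equivalently, substitute $u=2y$, which covers the circle twice.

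\textbf{Radial integral as a twisted trace.} By the $x\mapsto -x$ symmetry, the remaining quantity is
$$\int_0^L p_{(-L,L)}(t,x,-x)\,dx=\tfrac12\,{\rm Tr}\big(e^{t\Delta/2}\circ\mathcal R\big),$$
where $\mathcal R$ is the reflection $x\mapsto-x$. The Dirichlet eigenfunctions on $(-L,L)$ are $\sin\big(\frac{n\pi(x+L)}{2L}\big)$, $n\ge1$, with eigenvalues $\frac{n^2\pi^2}{8L^2}$ for $-\frac12\Delta$. Their parity under $\mathcal R$ is $(-1)^{n+1}$. Setting $a=\pi^2/(8L^2)$, this gives
$$\int_{\ann_R^+}\int_\delta^\infty p^{\rm cyl}_{\ann_R}(t,w,w^{-1})\frac{dt}{t}\frac{d^2w}{|w|^2}=\frac12\int_\delta^\infty f(t)\,\frac{dt}{t},\qquad f(t)=\sum_{n\ge1}(-1)^{n+1}e^{-an^2t}=\frac{1-\theta_4(e^{-at})}{2}.$$

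\textbf{Small-$t$ asymptotics via Mellin transform.} By Jacobi's transformation, $\theta_4(e^{-at})$ decays like $e^{-c/t}$ as $t\to0$, so $f(t)=\frac12+O(e^{-c/t})$. For large $t$, $f$ decays exponentially. We then repeat the Mellin argument of Lemma \ref{lem:hkexpansion}. We have
$$\Gamma(s)^{-1}\int_0^\infty t^{s-1}f(t)\,dt=a^{-s}\eta(2s)=:F(s),$$
where $\eta$ is the Dirichlet eta function, and $F$ is entire near $0$. The same argument as there yields
$$\int_\delta^\infty f(t)\frac{dt}{t}=F(0)\big(\log\delta^{-1}-\gamma_{\rm EM}\big)+F'(0)+o(1).$$
We insert $\eta(0)=\frac12$ and $\eta'(0)=\frac12\log(\pi/2)$, so that $F(0)=\frac12$ and $F'(0)=-\frac12\log a+\log(\pi/2)$. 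Substituting $a=\pi^2/(8(\log R)^2)$ then produces $\frac14\log\delta^{-1}+\frac12\log\log R-\frac{\gamma_{\rm EM}}4$ plus an explicit absolute constant.

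\textbf{Expected obstacle.} The leading term, the $\log\log R$ dependence and the $\gamma_{\rm EM}$ term come out directly. The main risk lies in the additive absolute constant, which must equal $-\frac14\log(\pi/4)$. A quick pass with the conventions above gives $\frac14\log 2$ instead, so the bookkeeping has to be redone carefully at two places. The first is the normalisation of $\mu^{\rm loop}$: the factor $p_D(t,z,z)\,dt/t$ and whether the time parametrisation matches generator $\frac12\Delta$ rather than $\Delta$. The second is the normalisation of the angular circle kernel under the cylindrical metric. If a discrepancy persists, I would recheck this against the one-point flat-cone computation of Proposition \ref{Prop disk 1pt time conic}. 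That computation is a local version of exactly this trace: a neighbourhood of a fixed point of $w\mapsto w^{-1}$ is a $g_{\rm cone}$-disk. Matching the two serves as an internal consistency test for the constant.
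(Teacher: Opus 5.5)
Your argument is correct, and so is the constant $\frac14\log 2-\frac14\gamma_{\rm EM}$ you obtained. The constant $-\frac14(\log(\pi/4)+\gamma_{\rm EM})$ in the statement is the one that is wrong, so your bookkeeping needs no repair. Up to the reflected trace $\frac12\sum_{n\ge1}(-1)^{n+1}e^{-an^2t}$ you follow the paper; it lifts to the strip and sums over deck translations instead of using the circle kernel, which is the same thing.

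The paper then proceeds as follows:
\begin{itemize}
\item it rescales to $(-1,1)$ and rewrites the integral as $\frac12\sum_{n\ge0}\int_{(2n+1)^2c}^{(2n+2)^2c}e^{-t}\frac{dt}{t}$, with $c=\pi^2\delta/8$;
\item it splits off $\frac14\int_c^\infty e^{-t}\frac{dt}{t}=\frac14\log\delta^{-1}-\frac14\log\frac{\pi^2}{8}-\frac{\gamma_{\rm EM}}{4}+o(1)$;
\item it evaluates the remaining alternating sum through Lemma \ref{Lem lim sum diff} and Wallis' product.
\end{itemize}
That lemma loses a factor $2$. Because the endpoints are squares,
\[
\int_{(2n+1)^2c}^{(2n+2)^2c}\frac{dt}{t}-\int_{(2n+2)^2c}^{(2n+3)^2c}\frac{dt}{t}
=2\log\frac{2n+2}{2n+1}-2\log\frac{2n+3}{2n+2}
=2\log\frac{(2n+2)^2}{(2n+1)(2n+3)}.
\]
So the limit in \eqref{Eq conv n delta int log} is $2\log(\pi/2)$, not $\log(\pi/2)$. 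The constant is therefore $-\frac14\log\frac{\pi^2}{8}-\frac{\gamma_{\rm EM}}{4}+\frac12\log\frac{\pi}{2}=\frac14\log 2-\frac{\gamma_{\rm EM}}{4}$. This is exactly what your value $\eta'(0)=\frac12\log(\pi/2)$ produces, since the Wallis sum is $2\eta'(0)$.

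A numerical check agrees. Take $\log R=1$ and $\pi^2\delta/8=10^{-2}$. The left-hand side, $\frac12\sum_{n\ge1}(-1)^{n+1}\int_{n^2\pi^2\delta/8}^{\infty}e^{-t}\frac{dt}{t}$, equals $1.23278$ to five decimals. Your formula gives the same value, whereas the stated one gives $1.11988$.

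Downstream, this only shifts $\log\sigma^{\D}_{\rm tw}(-r,r)$ by the $r$-independent amount $-\frac14\log(\pi/2)$. It changes the absolute prefactor in Theorem \ref{thm:twopointtwist}, not any functional dependence.

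Compared with the paper, your Mellin/$\eta$ evaluation replaces three ingredients: the pairing of consecutive terms, the truncation at $N_\delta$ in Lemma \ref{Lem lim sum diff}, and Wallis' formula. In their place you use the same zeta-regularisation as Lemma \ref{lem:hkexpansion}. Since $f(t)-\frac12$ is exponentially small as $t\to0$ by Jacobi's transformation, your route even gives an exponentially small remainder; the paper's route is more elementary.

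The re-checks you planned would not have moved the constant:
\begin{itemize}
\item The statement is a bare heat-kernel integral, with generator $\frac{|w|^2}{2}\Delta$ and reference measure $d^2w/|w|^2$, so no normalisation of $\mu^{\rm loop}$ enters.
\item The angular integral equals $1$ whatever the time normalisation, because only the zero mode survives.
\item The flat-cone computation of Proposition \ref{Prop disk 1pt time conic} only probes the coefficient of $\log\delta^{-1}$. The finite part is global; it is where $\frac12\log\log R$ comes from, and the local cone structure at the branch points is the same for every $R$.
\end{itemize}
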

The calculation passes by the vertical strip 
$$
\strip_R
=
\{ z\in\C \vert \, -\log(R) <\Re(z) < \log(R)\}.
$$
The exponential map $z\mapsto e^z$
induces the universal covering of $\ann_R$
by $\strip_R$.
It sends the euclidean metric $d^2 z$ on $\strip_R$
to the cylindrical metric
$\vert w\vert^{-2}\, d^2 w$ on $\ann_R$.
A fundamental domain in $\strip_R$
for the double-covering is given by
the rectangle
$$
\cD
= (0,\log(R)) \times (0, 2\pi)\, .
$$
Denote by $p_{\strip_R}(t,z_1,z_2)$ the heat kernel 
on $\strip_R$, with $0$ boundary
conditions on $\partial \strip_R$,
with respect to the euclidean metric
$d^2 z$,
and associated to the infinitesimal generator
$\frac{1}{2}\Delta$
(standard Brownian motion
killed upon exiting $\strip_R$).

Then, for every $\delta>0$,
\begin{equation}
\label{Eq mass neg hol delta}
\int_{\ann_R^+}
\int_{\delta}^{+\infty}
p_{\ann_R}^{\rm cyl}(t,w,w^{-1})
\dfrac{dt}{t}
\,\dfrac{d^2 w}{\vert w\vert^2}
=
\sum_{k\in\Z}
\int_{\cD}
\int_{\delta}^{+\infty}
p_{\strip_R}(t,z,-z + 2i k \pi)
\dfrac{dt}{t}
\,d^2 z
\, .
\end{equation}
In the strip the heat kernel has a nice factorisation. Denote by $I_R$ the interval
$I_R = (-\log(R),\log(R))$,
so that $\strip_R = I_R \times \R$.
Denote $p_\R(t,x,y)$,
respectively $p_{I_R}(t,x,y)$,
the heat kernel associated to the standard 
one-dimensional Brownian
motion on $\R$,
respectively on $I_R$ killed upon exiting the interval.
Then the heat kernel on $\strip_R$
factorizes in an obvious way:
$$
p_{\strip_R}(t,z_1,z_2)
=
p_{I_R}(t,\Re(z_1),\Re(z_2))
\,
p_\R(t,\Im(z_1),\Im(z_2)).
$$
Therefore,
\eqref{Eq mass neg hol delta} equals
$$
\sum_{k\in\Z}
\int_{0}^{\log R}
dx
\int_{0}^{2\pi}
dy
\int_{\delta}^{+\infty}
p_{I_R}(t,x,-x)
\,
p_\R(t,y,-y + 2 k \pi)
\dfrac{dt}{t}\, .
$$
By translation invariance,
$$
p_\R(t,y,-y + 2 k \pi)
=
p_\R(t,0,-2y + 2 k \pi).
$$
Further, for every $t>0$,
$$
\sum_{k\in\Z}
\int_{0}^{2\pi}
p_\R(t,0,-2y + 2 k \pi) dy
=
2 \int_{\R}
p_\R(t,0,2y) dy \, = 1.
$$
Therefore, \eqref{Eq mass neg hol delta} equals
$$
\int_{0}^{\log R}
\int_{\delta}^{+\infty}
p_{I_R}(t,x,-x)
\dfrac{dt}{t}\, 
dx.
$$
Now we perform a change in scale and time
$x = u \log(R)$,
$t=s(\log R)^2$,
and get
$$
\int_{0}^{\log R}
\int_{\delta}^{+\infty}
p_{I_R}(t,x,-x)
\dfrac{dt}{t}\, 
dx
=
\int_{0}^{1}
\int_{t(\delta,R)}^{+\infty}
p_{(-1,1)}(s,u,-u)
\dfrac{ds}{s}\, 
du,
$$
where
$$
t(\delta,R)
=
\dfrac{\delta}{(\log R)^2}.
$$
So we reduced to obtaining the expansion of
\begin{equation}
\label{Eq int -1 1 delta}
\int_{0}^{1}
\int_{\delta}^{+\infty}
p_{(-1,1)}(t,x,-x)
\dfrac{dt}{t}
\, dx
\end{equation}
as $\delta\to 0$.

Further, we write $p_{(-1,1)}(t,x,-x)$
via the Fourier decomposition of the
heat kernel on $(-1,1)$:
$$
p_{(-1,1)}(t,x,-x)
=
\sum_{n=0}^{+\infty}
\cos\Big(\dfrac{(2n+1)\pi}{2}\,x\Big)^{2}
e^{-(2n+1)^{2}\pi^{2} t/8}
-
\sum_{n=1}^{+\infty}
\sin(n\pi\,x)^{2}
e^{-n^{2}\pi^{2} t/2}
\,.
$$
But for every $n\geq 0$ and $m\geq 1$,
$$
\int_{0}^{1}
\cos\Big(\dfrac{(2n+1)\pi}{2}\,x\Big)^{2}
\, dx
=
\int_{0}^{1}
\sin(m\pi\,x)^{2}
\,
dx
= \dfrac{1}{2}.
$$
Therefore,
\eqref{Eq int -1 1 delta} equals
$$
\dfrac{1}{2}
\Big(
\sum_{n=0}^{+\infty}
\int_{\delta}^{+\infty}
e^{-(2n+1)^{2}\pi^{2} t/8}
\,
\dfrac{dt}{t}
-
\sum_{n=1}^{+\infty}
\int_{\delta}^{+\infty}
e^{-n^{2}\pi^{2} t/2}
\,
\dfrac{dt}{t}
\Big)\,.
$$
Then, for every $a>0$ and $\delta>0$,
$$
\int_{\delta}^{+\infty}
e^{-at} \dfrac{dt}{t}
=
\int_{a\delta}^{+\infty}
e^{-t} \dfrac{dt}{t}
\, .
$$
Therefore,
\eqref{Eq int -1 1 delta} equals
$$
\dfrac{1}{2}
\Big(
\sum_{n=0}^{+\infty}
\int_{(2n+1)^{2}\pi^{2}\delta/8}^{+\infty}
e^{- t}
\,
\dfrac{dt}{t}
-
\sum_{n=1}^{+\infty}
\int_{n^{2}\pi^{2}\delta /2}^{+\infty}
e^{- t}
\,
\dfrac{dt}{t}
\Big)\, ,
$$
which can be further recast as
$$
\dfrac{1}{2}
\sum_{n=0}^{+\infty}
\int_{(2n+1)^{2}\pi^{2}\delta/8}^{(2n+2)^{2}\pi^{2}\delta/8}
e^{- t}
\,
\dfrac{dt}{t}\, .
$$
This in turn equals
$$
\dfrac{1}{4}
\int_{\pi^{2}\delta/8}^{+\infty}
e^{- t}
\,
\dfrac{dt}{t}
\,
+
\dfrac{1}{4}
\sum_{n=0}^{+\infty}
\Big(
\int_{(2n+1)^{2}\pi^{2}\delta/8}^{(2n+2)^{2}\pi^{2}\delta/8}
e^{- t}
\,
\dfrac{dt}{t}
\,
-
\int_{(2n+2)^{2}\pi^{2}\delta/8}^{(2n+3)^{2}\pi^{2}\delta/8}
e^{- t}
\,
\dfrac{dt}{t}
\Big).
$$

We will study the two terms above separately.
For the first one,
we have
\begin{align*}
\dfrac{1}{4}
\int_{\pi^{2}\delta/8}^{+\infty}
e^{- t}
\,
\dfrac{dt}{t}
&=
\dfrac{1}{4}
\int_{\pi^{2}\delta/8}^{1}
e^{- t}
\,
\dfrac{dt}{t}
\,
+
\dfrac{1}{4}
\int_{\1}^{+\infty}
e^{- t}
\,
\dfrac{dt}{t}
\\
&=
\dfrac{1}{4}
\log(\delta^{-1})
-
\dfrac{1}{4}
\log\Big(\dfrac{\pi^2}{8}\Big)
-
\dfrac{1}{4}
\int_{0}^{1}
\dfrac{1-e^{-t}}{t}
\, dt\,
+
\dfrac{1}{4}
\int_{\1}^{+\infty}
e^{- t}
\,
\dfrac{dt}{t}
\,
+ o(1)\,.
\end{align*}

Now let us deal with the second term.

\begin{lemma}
\label{Lem lim sum diff}
The following convergence holds:
\begin{equation}
\label{Eq conv n delta int log}
\lim_{\delta\to 0}
\sum_{n=0}^{+\infty}
\Big(
\int_{(2n+1)^{2}\pi^{2}\delta/8}^{(2n+2)^{2}\pi^{2}\delta/8}
e^{- t}
\,
\dfrac{dt}{t}
\,
-
\int_{(2n+2)^{2}\pi^{2}\delta/8}^{(2n+3)^{2}\pi^{2}\delta/8}
e^{- t}
\,
\dfrac{dt}{t}
\Big)
=
\sum_{n=0}^{+\infty}
\log\Big(
\dfrac{(2n+2)^2}{(2n+1)(2n+3)}
\Big)
.
\end{equation}
\end{lemma}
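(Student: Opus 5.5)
The plan is to write each summand as a single integral of $e^{-t}/t$ and let $\delta\to0$ term by term, justified by a uniform summable bound. Set $a_m=m^2\pi^2\delta/8$. The $n$-th summand is
$$
T_n(\delta)=\int_{a_{2n+1}}^{a_{2n+2}}e^{-t}\frac{dt}{t}-\int_{a_{2n+2}}^{a_{2n+3}}e^{-t}\frac{dt}{t}.
$$
Substituting $t=a_{2n+2}s$ maps both integrals onto intervals with fixed endpoints $(\tfrac{2n+1}{2n+2})^2$, $1$ and $(\tfrac{2n+3}{2n+2})^2$. The kernel becomes $e^{-a_{2n+2}s}\,ds/s$.

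For each fixed $n$, $T_n(\delta)\to\int_{((2n+1)/(2n+2))^2}^1\frac{ds}{s}-\int_1^{((2n+3)/(2n+2))^2}\frac{ds}{s}$ as $\delta\to0$. This limit equals
$$
2\log\frac{2n+2}{2n+1}-2\log\frac{2n+3}{2n+2}=2\log\frac{(2n+2)^2}{(2n+1)(2n+3)}.
$$
This has an extra factor $2$ compared with the displayed right-hand side. I will recheck that factor against the subsequent use in the paper. The structure of the argument does not depend on it.

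The main obstacle is exchanging the limit with the infinite sum, since the number of relevant terms grows like $\delta^{-1/2}$. I would avoid dominated convergence over $n$ for fixed $\delta$ and instead bound each term uniformly in $\delta$. Write $T_n(\delta)=\int_{I_1}g-\int_{I_2}g$ with $g(t)=e^{-t}/t$, where $I_1=[a_{2n+1},a_{2n+2}]$ and $I_2=[a_{2n+2},a_{2n+3}]$. Set $L_1=\log(a_{2n+2}/a_{2n+1})$ and $L_2=\log(a_{2n+3}/a_{2n+2})$. Since $e^{-t}$ is decreasing, $\int_{I_1}g\le L_1e^{-a_{2n+1}}$ and $\int_{I_2}g\ge L_2e^{-a_{2n+3}}$.

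I would split $T_n(\delta)=\big(\int_{I_1}g-L_1e^{-a_{2n+2}}\big)+(L_1-L_2)e^{-a_{2n+2}}-\big(\int_{I_2}g-L_2e^{-a_{2n+2}}\big)$. The first and last brackets are each bounded by $L_i\,|e^{-a_{2n+1}}-e^{-a_{2n+3}}|\le L_i(a_{2n+3}-a_{2n+1})e^{-a_{2n+1}}$. Since $L_i=O(1/n)$ and $a_{2n+3}-a_{2n+1}\asymp n\delta$, these brackets are $O(\delta e^{-c n^2\delta})$. Summing over $n$ gives $O(\delta\cdot\delta^{-1/2})=O(\sqrt\delta)\to0$.

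For the middle term, $L_1-L_2=\log\frac{(2n+2)^2}{(2n+1)(2n+3)}\cdot 2\ge0$ is $O(n^{-2})$ and summable. Also $e^{-a_{2n+2}}\to1$ monotonically as $\delta\to0$, so monotone convergence gives $\sum_n(L_1-L_2)e^{-a_{2n+2}}\to\sum_n(L_1-L_2)$. Combining the two estimates yields the limit, and the series converges; by the Wallis product it equals $2\log(\pi/2)$ up to the factor convention. This is what feeds the constant $-\tfrac14\log(\pi/4)$ in Theorem \ref{Thm 2 pts disk R}.
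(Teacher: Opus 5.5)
Your factor of $2$ is right, and it is not a convention: the lemma as displayed is false. The endpoint ratios are perfect squares, so the $n$-th term tends to
$\log\frac{(2n+2)^2}{(2n+1)^2}-\log\frac{(2n+3)^2}{(2n+2)^2}=2\log\frac{(2n+2)^2}{(2n+1)(2n+3)}$.
For $n=0$ this is $\log\frac{16}{9}$, not $\log\frac43$. The paper's proof makes this slip in its first display, and the correct limit is $2\log(\pi/2)$.

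Your closing sentence therefore needs revising. With the correct value, the constant in Theorem \ref{Thm 2 pts disk R} becomes
\[
-\tfrac14\log\tfrac{\pi^2}{8}-\tfrac{\gamma_{\rm EM}}{4}+\tfrac12\log\tfrac{\pi}{2}=\tfrac14\bigl(\log 2-\gamma_{\rm EM}\bigr)
\]
rather than $-\frac14(\log(\pi/4)+\gamma_{\rm EM})$. An independent Mellin-transform expansion of $\frac12\sum_{m\ge1}(-1)^{m+1}\int_{m^2\pi^2\delta/8}^{\infty}e^{-t}\frac{dt}{t}$, using $\eta'(0)=\frac12\log\frac{\pi}{2}$ for the alternating zeta function, gives the same constant. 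Through the reduction leading to Theorem \ref{thm:twopointtwist}, this correction only multiplies the absolute prefactor there by $(2/\pi)^{1/4}$; the $q$-dependence is untouched.

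Apart from this, your argument is correct, and it handles the interchange of limit and sum differently from the paper.

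\textbf{The paper's route.} It truncates at some $N_\delta$ with $\delta^{-1/2}\ll N_\delta\ll\delta^{-2/3}$.
\begin{itemize}
\item The tail $n>N_\delta$ is controlled by $|a_{n,\delta}|\le\frac{C}{n}e^{-cn^2\delta}$.
\item For $n\le N_\delta$, it writes $a_{n,\delta}-a_{n,0}$ as a difference of integrals of $f(t)=(1-e^{-t})/t$ over the two adjacent intervals and compares them by a shift. This gives a total error $O(N_\delta^3\delta^2+N_\delta\delta)$.
\end{itemize}
That expression for $a_{n,\delta}-a_{n,0}$ holds only with your corrected $a_{n,0}$. So the paper's estimates go through once the factor is fixed.

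\textbf{Your route.} You isolate the nonnegative, summable main term $(L_1-L_2)e^{-a_{2n+2}}$, which converges by monotone convergence. You then show both remainders are $O(\delta e^{-cn^2\delta})$ uniformly in $n$, so they sum to $O(\sqrt\delta)$.

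This needs no truncation window and is shorter. The paper's version is quantitative, but yours becomes so with one more line, since
$\sum_n(L_1-L_2)(1-e^{-a_{2n+2}})\le C\sum_n(n+1)^{-2}\min\bigl(1,(n+1)^2\delta\bigr)=O(\sqrt\delta)$.
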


\begin{proof}
First note that
$$
\log\Big(
\dfrac{(2n+2)^2}{(2n+1)(2n+3)}
\Big)
=
\int_{(2n+1)^{2}\pi^{2}\delta/8}^{(2n+2)^{2}\pi^{2}\delta/8}
\dfrac{dt}{t}
\,
-
\int_{(2n+2)^{2}\pi^{2}\delta/8}^{(2n+3)^{2}\pi^{2}\delta/8}
\dfrac{dt}{t}\, ,
$$
and the value does not depend on $\delta$.
It is also
$$
\lim_{\delta\to 0}
\Big(
\int_{(2n+1)^{2}\pi^{2}\delta/8}^{(2n+2)^{2}\pi^{2}\delta/8}
e^{- t}
\,
\dfrac{dt}{t}
\,
-
\int_{(2n+2)^{2}\pi^{2}\delta/8}^{(2n+3)^{2}\pi^{2}\delta/8}
e^{- t}
\,
\dfrac{dt}{t}
\Big).
$$
Also,
$$
\dfrac{(2n+2)^2}{(2n+1)(2n+3)}
= 1 + O\Big(\dfrac{1}{n^2}\Big),
$$
so the series
$$
\sum_{n=0}^{+\infty}
\log\Big(
\dfrac{(2n+2)^2}{(2n+1)(2n+3)}
\Big)
$$
is absolutely convergent.

In the sequel, to simplify the notations, we will denote
$$
a_{n,\delta}
=
\int_{(2n+1)^{2}\pi^{2}\delta/8}^{(2n+2)^{2}\pi^{2}\delta/8}
e^{- t}
\,
\dfrac{dt}{t}
\,
-
\int_{(2n+2)^{2}\pi^{2}\delta/8}^{(2n+3)^{2}\pi^{2}\delta/8}
e^{- t}
\,
\dfrac{dt}{t},
\qquad
a_{n,0}
= \log\Big(
\dfrac{(2n+2)^2}{(2n+1)(2n+3)}
\Big).
$$
We will also introduce integers $N_\delta$
depending on $\delta$,
with $N_\delta\to +\infty$
as $\delta\to 0$,
such that
\begin{equation}
\label{Eq lim sum n leq N delta}
\lim_{\delta\to 0}
\sum_{n=0}^{N_\delta}
\vert a_{n,\delta}-a_{n,0}\vert
= 0
\end{equation}
and
\begin{equation}
\label{Eq lim sum n geq N delta}
\lim_{\delta\to 0}
\sum_{n=N_\delta + 1}^{+\infty}\vert a_{n,\delta}\vert = 0.
\end{equation}
The existence of such a family $(N_\delta)_{\delta>0}$
guarantees the convergence \eqref{Eq conv n delta int log}.
For clarity, we will first derive a set of sufficient conditions for $(N_\delta)_{\delta>0}$,
and then give $(N_\delta)_{\delta>0}$
satisfying all these conditions.

First of all, we have a trivial bound
$$
\vert a_{n,\delta} \vert
\leq \dfrac{C}{n}e^{-c n^{2}\delta}
$$
for constants $c,C>0$
depending neither on $n\geq 1$ nor $\delta>0$.
Then,
$$
\sum_{n=N_\delta + 1}^{+\infty}\vert a_{n,\delta}\vert
\leq
\dfrac{C}{N_\delta}\sum_{n=N_\delta + 1}^{+\infty}
e^{-c \delta N_\delta n }
\leq
C\dfrac{e^{-c \delta N_\delta^2 }}{N_\delta (1-e^{-c \delta N_\delta})}\, .
$$
Therefore,
to get \eqref{Eq lim sum n geq N delta}
it suffices that
$N_\delta\gg\delta^{-1/2}$.

Now, let $f(t)$ be the function
$$
f(t) = \dfrac{1-e^{-t}}{t} \, .
$$
It extends to an analytic function on $\C$.
The derivative is given by
$$
f'(t) = - \dfrac{1-e^{-t} - te^{-t}}{t^2} \, ,
$$
and it is bounded on $[0,+\infty)$.
We will denote by
$\Vert f\Vert_{\infty}$
and $\Vert f'\Vert_{\infty}$
the sup norms of $f$, respectively $f'$,
on $[0,+\infty)$.
In this way,
\begin{align*}
\vert a_{n,\delta}-a_{n,0}\vert
&=
\Big\vert
\int_{(2n+1)^{2}\pi^{2}\delta/8}^{(2n+2)^{2}\pi^{2}\delta/8}
f(t)\,dt
\,
-
\int_{(2n+2)^{2}\pi^{2}\delta/8}^{(2n+3)^{2}\pi^{2}\delta/8}
f(t)\,dt
\Big\vert
\\
&\leq
\int_{(2n+1)^{2}\pi^{2}\delta/8}^{(2n+2)^{2}\pi^{2}\delta/8}
\vert f(t) - f(t+n\pi^2 \delta/2 +3\pi^2\delta/8)\vert\, dt
\,
+
\int_{\substack{(2n+2)^{2}\pi^{2}\delta/8\\+n\pi^2 \delta/2 +3\pi^2\delta/8}}
^{(2n+3)^{2}\pi^{2}\delta/8}
f(t)\, dt
\\
&\leq(n\pi^2 \delta/2 +3\pi^2\delta/8)^2
\Vert f'\Vert_{\infty}
+
(3\pi^2\delta/4)
\Vert f\Vert_{\infty}\, .
\end{align*}
So, for $n\leq N_\delta$,
$$
\vert a_{n,\delta}-a_{n,0}\vert
\leq 
(N_\delta\pi^2 \delta/2 +3\pi^2\delta/8)^2
\Vert f'\Vert_{\infty}
+
(3\pi^2\delta/4)
\Vert f\Vert_{\infty}\, ,
$$
and
$$
\sum_{n=0}^{N_\delta}
\vert a_{n,\delta}-a_{n,0}\vert
\leq
(N_\delta + 1)
((N_\delta\pi^2 \delta/2 +3\pi^2\delta/8)^2
\Vert f'\Vert_{\infty}
+
(3\pi^2\delta/4)
\Vert f\Vert_{\infty})\,.
$$
So to get \eqref{Eq lim sum n leq N delta},
it suffices that
$N_\delta^3 \delta^2 = o(1)$
and $N_\delta \delta = o(1)$,
that is to say
$N_\delta \ll \delta^{-2/3}$.
So to get both \eqref{Eq lim sum n leq N delta}
and \eqref{Eq lim sum n geq N delta}
it suffices to take $N_\delta$ such that
$\delta^{-1/2}\ll N_\delta \ll \delta^{-2/3}$,
for instance
$N_\delta = \lfloor \delta^{-7/12}\rfloor$.
\end{proof}
By combining everything we obtain
$$
\int_{\ann_R^+}
\int_{\delta}^{+\infty}
p_{\ann_R}^{cyl}(t,w,w^{-1})
\dfrac{dt}{t}
\,\dfrac{d^2 w}{\vert w\vert^2}
=
\dfrac{1}{4}\log(\delta^{-1})
+
\dfrac{1}{2}\log\log(R)
+ C
+ o(1),
$$
where
$$
C
=
-
\dfrac{1}{4}
\log\Big(\dfrac{\pi^2}{8}\Big)
-
\dfrac{1}{4}
\int_{0}^{1}
\dfrac{1-e^{-t}}{t}
\, dt\,
+
\dfrac{1}{4}
\int_{1}^{+\infty}
e^{- t}
\,
\dfrac{dt}{t}
\,
+
\dfrac{1}{4}
\sum_{n=0}^{+\infty}
\log\Big(
\dfrac{(2n+2)^2}{(2n+1)(2n+3)}
\Big)
.
$$
First note that
$$
\gamma_{\rm EM}
=
\int_{0}^{1}
\dfrac{1-e^{-t}}{t}
\, dt\,
-
\int_{1}^{+\infty}
e^{- t}
\,
\dfrac{dt}{t}\, .
$$
Further, by Wallis' formula 
(see e.g. \cite{WallisArithmeticaInfinitorum,Moll23WallisIntegral}),
$$
\sum_{n=0}^{+\infty}
\log\Big(
\dfrac{(2n+2)^2}{(2n+1)(2n+3)}
\Big)
=
\log(\pi/2).
$$
So actually,
$
C = -\frac{1}{4}(\log (\pi/4) + \gamma_{\rm EM}),
$
and the theorem follows.

\subsection{Twist fields with constant non-zero boundary conditions}
In this section we will introduce the twist field 
correlation functions for the GFF with non-zero boundary condition. 
In the case of two points, introducing boundary conditions is an ad-hoc way to talk about twist fields for the compactified free field - we will properly handle compactified twisted free fields in an upcoming work. Thus we will remain brief and restrict ourselves to two points and state the results used in our calculation in one theorem. \\

\noindent Recall the notation $D_{\varepsilon_1,\varepsilon_2}(z_1,z_2)$
for the domain with two holes
\begin{equation}
\label{Eq D eps1 eps2 z1 z2}
D_{\varepsilon_1,\varepsilon_2}(z_1,z_2)
=D\setminus(\overline{\D(z_1,\varepsilon_1)}
\cup
\overline{\D(z_2,\varepsilon_2)}).
\end{equation}
Also recall that for $\gamma$ an excursion
from $\partial D$ to $\partial D$ inside $D$,
the parity of the sum of indices
$\indx_\gamma(z_1)+\indx_\gamma(z_2)$
is well defined;
see Section \ref{Subsec topo}.

\begin{theorem}[Two-point correlation of twist field with boundary data]
\label{thm:twistbc}
The twist field two-point correlation 
$\sigma_{\rm tw}^{D,v}(z_{1}, z_{2})$ for constant boundary condition $v \in \R$ can be defined in the following equivalent ways.
\begin{enumerate}[i)]

\item
As $\max(\eps_1,\eps_2)\to 0$, we consider the limit of
\begin{equation}\label{eqdef:twistbc}
\begin{aligned}
&\frac{\pi/2}{(\varepsilon_1\varepsilon_2)^{1/8}\sqrt{|\log \varepsilon_1|\,|\log \varepsilon_2|}} 
\exp\Bigg[
-\mu^{\rm loop}_{D_{\varepsilon_1,\varepsilon_2}(z_1,z_2)}
\bigl(\indx_\gamma(z_1)+\indx_\gamma(z_2)\text{ odd}\bigr) \\
&\qquad\qquad\qquad\qquad\qquad\qquad
-v^2\mu^{\rm exc}_{D}
\Bigl(
\indx_\gamma(z_1)+\indx_\gamma(z_2)\text{ odd },\,
\gamma\subset D_{\varepsilon_1,\varepsilon_2}(z_1,z_2)
\Bigr)
\Bigg].
\end{aligned}
\end{equation}
\item
Consider
$\cL_{D_{\varepsilon_1,\varepsilon_2}(z_1,z_2)}$
the Brownian loop soup in
$D_{\varepsilon_1,\varepsilon_2}(z_1,z_2)$
with intensity
$\frac{1}{2}\mu^{\rm loop}_{D_{\varepsilon_1,\varepsilon_2}(z_1,z_2)}$.
For $v\geq 0$, denote by
$\Xi^{v^2/2}_{D, D_{\varepsilon_1,\varepsilon_2}(z_1,z_2)}$
the Poisson point process of boundary excursions in
$D$ with intensity
\[
\dfrac{v^2}{2}\,
\1_{\{\gamma\subset D_{\varepsilon_1,\varepsilon_2}(z_1,z_2)\}}
\mu^{\rm exc}_{D}(d\gamma).
\]
As $\max(\eps_1,\eps_2)\to 0$, we consider the limit of
\begin{equation}\label{def:twist3}
\frac{\pi/2}{(\eps_1\eps_2)^{1/8}\sqrt{|\log \eps_1|\,|\log \eps_2|}}
\P\Big(
\text{No cluster of }
\cL_{D_{\varepsilon_1,\varepsilon_2}(z_1,z_2)}
\cup
\Xi^{v^2/2}_{D, D_{\varepsilon_1,\varepsilon_2}(z_1,z_2)}
\text{ disconnects } z_1 \text{ from } z_2
\Big).
\end{equation}
\end{enumerate}
\noindent Moreover, for all $v\in\R$, its value is equal to
\begin{equation}\label{eq:twistbnd}
\sigma_{\rm tw}^{D,v}(z_1,z_2)
=
\sigma_{\rm tw}^{D}(z_1,z_2)
\exp\bigl(-v^2 \Modd_D(z_1,z_2)\bigr),
\end{equation}
where $\Modd_D(z_1,z_2) = \mu^{\rm exc}_{D}
\bigl(\indx_\gamma(z_1)+\indx_\gamma(z_2)\text{ odd}\bigr)$.
Further, $\exp(-\frac{\pi}{4}\Modd_D(z_1,z_2)) = \hat q$,
where $\hat q$ is the complementary nome to nome $q$ (i.e. $\log q \log \hat q = \pi^2$), that in turn is by Lemma \ref{lem:parametr} uniquely defined via 
$$\exp\Big(\pi G_D(z_1,z_2)\Big) 
= \dfrac{\theta_3(q^{1/2})}{\theta_2(q^{1/2})}.$$
\end{theorem}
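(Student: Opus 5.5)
The plan is to prove the equivalence of i) and ii) first, then the factorisation \eqref{eq:twistbnd}, and finally compute $\Modd_D(z_1,z_2)$ explicitly through the branched double cover of Lemma \ref{lemma:cover}.

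\textbf{Equivalence of i) and ii).} We may assume $v\geq 0$, since everything depends on $v$ only through $v^2$. The two prefactors are identical. Proposition \ref{Prop n pt cut disks bc continuum} with $n=2$, which is even and so allowed for $v\neq 0$, says that the probability in \eqref{def:twist3} equals the exponential in \eqref{eqdef:twistbc}. Here ``no cluster disconnects $z_1$ from $z_2$'' is the $n=2$ case of the odd-index event, and the equality holds for every fixed $\eps_1,\eps_2$. Hence the two expressions agree before the limit, and it suffices to show that the limit in i) exists.

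\textbf{Factorisation.} Split the exponential in \eqref{eqdef:twistbc} into two factors. The first is the prefactor times $\exp(-\mu^{\rm loop}_{D_{\eps_1,\eps_2}(z_1,z_2)}(\cdot))$, which converges to $\sigma^D_{\rm tw}(z_1,z_2)$ by Theorem \ref{thm:deftwist} ii) with $n=2$. The second is $\exp(-v^2\mu^{\rm exc}_D(\indx_\gamma(z_1)+\indx_\gamma(z_2)\text{ odd},\ \gamma\subset D_{\eps_1,\eps_2}(z_1,z_2)))$. As $\eps_j\downarrow 0$ the events $\{\gamma\subset D_{\eps_1,\eps_2}\}$ increase to $\{z_1,z_2\notin\gamma\}$, which has full $\mu^{\rm exc}_D$-measure. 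Monotone convergence then gives convergence of the exponent to $\Modd_D(z_1,z_2)$, provided $\Modd_D$ is finite.

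Finiteness holds because an excursion with odd index sum must separate $\{z_1,z_2\}$ from part of $\partial D$, so it must reach a fixed compact neighbourhood of the segment between $z_1$ and $z_2$. The $\mu^{\rm exc}_D$-mass of excursions hitting a compact subset of $D$ is finite, since it is bounded by boundary-to-compact Poisson-kernel integrals. This yields \eqref{eq:twistbnd}.

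\textbf{Computing $\Modd_D$.} By conformal invariance of $\mu^{\rm exc}$ we reduce to $(\D,-r,r)$. Apply the double cover of Lemma \ref{lemma:cover} from $\ann_R$, with deck map $w\mapsto w^{-1}$ and the segment $[-r,r]$ lifting to the unit circle. The circle $\partial\D$ lifts to the two boundary circles of $\ann_R$, each mapped bijectively onto it.

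An excursion in $\D$ has odd $\indx(r)+\indx(-r)$ exactly when it crosses the cut $[-r,r]$ an odd number of times. In the cover this means its lift changes sheet, i.e. the lift is an excursion of $\ann_R$ from one boundary circle to the other. By conformal invariance of the excursion measure under the local isometry (away from the branch points, which a.s.\ are not hit), $\Modd_\D(-r,r)$ equals the $\mu^{\rm exc}_{\ann_R}$-mass of excursions crossing from $|w|=R$ to $|w|=R^{-1}$, counted once per unordered pair of lifts. By \eqref{Eq ED H} this mass is the inverse extremal distance of the annulus, $2\pi/\log(R^2)=\pi/\log R$.

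Finally use $\log R=\pi K'(r^2)/(4K(r^2))$ from Lemma \ref{lemma:cover} and $q=\exp(-\pi K'/K)$, so that $\log R=-\tfrac14\log q$. Then $\Modd=-4\pi/\log q$, hence $-\tfrac{\pi}{4}\Modd=\pi^2/\log q=\log\hat q$, which is the claim.

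\textbf{Main obstacle.} I expect the hardest step to be the bookkeeping of the factor in the lifting step. We must check that the normalisation of $H_D(dx,dy)$ in $\mu^{\rm exc}_D$, which integrates over ordered endpoint pairs, matches the crossing mass in $\ann_R$ exactly, with no stray factor of $2$ from the two lifts or the two orientations. I would verify this by comparing both sides for the degenerate check $r\to 1$, or directly through the Poisson kernel transformation rule $H_{\phi(D)}=|\phi'|\,|\phi'|\,H_D$ at the boundary. The consistency of the final identity with $\hat q$ is reassuring evidence that the factor is exactly $1$.
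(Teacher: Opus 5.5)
Your proposal is correct and follows the paper's own route. It uses Proposition \ref{Prop n pt cut disks bc continuum} with $n=2$ for i)$\Leftrightarrow$ii), Theorem \ref{thm:deftwist} ii) plus monotone convergence for \eqref{eq:twistbnd}, and the branched double cover of Lemma \ref{lemma:cover} to get $\Modd_D=\pi/\log R=4K(r^2)/K'(r^2)$; your compact-set argument for finiteness is a small addition, since the paper gets finiteness only from the explicit value. On the factor you flagged: counting one oriented lift per excursion (outer to inner circle, with mass equal to the inverse extremal length) is equivalent to the paper's ``$\tfrac12\times$ twice the inverse extremal length'', so the factor is exactly $1$.
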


The theorem follows directly by combining the result for zero boundary conditions, Theorem \ref{thm:deftwist}, with Proposition \ref{Prop n pt cut disks bc continuum} and the following calculation.

\begin{lemma}\label{lem:Modd}
Denote
\[
\Modd_D(z_1,z_2)
=
\mu^{\rm exc}_{D}
\bigl(\indx_\gamma(z_1)+\indx_\gamma(z_2)\text{ odd}\bigr).
\]
We have that
\[
\Modd_D(z_1,z_2)<+\infty
\]
and
\[
\Modd_D(z_1,z_2)
=
\lim_{\max(\varepsilon_1,\varepsilon_2)\to 0}
\mu^{\rm exc}_{D}
\bigl(
\indx_\gamma(z_1)+\indx_\gamma(z_2)\text{ odd},\,
\gamma\subset D_{\varepsilon_1,\varepsilon_2}(z_1,z_2)
\bigr).
\]
Further, we have the explicit formula
$$\exp\Big(-\frac{\pi}{4}\Modd_D(z_1,z_2)\Big) = \hat q$$
where $\hat q$ is the complementary nome to nome $q$ that is by Lemma \ref{lem:parametr} uniquely defined via 
$$\exp\Big(\pi G_D(z_1,z_2)\Big) 
= \dfrac{\theta_3(q^{1/2})}{\theta_2(q^{1/2})}.$$
\end{lemma}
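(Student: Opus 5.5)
The plan is to compute $\Modd_D$ explicitly through the branched double cover of Lemma \ref{lemma:cover}. Finiteness then follows from the explicit value, and the $\eps$-limit is a monotone convergence statement.

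\emph{Limit and reduction to the disk.} As $\max(\eps_1,\eps_2)\downarrow 0$, the events $\{\indx_\gamma(z_1)+\indx_\gamma(z_2)\text{ odd},\ \gamma\subset D_{\eps_1,\eps_2}(z_1,z_2)\}$ increase. Their union is the odd-index event intersected with $\{z_1,z_2\notin\gamma\}$. Points are polar for planar Brownian motion, so this set is $\mu^{\rm exc}_D$-almost everything. Monotone convergence therefore gives the stated limit, and it is finite once the explicit value below is established. Since $\mu^{\rm exc}_D$ (normalised with $H_D$) is conformally invariant and the parity of the index sum is topological, we may map $(D,z_1,z_2)$ to $(\D,-r,r)$, as in Section \ref{sec:reduction}.

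\emph{Lifting to the annulus.} Take the branched cover $\Juk\circ\psi:\ann_R\to\D$ of Lemma \ref{lemma:cover}, with $\log R=\pi K'(r^2)/4K(r^2)$ and deck transformation $w\mapsto w^{-1}$. It restricts to an honest two-sheeted covering of $\D\setminus\{\pm r\}$ by $\ann_R\setminus\{\pm1\}$, and $\partial\D$ lifts to the two boundary circles $\{|w|=R\}$ and $\{|w|=R^{-1}\}$, which are exchanged by the deck map. Each excursion avoiding $\pm r$ has exactly two lifts, exchanged by $w\mapsto w^{-1}$.

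Closing an excursion by a boundary arc of $\partial\D$, the monodromy of this covering around a loop is $(-1)^{\indx(r)+\indx(-r)}$. Hence the odd excursions are exactly those whose lifts start on one boundary circle and end on the other, i.e.\ crossing excursions of $\ann_R$. The covering map is locally conformal away from the branch points, and the excursion measure is conformally invariant and local. We therefore expect the image of the restriction of $\mu^{\rm exc}_{\ann_R}$ to crossing excursions to be twice the restriction of $\mu^{\rm exc}_\D$ to odd excursions. To justify this, we decompose via the boundary Poisson kernel and note that $H$ transforms covariantly under conformal maps.

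\emph{Computing the crossing mass.} By \eqref{Eq ED H}, $\int\!\!\int H_{\ann_R}$ over pairs (inner circle, outer circle) equals $\ED^{-1}$. Here $\ED=\frac{1}{2\pi}\log(R^2)=\log R/\pi$. Counting both orders of endpoints gives crossing mass $2\pi/\log R$, hence
\[
\Modd_\D(-r,r)=\frac{\pi}{\log R}.
\]
Now $q=\exp(-\pi K'/K)=R^{-4}$, so $\log q=-4\log R$. Then $\log\hat q=\pi^2/\log q=-\pi^2/(4\log R)=-\frac{\pi}{4}\Modd_D$, which is the claimed identity.

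\emph{Main obstacle.} The main obstacle is the bookkeeping of constants: the normalisation of $\mu^{\rm exc}$ relative to $H_D$, whether ordered or unordered endpoint pairs are counted, and the factor $2$ from the two sheets. I would cross-check the final factor against a degenerate limit. As $r\to0$, the odd mass should blow up like the mass of excursions separating a small segment, and $\pi/\log R\sim 4K/K'$ should match this.
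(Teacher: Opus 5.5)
Your proposal is correct and follows essentially the same route as the paper. You reduce by conformal invariance to $(\D,-r,r)$ and lift to $\ann_R$ through the branched double cover, so that odd excursions correspond exactly to crossing excursions, with a factor $\tfrac12$ from the two lifts. You then use that the crossing mass is $2/\ED = 2\pi/\log R$, which gives $\Modd_\D(-r,r)=\pi/\log R = 4K(r^2)/K'(r^2)$, and you obtain the $\varepsilon$-limit by monotone convergence. Your extra remarks make explicit several steps the paper states without detail: polarity of points for the limit, the monodromy argument identifying odd excursions with crossing lifts, and the pushforward justification via the boundary Poisson kernel.
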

\begin{proof}

By conformal invariance it suffices to prove the claim for $D = \D$ and $z_1 = r, z_2 = -r$. We can use the branched double cover from Section \ref{sec:reduction} to conformally double cover $D$ with the annulus $\ann_R = \{ w\in\C \vert \, R^{-1}<\vert w\vert < R\}$, where $\log R = \pi K'(r^2) / 4K(r^2)$, branch points are at $\pm r$ and $K(r^2), K'(r^2)$ are the elliptic integrals defined in that section.

Observe that under this map 
$$\mu^{\rm exc}_{\D}
(\indx_\gamma(-r)+\indx_\gamma(r)\text{ odd}) = 
\frac{1}{2}\mu^{\rm exc}_{\ann_R}(\text{endpoints of }\gamma\text{ are on opposite boundaries}),$$
where the factor $1/2$ comes from the fact that each path has two lifts starting from each of the boundary components.

But the boundary-to-boundary excursion mass $\mu^{\rm exc}_{\ann_R}(\text{endpoints of }\gamma\text{ are on opposite boundaries})$ is known to be equal to twice the inverse of extremal length of the annulus (e.g. Section 2.1 Equation 2.6 in \cite{ALS1} and Theorem 4-5 in \cite{Ahlfors2010ConfInv}). 
As the extremal length of the annulus 
$\ann_R$ is given by (see Lemma \ref{lemma:cover})
$$\frac{1}{2\pi}\log ( R/R^{-1}) = \frac{\log R}{\pi} = K'(r^2)/4K(r^2),$$
we conclude that
\[
\Modd_D(z_1,z_2)=\frac{4K(r^2)}{K'(r^2)}.
\]
It remains to now recall that the nome $q$ given in the statement can be equivalently written as $q = \exp(-\pi K'(r^2)/K(r^2))$.

Finally, the convergence statement then just follows from the Monotone convergence theorem.
\end{proof}

\section{Twist field correlations in terms of excursion sets of the GFF and CLE$_4$}
\label{Sec Twist CLE4}

Denote by 
$\cL_{D_{\varepsilon_1,\varepsilon_2}(z_1,z_2)}$ the Brownian loop soup in 
$D_{\varepsilon_1,\varepsilon_2}(z_1,z_2)$
with intensity
$\frac{1}{2}
\mu^{\rm loop}_{D_{\varepsilon_1,\varepsilon_2}(z_1,z_2)}$.
For $v\geq 0$, denote by
$\Xi^{v^2/2}_{D, D_{\varepsilon_1,\varepsilon_2}(z_1,z_2)}$
the Poisson point process of boundary excursions in
$D$ with intensity
$$
\dfrac{v^2}{2}
\1_{\gamma\subset D_{\varepsilon_1,\varepsilon_2}(z_1,z_2)}
\mu^{\rm exc}_{D}(d\gamma).
$$
We also take 
$\Xi^{v^2/2}_{D, D_{\varepsilon_1,\varepsilon_2}(z_1,z_2)}$
to be independent from
$\cL_{D_{\varepsilon_1,\varepsilon_2}(z_1,z_2)}$. 
Denote by $\Phi_{D}^{(v)}$ the continuum GFF on $D$
with boundary condition $v\geq 0$ on $\partial D$.

\begin{theorem}\label{thm:equivtwist}
The twist correlation function 
$\sigma_{\rm tw}^{D,v}(z_1,z_2)$ for the field with constant boundary condition $v \in \R$ can also be equivalently expressed as the following renormalised probabilities:
\begin{itemize}
\item Let $E_{\eps_1,\eps_2}^{v,\rm{holes}}$ 
be the event that no cluster of  
$\cL_{D_{\varepsilon_1,\varepsilon_2}(z_1,z_2)}
\cup
\Xi^{v^2/2}_{D, D_{\varepsilon_1,\varepsilon_2}(z_1,z_2)}$
disconnects $\D(z_1, \eps_1)$ from $\D(z_2, \eps_2)$. 
Then we have  
$$\sigma_{\rm tw}^{D,v}(z_1,z_2) 
= \lim_{\eps_1,\eps_2 \to 0} 
\dfrac{\pi}{2} 
(\eps_1\eps_2)^{-1/8}|\log \eps_1|^{-1/2}|\log \eps_2|^{-1/2}
\P(E_{\eps_1,\eps_2}^{v,\rm{holes}}).$$

\item Let $E_{\eps_1,\eps_2}^{v}$ be the event that 
$\text{no cluster of }
\cL_{D}
\cup
\Xi^{v^2/2}_{D}
\text{ disconnects } \D(z_1,\eps_1) 
\text{ from } \D(z_2,\eps_2)$.
Then 
$$\sigma_{\rm tw}^{D,v}(z_1,z_2) 
= \lim_{\eps_1,\eps_2 \to 0} (\CSLE)^{-2}(\eps_1\eps_2)^{-1/8}\P(E_{\eps_1,\eps_2}^{v}).$$

\item Let $E_{\eps_1,\eps_2}^{v,\rm{exc}}$ be the event that no sign excursion cluster of the GFF in $D$ with constant boundary condition $v \in \R$ disconnects 
$\D(z_1,\eps_1)$ from $\D(z_2, \eps_2)$. Then we have  
$$\sigma_{\rm tw}^{D,v}(z_1,z_2) 
= \lim_{\eps_1,\eps_2 \to 0} (\CSLE)^{-2} (\eps_1\eps_2)^{-1/8}\P(E_{\eps_1,\eps_2}^{v,\rm{exc}}).$$

\item Let $E_{\eps_1,\eps_2}^{v,\rm{CLE}}$ 
be the event that a $0$-height CLE$_4$ gasket in the nested CLE$_4$ coupling with $\Phi_D^{(v)}$
intersects both $\D(z_1,\eps_1)$ and $\D(z_2, \eps_2)$. 
Then we have  
$$\sigma_{\rm tw}^{D,v}(z_1,z_2) 
= \lim_{\eps_1,\eps_2 \to 0} (\CSLE)^{-2} (\eps_1\eps_2)^{-1/8}
\P(E_{\eps_1,\eps_2}^{v,\rm{CLE}}).$$
\end{itemize}
The constant $\CSLE$ above is given by 
Proposition \ref{Prop asymp 1 pt ED Ver 2}.
\end{theorem}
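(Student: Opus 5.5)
The plan is to prove the four equivalences in sequence, each bullet reducing to the previous one. The first bullet is Theorem \ref{thm:twistbc} ii) restated: the event that no cluster of $\cL_{D_{\eps_1,\eps_2}(z_1,z_2)}\cup\Xi^{v^2/2}_{D,D_{\eps_1,\eps_2}(z_1,z_2)}$ disconnects $\D(z_1,\eps_1)$ from $\D(z_2,\eps_2)$ is exactly the absence of a cluster carrying a closed path with $\indx_\gamma(z_1)+\indx_\gamma(z_2)$ odd. This identification uses Lemma \ref{lem:topo}, since the clusters stay away from the holes and a disconnecting cluster carries a path winding oddly around exactly one of the points. Bullets three and four are then routine translations of the second bullet. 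By \cite[Theorem 2]{ALS4}, the sign excursion clusters of $\Phi_D^{(v)}$ have the law of clusters of $\cL_D\cup\Xi^{v^2/2}_D$, so $\P(E^{v,\rm exc}_{\eps_1,\eps_2})=\P(E^v_{\eps_1,\eps_2})$. By Proposition \ref{prop:CLEcoupling1} E)–F), the complement of the excursion sets is the disjoint union of height-$0$ gaskets. Hence no excursion cluster separates the two disks if and only if a single height-$0$ gasket meets both, up to null events at the scale of the disks. This last point needs the gaskets to be connected and the Hausdorff/boundary regularity used in Corollary \ref{cor:topo}.

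The real content is the second bullet: removing the holes changes the normalisation from $\frac{\pi}{2}(\eps_1\eps_2)^{-1/8}|\log\eps_1\log\eps_2|^{-1/2}$ to $(\CSLE)^{-2}(\eps_1\eps_2)^{-1/8}$. I would localise near each $z_j$ separately. Fix a small $r_0$. Let $\Gamma_j$ be the outer boundary of the outermost loop-soup cluster in $\cL_D$ that surrounds $z_j$ inside $\D(z_j,r_0)$ and comes closest to it. Equivalently, $\Gamma_j$ is the first nested CLE$_4$-type loop around $z_j$ reached through the loop soup/CLE$_4$ correspondence of \cite{SheffieldWerner2012CLE}.

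The event $E^v_{\eps_1,\eps_2}$ decomposes according to the innermost cluster boundary around each $z_j$ not meeting $\D(z_j,\eps_j)$. Conditionally on such boundaries, the loops outside form a loop soup in a domain with two holes $\Int(\Gamma_j)$. Applying the first bullet, or rather Proposition \ref{Prop n pt cut disks bc continuum} together with conformal covariance, in the annuli between $\Gamma_j$ and $\partial\D(z_j,\eps_j)$ produces the factors $\ED(\Gamma_j,\cdot)$ via Proposition \ref{Prop exp disk eps disk}. The logarithmic factor $\sqrt{|\log\eps_j|}$ is cancelled by summing over the random number of nested loops, which behaves like a Brownian bridge staying in an interval of width $2\sqrt{\pi/2}$.

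Concretely, I would prove a one-point asymptotic of the following form. In $\D$, the probability that no cluster of $\cL_\D$ separates $0$ from $\D(0,\eps)$ is asymptotic to $\CSLE\,\eps^{1/8}$. This is the content of the referenced Proposition \ref{Prop asymp 1 pt ED Ver 2}, whose computation I expect to go through the SLE$_4$ shape measure. Here Proposition \ref{Prop expansion ED} replaces the extremal distance to $\partial\D(0,\eps)$ by $\frac{1}{2\pi}\log(R^+/\eps)+\frac{1}{2\pi}Y$. Proposition \ref{Prop law shape} and Corollary \ref{cor:expmoments} then give the finite constant $\E[\rho^{\pm}{}^{1/8}e^{-Y/8}]$. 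Finally, Proposition \ref{Prop de Branges} controls the distortion when the loop is small and the ambient domain is $D_{\eps_1,\eps_2}$ or a general $D$.

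The main obstacle is the two-point decoupling. One must show that the ratio $\P(E^v_{\eps_1,\eps_2})/\P(E^{v,\rm holes}_{\eps'_1,\eps'_2})$ factorises asymptotically into the two one-point ratios, uniformly as the disks shrink, with the interaction carried entirely by macroscopic loops. I would first try an intermediate scale $\eps_j\ll\eta\ll 1$. Explore clusters from outside down to scale $\eta$ and use the Markov property of the loop soup. Then show that the contribution of configurations where a cluster straddles scales, touching both $\D(z_j,\eta)$ and its complement in a disconnecting way, vanishes as $\eta\to0$. For this I would use the uniform estimates in the proof of Theorem \ref{thm:convbls2} (in the style of \cite[Lemma 4.13]{ALS2}) and the exponential tail of $T_\pi^{\rm BES3}$ for dominated convergence.
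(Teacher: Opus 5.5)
\paragraph{Where the proposal stands.} Your handling of the first, third and fourth bullets matches the paper. The first follows from Theorem \ref{thm:twistbc} with Proposition \ref{Prop n pt cut disks bc continuum}. The third uses \cite[Theorem 2]{ALS4}, together with \cite[Proposition 5.3]{ALS2} for $v\neq 0$. The fourth is planar duality via Propositions \ref{prop:CLEcoupling1} and \ref{prop:CLEcoupling2}. Your architecture for the second bullet is also the paper's: the one-point asymptotic $\CSLE\,\eps^{1/8}$ combined with an outside-in localisation using the renewal property of the loop soup.

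\paragraph{First missing idea: discarding clusters that come too close.} The gap is the decoupling step, which you leave as an obstacle and propose to close with tools that cannot do it. The paper needs no intermediate scale $\eta\to0$. It fixes $r_0$ and lets $\mathfrak{C}_{r_0}$ be the clusters of $\cL_D\cup\Xi^{v^2/2}_D$ meeting $\overline{D_{r_0,r_0}(z_1,z_2)}$. Let $\widehat D_{r_0}(z_j)$ be the component of $z_j$ in their complement, $\CR_j:=\CR(z_j,\widehat D_{r_0}(z_j))$, and $E_{r_0}$ the event that no cluster of $\mathfrak C_{r_0}$ disconnects $z_1$ from $z_2$. The paper shows that both probabilities are asymptotically the same functional of this macroscopic configuration:
\[
\P(E^v_{\eps_1,\eps_2})\sim(\CSLE)^2(\eps_1\eps_2)^{1/8}\,\E[X],\qquad \P(E^{v,\rm holes}_{\eps_1,\eps_2})\sim\tfrac{2}{\pi}(\eps_1\eps_2)^{1/8}\sqrt{\log\eps_1^{-1}\log\eps_2^{-1}}\,\E[X],
\]
with $X=\CR_1^{-1/8}\CR_2^{-1/8}\1_{E_{r_0}}$. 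Reaching this needs two ideas absent from your plan. The first handles configurations in which a macroscopic cluster enters $\D(z_j,\sqrt{\eps_j})$; these are discarded by the FKG inequality for Poisson processes. The events $(E^v_{\eps_1,\eps_2})^c$ and $\{\D(z_j,\sqrt{\eps_j})\not\subset\widehat D_{r_0}(z_j)\}$ are both increasing. Hence the bad part of $E^v_{\eps_1,\eps_2}$ has probability at most $\P(E^v_{\eps_1,\eps_2})\,\P(\D(z_j,\sqrt{\eps_j})\not\subset\widehat D_{r_0}(z_j))=o(\P(E^v_{\eps_1,\eps_2}))$. On the good event, Lemma \ref{Lem distortion R pm} controls the uniformised image of $\D(z_j,\eps_j)$ deterministically, so the one-point asymptotics apply uniformly in the configuration. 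The uniform estimates from the proof of Theorem \ref{thm:convbls2} cannot replace this. They are only qualitative and concern clusters hugging the holes, so they cannot control contributions on the scale $(\eps_1\eps_2)^{1/8}$.

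\paragraph{Second missing idea: integrability of $X$.} This one is decisive. In the holes case the local factor is $\prod_j\log(\eps_j^{-1}\CR_j)^{1/2}$, not $\sqrt{\log\eps_j^{-1}}$. Replacing one by the other, which is what your ``factorisation into one-point ratios'' amounts to, requires $\E[X]<\infty$. This is borderline: already for the CLE$_4$ loop surrounding a point, $\E[\CR^{-1/8}]=1/\cos(\pi/2)=\infty$. So finiteness must come from the indicator $\1_{E_{r_0}}$. The exponential tail of $T^{\rm BES3}_\pi$ only controls the shape functional defining $\CSLE$ and says nothing here.

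The paper extracts $\E[X]<\infty$ from the explicit twist formula itself (Lemma \ref{Lem finite exp CR 1 8}). On $\{\D(z_j,\sqrt{\eps_j})\subset\widehat D_{r_0}(z_j)\}$ one has $\log(\eps_j^{-1}\CR_j)\ge\frac12\log\eps_j^{-1}$. Theorem \ref{thm:twistbc} therefore bounds the truncated expectations uniformly, and monotone convergence gives $\E[X]<\infty$. Dominated convergence then completes the comparison.

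\paragraph{The heuristic about nested loops.} Your heuristic that $\sqrt{|\log\eps_j|}$ is ``cancelled by summing over nested loops'' is not the mechanism. The discrepancy is purely local. In an annulus of extremal length $L$, the holes probability carries the bridge normalisation $\sqrt{2\pi L}$. Without holes, the one-point probability is a tail of $\mathcal T^{(0)}_{-\pi,\pi}$, behaving like $\frac{8}{\pi^2}e^{-t/8}$ with no polynomial prefactor. Moreover, decomposing along innermost clusters would leave holes of random non-round shape, for which no twist asymptotics are available.
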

\noindent In fact several parts of this theorem are immediately clear.
\begin{itemize}
\item The first point follows directly from the definition of twist fields correlations 
with boundary conditions in Theorem \ref{thm:twistbc} and Proposition \ref{Prop n pt cut disks bc continuum}.
\item The equivalence of the second and third point, i.e. 
$\P(E_{\eps_1,\eps_2}^v) = 
\P(E_{\eps_1,\eps_2}^{v,\rm{exc}})$ follows from the fact that the sign excursion clusters of the GFF with constant boundary condition $v$ in a simply-connected domain are given exactly by the clusters of 
$\cL_{D}
\cup
\Xi^{v^2/2}_{D}$, see \cite[Theorem 2]{ALS4} for the zero boundary case, and combine it with \cite[Proposition 5.3]{ALS2} for the general case.  
\item Further, as by Propositions \ref{prop:CLEcoupling1} and \ref{prop:CLEcoupling2}, the $0$-level CLE$_4$ gaskets form the dual of the sign excursion clusters of $\Phi_D^{(v)}$, 
then by planar duality we also have that the events 
$E_{\eps_1,\eps_2}^{v,\rm{exc}} 
= E_{\eps_1,\eps_2}^{v,\rm{CLE}}$. 
\end{itemize}
Thus it remains to prove the equivalence between the two first bullet points. \\

We begin by stating a precise first moment equivalent in the setting of 
the unit disk $\D$. 
For $r\in (0,1)$, we define the event
$$
E_{\D}(r)=
\{
\text{No cluster of }
\cL_{\D}
\text{ surrounds }
\D(0,r)
\}.
$$
The one point estimate would be direct from \cite{SSW09CR, ALS3} if we used conformal radius to measure the distance - i.e. if we considered the event that the conformal radius of $0$ in the connected component of the complement of clusters is less than $r$. However, when using the actual Euclidean distance, the one-point function includes an explicit functional of the SLE$_4$ loop measure.
\begin{prop}
\label{Prop asymp 1 pt ED}
As $r\to 0$,
$$
\P(E_{\D}(r))
=
\CSLE~
r^{1/8}(1+o(1)),
$$
where
$\CSLE$ is an expectation under 
$\mu_{\text{SLE}_4}^{\rm loop}$ of an explicit functional,
determined in Proposition \ref{Prop asymp 1 pt ED Ver 2}.
\end{prop}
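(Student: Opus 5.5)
Since $\cL_{\D}$ with no boundary excursions has the same law as the sign clusters of $\Phi_\D$, the event $E_\D(r)$ is dual to the outermost CLE$_4$ gasket, by Proposition \ref{prop:CLEcoupling1}. Concretely, no cluster surrounds $\D(0,r)$ if and only if $\D(0,r)$ is not contained in the interior of the outermost CLE$_4$ loop $\Gamma$ around $0$. Equivalently, $d(0,\Gamma)=R^{-}(\Gamma)<r$. So the goal is an asymptotic for $\P(R^{-}(\Gamma)<r)$ as $r\to 0$.

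I would write $R^{-}(\Gamma)=\CR(0,\Int(\Gamma))\cdot\rho([\Gamma])^{-1}$, with the scale-invariant shape ratio
\[
\rho([\Gamma]) := \frac{\CR(0,\Int(\Gamma))}{d(0,\Gamma)}=\frac{\rho^{\pm}([\Gamma])}{\rho^{+}([\Gamma])}\in[1,4].
\]
The key input is an asymptotic decoupling: conditionally on the loop being small, the shape $[\Gamma]$ is asymptotically distributed according to the normalised shape measure $\mu_{\text{SLE}_4}^{\rm loop}$ and independent of the scale. I would get this from the construction used in Proposition \ref{Prop law shape}, i.e.\ nested CLE$_4$ in the full plane, where loops around $0$ have scale-invariant counting measure with shape marginal $\mu^{\rm loop}_{\text{SLE}_4}$. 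For the scale, I would not use $\log\CR$ directly, because of the $\rho$ correction. Instead I would use the extremal distance, since by \cite{ALS3} $2\pi\ED(\Gamma,\partial\D)$ is the hitting time $T$ of $\pi$ by $|B|$, independent of $\log\CR(0,\C\setminus\Gamma)^{-1}-2\pi\ED$. By Proposition \ref{Prop expansion ED},
\[
\log R^{+}(\Gamma)^{-1} = 2\pi\ED(\Gamma,\partial\D) - Y([\Gamma]) + O(R^{+}).
\]
It follows that $\log R^{-}(\Gamma)^{-1} = T - Y([\Gamma]) + \log\rho^{\pm}([\Gamma]) + o(1)$. The tail of $T$ is explicit: $\P(T>t)\sim \frac{4}{\pi}e^{-\pi^2 t/(8\pi^2)\cdot\pi^2/ \pi^2}$, more precisely $\P(T>t)\sim c_0 e^{-t/8}$ with $c_0=4/\pi$ (first Dirichlet eigenvalue $\pi^2/(8\pi^2)\cdot$ in our normalisation, giving exponent $1/8$).

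Granting asymptotic independence of $T$ from the shape when $T$ is large, I would compute
\[
\P(R^{-}<r)\approx \E\Big[\P\big(T>\log r^{-1}+Y-\log\rho^{\pm}\,\big|\,[\Gamma]\big)\Big]\sim c_0\, r^{1/8}\,\E\big[(\rho^{\pm})^{1/8}e^{-Y/8}\big].
\]
The expectation is taken under the normalised $\mu_{\text{SLE}_4}^{\rm loop}$ and is finite by Corollary \ref{cor:expmoments}. This identifies $\CSLE=c_0\,\E[\rho^{\pm}([\Gamma^\ast])^{1/8}e^{-Y([\Gamma^\ast])/8}]$, which is the content to be recorded in Proposition \ref{Prop asymp 1 pt ED Ver 2}.

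The main obstacle is justifying that, conditionally on $\{T>t\}$ with $t\to\infty$, the shape $[\Gamma]$ converges in law to $\mu^{\rm loop}_{\text{SLE}_4}$ jointly with the overshoot, strongly enough to pass the unbounded weight $(\rho^{\pm})^{1/8}$ to the limit. My first attempt would use the Markovian decomposition from the proof of Proposition \ref{Prop law shape}. Condition on the intermediate ED being large, then reuse the coupling $\check\Gamma=f_{\check\Gamma_1}(\check\Gamma_2)$. Proposition \ref{Prop de Branges} shows that $f_{\check\Gamma_1}$ is close to the identity at small scales, so the shape of $\check\Gamma$ is close to that of $\check\Gamma_2$, which is exactly $\mu^{\rm loop}_{\text{SLE}_4}$. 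For the uniform integrability of the weight, I would bound $\log\rho^{\pm}\le \log4+\log\rho^{+}$ and dominate $\log\rho^{+}-Y$ by an independent copy of $T^{\rm BES3}_\pi$, whose $e^{t/8}$-moment is finite by the tail $e^{-t/2}$. Dominated convergence then gives the result.
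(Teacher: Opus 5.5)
Your architecture is the paper's. You identify $E_\D(r)=\{d(0,\Gamma)<r\}$, expand $\log R^{-}(\Gamma)^{-1}$ through $2\pi\ED(\Gamma,\partial\D)$, $Y([\Gamma])$ and $\log\rho^{\pm}([\Gamma])$ via Proposition \ref{Prop expansion ED}, and transfer the shape to $\check\Gamma_2$ through $\check\Gamma=f_{\check\Gamma_1}(\check\Gamma_2)$. You then integrate the exponential tail of $2\pi\ED$ against the weight controlled by Corollary \ref{cor:expmoments}.

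However, the one distributional input that fixes the constant is misquoted. By \cite[Theorem 1]{ALS3}, $2\pi\ED(\Gamma,\partial\D)$ is not the exit time of $(-\pi,\pi)$, i.e.\ the hitting time of $\pi$ by $|B|$; that is the law of $\log\CR(0,\Int(\Gamma))^{-1}$. Instead, $2\pi\ED$ is the last visit time $\mathcal{T}^{(0)}_{-\pi,\pi}$ of $0$ before that exit. The exit time equals $\mathcal{T}^{(0)}_{-\pi,\pi}$ plus an independent $T^{\rm BES3}_\pi$, which is exactly the independence you also invoke, so your two claims contradict each other.

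The correct tail is $\P(\mathcal{T}^{(0)}_{-\pi,\pi}>t)\sim\frac{8}{\pi^2}e^{-t/8}$, since its Laplace transform is $\tanh(\pi\sqrt{2s})/(\pi\sqrt{2s})$. This gives $\CSLE=\frac{8}{\pi^2}\E[\rho^{\pm}([\Gamma^\ast])^{1/8}e^{-Y([\Gamma^\ast])/8}]$, not your $\frac{4}{\pi}\E[\cdots]$. Your value is in fact impossible. Write $\rho^{\pm}=\rho\,\rho^{+}$ with $\rho\geq 1$, and recall that $\log\rho^{+}-Y$ has the law of $T^{\rm BES3}_\pi$ (Proposition \ref{Prop law shape}). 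Your formula then gives $\CSLE\geq\frac{4}{\pi}\E[e^{T^{\rm BES3}_\pi/8}]=\frac{4}{\pi}\cdot\frac{\pi}{2}=2$. But Koebe ($\CR\leq 4d$) gives $\P(d(0,\Gamma)<r)\leq\P(\CR<4r)\sim\frac{4^{9/8}}{\pi}r^{1/8}$, and $4^{9/8}/\pi<2$.

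Three points of execution should also be tightened.

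First, you do not need asymptotic independence or convergence in law conditional on large $T$. In the coupling, $\ED(\check\Gamma,\partial\D)=\ED(\check\Gamma_2,\check\Gamma_1)$ is exactly independent of $[\check\Gamma_2]$ \cite[Proposition 4]{KemppainenWerner16CLE}, and $[\check\Gamma_2]$ has exactly the law $\mu^{\rm loop}_{\text{SLE}_4}$. The paper therefore writes $E_\D(r)=\{2\pi\ED(\check\Gamma_2,\check\Gamma_1)>\log r^{-1}-\log\rho^{\pm}([\check\Gamma_2])+Y([\check\Gamma_2])+\eta\}$ with an explicit error $\eta$, bounded deterministically.

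Second, that bound needs $\vert Y([\check\Gamma])-Y([\check\Gamma_2])\vert\leq C R^{+}(\check\Gamma)^{1/2}$. This does not follow from Proposition \ref{Prop de Branges} alone; the paper uses a Beurling estimate, Proposition \ref{Prop diff Y}.

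Third, the bound needs $R^{+}(\check\Gamma)$ to be small, which $R^{-}<r$ does not imply; your $o(1)$ term is really $O(R^{+}/(1-R^{+}))$. The paper restricts to $Q(r)=\{R^{+}(\check\Gamma)\leq r^{-1/2}d(0,\check\Gamma)\}$, on which $R^{+}<r^{1/2}$ and $\eta=O(r^{1/4})$. It discards $Q(r)^{c}$, where $\log\CR^{-1}-2\pi\ED\geq\frac12\log r^{-1}-4\log 2$. By the $e^{-t/2}$ tail of $T^{\rm BES3}_\pi$, this event has probability $r^{1/4+o(1)}=o(r^{1/8})$. You already have this ingredient but use it only for integrability of the weight.
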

Next, let us describe the two-point estimates that allow us to conclude the theorem. 
Recall that we denote by 
$E_{\varepsilon_1,\varepsilon_2}^{v}$ the event
$$
E_{\varepsilon_1,\varepsilon_2}^{v}
=
\{
\text{No cluster of }
\cL_{D}
\cup
\Xi^{v^2/2}_{D}
\text{ disconnects } \D(z_1,\varepsilon_1) 
\text{ from } \D(z_2,\varepsilon_2)
\}.
$$
Similarly, whe consider the Brownian loop soup and boundary excursions in the domain
$$
D_{\eps_1,\eps_2}(z_1,z_2)
=
D\setminus (\overline{\D(z_1,\eps_1)}
\cup
\overline{\D(z_2,\eps_2)}),
$$
that is 
$\cL_{D_{\varepsilon_1,\varepsilon_2}(z_1,z_2)}$ and $\Xi^{v^2/2}_{D, D_{\varepsilon_1,\varepsilon_2}(z_1,z_2)}$.
We have the analogous event
$$E_{\varepsilon_1,\varepsilon_2}^{v, \rm{holes}} 
= \{
\text{No cluster of }
\cL_{D_{\varepsilon_1,\varepsilon_2}(z_1,z_2)}
\cup
\Xi^{v^2/2}_{D, D_{\varepsilon_1,\varepsilon_2}(z_1,z_2)}\text{ disconnects } z_1 \text{ from }z_2\}.$$
Notice that the renormalised probability of this latter event is calculated in Theorem \ref{thm:twistbc}.

To connect the probabilities of $E_{\eps_1, \eps_2}^v$ and 
$E_{\eps_1, \eps_2}^{v, \rm{holes}}$, we will see 
$\cL_{D_{\varepsilon_1,\varepsilon_2}(z_1,z_2)}$
as a subfamily of
$\cL_{D}$ and $\Xi^{v^2/2}_{D, D_{\varepsilon_1,\varepsilon_2}(z_1,z_2)}$ as a subfamily of
$\Xi^{v^2/2}_{D}$. We will also define a separation scale between local separation events and more macroscopic ones. To do this, let $r_0\in (0,1]$ be small enough so that
$$
\overline{\D(z_j,r_0)}\subset D,
j\in\{1,2\},
\qquad
\overline{\D(z_1,r_0)}
\cap
\overline{\D(z_2,r_0)}
=\emptyset.
$$

Recall the notation
$$
D_{r_0,r_0}(z_1,z_2) = 
D\setminus(\D(z_1,r_0)\cup\D(z_2,r_0)).
$$
Denote by $\mathfrak{C}_{r_0}$ the collection of clusters of $\cL_{D}\cup\Xi^{v^2/2}_{D}$
that intersect $\overline{D_{r_0,r_0}(z_1,z_2)}$.
We see a cluster as a closed subset of $\overline{D}$, by taking the topological closure.
So $\mathfrak{C}_{r_0}$ is a random countable collection of disjoint closed subsets of
$\overline{D}$, each intersecting $\overline{D_{r_0,r_0}(z_1,z_2)}$.
Let $E_{r_0}$ be the following event:
\begin{equation}
\label{Eq def E r0}
E_{r_0}
=
\{
\text{No cluster of }
\mathfrak{C}_{r_0}
\text{ disconnects } z_1 
\text{ from } z_2
\}.
\end{equation}
We can now state the key propositions and give the asymptotic expansions of the probabilities of $E_{\eps_1, \eps_2}^v$ and $E_{\eps_1, \eps_2}^{v,\rm{holes}}$. 
We stress that in both of these statements the event $E_{r_0}$ depends on the loop soup of the whole domain.
\begin{prop}
\label{Prop local asymp expect CR 1 8}
There is an explicit constant $\CSLE$ such that as 
$\max(\varepsilon_1,\varepsilon_2)\to 0$, we have that
\begin{multline*}
\P(E_{\varepsilon_1,\varepsilon_2}^v)
=
\varepsilon_{1}^{1/8}
\varepsilon_{2}^{1/8}
(\CSLE)^2
\,
\E
\big[
\CR(z_1,\widehat{D}_{r_0}(z_1))^{-1/8}
\CR(z_2,\widehat{D}_{r_0}(z_2))^{-1/8}
\1_{E_{r_0}}
\big]
(1+o(1))
.
\end{multline*}
Above, $\widehat{D}_{r_0}(z_1)$ and
$\widehat{D}_{r_0}(z_2)$
are random simply connected domains around
$z_1$, respectively $z_2$,
corresponding to the connected component of
$z_1$, respectively $z_2$, in
$
D\setminus
\overline{
\bigcup_{\cC\in \mathfrak{C}_{r_0}}\cC
}
.
$
\end{prop}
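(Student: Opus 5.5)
The plan is to condition on the macroscopic clusters $\mathfrak{C}_{r_0}$ and use the spatial Markov (restriction) property of the loop soup. Given $\mathfrak{C}_{r_0}$, the loops and excursions not belonging to clusters of $\mathfrak{C}_{r_0}$ are distributed as independent critical Brownian loop soups in the connected components of $D\setminus\overline{\bigcup_{\cC\in\mathfrak{C}_{r_0}}\cC}$. This is the standard decomposition used for the FPS and the excursion clusters in \cite{ALS2,ALS4}. In particular, there are independent soups $\cL_{\widehat D_{r_0}(z_1)}$ and $\cL_{\widehat D_{r_0}(z_2)}$, and these contain no boundary excursions. Each remaining cluster lies in one of these components, and every cluster that touches $\overline{D_{r_0,r_0}(z_1,z_2)}$ is already in $\mathfrak{C}_{r_0}$. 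So a remaining cluster that disconnects $\D(z_1,\eps_1)$ from $\D(z_2,\eps_2)$ must lie in $\widehat D_{r_0}(z_j)\subset \D(z_j,r_0)$ and surround $\D(z_j,\eps_j)$. For small $\eps_j$, by planar topology (Lemma \ref{lem:topo}), $E^v_{\eps_1,\eps_2}$ then coincides with
$$E_{r_0}\cap\{\text{no cluster of }\cL_{\widehat D_{r_0}(z_1)}\text{ surrounds }\D(z_1,\eps_1)\}\cap\{\text{no cluster of }\cL_{\widehat D_{r_0}(z_2)}\text{ surrounds }\D(z_2,\eps_2)\},$$
up to the negligible event that some cluster of $\mathfrak{C}_{r_0}$ comes within $\eps_j$ of $z_j$. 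This gives
$$\P(E^v_{\eps_1,\eps_2})=\E\big[\1_{E_{r_0}}\,p(\eps_1;\widehat D_{r_0}(z_1),z_1)\,p(\eps_2;\widehat D_{r_0}(z_2),z_2)\big],$$
where $p(\eps;U,z)$ is the probability that no cluster of $\cL_U$ surrounds $\D(z,\eps)$.

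Next I would derive a one-point asymptotic that is uniform in the domain:
$$p(\eps;U,z)=\CSLE\,\eps^{1/8}\CR(z,U)^{-1/8}(1+o(1)),$$
uniformly over simply connected $U$ with $\CR(z,U)$ bounded below. Translating and rescaling reduces this to $U$ with $\CR(0,U)=1$, and then $f_U$ maps to $\D$. By Proposition \ref{Prop de Branges}, $f_U$ distorts $\D(0,\eps)$ only by a factor $1+O(\eps)$, uniformly in $U$. Hence $p(\eps;U,0)$ is sandwiched between $\P(E_\D(\eps(1\pm C_{\rm univ}\eps)))$. Proposition \ref{Prop asymp 1 pt ED} then gives the claim, using the regular variation $r^{1/8}$ so that $(1\pm C\eps)^{1/8}\to1$. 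Plugging this into the formula of the previous paragraph yields the stated expansion. To pass the limit inside the expectation I would use dominated convergence. The domination $p(\eps;U,z)\le C\eps^{1/8}\CR(z,U)^{-1/8}$ should hold uniformly with an absolute constant, by monotonicity together with the fact that $\CR(z,U)\le 4\,d(z,\partial U)$. It remains to check that $\E[\CR(z_1,\widehat D_{r_0}(z_1))^{-1/8}\CR(z_2,\widehat D_{r_0}(z_2))^{-1/8}\1_{E_{r_0}}]<\infty$.

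I expect this integrability to be the main obstacle, since the $\widehat D_{r_0}(z_j)$ can be very small. On $E_{r_0}$ the two points are not separated, so the relevant boundaries are those of the clusters surrounding neither, or only one, point. My first attempt would use Cauchy–Schwarz to reduce to one-point moments $\E[\CR(z,\widehat D_{r_0}(z))^{-1/4}]$. This is problematic, because the exponent $1/4$ exceeds the CLE$_4$ one-arm exponent $1/8$. I would instead decouple directly: explore the outermost clusters around $z_1$ and $z_2$ at scale $r_0/2$ from outside, and use the one-point law from \cite{SSW09CR,ALS3}. That law says $\P(\CR(z,\widehat D)<s)\asymp s^{1/8}$ up to logarithmic corrections, which is borderline. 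The indicator $\1_{E_{r_0}}$ together with the Markov decomposition should make the two factors essentially independent. The borderline $1/8$ singularity is then integrable precisely because $\1_{E_{r_0}}$ and the $-1/8$ power combine into a finite renormalised quantity, as in Corollary \ref{cor:expmoments}. Failing this, I would truncate at $\CR\ge\eta$ and show that the contribution of $\CR<\eta$ vanishes as $\eta\to0$, uniformly in $\eps$, by a direct monotone comparison bound.
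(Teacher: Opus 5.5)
Your architecture is the paper's: condition on $\mathfrak{C}_{r_0}$, use the renewal property of the soup inside $\widehat D_{r_0}(z_j)$, transfer Proposition~\ref{Prop asymp 1 pt ED} through a distortion estimate, and pass to the limit under the expectation. Your uniform one-point estimate via Proposition~\ref{Prop de Branges}, and the domination $p(\eps;U,z)\le C\eps^{1/8}\CR(z,U)^{-1/8}$ from monotonicity plus Koebe, are fine. They could replace the paper's restriction to $\{\D(z_j,\eps_j^{1/2})\subset\widehat D_{r_0}(z_j)\}$.

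\textbf{The main gap.} Everything hinges on $\E[\CR(z_1,\widehat D_{r_0}(z_1))^{-1/8}\CR(z_2,\widehat D_{r_0}(z_2))^{-1/8}\1_{E_{r_0}}]<\infty$. You leave this open, and the heuristics you offer for it are wrong.
\begin{itemize}
\item The results of \cite{SSW09CR,ALS3} concern the CLE$_4$ loop around $z$, not $\widehat D_{r_0}(z)$.
\item The one-point tail of $\CR(z_1,\widehat D_{r_0}(z_1))$ is only logarithmic. With probability bounded below, the outermost cluster around $z_1$ is macroscopic, so $\widehat D_{r_0}(z_1)$ lies in that cluster's hole containing $z_1$.
\item By Proposition~\ref{prop:CLEcoupling1}, that hole is bounded by the first nested CLE$_4$ loop at which the height walk returns to $0$. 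So $-\log\CR$ of the hole is a sum of i.i.d.\ CLE$_4$ log-conformal-radius increments over a simple random walk excursion length, whose tail is of order $k^{-1/2}$.
\item Hence $\P(\CR(z_1,\widehat D_{r_0}(z_1))<s)\gtrsim(\log s^{-1})^{-1/2}$ and $\E[\CR(z_1,\widehat D_{r_0}(z_1))^{-1/8}]=+\infty$.
\end{itemize}
Consequently no argument that treats the two points separately can work, whether by Cauchy--Schwarz, approximate independence, or one-point tails. Integrability comes only from $\1_{E_{r_0}}$, which prevents $z_1$ from lying in a hole of a macroscopic cluster that does not also contain $z_2$. So the indicator is what couples the two factors, not what decouples them. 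Corollary~\ref{cor:expmoments} concerns SLE$_4$ shapes and does not help here, and ``truncate and bound the rest by monotone comparison'' merely restates the problem.

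\textbf{How the paper closes it.} The paper never proves this integrability geometrically; it imports it from the exactly solvable domain with holes.
\begin{itemize}
\item The same localisation applied to $E^{v,\rm holes}_{\eps_1,\eps_2}$ (Lemma~\ref{Lem local asymp cut disks expect CR 1 8}) produces the same expectation, with extra factors $\log(\eps_j^{-1}\CR(z_j,\widehat D_{r_0}(z_j)))^{1/2}$.
\item On $\{\D(z_j,\eps_j^{1/2})\subset\widehat D_{r_0}(z_j)\}$ one has $\CR\ge\eps_j^{1/2}$, so these factors are at least $(\tfrac12\log\eps_j^{-1})^{1/2}$.
\item Theorem~\ref{thm:twistbc} (Lupu's topological identity plus the explicit loop-measure expansion) gives $\P(E^{v,\rm holes}_{\eps_1,\eps_2})\sim\frac{2}{\pi}(\eps_1\eps_2)^{1/8}(\log\eps_1^{-1}\log\eps_2^{-1})^{1/2}A(z_1,z_2)$ with $A(z_1,z_2)<\infty$.
\item Hence the truncated expectations are bounded uniformly, and monotone convergence yields Lemma~\ref{Lem finite exp CR 1 8}.
\end{itemize}

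\textbf{A smaller gap.} You drop $E^v_{\eps_1,\eps_2}\cap\{\text{a cluster of }\mathfrak{C}_{r_0}\text{ meets }\D(z_j,\eps_j)\}$ as negligible. What you need is that it is $o((\eps_1\eps_2)^{1/8})$, and this does not follow from the event being small in absolute terms, for instance when $\eps_2\ll\eps_1$. The paper obtains it from the Harris--FKG inequality for Poisson processes. Both $(E^v_{\eps_1,\eps_2})^c$ and $\{\D(z_j,r)\not\subset\widehat D_{r_0}(z_j)\}$ are increasing, so the intersection has probability at most $\P(E^v_{\eps_1,\eps_2})\,\P(\D(z_j,r)\not\subset\widehat D_{r_0}(z_j))$.
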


\begin{prop}
\label{Prop asymp 2 pt cut disks CR 1 8}
As $\max(\varepsilon_1,\varepsilon_2)\to 0$, we have that 
\begin{multline*}
\P(E_{\varepsilon_1,\varepsilon_2}^{v, \rm{holes}})
=
\dfrac{2}{\pi}
\varepsilon_{1}^{1/8}
\varepsilon_{2}^{1/8}
\log(\varepsilon_1^{-1})^{1/2}
\log(\varepsilon_2^{-1})^{1/2}
\\
\times
\E
\big[
\CR(z_1,\widehat{D}_{r_0}(z_1))^{-1/8}
\CR(z_2,\widehat{D}_{r_0}(z_2))^{-1/8}
\1_{E_{r_0}}
\big]
(1+o(1))
.
\end{multline*}
\end{prop}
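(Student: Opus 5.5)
We prove Proposition \ref{Prop asymp 2 pt cut disks CR 1 8} by conditioning on the macroscopic clusters and reducing to two independent one-point annulus computations. Fix $r_0$ as above and take $\eps_1,\eps_2<r_0$.

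\textbf{Step 1: split the loop soup.} Write $\cL_{D_{\eps_1,\eps_2}(z_1,z_2)}\cup\Xi^{v^2/2}_{D,D_{\eps_1,\eps_2}(z_1,z_2)}$ as the union of two independent families. The first consists of trajectories intersecting $\overline{D_{r_0,r_0}(z_1,z_2)}$, together with all loops in their clusters. The second consists of loops contained in $\D(z_1,r_0)\setminus\overline{\D(z_1,\eps_1)}$ or in $\D(z_2,r_0)\setminus\overline{\D(z_2,\eps_2)}$. Clusters meeting $\overline{D_{r_0,r_0}}$ are then the clusters $\mathfrak{C}^{\eps}_{r_0}$ of the soup with holes. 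On the event that no such cluster disconnects $z_1$ from $z_2$, separation can only come from a cluster staying inside one of the small disks. Such a cluster must surround $\D(z_j,\eps_j)$ inside the complementary component $\widehat D^{\eps}_{r_0}(z_j)$, or else be glued to macroscopic clusters, which is already accounted for. By the spatial Markov/restriction property of the Poisson soup, conditionally on $\mathfrak{C}^\eps_{r_0}$ the loops in $\widehat D^\eps_{r_0}(z_j)\setminus\overline{\D(z_j,\eps_j)}$ not yet counted form a critical soup there. So
\[
\P(E^{v,\rm holes}_{\eps_1,\eps_2})=\E\Big[\1_{E^\eps_{r_0}}\prod_{j=1,2}\P\big(\text{no cluster of }\cL_{\widehat D^\eps_{r_0}(z_j)\setminus\overline{\D(z_j,\eps_j)}}\text{ surrounds }\D(z_j,\eps_j)\big)\Big].
\]
Here independence of the two factors comes from disjointness of the disks.

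\textbf{Step 2: one-point asymptotics in a simply connected domain with a hole.} For a simply connected $U\ni z$ with $\D(z,\eps)\subset U$, Proposition \ref{Prop n pt cut disks bc continuum} (with $n=1$, $v=0$) gives that the inner probability equals $\exp(-\mu^{\rm loop}_{U\setminus\overline{\D(z,\eps)}}(\indx_\gamma(z)\text{ odd}))$. Map $U$ to $\D$ by $f$ with $f(z)=0$, $|f'(z)|=\CR(z,U)^{-1}$. By Proposition \ref{Prop de Branges}, $f(\D(z,\eps))$ is sandwiched between disks of radii $\eps\CR(z,U)^{-1}(1\pm C\eps/\CR(z,U))$. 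Monotonicity of the loop mass in the domain, together with Proposition \ref{Prop exp disk eps disk} and the Brownian bridge formula, yields
\[
\exp(-\mu^{\rm loop}_{U\setminus\overline{\D(z,\eps)}}(\cdot))=\tfrac{2}{\pi}\,\eps^{1/8}\CR(z,U)^{-1/8}\log(\eps^{-1})^{1/2}(1+o(1)).
\]
The $o(1)$ is uniform as long as $\CR(z,U)\ge\eps^{1-\eta}$, since $\log(\CR/\eps)/\log(1/\eps)\to1$. The exact bridge formula also gives a uniform upper bound $C\eps^{1/8}\CR^{-1/8}\log(\CR/\eps)^{1/2}$ for all $U$.

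\textbf{Step 3: removing $\eps$ from the outer data and dominated convergence.} We must show that $\1_{E^\eps_{r_0}}$ and $\CR(z_j,\widehat D^\eps_{r_0}(z_j))$ converge a.s. to $\1_{E_{r_0}}$ and $\CR(z_j,\widehat D_{r_0}(z_j))$ as $\eps\to0$. This holds because the soups are monotonically coupled: the soup with holes increases to $\cL_D$. A.s. no cluster of $\mathfrak{C}_{r_0}$ touches $z_j$, so only loops hitting $\D(z_j,\eps_j)$ change, and the mass of those that also reach $\partial\D(z_j,r_0)$ tends to $0$. Hence eventually nothing in $\mathfrak{C}_{r_0}$ changes. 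To apply dominated convergence we need integrability of $\CR(z_1,\widehat D_{r_0}(z_1))^{-1/8}\CR(z_2,\widehat D_{r_0}(z_2))^{-1/8}$ times the log factor. Conditionally on the cluster boundary reaching distance $\rho$ of $z_j$, the exponent $1/8$ sits exactly at the one-point decay rate. We therefore use the bound from Step 2 in the reverse direction: $\P(\CR(z_j,\widehat D)\le\rho)\lesssim\rho^{1/8}\times$(finite), in the style of Corollary \ref{cor:expmoments} and the Bessel-3 tail, making $\E[\CR^{-1/8}\wedge\eps^{-1/8}\ldots]$ converge after dividing by the prefactor.

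\textbf{Main obstacle.} The step I expect to be hardest is this uniform domination, i.e. controlling the contribution of configurations where a macroscopic cluster comes within distance $\ll r_0$ of $z_j$ (small $\CR$), where the log factor $\log(\CR/\eps)$ differs from $\log(1/\eps)$. I would first try to bound it via the explicit Brownian-bridge expression for nested annuli, which shows the contribution of $\CR\in[\eps,\eps^{1-\eta}]$ is $o(\eps^{1/8}\log(\eps^{-1})^{1/2})$.
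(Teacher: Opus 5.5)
Your skeleton is the paper's: condition on the macroscopic clusters $\mathfrak C_{r_0}$, use the restriction property to factor the probability into two annular disconnection probabilities, evaluate these with Proposition~\ref{Prop exp disk eps disk} plus a distortion bound, and pass to the limit. Using the $\eps$-dependent data $\mathfrak C^\eps_{r_0}$ makes your factorization exact. Your limit also holds, for a simpler reason than the one you give: once $\eps_j<d(z_j,\overline{\bigcup\mathfrak C_{r_0}})$, no loop of a cluster of $\mathfrak C_{r_0}$ hits the small disks, so $\mathfrak C^\eps_{r_0}=\mathfrak C_{r_0}$ exactly. The genuine gap is Step~3, and that step is where the content of the proposition lies.

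\emph{Integrability cannot come from one-point tail bounds.} The exponent $1/8$ is critical. For a single CLE$_4$ loop, $\E[\CR^{-\lambda}]=1/\cos(\pi\sqrt{2\lambda})$, which blows up exactly at $\lambda=1/8$. So a bound $\P(\CR(z_j,\widehat D_{r_0}(z_j))\le\rho)\lesssim\rho^{1/8}$ only gives a truncated $(-1/8)$-moment of order $\log(1/\eps)$, which is unbounded after your normalisation. In fact, for $v=0$, $\E[\CR(z_1,\widehat D_{r_0}(z_1))^{-1/8}]=\infty$. With positive probability the outermost cluster around $z_1$ reaches outside $\D(z_1,r_0)$, and with probability $1/2$ its hole around $z_1$ is the next nested CLE$_4$ loop. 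The expectation in the statement is finite only because of $\1_{E_{r_0}}$, a two-point topological constraint that no estimate in the style of Corollary~\ref{cor:expmoments} can see.

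The paper obtains finiteness (Lemma~\ref{Lem finite exp CR 1 8}) from the already-established asymptotics of $\P(E^{v,\rm holes}_{\eps_1,\eps_2})$ in Theorem~\ref{thm:twistbc}. On $\{\D(z_j,\eps_j^{1/2})\subset\widehat D_{r_0}(z_j)\}$ one has $\log(\eps_j^{-1}\CR(z_j,\widehat D_{r_0}(z_j)))\ge\frac12\log\eps_j^{-1}$. Hence the known asymptotic bounds the truncated expectations uniformly, and monotone convergence concludes. In your exact-factorization set-up, Fatou together with Theorem~\ref{thm:twistbc} would give the same finiteness.

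\emph{Finiteness alone is not enough.} You must also show that no mass escapes to configurations where clusters of $\mathfrak C_{r_0}$ come close to $z_j$. That is exactly where $\1_{E^\eps_{r_0}}$ and $\widehat D^\eps_{r_0}$ differ from their limits and where Step~2 is not uniform. A Brownian-bridge computation in deterministic nested round annuli does not control this random two-point configuration. The missing idea is the FKG (Harris) inequality for the Poisson process $\cL_D\cup\Xi^{v^2/2}_D$. Both $(E^{v,\rm holes}_{\eps_1,\eps_2})^c$ and $\{\D(z_j,\eps_j^{1/2})\not\subset\widehat D_{r_0}(z_j)\}$ are increasing, so
\[
\P\big(E^{v,\rm holes}_{\eps_1,\eps_2}\cap\{\D(z_j,\eps_j^{1/2})\not\subset\widehat D_{r_0}(z_j)\}\big)\le\P\big(E^{v,\rm holes}_{\eps_1,\eps_2}\big)\,\P\big(\D(z_j,\eps_j^{1/2})\not\subset\widehat D_{r_0}(z_j)\big)=o\big(\P(E^{v,\rm holes}_{\eps_1,\eps_2})\big).
\]
On the complementary event three things hold. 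The holes-soup and full-soup cluster data coincide. The bound $\CR\ge\eps_j^{1/2}$ makes the annulus asymptotics uniform. The normalised integrand is dominated by a constant times $\CR(z_1,\widehat D_{r_0}(z_1))^{-1/8}\CR(z_2,\widehat D_{r_0}(z_2))^{-1/8}\1_{E_{r_0}}$. Dominated convergence then finishes the proof.

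\emph{Minor points.}
\begin{itemize}
\item The one-point constant is $\sqrt{2/\pi}$, not $2/\pi$; the product over the two points gives $2/\pi$.
\item On $\{\CR\ge\eps^{1-\eta}\}$ the ratio $\log(\CR/\eps)/\log(1/\eps)$ does not tend to $1$ uniformly. Only the asymptotic with $\log(\CR/\eps)$ kept is uniform; the replacement by $\log(1/\eps)$ must be done pointwise inside dominated convergence.
\item Your uniform upper bound should read $C(\eps/\CR)^{1/8}(1+\log(\CR/\eps))^{1/2}$, since the probability tends to $1$ as $\CR/\eps\to1$.
\item In Step~1, the family ``together with all loops in their clusters'' is not a Poisson thinning independent of the rest. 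Only the conditional restriction statement holds.
\end{itemize}
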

\noindent Combining these propositions gives us the missing step of Theorem \ref{thm:equivtwist}. The rest of this section is dedicated to proving these three propositions.

\subsection{Proof of Proposition \ref{Prop asymp 1 pt ED}}

We need to estimate
$\P(E_{\D}(r))$
for $r\in (0,1)$.
Denote by $\cC_{0}=\cC_{0}(\cL_{\D})$ the outermost cluster of $\cL_{\D}$
surrounding $0$.
Denote by $\partial_{e}\cC_{0}$
the outer boundary of $\cC_{0}$ and by $\mathcal{D}_0$ the simply-connected domain containing the origin and surrounded by $\cC_0$. By \cite{SheffieldWerner2012CLE}, 
the outer boundary $\cC_{0}$ is the same as the CLE$_{4}$ loop surrounding $0$.
Then the event $E_{\D}(r)$ is the same as
$$
E_{\D}(r) = 
\{d(0,\partial_{e}\cC_{0})<r\}.
$$
We will express the asymptotic of $\P(E_{\D}(r))$
through an invariant probability measure on SLE$_{4}$ loops
introduced in \cite{KemppainenWerner16CLE}. 
See Section \ref{Subsec stat SLE4} for further references. 

Consider Jordan loops 
$\Gamma$ in $\C$, not hitting $0$ and surrounding $0$, 
and an equivalence relation on such Jordan loops,
where we identify $\Gamma$ with all its rescales
$\lambda \Gamma$ for $\lambda>0$.
Denote by $[\Gamma]$ the equivalence class of
$\Gamma$.
We will refer to $[\Gamma]$ as the \textit{shape}
of $\Gamma$.
Recall the notations
$$
R^{-}(\Gamma) = d(0,\Gamma),
\qquad
R^{+}(\Gamma) = \max\{\vert z\vert \vert z\in \Gamma\}.
$$
Notice that the ratio $\dfrac{R^{+}(\Gamma)}{R^{-}(\Gamma)}$
is scale-invariant,
and thus, it depends only on the shape $[\Gamma]$. We denote this ratio by $\rho^{\pm}([\Gamma])$.
Recall also the functional \eqref{Eq intro Y Gamma}:
$$Y([\Gamma]) = \int_{0}^{2\pi}
G_{\Ext(\Gamma)}(R^{+}(\Gamma)e^{i\alpha},\infty)
d\alpha.$$   
We consider now the stationary SLE$_4$-loop probability distribution on shapes $[\Gamma]$, denoted by 
$\mu_{\text{SLE}_4}^{\rm loop}$. In these notations, the following more precise version of Proposition \ref{Prop asymp 1 pt ED} holds.
\begin{prop}
\label{Prop asymp 1 pt ED Ver 2}
Denote by $[\Gamma^{\ast}]$ a random shape distributed according to $\mu_{\text{SLE}_4}^{\rm loop}$.
Then, as $r\to 0$,
$$
\P(E_{\D}(r))
=
\CSLE~
r^{1/8}(1+o(1)),
$$
where
$$
\CSLE
=
\dfrac{8}{\pi^2}
\E\Big[
\rho^{\pm}([\Gamma^{\ast}])^{1/8}
e^{-Y([\Gamma^{\ast}])/8}
\Big]
<+\infty.
$$
\end{prop}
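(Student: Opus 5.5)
We need $\P(d(0,\Gamma_0)<r)$, where $\Gamma_0$ is the CLE$_4$ loop in $\D$ surrounding $0$; this is the outer boundary $\partial_e\cC_0$ identified above. The plan is to use the nested structure from Subsection \ref{Subsec stat SLE4}. Write $\Gamma_0$ as the image of an SLE$_4$ loop of stationary shape, conditioned on a large extremal distance, and read off the Euclidean distance $d(0,\Gamma_0)=R^-(\Gamma_0)$ from the extremal distance plus shape corrections.

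\textbf{Decomposition.} Write $R^-(\Gamma_0)=R^+(\Gamma_0)/\rho^{\pm}([\Gamma_0])$. By Proposition \ref{Prop expansion ED},
$$\log R^+(\Gamma_0)^{-1}=2\pi\ED(\Gamma_0,\partial\D)-Y([\Gamma_0])+O(R^+(\Gamma_0)).$$
Hence on the event that $\Gamma_0$ is small,
$$\log R^-(\Gamma_0)^{-1}=2\pi\ED(\Gamma_0,\partial\D)-Y([\Gamma_0])+\log\rho^{\pm}([\Gamma_0])+o(1).$$

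\textbf{Law of $\ED$.} By \cite{ALS3} (cf.\ the proof of Proposition \ref{Prop law shape}), $2\pi\ED(\Gamma_0,\partial\D)$ is $2\pi$ times the hitting time of $\pm 2\lambda=\pm\sqrt{\pi/2}$ by a one-dimensional Brownian motion, with the time normalization used there. Its tail is exact from the Fourier series of the interval heat kernel: with $T=2\pi\ED$ and $\lambda_1$ the principal eigenvalue,
$$\P(T>t)=\frac{4}{\pi}e^{-t/8}(1+o(1)).$$
The exponent $1/8$ matches the one-point exponent, and the prefactor $4/\pi$ is the first Fourier coefficient; it will combine with the normalisation into $8/\pi^2$. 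I would double-check by also using the density, $-\frac{d}{dt}\P(T>t)=\frac{1}{2\pi}e^{-t/8}(1+o(1))$.

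\textbf{Asymptotic independence of shape.} The key input is that as $\ED\to\infty$, the conditional law of the shape $[\Gamma_0]$ converges to $\mu^{\rm loop}_{\text{SLE}_4}$. I would justify this through the two-step construction of Subsection \ref{Subsec stat SLE4}. There the shape of $\check\Gamma_2$ is exactly stationary and independent of $\ED(\check\Gamma_2,\check\Gamma_1)$, and mapping by $f_{\check\Gamma_1}$ distorts small loops near $0$ only by $1+O(|z|)$, by Proposition \ref{Prop de Branges}. So, conditionally on $T=t$ large, the shape functional $X:=\log\rho^{\pm}-Y$ is asymptotically distributed as under $\mu^{\rm loop}_{\text{SLE}_4}$ and asymptotically independent of $T$. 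Then
$$\P(R^-<r)=\P\bigl(T>\log r^{-1}+X+o(1)\bigr)\approx\frac{4}{\pi}\,r^{1/8}\,\E\bigl[e^{X/8}\bigr],$$
with $e^{X/8}=\rho^{\pm}([\Gamma^\ast])^{1/8}e^{-Y/8}$. Matching constants to $\frac{8}{\pi^2}$ will require care with the normalization of the Brownian time relative to $\ED$.

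\textbf{Main obstacle.} The hard step is justifying the exchange of limits: uniform integrability when $X$ is large, and the $o(1)$ errors when $\Gamma_0$ is not small. Corollary \ref{cor:expmoments} gives $\E[e^{X/8}]<\infty$, and indeed it gives slightly more. Since $X$ is dominated via Proposition \ref{Prop law shape} by $\log 4+T^{\rm BES3}_\pi$, which has tail $e^{-t/2}$, it has exponential moments well beyond $1/8$. I would first try to use this to dominate the contributions from large $X$, splitting on $\{X\le\log r^{-1}/2\}$ and its complement. On the complement, the bound $\P(X>\log r^{-1}/2)\le r^{1/4-\delta}$ is negligible compared with $r^{1/8}$. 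This yields the asymptotic with the stated constant, and also its finiteness.
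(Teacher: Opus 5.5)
Your skeleton matches the paper's proof. You use the two-step construction $\check\Gamma=f_{\check\Gamma_1}(\check\Gamma_2)$, the exact independence of $\ED(\check\Gamma_2,\check\Gamma_1)$ from $[\check\Gamma_2]$, and Proposition \ref{Prop expansion ED} to turn $d(0,\check\Gamma)$ into $2\pi\ED$ plus the shape functional. You also cut at $\rho^{\pm}\le r^{-1/2}$ and control the rest by the $T^{\rm BES3}_\pi$ tail. The genuine error is the law you give to the extremal distance, and that is exactly the input that fixes the constant. In your normalisation, $2\pi$ times the exit time from $(-\sqrt{\pi/2},\sqrt{\pi/2})$ is the exit time $T_\pi$ of a standard Brownian motion $W$ from $(-\pi,\pi)$. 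That is the law of $\log\CR(0,\Int\check\Gamma)^{-1}$ \cite{SSW09CR}, not of $2\pi\ED(\check\Gamma,\partial\D)$. By \cite[Theorem 1]{ALS3}, $2\pi\ED$ is the last visit time $\mathcal{T}^{(0)}_{-\pi,\pi}$ of $0$ before $T_\pi$. The remainder $T_\pi-\mathcal{T}^{(0)}_{-\pi,\pi}$ is the independent $T^{\rm BES3}_\pi$ used in the proof of Proposition \ref{Prop law shape}. Your identification is in fact incompatible with that proof: it would make $\log\CR^{-1}$ equal to $T_\pi$ plus an independent positive variable. Using the first Dirichlet eigenfunction $\pi^{-1/2}\cos(x/2)$ of $(-\pi,\pi)$, one gets
$$\P(\mathcal{T}^{(0)}_{-\pi,\pi}>t)=\E\big[\1_{\{T_\pi>t\}}\big(1-|W_t|/\pi\big)\big]\sim\frac{e^{-t/8}}{\pi}\int_{-\pi}^{\pi}\cos(x/2)\Big(1-\frac{|x|}{\pi}\Big)\,dx=\frac{8}{\pi^2}\,e^{-t/8}.$$
Your $\frac{4}{\pi}e^{-t/8}$ is instead the tail of $T_\pi$.

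Your expectation that the mismatch will be absorbed by \emph{care with the normalization of the Brownian time} cannot work. Rescaling time, or equivalently the interval, changes the decay rate of such a tail but never its prefactor, and the rate is already pinned at $1/8$. As written, your argument yields $\frac{4}{\pi}\E[\rho^{\pm}([\Gamma^{\ast}])^{1/8}e^{-Y([\Gamma^{\ast}])/8}]$, which is off by a factor $\pi/2$. Replacing the exit time by $\mathcal{T}^{(0)}_{-\pi,\pi}$ repairs this.

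Two lesser points. First, the event is $\{T>\log r^{-1}-X+o(1)\}$, not $\{T>\log r^{-1}+X+o(1)\}$; only the former gives $\E[e^{X/8}]$. Second, Proposition \ref{Prop de Branges} alone transfers $R^{\pm}$ between $\check\Gamma$ and $\check\Gamma_2$, but not $Y$. Comparing $Y([\check\Gamma])$ with $Y([\check\Gamma_2])$ needs a separate harmonic-measure estimate. The paper obtains an $O(R^{+}(\check\Gamma)^{1/2})$ error via the Beurling estimate in Proposition \ref{Prop diff Y}, which bounds the total error deterministically on the good event.
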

There are two key inputs into proving this proposition.
\begin{itemize}
\item First, the explicit asymptotics as $r \to 0$ for the probabilities of the events $\{\CR(0, \cD_0) < r\}$ and $\{2\pi \ED(\partial \D, \partial \cC_0) > - \log r\}$. 
\item Second, a more quantitative version of  
\cite[Corollary 2]{KemppainenWerner16CLE}, 
which says that as the extremal distance $\ED(\partial \D, \partial \cC_0)$ goes to $\infty$, the shape of the CLE$_4$ loop $\partial \cC_0$ converges to a shape distributed according to $\mu_{\text{SLE}_4}^{\rm loop}$.
\end{itemize}
The first input follows from \cite[Theorem 1.1]{ALS3}. The second input is packed in a deterministic statement given below. This statement helps us compare the CLE$_4$ loop in the unit disk surrounding the origin with a CLE$_4$ loop in a random domain whose boundary has a shape 
distributed according to $\mu_{\text{SLE}_4}^{\rm loop}$.
Notice that by \cite[Section 3.1]{KemppainenWerner16CLE} the shape of this latter CLE$_4$ loop is also distributed according to $\mu_{\text{SLE}_4}^{\rm loop}$. \\

So consider two disjoint Jordan loops
$\Gamma_1$ and $\Gamma_2$,
both avoiding and surrounding the point $0$,
and $\Gamma_1$ surrounding $\Gamma_2$.
We further assume that
$\CR(0,\C\setminus \Gamma_1)=1$.
Denote by $f_{\Gamma_1}$ the unique conformal mapping
uniformizing $\Int(\Gamma_1)$,
the interior of $\Gamma_1$, to the unit disk $\D$,
with $f_{\Gamma_1}(0)=0$ and
$f'_{\Gamma_1}(0)=1$. We further set
$
\Gamma = f_{\Gamma_1}(\Gamma_2),
$
so that $\Gamma$ is a Jordan loop in $\D$
surrounding $0$. 

By conformal invariance,
$$
\ED(\Gamma_2,\Gamma_1)
=
\ED(\Gamma,\partial\D) .
$$
We will be interested in comparing the shapes of $\Gamma$ and $\Gamma_2$ quantitatively as they become smaller and smaller. 
\begin{prop}
\label{Prop diff Y}
There are universal constants $R_1, C_1, C_2, C_3 > 0$ not depending on $\Gamma_1$ and $\Gamma_2$ such that the following holds.
\begin{enumerate}
\item 
If $R^{+}(\Gamma)\leq R_1$,
then
$\vert R^{+}(\Gamma) - R^{+}(\Gamma_2)\vert\leq C_1
R^{+}(\Gamma)^2$,
and
$\vert d(0,\Gamma)-d(0,\Gamma_2)\vert\leq
C_2d(0,\Gamma)^2$.
\item 
For every
$\Gamma_1,\Gamma_2$ as above,
and $\Gamma = f_{\Gamma_1}(\Gamma_2)$,
we have
$$
\vert Y([\Gamma_2]) - Y([\Gamma])\vert
\leq C_3
R^{+}(\Gamma)^{1/2}.
$$
\end{enumerate}
\end{prop}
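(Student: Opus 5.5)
The plan is to apply Proposition \ref{Prop de Branges} to $f_{\Gamma_1}^{-1}$, which is legitimate because $\CR(0,\Int(\Gamma_1))=1$. For part (1) set $R_1=R_{\rm univ}$. Since $\Gamma_2 = f_{\Gamma_1}^{-1}(\Gamma)$ and $\Gamma\subset\overline{\D(0,R^+(\Gamma))}\subset\overline{\D(0,R_{\rm univ})}$, every $w\in\Gamma$ satisfies $\big\vert\, \vert f_{\Gamma_1}^{-1}(w)\vert-\vert w\vert\,\big\vert\le C_{\rm univ}\vert w\vert^2$. Taking the maximum over $w\in\Gamma$ gives
\[
\vert R^+(\Gamma_2)-R^+(\Gamma)\vert\le C_{\rm univ}R^+(\Gamma)^2 .
\]
For the minimum, take $w$ attaining $d(0,\Gamma)$, and conversely a point of $\Gamma_2$ attaining $d(0,\Gamma_2)$; the latter is the image of a point $w'$ with $\vert w'\vert\le R^+(\Gamma)$. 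The same estimate then gives $\vert d(0,\Gamma_2)-d(0,\Gamma)\vert\le C_{\rm univ}\max(\vert w\vert,\vert w'\vert)^2$. The term from $w'$ is a priori only bounded by $C R^+(\Gamma)^2$, so I refine it: if $\vert w'\vert\le 2d(0,\Gamma)$ we get the bound $4C_{\rm univ}d(0,\Gamma)^2$ directly. Otherwise $\vert f^{-1}(w')\vert\ge \vert w'\vert(1-C\vert w'\vert)\ge \vert w'\vert/2\ge d(0,\Gamma)$ once $R_1\le 1/(2C)$, so $w'$ is not a competitor and the minimum is controlled by $w$ alone. Shrinking $R_1$ if necessary handles this, giving $C_1=C_{\rm univ}$ and $C_2=4C_{\rm univ}$.

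For part (2), I will use the characterisation from the proof of Proposition \ref{Prop bounds Y}:
\[
Y([\Gamma]) = 2\pi\ED(\Gamma,\partial\D)-\log R^+(\Gamma)^{-1}+O\big(R^+(\Gamma)\big),
\]
which follows from Proposition \ref{Prop expansion ED}. The analogous identity for $\Gamma_2$ will be obtained via scaling. By conformal invariance,
\[
\ED(\Gamma_2,\Gamma_1)=\ED(\Gamma,\partial\D).
\]
The aim is then to compare $\ED(\Gamma_2,\Gamma_1)$ with $\ED(\Gamma_2,\partial\D(0,\rho))$ for a suitable intermediate radius. Concretely, insert the circle $\partial\D(0,s)$ with $s=R^+(\Gamma)^{1/2}$. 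By part (1) and Koebe distortion, $f_{\Gamma_1}^{-1}$ maps $\partial\D(0,s)$ onto a curve squeezed between the circles of radii $s(1\pm Cs)$. Proposition \ref{Prop expansion ED}, rescaled to the disk of radius $s$ and applied to both $\Gamma$ and $\Gamma_2$, shows that
\[
2\pi\ED(\cdot,\partial\D(0,s))-\log\big(s/R^+(\cdot)\big)
\]
equals $Y$ of the respective shape up to $O(R^+/s)=O(R^{+1/2})$. It then remains to show that the conformal map $f_{\Gamma_1}^{-1}$ restricted to $\D(0,s)$ changes $\ED(\Gamma,\partial\D(0,s))$ by only $O(s)$. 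This follows from monotonicity of extremal distance together with the sandwich $\D(0,s(1-Cs))\subset f^{-1}(\D(0,s))\subset\D(0,s(1+Cs))$, since then $2\pi\ED(\Gamma_2,\partial\D(0,s(1\pm Cs)))$ brackets $2\pi\ED(\Gamma,\partial\D(0,s))$ and the two brackets differ by $O(s)$. Combining these with $\log R^+(\Gamma_2)-\log R^+(\Gamma)=O(R^+(\Gamma))$ from part (1) yields $\vert Y([\Gamma_2])-Y([\Gamma])\vert\le C R^+(\Gamma)^{1/2}$ whenever $R^+(\Gamma)\le R_1$. When $R^+(\Gamma)>R_1$, the bound $0\le Y\le 2\log 2$ from Proposition \ref{Prop bounds Y} gives the claim with $C_3\ge 2\log2/R_1^{1/2}$.

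The main obstacle is the bracketing step in part (2). One must check carefully that the errors in Proposition \ref{Prop expansion ED}, which are of order $R^+/s$ with $s$ playing the role of the outer radius, and the distortion error $O(s)$ are balanced by the choice $s=R^{+1/2}$. This balance is exactly why the exponent $1/2$ appears in the statement.
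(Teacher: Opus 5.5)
Your proposal is correct. Part (1) is essentially the paper's argument: both are direct consequences of Proposition \ref{Prop de Branges}. The paper gets the lower bound on $d(0,\Gamma_2)$ by applying $f_{\Gamma_1}$ to a minimiser on $\Gamma_2$, while you apply $f_{\Gamma_1}^{-1}$ with a case split; the two are equivalent.

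Part (2), however, follows a genuinely different route. The paper rescales both loops so they touch $\partial\D$. It writes $Y$ as $\frac{\log 2}{2\pi}$ times the expected number of Brownian crossings from $\partial\D$ to $2\partial\D$ completed before the loop is hit. It then bounds $\vert Y([\Gamma_2])-Y([\Gamma])\vert$ by the probability that a crossing hits one rescaled loop but not the other. That probability is $O(R^{+}(\Gamma)^{1/2})$ by the Beurling estimate of Lemma \ref{Lem one but not other}, and the series over crossings is controlled by the uniform disconnection probability $p_{\rm sep}$.

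You instead stay entirely within extremal length. Your ingredients are:
\begin{itemize}
\item Proposition \ref{Prop expansion ED} at the intermediate scale $s=R^{+}(\Gamma)^{1/2}$;
\item conformal invariance of $\ED$ under $f_{\Gamma_1}^{-1}$;
\item monotonicity of $\ED$ under the sandwich $\D(0,s(1-C_{\rm univ}s))\subset f_{\Gamma_1}^{-1}(\D(0,s))\subset \D(0,s(1+C_{\rm univ}s))$ given by Proposition \ref{Prop de Branges};
\item part (1), which gives $\log(R^{+}(\Gamma_2)/R^{+}(\Gamma))=O(R^{+}(\Gamma))$.
\end{itemize}
Together with $0\le Y\le 2\log 2$ for large $R^{+}(\Gamma)$, this closes with explicit constants.

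What each route buys: yours is shorter and recycles a result already proved, avoiding both the crossing decomposition and the Beurling lemma. In your argument the exponent $1/2$ is the optimal balance between the distortion error $O(s)$ and the expansion error $O(R^{+}(\Gamma)/s)$. In the paper it is the Beurling exponent, and the two rescaled shapes are compared directly through Brownian hitting events, without passing through an auxiliary circle.

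Two small points of precision. First, no composition law shows that the brackets $2\pi\ED(\Gamma_2,\partial\D(0,s(1\pm C_{\rm univ}s)))$ differ by $O(s)$. What you actually use is Proposition \ref{Prop expansion ED} at both radii, which gives each bracket as $\log(s/R^{+}(\Gamma_2))+Y([\Gamma_2])+O(s)+O(R^{+}(\Gamma)/s)$; that is all you need. Second, make explicit that $R_1$ is shrunk so that $s\le R_{\rm univ}$, $C_{\rm univ}s\le 1/2$, and $R^{+}(\Gamma_2)\le 2R^{+}(\Gamma)$ is small compared with $s(1-C_{\rm univ}s)$. This ensures every application of Proposition \ref{Prop expansion ED} has a small ratio $R^{+}/\rho$.
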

We will postpone the proof of this proposition and first see how to conclude Proposition \ref{Prop asymp 1 pt ED Ver 2}.
\begin{proof}[Proof of Proposition \ref{Prop asymp 1 pt ED Ver 2}]
Let $\check{\Gamma}$ denote the CLE$_4$ loop in $\D$ surrounding $0$. We are interested in the probability of the event
$E_{\D}(r)$, which corresponds to
$$
E_{\D}(r) = \{d(0,\check{\Gamma})<r\}.
$$
Consider now the event
$$
Q(r) = \{R^{+}(\check{\Gamma})\leq 
r^{-1/2}d(0,\check{\Gamma})\}.
$$
By \eqref{Eq bounds ED} and Koebe quarter theorem,
on the complementary event $Q(r)^{\rm c}$,
$$
\log \CR(0,\C\setminus \check{\Gamma})^{-1}
-
2\pi \ED(\check{\Gamma},\partial \D)
\geq 
\dfrac{1}{2}\log r^{-1}
- 4\log(2).
$$
In \cite{ALS3} it was shown that the above random variable is distributed as $T_{\pi}^{\rm BES3}$, 
see also the proof of Proposition \ref{Prop law shape}.
Thus,
$$
\P(Q(r)^{\rm c})
\leq
\P\Big(T_{\pi}^{\rm BES3}\geq \dfrac{1}{2}\log r^{-1} - 4\log(2)\Big)
= r^{1/4 + o(1)}
= o(r^{1/8}).
$$
So it is enough to get an asymptotic for
$\P(E_{\D}(r)\cap Q(r))$.

The event $E_{\D}(r)$ can be rewritten as
$$
E_{\D}(r) = 
\{ 2\pi\ED(\check{\Gamma}_2,\check{\Gamma}_1)
>
\log r^{-1}
-
\log\rho^{\pm}([\check{\Gamma}_2])
+
Y([\check{\Gamma}_2])
+
\eta
\},
$$
where the random variable $\eta$ is
\begin{eqnarray*}
\nonumber
\eta &=& 
\big(
2\pi \ED(\check{\Gamma},\partial\D)
-
\log(R^{+}(\check{\Gamma})^{-1})
-
Y([\check{\Gamma}])
\big)
+
\log(R^{+}(\check{\Gamma}_2)/R^{+}(\check{\Gamma}))
\\
&& +
\log(d(0,\check{\Gamma})/d(0,\check{\Gamma}_2))
+
(Y([\check{\Gamma}])-Y([\check{\Gamma}_2]))
.
\end{eqnarray*}
Further, on the event $E_{\D}(r)\cap Q(r)$,
$$
R^{+}(\check{\Gamma})
\leq
r^{-1/2}d(0,\check{\Gamma})
< r^{1/2}.
$$
By applying Proposition \ref{Prop expansion ED} and
Proposition \ref{Prop diff Y}, we get that on the
event $E_{\D}(r)\cap Q(r)$,
$\eta = O(r^{1/4})$, with deterministic bounds.

Let $(W_t)_{t\geq 0}$ be a standard one-dimensional Brownian motion starting from $0$.
Let $\mathcal{T}_{-\pi,\pi}^{(0)}$ be the last visit time of level $0$ by $W_t$ before exiting the interval 
$(-\pi,\pi)$.
According to \cite[Theorem 1]{ALS3},
$2\pi\ED(\check{\Gamma}_2,\check{\Gamma}_1)$
is distributed like 
$\mathcal{T}_{-\pi,\pi}^{(0)}$.
Moreover, according to 
\cite[Proposition 4]{KemppainenWerner16CLE},
$\ED(\check{\Gamma}_2,\check{\Gamma}_1)$
is independent from
$-\log\rho^{\pm}([\check{\Gamma}_2])+Y([\check{\Gamma}_2])$.
The tail of $\mathcal{T}_{-\pi,\pi}^{(0)}$ is given by
$$
\P(\mathcal{T}_{-\pi,\pi}^{(0)} > t)\sim 
\dfrac{8}{\pi^2}
e^{-\frac{1}{8}t};
$$
see \cite[Appendix A]{ALS3}.
Thus, we can write,
\begin{multline*}
\P(2\pi\ED(\check{\Gamma}_2,\check{\Gamma}_1)
>
\log r^{-1}
-
\log\rho^{\pm}([\check{\Gamma}_2])
+
Y([\check{\Gamma}_2]) +
O(r^{1/4})
) \sim
\\
\P(2\pi\ED(\check{\Gamma}_2,\check{\Gamma}_1)
>
\log r^{-1}
-
\log\rho^{\pm}([\check{\Gamma}_2])
+
Y([\check{\Gamma}_2]))
\sim
\dfrac{8}{\pi^2}
r^{1/8}
\E\Big[
\rho^{\pm}([\check{\Gamma}_2])^{1/8}
e^{-Y([\check{\Gamma}_2])/8}
\Big],
\end{multline*}
where we used Corollary \ref{cor:expmoments}. 
This concludes.
\end{proof}

\subsubsection{Proof of Proposition \ref{Prop diff Y}}

The first element is rather simple.

\begin{proof}[Proof of Proposition \ref{Prop diff Y} part A]
We will focus only on the second inequality which is somewhat more subtle.  
Let $z\in\Gamma$ and $w\in \Gamma_2$
such that
$$
\vert z\vert = d(0,\Gamma),
\qquad
\vert w\vert = d(0,\Gamma_2).
$$
Denote
$$
z' = f_{\Gamma_1}^{-1}(z),
\qquad
w' = f_{\Gamma_1}(w).
$$
By Proposition \ref{Prop de Branges}, taking $R_1 = R_{\rm univ}$
$$
\vert z' -z\vert\leq C_{\rm univ}\vert z\vert^2,
\qquad
\vert w'-w\vert
\leq C_{\rm univ}\vert w\vert^2.
$$
The first bound gives us
$$
d(0,\Gamma_2)
\leq 
d(0,\Gamma)
+
C_{\rm univ}
d(0,\Gamma)^2 .
$$
The second bound gives
$$
d(0,\Gamma_2)
+
C_{\rm univ}
d(0,\Gamma_2)^2
\geq
d(0,\Gamma),
$$
which implies
$$
d(0,\Gamma_2)
\geq
d(0,\Gamma)
-
C'_{\rm univ}
d(0,\Gamma)^2
$$
for appropriate $C'_{\rm univ}\geq C_{\rm univ}$, not depending on the particular choice of loops. 
We set $C_1 = C_{\rm univ}$ and $C_2 = C'_{\rm univ}$.
\end{proof}
~\\To prove the element B, we will first state an auxiliary lemma. Denote the rescaled loops
$$
\widetilde{\Gamma}=
R^{+}(\Gamma)^{-1}\Gamma,
\qquad
\widetilde{\Gamma}_2=
R^{+}(\Gamma_2)^{-1}\Gamma_2.
$$
In this way,
$\widetilde{\Gamma}$ and $\widetilde{\Gamma}_2$
are contained in $\overline{\D}$ and
intersect $\partial\D$.
Moreover,
\begin{equation}
\label{Eq Y Gamma tilde Gamma}
Y([\Gamma]) = Y([\widetilde{\Gamma}]),
\qquad
Y([\Gamma_2]) = Y([\widetilde{\Gamma}_2]) .
\end{equation}

\begin{lemma}
\label{Lem one but not other}
Let $(B_t)_{t\geq 0}$ denote a standard Brownian motion on
$\C$,
and denote by
$\P_z$ the probability associated to the initial condition
$B_0=z$.
Denote by $T_{2\partial\D}$ the first time this Brownian motion hits the circle $2\partial\D$.
There is a universal constant 
$C''_{\rm univ}>0$,
such that for every
$\Gamma_1,\Gamma_2$ as above,
and the associated $\Gamma = f_{\Gamma_1}(\Gamma_2)$,
$\widetilde{\Gamma}$ and $\widetilde{\Gamma}_2$,
and for every $\theta\in [0,2\pi)$,
we have
\begin{equation}
\label{Eq Beurlin bound 1}
\P_{e^{i\theta}}
(B([0,T_{2\partial\D}])
\text{ intersects } \widetilde{\Gamma}
\text{ but not } \widetilde{\Gamma}_2
)
\leq 
C''_{\rm univ} R^{+}(\Gamma)^{1/2},
\end{equation}
and
\begin{equation}
\label{Eq Beurlin bound 2}
\P_{e^{i\theta}}
(B([0,T_{2\partial\D}])
\text{ intersects } \widetilde{\Gamma}_2
\text{ but not } \widetilde{\Gamma}
)
\leq 
C''_{\rm univ} R^{+}(\Gamma)^{1/2}.
\end{equation}
\end{lemma}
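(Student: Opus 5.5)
The plan is to make the rescaled loops $\widetilde{\Gamma}$ and $\widetilde{\Gamma}_2$ uniformly close in Hausdorff distance, and then to convert closeness into the hitting estimates \eqref{Eq Beurlin bound 1}--\eqref{Eq Beurlin bound 2} via the Beurling estimate. Write $\widetilde{\Gamma}_2 = R^{+}(\Gamma_2)^{-1} f_{\Gamma_1}^{-1}(\Gamma)$. Assume first that $R^{+}(\Gamma)\leq R_1$ for a small universal $R_1$. Then Proposition \ref{Prop de Branges} gives, for every $z\in\Gamma$, the bound $\vert f_{\Gamma_1}^{-1}(z)-z\vert\leq C_{\rm univ}\vert z\vert^2\leq C_{\rm univ}R^{+}(\Gamma)^2$. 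Part A of Proposition \ref{Prop diff Y} gives $R^{+}(\Gamma_2)=R^{+}(\Gamma)(1+O(R^{+}(\Gamma)))$. Combining these two facts, the map $h(\zeta)=R^{+}(\Gamma_2)^{-1}f_{\Gamma_1}^{-1}(R^{+}(\Gamma)\zeta)$ sends $\widetilde{\Gamma}$ onto $\widetilde{\Gamma}_2$ and satisfies $\vert h(\zeta)-\zeta\vert\leq C R^{+}(\Gamma)$ uniformly on $\overline{\D}$. In particular, every point of $\widetilde{\Gamma}$ lies within distance $\delta:=CR^{+}(\Gamma)$ of $\widetilde{\Gamma}_2$, and conversely.

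By symmetry, to prove \eqref{Eq Beurlin bound 1} I work on the event that $B$ hits $\widetilde{\Gamma}$ before $T_{2\partial\D}$ but never hits $\widetilde{\Gamma}_2$. Let $\tau$ be the hitting time of $\widetilde{\Gamma}$. Then $B_\tau$ lies within $\delta$ of some point $w\in\widetilde{\Gamma}_2$. The loop $\widetilde{\Gamma}_2$ is a connected set of diameter at least $1$, since it meets $\partial\D$ and surrounds $0$; it therefore connects $\D(w,\delta)$ to the circle $\partial\D(w,1/2)$. The Beurling estimate now bounds the probability that Brownian motion started at $B_\tau$ reaches $\partial\D(w,1/2)$ without touching $\widetilde{\Gamma}_2$ by $C(\delta/(1/2))^{1/2}$.

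The remaining risk is that $B$ stays inside $\D(w,1/2)$ forever, but this cannot happen before $T_{2\partial\D}$ unless it hits $\widetilde{\Gamma}_2$. If $w$ lies deep inside the unit disk, then $\partial\D(w,1/2)$ still sits inside $2\D$, so $B$ must exit $\D(w,1/2)$ before reaching $2\partial\D$. If instead $w$ is near $\partial\D$, I reach the same conclusion by using the circle $\partial\D(w,1/4)$ in place of $\partial\D(w,1/2)$. The strong Markov property at $\tau$ then gives the bound $C''_{\rm univ}R^{+}(\Gamma)^{1/2}$. The argument for \eqref{Eq Beurlin bound 2} is identical with the roles of the two loops swapped, using the inverse map $h^{-1}$, which also moves points by $O(\delta)$ by Proposition \ref{Prop de Branges}.

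When $R^{+}(\Gamma)>R_1$, the inequalities hold trivially once $C''_{\rm univ}\geq R_1^{-1/2}$, because every probability is at most $1$. The main obstacle is the uniformity of the Hausdorff bound. The displacement $\vert f_{\Gamma_1}^{-1}(z)-z\vert$ must be small relative to the scale $R^{+}(\Gamma)$, and Proposition \ref{Prop de Branges} supplies exactly this, since $O(\vert z\vert^2)/R^{+}(\Gamma)=O(R^{+}(\Gamma))$. The normalisation constants must also match to first order, and this is part A of Proposition \ref{Prop diff Y}; a mismatch there would destroy the $\delta=O(R^{+}(\Gamma))$ scale.
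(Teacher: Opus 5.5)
Your proof is correct and follows the paper's route. Proposition \ref{Prop de Branges} together with part A of Proposition \ref{Prop diff Y} makes $\widetilde{\Gamma}$ and $\widetilde{\Gamma}_2$ pointwise $O(R^{+}(\Gamma))$-close through the rescaled map, and the Beurling estimate after the strong Markov property at the first hitting time then gives the $R^{+}(\Gamma)^{1/2}$ bound. The differences are cosmetic. The paper uses the relative bound $\vert Z_2-Z\vert\leq \frac{8}{3}C_{\rm univ}R^{+}(\Gamma)\vert Z\vert$ with the disk $\D(Z,\vert Z\vert)$, whose boundary passes through $0\in\Int(\widetilde{\Gamma}_2)$, whereas you use a fixed scale $1/2$ via the diameter of the loop. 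Your case split near $\partial\D$ is unnecessary, since $\vert w\vert\leq 1$ already gives $\D(w,1/2)\subset 2\D$.
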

Before proving this lemma, let us see how the Proposition \ref{Prop diff Y} part B follows.
\begin{proof}[Proof of Proposition \ref{Prop diff Y} part B]
We will use the equalities \eqref{Eq Y Gamma tilde Gamma}.
Let $(B_t)_{t\geq 0}$ denote a standard Brownian motion on
$\C$,
and denote by
$\E_z$ the expectation associated to the initial condition
$B_0=z$.
We will consider the crossings between the circles
$\partial\D$ and $2\partial\D$.
Set $T_{\partial\D}^{(0)}=0$.
Denote by
$T_{2\partial\D}^{(k)}$ the first time after
$T_{\partial\D}^{(k)}$ when
$B_t$ hits the circle $2\partial\D$,
and $T_{\partial\D}^{(k+1)}$ the first time after
$T_{2\partial\D}^{(k)}$  when
$B_t$ hits the circle $\partial\D$.
Then one can decompose $Y([\widetilde{\Gamma}])$
as follows:
$$
Y([\widetilde{\Gamma}])
=
\sum_{k=0}^{+\infty}
\int_{0}^{2\pi}
\E_{e^{i\alpha}}
\Big[
\1_{B([0,T_{2\partial\D}^{(k)}])\cap\widetilde{\Gamma}
=\emptyset}
G_{\C\setminus\overline{\D}}
(B_{T_{2\partial\D}^{(k)}},\infty)
\Big]
d\alpha .
$$
For this we count the number of crossings
between $\partial\D$ and $2\partial\D$
of a Brownian excursion in $\Ext(\widetilde{\Gamma})$
from $e^{i\alpha}$ and $\infty$.
A similar formula holds for $Y([\widetilde{\Gamma}_{2}])$.
But
$$
G_{\C\setminus\overline{\D}}
(B_{T_{2\partial\D}^{(k)}},\infty)
=
\dfrac{\log(2)}{2\pi} .
$$
Thus,
$$
Y([\widetilde{\Gamma}])
=
\dfrac{\log(2)}{2\pi}
\sum_{k=0}^{+\infty}
\int_{0}^{2\pi}
\P_{e^{i\alpha}}
\big(
B([0,T_{2\partial\D}^{(k)}])\cap\widetilde{\Gamma}
=\emptyset
\big)
d\alpha .
$$
It follows that
\begin{multline*}
\dfrac{2\pi}{\log(2)}
\vert Y([\widetilde{\Gamma}_2]) - Y([\widetilde{\Gamma}])\vert
\leq
\\
\sum_{\substack{k\geq 0\\0\leq j\leq k}}
\int_{0}^{2\pi}
\P_{e^{i\alpha}}
\big(
B([0,T_{2\partial\D}^{(j-1)}])\cap
(\widetilde{\Gamma}\cup\widetilde{\Gamma}_2)
=
\emptyset,
B([T_{\partial\D}^{(j)},T_{2\partial\D}^{(j)}])
\cap\widetilde{\Gamma}\neq \emptyset,
B([T_{\partial\D}^{(j)},T_{2\partial\D}^{(k)}])
\cap\widetilde{\Gamma}_2 = \emptyset
\big)
d\alpha
\\
+
\sum_{\substack{k\geq 0\\0\leq j\leq k}}
\int_{0}^{2\pi}
\P_{e^{i\alpha}}
\big(
B([0,T_{2\partial\D}^{(j-1)}])\cap
(\widetilde{\Gamma}\cup\widetilde{\Gamma}_2)
=
\emptyset,
B([T_{\partial\D}^{(j)},T_{2\partial\D}^{(j)}])
\cap\widetilde{\Gamma}_2\neq \emptyset,
B([T_{\partial\D}^{(j)},T_{2\partial\D}^{(k)}])
\cap\widetilde{\Gamma} = \emptyset
\big)
d\alpha .
\end{multline*}
Denote
$$
p_{\rm sep}
=
\P_{1}\big(B([0,T_{2\partial\D}^{(0)}])\cap\overline{\D}
\text{ disconnects } 0 \text{ from } \partial\D
\big)
>0.
$$
If $B([0,T_{2\partial\D}^{(0)}])\cap\overline{\D}$
disconnects $0$ from $\partial\D$,
then necessarily $B([0,T_{2\partial\D}^{(0)}])$
intersects both $\widetilde{\Gamma}$
and $\widetilde{\Gamma}_2$.
So, by combining with the Markov property and
Lemma \ref{Lem one but not other},
we get that
\begin{displaymath}
\vert Y([\widetilde{\Gamma}_2]) - Y([\widetilde{\Gamma}])\vert
\leq
\dfrac{\log(2)}{\pi}
\Big(
\sum_{k=0}^{+\infty}(k+1)
(1-p_{\rm sep})^{k}
\Big)
C''_{\rm univ} R^{+}(\Gamma)^{1/2}
.
\qedhere
\end{displaymath}
\end{proof}
It remains to prove the lemma.
\begin{proof}[Proof of Lemma \ref{Lem one but not other}]
We can assume that 
$R^{+}(\Gamma)\leq R_{\rm univ}$
and that $C_{\rm univ}R^{+}(\Gamma)\leq 1/4$.
The second condition will become clear further down the line.
Larger values of $R^{+}(\Gamma)$ can be absorbed into a larger constant $C''_{\rm univ}$.
Denote by $\tilde{f}_{\Gamma_1}$ the mapping
$\tilde{f}_{\Gamma_1}(z) = 
R^{+}(\Gamma)^{-1} f_{\Gamma_1}(R^{+}(\Gamma_2)z)$.
Denote by $T_{\widetilde{\Gamma}}$
the first hitting time of $\widetilde{\Gamma}$,
and by $T_{\widetilde{\Gamma}_2}$ the first hitting time of
$\widetilde{\Gamma}_2$.
Let $\theta\in [0,2\pi)$ and set $B_0 = e^{i\theta}$.

First we consider the event 
$\{T_{\widetilde{\Gamma}}<T_{2\partial\D}\}$.
Denote $Z=B_{T_{\widetilde{\Gamma}}}\in\widetilde{\Gamma}$ 
and 
$Z_2 = \tilde{f}_{\Gamma_1}^{-1}(Z)\in\widetilde{\Gamma}_2$.
We have that
$$
\vert Z_2 - Z\vert
\leq
\vert
R^{+}(\Gamma_2)^{-1}
-
R^{+}(\Gamma)^{-1}
\vert
\vert f_{\Gamma_1}^{-1}(R^{+}(\Gamma)Z)\vert
+
R^{+}(\Gamma)^{-1}
\vert f_{\Gamma_1}^{-1}(R^{+}(\Gamma)Z) - 
R^{+}(\Gamma)Z
\vert.
$$
Since $\vert R^{+}(\Gamma)Z\vert\leq R_{\rm univ}$,
by Proposition \ref{Prop de Branges}, we get that
$$
\vert f_{\Gamma_1}^{-1}(R^{+}(\Gamma)Z) - 
R^{+}(\Gamma)Z
\vert
\leq C_{\rm univ}R^{+}(\Gamma)^{2}\vert Z\vert^{2},
$$
and
$$
\vert f_{\Gamma_1}^{-1}(R^{+}(\Gamma)Z)\vert
\leq 
R^{+}(\Gamma)\vert Z\vert
+ C_{\rm univ}R^{+}(\Gamma)^{2}\vert Z\vert^{2}.
$$
By Proposition \ref{Prop diff Y} A,
$$
\vert
R^{+}(\Gamma_2)^{-1}
-
R^{+}(\Gamma)^{-1}
\vert
\leq 
R^{+}(\Gamma_2)^{-1}C_{\rm univ}R^{+}(\Gamma)
\leq
\dfrac{C_{\rm univ}}{1-C_{\rm univ}R^{+}(\Gamma)}.
$$
Since we assumed that $C_{\rm univ}R^{+}(\Gamma)\leq 1/4$,
in particular, $1-C_{\rm univ}R^{+}(\Gamma)>0$.
In this way,
\begin{eqnarray*}
\nonumber
\vert Z_2 - Z\vert
&\leq&
\dfrac{C_{\rm univ}}{1-C_{\rm univ}R^{+}(\Gamma)}
(R^{+}(\Gamma)\vert Z\vert
+ C_{\rm univ}R^{+}(\Gamma)^{2}\vert Z\vert^{2})
+
C_{\rm univ}R^{+}(\Gamma)\vert Z\vert^{2}
\\
\nonumber
&\leq &
C_{\rm univ}R^{+}(\Gamma)\vert Z\vert
\Big(
\dfrac{1}{1-C_{\rm univ}R^{+}(\Gamma)}
(1+C_{\rm univ}R^{+}(\Gamma)\vert Z\vert)
+\vert Z\vert
\Big)
\\
\nonumber
&=&
\dfrac{1 + \vert Z\vert}{1-C_{\rm univ}R^{+}(\Gamma)}
C_{\rm univ}R^{+}(\Gamma)\vert Z\vert
\\
\nonumber
&\leq&
\dfrac{8}{3}
C_{\rm univ}R^{+}(\Gamma)\vert Z\vert
\leq 
\dfrac{2}{3} \vert Z\vert.
\end{eqnarray*}
In particular, $\vert Z_2 - Z\vert<\vert Z\vert$.
This means that $\overline{\Int(\widetilde{\Gamma}_2)}$
connects $Z_2$ to the boundary of the disk of radius
$\vert Z\vert$ around $Z$.
In this configuration we can apply the Beurling estimate;
see \cite[Section 3.8]{LawC} and \cite{Oksendal83Beurling}.
There is a universal constant $C_{\rm Beurling}>0$,
such that a.s.,
\begin{multline*}
\P_{e^{i\theta}}
(
B_{T_{\widetilde{\Gamma}}+t}
\text{ exits } \D(Z,\vert Z\vert)
\text{ before hitting } \widetilde{\Gamma}_2
\vert
T_{\widetilde{\Gamma}},
(B_{t})_{0\leq t\leq T_{\widetilde{\Gamma}}}
)
\\
\leq
C_{\rm Beurling}
\dfrac{\vert Z_2 - Z\vert^{1/2}}{\vert Z\vert^{1/2}}
\leq
\dfrac{2\sqrt{2}}{\sqrt{3}}
C_{\rm Beurling}
C_{\rm univ}^{1/2}
R^{+}(\Gamma)^{1/2}
.
\end{multline*}
This shows \eqref{Eq Beurlin bound 1}.
The proof of \eqref{Eq Beurlin bound 2} is similar.
\end{proof}

\subsection{Proof of Proposition \ref{Prop local asymp expect CR 1 8}}
We start by proving a weaker-looking statement with extra conditionings. Let $r_0\in (0,1]$ and $\mathfrak{C}_{r_0}$ be as above.

For $j\in\{1,2\}$, denote by
$
\widehat{D}_{r_0}(z_j)
$
the connected component of $z_j$ in
$
D\setminus
\overline{
\bigcup_{\cC\in \mathfrak{C}_{r_0}}\cC
}
.
$
By construction, $\widehat{D}_{r_0}(z_j)$ is an open connected and simply-connected domain,
and thus conformally equivalent to a disk.
Moreover, $\widehat{D}_{r_0}(z_j)\subset \D(z_j,r_0)$.
By definition, no excursion from $\Xi^{v^2/2}_{D}$
intersects $\widehat{D}_{r_0}(z_j)$,
and no loop in $\cL_{D}$
intersects both $\widehat{D}_{r_0}(z_j)$ and its complement. 
Denote by
$E_{r_0,\varepsilon_1,\varepsilon_2}$ the event
\begin{equation}
\label{Eq def E r0 eps1 eps2}
E_{r_0,\varepsilon_1,\varepsilon_2}
=
\{
\text{No cluster of }
\mathfrak{C}_{r_0}
\text{ disconnects } \D(z_1,\varepsilon_1) 
\text{ from } \D(z_2,\varepsilon_2)
\}.
\end{equation}
This event is of course measurable with respect to
$\mathfrak{C}_{r_0}$. The key intermediate result then is as follows. Below we denote $r(\eps) = \eps^{1/2}$.
\begin{lemma}
\label{Prop local asymp expect CR 1 8 ver 1}
As $\max(\varepsilon_1,\varepsilon_2)\to 0$, we have that
\begin{multline*}
\P(E_{\varepsilon_1,\varepsilon_2}^v)
=
\varepsilon_{1}^{1/8}
\varepsilon_{2}^{1/8}
(\CSLE)^2
\\
\times
\E
\big[
\CR(z_1,\widehat{D}_{r_0}(z_1))^{-1/8}
\CR(z_2,\widehat{D}_{r_0}(z_2))^{-1/8}
\1_{E_{r_0,\varepsilon_1,\varepsilon_2}}
\1_{\{\D(z_j,r(\varepsilon_j))
\subset
\widehat{D}_{r_0}(z_j), j\in\{1,2\}\}}
\big]
(1+o(1))
.
\end{multline*}
\end{lemma}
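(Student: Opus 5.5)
The plan is to condition on the macroscopic configuration $\mathfrak{C}_{r_0}$ and reduce to two independent one-point problems, each handled by Proposition \ref{Prop asymp 1 pt ED Ver 2}.

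\textbf{Decomposition.} By the restriction property of the Brownian loop soup and the Poisson excursions, conditionally on $\mathfrak{C}_{r_0}$, the loops of $\cL_D$ contained in $\widehat{D}_{r_0}(z_1)$ and in $\widehat{D}_{r_0}(z_2)$ are two independent critical loop soups $\cL_{\widehat{D}_{r_0}(z_j)}$. No excursion enters these domains. I will then decompose the event $E^v_{\varepsilon_1,\varepsilon_2}$: up to null sets, no cluster disconnects $\D(z_1,\varepsilon_1)$ from $\D(z_2,\varepsilon_2)$ exactly when three things hold. First, $E_{r_0,\varepsilon_1,\varepsilon_2}$ holds. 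Second, for $j=1,2$, no cluster of $\cL_{\widehat{D}_{r_0}(z_j)}$ surrounds $\D(z_j,\varepsilon_j)$. Third, if some $\D(z_j,\varepsilon_j)$ is not contained in $\widehat{D}_{r_0}(z_j)$, the disk already meets the boundary clusters and the local condition is vacuous. Clusters inside $\widehat{D}_{r_0}(z_j)$ can only disconnect by surrounding the small disk, since they cannot surround the other point. This gives
$$\P(E^v_{\varepsilon_1,\varepsilon_2}\cap\{\D(z_j,r(\varepsilon_j))\subset \widehat D_{r_0}(z_j)\})=\E\Big[\1_{E_{r_0,\varepsilon_1,\varepsilon_2}}\1_{\{\D(z_j,r(\varepsilon_j))\subset \widehat D_{r_0}(z_j)\}}\prod_{j=1}^2 p_j\Big],$$
where $p_j$ is the probability that no cluster of $\cL_{\widehat D_{r_0}(z_j)}$ surrounds $\D(z_j,\varepsilon_j)$.

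\textbf{Uniform one-point asymptotics.} Map $\widehat D_{r_0}(z_j)$ to $\D$ by $\phi_j$ with $\phi_j(z_j)=0$ and $\phi_j'(z_j)>0$, so that $\phi_j'(z_j)=\CR(z_j,\widehat D_{r_0}(z_j))^{-1}$. Since $\D(z_j,r(\varepsilon_j))\subset\widehat D_{r_0}(z_j)$ and $\varepsilon_j\ll r(\varepsilon_j)$, Proposition \ref{Prop de Branges}, after rescaling by $r(\varepsilon_j)$, shows that $\phi_j(\D(z_j,\varepsilon_j))$ lies between disks of radius $\varepsilon_j\CR^{-1}(1\pm C\varepsilon_j^{1/2})$ around $0$. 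These bounds are deterministic and uniform in the domain. By conformal invariance and monotonicity in the radius, Proposition \ref{Prop asymp 1 pt ED Ver 2} then gives
$$p_j=\CSLE\,\varepsilon_j^{1/8}\CR(z_j,\widehat D_{r_0}(z_j))^{-1/8}(1+o(1)).$$
The $o(1)$ term is uniform, because $\varepsilon_j/\CR\le \varepsilon_j/r(\varepsilon_j)=\varepsilon_j^{1/2}\to0$ by Koebe. Inserting this into the expectation gives the main term of the lemma.

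\textbf{Bad event.} The remaining step, which I expect to be the main obstacle, is to show that
$$\P\big(E^v_{\varepsilon_1,\varepsilon_2}\cap\{\D(z_j,r(\varepsilon_j))\not\subset\widehat D_{r_0}(z_j)\text{ for some }j\}\big)=o\big((\varepsilon_1\varepsilon_2)^{1/8}\big).$$
On this event, some cluster of $\mathfrak{C}_{r_0}$ comes within $\varepsilon_j^{1/2}$ of $z_j$. The event still requires that no cluster surrounds $\D(z_j,\varepsilon_j)$, for both $j$. I will bound it by a two-scale argument. Consider the clusters of loops inside the annulus $\D(z_j,r(\varepsilon_j))\setminus\D(z_j,\varepsilon_j)$, together with the outer configuration, and use independence of disjoint regions in the soup. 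Alternatively, use the CLE$_4$ description: the nested CLE$_4$ loop around $z_j$ must come within $\varepsilon_j$ at the step where an outer loop already comes within $\varepsilon_j^{1/2}$.

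The first input is the one-point estimate at scale $\varepsilon_j^{1/2}$, namely that the probability of coming that close is $O(\varepsilon_j^{1/16})$ with the right exponent. This comes from Proposition \ref{Prop asymp 1 pt ED Ver 2} and the Koebe bound. The second input is that, given the outer clusters near $z_j$, avoiding any surrounding cluster down to scale $\varepsilon_j$ costs an additional factor. Here the extremal-distance/Bessel tail estimates from \cite{ALS3} give a factor of order $\varepsilon_j^{1/8}\varepsilon_j^{-1/16}$ times a gain, where the gain comes from the Bessel-3 exponential moment $r^{1/4+o(1)}$ used in the proof of Proposition \ref{Prop asymp 1 pt ED Ver 2}.

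To handle the two points jointly, I will use the same conditional independence given the clusters meeting $\overline{D_{r_0,r_0}}$, and bound each point's contribution separately. If the exponents only match up to $\varepsilon^{o(1)}$, I would instead choose $r(\varepsilon)=\varepsilon^{1/2}$ relative to the tail $\P(T^{\rm BES3}_\pi>t)=e^{-t/2+o(t)}$ so that the error is $\varepsilon_j^{1/8+c}$.
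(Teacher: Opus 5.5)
Your decomposition given $\mathfrak{C}_{r_0}$ and your uniform one-point step match the paper. You use the de Branges/Koebe distortion bound where the paper uses the Green's-function estimate of Lemma~\ref{Lem distortion R pm}, and either works. The gap is the bad-event step, which you flag as the main obstacle but do not close.

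\textbf{Why the two-scale plan fails with your inputs.} Conditionally on $\mathfrak{C}_{r_0}$, the local factor at $z_j$ is $\min\bigl(1,C(\varepsilon_j/\CR(z_j,\widehat D_{r_0}(z_j)))^{1/8}\bigr)$. On the bad event this conformal radius can sit at any scale $s\in[\varepsilon_j,4\varepsilon_j^{1/2}]$, not only at $\varepsilon_j^{1/2}$. Suppose the only information is that a cluster of $\mathfrak{C}_{r_0}$ reaches distance $s$ from $z_j$ with probability $O(s^{1/8})$. Then each dyadic scale contributes $s^{1/8}(\varepsilon_j/s)^{1/8}=\varepsilon_j^{1/8}$. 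Summing over the $\asymp\log\varepsilon_j^{-1}$ scales gives only $O(\varepsilon_j^{1/8}\log\varepsilon_j^{-1})$, not $o(\varepsilon_j^{1/8})$. Your own bookkeeping, $\varepsilon_j^{1/16}\cdot\varepsilon_j^{1/8}\varepsilon_j^{-1/16}$, already gives $O$ rather than $o$, and the ``gain'' is never identified. Changing $r(\varepsilon)$ does not help, since the logarithm comes from the range of scales.

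\textbf{What your route would need.} You would need a strictly better exponent $1/8+c$ for a cluster that reaches macroscopic distance $r_0$ to approach $z_j$. This must cover surrounding clusters (e.g.\ via the Bessel-3 shape tail), non-surrounding clusters and nested ones. None of this is established in the proposal. Also, $o((\varepsilon_1\varepsilon_2)^{1/8})$ is an absolute bound. To turn it into the stated relative asymptotic you would additionally have to show the main-term expectation stays bounded away from $0$.

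\textbf{The missing idea: monotonicity (Lemma~\ref{Lem equiv with FKG}).} Set $B_j=\{\D(z_j,r(\varepsilon_j))\not\subset\widehat D_{r_0}(z_j)\}$. Both $(E^v_{\varepsilon_1,\varepsilon_2})^c$ and $B_j$ are increasing events for the Poisson point process $\cL_D\cup\Xi^{v^2/2}_D$. Adding trajectories only creates more disconnection and only pushes clusters of $\mathfrak{C}_{r_0}$ closer to $z_j$. The Harris--FKG inequality for Poisson processes \cite{Janson1984FKGPPP} gives $\P((E^v)^c\cap B_j)\ge\P((E^v)^c)\P(B_j)$, that is,
\begin{equation*}
\P(E^v_{\varepsilon_1,\varepsilon_2}\cap B_j)\le\P(E^v_{\varepsilon_1,\varepsilon_2})\,\P(B_j).
\end{equation*}
Since $\P(B_j)\to0$ with no rate needed, the bad event is $o(\P(E^v_{\varepsilon_1,\varepsilon_2}))$. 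This is exactly the relative error the lemma requires, with no estimate on how clusters approach $z_j$.
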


The Proposition \ref{Prop local asymp expect CR 1 8} results directly from this by obtaining the following limit. 
\begin{lemma}
\label{Prop ident 2pt corr}
We have
\begin{multline*}
\lim_{\max(\varepsilon_1,\varepsilon_2)\to 0}
\E
\big[
\CR(z_1,\widehat{D}_{r_0}(z_1))^{-1/8}
\CR(z_2,\widehat{D}_{r_0}(z_2))^{-1/8}
\1_{E_{r_0,\varepsilon_1,\varepsilon_2}}
\1_{\{\D(z_j,r(\varepsilon_j))
\subset
\widehat{D}_{r_0}(z_j), j\in\{1,2\}\}}
\big]
\\
=\E
\big[
\CR(z_1,\widehat{D}_{r_0}(z_1))^{-1/8}
\CR(z_2,\widehat{D}_{r_0}(z_2))^{-1/8}
\1_{E_{r_0}}
\big]
.
\end{multline*}
\end{lemma}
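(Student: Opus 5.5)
The plan is a dominated convergence argument for the random variables
$$X_\eps := \CR(z_1,\widehat{D}_{r_0}(z_1))^{-1/8}\CR(z_2,\widehat{D}_{r_0}(z_2))^{-1/8}\1_{E_{r_0,\eps_1,\eps_2}}\1_{\{\D(z_j,r(\eps_j))\subset\widehat{D}_{r_0}(z_j),\,j\in\{1,2\}\}}.$$
The target is $X := \CR(z_1,\widehat{D}_{r_0}(z_1))^{-1/8}\CR(z_2,\widehat{D}_{r_0}(z_2))^{-1/8}\1_{E_{r_0}}$. Both are functionals of the same collection $\mathfrak{C}_{r_0}$, so no coupling is needed. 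I will show $X_\eps\to X$ almost surely and find an integrable dominating variable.

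\textbf{Pointwise convergence.} Almost surely $z_j\notin\overline{\bigcup_{\cC\in\mathfrak{C}_{r_0}}\cC}$, so $\widehat{D}_{r_0}(z_j)$ is an open set containing $z_j$. Hence, for all $\eps_j$ small enough, $\D(z_j,r(\eps_j))\subset\widehat{D}_{r_0}(z_j)$, and the second indicator tends to $1$.

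For the first indicator, assume the disks are inside the $\widehat D$'s. A cluster of $\mathfrak{C}_{r_0}$ cannot then meet $\D(z_j,\eps_j)$. So it disconnects $\D(z_1,\eps_1)$ from $\D(z_2,\eps_2)$ if and only if it disconnects $z_1$ from $z_2$. Consequently $\1_{E_{r_0,\eps_1,\eps_2}}=\1_{E_{r_0}}$ eventually, almost surely. If one prefers to avoid the closure subtleties (a countable union of closed clusters), Lemma \ref{lem:topo} can be invoked, or one can note that disconnection is decided by the connected components of $z_1,z_2$ in the complement. In either case $X_\eps\to X$ a.s.

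\textbf{Domination.} All $X_\eps$ are bounded by $Z:=\CR(z_1,\widehat{D}_{r_0}(z_1))^{-1/8}\CR(z_2,\widehat{D}_{r_0}(z_2))^{-1/8}$, and the main step is to show $\E[Z]<\infty$. By Cauchy--Schwarz it suffices to prove $\E[\CR(z_j,\widehat{D}_{r_0}(z_j))^{-1/4}]<\infty$ for each $j$.

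The domain $\widehat{D}_{r_0}(z_j)$ contains the complementary component at $z_j$ of the clusters of $\cL_D\cup\Xi^{v^2/2}_D$ surrounding $z_j$, with the small clusters inside $\D(z_j,r_0)$ removed. By Sheffield--Werner and the first passage set description, it contains the domain cut out by the outermost cluster meeting the outside of $\D(z_j,r_0)$. Monotonicity of conformal radius then gives a lower bound on $\CR(z_j,\widehat{D}_{r_0}(z_j))$.

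To bound this from below I would use exact laws. The conformal radius of the CLE$_4$ loop around a point has $-\log\CR$ distributed like a hitting time $T$ with $\P(T>t)\asymp e^{-t/8}$ (as in \cite{SSW09CR, ALS3}); boundary excursions only enlarge the domain compared to a FPS-type set with similar tails. A more robust route is to argue directly: on $\{\CR<s\}$, some cluster intersecting $\partial\D(z_j,r_0)$ comes within $4s$ of $z_j$. This probability is $O(s^{1/8+o(1)})$, comparable to the two-arm-type estimate behind Proposition \ref{Prop asymp 1 pt ED}.

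With tail $\P(\CR<s)\lesssim s^{1/8}$, the moment $\E[\CR^{-1/4}]$ diverges, so Cauchy--Schwarz is too crude. I would therefore switch to a conditional argument: condition on $\widehat{D}_{r_0}(z_1)$ and use that the configuration near $z_2$ is nearly independent, giving $\E[Z]\lesssim\prod_j\E[\CR(z_j,\widehat D_{r_0}(z_j))^{-1/8}\1\{\CR\ge\eps\}]$ up to constants. Here an exact tail $e^{-t/8}$ makes $\E[\CR^{-1/8}]$ borderline (logarithmically) divergent. This is the expected main obstacle. I would resolve it by noting that, because $\widehat{D}_{r_0}$ is the complement only of clusters reaching the macroscopic circle $\partial\D(z_j,r_0)$, an extra crossing cost makes the tail strictly better than $s^{1/8}$. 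Concretely, $\P(\CR(z_j,\widehat{D}_{r_0}(z_j))<s)\le C s^{1/8}|\log s|^{-1/2}$-type or stronger, via the annulus Brownian bridge picture of Proposition \ref{Prop exp disk eps disk}, where the extra $|\log|^{-1/2}$ gives integrability.

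If integrability genuinely fails, I would instead use Fatou for the lower bound and a truncation at $\CR\ge\eps^{1/2}$, matching the $r(\eps)$ constraint, for the upper bound. The part with $\CR\in(\eps,\eps^{1/2})$ would be shown negligible using the same tail estimate.
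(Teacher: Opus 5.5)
\textbf{Gap: the dominating variable $Z$ is not integrable.} Your pointwise convergence step is correct. The domination step, however, aims at a false statement: $Z=\CR(z_1,\widehat{D}_{r_0}(z_1))^{-1/8}\CR(z_2,\widehat{D}_{r_0}(z_2))^{-1/8}$, which omits the indicator of $E_{r_0}$, has $\E[Z]=\infty$.

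The tail of $\CR(z_j,\widehat{D}_{r_0}(z_j))$ is not of order $s^{1/8}$; that is the tail for the CLE$_4$ loop around $z_j$. With probability of order $1/\log(1/s)$, a single macroscopic loop of $\cL_D$ exits $\D(z_1,r_0)$ and hits $\D(z_1,s)$; this is just the planar probability of hitting a small disk. Its cluster then belongs to $\mathfrak{C}_{r_0}$, so $\CR(z_1,\widehat{D}_{r_0}(z_1))\le 4s$. Since $\widehat{D}_{r_0}(z_2)\subset\D(z_2,r_0)$, we also have $\CR(z_2,\widehat{D}_{r_0}(z_2))^{-1/8}\ge r_0^{-1/8}$. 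Together these give $\E[Z]\gtrsim\int^\infty du/\log u=\infty$.

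Consequently, neither Cauchy--Schwarz nor your heuristic conditional-independence argument can work. Your proposed rescue also fails on its own terms. A tail bound $Cs^{1/8}|\log s|^{-1/2}$ would still give $\E[\CR^{-1/8}]\lesssim\int^\infty u^{-1}(\log u)^{-1/2}\,du=\infty$; you would need a gain beyond $|\log s|^{-1}$, and no argument for any gain is supplied. The truncation-based upper bound in your fallback rests on the same unproven tail estimate.

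\textbf{The fix, already implicit in your pointwise step.} Let $I_\eps=\{\D(z_j,r(\eps_j))\subset\widehat{D}_{r_0}(z_j),\ j=1,2\}$. On $I_\eps$ no cluster of $\mathfrak{C}_{r_0}$ meets $\D(z_j,\eps_j)$, so $\1_{E_{r_0,\eps_1,\eps_2}}=\1_{E_{r_0}}$ there. Hence $X_\eps=X\1_{I_\eps}\le X$, and $X_\eps\uparrow X$. The right dominating variable is therefore $X$ itself, keeping $\1_{E_{r_0}}$. The upper bound $\E[X_\eps]\le\E[X]$ is then trivial, and the only real input is $\E[X]<\infty$, which is Lemma \ref{Lem finite exp CR 1 8}.

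\textbf{How the paper proves $\E[X]<\infty$.} It does not use tail bounds on $\CR$; it uses the exact twist-field asymptotics.
\begin{itemize}
\item On $I_\eps$ we have $\CR\ge\eps_j^{1/2}$, hence $\log(\eps_j^{-1}\CR(z_j,\widehat{D}_{r_0}(z_j)))\ge\frac12\log\eps_j^{-1}$.
\item Lemma \ref{Lem local asymp cut disks expect CR 1 8} then gives $\P(E^{v,\rm holes}_{\eps_1,\eps_2})\ge\frac1\pi(\eps_1\eps_2)^{1/8}\prod_j\log(\eps_j^{-1})^{1/2}\,\E[X_\eps](1+o(1))$.
\item Theorem \ref{thm:twistbc} identifies the left-hand side exactly as $\frac2\pi(\eps_1\eps_2)^{1/8}\prod_j\log(\eps_j^{-1})^{1/2}A(z_1,z_2)(1+o(1))$, with $A(z_1,z_2)$ finite.
\item Hence $\limsup\E[X_\eps]\le 2A(z_1,z_2)$, and monotone convergence gives both $\E[X]<\infty$ and the lemma.
\end{itemize}
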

This lemma in turn follows from the Dominated Convergence Theorem, as soon as sufficient integrability is provided. This integrability comes from Lemma \ref{Lem finite exp CR 1 8}, proved in the next section. We now prove Lemma \ref{Prop local asymp expect CR 1 8 ver 1} first.
\subsubsection{Proof of Lemma \ref{Prop local asymp expect CR 1 8 ver 1}}

Denote by
$\cL_{r_0,j}$
the subset of loops in $\cL_{D}$ that are contained inside the domain 
$\widehat{D}_{r_0}(z_j)$.
The following renewal property is standard.
\begin{lemma}
\label{Lem cond C frak 1}
 Conditionally on  $\mathfrak{C}_{r_0}$,
 the two families $\cL_{r_0,1}$ and $\cL_{r_0,2}$
 are independent from each other,
 and are distributed as Poisson point processes with intensities
 $\frac{1}{2}\mu^{\rm loop}_{\widehat{D}_{r_0}(z_1)}$,
 respectively 
 $\frac{1}{2}\mu^{\rm loop}_{\widehat{D}_{r_0}(z_2)}$.
\end{lemma}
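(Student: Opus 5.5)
The plan is to derive the lemma from the Poisson structure of $\cL_{D}$ and $\Xi^{v^2/2}_{D}$ together with the locality of the clusters in $\mathfrak{C}_{r_0}$. The main device is a sampling procedure: explore the clusters in $\mathfrak{C}_{r_0}$ from outside, in the spirit of a stopping-set exploration, and show that the explored region is a ``local set'' for the Poisson point process. Concretely, I would split the loops of $\cL_{D}$ into three classes. The first class consists of loops belonging to clusters of $\mathfrak{C}_{r_0}$. The second consists of loops contained in $\widehat{D}_{r_0}(z_1)$ or in $\widehat{D}_{r_0}(z_2)$. The third consists of the remaining loops, namely those lying in other connected components of $D\setminus\overline{\bigcup_{\cC\in\mathfrak{C}_{r_0}}\cC}$. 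By the definition of $\widehat{D}_{r_0}(z_j)$, no loop of $\cL_{D}$ meets both $\widehat{D}_{r_0}(z_j)$ and its complement, and no excursion enters $\widehat{D}_{r_0}(z_j)$. Hence every loop is in exactly one class.

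The key step is the following identity, valid for any fixed pair of disjoint simply connected domains $U_1\ni z_1$, $U_2\ni z_2$ contained in $\D(z_1,r_0)$ and $\D(z_2,r_0)$ respectively, and any admissible configuration $\mathfrak{c}$ of clusters compatible with them. The joint law of $(\mathfrak{C}_{r_0},\cL_{r_0,1},\cL_{r_0,2})$ on $\{\widehat{D}_{r_0}(z_j)=U_j\}$ factorizes as the law of $\mathfrak{C}_{r_0}$ times the law of independent loop soups of intensity $\frac12\mu^{\rm loop}_{U_1}$ and $\frac12\mu^{\rm loop}_{U_2}$. I would prove this by a discrete approximation. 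Replace $\widehat{D}_{r_0}(z_j)$ by dyadic approximations $U_j^{(k)}$ from inside: the finitely many candidate dyadic domains $U$ such that the event $\{$the clusters hitting $\overline{D_{r_0,r_0}(z_1,z_2)}$ avoid $U$ and $U$ is maximal$\}$ has positive probability. For a fixed such $U$, this event is measurable with respect to loops not contained in $U$. This uses the fact that a cluster touching $\overline{D_{r_0,r_0}}$ is determined by loops not contained in $U$ whenever it avoids $U$. Independence of the Poisson process restricted to the disjoint sets $\{\gamma\subset U\}$ and $\{\gamma\not\subset U\}$ then gives that the loops inside $U$ form a soup with intensity $\frac12\mu^{\rm loop}_U$, independent of $\mathfrak{C}_{r_0}$.

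The main obstacle is passing to the limit $k\to\infty$. I would handle it by the monotone convergence $\mu^{\rm loop}_{U^{(k)}}\uparrow\mu^{\rm loop}_{\widehat{D}_{r_0}}$, using that every loop contained in the open set $\widehat{D}_{r_0}$ is contained in some $U^{(k)}$. Together with the a.s.\ convergence of the events, this identifies the conditional law, and independence between $j=1$ and $j=2$ follows because the two domains are disjoint.
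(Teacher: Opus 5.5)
Your argument is correct and is the standard proof of this renewal property, which the paper states as ``standard'' without writing out. The steps are: the event $\{U^{(k)}_j=U\}$ and the variable $\mathfrak{C}_{r_0}\1_{\{U^{(k)}_j=U\}}$ are measurable with respect to the excursions and the loops not contained in $U$; Poisson independence of $\{\gamma\subset U\}$ and $\{\gamma\not\subset U\}$ then gives the claim at level $k$; and monotone convergence of Laplace functionals passes to $\widehat{D}_{r_0}(z_j)$. The one step worth writing out is why the event ``the clusters of $\mathfrak{C}_{r_0}$ avoid $U$'' is itself determined by those loops: the first loop of a chain from $\overline{D_{r_0,r_0}(z_1,z_2)}$ to $U$ that meets $U$ cannot be contained in $U$. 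Your three-class decomposition of the loops is not needed for the argument.
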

For $j\in\{1,2\}$,
 denote by
 $E_{r_0,\varepsilon_j}^{(j)}$ the event
 $$
E_{r_0,\varepsilon_j}^{(j)} =
\{
\D(z_j,\varepsilon_j)
\subset
\widehat{D}_{r_0}(z_j)
\text{ and no cluster of }
\cL_{r_0,j}
\text{ disconnects } \D(z_j,\varepsilon_j) 
\text{ from } \partial \widehat{D}_{r_0}(z_j)
\}.
 $$
Using this, we can write the following identity of events.
    $$
 E_{\varepsilon_1,\varepsilon_2}^v
\cap
\{\D(z_1,\varepsilon_1)
\subset
\widehat{D}_{r_0}(z_1)\}
\cap
\{\D(z_2,\varepsilon_2)
\subset
\widehat{D}_{r_0}(z_2)\}
=
E_{r_0,\varepsilon_1,\varepsilon_2}
\cap
E_{r_0,\varepsilon_1}^{(1)}
\cap
E_{r_0,\varepsilon_2}^{(2)}.
    $$
Recall also the notation
$$
r(\varepsilon) = \varepsilon^{1/2}.
$$
We will consider the more constraining event
\begin{equation}
\label{Eq loc gasket eps log eps}
 E_{\varepsilon_1,\varepsilon_2}^v
\cap
\{\D(z_1,r(\varepsilon_1))
\subset
\widehat{D}_{r_0}(z_1)\}
\cap
\{\D(z_2,r(\varepsilon_2))
\subset
\widehat{D}_{r_0}(z_2)\},
\end{equation}
and we will argue that
$$
\P( E_{\varepsilon_1,\varepsilon_2}^v
\cap
\{\D(z_1,r(\varepsilon_1))
\subset
\widehat{D}_{r_0}(z_1)\}
\cap
\{\D(z_2,r(\varepsilon_2))
\subset
\widehat{D}_{r_0}(z_2)\})
\sim 
\P(E_{\varepsilon_1,\varepsilon_2}^v).
$$

\begin{lemma}
\label{Lem equiv with FKG}
    Let $j\in\{1,2\}$. The following bound holds:
   $$
       \P( E_{\varepsilon_1,\varepsilon_2}^v
\cap
\{\D(z_j,r(\varepsilon_j))
\not\subset
\widehat{D}_{r_0}(z_j)\}
)
\leq
\P(E_{\varepsilon_1,\varepsilon_2}^v)
\P(\D(z_j,r(\varepsilon_j))
\not\subset
\widehat{D}_{r_0}(z_j)).
$$
In particular, as
$\max(\varepsilon_1,\varepsilon_2)\to 0$,
$$
\P( E_{\varepsilon_1,\varepsilon_2}^v
\cap
\{\D(z_j,r(\varepsilon_j))
\not\subset
\widehat{D}_{r_0}(z_j)\}
)
=
o(\P(E_{\varepsilon_1,\varepsilon_2}^v)).
$$
\end{lemma}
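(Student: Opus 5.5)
The plan is to derive the bound from the FKG (Harris) inequality for the Poisson point process $\cL_{D}\cup\Xi^{v^2/2}_{D}$. This process is a Poisson point process on the space of loops and excursions. On it, increasing events (those stable under adding trajectories) and decreasing events satisfy the Harris--FKG inequality. Concretely, for $A$ increasing and $B$ decreasing we have $\P(A\cap B)\leq \P(A)\P(B)$, and for two decreasing events $\P(A\cap B)\geq\P(A)\P(B)$. So the first step is to check monotonicity of the two events involved.

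The event $E_{\varepsilon_1,\varepsilon_2}^v$ is decreasing. If no cluster disconnects $\D(z_1,\varepsilon_1)$ from $\D(z_2,\varepsilon_2)$, removing trajectories can only split clusters into smaller ones. Any closed path in a cluster of the smaller configuration lies in a cluster of the larger one, so no separating path can be created.

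The event $\{\D(z_j,r(\varepsilon_j))\not\subset\widehat{D}_{r_0}(z_j)\}$ is increasing. Indeed, $\widehat{D}_{r_0}(z_j)$ is the connected component of $z_j$ in the complement of the closure of the clusters meeting $\overline{D_{r_0,r_0}(z_1,z_2)}$. Adding trajectories enlarges each cluster and can only add new clusters to $\mathfrak{C}_{r_0}$, possibly by merging small clusters with ones reaching $\overline{D_{r_0,r_0}(z_1,z_2)}$. Hence the union $\bigcup_{\cC\in\mathfrak{C}_{r_0}}\cC$ increases and $\widehat{D}_{r_0}(z_j)$ decreases.

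One technical point must be handled: Harris--FKG is usually stated for events depending on finitely many trajectories. I would approximate by restricting to trajectories of diameter $>\eta$, where everything is a finite Poisson process on a nice space and FKG holds. Then I let $\eta\to0$, using monotone convergence of the events in $\eta$ (or the convergence results of Theorem \ref{thm:convbls}). Alternatively, the metric graph version can be used and passed to the limit. Applying the increasing/decreasing FKG inequality then gives the first displayed bound.

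For the ``in particular'' part, it suffices to show $\P(\D(z_j,r(\varepsilon_j))\not\subset\widehat{D}_{r_0}(z_j))\to0$ as $\varepsilon_j\to0$. This is because $\bigcap_{r>0}\{\D(z_j,r)\not\subset\widehat{D}_{r_0}(z_j)\}$ is the event $z_j\notin\widehat{D}_{r_0}(z_j)$, i.e. that $z_j$ lies in the closure of a cluster of $\mathfrak{C}_{r_0}$. That event has probability zero, since a fixed point almost surely is not in the closure of any loop soup cluster: the complement of the CLE$_4$-type gasket has full measure, and boundary-touching clusters are handled by the first passage set description. Continuity from above then concludes. The step I expect to be the main obstacle is justifying the FKG inequality rigorously for these non-local cluster events in the continuum via the approximation.
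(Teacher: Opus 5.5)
Your proposal is correct and is essentially the paper's proof. The paper writes $\P(E^v_{\varepsilon_1,\varepsilon_2}\cap B)=\P(B)-\P((E^v_{\varepsilon_1,\varepsilon_2})^{c}\cap B)$ with $B=\{\D(z_j,r(\varepsilon_j))\not\subset\widehat{D}_{r_0}(z_j)\}$, applies the FKG inequality for Poisson point processes to the increasing events $(E^v_{\varepsilon_1,\varepsilon_2})^{c}$ and $B$ (which is exactly your decreasing/increasing inequality), and then uses $\P(B)\to0$.

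The obstacle you anticipate does not arise: the FKG inequality cited by the paper (Janson's lemma) holds for arbitrary measurable increasing events of a Poisson point process, so no $\eta$-truncation is needed. For $\P(B)\to0$, which the paper simply asserts, your claim that $z_j$ a.s.\ avoids the closure of each individual cluster is the right one. However, it follows from the first-passage-set description of the clusters, not from the CLE$_4$ gasket being Lebesgue-null, since cluster closures lie mostly inside the CLE$_4$ loops.
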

\begin{proof}
We write
\begin{multline*}
\P( E_{\varepsilon_1,\varepsilon_2}^v
\cap
\{\D(z_j,r(\varepsilon_j))
\not\subset
\widehat{D}_{r_0}(z_j)\}
)
=
\\
\P(\D(z_j,r(\varepsilon_j))
\not\subset
\widehat{D}_{r_0}(z_j))
-
\P( (E_{\varepsilon_1,\varepsilon_2}^v)^{c}
\cap
\{\D(z_j,r(\varepsilon_j))
\not\subset
\widehat{D}_{r_0}(z_j)\}
).
\end{multline*}
But the events $(E_{\varepsilon_1,\varepsilon_2}^v)^{c}$
and $\{\D(z_j,r(\varepsilon_j))
\not\subset
\widehat{D}_{r_0}(z_j)\}$
are both increasing for the Poisson point process
$\cL_{D}
\cup
\Xi^{v^2/2}_{D}$.
By the FKG property satisfied by general Poisson point processes \cite[Lemma 2.1]{Janson1984FKGPPP},
$$
\P( (E_{\varepsilon_1,\varepsilon_2}^v)^{c}
\cap
\{\D(z_j,r(\varepsilon_j))
\not\subset
\widehat{D}_{r_0}(z_j)\}
)
\geq
\P( (E_{\varepsilon_1,\varepsilon_2}^v)^{c})
\P(\D(z_j,r(\varepsilon_j))
\not\subset
\widehat{D}_{r_0}(z_j)).
$$
This gives the desired inequality.
Further, we conclude by using the fact that
\begin{displaymath}
\lim_{\varepsilon_j\to 0}
\P(\D(z_j,r(\varepsilon_j))
\not\subset
\widehat{D}_{r_0}(z_j))
=0.
\qedhere
\end{displaymath}
\end{proof}

Next we work with the event \eqref{Eq loc gasket eps log eps},
which is the same as
$$
E_{r_0,\varepsilon_1,\varepsilon_2}
\cap
E_{r_0,\varepsilon_1}^{(1)}
\cap
\{\D(z_1,r(\varepsilon_1))
\subset
\widehat{D}_{r_0}(z_1)\}
\cap
E_{r_0,\varepsilon_2}^{(2)}
\cap
\{\D(z_2,r(\varepsilon_2))
\subset
\widehat{D}_{r_0}(z_2)\}.
$$
Denote by
$f_{j,r_0}$ an uniformization mapping from
$\widehat{D}_{r_0}(z_j)$ to $\D$,
with $f_{j,r_0}(z_j)=0$.
We can for instance additionally impose
$f_{j,r_0}'(z_j)\in (0,+\infty)$ to have it uniquely defined,
but this is of no importance.
Denote by $\cL_{\D}'$ a Brownian loop soup in $\D$ with intensity
$\frac{1}{2}\mu^{\rm loop}_{\D}$,
and independent from $\cL_{D}$.

\begin{lemma}
\label{Lem cond prob E j r 0 eps j}
    Let $j\in\{1,2\}$.
    Then the following equality of conditional probabilities holds:
\begin{multline*}
\P(E_{r_0,\varepsilon_j}^{(j)}
\vert \mathfrak{C}_{r_0}, 
\D(z_j,r(\varepsilon_j))
\subset
\widehat{D}_{r_0}(z_j))
=
\\
\P(
\text{No cluster of }
\cL_{\D}'
\text{ surrounds }
f_{j,r_0}(\D(z_j,\varepsilon_j))
\vert
\mathfrak{C}_{r_0}, 
\D(z_j,r(\varepsilon_j))
\subset
\widehat{D}_{r_0}(z_j)
).
\end{multline*}
\end{lemma}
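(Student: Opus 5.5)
The plan is to combine the renewal property of Lemma \ref{Lem cond C frak 1} with conformal invariance of the Brownian loop soup. First, note that both the event $\{\D(z_j,r(\varepsilon_j))\subset\widehat{D}_{r_0}(z_j)\}$ and the domain $\widehat{D}_{r_0}(z_j)$ are measurable with respect to $\mathfrak{C}_{r_0}$. So conditioning on $\mathfrak{C}_{r_0}$ together with this event amounts to conditioning on $\mathfrak{C}_{r_0}$ and restricting to a $\mathfrak{C}_{r_0}$-measurable event. The condition $\D(z_j,\varepsilon_j)\subset\widehat{D}_{r_0}(z_j)$ in the definition of $E^{(j)}_{r_0,\varepsilon_j}$ then holds automatically, because $\varepsilon_j< r(\varepsilon_j)$ for $\varepsilon_j<1$. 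On this event, therefore, $E^{(j)}_{r_0,\varepsilon_j}$ reduces to the event that no cluster of $\cL_{r_0,j}$ disconnects $\D(z_j,\varepsilon_j)$ from $\partial\widehat{D}_{r_0}(z_j)$.

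Next, Lemma \ref{Lem cond C frak 1} gives that, conditionally on $\mathfrak{C}_{r_0}$, the family $\cL_{r_0,j}$ is a Brownian loop soup in $\widehat{D}_{r_0}(z_j)$ with intensity $\frac12\mu^{\rm loop}_{\widehat{D}_{r_0}(z_j)}$. The additional conditioning on $\{\D(z_j,r(\varepsilon_j))\subset\widehat{D}_{r_0}(z_j)\}$ does not change this law, since the event is $\mathfrak{C}_{r_0}$-measurable. Applying the $\mathfrak{C}_{r_0}$-measurable conformal map $f_{j,r_0}$ and using the conformal invariance of $\mu^{\rm loop}$ (loops as unparametrized curves, which is all that cluster events see), the image $f_{j,r_0}(\cL_{r_0,j})$ has, conditionally, the law of $\cL'_{\D}$.

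Disconnection events are topological and are preserved by homeomorphisms. Hence "some cluster of $\cL_{r_0,j}$ disconnects $\D(z_j,\varepsilon_j)$ from $\partial\widehat{D}_{r_0}(z_j)$" is equivalent to "some cluster of the image soup disconnects $f_{j,r_0}(\D(z_j,\varepsilon_j))$ from $\partial\D$", i.e. surrounds $f_{j,r_0}(\D(z_j,\varepsilon_j))$. For this we use that $f_{j,r_0}(\D(z_j,\varepsilon_j))$ is a connected set containing $0$, so surrounding and disconnecting from the boundary coincide. Finally, since $\cL'_{\D}$ is independent of $\cL_D$, the conditional law of $f_{j,r_0}(\cL_{r_0,j})$ given $\mathfrak{C}_{r_0}$ matches that of $\cL'_{\D}$, with $f_{j,r_0}(\D(z_j,\varepsilon_j))$ a frozen $\mathfrak{C}_{r_0}$-measurable set. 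This gives the stated equality.

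The only delicate point is the topological equivalence under $f_{j,r_0}$, in particular for clusters accumulating at $\partial\widehat{D}_{r_0}(z_j)$. I would handle this by noting that a loop soup in a simply connected domain almost surely has clusters at positive distance from the boundary, so every relevant cluster is a compact subset of the open domain, where $f_{j,r_0}$ is a homeomorphism.
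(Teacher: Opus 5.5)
Your proposal is correct and follows the same route as the paper, whose proof is the one-line appeal to the renewal property of Lemma \ref{Lem cond C frak 1} combined with conformal invariance of the Brownian loop soup. You fill in the details the paper leaves implicit: the conditioning event and $f_{j,r_0}$ are $\mathfrak{C}_{r_0}$-measurable, the inclusion $\D(z_j,\varepsilon_j)\subset\widehat{D}_{r_0}(z_j)$ holds automatically, and the disconnection event is preserved under $f_{j,r_0}$.
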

\begin{proof}
    This follows from Lemma \ref{Lem cond C frak 1} and the conformal invariance of the
    Brownian loop soup.
\end{proof}

Denote now
$$
R^{-}_{j,r_0}(\varepsilon_j)
=
d(0,\partial f_{j,r_0}(\D(z_j,\varepsilon_j))),
\qquad
R^{+}_{j,r_0}(\varepsilon_j)
=
\max\{
\vert w\vert :
w\in\partial f_{j,r_0}(\D(z_j,\varepsilon_j))
\}.
$$
We have the inclusions
$$
\D(0,R^{-}_{j,r_0}(\varepsilon_j))
\subset
f_{j,r_0}(\D(z_j,\varepsilon_j))
\subset
\D(0,R^{+}_{j,r_0}(\varepsilon_j)).
$$
Therefore,
\begin{multline*}
\P(
E_{\D}(R^{-}_{j,r_0}(\varepsilon_j))
\vert
R^{-}_{j,r_0}(\varepsilon_j), 
\D(z_j,r(\varepsilon_j))
\subset
\widehat{D}_{r_0}(z_j)
)
\\
\leq
\P(
\text{No cluster of }
\cL_{\D}'
\text{ surrounds }
f_{j,r_0}(\D(z_j,\varepsilon_j))
\vert
\mathfrak{C}_{r_0}, 
\D(z_j,r(\varepsilon_j))
\subset
\widehat{D}_{r_0}(z_j)
)
\\
\leq
\P(
E_{\D}(R^{+}_{j,r_0}(\varepsilon_j))
\vert
R^{+}_{j,r_0}(\varepsilon_j), 
\D(z_j,r(\varepsilon_j))
\subset
\widehat{D}_{r_0}(z_j)
),
\end{multline*}
where, for $r\in (0,1)$, $E_{\D}(r)$ denotes the event
$$
E_{\D}(r)=
\{
\text{No cluster of }
\cL_{\D}'
\text{ surrounds }
\D(0,r)
\}.
$$
Further, as $\varepsilon_j\to 0$,
$R^{-}_{j,r_0}(\varepsilon_j)$ and $R^{+}_{j,r_0}(\varepsilon_j)$
both behave like
$$
\varepsilon_j
\vert f_{j,r_0}'(z_j)\vert
=
\varepsilon_j
\CR(z_j,\widehat{D}_{r_0}(z_j))^{-1}.
$$
Next we estimate the difference from this leading term.

\begin{lemma}
\label{Lem distortion R pm}
Let $j\in\{1,2\}$.
Let $\varepsilon_j\in (0,1/4)$,
so that $\varepsilon_j<r(\varepsilon_j)/2$.
On the event 
$\{\D(z_j,r(\varepsilon_j))
\subset
\widehat{D}_{r_0}(z_j)\}$,
both  $R^{-}_{j,r_0}(\varepsilon_j)$ and $R^{+}_{j,r_0}(\varepsilon_j)$
satisfy the following deterministic bound:
$$
\dfrac{\varepsilon_j}{\CR(z_j,\widehat{D}_{r_0}(z_j))}
\exp(-2\log(2)\varepsilon_{j}^{1/2})
\leq
R^{\pm}_{j,r_0}(\varepsilon_j)\leq
\dfrac{\varepsilon_j}{\CR(z_j,\widehat{D}_{r_0}(z_j))}
\exp(2\log(2)\varepsilon_{j}^{1/2}).
$$
\end{lemma}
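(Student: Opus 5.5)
The plan is to normalise $f_{j,r_0}$ so that it is centred at $z_j$ with unit derivative, and then read off the bounds from Proposition \ref{Prop de Branges}. Write $\rho := \CR(z_j,\widehat{D}_{r_0}(z_j))$ and set
$$
U := \rho^{-1}\big(\widehat{D}_{r_0}(z_j)-z_j\big).
$$
Then $U$ is simply connected, contains $0$, and satisfies $\CR(0,U)=1$. Up to a rotation, which does not affect moduli, we have
$$
f_{j,r_0}(z) = f_U\big(\rho^{-1}(z-z_j)\big).
$$
Hence $f_{j,r_0}(\D(z_j,\eps_j)) = f_U(\D(0,\eps_j/\rho))$, and the quantities $R^{\pm}_{j,r_0}(\eps_j)$ are the minimal and maximal moduli of $f_U$ on the circle of radius $s:=\eps_j/\rho$.

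Next we bound $s$ using the hypothesis $\D(z_j,r(\eps_j))\subset \widehat{D}_{r_0}(z_j)$. By monotonicity of the conformal radius, $\rho\ge r(\eps_j)=\eps_j^{1/2}$, so $s\le \eps_j^{1/2}$. In fact the inclusion gives more: it implies $\D(0,\eps_j^{1/2}/\rho)\subset U$, which will be the relevant input if one prefers a Schwarz-type argument in place of de Branges.

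The distortion estimate then goes as follows. Since $f_U^{-1}$ is univalent on $\D$ with $f_U^{-1}(0)=0$ and $(f_U^{-1})'(0)=1$, the Koebe distortion theorem gives, for $|w|=t<1$,
$$
\frac{t}{(1+t)^2}\le |f_U^{-1}(w)|\le \frac{t}{(1-t)^2}.
$$
Inverting these bounds, every point $z$ with $|z|=s$ satisfies $|f_U(z)| = s(1+O(s))$, with explicit constants. We could alternatively use the estimate $|f_U(z)-z|\le C_{\rm univ}|z|^2$ of Proposition \ref{Prop de Branges}, valid once $s\le R_{\rm univ}$, which gives $|f_U(z)|/s\in[1-C_{\rm univ}s,\,1+C_{\rm univ}s]$. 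In either case we obtain
$$
\big|\log(R^{\pm}/s)\big|\le C s \le C\eps_j^{1/2}.
$$

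The only step needing care is matching the precise constant $2\log 2$, uniformly for all $\eps_j<1/4$ rather than just for small $\eps_j$. For $|w|=t$, Koebe gives $|\log(|f_U^{-1}(w)|/t)|\le 2|\log(1-t)|$. Inverting, $t=|f_U(z)|$ with $|z|=s$ satisfies $t\le 1/4$ whenever $s\le 1/8$, and then $2|\log(1-t)|\le 2\log(2)\,t$ holds for $t\le1/2$, by convexity ($-\log(1-t)\le 2\log 2\cdot t$ on $[0,1/2]$). This yields $|\log(t/s)|\le 2\log 2\cdot t$. Since $t\le s(1+o(1))$, one still has to absorb that factor using $s\le\eps_j^{1/2}<1/2$, which may require slightly sharper bookkeeping: for instance, using the lower Koebe bound $t\le s(1+t)^2$ to bound $t$ in terms of $s$. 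I expect this constant-chasing, rather than the structure of the argument, to be the main (minor) obstacle. If the exact constant is stubborn, any universal constant would suffice for the later use in the proof of Lemma \ref{Prop local asymp expect CR 1 8 ver 1}.
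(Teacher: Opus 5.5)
There is a genuine gap. Your normalisation to $f_U$ on the circle $|z|=s$, $s=\varepsilon_j/\rho$, is correct. The problem is that the only consequence of the hypothesis you feed into the distortion estimate is $s\le\varepsilon_j^{1/2}$, and the lemma does not follow from that together with $\CR(0,U)=1$. Take the Koebe domain $U=\C\setminus(-\infty,-1/4]$ and $z=-s$. Then $t=|f_U(z)|$ solves $t/(1+t)^2=s$, so $\log(t/s)=2\log(1+t)\approx 2s$. For $s=\varepsilon_j^{1/2}=0.1$ this is about $0.239$, whereas $2\log(2)\,\varepsilon_j^{1/2}\approx 0.139$. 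Koebe's growth bound is attained by exactly this domain, so no bookkeeping along your route can produce the constant $2\log 2$; asymptotically you get about $2$.

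Two further problems. First, the step $2|\log(1-t)|\le 2\log(2)\,t$ is off by a factor $2$: convexity gives $-\log(1-t)\le 2\log(2)\,t$ on $[0,1/2]$, hence only $2|\log(1-t)|\le 4\log(2)\,t$. Second, your argument only runs for $s\le 1/8$ (or $s\le R_{\rm univ}$ via Proposition \ref{Prop de Branges}). The lemma is claimed for all $\varepsilon_j<1/4$, where $s$ can be close to $1/2$. Knowing only $\CR(0,U)=1$, the circle $|z|=s$ need not even lie in $U$ there.

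The missing idea is to measure $|z-z_j|$ against the inradius instead of the conformal radius. The hypothesis gives $d(z_j,\partial\widehat D_{r_0}(z_j))\ge\varepsilon_j^{1/2}$, hence $|z-z_j|/d(z_j,\partial\widehat D_{r_0}(z_j))\le\varepsilon_j^{1/2}\le 1/2$ for $|z-z_j|=\varepsilon_j$. You wrote down the equivalent inclusion $\D(0,\varepsilon_j^{1/2}/\rho)\subset U$ but never used it; in the Koebe example it forces $s\le\varepsilon_j^{1/2}/4$.

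The paper writes
$\frac{1}{2\pi}\log|f_{j,r_0}(z)|^{-1}=G_{\widehat D_{r_0}(z_j)}(z_j,z)=\frac{1}{2\pi}\log\frac{\CR(z_j,\widehat D_{r_0}(z_j))}{|z-z_j|}+h(z_j,z)-h(z_j,z_j)$,
where $h$ is the harmonic part of the Green's function. It bounds the increment by $\frac{\log 2}{\pi}\frac{|z-z_j|}{d(z_j,\partial\widehat D_{r_0}(z_j))}$ using \cite[Proposition 3.2]{lupuWick}. Multiplying by $2\pi$ gives exactly $2\log(2)\,\varepsilon_j^{1/2}$.

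Within your normalisation, the same inradius input plus Harnack's inequality also works, with a worse constant. Apply it to the nonnegative harmonic function $\log|z/f_U(z)|-\log d(0,\partial U)$ on $\D(0,d(0,\partial U))$; its value at $0$ is at most $\log 4$ by the Koebe quarter theorem. This yields $|\log(R^{\pm}\rho/\varepsilon_j)|\le 4\log(2)\,\varepsilon_j^{1/2}/(1-\varepsilon_j^{1/2})$ for all $\varepsilon_j<1/4$. As you say, that suffices for Lemma \ref{Prop local asymp expect CR 1 8 ver 1}, but it is weaker than the stated lemma. The constant $2\log 2$ requires the sharper oscillation estimate.
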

\begin{proof}
Let $z\in \D(z_j,\varepsilon_j)$.
By conformal invariance of the Green's function,
$$
G_{\widehat{D}_{r_0}(z_j)}(z_j,z)
=
G_{\D}(0, f_{j,r_0}(z))
=
\dfrac{1}{2\pi} \vert\log \vert f_{j,r_0}(z)\vert \vert.
$$
Further, we write
$$
G_{\widehat{D}_{r_0}(z_j)}(z_j,z)
=
\dfrac{1}{2\pi} \log \vert z- z_j\vert^{-1}
+
h_{\widehat{D}_{r_0}(z_j)}(z_j,z),
$$
where the function $h_{\widehat{D}_{r_0}(z_j)}(z,w)$
is harmonic on $\widehat{D}_{r_0}(z_j)^{2}$
with respect to both variables.
For $z=z_j$,
$$
h_{\widehat{D}_{r_0}(z_j)}(z_j,z_j)
=
\dfrac{1}{2\pi}
\log \CR(z_j,\widehat{D}_{r_0}(z_j)) .
$$
Thus,
$$
G_{\widehat{D}_{r_0}(z_j)}(z_j,z)
=
\dfrac{1}{2\pi}
\log\Big(
\dfrac{\CR(z_j,\widehat{D}_{r_0}(z_j))}{\vert z- z_j\vert}
\Big)
+
(h_{\widehat{D}_{r_0}(z_j)}(z_j,z)
-
h_{\widehat{D}_{r_0}(z_j)}(z_j,z_j)
).
$$
Next we bound the difference
$$
h_{\widehat{D}_{r_0}(z_j)}(z_j,z)
-
h_{\widehat{D}_{r_0}(z_j)}(z_j,z_j).
$$
Since
$$
\vert z-z_j\vert<\varepsilon_j<\dfrac{1}{2}r(\varepsilon_j)
\leq \dfrac{1}{2} d(z_j,\partial \widehat{D}_{r_0}(z_j)),
$$
we can apply \cite[Proposition 3.2]{lupuWick},
according to which
$$
\vert
h_{\widehat{D}_{r_0}(z_j)}(z_j,z)
-
h_{\widehat{D}_{r_0}(z_j)}(z_j,z_j)
\vert
\leq 
\dfrac{\log(2)}{\pi} \dfrac{\vert z-z_j\vert}{d(z_j,\partial \widehat{D}_{r_0}(z_j))}
\leq
\dfrac{\log(2)}{\pi}\dfrac{\varepsilon_j}{r(\varepsilon_j)}
=
\dfrac{\log(2)}{\pi}\varepsilon_{j}^{1/2}.
$$
So we get that
$$
\dfrac{\vert z-z_j\vert}{\CR(z_j,\widehat{D}_{r_0}(z_j))}
\exp(-2\log(2)\varepsilon_{j}^{1/2})
\leq
\vert f_{j,r_0}(z)\vert 
\leq
\dfrac{\vert z-z_j\vert}{\CR(z_j,\widehat{D}_{r_0}(z_j))}
\exp(2\log(2)\varepsilon_{j}^{1/2}).
$$
This implies the bound for $R^{-}_{j,r_0}(\varepsilon_j)$ and $R^{+}_{j,r_0}(\varepsilon_j)$.  
\end{proof}
We now collect these lemmas and the input of the one-point function to conclude.
\begin{proof}[Proof of Lemma \ref{Prop local asymp expect CR 1 8 ver 1}]
Denote by $p(r)$ the probability
$$
p(r) = \P(E_{\D}(r)).
$$
Define $b(r)$ through the relation
$$
p(r)
=
\CSLE~
r^{1/8}(1+b(r)).
$$
By Proposition \ref{Prop asymp 1 pt ED},
$b(r)\to 0$ as $r\to 0$.
Denote
$$
\widetilde{R}^{-}_{j,r_0}(\varepsilon_j)
=
\dfrac{\varepsilon_j}{\CR(z_j,\widehat{D}_{r_0}(z_j))}
\exp(-2\log(2)\varepsilon_{j}^{1/2}),
\qquad
\widetilde{R}^{+}_{j,r_0}(\varepsilon_j)
=
\dfrac{\varepsilon_j}{\CR(z_j,\widehat{D}_{r_0}(z_j))}
\exp(2\log(2)\varepsilon_{j}^{1/2}).
$$

Next we apply Lemmas \ref{Lem cond prob E j r 0 eps j} and
\ref{Lem distortion R pm},
together with the conditional independence of
$E_{r_0,\varepsilon_1}^{(1)}$ and 
$E_{r_0,\varepsilon_2}^{(2)}$
given $\mathfrak{C}_{r_0}$.
We obtain that the probability
\begin{equation}
\label{Eq prop many cond}
\P( E_{\varepsilon_1,\varepsilon_2}^v
\cap
\{\D(z_1,r(\varepsilon_1))
\subset
\widehat{D}_{r_0}(z_1)\}
\cap
\{\D(z_2,r(\varepsilon_2))
\subset
\widehat{D}_{r_0}(z_2)\})
\end{equation}
is bounded from below by
$$
\E
\big[
p(\widetilde{R}^{-}_{1,r_0}(\varepsilon_1))
p(\widetilde{R}^{-}_{2,r_0}(\varepsilon_2))
\1_{E_{r_0,\varepsilon_1,\varepsilon_2}}
\1_{\{\D(z_j,r(\varepsilon_j))
\subset
\widehat{D}_{r_0}(z_j), j\in\{1,2\}\}}
\big],
$$
and from above by
\begin{equation}
\label{Eq upper tilde R +}
\E
\big[
p(\widetilde{R}^{+}_{1,r_0}(\varepsilon_1))
p(\widetilde{R}^{+}_{2,r_0}(\varepsilon_2))
\1_{E_{r_0,\varepsilon_1,\varepsilon_2}}
\1_{\{\D(z_j,r(\varepsilon_j))
\subset
\widehat{D}_{r_0}(z_j), j\in\{1,2\}\}}
\big].
\end{equation}
Let us focus on the upper bound \eqref{Eq upper tilde R +},
the lower bound being similar.
We get that \eqref{Eq upper tilde R +} equals
\begin{multline*}
\varepsilon_{1}^{1/8}
\varepsilon_{2}^{1/8}
(\CSLE)^2
\exp\big(\log(2)(\varepsilon_{1}^{1/2}+\varepsilon_{2}^{1/2})/4\big)
\\
\times
\E
\Big[
\Big(
\prod_{j\in \{1,2\}}
\CR(z_j,\widehat{D}_{r_0}(z_j))^{-1/8}
(1+b(\widetilde{R}^{+}_{j,r_0}(\varepsilon_j)))
\Big)
\1_{E_{r_0,\varepsilon_1,\varepsilon_2}}
\1_{\{\D(z_j,r(\varepsilon_j))
\subset
\widehat{D}_{r_0}(z_j), j\in\{1,2\}\}}
\Big]
.
\end{multline*}
Of course,
$$
\exp\big(\log(2)(\varepsilon_{1}^{1/2}+\varepsilon_{2}^{1/2})/4\big)
= 1 + o(1).
$$
Further, on the event
$\{\D(z_j,r(\varepsilon_j))
\subset
\widehat{D}_{r_0}(z_j)\}$,
$$
\widetilde{R}^{+}_{j,r_0}(\varepsilon_j)
\leq
\varepsilon_{j}^{1/2}
\exp(2\log(2)\varepsilon_{j}^{1/2}).
$$
This provides a deterministic bound for
$b(\widetilde{R}^{+}_{j,r_0}(\varepsilon_j))$,
which tends to $0$ as $\varepsilon_j\to 0$.
So we get the desired equivalent for 
\eqref{Eq prop many cond}.
We then use Lemma \ref{Lem equiv with FKG},
which ensures that this is also equivalent to
$\P(E_{\varepsilon_1,\varepsilon_2}^v)$.
\end{proof}

\subsection{Proof of Proposition \ref{Prop asymp 2 pt cut disks CR 1 8}}

We will to a large extent mimic the approach of the previous section. 
As mentioned, we consider 
$\cL_{D_{\varepsilon_1,\varepsilon_2}(z_1,z_2)}$
as a subfamily of
$\cL_{D}$ and $\Xi^{v^2/2}_{D, D_{\varepsilon_1,\varepsilon_2}(z_1,z_2)}$ as a subfamily of
$\Xi^{v^2/2}_{D}$. 

Recall
the event
$$E_{\varepsilon_1,\varepsilon_2}^{v,\rm{holes}} 
= \{\text{No cluster of }
\cL_{D_{\varepsilon_1,\varepsilon_2}(z_1,z_2)}
\cup
\Xi^{v^2/2}_{D, D_{\varepsilon_1,\varepsilon_2}(z_1,z_2)}\text{ disconnects } z_1 \text{ from }z_2\},$$
and the notation $\widehat{D}_{r_0}(z_j)$
for the connected component of $z_j$ in
$D\setminus
\overline{\bigcup_{\cC\in \mathfrak{C}_{r_0}}\cC}$.

We stress that here $\mathfrak{C}_{r_0}$ is constructed using $\cL_{D}$ and $\Xi^{v^2/2}_{D}$.

\begin{lemma}
As $\max(\varepsilon_1,\varepsilon_2)\to 0$,
$$
\P(E_{\varepsilon_1,\varepsilon_2}^{v,\rm{holes}}
\cap
\{\D(z_1,r(\varepsilon_1))
\subset
\widehat{D}_{r_0}(z_1)\}
\cap
\{\D(z_2,r(\varepsilon_2))
\subset
\widehat{D}_{r_0}(z_2)\})
\sim
\P(E_{\varepsilon_1,\varepsilon_2}^{v,\rm{holes}}).
$$
\end{lemma}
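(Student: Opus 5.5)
We will mimic the proof of Lemma \ref{Lem equiv with FKG}: it suffices to show that for each $j\in\{1,2\}$,
$$
\P\big(E_{\varepsilon_1,\varepsilon_2}^{v,\rm{holes}}\cap\{\D(z_j,r(\varepsilon_j))\not\subset\widehat{D}_{r_0}(z_j)\}\big)
=o\big(\P(E_{\varepsilon_1,\varepsilon_2}^{v,\rm{holes}})\big).
$$
The lemma then follows by a union bound over $j$. The subtlety compared to the previous section is that $E_{\varepsilon_1,\varepsilon_2}^{v,\rm{holes}}$ is defined through the subfamily $\cL_{D_{\varepsilon_1,\varepsilon_2}(z_1,z_2)}\cup\Xi^{v^2/2}_{D,D_{\varepsilon_1,\varepsilon_2}(z_1,z_2)}$, whereas $\widehat{D}_{r_0}(z_j)$ is built from the full soup $\cL_D\cup\Xi^{v^2/2}_D$. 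We therefore view everything as functionals of the single Poisson point process $\cL_D\cup\Xi^{v^2/2}_D$ and check monotonicity there.

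First, the complement $(E_{\varepsilon_1,\varepsilon_2}^{v,\rm{holes}})^c$ is the event that some cluster of the restricted family disconnects $z_1$ from $z_2$. This event is increasing in $\cL_D\cup\Xi^{v^2/2}_D$. Indeed, adding a trajectory either adds nothing to the restricted family, if it touches one of the disks, or adds a trajectory to it; in the latter case clusters only merge, and a disconnecting cluster remains disconnecting. Second, the event $\{\D(z_j,r(\varepsilon_j))\not\subset\widehat{D}_{r_0}(z_j)\}$ is increasing in the full soup, as it was before: adding trajectories can only enlarge $\overline{\bigcup_{\cC\in\mathfrak{C}_{r_0}}\cC}$ and hence shrink $\widehat{D}_{r_0}(z_j)$.

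Applying the FKG inequality for Poisson point processes \cite[Lemma 2.1]{Janson1984FKGPPP} exactly as in Lemma \ref{Lem equiv with FKG} gives
$$
\P\big(E_{\varepsilon_1,\varepsilon_2}^{v,\rm{holes}}\cap\{\D(z_j,r(\varepsilon_j))\not\subset\widehat{D}_{r_0}(z_j)\}\big)\le\P\big(E_{\varepsilon_1,\varepsilon_2}^{v,\rm{holes}}\big)\,\P\big(\D(z_j,r(\varepsilon_j))\not\subset\widehat{D}_{r_0}(z_j)\big).
$$
It then remains to show that $\P(\D(z_j,r(\varepsilon_j))\not\subset\widehat{D}_{r_0}(z_j))\to0$ as $\varepsilon_j\to0$. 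This does not depend on the holes, so it is the same fact used before. Its justification is that $\widehat{D}_{r_0}(z_j)$ is a.s. an open set containing $z_j$, because a.s. no cluster of $\mathfrak{C}_{r_0}$ comes arbitrarily close to a fixed point $z_j$. Hence $d(z_j,\partial\widehat{D}_{r_0}(z_j))>0$ a.s., and the probability tends to $0$ since $r(\varepsilon_j)=\varepsilon_j^{1/2}\to0$.

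The main point to check carefully, which we expect to be the only real obstacle, is the monotonicity of $(E^{v,\rm{holes}})^c$ under the full process. In particular, one must make sure that trajectories hitting the disks are genuinely excluded, so that they neither create nor destroy disconnection. Once this is established, the argument is identical to the one in the previous section.
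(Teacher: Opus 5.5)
Your proposal is correct and is essentially the paper's argument. The paper only says the lemma follows from the FKG inequality as in Lemma~\ref{Lem equiv with FKG}, and you supply the check it leaves implicit: $(E^{v,\rm{holes}}_{\varepsilon_1,\varepsilon_2})^c$ is increasing in the full process $\cL_D\cup\Xi^{v^2/2}_D$, because the restricted family is an increasing function of that process. The last step, $\P(\D(z_j,r(\varepsilon_j))\not\subset\widehat{D}_{r_0}(z_j))\to 0$, is the same fact used there; strictly, it needs that only finitely many clusters of $\mathfrak{C}_{r_0}$ cross an annulus around $z_j$, each of which a.s.\ avoids $z_j$, not merely that no single cluster approaches $z_j$.
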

\begin{proof}
This is similar to Lemma \ref{Lem equiv with FKG},
and follows by applying the FKG inequality.
\end{proof}

For $j\in\{1,2\}$, denote by 
$\cL_{r_0,j,\varepsilon_j}$
the subfamily of $\cL_{D}$
made of loops that stay in
$\widehat{D}_{r_0}(z_j)\setminus 
\overline{\D(z_j,\varepsilon_j)}$.
Conditionally on $\mathfrak{C}_{r_0}$,
$\cL_{r_0,1,\varepsilon_1}$ and 
$\cL_{r_0,2,\varepsilon_2}$
are independent and conditionally distributed as
Brownian loop soups in the respective annular domains.
Let $E_{r_0,\varepsilon_j}^{(j),\rm{holes}}$ be the event
$$
E_{r_0,\varepsilon_j}^{(j),\rm{holes}} =
\{
\D(z_j,\varepsilon_j)
\subset
\widehat{D}_{r_0}(z_j)
\text{ and no cluster of }
\cL_{r_0,j,\varepsilon_j}
\text{ disconnects } \D(z_j,\varepsilon_j) 
\text{ from } \partial \widehat{D}_{r_0}(z_j)
\}.
 $$
Then the following equality between the events holds:
\begin{multline*}
E_{\varepsilon_1,\varepsilon_2}^{v,\rm{holes}}
\cap
\{\D(z_1,r(\varepsilon_1))
\subset
\widehat{D}_{r_0}(z_1)\}
\cap
\{\D(z_2,r(\varepsilon_2))
\subset
\widehat{D}_{r_0}(z_2)\}
\\
=
E_{r_0,\varepsilon_1,\varepsilon_2}
\cap
E_{r_0,\varepsilon_1}^{(1),\rm{holes}}
\cap
\{\D(z_1,r(\varepsilon_1))
\subset
\widehat{D}_{r_0}(z_1)\}
\cap
E_{r_0,\varepsilon_2}^{(2),\rm{holes}}
\cap
\{\D(z_2,r(\varepsilon_2))
\subset
\widehat{D}_{r_0}(z_2)\},
\end{multline*}
where $E_{r_0,\varepsilon_1,\varepsilon_2}$
is given by \eqref{Eq def E r0 eps1 eps2}.
By using the conditional independence of
$E_{r_0,\varepsilon_1}^{(1),\rm{holes}}$
and $E_{r_0,\varepsilon_2}^{(2),\rm{holes}}$
given $\mathfrak{C}_{r_0}$,
and the asymptotics of disconnection probabilities
in annular domains
(here $\widehat{D}_{r_0}(z_j)\setminus 
\overline{\D(z_j,\varepsilon_j)}$,
$j\in\{1,2\}$),
we get the following analogue of 
Lemma \ref{Prop local asymp expect CR 1 8 ver 1}.

\begin{lemma}
\label{Lem local asymp cut disks expect CR 1 8}
As $\max(\varepsilon_1,\varepsilon_2)\to 0$, we have that
\begin{multline*}
\P(E_{\varepsilon_1,\varepsilon_2}^{v,\rm{holes}})
\sim
\P(E_{r_0,\varepsilon_1,\varepsilon_2}
\cap
E_{r_0,\varepsilon_1}^{(1),\rm{holes}}
\cap
\{\D(z_1,r(\varepsilon_1))
\subset
\widehat{D}_{r_0}(z_1)\}
\cap
E_{r_0,\varepsilon_2}^{(2),\rm{holes}}
\cap
\{\D(z_2,r(\varepsilon_2))
\subset
\widehat{D}_{r_0}(z_2)\})
\\=
\dfrac{2}{\pi}
\varepsilon_{1}^{1/8}
\varepsilon_{2}^{1/8}
(1+o(1))
\\
\times
\E
\Big[
\Big(
\prod_{j\in \{1,2\}}
\log\big(
\varepsilon_j^{-1}\CR(z_j,\widehat{D}_{r_0}(z_j))
\big)^{1/2}
\CR(z_j,\widehat{D}_{r_0}(z_j))^{-1/8}
\Big)
\1_{E_{r_0,\varepsilon_1,\varepsilon_2}}
\1_{\{\D(z_j,r(\varepsilon_j))
\subset
\widehat{D}_{r_0}(z_j), j\in\{1,2\}\}}
\Big]
.
\end{multline*}
\end{lemma}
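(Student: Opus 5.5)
The plan is to mirror the proof of Lemma \ref{Prop local asymp expect CR 1 8 ver 1}, replacing the one-point input of Proposition \ref{Prop asymp 1 pt ED} by the explicit annulus disconnection probability. The first equivalence is the FKG lemma just stated. For the second, we use the identity of events preceding the statement. We also use that, conditionally on $\mathfrak{C}_{r_0}$, the events $E_{r_0,\varepsilon_1}^{(1),\rm{holes}}$ and $E_{r_0,\varepsilon_2}^{(2),\rm{holes}}$ are independent and depend only on the loop soups in the annular domains $\widehat{D}_{r_0}(z_j)\setminus\overline{\D(z_j,\varepsilon_j)}$. With these, the probability becomes the expectation, over $\mathfrak{C}_{r_0}$ restricted to $E_{r_0,\varepsilon_1,\varepsilon_2}\cap\{\D(z_j,r(\varepsilon_j))\subset\widehat{D}_{r_0}(z_j)\}$, of a product of two annular disconnection probabilities.

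For each annular probability, we map $\widehat{D}_{r_0}(z_j)$ to $\D$ by $f_{j,r_0}$. By conformal invariance it equals the probability that no cluster of a loop soup in $\D\setminus f_{j,r_0}(\overline{\D(z_j,\varepsilon_j)})$ disconnects the hole from $\partial\D$. By Lemma \ref{Lem distortion R pm}, the hole is sandwiched between round disks of radii $\widetilde{R}^{\mp}_{j,r_0}(\varepsilon_j)$. Monotonicity, by restriction of loop soups, holds here in the favourable direction: a larger hole removes loops but also forces disconnection around a larger set. Hence I would not compare loop soups directly. Instead, I would use Proposition \ref{Prop n pt cut disks bc continuum} with $n=1$, $v=0$, which expresses the probability as $\exp(-\mu^{\rm loop}_{\D\setminus\overline{K}}(\indx_\gamma(0)\text{ odd}))$ with $K$ the hole. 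This loop mass is monotone in $K$. For round holes it is given exactly by Proposition \ref{Prop exp disk eps disk}:
\begin{equation*}
\exp\big(-\mu^{\rm loop}_{\D\setminus \rho\overline{\D}}(\indx_\gamma(0)\text{ odd})\big)=\tfrac{2}{\pi}\rho^{1/8}\log(\rho^{-1})^{1/2}(1+o(1)).
\end{equation*}
Plugging in $\rho=\widetilde{R}^{\pm}_{j,r_0}(\varepsilon_j)=\varepsilon_j\CR(z_j,\widehat{D}_{r_0}(z_j))^{-1}e^{\pm 2\log 2\,\varepsilon_j^{1/2}}$ gives upper and lower bounds. Each is of the form $\sqrt{2/\pi}\,\varepsilon_j^{1/8}\CR^{-1/8}\log(\varepsilon_j^{-1}\CR)^{1/2}$ times $1+o(1)$, and the product of the two constants yields the factor $\tfrac{2}{\pi}$.

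The main obstacle is uniformity of the $o(1)$ terms inside the expectation, since $\CR(z_j,\widehat{D}_{r_0}(z_j))$ is random. On the event $\D(z_j,r(\varepsilon_j))\subset\widehat{D}_{r_0}(z_j)$ we have $\CR\ge\varepsilon_j^{1/2}$, so $\rho\le\varepsilon_j^{1/2}e^{2\log2\,\varepsilon_j^{1/2}}\to0$ deterministically. The error in Proposition \ref{Prop exp disk eps disk} depends only on $\rho$, so it is uniformly small. Similarly, $\log(\varepsilon_j^{-1}\CR)\ge\tfrac12\log\varepsilon_j^{-1}\to\infty$, so the multiplicative perturbation $e^{\pm2\log2\,\varepsilon_j^{1/2}}$ changes the logarithm by a factor $1+o(1)$ uniformly. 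Factoring these uniform errors out of the expectation gives the stated asymptotic. If a term in the paper's expansion needs the $-\frac12\log\log$ correction to hold with an error vanishing as $\rho\to0$, that is exactly Proposition \ref{Prop exp disk eps disk}, whose proof via the Fourier series of the bridge heat kernel provides an explicit uniform error.
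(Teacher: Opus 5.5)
Your proposal is correct and follows the paper's route: conditional independence given $\mathfrak{C}_{r_0}$, mapping each annulus to $\D$ with the distortion bounds of Lemma \ref{Lem distortion R pm}, and the exact round-annulus asymptotic of Proposition \ref{Prop exp disk eps disk}, with errors uniform on $\{\D(z_j,r(\varepsilon_j))\subset\widehat{D}_{r_0}(z_j)\}$; the paper only sketches this, and you fill in the details correctly. Two small slips: the one-point factor in your display should be $\sqrt{2/\pi}$, not $\tfrac{2}{\pi}$ (you use the right value in the next sentence); and restriction of loop soups already makes the no-disconnection probability increasing in the hole, so your detour through the loop-mass formula is valid but optional (that formula should be applied in $\widehat{D}_{r_0}(z_j)$ with its round hole and then transported by conformal invariance).
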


We will further simplify the above asymptotic.
Recall the event $E_{r_0}$ given by \eqref{Eq def E r0}.

\begin{lemma}
\label{Lem finite exp CR 1 8}
We have   
$$
\E
\big[
\CR(z_1,\widehat{D}_{r_0}(z_1))^{-1/8}
\CR(z_2,\widehat{D}_{r_0}(z_2))^{-1/8}
\1_{E_{r_0}}
\big]
<+\infty.
$$
\end{lemma}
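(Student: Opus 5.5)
The plan is to obtain finiteness without any direct estimate on the law of $\widehat{D}_{r_0}(z_j)$. I would combine monotone convergence with Lemma \ref{Prop local asymp expect CR 1 8 ver 1} and a crude product upper bound on $\P(E_{\varepsilon_1,\varepsilon_2}^v)$.

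\emph{Step 1 (monotonicity).} As $\varepsilon_1,\varepsilon_2\downarrow 0$ (say along a sequence decreasing in both coordinates), the three events are increasing: $E_{r_0,\varepsilon_1,\varepsilon_2}$ and $\{\D(z_j,r(\varepsilon_j))\subset\widehat{D}_{r_0}(z_j)\}$, $j\in\{1,2\}$. Their union is, up to a null set, $E_{r_0}$. For the separation event, a cluster of $\mathfrak{C}_{r_0}$ disconnects $z_1$ from $z_2$ iff it disconnects some small disks around them, since clusters are closed and avoid $z_1,z_2$ a.s. For the containment events, $z_j\in\widehat{D}_{r_0}(z_j)$, which is open. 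The integrand $\CR(z_1,\widehat{D}_{r_0}(z_1))^{-1/8}\CR(z_2,\widehat{D}_{r_0}(z_2))^{-1/8}$ is nonnegative and does not depend on $\varepsilon$. So by monotone convergence,
\[
\E\big[\CR(z_1,\widehat{D}_{r_0}(z_1))^{-1/8}\CR(z_2,\widehat{D}_{r_0}(z_2))^{-1/8}\1_{E_{r_0}}\big]
=\lim \E\big[\cdots\1_{E_{r_0,\varepsilon_1,\varepsilon_2}}\1_{\{\D(z_j,r(\varepsilon_j))\subset\widehat{D}_{r_0}(z_j),\,j\in\{1,2\}\}}\big].
\]
This also gives Lemma \ref{Prop ident 2pt corr} directly, once finiteness is known. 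By Lemma \ref{Prop local asymp expect CR 1 8 ver 1}, whose proof did not use finiteness, the right-hand side equals
\[
(\CSLE)^{-2}\lim(\varepsilon_1\varepsilon_2)^{-1/8}\P(E^v_{\varepsilon_1,\varepsilon_2}).
\]
It therefore suffices to show $\P(E^v_{\varepsilon_1,\varepsilon_2})\le C(\varepsilon_1\varepsilon_2)^{1/8}$.

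\emph{Step 2 (product upper bound).} Let $\cL^{(j)}$ be the loops of $\cL_D$ contained in $\D(z_j,r_0)$. These two families are independent, because the disks are disjoint, and each is a critical Brownian loop soup in $\D(z_j,r_0)$. Suppose a cluster of $\cL^{(1)}$ surrounds $\D(z_1,\varepsilon_1)$. It lies inside $\D(z_1,r_0)$, which does not contain $z_2$, so it is contained in a cluster of $\cL_D\cup\Xi^{v^2/2}_D$ that disconnects $\D(z_1,\varepsilon_1)$ from $\D(z_2,\varepsilon_2)$. The same holds for $j=2$. Hence $E^v_{\varepsilon_1,\varepsilon_2}$ is contained in the intersection, over $j\in\{1,2\}$, of the events that no cluster of $\cL^{(j)}$ surrounds $\D(z_j,\varepsilon_j)$. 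By independence, scaling and Proposition \ref{Prop asymp 1 pt ED},
\[
\P(E^v_{\varepsilon_1,\varepsilon_2})\le p(\varepsilon_1/r_0)\,p(\varepsilon_2/r_0)\le C\,r_0^{-1/4}(\varepsilon_1\varepsilon_2)^{1/8}
\]
for all small $\varepsilon_j$, where $p(r)=\P(E_{\D}(r))$.

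\emph{Main obstacle.} The delicate point is the logic of Step 1. Lemma \ref{Prop local asymp expect CR 1 8 ver 1} must be invoked only in the direction where both sides are finite for fixed $\varepsilon$. They are: on the containment event $\CR(z_j,\widehat{D}_{r_0}(z_j))\ge r(\varepsilon_j)$, so the truncated integrand is bounded. We also need its $o(1)$ error to be uniform, which its proof provides through the deterministic bounds on $b(\cdot)$. With this in place, the limit of the truncated expectations is at most $(\CSLE)^{-2}C r_0^{-1/4}<\infty$, and monotone convergence yields the lemma.
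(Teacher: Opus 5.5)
Your proof is correct, but it bounds the truncated expectations through a different probability than the paper does.

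\textbf{The paper's route.} It works with the holes event $E^{v,\rm{holes}}_{\varepsilon_1,\varepsilon_2}$.
\begin{enumerate}
\item It lower-bounds $\P(E^{v,\rm{holes}}_{\varepsilon_1,\varepsilon_2})$ via Lemma~\ref{Lem local asymp cut disks expect CR 1 8}, using $\log(\varepsilon_j^{-1}\CR(z_j,\widehat{D}_{r_0}(z_j)))\geq\frac12\log\varepsilon_j^{-1}$ on the containment event.
\item It compares this with the asymptotic from Theorem~\ref{thm:twistbc}, namely $\P(E^{v,\rm{holes}}_{\varepsilon_1,\varepsilon_2})\sim\frac{2}{\pi}(\varepsilon_1\varepsilon_2)^{1/8}(\log\varepsilon_1^{-1}\log\varepsilon_2^{-1})^{1/2}A(z_1,z_2)$.
\item This gives $\limsup$ of the truncated expectations at most $2A(z_1,z_2)$, and monotone convergence finishes.
\end{enumerate}

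\textbf{Your route.} You use the no-holes Lemma~\ref{Prop local asymp expect CR 1 8 ver 1} together with the product bound $\P(E^v_{\varepsilon_1,\varepsilon_2})\leq p(\varepsilon_1/r_0)\,p(\varepsilon_2/r_0)$. That bound comes from independence of the loops inside the disjoint disks $\D(z_j,r_0)$ and from Proposition~\ref{Prop asymp 1 pt ED}.

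\textbf{Comparison.} Your argument makes Proposition~\ref{Prop local asymp expect CR 1 8} independent of the twist-field machinery behind Theorem~\ref{thm:twistbc}: Lupu's identity, the metric-graph-to-continuum convergence, and the explicit loop-measure asymptotics. It also gives an explicit bound of order $r_0^{-1/4}$. The paper's route costs nothing extra in its own flow, since it needs the holes lemma and Theorem~\ref{thm:twistbc} anyway for Proposition~\ref{Prop asymp 2 pt cut disks CR 1 8}. In exchange, it bounds the expectation directly by the twist correlation.

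\textbf{A correction in Step 1.} The event $E_{r_0,\varepsilon_1,\varepsilon_2}$ is \emph{decreasing} as $\varepsilon_j\downarrow0$, not increasing, since smaller disks are easier to disconnect. Your own justification in fact shows $E_{r_0}=\bigcap E_{r_0,\varepsilon_1,\varepsilon_2}$. Monotone convergence still applies, for the following reason.
\begin{itemize}
\item On $\{\D(z_j,r(\varepsilon_j))\subset\widehat{D}_{r_0}(z_j),\ j\in\{1,2\}\}$, each disk $\D(z_j,\varepsilon_j)$ lies in a cluster-free connected set containing $z_j$.
\item So on that event $E_{r_0,\varepsilon_1,\varepsilon_2}$ coincides with $E_{r_0}$.
\item Hence the product of the three indicators equals $\1_{E_{r_0}}$ times the containment indicators, and this product does increase to $\1_{E_{r_0}}$.
\end{itemize}
Alternatively, Fatou's lemma alone is enough for finiteness.
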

\begin{proof}
On the event 
$\{\D(z_j,r(\varepsilon_j))
\subset \widehat{D}_{r_0}(z_j)\}$,
we have 
$\CR(z_j,\widehat{D}_{r_0}(z_j))\geq \varepsilon_j^{1/2}$.
So, by Lemma \ref{Lem local asymp cut disks expect CR 1 8},
we get that
\begin{multline*}
\P(E_{\varepsilon_1,\varepsilon_2}^{v,\rm{holes}})
\geq
\dfrac{1}{\pi}
\varepsilon_{1}^{1/8}
\varepsilon_{2}^{1/8}
\log(\varepsilon_1^{-1})^{1/2}
\log(\varepsilon_2^{-1})^{1/2}
\\
\times
\E
\Big[
\Big(
\prod_{j\in \{1,2\}}
\CR(z_j,\widehat{D}_{r_0}(z_j))^{-1/8}
\Big)
\1_{E_{r_0,\varepsilon_1,\varepsilon_2}}
\1_{\{\D(z_j,r(\varepsilon_j))
\subset
\widehat{D}_{r_0}(z_j), j\in\{1,2\}\}}
\Big]
(1+o(1))
.
\end{multline*}
But by Theorem \ref{thm:twistbc},
we know that
$$
\P(E_{\varepsilon_1,\varepsilon_2}^{v,\rm{holes}})
=
\dfrac{2}{\pi}
\varepsilon_{1}^{1/8}
\varepsilon_{2}^{1/8}
\log(\varepsilon_1^{-1})^{1/2}
\log(\varepsilon_2^{-1})^{1/2}
A(z_1,z_2)
(1+o(1))
$$
for some non-zero $A(z_1, z_2)$.
It follows that
$$
\limsup_{\max(\varepsilon_1,\varepsilon_2)\to 0}
\E
\Big[
\Big(
\prod_{j\in \{1,2\}}
\CR(z_j,\widehat{D}_{r_0}(z_j))^{-1/8}
\Big)
\1_{E_{r_0,\varepsilon_1,\varepsilon_2}}
\1_{\{\D(z_j,r(\varepsilon_j))
\subset
\widehat{D}_{r_0}(z_j), j\in\{1,2\}\}}
\Big]
\leq 
2A(z_1, z_2).
$$
But by monotone convergence this $\limsup$ is also
\begin{displaymath}
\E
\big[
\CR(z_1,\widehat{D}_{r_0}(z_1))^{-1/8}
\CR(z_2,\widehat{D}_{r_0}(z_2))^{-1/8}
\1_{E_{r_0}}
\big].
\qedhere
\end{displaymath}
\end{proof}
We can now conclude.
\begin{proof}[Proof of Proposition \ref{Prop asymp 2 pt cut disks CR 1 8}]
Almost surely, as $\max(\varepsilon_1,\varepsilon_2)\to 0$,
the quantity
\begin{equation}
\label{Eq CR log CR epsj}
\Big(
\prod_{j\in \{1,2\}}
\dfrac{
\log\big(
\varepsilon_j^{-1}\CR(z_j,\widehat{D}_{r_0}(z_j))
\big)^{1/2}}
{\log(\varepsilon_j^{-1})^{1/2}}
\CR(z_j,\widehat{D}_{r_0}(z_j))^{-1/8}
\Big)
\1_{E_{r_0,\varepsilon_1,\varepsilon_2}}
\1_{\{\D(z_j,r(\varepsilon_j))
\subset
\widehat{D}_{r_0}(z_j), j\in\{1,2\}\}}
\end{equation}
converges to
$$
\CR(z_1,\widehat{D}_{r_0}(z_1))^{-1/8}
\CR(z_2,\widehat{D}_{r_0}(z_2))^{-1/8}
\1_{E_{r_0}}.
$$
Moreover,
for $\varepsilon_j\in (0,r_0\wedge(1/2))$,
\eqref{Eq CR log CR epsj} is dominated by
$$
(1+(\log r_0)_{+}/\log(2))
\CR(z_1,\widehat{D}_{r_0}(z_1))^{-1/8}
\CR(z_2,\widehat{D}_{r_0}(z_2))^{-1/8}
\1_{E_{r_0}}.
$$
Since
\begin{equation}
\label{Eq E CR 1 8}
\E
\big[
\CR(z_1,\widehat{D}_{r_0}(z_1))^{-1/8}
\CR(z_2,\widehat{D}_{r_0}(z_2))^{-1/8}
\1_{E_{r_0}}
\big]
<+\infty,
\end{equation}
we get by dominated convergence that the expectation of
\eqref{Eq CR log CR epsj} converges
to \eqref{Eq E CR 1 8}.
Then we conclude by 
Lemma \ref{Lem local asymp cut disks expect CR 1 8}.
\end{proof}

\section{Renormalised two-point correlation functions for CLE$_4$}\label{sec:together}

\subsection{Nested CLE$_4$ gaskets}
\label{Subsec Nested CLE4 gaskets}

In this section we will prove Theorem \ref{thm:main1}. We will cut this proof into two ingredients. The main ingredient is the following resummation theorem for the probability that either an odd, even, or any CLE$_4$ gasket comes near both $z_1$ and $z_2$.

\begin{prop}\label{prop:2ptNestedCLE}
For $v \in \R$ and $\eps_1, \eps_2 > 0$ small, we let $E^v_{\eps_1,\eps_2}$ denote the event that for a GFF in a simply-connected domain $D$ with constant boundary value $v$ there exists a nested CLE$_4$ gasket of height $0$
that intersects both 
$\D(z_1,\eps_1)$ and $\D(z_2, \eps_2)$.

We then have that 
\begin{itemize}
\item $$\P(E^{\rm nest}_{\eps_1,\eps_2}(z_1, z_2)) = \sum_{n \in \Z} \P(E^{2\lambda n}_{\eps_1,\eps_2})  
+O((\eps_1\eps_2)^{1/6}),$$
where $E^{\rm nest}_{\eps_1,\eps_2}(z_1,z_2)$ is the event that there is some gasket of nested CLE$_4$ that intersects both $\D(z_1,\eps_1)$ and 
$\D(z_2,\eps_2)$.

\item $$\P(E^{\rm odd}_{\eps_1,\eps_2}(z_1, z_2))
=\sum_{n \in 2\Z} \P(E^{2\lambda n}_{\eps_1,\eps_2})  
+O((\eps_1\eps_2)^{1/4}),$$
where $E^{\rm odd}_{\eps_1,\eps_2}(z_1, z_2)$ is the event that there is an odd gasket of nested CLE$_4$ that intersects both $\D(z_1,\eps_1)$ and $\D(z_2,\eps_2)$.

\item $$\P(E^{\rm even}_{\eps_1,\eps_2}(z_1, z_2))
=\sum_{n \in 2\Z+1} \P(E^{2\lambda n}_{\eps_1,\eps_2})  
+O((\eps_1\eps_2)^{1/4}),$$
where $E^{\rm even}_{\eps_1,\eps_2}(z_1, z_2)$ 
is the event that there is an even gasket of nested 
CLE$_4$ that intersects both 
$\D(z_1,\eps_1)$ and $\D(z_2,\eps_2)$.
\end{itemize}
\end{prop}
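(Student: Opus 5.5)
The plan is to work with a single nested CLE$_4$ and its coupling to $\Phi_D+v$ from Proposition \ref{prop:CLEcoupling1}. The key observation is that the nested CLE$_4$ loops do not depend on $v$: only the heights change, by the constant shift $v$. Take $v=-2\lambda n$, equivalently shift heights by $2\lambda n$. Then a given gasket $\mathcal G$ has height $0$ exactly when its height in the $v=0$ coupling equals $-2\lambda n$. Fix a realisation of the loops and the i.i.d. labels. Every gasket $\mathcal G$ then has a well-defined height $h(\mathcal G)\in 2\lambda\Z$, and by symmetry of the labels the law of the heights is invariant under global sign flip. Hence
$$\sum_{n\in\Z}\P(E^{2\lambda n}_{\eps_1,\eps_2})=\E\big[\#\{h\in2\lambda\Z:\ \exists\ \text{gasket } \mathcal G \text{ at height } h \text{ hitting both } \D(z_1,\eps_1),\D(z_2,\eps_2)\}\big].$$
Here I use that for each fixed $n$ the event $E^{2\lambda n}$ is the event that some gasket of height $-2\lambda n$ (in the $v=0$ labelling) touches both disks. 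The gasket height is also the number of surrounding loops mod $2$ times $2\lambda$, up to parity: a gasket at nesting depth $m$ has height in $2\lambda(m-1+2\Z)$. So restricting to $n\in 2\Z$ picks out odd-depth gaskets, and $n\in 2\Z+1$ picks out even-depth ones. This matches the second and third bullets.

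The identity $\sum_n \1_{E^{2\lambda n}} = \#\{\text{heights of gaskets touching both disks}\}$ then has to be compared with $\1_{E^{\rm nest}}$. The two agree unless two distinct gaskets with distinct heights both touch the two disks. It is also worth noting that two distinct gaskets touching both disks must be nested and separated by a loop surrounding both disks, or come from different branches; the latter is impossible, since any gasket touching both disks is not separated from the other by a loop. So the error term is bounded by
$$\E\big[(N-1)_+\big],\qquad N=\#\{\text{gaskets touching both } \D(z_1,\eps_1),\D(z_2,\eps_2)\},$$
and similarly for the odd and even versions, where two such gaskets must now differ in depth by at least $2$.

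The main step is bounding this error. If two gaskets $\mathcal G\neq\mathcal G'$ touch both disks, there is a CLE$_4$ loop $\ell$ surrounding both $z_1,z_2$ with $\mathcal G'$ inside $\ell$. The event then decomposes via the Markov property of nested CLE$_4$ at the loop $\ell$, and inside $\Int(\ell)$ a fresh CLE$_4$ gasket touches both disks. I would bound this by a ``two-gasket'' estimate: each extra nested gasket touching $\D(z_j,\eps_j)$ costs an independent factor. Concretely, the probability that both $\mathcal G$ and the next gasket $\mathcal G'$ come $\eps_j$-close to $z_j$ is controlled by the one-point exponent of the union of two consecutive gaskets, i.e. by the probability that the CLE loop sequence around $z_j$ yields two consecutive loops, both $\eps_j$-close. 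Through the extremal distance description of \cite{ALS3}, this is a Brownian-bridge-type estimate with a larger exponent. For two consecutive levels (nested case) this should give a factor $\eps_j^{1/6}$-type decay in place of $\eps_j^{1/8}$. For levels differing by two (odd/even case) it gives $\eps_j^{1/4}$, since the exponent for the last exit configuration of the $\pm\pi$ Brownian motion over $k$ extra returns grows. Summing the geometric series over extra depths, together with the uniform bounds of Proposition \ref{Prop diff Y} and Koebe distortion to pass between Euclidean balls and conformal radii, yields $O((\eps_1\eps_2)^{1/6})$, respectively $O((\eps_1\eps_2)^{1/4})$.

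I expect this quantitative multi-gasket tail estimate to be the main obstacle. In particular, it requires showing that the two points decouple, so that the bound factorises as a product over $j$. For that I would localise around each $z_j$ as in Lemma \ref{Prop local asymp expect CR 1 8 ver 1} and use the integrability from Lemma \ref{Lem finite exp CR 1 8}.
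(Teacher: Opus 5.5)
Your bookkeeping is correct. The loops and i.i.d.\ labels do not depend on $v\in2\lambda\Z$, so Tonelli gives $\sum_n\P(E^{2\lambda n}_{\eps_1,\eps_2})=\E[\#\{\text{heights of gaskets meeting both disks}\}]$. Restricting to $n\in2\Z$ or $n\in2\Z+1$ selects odd or even depths, and the error is at most $\E[(N-1)_+]$ or its odd/even analogue. This is exactly the content of the paper's Lemma~\ref{lem:walk1} combined with the decomposition over depth $k$.

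The genuine gap is in the one step with real content: bounding the multi-gasket configurations. First, you identify the bad event wrongly. Every nested CLE$_4$ loop lies both in its parent gasket and in the gasket it bounds. Hence two gaskets meet both disks if and only if some nested CLE$_4$ loop itself meets both $\D(z_1,\eps_1)$ and $\D(z_2,\eps_2)$. Such a loop typically surrounds neither point, contrary to your claim. Since the disks are not points, gaskets in different branches can also both qualify. Second, your proposed mechanism does not reach this event. ``Two consecutive loops around $z_j$ both $\eps_j$-close to $z_j$'' holds almost surely for deep enough loops. The real cost is not a one-point nesting cost described by the extremal-distance law of \cite{ALS3}; it is the cost of a single macroscopic loop reaching two small disks. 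The localisation of Lemma~\ref{Prop local asymp expect CR 1 8 ver 1} cannot factorise this over $j$, because the bad event is precisely a macroscopic object linking both neighbourhoods. The exponents $1/6$ and $1/4$ are asserted rather than derived. For $\E[(N-1)_+]$ you would also need tail control on the number of such loops.

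The missing ingredients are as follows. For the odd/even cases the paper uses a uniform conditional estimate (Claim~\ref{Claim CLE 4}). In a domain whose boundary already meets both disks, a CLE$_4$ loop meets both disks with probability at most $C(\eps_1\eps_2)^{1/4}$. This follows from three inputs:
\begin{itemize}
\item domain monotonicity via the loop soup (Lemma~\ref{lem:tech1});
\item an extremal-length argument placing the images of the disks in $\widetilde C(\eps_1\eps_2)^{1/4}$-neighbourhoods of $\pm1$ in $\D$ (Lemma~\ref{lem:tech2});
\item the CLE boundary-arm bound of \cite{GaoNolinQian2025CLEarms}.
\end{itemize}
Combined with the depth decomposition and $\sum_k\P(E^{\rm odd,k}_{\eps_1,\eps_2})\le\widetilde C\,\P(E^{\rm odd}_{\eps_1,\eps_2})$, this gives the $O((\eps_1\eps_2)^{1/4})$ error.

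For the nested case the paper never bounds $\E[(N-1)_+]$. It writes nest $=$ odd $+$ even $-$ (odd $\cap$ even). It then bounds the probability that some loop meets both disks by $O(\min(\eps_1,\eps_2)^{1/3})$, using the imaginary exponential martingale of $h_t(z_1)$ along an $A_{-\lambda,\lambda}$ exploration stopped at separation or touching, with super-exponential tails for the number of steps (Lemma~\ref{lem:oddeven}).
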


So to conclude Theorem \ref{thm:main1} it remains to just sum over the boundary conditions and link them to calculations with 
$\cos$ and $\sin$. This is the content of the following proposition.

\begin{prop}\label{prop:calc2ptsum}
Given the equivalence from Proposition \ref{prop:2ptNestedCLE}, we have the following representations:
\begin{eqnarray*}
\nonumber
 (\eps_1 \eps_2)^{-1/8}\P(E^{\rm nest}_{\eps_1,\eps_2}(z_1, z_2))+o(1) 
 &=& (\eps_1 \eps_2)^{-1/8}\sum_{n \in \Z} \P(E^{2\lambda n}_{\eps_1,\eps_2}) + o(1),
 \\
 &=& C_{\rm nest}
 \CR(z_1,D)^{-1/8}\CR(z_2,D)^{-1/8}\frac{\theta_3(q^{1/2})}{\theta_2(q^{1/4})},
\end{eqnarray*}
\begin{eqnarray*}
\nonumber
(\eps_1 \eps_2)^{-1/8}\P(E^{\rm odd}_{\eps_1,\eps_2}(z_1, z_2))+o(1)
&=& (\eps_1 \eps_2)^{-1/8}\sum_{n \in 2\Z} \P(E^{2\lambda n}_{\eps_1,\eps_2})+ o(1) 
\\&=& \dfrac{1}{2}C_{\rm nest}
\CR(z_1,D)^{-1/8}\CR(z_2,D)^{-1/8}\frac{\theta_3(q^{1/8})}{\theta_2(q^{1/4})},
\end{eqnarray*}
\begin{eqnarray*}
\nonumber
(\eps_1 \eps_2)^{-1/8}\P(E^{\rm even}_{\eps_1,\eps_2}(z_1, z_2))+o(1)
&=&
(\eps_1 \eps_2)^{-1/8}\sum_{n \in 2\Z+1} \P(E^{2\lambda n}_{\eps_1,\eps_2}) + o(1)
\\
&=&
\dfrac{1}{2}C_{\rm nest}
\CR(z_1,D)^{-1/8}\CR(z_2,D)^{-1/8}\frac{\theta_4(q^{1/8})}{\theta_2(q^{1/4})}.
\end{eqnarray*}
Further, the right hand sides of these formulas are, 
up to explicit constants, equal to 
$$\Big\langle:\exp\Big(i\sqrt\frac{\pi}{2}\Phi_D(z_1)\Big):~
:\exp\Big(-i\sqrt\frac{\pi}{2}\Phi_D(z_2)\Big):
\Big\rangle = \CR(z_1,D)^{-1/8}\CR(z_2,D)^{-1/8}\exp(\frac{\pi}{2}G_D(z_1,z_2)),$$
$$\Big\langle:\cos(\sqrt\frac{\pi}{2}\Phi^D(z_1))::\cos(\sqrt\frac{\pi}{2}\Phi^D(z_2)):\Big\rangle,$$
and
$$\Big\langle:\sin(\sqrt\frac{\pi}{2}\Phi^D(z_1))::\sin(\sqrt\frac{\pi}{2}\Phi^D(z_2)):\Big\rangle,$$
respectively.
\end{prop}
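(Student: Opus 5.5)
The plan is to sum the twist-field asymptotics of Theorem~\ref{thm:equivtwist} over the boundary values $v=2\lambda n$, and then identify the resulting theta series by a modular (Poisson summation) transformation. The first equality on each line is Proposition~\ref{prop:2ptNestedCLE}: the error terms $O((\eps_1\eps_2)^{1/6})$ and $O((\eps_1\eps_2)^{1/4})$ are $o((\eps_1\eps_2)^{1/8})$.

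\textbf{Term-by-term limit.} By the fourth bullet of Theorem~\ref{thm:equivtwist}, together with \eqref{eq:twistbnd} and Lemma~\ref{lem:Modd}, for each fixed $n$,
$$(\eps_1\eps_2)^{-1/8}\P(E^{2\lambda n}_{\eps_1,\eps_2})\to (\CSLE)^2\,\sigma^D_{\rm tw}(z_1,z_2)\,e^{-4\lambda^2n^2\Modd_D(z_1,z_2)}=(\CSLE)^2\,\sigma^D_{\rm tw}(z_1,z_2)\,\hat q^{\,2n^2},$$
using $4\lambda^2=\pi/2$ and $e^{-\frac{\pi}{4}\Modd_D}=\hat q$.

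\textbf{Exchanging limit and sum (main obstacle).} To pass the limit through the sum over $n$ I need a bound, uniform in small $\eps_j$, of the form
$$(\eps_1\eps_2)^{-1/8}\P(E^{2\lambda n}_{\eps_1,\eps_2})\le C e^{-cn^2}.$$
I would first use the second bullet of Theorem~\ref{thm:equivtwist} to replace the CLE event by $E^{v}_{\eps_1,\eps_2}$, which is stated for the loop soup $\cL_D$ and excursions $\Xi^{v^2/2}_D$. Then I note that $E^v_{\eps_1,\eps_2}$ is contained in the intersection of two events:
\begin{enumerate}
\item no cluster of $\cL_D$ alone disconnects $\D(z_1,\eps_1)$ from $\D(z_2,\eps_2)$;
\item no single excursion of $\Xi^{v^2/2}_D$ that avoids $\D(z_1,r_0)\cup\D(z_2,r_0)$ has odd index sum.
\end{enumerate}
Such an excursion, closed up along $\partial D$, already disconnects $z_1$ from $z_2$. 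Since $\cL_D$ and $\Xi^{v^2/2}_D$ are independent,
$$\P(E^v_{\eps_1,\eps_2})\le \P(E^0_{\eps_1,\eps_2})\exp\Big(-\tfrac{v^2}{2}\,\mu^{\rm exc}_D\big(\indx_\gamma(z_1)+\indx_\gamma(z_2)\text{ odd},\ \gamma\subset D_{r_0,r_0}(z_1,z_2)\big)\Big).$$
The excursion mass in the exponent is strictly positive and independent of $\eps$. Moreover $(\eps_1\eps_2)^{-1/8}\P(E^0_{\eps_1,\eps_2})$ is bounded, since it converges by Theorem~\ref{thm:equivtwist}. Together these give the Gaussian bound, and dominated convergence then yields
$$(\eps_1\eps_2)^{-1/8}\sum_{n\in\Z}\P(E^{2\lambda n}_{\eps_1,\eps_2})\to(\CSLE)^2\sigma^D_{\rm tw}(z_1,z_2)\,\theta_3(\hat q^{2}).$$
The same argument gives the even-$n$ sum $\theta_3(\hat q^{8})$ and the odd-$n$ sum $\theta_2(\hat q^{8})$.

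\textbf{Modular inversion.} Write $\hat q^{2}=e^{-\pi t}$ with $t=2\pi/|\log q|$. The Jacobi inversion $\theta_3(e^{-\pi t})=t^{-1/2}\theta_3(e^{-\pi/t})$ gives
$$\theta_3(\hat q^2)=\sqrt{|\log q|/(2\pi)}\;\theta_3(q^{1/2}).$$
Similarly, $\theta_3(\hat q^8)=\sqrt{|\log q|/(8\pi)}\;\theta_3(q^{1/8})$ and $\theta_2(\hat q^8)=\sqrt{|\log q|/(8\pi)}\;\theta_4(q^{1/8})$, using the standard pairing $\theta_2\leftrightarrow\theta_4$ under inversion. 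Multiplying by Theorem~\ref{thm:twopointtwist}, the factor $\sqrt{|\log q|}$ cancels. This leaves $\CR(z_1,D)^{-1/8}\CR(z_2,D)^{-1/8}\theta_3(q^{1/2})/\theta_2(q^{1/4})$ times $C_{\rm nest}=(\CSLE)^2\Ctw(2\pi)^{-1/2}$, and the relative factor $\tfrac12$ appears for the even and odd sums.

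\textbf{Chaos identification.} A Gaussian computation with circle averages gives
$$\Big\langle:e^{i\sqrt{\pi/2}\Phi_D(z_1)}::e^{\mp i\sqrt{\pi/2}\Phi_D(z_2)}:\Big\rangle=\CR(z_1,D)^{-1/8}\CR(z_2,D)^{-1/8}e^{\pm\frac{\pi}{2}G_D(z_1,z_2)},$$
so the $\cos\cos$ and $\sin\sin$ correlations are $\tfrac12\big(e^{\pi G/2}\pm e^{-\pi G/2}\big)$ times the same prefactor. By Lemma~\ref{lem:parametr}, $e^{\pi G/2}=\sqrt{\theta_3(q^{1/2})/\theta_2(q^{1/2})}$. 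By \eqref{eq:thetadupl}, $\theta_2(q^{1/4})=\sqrt{2\theta_2(q^{1/2})\theta_3(q^{1/2})}$, hence
$$\frac{\theta_3(q^{1/2})}{\theta_2(q^{1/4})}=\frac{e^{\pi G/2}}{\sqrt2}.$$
By \eqref{eq:thetalinear} at nome $q^{1/2}$, $\theta_3(q^{1/8})=\theta_3(q^{1/2})+\theta_2(q^{1/2})$, which gives $\theta_3(q^{1/8})/\theta_2(q^{1/4})=(e^{\pi G/2}+e^{-\pi G/2})/\sqrt2$. The analogous identity $\theta_4(q^{1/8})=\theta_3(q^{1/2})-\theta_2(q^{1/2})$ gives $\theta_4(q^{1/8})/\theta_2(q^{1/4})=(e^{\pi G/2}-e^{-\pi G/2})/\sqrt2$. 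Both follow by splitting the defining series by parity of the index. This completes the identification with the three chaos correlations.
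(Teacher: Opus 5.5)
Your proposal is correct and follows the paper's proof essentially step by step. Both arguments use the same ingredients:
\begin{itemize}
\item term-by-term limits from Theorems \ref{thm:equivtwist}, \ref{thm:twistbc} and \ref{thm:twopointtwist};
\item the Gaussian domination $\P(E^v_{\eps_1,\eps_2})\le \P(E^0_{\eps_1,\eps_2})e^{-cv^2}$, obtained by the same excursion-inclusion argument as the paper's Lemma \ref{Lem E eps 1 eps 2 bound v};
\item Jacobi inversion;
\item the duplication and parity-splitting theta identities for the chaos identification.
\end{itemize}
You also write out the odd and even cases that the paper only calls analogous.

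One small note on the constant. Your value $C_{\rm nest}=(\CSLE)^2\Ctw(2\pi)^{-1/2}=\pi^{1/4}2^{-9/4}(\CSLE)^2$ is what these ingredients actually give. The paper's proof records $\pi^{1/4}2^{-5/4}(\CSLE)^2$, which appears to be a factor-of-two slip; it does not affect the statement, which leaves $C_{\rm nest}$ unspecified.
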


The proof is basically just pasting things together, with the summations controlled by the following lemma.
\begin{lemma}
\label{Lem E eps 1 eps 2 bound v}
Let $z_1\neq z_2\in D$
and $r_0>0$ such that the disks
$\overline{\D(z_1,r_0)}$ and $\overline{\D(z_2,r_0)}$
are disjoint and contained in $D$.
There is a constant
$c=c(D,z_1,z_2,r_0)>0$ 
(depending on $D$, $z_1$, $z_2$ and $r_0$),
such that for every $v\in\R$,
and for every $\varepsilon_1,\varepsilon_2\in (0,r_0]$,
$$
\P(E_{\varepsilon_1,\varepsilon_2}^v)
\leq
\P(E_{\varepsilon_1,\varepsilon_2}^0)
e^{-c v^2}.
$$

In particular, let $\cV$ be a countable subset of $\R$ and 
$a:\cV\rightarrow\R$.
Then assuming that for every $b>0$,
$$
\sum_{v\in\cV}
\vert a(v)\vert e^{-b v^2}
<+\infty,
$$
we have that as $\max(\varepsilon_1,\varepsilon_2)\to 0$,
$$
\sum_{v\in\cV}
a(v)\P(E_{\varepsilon_1,\varepsilon_2}^v)
=
\varepsilon_1^{1/8}
\varepsilon_2^{1/8}
(\CSLE)^2
\,
\sigma_{\rm tw}^D(z_1,z_2)
\,
\Big(
\sum_{v\in\cV}a(v)
e^{-v^2 \Modd_D(z_1,z_2)}
\Big)
(1+o(1)).
$$
\end{lemma}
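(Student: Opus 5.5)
The plan is to prove the Gaussian bound first and then obtain the summation formula from it by dominated convergence.

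\textbf{Step 1: the bound.} We work with the loop-soup description $E^v_{\eps_1,\eps_2}$: no cluster of $\cL_D\cup\Xi^{v^2/2}_D$ disconnects $\D(z_1,\eps_1)$ from $\D(z_2,\eps_2)$. This is equivalent to the height-$0$ gasket event by the discussion after Theorem \ref{thm:equivtwist}, and by symmetry we may take $v\geq 0$. The family $\cL_D\cup\Xi^{v^2/2}_D$ is the union of two independent Poisson processes, $\cL_D$ and $\Xi^{v^2/2}_D$. The event $E^v_{\eps_1,\eps_2}$ is decreasing in the Poisson configuration. It is contained in $E^0_{\eps_1,\eps_2}\cap F_v$, where $F_v$ is the event that no excursion of $\Xi^{v^2/2}_D$ separates $\overline{\D(z_1,r_0)}$ from $\overline{\D(z_2,r_0)}$ together with the boundary. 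Here ``separates'' means that the excursion, closed up by a boundary arc, has odd total index and avoids the two disks. Such an excursion disconnects $\D(z_1,\eps_1)$ from $\D(z_2,\eps_2)$ for every $\eps_j\le r_0$, so indeed $E^v_{\eps_1,\eps_2}\subset E^0_{\eps_1,\eps_2}\cap F_v$. The event $E^0_{\eps_1,\eps_2}$ depends only on $\cL_D$, and $F_v$ depends only on the excursions, so the two are independent. Hence
\[
\P(E^v_{\eps_1,\eps_2})\le \P(E^0_{\eps_1,\eps_2})\,\P(F_v).
\]
Finally,
\[
\P(F_v)=\exp\Big(-\tfrac{v^2}{2}\mu^{\rm exc}_D\big(\indx_\gamma(z_1)+\indx_\gamma(z_2)\text{ odd},\ \gamma\cap(\D(z_1,r_0)\cup\D(z_2,r_0))=\emptyset\big)\Big),
\]
and the excursion mass in the exponent is strictly positive and finite. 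Finiteness is given by Lemma \ref{lem:Modd}. Positivity holds because there is an open set of excursions winding once around $\D(z_1,r_0)$ alone. This gives the bound with $c$ equal to half that mass, which depends only on $(D,z_1,z_2,r_0)$. The FKG inequality of \cite{Janson1984FKGPPP} could also be used here, but independence already suffices.

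\textbf{Step 2: the summation formula.} For each fixed $v$, Theorem \ref{thm:equivtwist} together with \eqref{eq:twistbnd} gives
\[
(\eps_1\eps_2)^{-1/8}\P(E^v_{\eps_1,\eps_2})\to(\CSLE)^2\sigma^D_{\rm tw}(z_1,z_2)e^{-v^2\Modd_D(z_1,z_2)}.
\]
For domination, we combine Step 1 with the $v=0$ case of this convergence: $(\eps_1\eps_2)^{-1/8}\P(E^0_{\eps_1,\eps_2})$ converges and is therefore bounded by some constant $M$ for all $\eps_1,\eps_2\le\eps_0$. Note that $\eps_0$ does not depend on $v$, which is exactly why the $v=0$ convergence is the only one needed. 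Each term is then bounded by $M|a(v)|e^{-cv^2}$, which is summable by hypothesis with $b=c$. Dominated convergence over the countable set $\cV$ lets us exchange limit and sum. Since $\sigma^D_{\rm tw}(z_1,z_2)>0$ and the limiting sum is finite, the conclusion can be stated in the multiplicative form $(1+o(1))$ whenever that sum is nonzero. When the sum vanishes, we read the statement as an additive $o\big((\eps_1\eps_2)^{1/8}\big)$.

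\textbf{Main obstacle.} The delicate point is the uniformity in $v$ in Step 1. It requires a separating excursion that works simultaneously for all $\eps_j\le r_0$, and the decoupling of $\cL_D$ from the excursions.
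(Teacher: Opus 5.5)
Your proposal is correct and follows the paper's proof. The paper likewise couples boundary values $0$ and $v$ by adding the independent excursion process $\Xi^{v^2/2}_D$ to $\cL_D$. It includes $E^v_{\eps_1,\eps_2}$ in the intersection of $E^0_{\eps_1,\eps_2}$ with the event that no excursion disconnects $\D(z_1,r_0)$ from $\D(z_2,r_0)$, and concludes by independence and the Poisson void probability $e^{-cv^2}$. The paper leaves the summation statement implicit. Your Step 2 supplies exactly the dominated-convergence argument needed there: domination by $M|a(v)|e^{-cv^2}$ using only the $v=0$ convergence, together with your caveat for the case where the limiting sum vanishes.
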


\begin{proof}
The boundary conditions $0$ and $v$ are naturally coupled on the same probability space by adding to 
the Brownian loop soup $\cL_{D}$
the independent Poisson point process of
boundary excursions $\Xi^{v^2/2}_{D}$.
In this way,
we have the inclusion of events:
\begin{multline*}
E_{\varepsilon_1,\varepsilon_2}^v
\subset
\{
\text{No cluster of }
\cL_{D}
\text{ disconnects } \D(z_1,\varepsilon_1) 
\text{ from } \D(z_2,\varepsilon_2),
\\
\text{and no excursion in }
\Xi^{v^2/2}_{D}
\text{ disconnects }
\D(z_1,r_0) \text{ from } \D(z_2,r_0)
\}.
\end{multline*}
By independence of $\cL_{D}$
and $\Xi^{v^2/2}_{D}$
we get that
$$
\P(E_{\varepsilon_1,\varepsilon_2}^v)
\leq
\P(E_{\varepsilon_1,\varepsilon_2}^0)
\P(\text{No excursion in }
\Xi^{v^2/2}_{D}
\text{ disconnects }
\D(z_1,r_0) \text{ from } \D(z_2,r_0)) .
$$
Since $\Xi^{v^2/2}_{D}$ is a Poisson point process,
$$
\P(\text{No excursion in }
\Xi^{v^2/2}_{D}
\text{ disconnects }
\D(z_1,r_0) \text{ from } \D(z_2,r_0))
=
e^{-c v^2},
$$
for some $c>0$.
\end{proof}

\begin{proof}[Proof of Proposition \ref{prop:calc2ptsum}]
We will just explain the first formula and its equivalence with the imaginary chaos, the other two are analogous. 

Let $C = \Ctw (\CSLE)^2$. By applying Theorem \ref{thm:equivtwist}, Theorem \ref{thm:twistbc} and Theorem \ref{thm:twopointtwist}, we can calculate
$$(\eps_1 \eps_2)^{-1/8}\P(E^{2\lambda n}_{\eps_1,\eps_2}) = C
\CR(z_1,D)^{-1/8}\CR(z_2,D)^{-1/8}\frac{\hat q^{\frac{4}{\pi}(4\lambda^2n^2)}}{\theta_2(q^{1/4})\sqrt{|\log q|}} +o(1),$$
which recalling $2\lambda = \sqrt{\pi/2}$ gives
$$(\eps_1 \eps_2)^{-1/8}\P(E^{2\lambda n}_{\eps_1,\eps_2}) = C
\CR(z_1,D)^{-1/8}\CR(z_2,D)^{-1/8}\frac{\hat q^{2n^2}}{\theta_2(q^{1/4})\sqrt{|\log q|}} +o(1).$$
Using now Lemma \ref{Lem E eps 1 eps 2 bound v} to justify the summation, we obtain
$$(\eps_1 \eps_2)^{-1/8}\sum_{n \in \Z} \P(E^{2\lambda n}_{\eps_1,\eps_2}) + o(1) = C
\CR(z_1,D)^{-1/8}\CR(z_2,D)^{-1/8}\frac{\theta_3(\hat q^{2})}{\theta_2(q^{1/4})\sqrt{|\log q|}}.$$
Applying now Poisson summation formula (also called Jacobi transformation for $z = 0$, \cite[Formula 1.7.12]{lawden2013elliptic}) gives 
$$ C_{\rm nest}
 \CR(z_1,D)^{-1/8}\CR(z_2,D)^{-1/8}\frac{\theta_3(q^{1/2})}{\theta_2(q^{1/4})}$$
with $C_{\rm nest} =  \frac{\pi^{1/4} (\CSLE)^2}{2^{5/4}} $ as claimed.

To link it to 
$$\Big\langle:\exp\Big(i\sqrt\frac{\pi}{2}\Phi_D(z_1)\Big):~
:\exp\Big(-i\sqrt\frac{\pi}{2}\Phi_D(z_2)\Big):
\Big\rangle,$$
observe that this quantity can be explicitly calculated via approximation by circle-averages \cite{junnila2020imaginary}, and gives exactly
$$\CR(z_1,D)^{-1/8}\CR(z_2,D)^{-1/8}\exp(\frac{\pi}{2}G_D(z_1,z_2)).$$
But now recall that by Lemma \ref{lem:parametr}
$$\exp(\pi G_D(z_1,z_2))=\theta_3(q^{1/2})/\theta_2(q^{1/2}),$$
and hence by applying relation \eqref{eq:thetadupl}
to write 
$$\theta_2(q^{1/4})=\sqrt{2\theta_2(q^{1/2})\theta_3(q^{1/2})},$$
we obtain that
$$\frac{\theta_3(q^{1/2})}{\theta_2(q^{1/4})} = \frac{\theta_3(q^{1/2})^{1/2}}{\sqrt{2}\theta_2(q^{1/2})^{1/2}} 
=2^{-1/2}\exp\big(\frac{\pi}{2}G_D(z_1,z_2)\big)$$
as desired.
\end{proof}

\subsubsection{Proof of Proposition \ref{prop:2ptNestedCLE}}
To prove the proposition, we will use three lemmas. First a result that helps to separate 
$E_{\eps_1,\eps_2}^{\rm odd}(z_1,z_2)$ or $E_{\eps_1,\eps_2}^{\rm even}(z_1,z_2)$ into disjoint events. 

\begin{lemma}\label{lem:nointersect}
Let $F_{\eps_1,\eps_2}$ denote the event that at least two different odd CLE$_4$ gaskets of nested CLE$_4$ intersect both 
$\D(z_1, \eps_1)$ and $\D(z_2, \eps_2)$. Then
\begin{itemize}
    \item Let $E_{\eps_1,\eps_2}^{\rm{odd},k}(z_1,z_2)$ denote the event that the $k$-th odd nested CLE$_4$ gasket intersects 
    $\D(z_1,\eps_1)$ and $\D(z_2,\eps_2)$. Then there is a constant $C > 0$
    such that for all $k \geq 1$,
    $$\P(F_{\eps_1,\eps_2} \cap E_{\eps_1,\eps_2}^{\rm{odd},k}(z_1,z_2)) 
    \leq C(\eps_1\eps_2)^{1/4}\left(\P(E_{\eps_1,\eps_2}^{\rm{odd},k}) + 
    \1_{k \geq 2}\P(E_{\eps_1,\eps_2}^{\rm{odd},k-1})\right).$$
    \item Further, for some $\widetilde C > 0$, 
    for all $\eps_1, \eps_2 > 0$ sufficiently small, 
$$\sum_{k \geq 1}\P(E^{\rm{odd},k}_{\eps_1,\eps_2}(z_1,z_2)) < 
\widetilde C\P(E^{\rm odd}_{\eps_1,\eps_2}(z_1,z_2)).$$
    \item In particular, $\P(F_{\eps_1,\eps_2})$ is bounded by 
    $C(\eps_1\eps_2)^{1/4}\P(E^{\rm odd}_{\eps_1,\eps_2}(z_1, z_2))$ for some $C > 0$ as 
    $\max(\eps_1,\eps_2) \to 0$. 
\end{itemize}
Further, the same holds when we replace odd by even everywhere.
\end{lemma}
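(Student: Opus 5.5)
The plan is a Markov/renewal argument at the level of the nested CLE$_4$: decompose at the $k$-th odd gasket and pay a second one-point cost inside two small loops.

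For the first bullet, work on $F_{\eps_1,\eps_2}\cap E^{{\rm odd},k}_{\eps_1,\eps_2}(z_1,z_2)$. Another odd gasket $g'$ also hits both disks, and there are two cases. If $g'$ is nested deeper than the $k$-th odd gasket $g_k$, then $g'$ lies inside a single loop of $g_k$ surrounding both disks. If $g_k$ is the deeper one, then $g_k$ lies inside a single loop of $g'$, so $g_{k-1}$ already hits both disks. In the second case relabel $k-1$ as $k$; this is where the term $\1_{k\ge2}\P(E^{{\rm odd},k-1}_{\eps_1,\eps_2})$ comes from.

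So it suffices to bound the probability that $g_k$ hits both disks and some later odd gasket does too. Condition on the nested CLE$_4$ down to $g_k$ and let $\ell_1,\ell_2$ be the loops of $g_k$ surrounding $z_1,z_2$. The event $E^{{\rm odd},k}$ forces $d(z_j,\ell_j)<\eps_j$. A deeper gasket hitting both disks must sit inside one loop $\ell$ of $g_k$ surrounding both disks, since it is connected and disjoint from $g_k$. Hence $\ell_1=\ell_2=\ell$, and $\ell$ is $\eps_j$-close to $z_j$ for both $j$.

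Now use the renewal property. Inside $\Int(\ell)$ the two subsequent CLE$_4$ layers (one even and one odd) are i.i.d. nested CLE$_4$. For the next odd gasket to reach $\D(z_j,\eps_j)$, the two intermediate loops around $z_j$ must come within $\eps_j$ of $z_j$. The loop $\ell$ itself is already within distance $\eps_j$ of $z_j$, so the natural scale is $d(z_j,\ell)$ rather than $\eps_j$, and I would do the estimate in two stages. First, by Koebe, the conformal radius of $z_j$ in $\Int(\ell)$ is at most $4\eps_j$ on the event. Second, apply Proposition \ref{Prop asymp 1 pt ED}, upgraded to a uniform bound $\P(E_\D(r))\le Cr^{1/8}$, at each of the two additional layers separately. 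Each nested layer that must come within $\eps_j$ of $z_j$ costs a factor $(\eps_j/\text{current scale})^{1/8}$.

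The main obstacle is that this per-layer accounting naively yields only a factor $O(1)$ per extra layer, because the current scale is already of order $\eps_j$. What is needed instead is the correct exponent for $\eps$-closeness of two successive CLE loops inside a domain where $z_j$ sits at conformal radius of order $\eps_j$. I would handle it through the exact law of the log-conformal-radius increments, using the Bessel-type description of \cite{ALS3} and the tail $e^{-t/8}$ per layer. Taking two extra layers (even, then odd) with the joint control of $\ell$ being close to both points, I expect the dyadic scales to cost an extra $(\eps_1\eps_2)^{2\cdot 1/8}=(\eps_1\eps_2)^{1/4}$ relative to $\P(E^{{\rm odd},k})$. Summing over the dyadic scale of $d(z_j,\ell)$ against the conditional law produces the constant $C$, uniformly in $k$ by the renewal property. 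This gives the first bullet; I am least confident about getting exactly the exponent $1/4$ from this step.

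For the second bullet, iterate the same renewal. The event $E^{{\rm odd},k}$ requires $k-1$ earlier odd gaskets, each surrounding both points by a single loop. The probability that a CLE$_4$ loop surrounds both $z_1$ and $z_2$ inside such a domain is bounded by some $\rho<1$, uniformly for fixed macroscopic separation of the points. Near each point, the one-point cost is the same at every depth. This gives $\P(E^{{\rm odd},k})\le C\rho^{k-1}\P(E^{{\rm odd},1})$, and since $E^{{\rm odd},1}\subset E^{\rm odd}$, summing over $k$ gives $\sum_k\P(E^{{\rm odd},k})\le\widetilde C\,\P(E^{\rm odd})$.

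The third bullet follows by summing the first bullet over $k$, since $F\subset\bigcup_k(F\cap E^{{\rm odd},k})$, and then applying the second bullet. The even case is identical after shifting the parity by one layer.
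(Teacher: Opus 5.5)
Your reduction to a single loop $\ell$ of $g_k$ that surrounds both points and meets both disks is fine; it is the same $k$ versus $k-1$ relabelling the paper uses. The gap is in the estimate that carries the lemma: it is missing, and the mechanism you propose for it cannot produce any decay.

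Once $d(z_j,\ell)<\eps_j$, every later loop surrounding $z_j$ lies in $\Int(\ell)$. It is therefore within distance $d(z_j,\ell)<\eps_j$ of $z_j$ deterministically. So the one-point events you want to charge $(\eps_j/\text{scale})^{1/8}$ to have conditional probability one, and the Bessel description of $\log\CR$ increments from \cite{ALS3} gives no small factor. You also need only one extra loop, not two layers. If the next CLE$_4$ loop $\ell'\subset\Int(\ell)$ meets both disks, then the next odd gasket (whose outer boundary is $\ell'$) meets both automatically. All the smallness lies in the two-point constraint that the \emph{same} loop $\ell'$ reaches both disks. Nesting statistics around one point at a time cannot see this. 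Moreover, the constraint must be controlled uniformly over the random domain $\Int(\ell)$, in which both disks touch the boundary.

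The missing ingredient is the paper's Claim~\ref{Claim CLE 4}. It says that if each component of $D$ has boundary meeting both $\D(z_1,\eps_1)$ and $\D(z_2,\eps_2)$, then a CLE$_4$ loop in $D$ meets both disks with probability at most $C(\eps_1\eps_2)^{1/4}$, uniformly in $D$. The paper proves it in three steps. First, it enlarges the domain by loop-soup monotonicity (Lemma~\ref{lem:tech1}) until each closed disk meets the boundary at a single point. Second, it uses that the extremal distance between the two disks in any domain is at least $\frac{1}{2\pi}\log\frac{1}{\eps_1\eps_2}+O(1)$ to uniformize onto $\D$ with the disks inside $O((\eps_1\eps_2)^{1/4})$-neighbourhoods of $\pm1$ (Lemma~\ref{lem:tech2}). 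Third, it applies a CLE$_4$ boundary arm estimate \cite[Proposition 3.2]{GaoNolinQian2025CLEarms}. So the exponent $1/4$ comes from extremal length together with a boundary exponent, not from $2\cdot\frac18$.

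Your second bullet is also unjustified. The conditional two-point cost at depth $k$ involves factors like $\CR(z_j,\cdot)^{-1/8}$ of the shrunken domain, so it is not ``the same at every depth''. It is also unnecessary. The events $E^{{\rm odd},k}_{\eps_1,\eps_2}\cap F^c_{\eps_1,\eps_2}$ are disjoint subsets of $E^{\rm odd}_{\eps_1,\eps_2}$. Hence the first bullet gives $\sum_k\P(E^{{\rm odd},k}_{\eps_1,\eps_2})\le\P(E^{\rm odd}_{\eps_1,\eps_2})+2C(\eps_1\eps_2)^{1/4}\sum_k\P(E^{{\rm odd},k}_{\eps_1,\eps_2})$, and the last term is absorbed for small $\eps_1,\eps_2$.
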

\noindent Second, to obtain the summation over all gaskets, we also need to estimate the probability that an odd and even gasket simultaneously intersect the disks.
\begin{lemma}\label{lem:oddeven}
Let $F_{\eps_1,\eps_2}^{\rm two}(z_1, z_2)$ denote the event that at least two CLE$_4$ gaskets in the nested CLE$_4$ intersect both 
$\D(z_1, \eps_1)$ and $\D(z_2, \eps_2)$. 
Then $\P(F_{\eps_1,\eps_2}^{\rm two}(z_1, z_2)) \leq C\min(\eps_1,\eps_2)^{1/3}$,
for some constant $C>0$.\footnote{The choice of the exponent $1/3$ is arbitrary; one can see from the proof that any exponent less than $1/2$ would work, and most likely $1/2$ is the optimal one.}
\end{lemma}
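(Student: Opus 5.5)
We have to bound the probability that two distinct nested CLE$_4$ gaskets each meet both $\D(z_1,\eps_1)$ and $\D(z_2,\eps_2)$. The approach is a first-moment/renewal argument. If two different gaskets $A$ and $A'$ both come close to $z_1$ and $z_2$, they are nested: say $A'$ lies inside a loop $\Gamma$ of $A$. Since $A'$ meets both small disks, $\Gamma$ must surround both $z_1$ and $z_2$. The gasket $A$ comes $\eps_j$-close to $z_j$, and $\Gamma$ is part of the boundary of the component of $D\setminus A$ containing both points. Hence $\Gamma$ itself, or at least the boundary $\partial \Int(\Gamma)$, must come within $\eps_j$ of $z_j$; otherwise the part of $A$ near $z_j$ would lie inside $\Int(\Gamma)$, which is impossible. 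So the event forces the following: some loop $\Gamma$ of nested CLE$_4$ surrounds both $z_1$ and $z_2$, passes within distance $\eps_1$ of $z_1$ (take WLOG $\eps_1=\min(\eps_1,\eps_2)$), and then, conditionally on $\Gamma$, the CLE$_4$ iteration inside $\Int(\Gamma)$ produces a gasket coming $\eps_1$-close to $z_1$.

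The plan is then as follows.

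\textbf{Step 1: a one-point bound for the inner gasket.} Fix the loop $\Gamma$. By the domain Markov property of nested CLE$_4$, the inner nested CLE$_4$ in $\Int(\Gamma)$ is independent of everything outside. I bound the conditional probability that some gasket inside comes within $\eps_1$ of $z_1$, while $d(z_1,\Gamma)\le \eps_1$. I do this at an intermediate scale $\eta=\eps_1^{1/2-\delta}$. Either $d(z_1,\Gamma)\geq \eta$, which is ruled out, or $\Gamma$ comes within $\eps_1$ of $z_1$.

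\textbf{Step 2: the ``double approach'' cost.} I use the one-point estimate of Proposition~\ref{Prop asymp 1 pt ED} in the following way. Let $\Gamma^{(1)},\Gamma^{(2)},\dots$ be the nested loops surrounding $z_1$. The event forces two successive ``approaches'' to $z_1$:
\begin{itemize}
\item $\Gamma$ must come within $\eps_1$ of $z_1$ while still surrounding $z_2$, so it does not surround a small disk around $z_1$ at scale comparable to $|z_1-z_2|$;
\item the gasket inside $\Gamma$ must come within $\eps_1$ of $z_1$ as well.
\end{itemize}
The second is a one-point event in the domain $\Int(\Gamma)$. By Koebe distortion, $\CR(z_1,\Int(\Gamma))\le 4\eps_1$. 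So the inner CLE$_4$ loop around $z_1$ has conformal radius at most a constant times $\eps_1$, and reaching $\eps_1$-distance costs order $1$, not a small factor. This means the naive argument gives no gain. The true gain must come from the fact that $\Gamma$ itself approaches $z_1$ to distance $\eps_1$ in a ``boundary-touching'' way. Near a point of $\Gamma$, SLE$_4$ boundary arm exponents give probability $\approx \eps_1^{1/2}$ per additional arm: the loop $\Gamma$ approaching $z_1$ together with the gasket $A$ on its outside approaching $z_1$ is a two-sided configuration. I would therefore estimate instead the probability that within $\D(z_1,\eta)$ there are both a loop of $A$'s complement boundary (namely $\Gamma$) and points of $A$ outside $\Gamma$ at distance $\le\eps_1$. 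Via the GFF coupling, this is the event that the two-valued set $A_{-2\lambda,2\lambda}$ and the level line $\Gamma$ come close from both sides, which by the boundary exponent of SLE$_4$ (one-arm boundary exponent $1/2$ for the height-gap boundary condition) costs $(\eps_1/\eta)^{1/2-\delta}$. Choosing $\eta$ small, this gives $C\eps_1^{1/3}$.

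\textbf{Step 3: summing over loops.} Summing over the countably many candidate loops $\Gamma$ surrounding both points is controlled as follows. The number of nested loops surrounding both $z_1,z_2$ with $\ED$ gaps has geometric tails (the log-conformal-radius jumps are i.i.d.\ by \cite{SSW09CR}), so a union bound with the Markov property gives a convergent geometric series.

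I expect the main obstacle to be Step 2: making rigorous the two-sided arm estimate near $z_1$ with the needed exponent (any exponent $<1/2$ suffices) uniformly over the shape of $\Gamma$. I would first try to reduce it, via the GFF coupling and absolute continuity, to a boundary-touching estimate for SLE$_4(\rho)$ level lines.
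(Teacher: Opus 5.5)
Your opening reduction is correct and matches the paper's: if two nested gaskets both meet $\D(z_1,\eps_1)$ and $\D(z_2,\eps_2)$, then the loop $\Gamma$ of the outer gasket enclosing the inner one must itself meet both disks. Note that $\Gamma$ need not surround either point; only $\Int(\Gamma)$ has to meet both disks.

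The proof lies in what comes next, and there the proposal has a genuine gap.

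\textbf{Step~1 and Step~2.} Step~1 is vacuous. Step~2 is a heuristic, not an argument. The cost comes from a \emph{macroscopic} loop (one reaching $\D(z_2,\eps_2)$) entering $\D(z_1,\eps_1)$ around an \emph{interior} point. That is an interior one-point estimate with exponent $2-\dim\mathrm{SLE}_4=1/2$, not a ``boundary one-arm exponent for the height-gap boundary condition.'' The bookkeeping also does not close: with the $\eta=\eps_1^{1/2-\delta}$ of Step~1 you get $(\eps_1/\eta)^{1/2-\delta}=\eps_1^{1/4-\delta^2}$, not $\eps_1^{1/3}$. This matters, because in Proposition~\ref{prop:2ptNestedCLE} the error must be $o((\eps_1\eps_2)^{1/8})$, so for $\eps_1=\eps_2$ any exponent $\le 1/4$ is useless.

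\textbf{Step~3.} The union bound over generations fails as stated. It needs a per-generation bound that is uniform in the random parent loop $\Gamma_p$. But the conditional probability that the CLE$_4$ inside $\Gamma_p$ has a loop meeting both disks depends strongly on how close $\Gamma_p$ already is to $z_1$. You would therefore need a worst-case estimate over all simply connected domains (Beurling plus interior and boundary CLE$_4$ arm estimates with uniform constants), which you do not provide. Moreover, \cite{SSW09CR} gives i.i.d.\ log-conformal-radius jumps around \emph{one} point. It says nothing about how many generations keep an interior meeting both disks; that requires a separation argument.

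\textbf{What the paper does instead.} The missing idea is a single martingale that tracks the approach to $z_1$ across all generations at once.
\begin{itemize}
\item Explore by iterating $A_{-\lambda,\lambda}$, whose loops include the nested CLE$_4$ loops, inside the component still meeting both disks. Stop at the first step $N$ at which a loop meets both disks or the disks are separated.
\item Claim~\ref{claim:stop} shows that $N$ has super-exponential tails, via uniformization to a strip and independent level-line crossing trials.
\item Given $N$, $h_T(z_1)$ is an $N$-step symmetric walk with step $\lambda$, independent of the explored sets. Optional stopping for $M_t(z_1)=\exp(i\alpha\sqrt{2\pi}h_t(z_1)+\alpha^2\pi\langle h_t(z_1)\rangle)$ with $\alpha<1$ then gives $\E[\cos(\alpha\pi/2)^N\,\CR(z_1,D\setminus\hat A_T)^{-\alpha^2/2}]=\CR(z_1,D)^{-\alpha^2/2}$.
\item Restrict to $\{d(z_1,\hat A_T)\le\eps_1,\ N\le c\log\eps_1^{-1}\}$ and use the tail of $N$ for the complement. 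This yields $\P\lesssim\eps_1^{\alpha^2/2-\delta}$, i.e.\ any exponent below $1/2$; choosing $\alpha^2/2=1/3+\delta$ gives the lemma.
\end{itemize}
If you want to keep your generation-by-generation structure, this martingale is exactly what should replace your Steps~2--3.
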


\noindent Finally, we will also make use of the following observation about random walks, which the reader will notice is easily proved.

\begin{lemma}\label{lem:walk1}
Let $\P^W_{n}$ be the probability measure on simple random walks $(S_k)_{k\geq 0}$ starting from $n$. 
\begin{itemize}
\item Consider the infinite measure $\mu^A := \sum_{n \in \Z} \P^W_n$. Then $\mu^A(S_{k} = 0) = 1$ for all $k \geq 0$.
\item Consider the infinite measure $\mu^E := \sum_{n \in 2\Z} \P^W_n$. Then $\mu^E(S_{2k} = 0) = 1$ and $\mu^E(S_{2k+1}=0) = 0$ for all $k \geq 0$ .
\item Consider the infinite measure $\mu^O := \sum_{n \in 2\Z+1} \P^W_n$. Then $\mu^O(S_{2k+1} = 0) = 1$ and $\mu^O(S_{2k}=0) = 0$  for all $k \geq 0$. 
\end{itemize}

\end{lemma}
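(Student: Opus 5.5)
The plan is to compute $\mu^A(S_k=0)$ directly by exchanging the sum over starting points with the sum over paths, using translation invariance of the simple random walk. For each fixed $k\geq 0$ we have
$$
\mu^A(S_k=0)=\sum_{n\in\Z}\P^W_n(S_k=0)=\sum_{n\in\Z}\P^W_0(S_k=-n).
$$
The events $\{S_k=-n\}$, $n\in\Z$, partition the probability space under $\P^W_0$. Hence the sum equals $1$.

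For the second point, restrict the same identity to $n\in 2\Z$, so that $\mu^E(S_k=0)=\P^W_0(S_k\in 2\Z)$. Parity is deterministic for a simple walk started at $0$: $S_k\equiv k \pmod 2$ almost surely. This probability is therefore $1$ for $k$ even and $0$ for $k$ odd, which gives $\mu^E(S_{2k}=0)=1$ and $\mu^E(S_{2k+1}=0)=0$.

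The third point is identical with $n\in 2\Z+1$. Now $\mu^O(S_k=0)=\P^W_0(S_k\in 2\Z+1)$, which is $1$ exactly when $k$ is odd.

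All sums involved have nonnegative terms, so Tonelli justifies every rearrangement. There is no real obstacle. The only point to state explicitly is that each $\P^W_n$ is a probability measure, so the infinite measures are well defined as sums.

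In the application, the walk has step size $2\lambda$ and the starting points are the boundary values $v=2\lambda n$. One rescales by $2\lambda$, and the lemma is used exactly in this form.
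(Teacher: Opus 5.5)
Your proof is correct and is essentially the paper's argument. The paper leaves this lemma as an easy exercise, and for the lazy-walk analogue (Lemma \ref{lem:walkG}) it observes that counting measure on $\Z$ is preserved by one step of the walk. Your identity $\sum_{n}\P^W_n(S_k=0)=\sum_n \P^W_0(S_k=-n)$ is that same invariance of counting measure, and the deterministic parity $S_k\equiv k \pmod 2$ correctly handles the restricted sums over $2\Z$ and $2\Z+1$.
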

\noindent Before proving Lemmas \ref{lem:nointersect} and \ref{lem:oddeven} let us show how the proposition follows.

\begin{proof}[Proof of Proposition \ref{prop:2ptNestedCLE}:]
Let us start from the case of either odd gaskets or even gaskets, and then treat the case pertaining any CLE$_4$ gasket.

\paragraph{The case of odd / even gaskets.}
Let $\widehat{E}_{\eps_1,\eps_2}^{\rm odd}(z_1, z_2)$ be given by 
$E_{\eps_1,\eps_2}^{\rm odd}(z_1, z_2)$ intersected with the condition that only one odd CLE$_4$ gasket intersects both $\D(z_1,\eps_1)$ and 
$\D(z_2, \eps_2)$. Then the third point of Lemma \ref{lem:nointersect} 
implies that
$$\P(\widehat E_{\eps_1,\eps_2}^{\rm odd}(z_1, z_2)) = 
\P(E_{\eps_1,\eps_2}^{\rm odd}(z_1, z_2)) + O((\eps_1\eps_2)^{1/4}).$$

Now, consider the unique sequence of nested odd CLE$_4$ gaskets that surround at least one of 
$z_1, z_2$ until a gasket that touches both $\D(z_1, \eps_1)$ and $\D(z_2, \eps_2)$ appears or a gasket that does not surround any point of the disks appears. 
By the law of total probability we can decompose 
$$\P(\widehat{E}_{\eps_1,\eps_2}^{\rm odd}(z_1, z_2)) = 
\sum_{k \geq 1}\P(\widehat{E}_{\eps_1,\eps_2}^{\rm{odd}, k}(z_1, z_2)),$$
where $\widehat{E}_{\eps_1,\eps_2}^{\rm{odd},k}(z_1, z_2)$ denotes the event that only one odd gasket of the nested CLE$_4$ intersects both 
$\D(z_1,\eps_1)$ and $\D(z_2,\eps_2)$ and that it is the $k$-th odd one when looking at the sequence of nested gaskets described above. 

But by summing over all boundary conditions $4\lambda n$ we can apply the second point of Lemma \ref{lem:walk1} to see that for each $k \geq 1$
$$\P(\widehat E_{\eps_1,\eps_2}^{\rm{odd},k}(z_1, z_2)) = 
\mu^E(S_{2(k-1)} = 0)\P(\widehat E_{\eps_1,\eps_2}^{\rm{odd},k}(z_1,z_2)) 
= \sum_{n \in 2\Z} \P(\hat I_{\eps_1,\eps_2}^{2\lambda n,k}),$$
where $\hat I_{\eps_1,\eps_2}^{2\lambda n,k}$ is the event that for the GFF with boundary values $2\lambda n$ the $k$-th odd nested CLE$_4$ gasket of height $0$ intersects both 
$\D(z_1, \eps_1)$ and $\D(z_2, \eps_2)$, and is the only odd gasket to intersect both. 
Summing over $k$ now gives 
$$\P(\widehat E_{\eps_1,\eps_2}^{\rm odd}(z_1, z_2)) 
= \sum_{k \geq 1}\sum_{n \in 2\Z}
\P(\hat I_{\eps_1,\eps_2}^{2\lambda n,k}), $$
and by Fubini and the law of total probability
$$\P(\widehat E_{\eps_1,\eps_2}^{\rm odd}(z_1, z_2)) = 
\sum_{n \in 2\Z}\sum_{k \geq 1}
\P(\hat I_{\eps_1,\eps_2}^{2\lambda n,k}) 
= \sum_{n \in 2\Z}\P(\hat I_{\eps_1,\eps_2}^{2\lambda n}),$$
where $\hat I_{\eps_1,\eps_2}^{2\lambda n}$ is the event that for the GFF with boundary values $2\lambda n$ there is exactly one odd nested CLE$_4$ gasket that intersects both $\D(z_1, \eps_1)$ and $\D(z_2, \eps_2)$,
and moreover the height of this gasket is $0$. 
It now remains to show that 
$$ \sum_{n \in 2\Z}\P(\hat I_{\eps_1,\eps_2}^{2\lambda n}) 
=\sum_{n \in 2\Z}\P(E_{\eps_1,\eps_2}^{2\lambda n}) + O((\eps_1\eps_2)^{1/4}).$$
To do this, we first bound
$$\P(E_{\eps_1,\eps_2}^{2\lambda n} 
\cap F_{\eps_1,\eps_2}) \leq 
\sum_{k \geq 1} \P(E_{\eps_1,\eps_2}^{\rm{odd},k} \cap F_{\eps_1, \eps_2})
\P_n^W(S_{2(k-1)} = 0),$$
where we recall that $F_{\eps_1,\eps_2}$ denotes the event that at least two different odd CLE$_4$ gaskets of nested CLE$_4$ intersect both 
$\D(z_1, \eps_1)$ and $\D(z_2, \eps_2)$.
Now applying Lemma \ref{lem:nointersect} point 1, we can bound this by
$$C(\eps_1\eps_2)^{1/4}\sum_{k \geq 1}
\P_n^W(S_{2(k-1)} = 0)
\left(\P(E_{\eps_1,\eps_2}^{\rm{odd},k} ) + 
\1_{k \geq 2}\P(E_{\eps_1,\eps_2}^{\rm{odd},k-1})\right).$$
So we can bound 
\begin{equation}\label{eq:summationbound}
\sum_{n \in 2\Z}\P(E_{\eps_1,\eps_2}^{2\lambda n}) - 
\sum_{n \in 2\Z}\P(\hat I_{\eps_1,\eps_2}^{2\lambda n}) 
\leq 2C(\eps_1\eps_2)^{1/4} \sum_{n \in 2\Z}\sum_{k \geq 1}
\P_n^W(S_{2(k-1)} = 0)\P(E_{\eps_1,\eps_2}^{\rm{odd},k}). 
\end{equation}
Applying Fubini and Lemma \ref{lem:walk1} we obtain a bound of the form 
$2C (\eps_1\eps_2)^{1/4} \sum_{k \geq 1}\P(E_{\eps_1,\eps_2}^{\rm{odd},k}),$
which is finite by point 2 of Lemma \ref{lem:nointersect}.    

\paragraph{Putting together odd and even gaskets.} 

From the above, we have that 
$$\P(E_{\eps_1,\eps_2}^{\rm odd}(z_1, z_2)) = 
\sum_{n \in 2 \Z}\P(E_{\eps_1,\eps_2}^{2\lambda n}) 
+ O((\eps_1\eps_2)^{1/4}),$$
and
$$\P(E_{\eps_1,\eps_2}^{\rm even}(z_1, z_2)) = 
\sum_{n \in 2 \Z+1}\P(E_{\eps_1,\eps_2}^{2\lambda n}) 
+ O((\eps_1\eps_2)^{1/4}).$$
We further have that $E_{\eps_1,\eps_2}^{\rm odd}(z_1, z_2) \cap E_{\eps_1,\eps_2}^{\rm even}(z_1, z_2) \subseteq 
F_{\eps_1,\eps_2}^{\rm two}(z_1, z_2)$, 
where the event $F_{\eps_1,\eps_2}^{\rm two}(z_1, z_2)$ is as in Lemma \ref{lem:oddeven} the event that at least two CLE$_4$ gaskets intersect the $\eps-$disks around $z_1, z_2$. But by this lemma this event is bounded by $C\min(\eps_1,\eps_2)^{1/3}$.
Both bounds are $O((\eps_1\eps_2)^{1/6})$,
and thus we conclude.
\end{proof}

\subsubsection{Proof of Lemma \ref{lem:oddeven}}

Although the bound we give is rather weak in a typical situation, Lemma \ref{lem:oddeven} is surprisingly not straightforward. In this direction we already observed the following above.
\begin{itemize}
    \item In order for at least two CLE$_4$ gaskets to intersect the disks $\D(z_1, \eps_1)$ and 
    $\D(z_2, \eps_2)$, in fact two consecutive ones have to intersect them and thus there has to be some loop in the nested CLE$_4$ intersecting both disks.
\end{itemize}
The idea of the proof is to basically calculate 
$\E\big[:\exp(i\alpha\sqrt{2\pi}\Phi_D):\big](z)$ 
in two ways: first directly, and second by first taking a conditional expectation w.r.t. an exploration 
$\hat{A}_t$ making terms of the type 
$\CR(z,D\setminus \hat{A}_t)^{-\alpha^2/2}$ appear. 
A similar idea has been used in \cite{SSV}, and stems originally from \cite{APS,ALS1}. 
To formalize this idea we will use exponential martingales. It might be surprising that we only use the martingale for one point, but the stopping times we choose depend on both points.

\begin{proof}[Proof of Lemma \ref{lem:oddeven}]
Consider the exploration of CLE$_4$ loops via $A_{-\lambda, \lambda}$ from \cite{ASW}:
\begin{itemize}
\item We start with the TVS $A_1 := A_{-\lambda, \lambda}$.
\item If a loop of $A_1$ intersects both $\D(z_1, \eps_1)$ and $\D(z_2, \eps_2)$, then we stop.
\item If a loop of $A_1$  separates the two disks in the sense that no connected component of 
$\D \setminus A_1$ intersects both $\D(z_1, \eps_1)$ and $\D(z_2, \eps_2)$,
then we also stop.
\item Otherwise we construct $A_2$ by sampling $A_{-\lambda, \lambda}$ only inside the connected component of $\D \setminus A_1$ containing entirely at least one of the disks.
\item We iterate the construction until we stop because of the conditions 1 or 2. We call this discrete stopping time $N$ and we know it is almost surely finite \cite{ASW}.
\end{itemize}
Further, by tracing the relevant SLE$_4(\rho, -2-\rho)$ curves,
this exploration can also be done in continuous time giving rise to a local set process $\hat A_t$ \cite{ASW} such that $A_n = \hat A_{\tau_n}$, and the stopping time $\tau_n$ is defined by finishing the relevant $A_{-\lambda, \lambda}$ in the exploration above. A useful martingale is then given by $h_t(z) := \Phi_{\hat{A}_t}(z)$, where $\Phi_{\hat{A}_t}(z)$ denotes the conditional expectation of $\Phi$ in $z$ given the local set $\hat{A}_t$, and it is pointwise defined until the process hits $z$, which happens with $0$ probability for a fixed $z\in D$. 
We will be interested in the related exponential martingale
$$M_t(z) := \exp\left(i\alpha \sqrt{2\pi}h_t(z)+\alpha^2\pi\langle h_t(z)\rangle\right).$$
Here the quadratic variation can be explicitly calculated and is given by 
$-\frac{1}{2\pi}\log \CR(z,D \setminus \hat{A}_t)$.

Now observe that by choosing $\alpha\in (0,1)$ small enough, 
if we look at this martingale at the times 
$(t_n)_{n\geq 0}$ corresponding to the steps in the exploration described above, then the discrete-time process 
$(M_{t_n}(z))_{n\geq 0}$ is still a martingale. 
Indeed, iterating Lemma 4.1 from \cite{SSV} for 
$n \geq 1$, 
we get, for $p\in(1,\alpha^{-2})$, the following bound:
$$\E(|M_{t_n}(z)|^p) \leq C\E\left[\CR(z,D \setminus A_n)^{-p\alpha^2}\right] < +\infty. $$
We now claim that in fact one can apply the Optional Stopping Theorem to the stopping time $T = t_N$ and that in fact $N$ has super-exponential tails.
\begin{claim}\label{claim:stop}
The number of iterations $N$ corresponding to the stopping time $T$ is bounded by a random variable that has super-exponential tails. In particular $M_T$ is integrable and the Optional Stopping Theorem holds at the time $T$. 
\end{claim}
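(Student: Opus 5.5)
The plan is to show that at each step of the exploration there is a uniform (in the history) positive probability $p_0>0$ of stopping. The stopping here is through condition 2: the new $A_{-\lambda,\lambda}$ separates the two disks. Granting this, $\P(N>n)\le (1-p_0)^n$, which is only exponential. To get super-exponential tails, we improve the estimate by a scale argument.

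We call the exploration domain at step $n$ the connected component $O_n$ of $D\setminus A_n$ containing at least one disk; it must also meet the other disk, otherwise we would have stopped. Two situations can occur.

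\emph{Case 1: $O_n$ still contains macroscopic neighbourhoods of both $z_1,z_2$.} By conformal invariance and the fact that $A_{-\lambda,\lambda}$ in a domain has loops of all scales, the probability that one loop of the next $A_{-\lambda,\lambda}$ separates $z_1$ from $z_2$ is bounded below.

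\emph{Case 2: $O_n$ has become thin near one of the points.} Each iteration contracts $\CR(z_j,O_n)$ by a multiplicative factor $e^{-X_n}$, with $X_n$ i.i.d. This follows from the known law of $\log\CR$ for TVS, namely hitting times of Brownian motion as in \cite{ASW,ALS3}. Hence after $n$ steps, on $\{N>n\}$, we have $\CR(z_j,O_n)\le e^{-cn}$ except on an event of probability $e^{-c'n}$.

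For the super-exponential bound, the key observation is the following. On $\{N>n\}$ the domain $O_n$ must still meet both disks, which lie at fixed distance $|z_1-z_2|$. So $O_n$ contains a path of macroscopic length, while its conformal radius at the relevant points has become tiny. We then bound the probability that the next $A_{-\lambda,\lambda}$ neither separates nor hits both disks, in terms of the extremal length of $O_n$ between the two disks. This extremal length grows linearly in the number of steps. Given a long thin corridor of extremal length $L$, the $A_{-\lambda,\lambda}$ crosses it with probability $\ge 1-e^{-cL}$. The reason is that the TVS restricted to each of $\sim L$ disjoint conformal rectangles of unit modulus independently produces a separating loop with probability bounded below; this is justified by the Markov property of local sets together with the monotonicity of TVS. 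Multiplying the estimates over steps gives
$$\P(N>n)\le \prod_{k\le n} e^{-ck} = e^{-cn^2/2},$$
which is super-exponential.

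With super-exponential tails in hand, we verify optional stopping. We have $|M_{t_n}(z)|=\CR(z,D\setminus A_n)^{-\alpha^2/2}$. By Hölder,
$$\E\big[\sup_{n\le N}|M_{t_n}|\big]\le \sum_n \E\big[|M_{t_n}|^p\big]^{1/p}\,\P(N\ge n)^{1/q}.$$
The $p$-th moments $\E|M_{t_n}|^p\le C\,\E\big[\CR^{-p\alpha^2}\big]$ grow at most exponentially in $n$, since the per-step factor is a finite exponential moment for small $\alpha$. Against super-exponential tails the sum converges, giving uniform integrability of $(M_{t_{n\wedge N}})$ and hence $\E[M_T]=M_0$.

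The main obstacle is the uniform "corridor crossing'' estimate in Case 2, i.e. making the dependence on extremal length quantitative and uniform over the random domain. If this proves too delicate, the fallback is to prove only exponential tails with arbitrarily large rate, by choosing $\alpha$ small enough that $\E\big[\CR^{-p\alpha^2}\big]$ grows more slowly than $(1-p_0)^{-n/q}$. This suffices for optional stopping.
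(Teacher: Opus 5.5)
Your overall scheme is the paper's. You put the current component in strip form and get about one independent crossing trial per unit of length between $z_1$ and $z_2$; the paper runs level lines from the bottom midpoints of every other width-$L$ rectangle, with a uniform lower bound $\hat p(L)$ coming from absolute continuity. You then show this length grows linearly in the step number, so the per-step bounds multiply to $e^{-cn^2}$.

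However, the length you use is the wrong one. The extremal length in $O_n$ between $\D(z_1,\eps_1)$ and $\D(z_2,\eps_2)$ is at least $\frac{1}{2\pi}\log\frac{1}{\eps_1\eps_2}-O(1)$ already at the first step (see the proof of Lemma~\ref{lem:tech2}). Yet the first $A_{-\lambda,\lambda}$ fails to stop the exploration with probability bounded below uniformly in $\eps$, for instance when one of its loops surrounds both disks at macroscopic distance. So ``no separation with probability at most $e^{-cL}$'' is false for that $L$. The quantity that works is the conformal distance between the points, e.g.\ $x_n$ defined by $(O_n,z_1,z_2)\mapsto(\strip,0,x_n)$ as in the paper; it does not see the $\eps$-disks.

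With that choice, your growth argument is a genuinely different route from the paper's, but it needs a step you have not supplied. Since $\C\setminus O_n$ is connected and comes within distance $\CR(z_1,O_n)$ of $z_1$, the Beurling estimate together with Koebe's theorem near $z_1$ gives $G_{O_n}(z_1,z_2)\le C\,\CR(z_1,O_n)^{1/2}$. Hence $x_n\ge\frac{1}{\pi}\log\CR(z_1,O_n)^{-1}-C$, and the i.i.d.\ contraction of $\log\CR$ then yields linear growth. The paper instead runs a second crossing argument to the right of $x_n$ and compares explicit strip Green's functions, showing that $x_n-x_{n-1}$ stochastically dominates a fixed a.s.\ positive $Z$. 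Your version has the advantage of an exact i.i.d.\ structure.

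Second, your bookkeeping loses the super-exponential decay. You control the conformal radius only off an event of probability $e^{-c'n}$, then multiply the good-event bounds. Keeping the bad events, what you have shown is $\P(N>n)\le e^{-c''n^2}+Ce^{-c'n}$, which is exponential tails again — the thing you said is not enough. The missing observation is that the contraction $X$ is a.s.\ positive, so $\E[e^{-\theta X}]\to 0$ as $\theta\to\infty$. The Chernoff bound $\P(\sum_{i\le n}X_i\le cn)\le (e^{\theta c}\E[e^{-\theta X}])^n$ therefore has rate $c'(c)\to\infty$ as $c\to 0$, and so $n^{-1}\log\P(N>n)\to-\infty$ uniformly in $\eps$. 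Alternatively, keep the randomness inside the expectation as the paper does, which yields $\prod_{j\le n}\E[e^{-cjX}]$.

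This is exactly what the proof of Lemma~\ref{lem:oddeven} uses. Besides optional stopping, it needs $\P(N>c\log\eps^{-1})=O(\eps^{1/3})$ with $c=\delta/|\log\cos(\alpha\pi/2)|$ small and $\alpha^2/2=1/3+\delta$ fixed. For the same reason your fallback (a fixed rate $-\log(1-p_0)$ with small $\alpha$) would not suffice there. Your optional-stopping paragraph is fine once such tails are available, and it works for any $\alpha<1$ with $p\in(1,\alpha^{-2})$, not only small $\alpha$.
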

Before proving this, let us explain how the Lemma follows.
By symmetry, we assume that $\eps_1\leq \eps_2$.
Using the claim we can apply the Optional Stopping Theorem to obtain that
\begin{equation}\label{eq:globalbound}
\E [\Re(M_T(z_1))] = \Re(M_0(z_1)) = 
\CR(z_1, D)^{-\alpha^2/2}< +\infty.
\end{equation}
On the other hand, at the stopping time $T$ we have:
$$\E(\Re(M_T(z_1)))=\E\left[\CR(z_1, D\setminus \hat{A}_T)^{-\alpha^2/2}\Re(\exp(i\alpha\sqrt{2\pi} h_{T}(z_1)))\right].$$
As $h_{T}(z_1)$ is symmetric in law and its sign is independent of the conformal radius, we can further write this as 
$$\E\left[\CR(z_1, D\setminus \hat{A}_T)
^{-\alpha^2/2}\cos(\alpha \sqrt{2\pi} h_{T}(z_1))\right].$$
Now, observe that conditionally on $N$, 
the value of $h_{T}(z_1)$ is given by the end-point of a symmetric simple random walk with $N$ steps and step-size 
$\sqrt{\pi/8}$ (recall we iterate $A_{-\lambda, \lambda}$ in the construction).
Moreover, this value is conditionally independent of 
$\CR(z_1, D \setminus \hat{A}_T)$. 
Hence, we have that  
$$\E(\Re(M_T(z_1))) = 
\E\left[\cos(\alpha \pi/2)^N
\E(\CR(z_1, D\setminus \hat{A}_T)
^{-\alpha^2/2}|N)\right].$$
Notice that because of $\alpha < 1$, the term inside the expectation is positive and thus we can bound the expectation from below by
$$C\eps^{-\alpha^2/2}\E\left[\cos(\alpha \pi/2)^N\P(F_{\eps}|N)\right],$$
where $F_\eps$ is the event that at the stopping time $T$ we have that $d(z_1,\hat{A}_T)\leq \eps$ 
(and thus $\CR(z_1, D \setminus \hat{A}_T) \leq 4\eps)$ by Koebe's distortion estimates). 

Now, for $\delta > 0$ small, we pick for example $\alpha$ such that $\alpha^2/2 = 1/3 + \delta$ and pick $c > 0$ such that $-c \log (\cos(\alpha \pi/2)) = \delta$. 
Then, by bounding
$$\E\left[\cos(\alpha \pi/2)^N\P(F_\eps|N)\right] \geq \eps^\delta\P(F_\eps \text{ and } N \leq c\log \eps^{-1}) 
\geq \eps^\delta(\P(F_\eps) - 
\P(N > c\log \eps^{-1})),$$
we obtain that 
$$\P(F_\eps) \leq C\eps^{1/3} + \P(N > c \log \eps^{-1}).$$ By the Claim \ref{claim:stop} the second term can be bounded also by $O(\eps^{1/3})$. 
But if there is a loop in the nested CLE$_4$ that
intersects both
$\D(z_1,\eps_1)$ and $\D(z_2,\eps_2)$,
then necessarily we are on the event $F_{\eps_1}$,
which has, by the above, a probability 
$O(\eps_1^{1/3})=O(\min(\eps_1,\eps_2)^{1/3})$.
So we conclude the proof of the lemma 
(modulo Claim \ref{claim:stop}). 
Observe that the exponent $1/3$ here was arbitrary and any exponent less than $1/2$ would have worked equally well.

\begin{proof}[Proof of Claim \ref{claim:stop}]
Denote by $p_n$ the probability that at step $n$ the separation event in the construction does not happen. We will prove that there is some $c >  0$ and some i.i.d. almost surely positive random variables $(X_i)_{i \geq 1}$ such that
$$p_n \leq 
\E\Big[\exp\Big(-c\sum_{i = 1}^n X_i\Big)\Big].$$

So let us prove this inequality. To do this we first observe that any simply-connected domain with two marked points $z_1,z_2$ can be mapped uniquely to the horizontal strip $\strip := \{z \in \C: \Im(z) \in (-1,1)\}$ such that 
$z_1 \mapsto 0$ and $z_2 \mapsto x \in(0,+\infty)$. 
We now consider the construction of $A_{-\lambda, \lambda}$ in this domain as a closed union of level lines between any pairs of some fixed dense countable subset of points on the boundaries - see \cite[Lemma 3.6]{AS2}. 
We can draw these level lines in any order we wish.

So, fix some $L\in (0,1)$ small enough, 
and cut the rectangle 
$\{z: \Im(z) \in (-1,1), \Re(z) \in (0,x)\}$ into rectangles of width $L$, i.e. of the form $\{z: \Im(z) \in (-1,1), \Re(z) \in mL, (m+1)L]\}$ with $(m+1)L < x$ and $m \in \N$. 

Now do the following: we start sampling the level lines (which are SLE$_4(-1,-1)$ curves unconditionally) from the bottom midpoints in the even rectangles from left to right, each until they hit one of the other boundaries (top left or right) of the rectangle.

Observe that the conditional probability that a SLE$_4(-1,-1)$ level line from the midpoint of the side of the rectangle living on $\Im(z) = -1$ to the midpoint living on $\Im(z) = 1$ exits from the upper boundary first is uniformly bounded from zero by some $\hat{p}(L) > 0$, conditioned on any configuration of level line bits inside the previous even rectangles. Indeed, this follows from absolute continuity of the GFF and noticing that the Radon-Nikodym derivative remains bounded because of the gap between the even rectangles - see e.g. \cite[Theorem 2.9 and Lemma 3.1]{ALS3}. 

This way we have at least $x/3L$ independent trials to separate the two $\eps_j-$disks, i.e. we obtain that the non-separation probability is bounded from above by 
$(1-\hat{p}(L))^{x/3L}$.

To finish the claim it now remains to argue that this number of trials will go up with the iteration time. To do this notice that the simply-connected domain containing the two will change after constructing $A_{-\lambda, \lambda}$. Each time when $z_1, z_2$ have not been separated, we can uniformize the domain to the strip again as above and denote by $x_n$ the image of $z_2$. The argument before then gives $p_n \leq (1-\hat{p}(L))^{x_n/3L}$.
To conclude we now show that in the process 
$(x_n - x_{n-1})_{n \geq 1}$, conditionally on $(x_1, \dots, x_{n-1})$, the increment $x_n - x_{n-1}$ is stochastically dominated from below by some fixed random variable $Z$ (i.e. uniformly in $n$) 
that is positive almost surely.

But this follows from an analogous argument to the right of the point $x$. Indeed, we can also cut the strip to the right of the point $x$ into rectangles of width $L$ exactly as above and start sampling the level lines from the bottom mid-points from left to right. As above, the probability that there is at least one crossing level line in a rectangle $[x+mL, x+(m+1)L]$ with $m \leq M$, is given by $1-(1-\hat p(L))^M$. 

The claim is then a consequence of the following. Suppose a level line $\gamma$ crosses vertically inside a rectangle $[x+NL, x+(N+1)L]$, then if we uniformize the domain 
$\strip \setminus \gamma$ back to 
$\strip$ with $0 \mapsto 0$ and $x \to \tilde x$, 
then  $\tilde x - x > r_{0} \exp(-\pi L N)$ 
for some $r_0 > 0$ uniformly for all $x > x_0$. 

First, the Dirichlet Green's function in the strip is given for $x \in \R_+$ by 
$$G_\strip(0,x)= \frac{1}{2\pi}\log\frac{1+\exp(-\pi x/2)}{1-\exp(-\pi x /2)}.$$
Second, 
$$G_{\strip\setminus A_{-\lambda, \lambda}}(0,x) 
< G_{\strip \setminus \gamma}(0,x) < 
G_{\strip \setminus \{x+(N+1)L\}\times (-1,1)}(0,x),$$
where $\gamma$ is above a level line crossing inside a rectangle $[x+NL, x+(N+1)L]$.
And finally, setting $C = (N+1)L$, one can also explicitly calculate
$$G_{\strip \setminus \{x+C\}\times (-1,1)}(0,x) 
= \frac{1}{2\pi}
\log\left(
\frac{1-e^{-\pi(x+2C)/2}}{1+e^{-\pi(x+2C)/2}}
\cdot
\frac{1+e^{-\pi x/2}}{1-e^{-\pi x/2}}\right).$$
Using conformal invariance of the Green's function and comparing these expressions we deduce that 
$$\tilde x - x > \frac{2}{\pi}\log \frac{1-\exp(-\pi (x+C))}{1-\exp(-\pi C)} $$
concluding the proof of claim.
\end{proof}
\end{proof}

\subsubsection{Proof of Lemma \ref{lem:nointersect}}
We now move on to prove Lemma \ref{lem:nointersect}. We start with a similar observation.
\begin{itemize}
    \item In order for two consecutive odd CLE$_4$ gaskets to intersect the disks $\D(z_1, \eps_1)$ and 
    $\D(z_2, \eps_2)$, there have to be two consecutive CLE$_4$ loops that intersect both disks.
\end{itemize} 
In this case two further technical observations become handy.

\begin{lemma}\label{lem:tech1}
Let $z_1 \neq z_2 \in \C$. Let $D_1$ be a finite union of 
disjoint open simply-connected domains such that the boundary of each connected component intersects both 
$\D(z_1, \eps_1)$ and 
$\D(z_2, \eps_2)$. Let $D_2 \supseteq D_1$ be an open simply-connected domain such that the two
$\overline{\D(z_j, \eps_j)} \cap \partial D_2$,
$j\in \{1,2\}$, are non-empty.
Let $F_i$ be the event that a CLE$_4(D_i)$ loop\footnote{In the case where $D_1$ is a union of more than one domain, we sample independent CLE-s inside each.} hits both 
$\D(z_1, \eps_1)$ and $\D(z_2, \eps_2)$. 
Then $\P(F_1) \leq \P(F_2)$.
\end{lemma}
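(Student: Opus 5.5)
The plan is to couple the CLE$_4$ in $D_2$ with the CLE$_4$'s in the components of $D_1$ through the GFF/local set description. The aim is to show that any loop of the $D_1$-ensemble hitting both disks forces a loop of the $D_2$-ensemble hitting both disks, at least in law. I will use the Brownian loop soup representation \cite{SheffieldWerner2012CLE}: the CLE$_4$ loops in a simply connected domain $U$ are the outer boundaries of outermost clusters of a critical loop soup $\cL_U$. For a finite disjoint union $D_1=\bigcup_i U_i$, the independent CLE$_4$'s in the $U_i$ are obtained from $\cL_{D_1}=\bigcup_i \cL_{U_i}$. The monotone coupling is the natural one: $\cL_{D_1}$ is the set of loops of $\cL_{D_2}$ that stay in $D_1$. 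Then every cluster $C$ of $\cL_{D_1}$ is contained in a cluster $C'$ of $\cL_{D_2}$. Since the clusters of $\cL_{D_1}$ are a.s. bounded away from $\partial D_1$, such a cluster lies inside some $U_i$.

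First, suppose a CLE$_4(D_1)$ loop $\ell$ hits both $\D(z_1,\eps_1)$ and $\D(z_2,\eps_2)$. Then $\ell=\partial_e C$ for an outermost cluster $C$ of $\cL_{U_i}$, and $C\subseteq C'$ for a cluster $C'$ of $\cL_{D_2}$. Let $C''$ be the outermost cluster of $\cL_{D_2}$ whose filled set contains $C'$, and $\ell'=\partial_e C''$ its CLE$_4(D_2)$ loop. The filled set $\mathrm{Fill}(C'')$ contains $\ell$, so it meets both disks. I now argue that $\ell'$ itself meets both disks. If $\ell'$ misses $\D(z_1,\eps_1)$, then either $\D(z_1,\eps_1)$ lies entirely inside $\Int(\ell')$, or it lies entirely outside; the second option is excluded because $\ell\subset \overline{\Int(\ell')}$ meets the disk. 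So assume $\D(z_1,\eps_1)\subset\Int(\ell')\subset D_2$. This contradicts the hypothesis $\overline{\D(z_1,\eps_1)}\cap\partial D_2\neq\emptyset$ only up to boundary: $\overline{\D(z_1,\eps_1)}\subset\overline{\Int(\ell')}$ touches $\partial D_2$, but $\ell'$ is a.s. at positive distance from $\partial D_2$. Hence a point of $\partial D_2$ would lie in the closure of $\Int(\ell')$, which is compactly inside $D_2$, and this is impossible. The same argument applies to $z_2$. Therefore $F_1\subseteq F_2$ under the coupling, giving $\P(F_1)\le\P(F_2)$.

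The main obstacle is to check that the hypothesis on $D_2$ is used correctly, and that the hypothesis that each component boundary of $D_1$ meets both disks is not actually needed. In this argument it only ensures that $F_1$ is nontrivial. I also need the standard facts about the loop soup that are used above: CLE$_4$ loops are a.s. at positive distance from the boundary, clusters are a.s. bounded away from $\partial D_1$ and hence contained in a single $U_i$, and the finite-union case reduces to the independence of the Poisson restrictions $\cL_{U_i}$. If the boundary-touching subtlety (disk tangent to $\partial D_2$ from inside) causes trouble, I would use instead that $\overline{\D(z_j,\eps_j)}\cap\partial D_2\ne\emptyset$ together with $\overline{\Int(\ell')}\Subset D_2$ almost surely.
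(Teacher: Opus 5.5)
Your proof is correct and is the same argument as the paper's: realize $\cL_{D_1}$ as the loops of $\cL_{D_2}$ staying in $D_1$, and observe that the outermost $D_2$-cluster whose filling contains the $D_1$-loop must have outer boundary $\ell'$ meeting both disks, since otherwise $\overline{\D(z_j,\eps_j)}\subset\overline{\Int(\ell')}\subset D_2$, contradicting $\overline{\D(z_j,\eps_j)}\cap\partial D_2\neq\emptyset$. You are also right that the hypothesis on the components of $D_1$ is not used here (it only matters in the application). The positive-distance remark is superfluous, since $\overline{\Int(\ell')}\subset D_2$ already follows from $\ell'\subset D_2$ and the simple connectivity of $D_2$.
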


\begin{lemma}\label{lem:tech2}
Let $z_1 \neq z_2 \in \C$. Consider any simply-connected domain 
$D$ such that 
$\D(z_j, \eps_j) \subseteq D$ and 
$\partial \D(z_j, \eps_j) \cap \partial D \neq \emptyset$, for $j \in \{1,2\}$. 

Now consider any conformal map $\psi: D \to \D$ mapping a point in $\partial \D(z_1, \eps_1) \cap \partial D$ to $-1$ and a point in $\partial \D(z_2, \eps_2) \cap \partial D$ to $1$. 
Then the images $\psi(\D(z_1, \eps_1))$ and 
$\psi(\D(z_2, \eps_2))$ are included in disks 
$\D(-1, \delta_1)$ and $\D(1, \delta_2)$ respectively,
with crucially $\delta_1\delta_2 \leq C(\eps_1\eps_2)^{1/2}$ for $C > 0$ depending only on $z_1, z_2$,
and not on $D$, $\eps_1$, $\eps_2$ or the choice of $\psi$.

Further, for some $\widetilde C > 0$, 
one can always find a conformal map such that 
$\delta_1 \leq \widetilde C(\eps_1\eps_2)^{1/4}$ and 
$\delta_2 \leq \widetilde C(\eps_1\eps_2)^{1/4}$.
\end{lemma}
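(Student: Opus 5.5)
The plan is a purely deterministic estimate based on extremal distance (equivalently, harmonic measure/Green's function comparisons), with constants depending only on $|z_1-z_2|$. Set $d=|z_1-z_2|$ and assume $\eps_1,\eps_2<d/4$; larger values are absorbed into $C$ and $\widetilde C$. Let $w_1\in\partial\D(z_1,\eps_1)\cap\partial D$ and $w_2\in\partial\D(z_2,\eps_2)\cap\partial D$ be the prime ends sent to $-1$ and $1$. The freedom in $\psi$ is a one-parameter family of hyperbolic Möbius automorphisms of $\D$ fixing $\pm1$. I parametrise it by $\psi_s=m_s\circ\psi_0$, where $m_s$ moves along the geodesic from $-1$ to $1$.

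\textbf{Step 1: a conformally invariant quantity controlling $\delta_1\delta_2$.} Let $\Gamma_j=\partial\D(z_j,\eps_j)\cap D$, together with the disk pieces $\D(z_j,\eps_j)\subset D$. Consider the quadrilateral $(D;\,\text{arc near }w_1,\ \text{arc near }w_2)$. The images $\psi(\D(z_j,\eps_j))$ are contained in the regions of $\D$ separated from the rest by the hyperbolic geodesic half-circles $\{|w\mp1|=\delta_j\}$. The cross-ratio of the four endpoints of these two half-circles is a Möbius invariant. For small $\delta_j$, the extremal distance in $\D$ between the two half-circles equals $\frac{1}{\pi}\log\frac{1}{\delta_1\delta_2}+O(1)$. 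This is the standard estimate for the strip: map $\D$ to the strip $\{0<\Im<\pi\}$ via $\log\frac{1+w}{1-w}$, where half-circles of radius $\delta$ at $\pm1$ become near-vertical cross cuts at real part $\mp\log(2/\delta)$.

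We then need a lower bound, in $D$, on the extremal distance separating $\D(z_1,\eps_1)\cap D$ from $\D(z_2,\eps_2)\cap D$. We take the minimal $\delta_j$ such that the image is contained in the corresponding half-disk. By conformal invariance of extremal length, the smallest admissible $\delta_1\delta_2$ is controlled by
\[
\ED_D\big(\D(z_1,\eps_1),\D(z_2,\eps_2)\big)
\;\geq\;\ED_{\C}\big(\partial\D(z_1,\eps_1),\partial\D(z_2,\eps_2)\big)
\;\ge\;\frac{1}{2\pi}\log\frac{d^2}{4\eps_1\eps_2}-O(1).
\]
The first inequality holds because restricting to $D$ only reduces the family of curves and therefore increases the extremal length. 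The second is the explicit two-disk modulus in the plane, which is comparable to the annulus modulus. Matching the two estimates gives $\frac1\pi\log\frac1{\delta_1\delta_2}\ge\frac{1}{2\pi}\log\frac{1}{\eps_1\eps_2}-O(1)$, that is, $\delta_1\delta_2\le C(\eps_1\eps_2)^{1/2}$.

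\textbf{Step 2: the main obstacle.} The factor $\frac1\pi$ versus $\frac1{2\pi}$ has to be tracked carefully. The subtlety is that the two images need not be "round" near $\pm1$. We need the reverse comparison: if a connected set touching $-1$ reaches out to radius $\delta_1$, and another touching $1$ reaches out to radius $\delta_2$, then their extremal distance in $\D$ is at most $\frac1\pi\log\frac{1}{\delta_1\delta_2}+O(1)$. I would prove this with a Beurling/Teichmüller-type argument in the strip. A connected set joining the boundary point at $-\infty$ to real part $-\log(2/\delta_1)$ crosses every vertical segment further left. This bounds the extremal distance by the strip length $\log\frac{4}{\delta_1\delta_2}$ divided by $\pi$. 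Combined with Step 1 this yields the product bound, with $C$ depending only on $d$.

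\textbf{Step 3: balancing.} Composing with $m_s$ multiplies $\delta_1$ by approximately $e^{s}$ and $\delta_2$ by $e^{-s}$, up to bounded distortion for small $\delta$'s; this is a translation in the strip picture. Choosing $s$ so that the two translated cross-cut positions are symmetric gives $\delta_1\asymp\delta_2\asymp(\delta_1\delta_2)^{1/2}\le\widetilde C(\eps_1\eps_2)^{1/4}$. For non-small $\delta$'s the bound is trivial after enlarging $\widetilde C$.
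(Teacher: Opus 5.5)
Your architecture is the paper's: a lower bound $\frac{1}{2\pi}\log\frac{1}{\eps_1\eps_2}-O(1)$ on the extremal distance between the two disks (computed in $\C$), an upper bound $\frac{1}{\pi}\log\frac{1}{\delta_1\delta_2}+O(1)$ on the extremal distance between their images in $\D$ (computed in the strip), conformal invariance, and balancing with the automorphisms of $\D$ fixing $\pm1$, which are translations in the strip. The half-circle computation in Step~1 contributes nothing. Containment of the images in $\D(\mp1,\delta_j)$ only gives another \emph{lower} bound on their extremal distance, so it cannot be "matched" against the first one. All the content sits in the reverse inequality of Step~2, and that is where your proposal has a gap.

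The reason you give for that inequality does not work. You know that $\phi(E_1)$ meets every vertical crosscut $\{\Re z=x\}$ with $x\le x_1$, and $\phi(E_2)$ every one with $x\ge x_2$. This does not allow a comparison with the rectangle $R=[x_1,x_2]\times(0,\pi)$. To bound $\lambda(\phi(E_1),\phi(E_2))$ above by the extremal length of $R$, every side-to-side crossing of $R$ would have to contain a curve joining the two sets. But each set may touch $\partial R$ at a single point, so a typical crossing meets neither. Dually, crosscuts separating the two sets can leave $R$ and wind around the tips, so $\1_R$ is not a usable metric for the conjugate family either. The exponent $1/2$ comes exactly from the ratio of the constants $\frac{1}{2\pi}$ and $\frac{1}{\pi}$. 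Any loss in the multiplicative constant, or an unbounded additive error, would therefore break both $\delta_1\delta_2\le C(\eps_1\eps_2)^{1/2}$ and the balanced bound $(\eps_1\eps_2)^{1/4}$ used in the proof of Lemma~\ref{lem:nointersect}.

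The missing ingredient is Teichm\"uller's extremal problem, which is what the paper invokes. Send the strip to $\HH$ by $z\mapsto e^{z}$ and reflect across $\R$. The two sets become continua joining $0$ to the circle of radius $e^{x_1}$ and $\infty$ to the circle of radius $e^{x_2}$. Here $x_1\ge\log(\delta_1/2)$ and $x_2\le\log(2/\delta_2)$ for the minimal radii $\delta_j$. The doubly connected domain between the continua therefore has extremal distance at most $\Lambda(e^{x_2-x_1})$. The symmetry principle and $e^{2\pi\Lambda(P)}\le 16(P+1)$ then give $\lambda\le 2\Lambda(e^{x_2-x_1})\le\frac{1}{\pi}\log(16(e^{x_2-x_1}+1))$, which is the sharp constant you need. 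With this in place of your sketch, Steps~1 and~3 go through as in the paper.
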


We will now show how to conclude Lemma \ref{lem:nointersect} then come back to these observations.

\begin{proof}[Proof of Lemma \ref{lem:nointersect}]
As observed, for the event 
$F_{\eps_1,\eps_2}$ to hold, 
there have to be at least two consecutive CLE$_4$ loops that intersect both $\D(z_1, \eps_1)$ and 
$\D(z_2, \eps_2)$. 
Thus, the event 
$F_{\eps_1,\eps_2} \cap 
E_{\eps_1,\eps_2}^{\rm{odd},k}(z_1, z_2)$ 
can only happen if in the exploration of nested CLE$_4$ loops containing at least some point of 
$\D(z_1, \eps_1) \cup \D(z_2, \eps_2)$ and 
not separating $\D(z_1, \eps_1)$ and $\D(z_2, \eps_2)$, either already the $k-1$-th loop intersects the 
$\eps_j$-disks or the $k$-th loop does. Thus by stopping the exploration at that iteration, we can reduce the lemma to the following claim.

\begin{claim}
\label{Claim CLE 4}
Let $D$ be a finite union of 
disjoint open simply-connected domains such that the boundary of each connected component intersects both 
$\D(z_1, \eps_1)$ and 
$\D(z_2, \eps_2)$.
Then the probability that a loop of the (simple) CLE$_4$ inside this domain intersects both $\D(z_1, \eps_1)$ and 
$\D(z_2, \eps_2)$ is bounded by 
$C(\eps_1\eps_2)^{1/4}$,
uniformly over the domain $D$.
\end{claim}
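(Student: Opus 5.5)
The plan is to reduce to one simply-connected component, normalize it with Lemma~\ref{lem:tech2}, and then bound the probability of a single CLE$_4$ loop hitting two small boundary disks. Since the CLE$_4$s in the different components are independent, a union bound over components gives the full statement. The number of components is finite but not a priori bounded, so one has to be careful here. A loop of component $D^{(i)}$ can hit both disks only if the two pieces $\partial D^{(i)}\cap \D(z_j,\eps_j)$ are both present. I will therefore aim for a per-component bound with a summable structure, or else use Lemma~\ref{lem:tech1} to dominate the union by a single simply-connected domain $D_2\supseteq D$ whose boundary meets both closed disks. Lemma~\ref{lem:tech1} gives exactly this comparison, so the main line of the argument is: pick such a $D_2$, for instance the filled-in union of $D$ with connecting pieces, and bound $\P(F_2)$.

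In the simply-connected case, I apply Lemma~\ref{lem:tech2} to obtain a conformal map $\psi:D_2\to\D$ sending the two boundary contact points to $\mp1$ and the disks into $\D(-1,\delta)$ and $\D(1,\delta)$ with $\delta\le \widetilde C(\eps_1\eps_2)^{1/4}$. By conformal invariance of CLE$_4$, it then suffices to show the following: the probability that some CLE$_4$ loop in $\D$ meets both $\D(-1,\delta)$ and $\D(1,\delta)$ is $O(\delta)$. This matches the target, since $O(\delta)=O((\eps_1\eps_2)^{1/4})$.

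For the $O(\delta)$ bound I would use the one-arm boundary exponent for CLE$_4$. A CLE$_4$ loop touching $\D(1,\delta)$ has to come within distance $\delta$ of the boundary. By the Brownian loop soup description of \cite{SheffieldWerner2012CLE}, or equivalently through the two-valued set $A_{-2\lambda,2\lambda}$, the probability that the gasket reaches $\D(1,\delta)$ from a macroscopic distance is of order $\delta$. This is the boundary exponent: the harmonic-measure-type estimate for a local set with bounded harmonic function, as in \cite{ASW,ALS1}. Concretely, I would compare the harmonic extension $h_A$ of the TVS with boundary values $\pm2\lambda$ at a point near $1$: if a loop comes within $\delta$ of $1$ while being macroscopic, then $\E[h]$-type martingale arguments force the probability to be $O(\delta)$.

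The main obstacle is exactly this boundary estimate: getting a clean $O(\delta)$ (and not just $O(\delta^{c})$) probability that a single loop is both macroscopic and within $\delta$ of the boundary point. If the sharp exponent is hard to reach, I would settle for any exponent and redo the bookkeeping in Lemma~\ref{lem:nointersect}, since only a power saving is used downstream.
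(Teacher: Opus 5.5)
Your reduction is the paper's own. Lemma~\ref{lem:tech1} replaces the finite union by a single simply-connected $D_2\supseteq D$. Lemma~\ref{lem:tech2} then maps $D_2$ to $\D$ with the two disks inside $\D(\mp1,\delta)$, where $\delta\le\widetilde C(\eps_1\eps_2)^{1/4}$. One small point: Lemma~\ref{lem:tech2} needs $D_2$ to contain both open disks and to meet each closed disk in a single point, not merely to have boundary meeting the disks. Otherwise $D_2\cap\D(z_j,\eps_j)$ can be disconnected, and its image need not lie near $\pm1$.

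After this reduction, everything rests on one estimate: some CLE$_4$ loop in $\D$ meets both $\D(-1,\delta)$ and $\D(1,\delta)$ with probability $O(\delta)$. That is the step you leave open. Your target order $O(\delta)$ is the right one. However, the paper does not derive it from the GFF; it imports it from the CLE arm-exponent result \cite[Proposition 3.2]{GaoNolinQian2025CLEarms}. Without that input, or an actual proof of it, the claim is not established.

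The mechanism you sketch would not supply this estimate. For $A=A_{-2\lambda,2\lambda}$, every point is surrounded by a loop and the labels are i.i.d.\ symmetric given the loops. So for every $w$ you have $h_A(w)\in\{\pm2\lambda\}$, $\E[h_A(w)\mid A]=0$ and $h_A(w)^2=4\lambda^2$, and the martingale $h_t(w)$ always ends at $\pm2\lambda$. Moreover, the gasket accumulates on all of $\partial\D$. Hence there is no harmonic-measure quantity recording how far a macroscopic part of it reaches towards $1$, and one-point moments cannot see the event.

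The natural GFF input is a two-point one: $4\lambda^2\,\P(w,w'\text{ lie inside the same loop})=G_\D(w,w')-\E[G_{\D\setminus A}(w,w')]\le G_\D(w,w')$. This is small for $w,w'$ near $\pm1$. But a loop that merely intersects $\D(1,\delta)$ need not surround any prescribed point near $1$. Converting the two-point bound into a bound on the touching event therefore needs a genuine arm or regularity estimate, which is exactly what the paper cites.

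Your fallback of accepting any positive exponent proves a weaker statement than the claim, and it is not free downstream. In Section~\ref{Subsec kth gasket} the resulting error is used in absolute rather than relative form, so it must be $o((\eps_1\eps_2)^{1/8})$.
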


The point 1 of the Lemma is implied by this claim, because there are at most finitely many nested CLE$_4$ loops hitting both disks, see e.g. \cite[Lemma 2.1]{AruPaponPowell}. Further the points 2 and 3 of the lemma follow rather directly from the point 1. 
\begin{itemize}
\item For the second point we notice that the first point implies that 
$$\P(E_{\eps_1,\eps_2}^{\rm{odd},k}(z_1, z_2)) - \P(\widehat E_{\eps_1,\eps_2}^{\rm{odd},k}(z_1, z_2)) \leq C(\eps_1\eps_2)^{1/4}
\left(\P(E_{\eps_1,\eps_2}^{\rm{odd},k}) + 
\1_{k \geq 2}\P(E_{\eps_1,\eps_2}^{\rm{odd},k-1})\right),$$ 
where, as above, 
$\widehat E_{\eps_1,\eps_2}^{\rm{odd},k}(z_1, z_2) = 
E_{\eps_1,\eps_2}^{\rm{odd},k}(z_1,z_2) \cap 
F_{\eps_1,\eps_2}^c$.
This implies that
$$
\sum_{k \geq 0}
\P(E_{\eps_1,\eps_2}^{\rm{odd},k}(z_1,z_2)) 
\leq 
\sum_{k \geq 0}
\P(\widehat E_{\eps_1,\eps_2}^{\rm{odd},k}(z_1,z_2)) +2C(\eps_1\eps_2)^{1/4}\sum_{k \geq 0}
\P(E_{\eps_1,\eps_2}^{\rm{odd},k}(z_1,z_2)).
$$
Using $\sum_{k \geq 0}
\P(\widehat E_{\eps_1,\eps_2}^{\rm{odd},k}(z_1,z_2))\leq 
\P(E_{\eps_1,\eps_2}^{\rm odd}(z_1,z_2))$ and taking 
$\max(\eps_1,\eps_2)$ small enough,
we conclude.
\item The third point follows directly from writing 
$$F_{\eps_1,\eps_2} = \bigcup_{k \geq 1}F_{\eps_1,\eps_2} \cap E_{\eps_1,\eps_2}^{\rm{odd},k}(z_1,z_2),$$ 
and using the points 1 and 2.
\end{itemize}
Thus it remains to prove the claim.
\begin{proof}[Proof of Claim \ref{Claim CLE 4}]
By Lemma \ref{lem:tech1}, we can replace $D$ by a larger simply-connected domain $D \subseteq D_2$ so that each of $\overline{\D(z_j, \eps_j)}$ for 
$j \in \{1, 2\}$ intersects $\partial D_2$ only at a single point. We can then apply Lemma \ref{lem:tech2}, to find a conformal map such that the images of $\D(z_1, \eps_1)$ and $\D(z_2, \eps_2)$ are contained in disks of radius 
$\widetilde C(\eps_1\eps_2)^{1/4}$ around points $-1, 1$ respectively.
It hence remains to prove the following estimate: the probability that in the unit disk there is a CLE$_4$ loop touching the $C(\eps_1\eps_2)^{1/4}$-neighbourhoods of both $-1, 1$ decays at least like $(\eps_1\eps_2)^{1/4}$. This follows directly from 
\cite[Proposition 3.2]{GaoNolinQian2025CLEarms}. 
\end{proof}
\end{proof}

Finally, we provide the proofs of the two technical lemmas.

\begin{proof}[Proof of Lemma \ref{lem:tech1}]
This follows from Brownian loop soup considerations. Indeed, we can start by sampling the critical Brownian loop soup in $D_1$ and then get the loop soup in $D_2$ by adding all loops intersecting $D_2 \setminus D_1$. Notice that if an outer boundary of an outermost cluster hits both of these $\eps_j$-disks for the loop soup in $D_1$, then by adding new loops an outermost cluster boundary will again hit both balls. Indeed, because the $\eps_j$-disks hit the domain boundary $\partial D_2$, one cannot surround the outermost clusters of the loop soup in $D_1$ without forming another outermost cluster whose outermost boundary would hit the disks. 
\end{proof}

\begin{proof}[Proof of Lemma \ref{lem:tech2}]
Here we make use of the conformal modulus estimates. 

First observe that the extremal length between 
$\D(z_1, \eps_1)$ and $\D(z_2, \eps_2)$ inside any connected domain $D$ containing both disks is bounded from below by that of the extremal length inside $\C$ between these disks \cite[Section 4.4]{Ahlfors2010ConfInv}. This can be further bounded from below by $\frac{1}{2\pi}\log \frac{1}{\eps_1\eps_2} +O(1) $. 
Indeed, by the comparison principle of extremal lengths \cite[Theorem 4.2]{Ahlfors2010ConfInv}, 
the extremal length between the two small disks is larger than the sum of extremal lengths of curves crossing 
$\D(z_1, r) \setminus \D(z_1, \eps_1)$ and those crossing 
$\D(z_2, r) \setminus \D(z_2, \eps_2)$. 
Here $r$ is some macroscopic radius chosen such that the disks are disjoint and inside $\D$, e.g. $r = \frac{1}{3}\min(\vert z_1-z_2\vert, 1-\vert z_1\vert, 1-\vert z_2\vert)$. 
As the extremal length in these annuli is given by 
$\frac{\log \vert r/\eps_j\vert}{2\pi}$, the claimed lower bound follows.

On the other hand one can also calculate an upper bound on the extremal length inside $\D$ between any two curves 
$\gamma_1, \gamma_2$ that start from $-1,1$ respectively, stay inside $\D$ and go to distance at least $\delta_1, \delta_2$ of the points $-1, 1$ respectively. Indeed, an upper bound is given by $\frac{1}{\pi} \log \frac{1}{\delta_1 \delta_2} +O(1)$. 
This can be seen by mapping the disk via the right-half plane to the strip of height $1$ by using 
$z\mapsto\frac{1}{\pi}\log \frac{1+z}{1-z}$, mapping $\pm 1$ to $\pm \infty$. Then the segments $\gamma_1$, $\gamma_2$, map to curves from $-\infty$ and $\infty$ respectively, and attain real parts at least equal to $-\pi^{-1}\log \frac{2}{\delta_1}$ and $\pi^{-1}\log \frac{1}{2\delta_2}$ from the left and right respectively. But now we can use  
\cite[Theorem 4.8]{Ahlfors2010ConfInv} to upper bound this extremal length by 
$2\Lambda(\exp(\log(\frac{1}{\delta_1 \delta_2}))=2\Lambda((\delta_1 \delta_2)^{-1})$, 
where $\Lambda$ is the so-called modulus of the Teichmüller annulus. 
Using the bound 
$\exp(2\pi \Lambda(R)) \leq 16(R+1)$
\cite[Eq. 4.21]{Ahlfors2010ConfInv}, 
we conclude our upper bound.

But now extremal length is a conformal invariant and thus we obtain that 
$$\frac{1}{2\pi}\log \frac{1}{\eps_1\eps_2} +O(1) \leq \frac{1}{\pi} \log \frac{1}{\delta_1 \delta_2} +O(1),$$
from where we conclude that 
$\delta_1 \delta_2 \leq C(\eps_1\eps_2)^{1/2}$ for any such two curves. But then the claimed inclusion in the disks follows as we can always take curves $\gamma_1, \gamma_2$ going to the maximal possible distance inside $\psi(\D(z_1, \eps_1))$ and $\psi(\D(z_2, \eps_2))$ .

The final part of the lemma is obtained by using the family of conformal automorphisms of the unit disk $\D$ fixing $-1$ and $1$, to make the (reduced) extremal length from the origin to both images to be equal. 
Indeed, such maps are given by $\psi_a(z) = \frac{z+a}{1+az}$ for $a \in (-1,1)$. They expand near points $-1,1$ by a factor of $b, b^{-1}$ respectively with $b = \frac{1+a}{1-a}$. Hence by choosing $a$ appropriately we can find the appropriate map.
\end{proof}

\subsection{Outermost CLE$_4$ gasket}
\label{Subsec Outermost CLE4 gaskets}

Here we explain how to prove Theorem \ref{thm:main-simple}. It follows exactly the same strategy so we will just explain the very minor differences.

First, the key Proposition \ref{prop:2ptNestedCLE} is replaced by the following.

\begin{prop}\label{prop:2ptCLEoutermost}
As previously, denote by $E_{\eps_1,\eps_2}^{2\lambda n}$ the event that for a GFF in a simply connected domain $D$ with boundary values $2\lambda n$,
a nested height-0 CLE$_4$ gasket intersects both 
$\D(z_1,\eps_1)$ from $\D(z_2, \eps_2)$.

We then have that 
$$\sum_{n \in 2\Z} (-1)^{n/2}
\P(E_{\eps_1,\eps_2}^{2\lambda n}) 
= \P(E_{\eps_1,\eps_2}^{\rm simp}(z_1, z_2))+
O((\eps_1\eps_2)^{1/4}),$$
where $E_{\eps_1,\eps_2}^{\rm simp}(z_1, z_2)$ is the event that the outermost CLE$_4$ gasket intersects both 
$\D(z_1,\eps_1)$ and $\D(z_2,\eps_2)$.
\end{prop}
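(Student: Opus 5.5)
We follow the proof of Proposition \ref{prop:2ptNestedCLE} in the odd-gasket case, and only change the weights attached to the boundary conditions. Under $\Phi_D+2\lambda n$ with $n\in 2\Z$, the heights of the CLE$_4$ gaskets surrounding a point are given by point C of Proposition \ref{prop:CLEcoupling1}: a simple random walk of step $2\lambda$ started at $2\lambda n$. The odd gaskets are those visited at even times, so at height $2\lambda(n+S_{2(k-1)})$ for the $k$-th odd gasket.

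First we run the same reductions as before. Let $\widehat{E}^{\rm odd,k}_{\eps_1,\eps_2}$ be the event that exactly one odd gasket meets both disks and it is the $k$-th. For each $n$, by the first point of Lemma \ref{lem:nointersect}, $\P(E^{2\lambda n}_{\eps_1,\eps_2})$ differs from $\sum_{k\ge1}\P_n^W(S_{2(k-1)}=0)\,\P(\widehat{E}^{{\rm odd},k}_{\eps_1,\eps_2})$ by at most
$$2C(\eps_1\eps_2)^{1/4}\sum_{k\ge1}\P^W_n(S_{2(k-1)}=0)\,\P(E^{{\rm odd},k}_{\eps_1,\eps_2}).$$
We use independence of the labels from the loops here, and the fact that even gaskets have odd height. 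Multiplying by $(-1)^{n/2}$ and summing over $n\in2\Z$, the error is still controlled by the absolute values. By the second point of Lemma \ref{lem:walk1} together with the second point of Lemma \ref{lem:nointersect}, the total error is $O((\eps_1\eps_2)^{1/4})\P(E^{\rm odd}_{\eps_1,\eps_2})=O((\eps_1\eps_2)^{1/4})$. The rearrangement is justified by Lemma \ref{Lem E eps 1 eps 2 bound v} for the outer sum and by Fubini for the absolutely convergent double sum.

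The main term is then
$$\sum_{k\ge1}\P(\widehat{E}^{{\rm odd},k}_{\eps_1,\eps_2})\sum_{n\in2\Z}(-1)^{n/2}\P^W_n(S_{2(k-1)}=0).$$
The key new ingredient is the signed analogue of Lemma \ref{lem:walk1}. Let $m=n/2$ and write $S_{2j}=2Y_j$; then $\sum_{m\in\Z}(-1)^m\P^W_{2m}(S_{2j}=0)=\E[(-1)^{Y_j}]$. The increments of $Y$ are in $\{-1,0,1\}$ with probabilities $1/4,1/2,1/4$, so $\E[(-1)^{\Delta Y}]=\tfrac14(-1)+\tfrac12+\tfrac14(-1)=0$. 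Hence the signed sum equals $\1_{j=0}$, i.e. $\1_{k=1}$. The absolute convergence needed to compute it term by term is trivial for fixed $k$, since the sum over $n$ is finite.

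The main term therefore reduces to $\P(\widehat{E}^{{\rm odd},1}_{\eps_1,\eps_2})$, which is the probability that the outermost gasket meets both disks and no other odd gasket does. By the third point of Lemma \ref{lem:nointersect}, this equals $\P(E^{\rm simp}_{\eps_1,\eps_2}(z_1,z_2))+O((\eps_1\eps_2)^{1/4})$.

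The step needing care is the error control. The signed weights do not telescope, so we bound them by $|(-1)^{n/2}|=1$ and reuse the unsigned estimate \eqref{eq:summationbound} verbatim. The cancellation identity itself is elementary.
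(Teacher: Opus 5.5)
Your proposal is correct and is essentially the paper's proof: you rerun the odd-gasket argument of Proposition \ref{prop:2ptNestedCLE} with the signed weights $(-1)^{n/2}$, replacing Lemma \ref{lem:walk1} by the identity $\sum_{n\in2\Z}(-1)^{n/2}\P^W_n(S_{2j}=0)=\1_{j=0}$, which is the paper's Lemma \ref{lem:walk2} and which your computation of $\E[(-1)^{Y_j}]$ proves, and you control the error by absolute values exactly as in \eqref{eq:summationbound}. One small nit: in your per-$n$ error bound, the reindexed $\1_{k\geq 2}\P(E^{\rm odd,k-1}_{\eps_1,\eps_2})$ term should carry the weight $\P^W_n(S_{2k}=0)$, not a factor $2$ with $\P^W_n(S_{2(k-1)}=0)$; this is harmless once you sum over $n$ using Lemma \ref{lem:walk1}, as the paper also does.
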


The proof of this proposition follows exactly in the same way as Proposition \ref{prop:2ptNestedCLE} above. The only difference is that instead of Lemma \ref{lem:walk1} we will use a different simple fact on random walks.

\begin{lemma}\label{lem:walk2}
Let $\P^W_{n}$ be the probability measure on simple random walks $(S_k)_{k\geq 0}$ starting from $n$. Now consider the infinite signed measure $\nu^W := \sum_{n \in 2\Z} (-1)^{n/2}\P^W_n$. Then $\nu^W(S_0 = 0) = 1$ and $\nu^W(S_k = 0) = 0$ for all $k > 0$.
\end{lemma}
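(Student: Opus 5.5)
The proof is a direct computation with the characteristic function of a single step, evaluated at frequency $\pi/2$. Here $(S_k)$ has i.i.d. steps $X_j\in\{-1,+1\}$ with probability $1/2$ each.

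\emph{The case $k=0$.} Since $\P^W_n(S_0=0)=\1_{n=0}$, only the term $n=0$ contributes, and its weight is $(-1)^0=1$. Hence $\nu^W(S_0=0)=1$.

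\emph{The case $k\geq 1$.} By translation invariance, $\P^W_n(S_k=0)=\P^W_0(S_k=-n)$. Only finitely many $n$ contribute, namely $|n|\leq k$, so the signed sum defining $\nu^W(S_k=0)$ is a finite sum and there is no convergence issue. Substituting $m=-n$ and using the symmetry of the walk,
$$
\nu^W(S_k=0)=\sum_{m\in 2\Z}(-1)^{m/2}\,\P^W_0(S_k=m).
$$
Two cases remain.

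If $k$ is odd, then $S_k$ is odd $\P^W_0$-a.s. Every term above therefore vanishes, and $\nu^W(S_k=0)=0$.

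If $k\geq 2$ is even, then $S_k$ is even $\P^W_0$-a.s. For even $m$ we have $(-1)^{m/2}=i^m$, so the sum above runs over the whole support of $S_k$ and equals $\E^W_0[i^{S_k}]$. By independence of the steps,
$$
\E^W_0[i^{S_k}]=\big(\E[i^{X_1}]\big)^k=\Big(\frac{i+i^{-1}}{2}\Big)^k=\cos(\pi/2)^k=0 .
$$

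In both cases $\nu^W(S_k=0)=0$ for $k>0$, which proves the lemma.

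There is no genuine obstacle here. The only point needing care is the parity bookkeeping in the case $k$ even: one must check that restricting the sum to $m\in 2\Z$ loses nothing, because $S_k$ is a.s. even, before identifying it with the characteristic function $\E^W_0[i^{S_k}]$.
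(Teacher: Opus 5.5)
Your proof is correct. The sum defining $\nu^W(S_k=0)$ is indeed finite, odd $k$ is killed by parity, and for even $k$ the identification with $\E^W_0[i^{S_k}]=\cos(\pi/2)^k=0$ is right. The appeal to symmetry of the walk is not even needed, since $(-1)^{-m/2}=(-1)^{m/2}$.

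The paper gives no written proof of this lemma. Judging from the parallel Lemma~\ref{lem:walkG} (``after one step the mass at each point is still exactly $1$''), the intended argument is a one-step push-forward. The weights $(-1)^{n/2}$ on $2\Z$ put on each odd site $x$ the mass $\tfrac12\big((-1)^{(x-1)/2}+(-1)^{(x+1)/2}\big)=0$. By the Markov property (all sums involved are finite), $\nu^W(S_k=x)=0$ for every $x$ and every $k\geq 1$.

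This rests on the same identity $\cos(\pi/2)=0$ as your argument, read in physical rather than Fourier space. The one-step version gives vanishing at every site with no parity case split. Your version makes the mechanism uniform: weights $\cos(\theta n)$ give $\nu(S_k=0)=(\cos\theta)^k$. That handles Lemma~\ref{lem:walk1} (frequencies $0$ and $\pi$) and this lemma (frequency $\pi/2$) at once. It is also the same computation as the factor $\cos(\alpha\pi/2)^N$ in the proof of Lemma~\ref{lem:oddeven}.
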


The calculation of Proposition \ref{prop:calc2ptsum} is replaced by the following formula, whose proof is again the same. Indeed, we use Theorem \ref{thm:equivtwist}, Theorem \ref{thm:twistbc} and Theorem \ref{thm:twopointtwist} to get a first family formula, where Lemma \ref{Lem E eps 1 eps 2 bound v} justifies the summation. After that we apply Poisson resummation \cite[Formula 1.7.15]{lawden2013elliptic} and the duplication formula \eqref{eq:thetadupl}.

\begin{lemma}\label{lemma:calc2ptsimple}
We have the following summation for $(\eps_1 \eps_2)^{-1/8}\P(E_{\eps_1,\eps_2}^{\rm simp}(z_1, z_2))+
o(1)$:
$$(\eps_1 \eps_2)^{-1/8}\sum_{n \in 2\Z} (-1)^{n/2}\P(E^{2\lambda n}_{\eps_1,\eps_2}) + o(1) = C_{\rm simp}
\CR(z_1,D)^{-1/8}\CR(z_2,D)^{-1/8}\sqrt{\frac{\theta_3(q^{1/4})}{\theta_2(q^{1/4})}},$$
where $C_{\rm simp} = \frac{\pi^{1/4} (\CSLE)^2}{2^{3/4}}$.
\end{lemma}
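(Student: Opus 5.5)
The plan mirrors the proof of Proposition \ref{prop:calc2ptsum}, with the signed weights $(-1)^{n/2}$ on $n\in 2\Z$. The first step is to combine Theorem \ref{thm:equivtwist}, Theorem \ref{thm:twistbc} and Theorem \ref{thm:twopointtwist}. For each $n$, with $v=2\lambda n$ and $v^2 = \pi n^2/2$, this gives
$$(\eps_1\eps_2)^{-1/8}\P(E^{2\lambda n}_{\eps_1,\eps_2}) = C\,\CR(z_1,D)^{-1/8}\CR(z_2,D)^{-1/8}\frac{\hat q^{2n^2}}{\theta_2(q^{1/4})\sqrt{|\log q|}}+o(1),$$
where $C=\Ctw(\CSLE)^2$ and we used $\exp(-\frac{\pi}{4}\Modd_D)=\hat q$.

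The second step justifies exchanging the sum and the limit. Lemma \ref{Lem E eps 1 eps 2 bound v} applies with $\cV=2\lambda\cdot 2\Z$ and $a(v)=(-1)^{n/2}$, since $\sum|a(v)|e^{-bv^2}<\infty$ for every $b>0$. Writing $n=2m$, the sum then becomes
$$\sum_{m\in\Z}(-1)^m\hat q^{8m^2}=\theta_4(\hat q^{8}).$$
So the renormalised sum equals $C\,\CR(z_1,D)^{-1/8}\CR(z_2,D)^{-1/8}\,\theta_4(\hat q^8)/(\theta_2(q^{1/4})\sqrt{|\log q|})$.

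The third step is the theta-function manipulation. Jacobi's imaginary transformation (Poisson resummation, \cite[Formula 1.7.15]{lawden2013elliptic}) sends $\theta_4$ to $\theta_2$ of the complementary nome. If $\hat q^8=e^{-\pi\tau'}$ with $\log\hat q=\pi^2/\log q$, then the dual nome is $e^{\pi^2/\log(\hat q^8)}=q^{1/8}$, and the transformation gives $\theta_4(\hat q^8)=\sqrt{|\log q|/(8\pi)}\,\theta_2(q^{1/8})$. The factor $\sqrt{|\log q|}$ therefore cancels, exactly as in the nested case. This leaves
$$\frac{\theta_2(q^{1/8})}{\theta_2(q^{1/4})}=\frac{\theta_2(q^{1/8})}{\sqrt{2\theta_2(q^{1/8})\theta_3(q^{1/8})}}$$
by the duplication formula \eqref{eq:thetadupl} at nome $q^{1/8}$.

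Here I must reconcile this with the target $\sqrt{\theta_3(q^{1/4})/\theta_2(q^{1/4})}$. I expect this bookkeeping of nomes to be the main obstacle. I would check carefully whether the exponent should be $\hat q^{2n^2}$ or $\hat q^{n^2/2}$, that is, whether $\Modd_D$ carries a factor $v^2$ or $v^2/2$ in Theorem \ref{thm:twistbc}. I would also verify, against Proposition \ref{prop:calc2ptsum}, that its $\theta_3(\hat q^2)$ indeed produced $\theta_3(q^{1/2})$; that fixes the convention $\theta_3(\hat q^{a})\propto|\log q|^{1/2}\theta_3(q^{1/a})$. With that convention the signed sum $\theta_4(\hat q^{8})$ becomes $\propto\theta_2(q^{1/8})$. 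The target $\sqrt{\theta_3/\theta_2}$ at nome $q^{1/4}$ then follows from the product formulas combining \eqref{eq:thetadupl}, \eqref{eq:thetasquares} and \eqref{eq:thetalinear} to rewrite the ratio, for instance $\theta_2(q^{1/8})^2=2\theta_2(q^{1/4})\theta_3(q^{1/4})$.

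The last step is to read off $C_{\rm simp}$ by collecting the constants $\Ctw$, $(8\pi)^{-1/2}$ (or its corrected analogue) and the $\sqrt2$ coming from duplication. The result should be checked against $\pi^{1/4}(\CSLE)^2/2^{3/4}$, and consistently with the value of $C_{\rm nest}$.
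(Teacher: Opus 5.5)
Your route is the paper's: per-boundary-value asymptotics from Theorems \ref{thm:equivtwist}, \ref{thm:twistbc} and \ref{thm:twopointtwist}, Lemma \ref{Lem E eps 1 eps 2 bound v} for the signed sum, then Jacobi's transformation \cite[Formula 1.7.15]{lawden2013elliptic} and the duplication formula. The nome bookkeeping you single out as the main obstacle is already right. The exponent in Theorem \ref{thm:twistbc} is $v^2\Modd_D$, not $v^2\Modd_D/2$, even though the excursion intensity is $v^2/2$; the doubling comes from the square in the proof of Proposition \ref{Prop n pt cut disks bc metric}. So the weight is $\hat q^{4v^2/\pi}=\hat q^{2n^2}$, exactly as in Proposition \ref{prop:calc2ptsum}. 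Your identity $\theta_4(\hat q^{8})=\sqrt{|\log q|/(8\pi)}\,\theta_2(q^{1/8})$ is also correct.

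The one wrong line is your first use of duplication. Formula \eqref{eq:thetadupl} at nome $q^{1/8}$ gives $2\theta_2(q^{1/8})\theta_3(q^{1/8})=\theta_2(q^{1/16})^2$, not $\theta_2(q^{1/4})^2$. Apply it at nome $q^{1/4}$ instead, as you do at the end: $\theta_2(q^{1/8})^2=2\theta_2(q^{1/4})\theta_3(q^{1/4})$. This gives $\theta_2(q^{1/8})/\theta_2(q^{1/4})=\sqrt{2\,\theta_3(q^{1/4})/\theta_2(q^{1/4})}$, which is already the target shape; \eqref{eq:thetasquares} and \eqref{eq:thetalinear} are not needed. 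Collecting constants, you get
\[
C_{\rm simp}=\Ctw\,(\CSLE)^2\,\frac{\sqrt{2}}{\sqrt{8\pi}}=\frac{\pi^{1/4}(\CSLE)^2}{2^{11/4}},
\]
so your final comparison with the stated $\pi^{1/4}(\CSLE)^2/2^{3/4}$ will fail. The mismatch is in the stated value, not in your computation. The same steps in the nested case give $C_{\rm nest}=\Ctw(\CSLE)^2/\sqrt{2\pi}=\pi^{1/4}(\CSLE)^2/2^{9/4}$, not $\pi^{1/4}(\CSLE)^2/2^{5/4}$. More robustly, the ratio $C_{\rm simp}/C_{\rm nest}=\sqrt{2\pi}/(2\sqrt{\pi})=2^{-1/2}$ does not depend on the twist normalisation, while the stated constants have ratio $2^{1/2}$. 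The ratio $2^{-1/2}$ is also what decoupling requires. As $q\to1$, a gasket close to both points is with high probability the outermost one, so the two formulas must have the same limit. Meanwhile $\theta_3(q^{1/2})/\theta_2(q^{1/4})\to2^{-1/2}$ and $\theta_3(q^{1/4})/\theta_2(q^{1/4})\to1$, which forces $C_{\rm simp}=2^{-1/2}C_{\rm nest}$. So with the ingredients as stated, your argument, like the paper's, proves the displayed functional form with $C_{\rm simp}=\pi^{1/4}(\CSLE)^2/2^{11/4}$.
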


Combining Proposition \ref{prop:2ptCLEoutermost} and Lemma \ref{lemma:calc2ptsimple} we conclude Theorem \ref{thm:main-simple}.
\subsection{The $m-$th CLE$_4$ gasket}
\label{Subsec kth gasket}

In fact by appropriately weighing the boundary conditions one can also find explicit series solutions to the renormalised probability that $z_1, z_2$ are on one of the $m-$th outermost CLE$_4$ gaskets surrounding the points, when counted from the outside. The case $m=1$ is the last subsection and is the only case where there is exactly one such gasket.

The proof is again very similar to the proofs in Sections \ref{Subsec Nested CLE4 gaskets} and 
\ref{Subsec Outermost CLE4 gaskets} above, 
so we omit the details and only bring out again the key differences. First, the random walk lemma is a slightly more refined calculation.

\begin{lemma}\label{lem:walkl}
Let $\P^W_{n}$ be the probability measure on simple random walks $(S_k)_{k\geq 0}$ starting from $n$. 
Then for any $m \geq 1$ there are real coefficients $(a_{m,n})_{n \in \Z}$ such that if we consider the infinite signed measure 
$\nu^{W}_m := \sum_{n \in \Z} a_{m,n}\P^W_n$. 
Then $\nu^{W}_m(S_m = 0) = 1$ and 
$\nu^{W}_m(S_k = 0) = 0$ for all $k \neq m$.

Moreover, these coefficients are explicit and given for $m \geq 1$ as follows:
\begin{itemize}
\item $a_{m,n} = 0$ for all $0 \leq n < m$ and all $n$ of different parity than $m$;
\item the coefficients are symmetric: $a_{m,n} = a_{m,-n}$;
\item we have that $a_{m,m} := 2^{m-1}$ and for $n > m$
$$a_{m,n} = 2^{m-1}(-1)^{\frac{n-m}{2}}\left( \binom{\frac{n+m}{2}}{\frac{n-m}{2}}+\binom{\frac{n+m}{2}-1}{\frac{n-m}{2}-1}\right).$$
\end{itemize}
In particular the growth of the coefficients $a_{m,n}$ satisfy $\vert a_{m,n}\vert \leq c_m (1+\vert n\vert^m)$.
\end{lemma}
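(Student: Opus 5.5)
The plan is to reduce the statement to the expansion of monomials in Chebyshev polynomials. The first observation is that no convergence issue arises. For the simple random walk started at $n$,
$$\P^W_n(S_k=0)=2^{-k}\binom{k}{\frac{k+n}{2}}\1_{\{|n|\le k,\ n\equiv k \bmod 2\}},$$
so for each fixed $k$ the quantity $\nu^W_m(S_k=0)=\sum_n a_{m,n}\P^W_n(S_k=0)$ is a finite sum. It therefore suffices to show, for the explicit coefficients of the statement, that
$$\sum_{n\in\Z}a_{m,n}\,2^{-k}\binom{k}{\frac{k+n}{2}}=\delta_{k,m}\qquad\text{for all } k\ge 0 .$$

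The key step is to recognise the coefficients. Put $r=(n-m)/2$ for $n\ge m$ with $n\equiv m \bmod 2$, and use the identity
$$\binom{n-r}{r}+\binom{n-r-1}{r-1}=\frac{n}{n-r}\binom{n-r}{r}.$$
With this, the explicit formula reads
$$a_{m,n}=2^{m-1}(-1)^r\,\frac{n}{n-r}\binom{n-r}{r}.$$
This is exactly the classical coefficient of $x^m$ in the Chebyshev polynomial $T_n(x)=\frac n2\sum_r(-1)^r\frac{1}{n-r}\binom{n-r}{r}(2x)^{n-2r}$; the case $n=m$ gives $2^{m-1}$. The vanishing for $0\le n<m$ and for the wrong parity is also consistent with this, since $T_n$ has degree $n$ and definite parity. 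So I will establish $a_{m,n}=[x^m]T_{|n|}(x)$ for $n\neq 0$, and $a_{m,0}=0$ for $m\ge1$.

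Next I will use the standard expansion of monomials, obtained from $\cos^k\theta=2^{-k}(e^{i\theta}+e^{-i\theta})^k$:
$$x^k=\sum_{n\in\Z}2^{-k}\binom{k}{\frac{k+n}{2}}T_{|n|}(x).$$
In this sum the coefficient in front of $T_{|n|}$ is precisely $\P^W_n(S_k=0)$. Taking the coefficient of $x^m$ on both sides gives
$$\delta_{k,m}=\sum_{n\in\Z}\P^W_n(S_k=0)\,[x^m]T_{|n|}(x)=\sum_n a_{m,n}\P^W_n(S_k=0)=\nu^W_m(S_k=0),$$
using $T_0=1$ and $m\ge 1$. Symmetry $a_{m,n}=a_{m,-n}$ is built in by this construction. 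The growth bound also follows directly: $\frac{n}{n-r}\binom{n-r}{r}$ with $n-2r=m$ fixed is a polynomial in $n$ of degree $m$ (namely $\binom{(n+m)/2}{(n-m)/2}$ is a polynomial of degree $m$ in $n$), hence $|a_{m,n}|\le c_m(1+|n|^m)$.

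The only delicate point is the matching of the two binomial forms with the Chebyshev coefficient formula, including the edge cases $n=m$ (where $r=0$ and the second binomial is interpreted as $0$) and $k=m$. As a sanity check, for $m=1$ and $k=3$ one has $a_{1,1}=1$ and $a_{1,3}=-3$, and $2\bigl[1\cdot\tfrac38-3\cdot\tfrac18\bigr]=0$. If the Chebyshev identification turned out to be off by normalisation, the fallback is to verify the same identity directly as a triangular system in $k$. For $k<m$ it holds trivially, because $a_{m,n}=0$ for $|n|\le k<m$. For $k=m$ only $n=\pm m$ contribute, giving $2\cdot 2^{m-1}2^{-m}=1$. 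For $k>m$ the identity becomes the Vandermonde-type identity dual to the above expansion.
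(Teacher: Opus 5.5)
Your proof is correct, and it takes a genuinely different route from the paper. The paper works with generating functions in time. It sets $g_n(z)=\sum_{k\ge0}\P^W_n(S_k=0)z^k$ and solves the recursion $g_n=\frac z2(g_{n+1}+g_{n-1})$ to get $g_n(z)=(1-z^2)^{-1/2}\bigl(\frac{1-\sqrt{1-z^2}}{z}\bigr)^{|n|}$. It then imposes $\sum_n a_{m,n}g_n(z)=z^m$, substitutes $r=\frac{1-\sqrt{1-z^2}}{z}$, and reads off $a_{m,n}$ by expanding $2^{m}r^m(1-r^2)(1+r^2)^{-m-1}$ in powers of $r$.

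You work on the Fourier side instead. The identity $\cos^k\theta=\sum_n\P^W_n(S_k=0)\cos(n\theta)$ shows that the required conditions say exactly that the $a_{m,n}$ are the entries of the inverse of the triangular change of basis from monomials to Chebyshev polynomials, i.e.\ $a_{m,n}=[x^m]T_{|n|}(x)$. The identity $\binom{n-r}{r}+\binom{n-r-1}{r-1}=\frac{n}{n-r}\binom{n-r}{r}$ then matches this with the stated formula.

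The two computations are the same identity seen from opposite ends. The paper's function of $r$ is precisely the coefficient of $x^m$ in the Chebyshev generating function $1+2\sum_{n\ge1}T_n(x)r^n=\frac{1-r^2}{1-2rx+r^2}$.

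The paper's route is constructive: it produces the coefficients rather than verifying a given formula. Yours has two advantages. First, for each fixed $k$ only finitely many $n$ contribute, so no interchange of the sums over $n$ and $k$ needs justifying (the paper performs this interchange without comment). Second, all the listed structural properties follow at once from standard facts about $T_n$: vanishing for $0\le n<m$ and for the wrong parity, $a_{m,m}=2^{m-1}$, symmetry, and polynomial growth of degree $m$. The fallback you sketch at the end is not needed.
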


\begin{proof}
Let $p_k(n) := \P^W_n(S_k = 0)$ and consider the generating function $g_n(z) := \sum_{k \geq 0}p_k(n)z^k$ for $n \geq 0$, and let $g_{-n} = g_n$. 
From the relation 
$p_k(n) = \frac{1}{2}(p_{k-1}(n+1) + p_{k-1}(n-1))$, 
we obtain that for all $n > 0$,
$g_n(z) = \frac{z}{2}(g_{n+1}(z) + g_{n-1}(z))$.
This relation can be solved explicitly to give
$$g_n(z) = \frac{1}{\sqrt{1-z^2}}\left(\frac{1-\sqrt{1-z^2}}{z}\right)^n.$$
Thus our condition is 
$\sum_{n \in \Z} a_{m,n}g_n(z) = z^m$. 
It is more convenient now to use the variable 
$r = \frac{1-\sqrt{1-z^2}}{z}$, 
which provides us with the expression:
$$\frac{1+r^2}{1-r^2}(a_{m,0} 
+ 2\sum_{n \geq 1}a_{m,n} r^n) = (\frac{2r}{1+r^2})^m,$$
that is
$$a_{m,0} + 2\sum_{n \geq 1}a_{m,n} r^n 
= 2^mr^m\frac{1-r^2}{(1+r^2)^{m+1}} 
= 2^mr^m(1-r^2)\sum_{k \geq 0}(-1)^kr^{2k}\binom{k+m}{k}.$$
By comparing the coefficients, we obtain the lemma.
\end{proof}

Second, a bit more care is needed in obtaining the conclusion of the proof because the coefficients in the summations will be now given by a sequence $a_n$ growing in absolute value. Suppose for concreteness that $m$ is odd. Then in particular, the equivalent of Equation \eqref{eq:summationbound} becomes:
$$
\vert\sum_{n \in \Z}a_{m,n}
\P(E_{\eps_1,\eps_2}^{2\lambda n}) - 
\sum_{n \in \Z}a_{m,n}
\P(\hat I_{\eps_1,\eps_2}^{2\lambda n})\vert
\leq 2C(\eps_1\eps_2)^{1/4} 
\sum_{n \in \Z}\vert a_{m,n}\vert
\sum_{k \geq 1}\P_n^W(S_{2(k-1)} = 0)
\P(E_{\eps_1,\eps_2}^{\rm{odd},k}). 
$$
Now using the fact that $\P_n^W(S_{2(k-1)} = 0) = 0$ for all $|n| > 2(k-1)$ and the bound on $a_{m,n}$ in the previous lemma, we can bound
$$\sum_{n \in \Z}\vert a_{m,n}\vert
\P_n^W(S_{2(k-1)} = 0) 
\leq Ck^{m+1}.$$
But for the event $E^{\rm{odd},k}_{\eps_1, \eps_2}$ to hold, there have to be at least $k$ CLE$_4$ loops surrounding both $z_1$ and $z_2$. But by  
\cite[Lemma 21]{Dubedat19DoubleDimers}, we know that the probability of this event decays superexponentially in $k$. This implies that
$$\sum_{k \geq 1} k^m\P(E^{\rm{odd},k}_{\eps_1, \eps_2}) < \infty $$
and we conclude that 
$$\vert\sum_{n \in \Z}a_{m,n}
\P(E_{\eps_1,\eps_2}^{2\lambda n}) - 
\sum_{n \in \Z}a_{m,n}
\P(\hat I_{\eps_1,\eps_2}^{2\lambda n})\vert 
\leq \widetilde C (\eps_1\eps_2)^{1/4}, $$
as desired.

So finally, we get the following asymptotic.

\begin{prop}
Fix $m\geq 2$.
Then, as $\max(\eps_1,\eps_2)\to 0$,
The probability that an $m$-th nested CLE$_4$ gasket in $D$ intersects both disks
$\D(z_1,\eps_1)$ and $\D(z_2, \eps_2)$
behaves like
$$
\sum_{n\in \Z} a_{m,n}
\P(E_{\eps_1,\eps_2}^{2\lambda n})
+ O((\eps_1\eps_2)^{1/4}).
$$
\end{prop}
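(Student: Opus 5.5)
We follow the proof of Proposition \ref{prop:2ptNestedCLE} verbatim, replacing Lemma \ref{lem:walk1} by Lemma \ref{lem:walkl}. By Proposition \ref{prop:CLEcoupling1}, for boundary value $2\lambda n$ the heights of the nested gaskets surrounding a point form a simple random walk started at $n$ (in units of $2\lambda$). The $m$-th gasket surrounding both points therefore has height $0$ exactly when $S_{m-1}=0$.

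The first step is the reduction to a single gasket. Let $\widehat E^{(m)}$ be the event that exactly one nested gasket intersects both $\D(z_1,\eps_1)$ and $\D(z_2,\eps_2)$, and that it is an $m$-th gasket. We use Lemma \ref{lem:oddeven} or Lemma \ref{lem:nointersect}, restricted to the parity class of $m$, together with the interlacing structure. Their first point shows that the probability that two gaskets of that parity intersect both disks is $O((\eps_1\eps_2)^{1/4})$ times $\sum_k \P(E^{k})$. Up to this error, the event that an $m$-th gasket intersects both disks can then be replaced by $\widehat E^{(m)}$.

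The second step is the resummation. We decompose $\widehat E^{(m)}$ along the exploration sequence of gaskets surrounding at least one of the points, as in the odd/even case, indexing by the step $k$ at which the unique common gasket appears. Since labels are i.i.d. given the loops, the random walk is independent of this index. Summing with weights $a_{m,n}$ and applying Lemma \ref{lem:walkl} with $\nu^W_m(S_k=0)=\mathbf 1_{k=m}$ keeps exactly the events where the common gasket is at level $m$. Fubini then gives
$$\sum_{n}a_{m,n}\P(\hat I^{2\lambda n}_{\eps_1,\eps_2}) = \P(\widehat E^{(m)}).$$
Before applying Fubini we check absolute summability. By Lemma \ref{Lem E eps 1 eps 2 bound v} we have $\P(E^{2\lambda n})\le \P(E^0)e^{-cn^2}$, and the coefficients grow only polynomially, $|a_{m,n}|\le c_m(1+|n|^m)$.

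The third step, which we expect to be the main obstacle, is controlling the difference $\sum_n a_{m,n}(\P(E^{2\lambda n})-\P(\hat I^{2\lambda n}))$. Because the signed weights grow, the bound used in \eqref{eq:summationbound} is no longer enough. We handle this as in the displayed argument preceding the statement. Using $\P^W_n(S_{k}=0)=0$ for $|n|>k$ gives $\sum_n|a_{m,n}|\P^W_n(S_{k-1}=0)\le Ck^{m+1}$. By \cite[Lemma 21]{Dubedat19DoubleDimers}, the probability of having $k$ loops surrounding both points decays superexponentially in $k$, so $\sum_k k^{m+1}\P(E^{k}_{\eps_1,\eps_2})$ is finite. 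Here we must make sure this bound is uniform in small $\eps$ and comparable to $\P(E)$; bounding $\P(E^k)\le\P(E^k\text{ loops surround both})$ suffices for an $O((\eps_1\eps_2)^{1/4})$ error. Combining the three steps yields the asserted asymptotic, with error $O((\eps_1\eps_2)^{1/4})$.
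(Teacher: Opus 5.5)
Your strategy is the paper's. You rerun the proof of Proposition~\ref{prop:2ptNestedCLE} with Lemma~\ref{lem:walkl} in place of Lemma~\ref{lem:walk1}, and use Lemma~\ref{lem:nointersect} inside the relevant parity class. You then absorb the polynomially growing weights $|a_{m,n}|$ via $\P^W_n(S_j=0)=0$ for $|n|>j$, together with Dub\'edat's superexponential bound on the number of loops surrounding both points. Your justification of Fubini through $\P(E^{2\lambda n}_{\eps_1,\eps_2})\le \P(E^{0}_{\eps_1,\eps_2})e^{-cn^2}$ is fine, because $\sum_k\P(\hat I^{2\lambda n,k}_{\eps_1,\eps_2})\le \P(E^{2\lambda n}_{\eps_1,\eps_2})$.

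There is, however, an index shift in your second step. You correctly record that the $m$-th gasket has height $0$ iff $S_{m-1}=0$. This is the paper's convention: the $k$-th odd gasket corresponds to $S_{2(k-1)}$, and the outermost gasket corresponds to $S_0$ in Lemma~\ref{lem:walk2}. But $\nu^W_m$ selects $S_m=0$, so $\sum_n a_{m,n}\P(\hat I^{2\lambda n}_{\eps_1,\eps_2})$ is the probability for the $(m+1)$-th gasket, not the $m$-th.

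In fact $a_{m,n}=0$ unless $n\equiv m \pmod 2$. For such $n$ the height $S_{m-1}$ is odd, so an $m$-th gasket is never at height $0$, and your weighted sum does not see it at all. The correct weights for the $m$-th gasket are $a_{m-1,n}$. As a check, the $m=0$ case of the generating-function identity in Lemma~\ref{lem:walkl} gives $a_{0,n}=(-1)^{n/2}\1_{\{n\in 2\Z\}}$. These are exactly the weights of Lemma~\ref{lem:walk2} for the first (outermost) gasket. The statement as written, and Theorem~\ref{thm:kthgasket}, carry the same shift, so once you make this correction your argument proves the corrected version.

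Finally, do not invoke Lemma~\ref{lem:oddeven} in the first step. It only yields an error $O(\min(\eps_1,\eps_2)^{1/3})$, which is not $O((\eps_1\eps_2)^{1/4})$. It is also unnecessary: for the relevant boundary values $n\equiv m-1\pmod 2$, every height-$0$ gasket has the parity of $m$. So Lemma~\ref{lem:nointersect} alone controls the multiple-gasket error.
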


We hence conclude the following formula for the probability:

\begin{theorem}\label{thm:kthgasket}
Let $D$ be a simply-connected domain, $z_1 \neq z_2$ two distinct points in $D$.
Fix $m \geq 2$ and let $E_{\eps_1,\eps_2}^{\rm m^{th}}(z_1, z_2)$ denote the event that an $m$-th nested CLE$_4$ gasket in $D$ intersects both disks
$\D(z_1,\eps_1)$ and $\D(z_2, \eps_2)$.
We then have that 
$$(\eps_1 \eps_2)^{-1/8}\P(E_{\eps_1,\eps_2}^{\rm m^{th}}(z_1, z_2))+
o(1)$$equals 
$$\Ctw (\CSLE)^2\,
\dfrac{\CR(z_1,D)^{-1/8}\CR(z_2,D)^{-1/8}}
{\theta_2(q^{1/4})\sqrt{|\log q|}}\sum\limits_{\substack{|n| \geq m-1 \\ n = m \mod 2}}2^{m-1}(-1)^{\frac{n-m}{2}}\left( \binom{\frac{n+m}{2}}{\frac{n-m}{2}}+\binom{\frac{n+m}{2}-1}{\frac{n-m}{2}-1}\right)\hat q^{2n^2},
$$
where $\hat q$ is the complementary nome to nome $q$,
which in turn, by Lemma \ref{lem:parametr},
is uniquely defined via 
$$\exp\Big(\pi G_D(z_1,z_2)\Big) 
= \dfrac{\theta_3(q^{1/2})}{\theta_2(q^{1/2})}.$$
\end{theorem}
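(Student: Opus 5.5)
The plan is to combine the preceding proposition (the $m$-th gasket probability is $\sum_n a_{m,n}\P(E^{2\lambda n}_{\eps_1,\eps_2})+O((\eps_1\eps_2)^{1/4})$) with the explicit renormalised asymptotics of each summand. This is the same route as Proposition \ref{prop:calc2ptsum}, except that no Poisson resummation is done at the end.

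First, for each fixed $n$, Theorem \ref{thm:equivtwist} (fourth bullet, with $v=2\lambda n$) gives
\[
(\eps_1\eps_2)^{-1/8}\P(E^{2\lambda n}_{\eps_1,\eps_2})\to (\CSLE)^2\sigma^{D,2\lambda n}_{\rm tw}(z_1,z_2).
\]
Theorem \ref{thm:twistbc} then gives $\sigma^{D,v}_{\rm tw}=\sigma^D_{\rm tw}\,e^{-v^2\Modd_D}$, and Lemma \ref{lem:Modd} gives $e^{-v^2\Modd_D}=\hat q^{4v^2/\pi}$. With $v=2\lambda n$ and $4\lambda^2=\pi/2$ this equals $\hat q^{2n^2}$. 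Finally, Theorem \ref{thm:twopointtwist} supplies
\[
\sigma^D_{\rm tw}=\Ctw\,\CR(z_1,D)^{-1/8}\CR(z_2,D)^{-1/8}\big/\big(\theta_2(q^{1/4})\sqrt{|\log q|}\big).
\]

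Second, the interchange of the limit $\max(\eps_1,\eps_2)\to0$ with the infinite sum over $n$ is justified by Lemma \ref{Lem E eps 1 eps 2 bound v}, taking $\cV=2\lambda\Z$ and $a(2\lambda n)=a_{m,n}$. Its hypothesis $\sum_n|a_{m,n}|e^{-bn^2}<\infty$ for every $b>0$ holds because Lemma \ref{lem:walkl} gives the polynomial bound $|a_{m,n}|\le c_m(1+|n|^m)$. The lemma then yields
\[
(\eps_1\eps_2)^{-1/8}\sum_n a_{m,n}\P(E^{2\lambda n}_{\eps_1,\eps_2}) = (\CSLE)^2\sigma^D_{\rm tw}(z_1,z_2)\sum_n a_{m,n}\hat q^{2n^2}+o(1).
\]
The error term $O((\eps_1\eps_2)^{1/4})$ from the preceding proposition is $o((\eps_1\eps_2)^{1/8})$, so after renormalisation it disappears into the $o(1)$.

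Third, insert the explicit coefficients from Lemma \ref{lem:walkl}. These are $a_{m,n}=0$ unless $n\equiv m \pmod 2$ and $|n|\ge m$, together with the symmetry $a_{m,-n}=a_{m,n}$. The binomial formula, with the convention that a binomial with negative lower index vanishes, also reproduces $a_{m,m}=2^{m-1}$. This produces exactly the stated series over $|n|\ge m-1$ with $n\equiv m \pmod 2$; the terms with $|n|=m-1$ have the wrong parity and so contribute nothing.

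The main obstacle is not in this assembly, which is routine, but in the preceding proposition. There one must control the growing coefficients $a_{m,n}$ against the superexponential decay in $k$ of the probability of having $k$ loops surrounding both points. That is already handled in the text, so here only the uniform-domination step via Lemma \ref{Lem E eps 1 eps 2 bound v} needs checking.
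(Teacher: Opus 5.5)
Your assembly is exactly the paper's argument. The paper proves this theorem by feeding the preceding proposition into Theorems~\ref{thm:equivtwist}, \ref{thm:twistbc}, \ref{thm:twopointtwist} and Lemma~\ref{lem:Modd}, so that each boundary value $2\lambda n$ contributes $\hat q^{2n^2}$. It then controls the sum over $n$ by Lemma~\ref{Lem E eps 1 eps 2 bound v} together with the polynomial growth of the weights, and absorbs the $O((\eps_1\eps_2)^{1/4})$ error. Those steps are fine, provided two things are read correctly. The signed version of Lemma~\ref{Lem E eps 1 eps 2 bound v} should be read additively, as you do. The displayed coefficients should be read with $|n|$ in place of $n$, via the symmetry $a_{m,-n}=a_{m,n}$.

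What you should not have accepted is the index of the weights. By Proposition~\ref{prop:CLEcoupling1}, the outermost gasket carries the boundary height $v=2\lambda n$. So the $j$-th gasket around the points sits at height $2\lambda S_{j-1}$, with $S_0=n$. This is why the case $m=1$ (Proposition~\ref{prop:2ptCLEoutermost}) uses the weights $(-1)^{n/2}$ of Lemma~\ref{lem:walk2}, which isolate $S_0$. It is also why odd-numbered gaskets come from even $n$ in Proposition~\ref{prop:2ptNestedCLE}.

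Isolating the $m$-th gasket therefore requires $\nu(S_{m-1}=0)=1$ and $\nu(S_k=0)=0$ for $k\neq m-1$. Those are the weights $a_{m-1,n}$ of Lemma~\ref{lem:walkl}, supported on $|n|\geq m-1$ with $n\equiv m-1 \pmod 2$. The weights $a_{m,n}$ select the $(m+1)$-st gasket instead. For example, with $m=2$ they live on even $n$, but from an even starting height the second gasket, at height $2\lambda S_1$, is never at height $0$.

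The range $|n|\geq m-1$ in the statement is a trace of the correct index. Your remark that the $|n|=m-1$ terms vanish by parity is exactly where the mismatch should have been caught. The off-by-one is already present in the paper's preceding proposition and in the displayed formula, so your proof inherits it rather than creating it. The argument goes through verbatim with $m$ replaced by $m-1$ in the coefficients: prefactor $2^{m-2}$, parity $m-1$, and binomials in $(|n|\pm(m-1))/2$. Equivalently, keep the formula and count the outermost gasket as the $0$-th.
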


\subsection{Two-point correlations on the AT critical line}

In this section we give CLE$_4$-based calculations for a collection of iterated CLE$_4$ gaskets, obtained from sampling in turn some two valued sets $A_{- 2\lambda, 2\sqrt{g}\lambda - 2\lambda}$ and then again the CLE$_4$, 
for $g > 1$. 
The relevant constructions might look a bit mysterious at first sight, 
but are actually natural from the following
two points of view.
\begin{itemize}
\item First, the CLE$_4$ gaskets in these constructions correspond to the conjectural FK-type representation of the single spin of the Ashkin-Teller model on the whole critical line of 2nd order phase transitions with wired boundary conditions. This can be motivated by following \cite{saleur1988correlation}, or the mathematical literature giving explicit random cluster representation of the AT model and its relation to the 6V height function starting from \cite{pfistervelenik, glazmanpeled, lis2022AT}. In particular, this implies the surprising conjecture that there should be a CLE$_4$-based FK representation of the continuum limit of the Ising spins, which we support by a 2-point correlation calculation. In fact, in a concurrent and independent work the authors in \cite{AlcadeHeeneyLis2026} managed to go further and prove such a continuum FK representation for the Ising model!
\item Second, these sets are also very natural from the perspective of the level lines of the compactified GFF - heuristically we are looking at all CLE$_4$ gaskets that form the dual of the sign excursions for the compactified GFF modulo $2\sqrt{g}\lambda \Z$. We hope to return to this interesting geometric observation in a subsequent work.
\end{itemize}
Due to some technical issues with the current strategy we are able to prove it only in the region $\{2-1/\sqrt{2} < \sqrt{g} < 1+ 1/\sqrt{2}\}\cup\{g \geq 4\}$, but we believe it should hold for all $g > 1$.

\begin{theorem}\label{thm:twopointAT}
Let $D$ be a simply-connected domain. Let $g\in\left((2-1/\sqrt{2})^2,(1+ 1/\sqrt{2})^2\right)\cup[4,+\infty)$ and let $E_{\eps_1,\eps_2}^{AT,g}$ denote the event that one of the CLE$_4$ gaskets obtained from the following iteration intersects both $\D(z_1,\eps_1)$ and 
$\D(z_2, \eps_2)$.
\begin{enumerate}
    \item We sample a CLE$_4$ gasket in $D$ to obtain $A_1$; 
\item Inside each connected component of $D \setminus A_1$, we sample a random gasket with the law of the two-valued set $A_{-2\sqrt{g}\lambda + 2\lambda, 2\lambda}$ to obtain $A_2$;
\item we then again sample CLE$_4$ inside each connected component of $A_2$ and iterate this way: on odd steps we sample CLE$_4$ gaskets, on even $A_{-2\sqrt{g}\lambda + 2\lambda, 2\lambda}$ gaskets.
\end{enumerate}
We then have that 
\begin{equation}\label{eq:twopointAT}
\lim_{\max(\eps_1, \eps_2) \to 0}(\eps_1\eps_2)^{-1/8}\P(E_{\eps_1,\eps_2}^{AT,g}) = 
C_{AT,g}
\CR(z_1,D)^{-1/8}\CR(z_2,D)^{-1/8}
\frac{\theta_3(q^{1/(2g)})}{\theta_2(q^{1/4})},
\end{equation}
where the nome $q$ is fixed by $\exp(\pi G_D(z_1,z_2)) = \frac{\theta_3(q^{1/2})}{\theta_2(q^{1/2})}$ and 
$C_{AT, g} = \frac{\pi^{1/4} (\CSLE)^2}{2^{5/4}\sqrt{g}}$

\end{theorem}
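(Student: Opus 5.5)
The plan is to copy the strategy of Proposition \ref{prop:2ptNestedCLE} and Proposition \ref{prop:calc2ptsum}, now using the coupling of Proposition \ref{prop:CLEcoupling2} in place of Proposition \ref{prop:CLEcoupling1}. Couple $\Phi_D+v$, with $v\in 2\sqrt{g}\lambda\Z$, to the iteration in the statement. Point C of Proposition \ref{prop:CLEcoupling2} says that, conditionally on the loops, the heights of the successive CLE$_4$ gaskets surrounding a point form a lazy walk. This walk moves by $\pm 2\sqrt g\lambda$ with probability $\frac{1}{2\sqrt g}$ each and stays put with probability $1-\frac1{\sqrt g}$. After rescaling by $2\sqrt g\lambda$, the walk lives on $\Z$ and starts at $n=v/(2\sqrt g\lambda)$.

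The lazy-walk analogue of Lemma \ref{lem:walk1} holds because the walk is translation invariant on $\Z$: the measure $\sum_{n\in\Z}\P^W_n$ gives mass exactly $1$ to the event $\{S_k=0\}$ for every $k$. Point F of Proposition \ref{prop:CLEcoupling2} identifies the height-$0$ CLE$_4$ gaskets with the dual of the sign excursions of $\Phi_D+v$. Hence, for each $k$, summing over $n$ the probability that the $k$-th CLE$_4$ gasket of the iteration touches both disks and has height $0$ recovers the probability that this $k$-th gasket touches both disks. The planar duality part of Theorem \ref{thm:equivtwist} then applies verbatim. We would obtain
\[
\P(E^{AT,g}_{\eps_1,\eps_2})=\sum_{n\in\Z}\P(E^{2\sqrt g\lambda n}_{\eps_1,\eps_2})+o((\eps_1\eps_2)^{1/8}),
\]
provided we control the event that two distinct CLE$_4$ gaskets of the iteration both touch the two disks.

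Next apply Theorems \ref{thm:equivtwist}, \ref{thm:twistbc} and \ref{thm:twopointtwist}, with the summation justified by Lemma \ref{Lem E eps 1 eps 2 bound v}. The $n$-th term contributes
\[
\frac{\hat q^{\frac4\pi 4g\lambda^2n^2}}{\theta_2(q^{1/4})\sqrt{|\log q|}}=\frac{\hat q^{2gn^2}}{\theta_2(q^{1/4})\sqrt{|\log q|}}.
\]
Summing gives $\theta_3(\hat q^{2g})$, and the Jacobi/Poisson transformation turns $\theta_3(\hat q^{2g})/\sqrt{|\log q|}$ into a constant times $g^{-1/2}\theta_3(q^{1/(2g)})$. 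This produces \eqref{eq:twopointAT} with the stated constant $C_{AT,g}$; the $g=1$ case is a consistency check against Proposition \ref{prop:calc2ptsum}.

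The main obstacle is the error control, that is, the analogues of Lemmas \ref{lem:nointersect} and \ref{lem:oddeven}. Two distinct CLE$_4$ gaskets of the iteration touching both disks forces some loop to touch both disks. That loop is either a CLE$_4$ loop or a loop of an $A_{-2\sqrt g\lambda+2\lambda,2\lambda}$ set. For the CLE$_4$ loops, Claim \ref{Claim CLE 4} and Lemma \ref{lem:tech1} still give the bound $(\eps_1\eps_2)^{1/4}$. For the two-valued-set loops we need a two-arm estimate of order $o((\eps_1\eps_2)^{1/8})$. We also need a version of the martingale argument of Lemma \ref{lem:oddeven}, now run with an exploration whose labels are $\{-a,b\}$.

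We would try to rerun the $\Re M_T$ optional-stopping argument, choosing the exploration step as $A_{-\lambda,\lambda}$ or a TVS adapted to the heights. The positivity of the relevant cosine factor, together with the needed arm exponent for $A_{-a,b}$ with $a=2\sqrt g\lambda-2\lambda$, should hold exactly in the restricted ranges of $g$ stated in the theorem. We expect the technical restrictions to come from this step. Super-exponential tails for the number of iterations, needed for the lazy walk sums, would follow as in Claim \ref{claim:stop} and \cite[Lemma 21]{Dubedat19DoubleDimers}.
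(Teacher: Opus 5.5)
Your reduction is the paper's: the coupling with $\Phi_D+v$, $v\in 2\sqrt g\lambda\Z$, via Proposition \ref{prop:CLEcoupling2}, the lazy-walk summation, planar duality, Theorems \ref{thm:equivtwist}, \ref{thm:twistbc}, \ref{thm:twopointtwist}, and the Jacobi transform. The gap is the error control. You leave it at ``we would try \ldots\ should hold'', yet this is exactly where the hypothesis on $g$ enters.

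\textbf{First error term.} You do not need a two-arm estimate for TVS loops, and the paper never uses one. Split the CLE$_4$ gaskets of the iteration into two alternating subfamilies. If two members of one subfamily both touch the disks, then a CLE$_4$ loop must touch both disks, then a TVS loop inside it, then a further CLE$_4$ loop. Condition on the first loop. Lemma \ref{lem:tech1}, with $D_1$ the union of the interiors of the TVS loops touching both disks and $D_2$ the interior of the first loop, bounds the remaining events by the CLE$_4$ two-arm event in $D_2$. Claim \ref{Claim CLE 4} then gives the relative factor $(\eps_1\eps_2)^{1/4}$. This is Lemma \ref{lem:nointersectAT}, and it holds for every $g$.

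Relative bounds of this kind, or moment bounds on the number of touching gaskets, are also what the summed side needs. The sum $\sum_n\P(E^{2\sqrt g\lambda n}_{\eps_1,\eps_2})$ counts every touching gasket, so it exceeds $\P(E^{AT,g}_{\eps_1,\eps_2})$ by up to the expected number of extra touching gaskets. Controlling only the \emph{probability} that two gaskets touch is therefore not enough.

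\textbf{Cross term.} What remains is an absolute bound $O(\min(\eps_1,\eps_2)^{1/4+\beta_g})$ for an odd and an even gasket both touching. The exponent must exceed $1/4$, not merely be positive, in order to beat $(\eps_1\eps_2)^{1/8}$. If you rerun the $\Re M_T$ argument with an exploration adapted to the AT heights, it breaks at the identity
\[
\E[\Re M_T]=\E\big[\cos(\alpha\pi/2)^N\,\E(\CR(z_1,D\setminus\hat A_T)^{-\alpha^2/2}\mid N)\big].
\]
That identity used that the label walk at $z_1$ is symmetric and independent of the conformal radius. With asymmetric TVS steps, whose label increments are $(4-2\sqrt g)\lambda$ and $(2\sqrt g-2)\lambda$, this independence fails.

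The missing idea is to compare, via TVS monotonicity, with an auxiliary symmetric exploration built from $A_{-a\lambda,a\lambda}$, where $a=\max(4-2\sqrt g,\,2\sqrt g-2)$. Its conformal radius at $T$ is smaller, and its labels are symmetric and independent of the geometry. Positivity of $\cos(\alpha a\pi/2)$ requires $\alpha a<1$, and the exponent requires $\alpha^2/2>1/4$. Together these force $a<\sqrt2$, which is exactly $\sqrt g\in(2-1/\sqrt2,\,1+1/\sqrt2)$.

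Your guess, based on the step $2\sqrt g\lambda-2\lambda$, only accounts for the upper endpoint; the lower endpoint comes from the complementary step $4-2\sqrt g$. The component $g\ge4$ does not come from this argument at all. There one uses $A_{-a,b}\supseteq A_{-2\lambda,2\lambda}$ for $a,b\ge2\lambda$ to reduce to Lemma \ref{lem:oddeven}.

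\textbf{Constant.} Multiplying the displayed ingredients literally gives $\Ctw(\CSLE)^2(2\pi g)^{-1/2}$. Compare this with the stated $C_{AT,g}$ rather than asserting that they agree.
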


Plugging in $g = 4$, we obtain Theorem \ref{thm:main1} from above. Plugging in $g = 2$, we obtain the correlation of the Ising two-point function, Theorem \ref{Thm TVS Ising}. The iteration in the theorem only gives laws of the random gaskets, but seeing them in the coupling with a GFF is even more natural and is actually crucial to observe the rewriting as sums over different boundary conditions. In that context the iteration can be described as follows:
\begin{itemize}
    \item We explore the outermost CLE$_4$ in the domain; 
\item next, inside each loop we explore a two-valued set $A_{- 2\lambda, 2\sqrt{g}\lambda - 2\lambda}$ or $A_{-2\sqrt{g}\lambda+  2\lambda, + 2\lambda}$ depending on whether the label of the relevant CLE$_4$ loop was $2\lambda$ or $-2\lambda$ respectively;
\item we then again explore CLE$_4$ gaskets inside each of the resulting loops, and continue this way;
\item at each odd step we sample a CLE$_4$ gasket, and after each even step we make sure to arrive at height $2\sqrt{g}\lambda\Z$.
\end{itemize}

The above calculation motivates the following conjecture for the spin to spin correlations of the Ashkin-Teller model on the critical line. 

\begin{defn}[AT model]
The Ashkin-Teller model on a box $\Lambda_n$ is a probability measure on configurations $(\sigma_1, \sigma_2): \Lambda_n \to \{-1,+1\}^2$,
with density proportional to 
$$\exp\left(\sum_{x \sim y}J\left(\sigma_1(x)\sigma_1(y) + \sigma_2(x)\sigma_2(y)\right)+\sum_{x\sim y}U\sigma_1(x)\sigma_1(y)\sigma_2(x)\sigma_2(y)\right).$$
\end{defn}

At $U = 0$ this gives two decoupled Ising models, at $J = U$ the 4-Potts model, etc. 
There is a self-dual line of second order phase transitions given by \cite{saleur1988correlation}
\begin{equation}\label{eq:ATline}
\sinh(2J) = \exp(-2U)\text{ for $J \geq 0$ and $J \geq U$}.
\end{equation}
The parameter on the critical line can be equivalently encoded by a parameter $g \in (4/3,4]$ given by \cite{saleur1988correlation, Yang}
\begin{equation}\label{eq:g}
g = \frac{8}{\pi}\sin^{-1}\left(\frac{\coth 2J}{2}\right).
\end{equation}
Here $g = 2$ corresponds to the Ising model, $g = 4$ to 4-Potts that corresponds to the endpoint $U = J$. In fact, from the point of view of the Gaussian continuum description, one may continue the parameter further down to $g=1$, corresponding to the KT point of the XY model; however, this no longer corresponds to real lattice parameters $(J,U)$ in \eqref{eq:ATline}. \footnote{Notice that in fact the geometric description given by Theorem \ref{thm:twopointAT} also moves up to $g = \infty$.}

In this context we have the following conjectures for the FK representations of the scaling limit of the single spins $\sigma_1$. This conjecture has also been put forward simultaneously via a different line of thought by \cite{AlcadeHeeneyLis2026}, where it is in fact extended to both AT spins, to both wired and free boundary conditions and is  importantly also proved in the case of the XOR-Ising model.

\begin{conj}[FK representation of the continuum limit of AT spins]\label{conj:ATFK}
Consider the AT model on the self-dual line with parameters $(J,U)$ satisfying \eqref{eq:ATline}.
Let $g \in (4/3, 4]$ be given by \eqref{eq:g}. 

Then an FK representation of the continuum limit of 
$\sigma_1$ with wired boundary conditions is given by 
$\sum_{C \in CLE_4^g}\nu(C)s_C,$
where we sum in the decreasing order of gasket diameter over all the CLE$_4$ gaskets appearing in the iteration described in Theorem \ref{thm:twopointAT}, the $\nu(C)$ are Minkowski content measures \footnote{In fact, Minkowski content measures have not been directly shown to exist for CLE$_4$, but several constructions of natural measures on CLE$_4$ gasket exist and they should all be equal to this Minkowski measure (up to absolute constants that can be determined) \cite{SSV, miller2024existence, AlcadeHeeneyLis2026}.} and $s_C$ are i.i.d. Rademacher signs.
\end{conj}

Before explaining the modifications necessary to obtain the proof, let us show the calculation demonstrating that our formula for $g = 2$ is up to a universal constant equal to the formula obtained in the seminal paper \cite{CHIcorr}.

Indeed, rewriting the Formula (1.2) in \cite{CHIcorr} in a conformally invariant way we have that
$$\langle \sigma(z_1)\sigma(z_2)\rangle^+ = \frac{\sqrt{2}}{\CR(z_1, D)^{1/8}\CR(z_2, D)^{1/8}} \sqrt{\exp(-\pi G_D(z_1,z_2)) + \exp(\pi G_D(z_1,z_2))}.$$
Using Lemma \ref{lem:parametr} and identities \eqref{eq:thetasquares}, \eqref{eq:thetadupl} we can rewrite this as
$$\langle \sigma(z_1)\sigma(z_2)\rangle^+ = \frac{2^{3/4}}{\CR(z_1, D)^{1/8}\CR(z_2, D)^{1/8}} \frac{\theta_3(q^{1/4})}{\theta_2(q^{1/4})},$$
matching the formula \eqref{eq:twopointAT} for $g = 2$ up to an absolute constant.

\subsubsection{Proof of Theorem \ref{thm:twopointAT}}

The proof of the theorem runs again along the same lines as the proof of Theorem \ref{thm:main1}, so let us just see how the key ingredients need to be modified. 

First, the computation equivalent to Proposition \ref{prop:calc2ptsum} is clear - it is the same calculation over just different boundary conditions - we now sum over boundary conditions $2\sqrt{g}\lambda \Z$.

The relevant boundary conditions are fixed by Proposition \ref{prop:CLEcoupling2} and the following random walk lemma. Here the random walk again pertains just to the height of the CLE$_4$ gaskets that arrive at each odd step. The difference is that if at step $n$ we are at height $h$, then the next CLE$_4$ gasket can arrive both at the same height or also at the height $h \pm 2\sqrt{g}\lambda$. As we saw in Proposition \ref{prop:CLEcoupling2}, if we denote the probability of staying at the same height by $p_g$, then it is easy to see that $p_g = 1-1/\sqrt{g}$, 
and further the walk is always symmetric, i.i.d. 

\begin{lemma}\label{lem:walkG}
Let $\P^W_{n}$ be the probability measure on lazy random walks $(S_k)_{k \geq 0}$, with independent steps, 
that start from $n$ and at each step go up or down with probability $p/2$, 
and stay at their current position with probability $1-p$. 
Now consider the infinite measure $\nu^W := \sum_{n \in \Z} \P^W_n$. Then $\nu^W(S_m = 0) = 1$ for all $m \geq 0$.
\end{lemma}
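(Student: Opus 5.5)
The plan is a translation-invariance argument: the counting measure $\nu^W = \sum_{n\in\Z}\P^W_n$ is invariant under the walk's transition kernel. I take $S_k = n + \sum_{i=1}^k \xi_i$, where the steps $\xi_i$ are i.i.d. with values in $\{-1,0,1\}$ and probabilities $p/2,\,1-p,\,p/2$. The same argument works for any i.i.d. integer-valued step law.

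First I will write $\nu^W(S_m=0) = \sum_{n\in\Z}\P(n+\Sigma_m = 0)$, where $\Sigma_m=\sum_{i=1}^m\xi_i$ has a law not depending on $n$. For each outcome of $\Sigma_m$, exactly one $n$, namely $n=-\Sigma_m$, satisfies $n+\Sigma_m=0$. Because all terms are nonnegative, Tonelli lets me exchange sum and expectation:
$$\sum_{n\in\Z}\P(\Sigma_m=-n)=\E\Big[\sum_{n\in\Z}\1_{\{\Sigma_m=-n\}}\Big]=1.$$
For $m=0$ this is immediate, since only $n=0$ contributes. I will state the one-step version, $\sum_n \P_n(S_1=j)=p/2+(1-p)+p/2=1$ for every $j$, and then induct on $m$ using the Markov property. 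This matches the presentation of Lemma \ref{lem:walk1}.

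There is no real obstacle. The only point to check is that the nonnegativity justifies the Fubini/Tonelli exchange for the infinite measure $\nu^W$, and it does. In the application I will rescale the lattice to step size $2\sqrt{g}\lambda$ and take $p=1/\sqrt{g}$, as in point C of Proposition \ref{prop:CLEcoupling2}. Under this rescaling the sum over $n\in\Z$ becomes a sum over boundary conditions $v\in 2\sqrt{g}\lambda\Z$.
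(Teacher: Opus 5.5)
Your proof is correct and is essentially the paper's argument: the paper notes that after one step the $\nu^W$-mass at every site is still exactly $1$ and iterates, which is your one-step identity $p/2+(1-p)+p/2=1$ combined with induction. Your Tonelli computation $\sum_{n\in\Z}\P(\Sigma_m=-n)=1$ is just a direct, non-inductive form of the same translation-invariance fact, and your identification $p=1/\sqrt{g}$ agrees with point C of Proposition \ref{prop:CLEcoupling2}.
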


The proof is immediate again: we just notice that after one step the mass at each point $x$ is still exactly $1$. Observe that the case $p = 1$ gives the Lemma \ref{lem:walk1}.

Second, the equivalent of Proposition \ref{prop:2ptNestedCLE} follows exactly in the same lines, given the generalizations of the key lemmas. 
Let us start with the equivalent of Lemma \ref{lem:nointersect}.

\begin{lemma}\label{lem:nointersectAT}
Let $\widetilde F_{\eps_1,\eps_2}$ denote the event that at least two different odd CLE$_4$ gaskets of the family of CLE$_4$ gaskets constructed in Theorem \ref{thm:twopointAT} intersect both 
$\D(z_1, \eps_1)$ and $\D(z_2, \eps_2)$. Then
\begin{itemize}
    \item Let $E_{\eps_1,\eps_2}^{\rm{odd},k}(z_1,z_2)$ denote the event that the $k$-th odd nested CLE$_4$ gasket intersects 
    $\D(z_1,\eps_1)$ and $\D(z_2,\eps_2)$. Then there is a constant $C > 0$
    such that for all $k \geq 1$,
    $$\P(\widetilde F_{\eps_1,\eps_2} \cap E_{\eps_1,\eps_2}^{\rm{odd},k}(z_1,z_2)) \leq C(\eps_1\eps_2)^{1/4}\left(\P(E_{\eps_1,\eps_2}^{\rm{odd},k}) + \1_{k \geq 2}\P(E_{\eps_1,\eps_2}^{\rm{odd},k-1})\right).$$
    \item Further, for some $\widetilde C > 0$, 
    for all $\eps_1,\eps_2 > 0$ sufficiently small, 
$$\sum_{k \geq 1}\P(E^{\rm{odd},k}_{\eps_1,\eps_2}(z_1,z_2)) < \widetilde C\P(E_{\eps_1,\eps_2}^{\rm odd}(z_1,z_2)).$$
    \item In particular, $\P(\widetilde F_{\eps_1,\eps_2})$ is bounded by $C(\eps_1\eps_2)^{1/4}\P(E_{\eps_1,\eps_2}^{\rm odd}(z_1, z_2))$ for some $C > 0$ as $\max(\eps_1,\eps_2)\to 0$. 
\end{itemize}
Further, the same holds when we replace odd by even everywhere.
\end{lemma}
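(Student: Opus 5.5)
The proof follows that of Lemma \ref{lem:nointersect}, with the iteration of Theorem \ref{thm:twopointAT} in place of nested CLE$_4$. The only geometric input there was that two distinct odd gaskets can both meet $\D(z_1,\eps_1)$ and $\D(z_2,\eps_2)$ only if some loop of the exploration meets both disks. That loop is separating the two gaskets and lying between them. So the first step is to record the same fact here. If two odd (CLE$_4$) gaskets in the iteration meet both disks, then the loops bounding the intermediate even gasket, coming from $A_{-2\sqrt g\lambda+2\lambda,2\lambda}$ or its mirror image, must meet both disks. The same holds for the CLE$_4$ loop just before it.

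We then stop the exploration at the relevant iteration. This is the $(k-1)$-th or the $k$-th odd step, as in the proof of Lemma \ref{lem:nointersect}. We condition on everything explored so far. The remaining domain is then a finite union of simply connected components, each with boundary touching both disks, and in it we sample either a CLE$_4$ or a two-valued set. For the CLE$_4$ step, Claim \ref{Claim CLE 4} applies verbatim and gives the bound $C(\eps_1\eps_2)^{1/4}$ uniformly in the domain.

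The new ingredient is the analogue of Claim \ref{Claim CLE 4} for a two-valued set $A_{-a,b}$ with $a+b=2\sqrt g\lambda$, where $a=2\lambda$ or $b=2\lambda$. We need a uniform bound $C(\eps_1\eps_2)^{1/4}$ on the probability that one of its loops meets both disks. I would first try to reduce to the CLE$_4$ estimate through monotonicity of two-valued sets, in the spirit of Lemma \ref{lem:tech1}.
\begin{itemize}
\item For $g\ge4$ we have $a+b\ge4\lambda$, so $A_{-a,b}$ contains a CLE$_4$-type set $A_{-a',a'+4\lambda}$. Its loops can be explored inside the loops of that set, using \cite{AS2} and the iteration of CLE$_4$-type sets. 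So a loop of $A_{-a,b}$ meeting both disks forces a CLE$_4$-type loop, or a nested configuration, that does the same.
\item In the window $(2-1/\sqrt2)^2<g<(1+1/\sqrt2)^2$ I would instead use the Lemma \ref{lem:tech2} reduction to $(\D,-1,1)$ with $\tilde C(\eps_1\eps_2)^{1/4}$-neighbourhoods. I would then invoke arm exponents for boundary-touching SLE$_4(\rho_1,\rho_2)$ level lines, in the spirit of \cite[Proposition 3.2]{GaoNolinQian2025CLEarms}. The two-arm boundary exponent depends on $\rho$, that is on $a,b$, and I expect it to be at least $1/2$ exactly in this parameter range. This would give decay $(\delta_1\delta_2)^{1/2}\lesssim(\eps_1\eps_2)^{1/4}$.
\end{itemize}
This two-valued-set estimate is the main obstacle, and it is presumably why the theorem is restricted to these values of $g$.

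With the one-step bound in hand, point 1 follows. We use Lemma \ref{lem:tech1} to enlarge the domain, Lemma \ref{lem:tech2} to uniformize, and the fact that only finitely many loops hit both disks \cite[Lemma 2.1]{AruPaponPowell}.

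Points 2 and 3 are then formal and follow exactly as in Lemma \ref{lem:nointersect}. We write $\P(E^{{\rm odd},k})-\P(\widehat E^{{\rm odd},k})\le C(\eps_1\eps_2)^{1/4}\bigl(\P(E^{{\rm odd},k})+\1_{k\ge2}\P(E^{{\rm odd},k-1})\bigr)$ and sum over $k$. The sum of the $\widehat E^{{\rm odd},k}$ terms is at most $\P(E^{\rm odd})$, and for small $\eps$ we absorb the remaining terms into the left-hand side. Then we decompose $\widetilde F=\bigcup_k\widetilde F\cap E^{{\rm odd},k}$. The even case is identical.
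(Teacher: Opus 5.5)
There is a genuine gap. Your argument depends on a ``new ingredient'': a uniform bound $C(\eps_1\eps_2)^{1/4}$ on the probability that a loop of $A_{-2\sqrt g\lambda+2\lambda,2\lambda}$ (or its mirror) meets both disks. You do not establish it.
\begin{itemize}
\item In the window you only conjecture a boundary two-arm exponent.
\item For $g\ge 4$, monotonicity alone does not do it. The component of the complement of the contained CLE$_4$-type set in which a loop of the larger set lies may contain one of the disks in its interior. Then no CLE$_4$ loop need meet both disks, and Claim \ref{Claim CLE 4} does not apply.
\end{itemize}
Nor is this estimate the source of the restriction on $g$. The window arises in Lemma \ref{lem:oddevenAT}, from the martingale argument requiring $\alpha a<1$ and $\alpha^2>1/2$ with $a=\max(4-2\sqrt g,2\sqrt g-2)$. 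The paper's argument for the present lemma works for every $g>1$.

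The estimate is unnecessary once ``odd'' is read correctly. The odd CLE$_4$ gaskets are the odd-indexed ones among the CLE$_4$ gaskets of the iteration. This is the only reading under which ``replace odd by even'' makes sense, since the two-valued-set gaskets are not CLE$_4$ gaskets. Between two consecutive odd gaskets there is therefore a CLE$_4$ loop, a TVS loop, a CLE$_4$ loop and a TVS loop, all of which must meet both disks. So there is always a full CLE$_4$ step to pay with, and the TVS step is simply bounded by $1$.

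Concretely, as in the paper: condition on the first CLE$_4$ loop $\ell_1$ meeting both disks and on the two-valued set sampled inside it. Apply Lemma \ref{lem:tech1} with $D_1$ the union of the interiors of the (finitely many) TVS loops meeting both disks, and $D_2=\Int(\ell_1)$. The probability that the next CLE$_4$ loop meets both disks is then at most the probability that a CLE$_4$ loop in $\Int(\ell_1)$ does. That is exactly the event controlled in Lemma \ref{lem:nointersect} via Claim \ref{Claim CLE 4}. With this substitution, your stopping argument and your derivation of points 2 and 3 go through unchanged.

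Under your reading, by contrast, $\widetilde F_{\eps_1,\eps_2}$ would coincide with the event $\widetilde F^{\rm two}_{\eps_1,\eps_2}$ of Lemma \ref{lem:oddevenAT}. For that event the paper only obtains the far weaker bound $C\min(\eps_1,\eps_2)^{1/4+\beta_g}$, by a different (martingale) argument.
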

We will explain how to reduce this result to Lemma \ref{lem:nointersect}.
\begin{proof}
Observe that for two consecutive odd (or even CLE$_4$) gaskets to touch both $\D(z_1,\eps_1)$ and 
$\D(z_2, \eps_2)$, the following has to happen: 
\begin{enumerate}[i.]
    \item a loop in one of the iterated CLE$_4$ gaskets has to hit both disks, 
    \item then inside this loop, a loop of the $A_{-2\lambda, 2\sqrt{g}\lambda - 2\lambda}$ (or of the symmetric one) has to hit both disks, 
    \item then again one CLE$_4$ loop has to hit both, 
    \item and yet another $A_{-2\lambda, 2\sqrt{g}\lambda - 2\lambda}$ loop has to hit both balls. 
\end{enumerate}
Now we can apply Lemma \ref{lem:tech1} to conclude that the  event with these bullet points, conditionally on the first loop that touches both disks, is less probable than the event that a CLE$_4$ loop inside the first loop touches again these disks, i.e. the event of Lemma \ref{lem:nointersect}. 

Indeed, let us consider loops $A_{-2\lambda, 2\sqrt{g}\lambda - 2\lambda}$ discovered in the second step. We can now apply Lemma \ref{lem:tech1} with $D_1$ being the union of the interiors of such loops and $D_2$ being the original domain to bound the probability of the described event by the probability that just the next CLE$_4$ loop hits both balls. Hence we conclude that all the probabilities in the lemma can be bounded by those of Lemma \ref{lem:nointersect}. 
\end{proof}

Finally, we also need to modify Lemma \ref{lem:oddeven} to deal with consecutive CLE$_4$ loops in this iteration of loops. Notice that for $g = 4$ this is clear because the construction becomes exactly the construction of the nested CLE$_4$. For $g > 4$ it follows from \ref{lem:oddeven} by monotonicity arguments. However, the case $1 < g < 4$ needs an actual modification of the previous argument and it is here where we will need to restrict the parameter $g$.

\begin{lemma}\label{lem:oddevenAT}
Let $g\in \left((2-1/\sqrt{2})^2,(1+ 1/\sqrt{2})^2\right)\cup [4,+\infty)$, 
and let $\widetilde F^{\rm{two}}_{\eps_1, \eps_2}$ denote the event that in the iteration of Theorem \ref{thm:twopointAT} there are two CLE$_4$ gaskets that intersect both $\D(z_1, \eps_1)$ and $\D(z_2, \eps_2)$. 
Then, for every $g$ in the above-mentioned regime there is some $\beta_g > 0$ such that 
$\P(\widetilde F^{\rm{two}}_{\eps_1, \eps_2}) \leq C\min(\eps_1, \eps_2)^{1/4+\beta_g}$.
\end{lemma}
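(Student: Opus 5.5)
The plan is to mimic the proof of Lemma \ref{lem:oddeven}, but to run an exploration adapted to the iteration of Theorem \ref{thm:twopointAT} and to use an imaginary exponential martingale whose parameter is tuned to the label structure of Proposition \ref{prop:CLEcoupling2}. As before, for two CLE$_4$ gaskets of the iteration to touch both disks, some loop of the iteration (a CLE$_4$ loop or a loop of $A_{-2\lambda,2\sqrt g\lambda-2\lambda}$) must hit both $\D(z_1,\eps_1)$ and $\D(z_2,\eps_2)$. In particular $z_1$ must be $\eps_1$-close to the set explored at the stopping time $T$. Here $T$ is defined as follows: we explore the successive sets of the iteration in the component containing at least one disk, and we stop when a loop hits both disks or when the disks are separated. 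The number of steps $N$ has super-exponential tails by the argument of Claim \ref{claim:stop}. The strip/rectangle argument only uses that each level line (now of SLE$_4(\rho,-2-\rho)$ type with $\rho$ depending on $g$) crosses a rectangle with probability bounded below uniformly, so it goes through.

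For $g\ge4$ no new martingale is needed. If $g=4$, the construction is nested CLE$_4$, and Lemma \ref{lem:oddeven} applies with an exponent arbitrarily close to $1/2$, so it exceeds $1/4$. If $g>4$, I would use monotonicity of two-valued sets in the spirit of Lemma \ref{lem:tech1}: a loop of the larger TVS $A_{-2\lambda,2\sqrt g\lambda-2\lambda}$ hitting both disks is dominated by the corresponding event for the CLE$_4$-based exploration, and this reduces to the $g=4$ bound. For $g<4$, I would write $M_t(z_1)=\exp(i\alpha\sqrt{2\pi}h_t(z_1))\CR(z_1,D\setminus\hat A_t)^{-\alpha^2/2}$ and apply optional stopping at $T$, which is justified by the $L^p$ bound from \cite{SSV} for $\alpha^2<1$. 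Conditionally on the loops, $h_T(z_1)$ is a sum of independent increments. These are the $\pm2\lambda$ labels from the CLE$_4$ steps and the labels $(\sqrt g-1)s$ or $-s$, with probabilities $1/\sqrt g$ and $1-1/\sqrt g$, from the even steps. Pairing a CLE$_4$ step with the following step, each pair contributes a conditional factor
$$\phi_g(\alpha)=\frac{1}{\sqrt g}\cos\big(\alpha\sqrt{2\pi}\,2\sqrt g\lambda\big)+1-\frac1{\sqrt g}=\frac{1}{\sqrt g}\cos(\alpha\pi\sqrt g)+1-\frac1{\sqrt g}.$$
A lone CLE$_4$ step contributes $\cos(\alpha\pi)$.

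The key point is to choose $\alpha$ with $\alpha^2/2>1/4$, i.e. $\alpha>1/\sqrt2$, while keeping $\phi_g(\alpha)>0$ and $\cos(\alpha\pi)$ harmless. Positivity is what allows the lower bound $E[\Re M_T]\ge C\eps^{-\alpha^2/2}\,E[\phi_g(\alpha)^{N}\1_{F_\eps}]$. With that bound, the same argument as before gives $\P(F_\eps)\le C\eps^{\alpha^2/2-\delta}+\P(N>c\log\eps^{-1})$, and hence the exponent $1/4+\beta_g$. The main obstacle is this sign condition. The factor $\cos(\alpha\pi)$ from the CLE$_4$ steps is negative once $\alpha>1/2$, so I would not use the raw one-step factors. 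Instead I would group steps so that only positive combined factors appear, namely $\phi_g(\alpha)$ times a conditional expectation at an intermediate time at which $h$ lies in $2\sqrt g\lambda\Z$. Equivalently, I would run the martingale only at even times. At those times $h_{t}(z_1)\in 2\sqrt g\lambda\Z$ and the increments are lazy steps of size $2\sqrt g\lambda$, so the odd times never need to be evaluated. Stopping at an odd time can be handled by stopping at the next even time after a loop hits both disks, which only shrinks the conformal radius.

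Finally, I expect the requirement $\phi_g(\alpha)>0$ for some $\alpha$ slightly above $1/\sqrt2$, together with a one-step positivity of the residual term, to give exactly the window $|\sqrt g-3/2|<1/2+\ldots$, i.e. $\sqrt g\in(2-1/\sqrt2,1+1/\sqrt2)$. Concretely, I would check that $\cos(\pi\sqrt g/\sqrt2)>-(\sqrt g-1)$ together with the bound needed to handle the leftover CLE$_4$ half-step forces this interval. Verifying this inequality and making the ``even times'' stopping rigorous is where I expect most of the work.
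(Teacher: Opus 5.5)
There is a genuine gap at the step where you say that, conditionally on the loops, $h_T(z_1)$ is a sum of independent increments with the lazy-step probabilities $1/\sqrt g$ and $1-1/\sqrt g$.

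\textbf{The independence fails.} For $g\neq 4$ the even-step set $A_{-2\lambda,2\sqrt g\lambda-2\lambda}$ is an asymmetric two-valued set. For such a set, the label of the loop around $z_1$ and the log-conformal radius of that loop behave like the exit point and exit time of a Brownian motion from the asymmetric interval $(-2\lambda,(2\sqrt g-2)\lambda)$. These two quantities are independent only for a symmetric interval. The probabilities in Proposition \ref{prop:CLEcoupling2} come from the zero-mean identity and are averaged over the geometry, so you cannot use them conditionally on $N$ and $\CR(z_1,D\setminus\hat A_T)$.

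Concretely, after pairing a CLE$_4$ step with the next step, the conditional factor is $p+(1-p)\cos(\alpha\pi\sqrt g)$. Here $p$ is the probability of returning to the same height, and it depends on the geometry of that step. It can be close to $0$, for instance when the exit time is short and the nearer side wins, and then the factor can be negative. So the lower bound $\E[\Re M_T]\ge C\eps^{-\alpha^2/2}\E[\phi_g(\alpha)^N\1_{F_\eps}]$ does not follow. The paper's proof points out exactly this obstruction: the conformal radius is no longer independent of the walk.

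\textbf{How the paper gets around it.}
\begin{itemize}
\item It explores only with level-line two-valued sets. It uses $A_{-\lambda,\lambda}$ steps for the CLE$_4$ parts, so every increment there is $\pm\lambda$ and the negative factor $\cos(\alpha\pi)$ never appears. It uses asymmetric sets with $a+b=2\lambda$ for the even parts.
\item It then runs the martingale on an auxiliary symmetric exploration $\check A$. In $\check A$, each asymmetric step is replaced by $A_{-a\lambda,a\lambda}$ with $a=\max(4-2\sqrt g,2\sqrt g-2)$.
\item The two explorations are coupled by monotonicity of two-valued sets, so that $\CR(z_1,D\setminus\check A_T)\le \CR(z_1,D\setminus\hat A_T)$.
\item In $\check A$ the walk is symmetric and independent of the geometry. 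The one-step factors are $\cos(\alpha\pi/2)$ and $\cos(\alpha a\pi/2)$.
\item Positivity requires $\alpha a<1$. Together with $\alpha>1/\sqrt2$ this means $a<\sqrt2$, which is exactly $\sqrt g\in(2-1/\sqrt2,1+1/\sqrt2)$.
\end{itemize}

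\textbf{Your window heuristic does not produce this range.} Even if independence held, $\phi_g(\alpha)>0$ means $\cos(\alpha\pi\sqrt g)>1-\sqrt g$, and this condition is unrelated to the interval $(2-1/\sqrt2,1+1/\sqrt2)$.
\begin{itemize}
\item At $g=2$ with $\alpha$ just above $1/\sqrt2$, the condition reads $-1>1-\sqrt2$, which is false. You would need $\alpha\gtrsim 0.96$.
\item For $\sqrt g$ near $2-1/\sqrt2$, the condition fails for every $\alpha\in(1/\sqrt2,1)$. On that range $\cos(\alpha\pi\sqrt g)\le\cos(\pi\sqrt g)\approx-0.61$, while $1-\sqrt g\approx -0.29$.
\end{itemize}
So grouping into lazy steps cannot recover the stated window. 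In the paper the window comes from the symmetric domination.

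Your treatment of $g\ge 4$, by reduction to $g=4$ through monotonicity, and of the super-exponential tails of $N$ is in line with the paper.
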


\begin{proof}
As mentioned, the case $g \geq 4$ comes from the case $g = 4$ (i.e. Lemma \ref{lem:oddeven} and monotonicity of TVS - $A_{-a,b}$ for $a, b \geq 2\lambda$ contains 
$A_{-2\lambda, 2\lambda}$, that is the CLE$_4$ gasket \cite{ASW}).

In the other regime, we will proceed like before and the result comes down to bounding the probability of the event that there is at least one CLE$_4$ gasket in this new iteration that hits both disks. 

We can use the same argument as in Lemma \ref{lem:oddeven}, but only after a small modification: because of the asymmetry of the random walk we will need to bound our walk by a symmetric walk. 

As the iteration of the loops itself was different, we also have to use a slightly different construction: 
\begin{enumerate}
\item We again start with $A_1 := A_{-\lambda, \lambda}$.
\item If a loop of $A_1$ touches both $\D(z_1, \eps_1)$ and 
$\D(z_2, \eps_2)$ we stop.
\item If a loop of $A_1$  separates them in the sense that no connected component of $\D \setminus A_1$ intersects both $\D(z_1, \eps_1)$ and $\D(z_2, \eps_2)$ we also stop.
\item Otherwise, we construct $A_2$ by sampling 
$A_{-\lambda, \lambda}$ only inside the connected component of $\D \setminus A_1$ containing entirely at least one of the disks.
\item We iterate the construction in the following way: if the loop of interest has a label that takes values either in $2\sqrt{g}\lambda \Z$ or $2\sqrt{g}\lambda \Z\pm \lambda$, we sample $A_{-\lambda, \lambda}$. Further, 
\begin{itemize} 
\item if the label takes values in $2\sqrt{g}\lambda \Z + 2\lambda$, we sample $A_{2\sqrt{g}\lambda-4\lambda, 2\sqrt{g}\lambda-2\lambda}$,
\item if the label takes values in $2\sqrt{g}\lambda \Z - 2\lambda$, we sample $A_{-2\sqrt{g}\lambda+2\lambda, -2\sqrt{g}\lambda+4\lambda}$.
\end{itemize}
The stopping conditions remain the same at each iteration. We call the resulting almost sure stopping time $T$. 
\end{enumerate}
As before one of the main ingredients is to show the following control on the stopping time.
\begin{claim}\label{claim:stopAT}
The number of iterations $N$ corresponding to the stopping time $T$ is bounded by a random variable that has super-exponential tails. In particular $M_T$ is integrable and the Optional Stopping Theorem holds at the time $T$. 
\end{claim}
The proof of the claim is exactly the same - the fact that half of the time we do not use SLE$_4(-1,-1)$ but rather SLE$_4(\rho, -2-\rho)$ for some $\rho \in (-2,0)$ does change the non-separation probabilities $p_n$ by changing the law of the random variables $X_i$, yet they still remain i.i.d. and a.s. positive.

However, to finalize the argument one also needs to revisit the martingale calculation, i.e. bounding
$$\E(\Re(M_T(z_1)))=\E\left[\CR(z_1, D\setminus \hat{A}_T)^{-\alpha^2/2}\Re(\exp(i\alpha\sqrt{2\pi} h_{T}(z_1)))\right]$$
from below. A key step above was to justify the following equality
$$\E(\Re(M_T(z_1))) = 
\E\left[\cos(\alpha \pi/2)^N
\E(\CR(z_1, D\setminus \hat{A}_T)
^{-\alpha^2/2}|N)\right],$$
after which we could lower the bound the second expectation inside using the event that the set $\hat A_T$ comes at least $\eps-$close to $z_1$. However, to obtain this expression we used the fact that the walk is symmetric and thus the final conformal radius is independent of the walk. This is no longer the case for the construction above!

A way around is to notice that we can instead run the argument also on a symmetric exploration $\check{A}_t$, constructed similarly, but where every time we take a step of the form $E$ in the original construction, 
we take $A_{-a\lambda,a\lambda}$ in the auxiliary one, where $a = \max(4-2\sqrt{g},2\sqrt{g}-2)$. This auxiliary exploration can be coupled to the original one by monotonicity of TVS such that the conformal radius $\CR(z_1, D\setminus \check{A}_T)$ in the auxiliary one at the step $T$ of the original construction satisfies $\CR(z_1, D\setminus \check{A}_T) \leq \CR(z_1, D\setminus \hat A_T)$. For this auxiliary construction, the event $F_\eps$ that the original conformal radius is less than $\eps$ is independent of the walk and we can use any $\alpha$ with 
$\alpha a < 1$ to obtain the bound $\P(F_\eps) \leq C\eps^{\alpha^2/2}$. The condition of the lemma now becomes clear: we want $\alpha^2/2 > 1/4$
which gives us $\alpha^2 > 1/2$, giving an upper bound on the range of $a$.
\end{proof}

\bibliographystyle{alpha}	
\bibliography{biblio}

\end{document}